\documentclass[11pt,reqno]{amsart}
\usepackage[a4paper,margin=2.4cm]{geometry}
\usepackage{amsmath,amsthm,amsfonts,amssymb,latexsym,mathrsfs,extarrows}
\usepackage{xcolor}
\usepackage{tikz}
\usetikzlibrary{arrows.meta}

\newtheorem{Theorem}{Theorem}[section]
\newtheorem{Corollary}[Theorem]{Corollary}
\newtheorem{Proposition}[Theorem]{Proposition}

\newtheorem{Lemma}[Theorem]{Lemma}
\theoremstyle{definition}
\newtheorem{Definition}[Theorem]{Definition}
\newtheorem{Example}[Theorem]{Example}
\newtheorem{Problem}[Theorem]{Problem}
\theoremstyle{remark}
\newtheorem{Remark}[Theorem]{Remark}
\newtheorem{Observation}[Theorem]{Observation}

\numberwithin{equation}{section}

\allowdisplaybreaks

\DeclareMathOperator{\sgn}{sgn}
\newcommand{\QQ}{\mathbb{Q}}
\newcommand{\RR}{\mathbb{R}}
\newcommand{\ZZ}{\mathbb{Z}}
\newcommand{\EE}{\mathcal{E}}
\newcommand{\He}{\operatorname{He}}
\newcommand{\HH}{\ddot{H}}
\newcommand{\sn}{\operatorname{sn}}
\newcommand{\cn}{\operatorname{cn}}
\newcommand{\dn}{\operatorname{dn}}
\newcommand{\ord}{\operatorname{ord}}

\newcommand{\M}{\mathbf{M}}

\title{Dilated Hankel determinants}
\author{Guo-Niu Han}
\date{2026/07/09}

\keywords{Hankel determinant, dilated Hankel determinant, 
orthogonal polynomials, Jacobi and Stieltjes continued fractions, moment problem,
Catalan numbers, 
Euler numbers, secant and tangent numbers, Springer numbers, 
Hermite polynomials, elliptic functions, Vandermonde determinant,
Cauchy--Binet formula}
\subjclass[2020]{Primary 11C20, 15A15, 33C45; Secondary 05A10, 11A55, 11B68, 11B83}

\begin{document}

\begin{abstract}
For a sequence $\mathbf a=(a_0,a_1,\dots)$ we define its \emph{dilated Hankel
determinant} $\HH_n(\mathbf a)=\det(a_{2i+j})_{0\le i,j\le n-1}$, the minor of the
infinite Hankel matrix $(a_{i+j})$ formed from the even-indexed rows and the
first $n$ columns.
We prove that, for a broad class of sequences, $\HH_n$ admits a remarkably
simple product evaluation. This mirrors the behaviour of the classical Hankel
determinant $H_n$, but with two key distinctions: the class of sequences for
which such formulas are known is far larger in the classical case; and, whereas
$H_n$ enjoys a single universal evaluation --- the Heilermann formula via the
Jacobi continued fraction --- no analogous general method exists for the dilated
determinant, which is therefore considerably more challenging.
Our evaluations instead rest on six methods developed here, four of
general scope and two of a more specialised nature.
The cases treated include the factorial numbers, the Catalan and
central binomial coefficients;
the Euler  numbers and a one-parameter secant
family; the involution numbers;
the Springer numbers along with elliptic and derivative
deformations; the reciprocal-sine function, whose
evaluation rests on a new Catalan determinant proved by condensation; a
Bessel analogue of the Euler numbers; and a
multiplicative Bessel family. As an
application, we
settle a conjecture of Chapoton and the author on the roots of the Poupard and
Kreweras polynomials.
\end{abstract}

\maketitle

\setcounter{tocdepth}{1}
\tableofcontents

\section{Introduction}\label{sec:intro}

Let $\mathbf a=(a_0,a_1,a_2,\dots)$ be a sequence over a field, and let
$f(x)=\sum_{n\ge0}a_n\,x^n/n!$ be its exponential generating function. The
\emph{(classical) Hankel determinant} of order $n$ is
\begin{equation}\label{eq:Hclass}
H_n=H_n(f)=H_n(\mathbf a)=\det\bigl(a_{i+j}\bigr)_{0\le i,j\le n-1}
=\begin{vmatrix}
a_0 & a_1 & a_2 & \cdots & a_{n-1}\\
a_1 & a_2 & a_3 & \cdots & a_{n}\\
a_2 & a_3 & a_4 & \cdots & a_{n+1}\\
\vdots & & & & \vdots\\
a_{n-1} & a_{n} & a_{n+1} & \cdots & a_{2n-2}
\end{vmatrix}.
\end{equation}
Hankel determinants are a meeting point of orthogonal polynomials, the
moment problem, continued fractions, total positivity and enumerative
combinatorics. When the $a_k$ are the moments of a linear functional $\mu$,
the determinants $H_n$ are the Gram determinants of the monomial basis; their
positivity governs the classical moment problem and the total positivity of the
Hankel matrix \cite{Stieltjes1894Re,Sokal2019,Petreolle2023Sokal}, and the
theorem of Heilermann and Stieltjes evaluates them through the Jacobi continued
fraction of the generating function
\cite{Heilermann1846,Wall1948,Viennot1983,Flajolet1980,Krattenthaler1998,Krattenthaler2005,GesselXin2006}.
Combinatorially, $H_n$ counts families of non-intersecting lattice paths
\cite{Lindstrom1973,Gessel1985Viennot,Viennot1983,Radoux1992}; Flajolet's
theory \cite{Flajolet1980} reads the continued fraction off weighted Motzkin
paths and gives, in one stroke, the continued-fraction expansions --- hence the
Hankel determinants --- of the Catalan, Bell, Stirling, Euler, tangent and
secant numbers, and of the elliptic functions. The Jacobi continued fraction,
however, exists only when all $H_n\neq0$; as soon as some $H_n$ vanish it breaks
down, and one turns instead to the \emph{Hankel} continued fraction
\cite{Han2016Adv,Han2015NT}, which is defined for every sequence.

A recurring theme of the subject is the contrast between sequences whose Hankel
determinants have a \emph{closed product form} --- a formula of the
$\mathrm{NICE}(n)$ type $\xi^{n}\,\mathrm{Rat}(n)\prod_i(a_in+b_i)!^{\pm1}$ in
Krattenthaler's sense \cite{Krattenthaler2005} --- and those whose determinants
carry sporadic large prime factors and admit no such formula. The first class is
recognised by its arithmetic, the prime factors of $H_n$ growing only linearly
in $n$; it is this ``niceness'' that combinatorialists prize and that the
determinant calculus \cite{Krattenthaler1998,Krattenthaler2005} is designed to
certify. Which sequences fall on which side is a subtle matter, and it is this
question that the present paper takes up --- for a Hankel-type determinant of a
new kind.

\begin{Definition}\label{def:double}
The \emph{dilated Hankel determinant of order $n$} of the sequence
$\mathbf a=(a_0,a_1,\dots)$ is
\begin{equation}\label{eq:Hdouble}
\HH_n=\HH_n(f)=\HH_n(\mathbf a)=\det\bigl(a_{2i+j}\bigr)_{0\le i,j\le n-1}
=\begin{vmatrix}
a_0 & a_1 & a_2 & \cdots & a_{n-1}\\
a_2 & a_3 & a_4 & \cdots & a_{n+1}\\
a_4 & a_5 & a_6 & \cdots & a_{n+3}\\
\vdots & & & & \vdots\\
a_{2n-2} & a_{2n-1} & a_{2n} & \cdots & a_{3n-3}
\end{vmatrix},
\end{equation}
with the convention $\HH_0=1$.
\end{Definition}

\noindent More generally, replacing the row index $i$ by $ri$ gives the
\emph{$r$-step Hankel determinant}
$$\det\bigl(a_{ri+j}\bigr)_{0\le i,j\le n-1};$$
the classical Hankel determinant \eqref{eq:Hclass} is the case $r=1$, and the
\emph{dilated} one the case $r=2$. This paper is devoted to $r=2$. We call it
the \emph{dilated Hankel determinant}, or simply the \emph{dilated
determinant}, throughout, and reserve the name \emph{$2$-step} for the rare
occasions when it must be set against the general $r$-step determinant
(Section~\ref{sec:vandermonde}).

Two readings of \eqref{eq:Hdouble} place it within known theory. First,
$\HH_n$ is the \emph{minor of the infinite Hankel matrix}
$(a_{p+q})_{p,q\ge0}$ that selects the even-indexed rows
$\{0,2,4,\dots,2n-2\}$ and the first $n$ columns; general minors of
Hankel matrices fall under the determinant calculus of Krattenthaler
\cite{Krattenthaler1998,Krattenthaler2005}. Second, if $a_k=\mu(z^k)$ are
moments then $a_{2i+j}=\mu(z^{2i}\cdot z^j)$, so
\begin{equation}\label{eq:gram}
\HH_n=\det\bigl(\langle z^{2i},z^{j}\rangle_\mu\bigr)_{0\le i,j\le n-1},
\qquad \langle p,q\rangle_\mu:=\mu(pq),
\end{equation}
the Gram determinant of the bilinear form $\langle\cdot,\cdot\rangle_\mu$ on
the two bases $\{1,z^2,z^4,\dots\}$ and $\{1,z,z^2,\dots\}$. The factor $2$
in $a_{2i+j}$ is thus the quadratic map $z\mapsto z^2$ together with the
even/odd splitting of $\mu$ --- the realm of the quadratic decomposition and
symmetrisation of orthogonal polynomials \cite{Chihara1978} and of
biorthogonal and multiple orthogonal polynomials
\cite{Nikishin1991Sorokin,Ismail2005}. The closely related but distinct
\emph{shifted} Hankel determinants $\det(a_{i+j+k})$ of \cite{Han2016Adv}
translate the index by a constant rather than scaling it; the interaction of the
two operations --- shifts of a dilated determinant --- is itself a source of
clean evaluations (Sections~\ref{sec:dshift}, \ref{sec:dblshift},
\ref{sec:algsqshift} and~\ref{sec:besseldshift}).

\medskip
\noindent\emph{Results.}
Our central finding is that, for a wide range of classical sequences,
$\HH_n$ has a surprisingly simple product formula even when the ordinary $H_n$
does not.
We organise the evaluations into a few families.
\begin{itemize}
\item The \emph{Beta family} $a_{m+1}/a_m=\rho\,(m+\alpha)/(m+\beta)$
(Section~\ref{sec:beta}) --- the sequences whose term ratio has degree less than or equal to $1$
--- has a single closed form (Theorem~\ref{thm:beta}) specialising to the
factorials, the Catalan numbers, the central binomial coefficients and the
double factorials. The resulting dilated Hankel determinants, such as the
Catalan values $1,3,32,1232,172032,\dots$ and the central binomial values
$1,8,224,22528,8200192,\dots$, appear not to have been recorded before.
\item The involution numbers and the Gaussian family $\exp(cx+bx^2)$ give
$\HH_n=(4bc)^{\binom n2}\prod_{k<n}k!$ (Theorem~\ref{thm:gauss},
Section~\ref{sec:gauss}); the Gaussian is the unique exponential weight for
which this simplification occurs.
\item The \emph{Euler number family}
$1/\cos(x)^{s+1}+\int_0^x dy/\cos(y)^{t+1}$
(Proposition~\ref{prop:family}, Section~\ref{sec:gen}), whose $(s,t)=(0,1)$
member has the Euler (secant--tangent) numbers \cite{Andre1879} as its moments,
has a closed product evaluation for every admissible $(s,t)$, even though the
relevant odd moment functional is not classical.
\item The \emph{secant-number family} $(1+x)/\cos(x)^{s+1}$, a separate family,
has the product evaluation $\HH_n=c_n\prod_{i=1}^{n-1}(s+1)_i$ valid for
\emph{every} $s$ (Theorem~\ref{thm:allstar}, Section~\ref{sec:allstar}), even
though its odd moment functional is not classical once $s\ge1$.
\item The \emph{Springer number family} $1/(\cos x-t\sin x)^r$
\cite{Springer1971,Sokal2019}, whose $t=1$, $r=1$ member has the Springer
numbers as its moments, a derivative $(\cos x+\sin x)/(\cos x-\sin x)^s$ of it,
and an elliptic Euler family all have product formulas
(Theorems~\ref{thm:springer}, \ref{thm:deriv}, \ref{thm:ell}).
\item An \emph{algebraic family} $(1+x)/(1-x^{2})^{s/2}$
(Theorem~\ref{conj:alg}, Section~\ref{sec:alg}) also has a product formula for
every $s$, even though \emph{neither} of its two moment functionals is
classical; the proof accordingly abandons orthogonal polynomials altogether in
favour of an elementary contiguous relation in $s$.
\item A \emph{rank-one perturbation} $(\sin x+1)/\cos^2x+s\sin x$ of the
Euler number family (Proposition~\ref{prop:sin3}, Section~\ref{sec:runkone})
factors for every $s$, with $\HH_n$ affine in $s$; at the member $s=-1$,
where it becomes $(1+\sin^3x)/\cos^2x$, the seemingly sporadic prime factors
$29,37,23$ at $n=8,9,10$ turn out to be $\binom n2+1$.
\item The \emph{reciprocal-sine function} $(1+x)\,x/\sin x$
(Theorem~\ref{conj:xsin-closed}, Section~\ref{sec:xsinx}) is evaluated as a
product of factorials --- the deepest evaluation of the paper. Its two moment
functionals turn out to be Wilson functionals differing in a \emph{single}
parameter; the connection coefficients of the biorthogonal reduction then
collapse to a single hypergeometric term, and the evaluation reduces to a
determinant of products of two Catalan numbers, established by
Desnanot--Jacobi condensation (Theorem~\ref{thm:xsin-catalan}) --- a
determinant evaluation of independent interest.
\item A \emph{Bessel $(s,t)$ family}
$\mathrm{cosb}_s(x)+\int_0^x\mathrm{cosb}_t(y)\,dy$
(Theorem~\ref{thm:besselst}, Section~\ref{sec:besselst}), built from the
normalised Bessel function $\mathrm{cosb}_s=\Gamma(s+1)(2/x)^sJ_s(x)$, has
a closed product evaluation for \emph{every} $(s,t)$: the signed factor of
the Euler number family reappears at doubled argument, so the determinants are
nonzero exactly on the half-integer offsets $t-s\in\tfrac12+\ZZ$ and
vanish eventually on the integer ones --- the Bessel ladder walks in half
steps.
\item A \emph{multiplicative Bessel family} $(1+x)\,\mathrm{cosb}_\nu^{\,2}$
(Theorem~\ref{thm:xbesseleven}, Section~\ref{sec:xbesseleven}), the Bessel
analogue of the secant-number family with the exponent ladder replaced by the
order ladder, has a closed product evaluation at every \emph{even} order
$n=2N$, complete with quarter-integer zeros --- the blocks
$(2\nu+\tfrac12)_k$ --- that have no trigonometric counterpart. Although
its even moment functional is not classical, Watson's product formula
makes the even moments a single hypergeometric term, and the evaluation
falls, unexpectedly, to the elementary Vandermonde reduction, whose only
other application is the Beta family; at odd orders the degree count fails
by one and an irreducible carrier appears.
\end{itemize}

\medskip
\noindent\emph{Origin and application.}
As is so often the way with determinant evaluations \cite{Krattenthaler1998},
the object of this paper first announced itself in the course of another problem.
While studying a conjecture of Chapoton and the author on the roots of the
Poupard and Kreweras polynomials \cite{Chapoton2020Han}, the author was confronted
with a determinant that did not yield to the ordinary continued-fraction
machinery and whose first row was the secant-type sequence
$1,1,1,3,5,25,61,427,\dots$ of $(1+x)/\cos x$; this clue is what led to the
dilated determinant studied here. We close the circle in Section~\ref{sec:root}:
Conjecture~5.4 of \cite{Chapoton2020Han} is, after a change of basis and up to an
explicit power of two, the dilated Hankel determinant of $(1+x)/\cos x$
(Theorem~\ref{thm:conj54double}), whose evaluation (Proposition~\ref{prop:sec1})
settles it.

\medskip
\noindent\emph{Methods.}
The classical Hankel determinant enjoys a \emph{universal} method of evaluation:
the Jacobi continued fraction of the generating function produces the whole
sequence $H_1,H_2,\dots$ at once, from a single list of coefficients
\cite{Heilermann1846,Stieltjes1894Re,Wall1948}. For the dilated determinant there
is no such tool. $\HH_n$ is in general a substantially harder
object: no one method evaluates every family. We therefore develop \emph{six}
methods for $\HH_n$, collected and abstracted in Section~\ref{sec:methods} and
tagged $\M1$--$\M6$ throughout the paper. Four of them run through the paper:
the elementary Vandermonde
(row-factorisation) reduction $\M1$; the biorthogonal
reduction $\M2$, which splits $\mathbf a$ into its even and odd moment
functionals --- the determinantal face of the even/odd contraction of continued
fractions \cite{Wall1948,Flajolet1980} --- and reduces $\HH_n$ to a determinant
of connection coefficients; the one-functional reduction $\M3$; and the divisor
method $\M4$, which evaluates a one-parameter family by its degenerations through
a dilated Cauchy--Binet expansion. Each of these four comes with a sharp rigidity
boundary marking where the product form must fail. The remaining two methods
are of a different, more specialised character, each used only once rather than
running through the paper: the algebraic family (Section~\ref{sec:alg}) is settled by a
contiguous relation ($\M5$), and the rank-one perturbation
(Section~\ref{sec:runkone}) by the matrix-determinant lemma ($\M6$). Finally,
the reciprocal-sine function of Section~\ref{sec:xsinx} is reached by $\M2$
followed by a new finishing step: the connection determinant turns out to be a
determinant of products of Catalan numbers, which we evaluate by
Desnanot--Jacobi condensation; the same finishing step closes the Bessel
$(s,t)$ family of Section~\ref{sec:besselst}, whose connection determinant
carries the parameters in its entries. In the opposite direction, the
multiplicative Bessel family of Section~\ref{sec:xbesseleven} is closed by
$\M1$ alone --- its even moments are a single hypergeometric term, and at
even orders the row factorisation lands exactly on the Vandermonde degree
bound: the one evaluation of the paper where the most elementary method
reaches beyond the Beta family. The classical background
needed --- orthogonal polynomials, the moment problem, $J$- and $S$-fractions and
their contractions --- is gathered, in self-contained algebraic form, in
Section~\ref{sec:prelim}.

\medskip
\noindent\emph{Organisation.}
Section~\ref{sec:prelim} fixes notation and recalls the classical toolbox;
Section~\ref{sec:methods} presents the six methods $\M1$--$\M6$ and closes
with a table matching them to families. The evaluations then proceed family
by family: the Beta family (Section~\ref{sec:beta}); the Gaussian family ---
containing the involution numbers --- and its derivative rule
(Sections~\ref{sec:gauss} and~\ref{sec:dergauss}); the Euler number family with
its double and single shifts (Sections~\ref{sec:gen}--\ref{ssec:t3}); the
secant-number family $(1+x)/\cos(x)^{s+1}$ with its shifts
(Sections~\ref{sec:gen-xcos}--\ref{sec:dblshift}); the rank-one perturbation of
the Euler number family (Section~\ref{sec:runkone}); the Springer number family
and a derivative of it (Sections~\ref{sec:springer}--\ref{sec:derivative}); the
reciprocal-sine case (Section~\ref{sec:xsinx}); the elliptic Euler family
(Section~\ref{sec:elliptic}); the algebraic and squared algebraic families with
their shifted determinants (Sections~\ref{sec:alg}--\ref{sec:algsqshift}); the
Bessel $(s,t)$ family with its double shift (Sections~\ref{sec:besselst}
and~\ref{sec:besseldshift}); and the multiplicative Bessel family at even
orders (Section~\ref{sec:xbesseleven}). Section~\ref{sec:root} settles the
Chapoton--Han conjecture, and Section~\ref{sec:coro} collects, family by
family, the explicit corollaries and their shifts. Section~\ref{sec:lgv}
then sketches the Lindstr\"om--Gessel--Viennot point of view --- a natural
method which, for the dilated determinant, yields no evaluation and which
no proof of this paper uses; it is recorded for orientation only.
Section~\ref{sec:hankel-classical} records the classical Hankel determinants
$H_n$ of several of the moment sequences studied here. Finally Section~\ref{sec:concl} gathers the
concluding remarks and open problems. All closed forms were discovered and
checked with \textsf{SageMath}.

\section{Preliminaries}\label{sec:prelim}

This section collects the classical material we use, in the
self-contained algebraic form suited to formal power series; no analysis
or positivity is needed. Standard references are
\cite{Szego1975,Chihara1978,Ismail2005} for orthogonal polynomials,
\cite{Wall1948,Flajolet1980,Viennot1983,Krattenthaler1998} for continued
fractions and Hankel determinants. A reader fluent in this material may
skip to the methods of Section~\ref{sec:methods} and refer back as
needed.

\medskip
\noindent\emph{Conventions.}
Throughout the paper, $m!!=m(m-2)(m-4)\cdots$, down to $1$ or $2$, is the
double factorial, with $0!!=(-1)!!=1$; empty products equal $1$;
$(\alpha)_m=\alpha(\alpha+1)\cdots(\alpha+m-1)$ is the Pochhammer symbol;
and $\binom{\alpha}{d}=0$ for $d<0$. For sequences given by a generating
function we use the \emph{exponential} convention
$f(x)=\sum_{n\ge0}a_nx^n/n!$, i.e.\ $a_n=n!\,[x^n]f$.

\subsection{Desnanot--Jacobi condensation}\label{ssec:condensation}

Several of the evaluations below are closed by induction on the order, through a
classical identity among the minors of a matrix. For an
$n\times n$ matrix $M$ write $M^{\,i}_{\,j}$ for the $(n-1)\times(n-1)$ minor
obtained by deleting row $i$ and column $j$, and $M^{\,i,i'}_{\,j,j'}$ for the
$(n-2)\times(n-2)$ minor obtained by deleting rows $i,i'$ and columns $j,j'$.
The \emph{Desnanot--Jacobi identity} --- equivalently the
\emph{Jacobi--Desnanot identity}, and, after Lewis Carroll, \emph{Dodgson
condensation} \cite{Krattenthaler1998} --- states, for $n\ge2$,
\begin{equation}\label{eq:condensation}
\det M\;\det M^{\,1,n}_{\,1,n}
=\det M^{\,1}_{\,1}\;\det M^{\,n}_{\,n}
-\det M^{\,1}_{\,n}\;\det M^{\,n}_{\,1}:
\end{equation}
the full determinant times its central minor equals a difference of products of
the four minors obtained by deleting one boundary row and one boundary column.
When a determinant belongs to a family stable under \eqref{eq:condensation} ---
so that all five minors are again members of the family --- the identity becomes
a two-step recurrence in the order, and matching it against a candidate product
proves a closed form; this is the finishing step used in
Sections~\ref{sec:xsinx} and~\ref{sec:besselst}.

\subsection{Moments, the Hankel determinant, and orthogonal
polynomials}\label{ssec:opbasics}

A \emph{linear functional} $\mathcal L$ on the polynomial ring $\QQ[y]$
(more generally over any field) is fixed by its values on the monomial
basis,
$$
\mu_k:=\mathcal L[y^k]\qquad(k\ge0),
$$
the \emph{moments} of $\mathcal L$. The \emph{Hankel determinant} of the
moment sequence is
$$
H_n:=\det\bigl(\mu_{i+j}\bigr)_{0\le i,j\le n-1}
=\det\bigl(\mathcal L[y^i\cdot y^j]\bigr)_{0\le i,j\le n-1},
\qquad H_0=1,
$$
i.e.\ the Gram determinant of $1,y,\dots,y^{n-1}$ under the symmetric
bilinear form $\langle p,q\rangle:=\mathcal L[pq]$. We assume $\mathcal L$
is \emph{quasi-definite}, $H_n\ne0$ for all $n$; this is exactly the
condition under which the Gram--Schmidt process never breaks down.

Under this hypothesis there is a unique sequence of \emph{monic
orthogonal polynomials} $P_0=1,P_1,P_2,\dots$ with $\deg P_n=n$ and
$$
\mathcal L[P_nP_m]=h_n\,\delta_{nm},\qquad h_n:=\mathcal L[P_n^2]\ne0 ,
$$
where $h_n$ is the \emph{squared norm} of $P_n$ (a formal one --- no positivity
is assumed, so $h_n$ need not be a square; we abbreviate it to \emph{norm}
throughout).
They are the Gram--Schmidt orthogonalisation of $1,y,y^2,\dots$, and have
the explicit determinantal form
$$
P_n(y)=\frac{1}{H_n}
\begin{vmatrix}
\mu_0 & \mu_1 & \cdots & \mu_n\\
\mu_1 & \mu_2 & \cdots & \mu_{n+1}\\
\vdots & & & \vdots\\
\mu_{n-1} & \mu_n & \cdots & \mu_{2n-1}\\
1 & y & \cdots & y^n
\end{vmatrix}.
$$
Expanding along the last row shows $P_n$ is monic of degree $n$, and
$\mathcal L[y^jP_n]=0$ for $j<n$ because the determinant then has a
repeated row; this is orthogonality. Taking $\mathcal L$ of $y^nP_n$ and
comparing minors gives the basic bridge between the norms and the Hankel
determinants,
\begin{equation}\label{eq:normHankel}
h_n=\frac{H_{n+1}}{H_n},
\qquad\text{equivalently}\qquad
H_n=\prod_{k=0}^{n-1}h_k\qquad(h_0=\mu_0).
\end{equation}
Thus \emph{evaluating a Hankel determinant is the same as computing the
norms $h_k$ of the orthogonal polynomials}, and the latter are read off a
continued fraction (Section~\ref{ssec:cf}).

\emph{Orthogonal expansion.} Since $\deg P_n=n$, the set
$\{P_0,\dots,P_n\}$ is a basis of $\QQ[y]_{\le n}$, the polynomials of
degree $\le n$. Hence every $R$ with $\deg R\le n$ has a unique expansion
$R=\sum_m r_mP_m$, and pairing against $P_m$ recovers the coefficient,
\begin{equation}\label{eq:fourier}
\mathcal L[R\,P_m]=r_m\,h_m,\qquad\text{i.e.}\qquad r_m=\frac{\mathcal L[R\,P_m]}{h_m}.
\end{equation}

\subsection{Change of family}\label{ssec:family}

Every reduction in this paper replaces the row and column families of a
moment matrix by monic families of the same degrees. That this leaves the
determinant unchanged is the following elementary but pivotal fact, which we
isolate once and cite throughout. 

\begin{Lemma}[change of family]\label{lem:family}
Let $\mathcal L$ be a linear functional on a space of polynomials, and let
$(R_i)_{0\le i<n}$ and $(C_j)_{0\le j<n}$ be two families of polynomials.
If $(\tilde R_i)$ and $(\tilde C_j)$ are \emph{unitriangular recombinations},
$$
\tilde R_i=R_i+\sum_{k<i}p_{ik}R_k,
\qquad
\tilde C_j=C_j+\sum_{l<j}q_{jl}C_l,
$$
then
$$
\det\bigl(\mathcal L[\tilde R_i\,\tilde C_j]\bigr)_{0\le i,j<n}
=\det\bigl(\mathcal L[R_i\,C_j]\bigr)_{0\le i,j<n}.
$$
In particular, if the matrix $\bigl(\mathcal L[\tilde R_i\tilde C_j]\bigr)$ is
triangular, then
$\det\bigl(\mathcal L[R_iC_j]\bigr)=\prod_{i=0}^{n-1}\mathcal L[\tilde R_i\,\tilde C_i]$.
\end{Lemma}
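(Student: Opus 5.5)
The plan is to write the recombination as a matrix factorisation and take determinants. Let $P=(p_{ik})$ be the $n\times n$ matrix with $1$ on the diagonal, the given $p_{ik}$ below it ($k<i$), and $0$ above. Define $Q=(q_{jl})$ in the same way. These matrices are lower unitriangular, so $\det P=\det Q=1$. The hypotheses then say $\tilde R_i=\sum_k P_{ik}R_k$ and $\tilde C_j=\sum_l Q_{jl}C_l$.

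First, I expand the entries bilinearly. Since $\mathcal L$ is linear and multiplication of polynomials is bilinear,
$$
\mathcal L[\tilde R_i\tilde C_j]=\sum_{k,l}P_{ik}\,\mathcal L[R_kC_l]\,Q_{jl}.
$$
In matrix form this reads $\tilde G=P\,G\,Q^{\mathsf T}$, where $G=(\mathcal L[R_kC_l])$ and $\tilde G=(\mathcal L[\tilde R_i\tilde C_j])$.

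Second, multiplicativity of the determinant gives $\det\tilde G=\det P\cdot\det G\cdot\det Q=\det G$. This is the main identity.

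Third, for the "in particular" clause: if $\tilde G$ is triangular (upper or lower), its determinant is the product of its diagonal entries $\mathcal L[\tilde R_i\tilde C_i]$. Combining this with the identity above yields the product formula.

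No step is genuinely hard. The only point needing care is the bookkeeping: the column family's recombination enters through the transpose $Q^{\mathsf T}$, and the sums over $k<i$ and $l<j$ must be padded to full sums using the diagonal ones of $P$ and $Q$. If one prefers to avoid matrices altogether, the same argument can be phrased as elementary row and column operations. Adding multiples of earlier rows (respectively columns) to later ones leaves the determinant unchanged, and one performs these operations row by row in increasing $i$, then column by column in increasing $j$.
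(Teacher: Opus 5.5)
Your proof is correct and is the argument the paper intends: the paper states this lemma without proof as an elementary fact. Writing the new Gram matrix as $P\,G\,Q^{\mathsf T}$ with $P,Q$ lower unitriangular, and then reading the determinant off the diagonal in the triangular case, is all the lemma needs.
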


\noindent In every application the new families are monic of strictly
increasing degree---typically the orthogonal polynomials of the relevant
functional---hence unitriangular recombinations of the monomial (or
split-monomial) families they replace.

\subsection{The three-term recurrence and Favard's theorem}\label{ssec:ttr}

Because $yP_n$ has degree $n+1$, expansion~\eqref{eq:fourier} writes it in
the basis $P_0,\dots,P_{n+1}$,
$$
yP_n=\sum_{m=0}^{n+1}r_m\,P_m,
\qquad
r_m=\frac{\mathcal L[yP_n\,P_m]}{h_m}.
$$
For $m<n-1$ we have $\deg(yP_m)=m+1<n$, so
$\mathcal L[yP_n\,P_m]=\mathcal L[P_n\,yP_m]=0$ by orthogonality; hence
$r_m=0$ for all $m<n-1$. This leaves a \emph{three-term recurrence}
\begin{equation}\label{eq:ttr}
P_{n+1}=(y-c_n)\,P_n-\lambda_n\,P_{n-1},
\qquad P_0=1,\ P_{-1}=0,
\end{equation}
with
$$
c_n=\frac{\mathcal L[y\,P_n^2]}{h_n}\ \ (n\ge0),
\qquad
\lambda_n=\frac{h_n}{h_{n-1}}\ \ (n\ge1) .
$$
In particular $\lambda_n\ne0$, and telescoping $\lambda_n=h_n/h_{n-1}$ with
\eqref{eq:normHankel} gives the norms and the Hankel determinant directly
from the recurrence coefficients,
\begin{equation}\label{eq:HfromLambda}
h_n=h_0\prod_{k=1}^{n}\lambda_k,
\qquad
H_n=\prod_{k=0}^{n-1}h_k=\mu_0^{\,n}\prod_{k=1}^{n-1}\lambda_k^{\,n-k}.
\end{equation}
Conversely, \emph{Favard's theorem} states that any monic sequence defined
by a recurrence of the shape \eqref{eq:ttr} with all $\lambda_n\ne0$ is
orthogonal for a (unique up to scale) quasi-definite functional. So the
data $(c_n,\lambda_n)$ and the moment data $(\mu_k)$ determine each other.

\subsection{Continued fractions: the J-fraction and S-fraction}\label{ssec:cf}

The translation between moments and recurrence coefficients is most
compact as a continued fraction. The \emph{Jacobi continued fraction}
(J-fraction) of $\mathcal L$ is
\begin{equation}\label{eq:Jfrac}
\sum_{k\ge0}\mu_k\,x^k
=\cfrac{\mu_0}{1-c_0x-\cfrac{\lambda_1x^2}{1-c_1x-\cfrac{\lambda_2x^2}{1-\ddots}}},
\end{equation}
where $c_n,\lambda_n$ are \emph{exactly} the recurrence coefficients
of~\eqref{eq:ttr}; this is the theorem of Stieltjes
\cite{Wall1948,Flajolet1980}, and with
\eqref{eq:HfromLambda} it is the standard route to closed-form Hankel
determinants \cite{Heilermann1846}. Combinatorially $\mu_n$ is the generating polynomial of
weighted Motzkin paths of length $n$, a level step at height $k$ weighted
$c_k$ and a fall from height $k$ (paired with the matching rise) weighted
$\lambda_k$ \cite{Flajolet1980,Viennot1983}.

When $\mu_0=1$, the series often admits the narrower \emph{Stieltjes
continued fraction} (S-fraction):
\begin{equation}\label{eq:Sfrac}
\sum_{k\ge0}\mu_k\,x^k
=\cfrac{1}{1-\cfrac{b_1x}{1-\cfrac{b_2x}{1-\cfrac{b_3x}{1-\ddots}}}},
\end{equation}
with coefficients $b_1,b_2,\dots$; here $\mu_n$ counts weighted Dyck-type
paths. 

\subsection{The even contraction $S\to J$}\label{ssec:contraction}

The S-fraction \eqref{eq:Sfrac} and the J-fraction \eqref{eq:Jfrac} are
\emph{different} continued fractions --- one has only first-order terms
$b_jx$, the other both $c_nx$ and $\lambda_nx^2$ --- yet they can expand the
\emph{same} power series. Indeed a series given by an S-fraction also has a
J-fraction: the \emph{even contraction} of \eqref{eq:Sfrac} is, by definition,
the continued fraction whose sequence of convergents is the even-indexed
subsequence $\mathcal C_0,\mathcal C_2,\mathcal C_4,\dots$ of the convergents
$\mathcal C_0,\mathcal C_1,\mathcal C_2,\dots$ of
\eqref{eq:Sfrac}; it turns out to be a J-fraction. The $b_j$ are therefore not
themselves recurrence coefficients, but determine the $c_n,\lambda_n$ through
this contraction, which yields

$$
\sum_{k\ge0}\mu_k\,x^k
=\cfrac{1}{1-b_1 x-\cfrac{(b_1b_2)x^2}{1-(b_2+b_3)x-\cfrac{(b_3b_4)x^2}{1-\ddots}}},
$$

\begin{equation}\label{eq:evencontraction}
c_0=b_1,\qquad
c_n=b_{2n}+b_{2n+1}\ \ (n\ge1),\qquad
\lambda_n=b_{2n-1}\,b_{2n}\ \ (n\ge1),
\end{equation}
or uniformly $c_n=b_{2n}+b_{2n+1}$, $\lambda_n=b_{2n-1}b_{2n}$ with the
convention $b_0=0$ \cite{Wall1948,Flajolet1980}. Equation~\eqref{eq:HfromLambda}
then gives the norms in the clean telescoped form
\begin{equation}\label{eq:Snorm}
h_n=\prod_{k=1}^{n}\lambda_k=\prod_{k=1}^{n}b_{2k-1}b_{2k}=\prod_{j=1}^{2n}b_j,
\qquad
H_n=\prod_{i=0}^{n-1}\prod_{j=1}^{2i}b_j .
\end{equation}

\subsection{The odd contraction}\label{ssec:oddcontraction}
The \emph{odd contraction} of \eqref{eq:Sfrac} is, by definition, the continued
fraction whose sequence of convergents is the odd-indexed subsequence
$\mathcal C_1,\mathcal C_3,\mathcal C_5,\dots$ of the convergents
$\mathcal C_0,\mathcal C_1,\mathcal C_2,\dots$ of \eqref{eq:Sfrac};
it yields
$$
\sum_{k\ge0}\mu_k\,x^k
=1+\cfrac{b_1x}{1-(b_1+b_2) x-\cfrac{(b_2b_3)x^2}{1-(b_3+b_4)x-\cfrac{(b_4b_5)x^2}{1-\ddots}}},
$$
which is not a J-fraction. We observe, however, that the shifted moment sequence does have a J-fraction \cite{Wall1948,Flajolet1980},
$$
\sum_{k\ge0}\mu_{k+1}\,x^k
=\cfrac{b_1}{1-(b_1+b_2) x-\cfrac{(b_2b_3)x^2}{1-(b_3+b_4)x-\cfrac{(b_4b_5)x^2}{1-\ddots}}},
$$
with
\begin{equation}\label{eq:oddcontraction}
\mu_0'=b_1,\qquad
c_n'=b_{2n+1}+b_{2n+2}\ \ (n\ge0),\qquad
\lambda_n'=b_{2n}\,b_{2n+1}\ \ (n\ge1).
\end{equation}
The shifted moment sequence $(\mu_{k+1})_{k\ge0}$ is the \emph{Christoffel
transform} of $\mathcal L$ by the variable $y$: if $\mathcal L'[P]:=\mathcal
L[y\,P]$ denotes the functional obtained by multiplying $\mathcal L$ by $y$,
then $\mathcal L'[y^k]=\mathcal L[y^{k+1}]=\mu_{k+1}$. Thus the odd contraction
is exactly the passage from the $S$-fraction of $\mathcal L$ to the $J$-fraction
of its Christoffel transform $\mathcal L'$, and the squared norms of the latter
telescope to
\begin{equation}\label{eq:Snormodd}
h_n'=h_0'\prod_{k=1}^{n}\lambda_k'=b_1\prod_{k=1}^{n}b_{2k}b_{2k+1}=\prod_{j=1}^{2n+1}b_j,
\qquad
H_n'=\prod_{i=0}^{n-1}\prod_{j=1}^{2i+1}b_j ,
\end{equation}
where the factor $h_0'=\mu_0'=b_1$ must be kept since the Christoffel sequence is
not normalised ($\mu_0'=b_1\ne1$), unlike the even case \eqref{eq:Snorm}. The
Christoffel transform recurs in the shifted determinants of
Sections~\ref{sec:shift}, \ref{sec:cosshift}, \ref{sec:dblshift}
and~\ref{sec:besseldshift}, where the moment sequence is advanced by one index,
$a_k\mapsto a_{k+1}$.

\section{Six methods for the dilated Hankel determinant}\label{sec:methods}

Six methods for $\HH_n$ are developed in this paper. Four of them run through
it, listed here from the most elementary to the most structured; each evaluates
the dilated determinant under a
different structural hypothesis on $\mathbf a$, and each has a boundary
beyond which the product form fails; Subsection~\ref{ssec:methodtable}
collects, in tabular form, the hypotheses, the collapse conditions and the
applications of all the methods.
\begin{itemize}
\item[$\M1$.] The \emph{Vandermonde reduction} (Section~\ref{sec:vandermonde})
factors each row and uses no functional at all; it applies to the
sequences whose term ratio has degree $\le1$ (the Beta family) and, through
the dilation, to the even orders of the multiplicative Bessel family
(Section~\ref{sec:xbesseleven}), whose \emph{even} moments have a
hypergeometric term ratio of degree $(2,2)$.
\item[$\M2$.] The \emph{biorthogonal reduction} (Section~\ref{sec:biotho}) splits
$\mathbf a$ into its even and odd functionals $\mathcal S,\mathcal T$ and reduces
$\HH_n$ to a determinant of connection coefficients; it is exact whenever both are
quasi-definite, and collapses to a product when the connection coefficients are
themselves explicit --- notably when $\mathcal S$ and $\mathcal T$ belong to a
single classical family and differ in one parameter, so that the connection
array is a single hypergeometric term (see \emph{The collapse mechanism} at the
end of Section~\ref{sec:biotho}).
\item[$\M3$.] The \emph{one-functional reduction} (Section~\ref{ssec:onefunc}),
the mechanism of Cigler and Krattenthaler
\cite{Cigler2021Krattenthaler,Krattenthaler2023}, uses the orthogonal
polynomials of the single functional $\EE$ on $z$; the Gaussian is the only
Appell family for which it collapses.
\item[$\M4$.] The \emph{divisor method}, with the dilated Cauchy--Binet lemma
(Section~\ref{sec:divisor}), treats $\HH_n(f_\theta)$ as a polynomial in a
parameter and locates its zeros at the degenerations of $f_\theta$.
\end{itemize}
Methods $\M1$ and $\M4$ are elementary; $\M2$ and $\M3$ rest on the
orthogonal-polynomial preliminaries of Section~\ref{sec:prelim}. We tag results
by the method used throughout the paper.

The remaining two methods are of a more specialised character, each used only
once rather than running through the paper, and are tagged accordingly.
\begin{itemize}
\item[$\M5$.] The \emph{contiguous-relation method} (Section~\ref{sec:alg})
applies when neither moment functional is classical: it relates $\HH_n$ at a
parameter $s$ to $\HH_n$ at $s-2$ by an elementary column operation, solves the
resulting relation by a periodicity argument, and fixes the constant at a
single special value. No orthogonal polynomials are used.
\item[$\M6$.] The \emph{rank-one/matrix-determinant-lemma method}
(Section~\ref{sec:runkone}, Remark~\ref{rem:rankone}) applies to a rank-one
perturbation of a family already evaluated by another method: the perturbed
determinant is the unperturbed one times an explicit scalar, computed by the
matrix-determinant lemma rather than by the divisor method $\M4$.
\end{itemize}

\subsection{Row factorisation and the Vandermonde reduction
($\M1$)}\label{sec:vandermonde}

The most elementary reduction uses no functional, no positivity and no continued
fraction. It evaluates the \emph{generalised} Hankel determinant
$\det(a_{x_i+j})$ with \emph{arbitrary} row indices $x_0,x_1,\dots,x_{n-1}$ ---
the dilated determinant $\HH_n$ being the case $x_i=2i$, and the $r$-step
Hankel determinant $\det(a_{ri+j})$ the case $x_i=ri$ --- whenever the rows of the
matrix separate into a row-dependent prefactor times a polynomial in the row
index.

\begin{Lemma}[Vandermonde reduction]\label{lem:vdm}
Let $(a_m)$ be a sequence over a field $K$, let $n\ge1$, and suppose there are
a scalar function $x\mapsto w(x)$ and polynomials $Q_0,\dots,Q_{n-1}\in K[x]$
with $\deg Q_j\le n-1$ such that
\begin{equation}\label{eq:rowfact}
a_{x+j}=w(x)\,Q_j(x)\qquad(0\le j\le n-1)
\end{equation}
for every row index $x$ used below. Then, for any $x_0,\dots,x_{n-1}$,
\begin{equation}\label{eq:vdm}
\det\bigl(a_{x_i+j}\bigr)_{0\le i,j\le n-1}
=\det M\;\prod_{0\le i<j\le n-1}(x_j-x_i)\;\prod_{i=0}^{n-1}w(x_i),
\end{equation}
where $M=(m_{kj})_{0\le k,j\le n-1}$ is the coefficient matrix of the $Q_j$,
$Q_j(x)=\sum_{k=0}^{n-1}m_{kj}\,x^k$. The constant $\det M$ is independent of
the $x_i$; since \eqref{eq:vdm} is a polynomial identity in $x_0,\dots,x_{n-1}$,
it may be evaluated at any convenient values to determine $\det M$. In
particular, if $\deg Q_j=j$ with leading coefficient $\ell_j$ for each $j$, then
$M$ is triangular and $\det M=\prod_{j=0}^{n-1}\ell_j$.
\end{Lemma}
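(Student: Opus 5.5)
The plan is to factor the matrix $A=(a_{x_i+j})_{0\le i,j\le n-1}$ as a product of three matrices and take determinants. By hypothesis \eqref{eq:rowfact}, the $(i,j)$ entry is
$$
w(x_i)\,Q_j(x_i)=w(x_i)\sum_{k=0}^{n-1}x_i^k\,m_{kj}.
$$
Hence
$$
A=D\,V\,M,\qquad D=\operatorname{diag}\bigl(w(x_0),\dots,w(x_{n-1})\bigr),\quad V=\bigl(x_i^k\bigr)_{0\le i,k\le n-1}.
$$
Here $V$ is a square $n\times n$ matrix precisely because $\deg Q_j\le n-1$; this is the only point where the degree bound enters. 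Multiplicativity of the determinant then gives
$$
\det A=\prod_i w(x_i)\cdot\det V\cdot\det M.
$$
The classical Vandermonde evaluation $\det V=\prod_{i<j}(x_j-x_i)$ yields \eqref{eq:vdm}.

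For the remaining assertions, note that $M$ is built from the coefficients of the $Q_j$ alone, so $\det M$ does not depend on the $x_i$. Since $\det A$ equals $\det M$ times an explicit polynomial in the $x_i$, one may solve for $\det M$ at any admissible choice of distinct row indices where the other factors are nonzero. I will phrase this carefully: the identity holds for whatever row indices the hypothesis covers, and "evaluate at convenient values" means choosing such indices, e.g.\ $x_i=i$.

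Finally, if $\deg Q_j=j$ with leading coefficient $\ell_j$, then $m_{kj}=0$ for $k>j$. So $M$ is upper triangular with diagonal entries $m_{jj}=\ell_j$, and $\det M=\prod_j\ell_j$.

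There is no real obstacle. The one point needing care is that the factorisation is only valid for row indices $x$ at which \eqref{eq:rowfact} holds, which I will state explicitly rather than claim a formal polynomial identity beyond that range.
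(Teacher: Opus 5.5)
Your factorisation $A=D\,V\,M$, followed by multiplicativity of the determinant and the Vandermonde evaluation, is exactly the paper's proof of \eqref{eq:vdm}. Your treatment of the triangular case is also correct.

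What needs fixing is the closing caveat. It is not merely cautious: it discards what the ``convenient values'' clause is for. The polynomial identity the lemma refers to is the $w$-free one,
\[
\det\bigl(Q_j(u_i)\bigr)_{0\le i,j\le n-1}=\det\bigl(u_i^{\,k}\bigr)_{0\le i,k\le n-1}\,\det M=\det M\prod_{0\le i<j\le n-1}(u_j-u_i).
\]
It holds for \emph{arbitrary} $u_0,\dots,u_{n-1}$, whether in $K$, in an extension of $K$, or as indeterminates. The reason is that $\bigl(Q_j(u_i)\bigr)=\bigl(u_i^{\,k}\bigr)M$ is just the definition of the $m_{kj}$ and does not use \eqref{eq:rowfact} at all. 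Hypothesis \eqref{eq:rowfact} is needed only to link this alternant to $\det(a_{x_i+j})$ at genuine row indices.

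Restricting the evaluation to admissible row indices such as $x_i=i$ would make determining $\det M$ circular, since you would already need $\det(a_{i+j})$. The paper's applications instead evaluate the alternant at nodes that are not row indices and are chosen to make $\bigl(Q_j(u_i)\bigr)$ triangular. Examples are $u_i=1-\beta-i$ in Step~3 of Theorem~\ref{thm:beta}, and the interlaced nodes $-\tfrac12-r$, $-\nu-\tfrac12-r$ in Lemma~\ref{lem:xbe-coeffdet}. Your own factorisation already proves the identity for arbitrary nodes, so state it in that form rather than disclaiming it.
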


\begin{proof}
By \eqref{eq:rowfact} the $(i,j)$ entry is
$a_{x_i+j}=w(x_i)\sum_{k=0}^{n-1}m_{kj}\,x_i^{\,k}$, so the matrix factors as
$\bigl(a_{x_i+j}\bigr)=D\,V\,M$ with $D=\operatorname{diag}\bigl(w(x_i)\bigr)$,
$V=\bigl(x_i^{\,k}\bigr)_{0\le i,k\le n-1}$ and $M=(m_{kj})$. Taking
determinants and using the Vandermonde evaluation
$\det V=\prod_{i<j}(x_j-x_i)$ gives \eqref{eq:vdm}.
\end{proof}

The reduction \eqref{eq:vdm} is classical determinant calculus: it is the
mechanism behind Krattenthaler's determinant lemma
\cite[Lemma~2.2]{Krattenthaler1990}, \cite{Krattenthaler1998}.  What the
present section adds is the delimitation of its exact range --- the
term-ratio boundary of degree at most one \eqref{eq:degone} below --- and the
even-index escape that reaches the multiplicative Bessel family of
Section~\ref{sec:xbesseleven}.

The factorisation \eqref{eq:rowfact} is available precisely for the sequences
whose term ratio is rational of degree at most one. Indeed, if
$a_{m+1}/a_m=R(m)$ is rational with numerator and denominator of degrees $p$
and $q$, then $a_{x+j}/a_x=\prod_{l=0}^{j-1}R(x+l)$; clearing the common
denominator over the shifts $l=0,\dots,n-2$ leaves
$a_{x+j}=w(x)\,Q_j(x)$ with $\deg Q_j=p\,j+q\,(n-1-j)$, which is $\le n-1$ for
all $j<n$ iff $p\le1$ and $q\le1$, i.e.
\begin{equation}\label{eq:degone}
\frac{a_{m+1}}{a_m}=\rho\,\frac{m+\alpha}{m+\beta}.
\end{equation}
This is the three-parameter \emph{Beta family}: here
$w(x)=a_x/(x+\beta)_{n-1}$ and $Q_j(x)=\rho^j(x+\alpha)_j(x+\beta+j)_{n-1-j}$,
and Lemma~\ref{lem:vdm} yields the closed form, computed in
Section~\ref{sec:beta}, of which the factorial, Catalan and central binomial
determinants are specialisations; the triangular case $\deg Q_j=j$ is the
degeneration $q=0$ (denominator absent). Conversely, a term ratio of degree
$\ge2$ inflates some $\deg Q_j$ beyond $n-1$, the collapse onto the Vandermonde
determinant fails, and the determinants acquire large sporadic prime factors
(Section~\ref{sec:beta}). One escape from this boundary exists and is used
once: in the dilated determinant only \emph{even} row indices occur, so
\eqref{eq:rowfact} is needed only in the contracted row variable $i$
(with $x_i=i$), where a hypergeometric term ratio of degree $(2,2)$ on the
even-indexed half of $\mathbf a$ still produces polynomials of admissible
degree --- fitting the bound $n-1$ exactly when $n$ is even; this closes
the multiplicative Bessel family of Section~\ref{sec:xbesseleven}.

\subsection{The biorthogonal reduction ($\M2$)}\label{sec:biotho}

Let $\mathbf a=(a_n)_{n\ge0}$ be a sequence over a field, split into its even and
odd parts. We reduce $\HH_n=\det(a_{2i+j})_{0\le i,j\le n-1}$ to a determinant of
connection coefficients (Lemma~\ref{lem:bindetGene}) under the sole hypothesis
that its even and odd parts are quasi-definite moment sequences; no
continued-fraction assumption is needed at this stage, and the closed-form
evaluation follows once the data are specialised
(Proposition~\ref{prop:family}). Throughout write
$\bar n=\lceil n/2\rceil$ and $\underline n=\lfloor n/2\rfloor$, so $\bar n+\underline n=n$ and
$\bar n\in\{\underline n,\underline n+1\}$.

Define two linear functionals on $\QQ[y]$ by
$$
\mathcal S[y^k]=a_{2k},
\qquad
\mathcal T[y^k]=a_{2k+1}.
$$
Every polynomial $\phi(z)$ decomposes as $\phi(z)=G(z^2)+z\,K(z^2)$, and
the functional $\EE[z^m]=a_m$ satisfies
$\EE[\phi]=\mathcal S[G]+\mathcal T[K]$. The entries of our dilated matrix are
$a_{2i+j}=\EE[z^{2i}\cdot z^j]$; in other words, writing $y=z^2$,
\begin{equation}\label{eq:KmatrixGene}
\HH_n=\det\bigl(\EE[y^i\cdot \chi_j]\bigr)_{0\le i,j\le n-1},
\qquad
\chi_{2l}=y^l,\quad \chi_{2m+1}=z\,y^m .
\end{equation}

We assume throughout this section that $\mathcal S$ and $\mathcal T$ are
\emph{quasi-definite}, so each carries its monic orthogonal polynomials---$P_i$
for $\mathcal S$ and $Q_m$ for $\mathcal T$---with three-term recurrences
\begin{equation}\label{eq:STrec}
P_{i+1}=(y-c^S_i)\,P_i-\lambda^S_i\,P_{i-1},
\qquad
Q_{m+1}=(y-c^T_m)\,Q_m-\lambda^T_m\,Q_{m-1}
\qquad(P_0=Q_0=1),
\end{equation}
and nonzero squared norms $h^S_i=\mathcal S[P_i^2]$ and $h^T_m=\mathcal T[Q_m^2]$.
Equivalently, the generating functions $\sum_k a_{2k}x^k$ and
$\sum_k a_{2k+1}x^k$ have $J$-fractions with coefficients
$c^S_i,\lambda^S_i$ and $c^T_m,\lambda^T_m$ respectively. 

\begin{Lemma}[one-sided reduction]\label{lem:onesided}
For this reduction only the \emph{even} functional need be quasi-definite. Let
$P_0,P_1,\dots$ be the monic $\mathcal S$-orthogonal polynomials, with squared
norms $h^S_l=\mathcal S[P_l^2]$, and let $\mathcal T$ be an \emph{arbitrary}
linear functional. Then
\begin{equation}\label{eq:onesided}
\HH_n=(-1)^{\binom{\bar n}{2}}
\Bigl(\prod_{l=0}^{\bar n-1}h^S_l\Bigr)\,
\det\bigl(\mathcal T[P_{\bar n+r}\,y^{m}]\bigr)_{0\le r,m\le \underline n-1}.
\end{equation}
\end{Lemma}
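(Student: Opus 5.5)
The plan is to reorder the columns of the matrix in \eqref{eq:KmatrixGene} so that the even-type columns come first, and then to orthogonalise the rows against $\mathcal S$ by Lemma~\ref{lem:family}. The result is a block-triangular matrix whose determinant splits into the product of norms times the $\mathcal T$-connection determinant.

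\emph{Column reordering.} In \eqref{eq:KmatrixGene} the columns are $\chi_0,\chi_1,\dots,\chi_{n-1}$. The even-type columns $\chi_{2l}=y^l$ have $0\le l\le\bar n-1$, and the odd-type columns $\chi_{2m+1}=zy^m$ have $0\le m\le\underline n-1$. Reorder them as $(y^0,\dots,y^{\bar n-1},\,zy^0,\dots,zy^{\underline n-1})$.

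The sign of this permutation is $(-1)$ to the number of inversions, i.e.\ of pairs (odd column, even column) with the odd one to the left. The odd columns to the left of $\chi_{2l}$ are $\chi_1,\chi_3,\dots,\chi_{2l-1}$, which is $l$ of them. All of them exist because $l\le\bar n-1\le\underline n$. The inversion count is therefore $\sum_{l=0}^{\bar n-1}l=\binom{\bar n}{2}$, which produces the sign in \eqref{eq:onesided}.

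After reordering, the entry in row $i$ is $\mathcal S[y^{i}y^{l}]$ in the even block and $\mathcal T[y^{i}y^m]$ in the odd block. This uses $\EE[y^i\cdot y^l]=\mathcal S[y^{i+l}]$ and $\EE[y^i\cdot zy^m]=\mathcal T[y^{i+m}]$, both from the decomposition $\EE[\phi]=\mathcal S[G]+\mathcal T[K]$.

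\emph{Row recombination.} Next replace the row family $y^0,\dots,y^{n-1}$ by $P_0,\dots,P_{n-1}$. This uses only the quasi-definiteness of $\mathcal S$, and since each $P_i$ is monic of degree $i$ it is a unitriangular recombination. The row operation acts simultaneously on both blocks, because each entry is linear in the row polynomial: the even block entry becomes $\mathcal S[P_i\,y^l]$ and the odd block entry becomes $\mathcal T[P_i\,y^m]$. By Lemma~\ref{lem:family}, or simply by elementary row operations, the determinant is unchanged.

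\emph{Block-triangular evaluation.} By orthogonality, $\mathcal S[P_i y^l]=0$ whenever $l<i$, and $\mathcal S[P_iy^i]=h^S_i$. Consequently:
\begin{itemize}
\item the top-left $\bar n\times\bar n$ block is upper triangular with diagonal $h^S_0,\dots,h^S_{\bar n-1}$;
\item the bottom-left block, with rows $i=\bar n+r\ge\bar n$ and $l\le\bar n-1<i$, vanishes identically.
\end{itemize}
The matrix is thus block upper triangular. Its determinant is $\prod_{l<\bar n}h^S_l$ times the determinant of the bottom-right block, $\det(\mathcal T[P_{\bar n+r}y^m])_{0\le r,m\le\underline n-1}$. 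The top-right block is irrelevant.

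Combining the sign, the norms and this block gives \eqref{eq:onesided}. No hypothesis on $\mathcal T$ enters anywhere. The only delicate point is the inversion count for the sign, in particular checking the edge case $\bar n=\underline n+1$ (odd $n$), where the last even column $\chi_{n-1}$ has all $\underline n$ odd columns to its left. The formula $\sum l$ still gives $\binom{\bar n}{2}$ in that case.
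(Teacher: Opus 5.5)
Your proof is correct and follows the paper's argument. It uses the same column reordering with $\binom{\bar n}{2}$ inversions, the same unitriangular replacement of the rows $y^i$ by $P_i$ (Lemma~\ref{lem:family}), and the same vanishing of the even-column entries on the rows $i\ge\bar n$. The only cosmetic difference is that the paper also replaces the even columns $y^l$ by $P_l$, which makes the top-left block diagonal before its Laplace expansion, whereas you keep those columns as monomials and use that $\bigl(\mathcal S[P_iy^l]\bigr)$ is upper triangular with diagonal $h^S_i$; both are equally valid.
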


\begin{proof}
Start from \eqref{eq:KmatrixGene}. Replace the row family $(y^i)_{i<n}$ by
$(P_i)_{i<n}$ and the even-column family $(y^l)_{l<\bar n}$ by $(P_l)_{l<\bar n}$ --- each
monic of the same degree, hence a unitriangular recombination, which leaves the
determinant unchanged by Lemma~\ref{lem:family}; the odd columns $(z\,y^m)_{m<\underline n}$ are left untouched. The
$(i,2l)$ entry becomes $\mathcal S[P_iP_l]=\delta_{il}\,h^S_l$ and the
$(i,2m+1)$ entry becomes $\mathcal T[P_i\,y^m]$. Reorder the columns so that the
$\bar n$ even indices $0,2,\dots,2\bar n-2$ precede the $\underline n$ odd indices
$1,3,\dots,2\underline n-1$; this permutation has $\sum_{l=0}^{\bar n-1}\min(l,\underline n)=\binom{\bar n}2$
inversions (as $l\le \bar n-1\le \underline n$), hence sign $(-1)^{\binom{\bar n}2}$. In the reordered
matrix the first $\bar n$ columns are diagonal on the rows $i<\bar n$ and vanish on the
rows $i\ge \bar n$, so a Laplace expansion along them contributes
$\prod_{l=0}^{\bar n-1}h^S_l$ and leaves the complementary rows $i=\bar n,\dots,n-1=\bar n+r$
against the odd columns, giving \eqref{eq:onesided}. No property of $\mathcal T$
was used.
\end{proof}

When $\mathcal T$ is also quasi-definite it carries its own monic orthogonal
polynomials $Q_m$, and since $\{Q_0,\dots,Q_i\}$ is a basis of $\QQ[y]_{\le i}$ we
may expand the $\mathcal S$-orthogonal $P_i$ in the $\mathcal T$-orthogonal basis,
\begin{equation}\label{eq:connexp}
P_i=\sum_{m=0}^{i}\kappa_{i,m}\,Q_m ;
\end{equation}
the $\kappa_{i,m}$ are the \emph{connection coefficients}, lower-triangular in
$(i,m)$ with $\kappa_{i,i}=1$ (both families monic). Applying the projection
\eqref{eq:fourier} for $\mathcal T$ isolates a single one,
\begin{equation}\label{eq:connproj}
\mathcal T[P_i\,Q_m]
=\sum_{m'}\kappa_{i,m'}\,\mathcal T[Q_{m'}Q_m]
=\kappa_{i,m}\,\mathcal T[Q_m^2]
=\kappa_{i,m}\,h^T_m,
\qquad
\kappa_{i,m}=\frac{\mathcal T[P_i\,Q_m]}{h^T_m}.
\end{equation}
One further column operation turns \eqref{eq:onesided} into a determinant of
these $\kappa_{i,m}$.

\begin{Lemma}[connection coefficients]\label{lem:connrec}
Write $P_i=\sum_{m=0}^{i}\kappa_{i,m}\,Q_m$ for the expansion of the
$\mathcal S$-orthogonal polynomials in the $\mathcal T$-orthogonal basis.
Then
$\kappa_{i,i}=1$, while $\kappa_{i,m}=0$
for $m>i$ (and we set $\kappa_{i,m}=0$ for $m<0$), and
\begin{equation}\label{eq:kapparec}
\kappa_{i+1,m}
=\kappa_{i,m-1}+(c^T_m-c^S_i)\,\kappa_{i,m}
+\lambda^T_{m+1}\,\kappa_{i,m+1}-\lambda^S_i\,\kappa_{i-1,m}.
\end{equation}
\end{Lemma}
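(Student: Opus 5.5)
The first assertions follow directly from the definitions. Both $P_i$ and $Q_m$ are monic with $\deg P_i=i$ and $\deg Q_m=m$, so $\{Q_0,\dots,Q_i\}$ is a basis of $\QQ[y]_{\le i}$. The expansion \eqref{eq:connexp} therefore exists and is unique, it involves only $m\le i$, and comparing leading coefficients gives $\kappa_{i,i}=1$. The conventions $\kappa_{i,m}=0$ for $m>i$ and for $m<0$ are consistent with this.

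For the recurrence \eqref{eq:kapparec}, substitute the expansion into the $\mathcal S$-recurrence \eqref{eq:STrec}:
$$P_{i+1}=(y-c^S_i)\sum_{m}\kappa_{i,m}Q_m-\lambda^S_i\sum_m\kappa_{i-1,m}Q_m .$$
Rewrite $yQ_m$ using the $\mathcal T$-recurrence in the form $yQ_m=Q_{m+1}+c^T_mQ_m+\lambda^T_mQ_{m-1}$. This gives
$$yP_i=\sum_m\kappa_{i,m}\bigl(Q_{m+1}+c^T_mQ_m+\lambda^T_mQ_{m-1}\bigr).$$
Next reindex so that each term multiplies $Q_m$: the first term contributes $\kappa_{i,m-1}$, the second $c^T_m\kappa_{i,m}$, and the third $\lambda^T_{m+1}\kappa_{i,m+1}$. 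Adding the remaining contributions $-c^S_i\kappa_{i,m}$ and $-\lambda^S_i\kappa_{i-1,m}$ and collecting the coefficient of $Q_m$ on both sides yields exactly \eqref{eq:kapparec}, by uniqueness of the expansion in the basis $(Q_m)$.

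The only delicate point is the boundary terms, and they are handled by the conventions. We use $Q_{-1}=0$ (with $\lambda^T_0$ irrelevant), $\kappa_{i,m}=0$ outside $0\le m\le i$, and, for $i=0$, $P_{-1}=0$, so the $\lambda^S_0$ term vanishes. The reindexing then produces no spurious terms at $m=0$ or $m=i+1$. As a check, at $m=i+1$ the right side reduces to $\kappa_{i,i}=1$, matching $\kappa_{i+1,i+1}=1$. Alternatively, one could compute $\mathcal T[P_{i+1}Q_m]/h^T_m$ via \eqref{eq:connproj}, but the direct coefficient comparison is simpler and avoids dividing by norms.
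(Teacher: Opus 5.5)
Your proof is correct and follows the paper's argument essentially line for line. Both obtain existence and the normalisation $\kappa_{i,i}=1$ from the monic basis $\{Q_0,\dots,Q_i\}$, then substitute into the $\mathcal S$-recurrence, eliminate $yQ_m$ via the $\mathcal T$-recurrence, and compare $Q_m$-coefficients; your explicit handling of the boundary cases ($Q_{-1}=P_{-1}=0$, and the check at $m=i+1$) is a small addition that the paper leaves implicit.
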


\begin{proof}
Since $\deg P_i=i$ and $\{Q_0,\ldots,Q_i\}$ is a basis of the polynomials of
degree $\le i$, the expansion $P_i=\sum_{m\le i}\kappa_{i,m}Q_m$ exists; comparing
leading coefficients (both monic) gives $\kappa_{i,i}=1$, while $\kappa_{i,m}=0$
for $m>i$ (and we set $\kappa_{i,m}=0$ for $m<0$).

For the recurrence, substitute $P_i=\sum_m\kappa_{i,m}Q_m$ into
$P_{i+1}=(y-c^S_i)P_i-\lambda^S_iP_{i-1}$ from \eqref{eq:STrec}, and expand the
$y Q_m$ that appears using the $Q$-recurrence
$y\,Q_m=Q_{m+1}+c^T_m Q_m+\lambda^T_m Q_{m-1}$:
\begin{align*}
\sum_m\kappa_{i+1,m}\,Q_m
=\sum_m\kappa_{i,m}\bigl(Q_{m+1}+c^T_m Q_m+\lambda^T_m Q_{m-1}\bigr)
 -c^S_i\sum_m\kappa_{i,m}\,Q_m-\lambda^S_i\sum_m\kappa_{i-1,m}\,Q_m .
\end{align*}
Now read off the coefficient of $Q_m$ on both sides: the term $Q_{m+1}$
contributes through $\kappa_{i,m-1}$, and the term $\lambda^T_m Q_{m-1}$
contributes (at index $m{+}1$) through $\lambda^T_{m+1}\kappa_{i,m+1}$. This gives
\eqref{eq:kapparec},
$$
\kappa_{i+1,m}
=\kappa_{i,m-1}+(c^T_m-c^S_i)\,\kappa_{i,m}
+\lambda^T_{m+1}\,\kappa_{i,m+1}-\lambda^S_i\,\kappa_{i-1,m},
$$
which together with $\kappa_{0,0}=1$ determines all $\kappa_{i,m}$.
\end{proof}

\begin{Lemma}[determinant reduction]\label{lem:bindetGene}
With the squared norms $h^S_l=\mathcal S[P_l^2]$ and
$h^T_m=\mathcal T[Q_m^2]$,
$$
\HH_n=(-1)^{\binom{\bar n}{2}}
\Bigl(\prod_{l=0}^{\bar n-1}h^S_l\Bigr)
\Bigl(\prod_{m=0}^{\underline n-1}h^T_m\Bigr)
\det\bigl(\kappa_{\bar n+r,\,m}\bigr)_{0\le r,m\le \underline n-1}.
$$
\end{Lemma}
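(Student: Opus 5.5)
I would start from the one-sided reduction, Lemma~\ref{lem:onesided}, which already gives
$$
\HH_n=(-1)^{\binom{\bar n}{2}}\Bigl(\prod_{l=0}^{\bar n-1}h^S_l\Bigr)\det\bigl(\mathcal T[P_{\bar n+r}\,y^{m}]\bigr)_{0\le r,m\le \underline n-1}.
$$
It then remains to show that the residual $\underline n\times\underline n$ determinant equals $\prod_{m<\underline n}h^T_m\cdot\det(\kappa_{\bar n+r,m})$. The step that does this is a second application of Lemma~\ref{lem:family}, now on the column side only.

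The columns are indexed by the monomials $y^m$, $0\le m\le\underline n-1$. Since $\mathcal T$ is quasi-definite, the monic orthogonal polynomials $Q_m$ exist, and $(Q_m)_{m<\underline n}$ is a unitriangular recombination of $(y^m)_{m<\underline n}$. The row family $(P_{\bar n+r})_r$ is left fixed, i.e.\ given the trivial recombination. Lemma~\ref{lem:family} then gives
$$
\det\bigl(\mathcal T[P_{\bar n+r}\,y^m]\bigr)=\det\bigl(\mathcal T[P_{\bar n+r}\,Q_m]\bigr).
$$
This equality is exactly the column operation alluded to before Lemma~\ref{lem:connrec}.

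Next I would substitute the projection formula \eqref{eq:connproj}, $\mathcal T[P_iQ_m]=\kappa_{i,m}h^T_m$, which follows from the connection expansion \eqref{eq:connexp} and the $\mathcal T$-orthogonality of the $Q_m$. Column $m$ of the new matrix is then $h^T_m$ times the column $(\kappa_{\bar n+r,m})_r$. Pulling these scalars out of the columns produces the factor $\prod_{m=0}^{\underline n-1}h^T_m$ and leaves $\det(\kappa_{\bar n+r,m})_{0\le r,m\le\underline n-1}$. Combined with the sign and the $h^S$ product inherited from Lemma~\ref{lem:onesided}, this is the stated identity.

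I do not expect a real obstacle; the points to check are bookkeeping.
\begin{itemize}
\item Lemma~\ref{lem:family} must be applied to a matrix whose rows and columns are both given by $\mathcal T$ paired with polynomial families. It is, with $R_r=P_{\bar n+r}$ and $C_m=y^m$.
\item The index $i=\bar n+r$ can exceed $m$, so the $\kappa_{i,m}$ appearing here are genuine below-diagonal coefficients, not the unit diagonal ones. This is harmless, since \eqref{eq:connproj} holds for all $m\le i$, and every $m\le\underline n-1$ satisfies $m<\bar n\le i$.
\end{itemize}
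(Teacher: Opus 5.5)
Your proposal is correct and follows the paper's proof exactly: start from the one-sided reduction of Lemma~\ref{lem:onesided}, and replace the column monomials $y^m$ by the unitriangular family $Q_m$ via Lemma~\ref{lem:family}. Then use the projection \eqref{eq:connproj} to write each entry as $\kappa_{\bar n+r,m}h^T_m$, and pull $h^T_m$ out of each column.
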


\begin{proof}
Since $\mathcal T$ is now quasi-definite, carry out one further column operation
in the one-sided reduction \eqref{eq:onesided}: expand each odd-column monomial
in the $\mathcal T$-orthogonal basis, $y^m=Q_m+(\text{lower-degree }Q\text{'s})$,
a unitriangular recombination of the odd columns that leaves the determinant
unchanged (Lemma~\ref{lem:family}) and turns the entry $\mathcal T[P_{\bar n+r}\,y^m]$ into
$\mathcal T[P_{\bar n+r}\,Q_m]$. By the projection~\eqref{eq:connproj} this equals
$\kappa_{\bar n+r,m}\,h^T_m$, so
$$
\HH_n=(-1)^{\binom{\bar n}{2}}
\Bigl(\prod_{l=0}^{\bar n-1}h^S_l\Bigr)\cdot
\det\Bigl(\kappa_{\bar n+r,\,m}\,h^T_m\Bigr)_{0\le r,m\le \underline n-1}.
$$
Pulling the column factor $h^T_m$ out of column $m$ gives the claim.
\end{proof}

\medskip
\noindent\emph{Specialisation: even and odd $S$-fractions.}\label{sec:sfrac}
The general reduction $\M2$ (Lemma~\ref{lem:bindetGene}) needs only the recurrence
and norm data $(c^S_i,\lambda^S_i,h^S_i)$ and $(c^T_m,\lambda^T_m,h^T_m)$. These
become fully explicit, and the norms telescope into products, as soon as the
even and odd parts of $\mathbf a$ admit Stieltjes $S$-fractions:
\begin{equation}\label{eq:Fsfrac}
\sum_{k\ge 0}a_{2k}x^k
=\cfrac{1}{1-\cfrac{u_1\,x}{1-\cfrac{u_2\,x}{1-\ddots}}},
\qquad
\sum_{k\ge 0}a_{2k+1}x^k
=\cfrac{1}{1-\cfrac{v_1\,x}{1-\cfrac{v_2\,x}{1-\ddots}}}.
\end{equation}
The passage from these $S$-fractions to the recurrence coefficients and norms of
Section~\ref{sec:biotho} is the even contraction (Section~\ref{ssec:contraction}).

\begin{Lemma}[continued fractions, recurrences, norms]\label{lem:cfGene}
The monic orthogonal polynomials $P_i$ of $\mathcal S$ and $Q_m$ of
$\mathcal T$ satisfy the three-term recurrences
$$
P_{i+1}=(y-c^S_i)\,P_i-\lambda^S_i\,P_{i-1},
\qquad
Q_{m+1}=(y-c^T_m)\,Q_m-\lambda^T_m\,Q_{m-1}
\qquad(P_0=Q_0=1),
$$
where
$$
c^S_i=u_{2i}+u_{2i+1},\quad
\lambda^S_i=u_{2i-1}u_{2i},\qquad
c^T_m=v_{2m}+v_{2m+1},\quad
\lambda^T_m=v_{2m-1}v_{2m},
$$
and orthogonality and norms
$$
\mathcal S[P_iP_l]=\delta_{il}\,\prod_{j=1}^{2i} u_j,
\qquad
\mathcal T[Q_mQ_{m'}]=\delta_{mm'}\,\prod_{j=1}^{2m} v_j.
$$
\end{Lemma}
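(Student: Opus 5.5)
The statement is the even contraction of Section~\ref{ssec:contraction} applied twice, once to $\mathcal S$ and once to $\mathcal T$; no new ingredient is needed. I would treat $\mathcal S$ in detail and then repeat the argument for $\mathcal T$.

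\emph{Step 1: recurrence coefficients.} The moments of $\mathcal S$ are $\mathcal S[y^k]=a_{2k}$, and their ordinary generating function is the first $S$-fraction in \eqref{eq:Fsfrac}, with coefficients $u_1,u_2,\dots$ and $\mu_0=a_0=1$. The even contraction \eqref{eq:evencontraction}, taken with the convention $u_0=0$, turns this into a $J$-fraction of the form \eqref{eq:Jfrac}. Its coefficients are $c_i=u_{2i}+u_{2i+1}$ and $\lambda_i=u_{2i-1}u_{2i}$.

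\emph{Step 2: identify the polynomials.} By Stieltjes' theorem recalled in Section~\ref{ssec:cf}, the coefficients of the $J$-fraction are exactly the recurrence coefficients \eqref{eq:ttr} of the monic orthogonal polynomials of $\mathcal S$. These polynomials are the $P_i$ of \eqref{eq:STrec}. This gives the stated recurrence with $c^S_i$ and $\lambda^S_i$. Quasi-definiteness is automatic when the $u_j$ are nonzero, since then every $\lambda^S_i\ne0$ and Favard's theorem applies. Under the standing hypothesis of the section it is given anyway.

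\emph{Step 3: orthogonality and norms.} Orthogonality $\mathcal S[P_iP_l]=0$ for $i\ne l$ is the defining property of the $P_i$. For the norm, combine \eqref{eq:HfromLambda} with $h_0=\mu_0=1$:
\[
h^S_i=\prod_{k=1}^{i}\lambda^S_k=\prod_{k=1}^{i}u_{2k-1}u_{2k}=\prod_{j=1}^{2i}u_j .
\]
This is precisely \eqref{eq:Snorm}.

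\emph{Step 4: the odd functional.} The same argument applies verbatim to $\mathcal T$. Its moments are $a_{2k+1}$, with $S$-coefficients $v_j$ and normalisation $\mathcal T[1]=a_1=1$. Running Steps 1–3 gives $c^T_m$, $\lambda^T_m$ and $h^T_m=\prod_{j\le 2m}v_j$.

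\emph{Main obstacle.} The only point needing care is normalisation. The $S$-fractions in \eqref{eq:Fsfrac} have constant term $1$, which forces $a_0=a_1=1$. Because of this, $h_0=\mu_0=1$ introduces no extra factor, unlike the Christoffel case \eqref{eq:Snormodd}. If a general $a_1$ were intended, the norm formula for $\mathcal T$ would carry an extra factor $a_1$. I would state the normalisation explicitly in the proof. Apart from that, the proof is just a citation of the contraction identity \eqref{eq:evencontraction}, which the paper already takes from \cite{Wall1948,Flajolet1980}.
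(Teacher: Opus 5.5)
Your proposal is correct and is essentially the paper's own proof. Both apply the even contraction of Section~\ref{ssec:contraction}, together with Stieltjes' identification of the $J$-fraction coefficients and the telescoped norms, to the two $S$-fractions, taking $b=u$ for $\mathcal S$ and $b=v$ for $\mathcal T$. Your observation that the constant term $1$ of the $S$-fractions in \eqref{eq:Fsfrac} forces $a_0=a_1=1$, and hence $h_0=1$ with no extra factor, is correct bookkeeping that the paper leaves implicit.
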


\begin{proof}
The two series in \eqref{eq:Fsfrac} are Stieltjes $S$-fractions with
coefficient sequences $(u_1,u_2,\dots)$ and $(v_1,v_2,\dots)$. The passage
from an $S$-fraction with coefficients $(b_1,b_2,\dots)$ to the monic
three-term recurrence of the associated orthogonal polynomials is the
\emph{even contraction} of Section~\ref{ssec:contraction}, which gives
$c_i=b_{2i}+b_{2i+1}$, $\lambda_i=b_{2i-1}b_{2i}$ (with $b_0=0$) and squared
norms $\prod_{k=1}^{2i}b_k$. Taking $b=u$ for $\mathcal S$ and $b=v$ for
$\mathcal T$ yields the stated coefficients and norms.
\end{proof}

\medskip
\noindent\emph{The collapse mechanism.}
Lemma~\ref{lem:bindetGene} is exact, but it leaves the determinant of the
connection array to be evaluated, and for a generic pair of classical
functionals this array is a balanced ${}_3F_2$-type \emph{double sum}, out of
reach of the determinant calculus. In every case where $\M2$ closes in this
paper, one and the same two-stage mechanism is at work. First, $\mathcal S$
and $\mathcal T$ lie in a \emph{single} classical family and differ in a
\emph{single} parameter; for such a pair the connection coefficients collapse
from a double sum to one hypergeometric term, which splits into a row factor,
a column factor, and a kernel depending only on $i-m$ and (possibly) $i+m$.
Second, the resulting kernel determinant --- which carries the shift $\bar n$
of Lemma~\ref{lem:bindetGene}, hence free integer parameters --- is evaluated
in closed form. The paper contains three instances. For the Euler $(s,t)$
family the two functionals differ in the exponent parameter, the kernel is
the binomial coefficient $\binom{(t-s)/2}{i-m}$, and the shifted binomial
determinant is evaluated by the dual Jacobi--Trudi identity
(Proposition~\ref{prop:family}). For the reciprocal-sine function the two
functionals are Wilson functionals differing in one Wilson parameter, the
kernel is the product of Catalan numbers $C_{i-m-1}\,C_{i+m}$, and the
shifted Catalan determinant is evaluated by Desnanot--Jacobi condensation
(Theorem~\ref{thm:xsin-catalan}). For the Bessel $(s,t)$ family the two
functionals are Jacobi--Gegenbauer functionals differing in one parameter,
the kernel is
$\binom{t-s}{i-m}\big/\bigl((t+2m+1)_{i-m}(s+i+m)_{i-m}\bigr)$ ---
binomial in $i-m$ but carrying the parameters through $i+m$ --- and the
shifted connection determinant is again evaluated by condensation
(Theorem~\ref{thm:bessel-det}). When the two classical functionals do
\emph{not} form such a one-parameter pair, the array remains a genuine double
sum and no product formula is to be expected.

\subsection{A one-functional reduction ($\M3$)}\label{ssec:onefunc}

The biorthogonal reduction of Section~\ref{sec:biotho} splits $\EE$ into its
even and odd parts $\mathcal S,\mathcal T$ and passes to the contracted
variable $y=z^2$. A parallel reduction keeps the single functional
$\EE[z^k]=a_k$ on $z$ and uses \emph{its own} orthogonal polynomials. It needs
$\EE$ itself to be quasi-definite---a hypothesis independent of the
quasi-definiteness of $\mathcal S$ and $\mathcal T$---and it underlies the
Hermite evaluation of the Gaussian family in Section~\ref{sec:gauss}.

Assume $\EE$ is quasi-definite, with monic orthogonal polynomials
$(\pi_m)_{m\ge0}$. By
\eqref{eq:Hdouble}, $\HH_n=\det\bigl(\EE[z^{2i}\cdot z^j]\bigr)_{0\le i,j\le n-1}$.

The reduction collapses to a product as soon as, alongside the columns, the
rows too can be replaced by a one-sidedly orthogonal family. Suppose there are
monic polynomials $\Phi_i$ of degree $2i$, each a unitriangular recombination
of the even monomials $1,z^2,\dots,z^{2i}$, orthogonal to every polynomial of
degree $<i$:
$$
\EE[\Phi_i\,\pi_j]=0\qquad(j<i).
$$
Such a $\Phi_i$ exists and is unique whenever the corresponding Gram minor is
nonzero.

\begin{Lemma}[one-functional reduction]\label{lem:onefunc}
If the even-orthogonal family $(\Phi_i)_{i<n}$ above exists, then
\begin{equation}\label{eq:onefuncprod}
\HH_n=\prod_{i=0}^{n-1}\EE[\Phi_i\,\pi_i].
\end{equation}
\end{Lemma}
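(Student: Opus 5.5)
The plan is to apply Lemma~\ref{lem:family} twice, once to the rows and once to the columns, so that the matrix becomes triangular. Start from $\HH_n=\det\bigl(\EE[z^{2i}\cdot z^j]\bigr)_{0\le i,j\le n-1}$, which is the Gram-type matrix of the row family $R_i=z^{2i}$ and the column family $C_j=z^j$. By hypothesis each $\Phi_i$ is monic of degree $2i$ and lies in the span of $1,z^2,\dots,z^{2i}$, with coefficient $1$ on $z^{2i}$. So $(\Phi_i)_{i<n}$ is a unitriangular recombination of $(z^{2i})_{i<n}$, and replacing the rows leaves the determinant unchanged. Likewise each $\pi_j$ is monic of degree $j$, hence $\pi_j=z^j+\sum_{l<j}q_{jl}z^l$. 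This is a unitriangular recombination of the columns, and again the determinant is unchanged. Lemma~\ref{lem:family} therefore gives
$$\HH_n=\det\bigl(\EE[\Phi_i\,\pi_j]\bigr)_{0\le i,j\le n-1}.$$

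Next, the defining property $\EE[\Phi_i\pi_j]=0$ for $j<i$ says exactly that every entry strictly below the diagonal vanishes. The matrix is therefore upper triangular. By the final clause of Lemma~\ref{lem:family}, its determinant is the product of the diagonal entries $\EE[\Phi_i\pi_i]$, which is \eqref{eq:onefuncprod}.

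The only point needing care is that the recombinations really are unitriangular in the sense of Lemma~\ref{lem:family}. For the rows this holds because $\Phi_i$ involves only the even monomials up to $z^{2i}$, so it is a combination of $R_0,\dots,R_i$ with leading coefficient $1$. No odd monomials $z^{2k+1}$, which are not among the row family, appear. This is guaranteed by the assumption that $\Phi_i$ is a recombination of $1,z^2,\dots,z^{2i}$.

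The quasi-definiteness of $\EE$ is used only to guarantee that the $\pi_j$ exist. The orthogonality of $\pi_j$ itself is never needed beyond $\pi_j$ being monic of degree $j$; the vanishing below the diagonal comes entirely from the one-sided orthogonality of the $\Phi_i$. I expect no real obstacle: the argument is a direct two-sided change of family followed by reading off a triangular determinant.
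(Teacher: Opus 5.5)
Your proof is correct and follows the paper's argument exactly: a two-sided change of family via Lemma~\ref{lem:family} (columns $z^j\to\pi_j$, rows $z^{2i}\to\Phi_i$), after which $\EE[\Phi_i\pi_j]=0$ for $j<i$ makes the matrix upper triangular and the determinant is the product of its diagonal entries. Your side remark is also accurate: in this step only the monicity of the $\pi_j$ is used, not their orthogonality.
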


\begin{proof}
Replace the column family $(z^j)_{j<n}$ by the orthogonal family
$(\pi_j)_{j<n}$ and the row family $(z^{2i})_{i<n}$ by $(\Phi_i)_{i<n}$; both are
unitriangular recombinations, so Lemma~\ref{lem:family} gives
$$\HH_n=\det\bigl(\EE[\Phi_i\,\pi_j]\bigr)_{0\le i,j<n}.$$
By orthogonality $\EE[\Phi_i\,\pi_j]=0$ for $j<i$, so this matrix is upper
triangular and its determinant is the product of the diagonal entries
$\EE[\Phi_i\,\pi_i]$.
\end{proof}

Equation~\eqref{eq:onefuncprod} is the one-functional counterpart of
Lemma~\ref{lem:bindetGene}. Section~\ref{sec:gauss} carries it out for the
Gaussian functional, whose orthogonal polynomials are the Hermite polynomials
and whose $\Phi_i$ has a closed umbral form.
The passage from a Hankel determinant to a triangular product built from the
orthogonal polynomials of a single functional is precisely the mechanism
developed by Cigler and Krattenthaler
\cite{Cigler2021Krattenthaler,Krattenthaler2023}, who reduce the Hankel
determinant of a linear combination of moments of orthogonal polynomials to a
smaller determinant of those polynomials and, as we do throughout, close
several of their evaluations by Desnanot--Jacobi (Dodgson) condensation.

\subsection{The divisor method and the dilated Cauchy--Binet lemma
($\M4$)}\label{sec:divisor}

The reductions $\M1$--$\M3$ evaluate $\HH_n$ from the moment data directly. The
divisor method ($\M4$) instead applies to a \emph{family} $f_\theta$ depending on
a parameter $\theta$: one treats
$\HH_n(f_\theta)$ as a polynomial in $\theta$, bounds its degree, and locates
its zeros at the \emph{degenerate} values of $\theta$ where $f_\theta$ collapses
to a geometric/exponential function or to an even function. At such a value the
moments form an exponential sum, and the dilated determinant is evaluated, to
leading order, by a single Cauchy--Binet expansion into two Vandermonde
determinants. Three general facts drive the method; the case-specific inputs (the
degree bound and the precise orders) are supplied in each application
(Sections~\ref{sec:springer}, \ref{sec:derivative} and~\ref{sec:elliptic}).

The first is a homogeneity, used to rescale and to take limits.

\begin{Lemma}[homogeneity]\label{lem:scale}
For scalars $\beta,c$,\;
$\HH_n\bigl(f(\beta x)\bigr)=\beta^{3\binom n2}\HH_n(f)$ and
$\HH_n\bigl(c\,f\bigr)=c^{\,n}\HH_n(f)$.
\end{Lemma}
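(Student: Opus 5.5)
Both identities come straight from the entries $a_{2i+j}$ of the matrix in \eqref{eq:Hdouble}: one identifies how each transformation acts on the sequence $\mathbf a$, then pulls the resulting factors out of rows and columns by multilinearity.

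For the scaling $f(x)\mapsto f(\beta x)$, the exponential convention $a_m=m!\,[x^m]f$ shows that $f(\beta x)$ has coefficients $\beta^m a_m$. The $(i,j)$ entry of the dilated matrix therefore becomes $\beta^{2i+j}a_{2i+j}=\beta^{2i}\cdot\beta^{j}\cdot a_{2i+j}$. Take the factor $\beta^{2i}$ out of row $i$ and the factor $\beta^{j}$ out of column $j$. The determinant is multiplied by
$$
\prod_{i=0}^{n-1}\beta^{2i}\prod_{j=0}^{n-1}\beta^{j}=\beta^{2\binom n2+\binom n2}=\beta^{3\binom n2}.
$$
This is the matrix identity $\operatorname{diag}(\beta^{2i})\,(a_{2i+j})\,\operatorname{diag}(\beta^{j})$, so it holds for every scalar $\beta$, including $\beta=0$ with the convention $0^0=1$, and over any commutative ring.

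For $c\,f$, every coefficient is multiplied by $c$. Hence every entry of the $n\times n$ matrix is multiplied by $c$, and the determinant picks up the factor $c^n$. The case $n=0$ is consistent, since $\HH_0=1$.

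Nothing here is a real obstacle. The only point to check is the exponent $3\binom n2=\sum_i 2i+\sum_j j$, which follows from $\sum_{i<n} i=\binom n2$.
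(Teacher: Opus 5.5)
Your proof is correct and is essentially the paper's own argument: the paper also notes that the coefficients of $f(\beta x)$ are $\beta^m a_m$, pulls $\beta^{2i}$ from row $i$ and $\beta^{j}$ from column $j$ to get $\beta^{2\binom n2+\binom n2}$, and observes that multiplying every entry by $c$ gives $c^n$. Your remarks on $\beta=0$ (with $0^0=1$) and on $n=0$ are harmless extras.
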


\begin{proof}
The coefficients of $f(\beta x)$ are $\beta^na_n$, so the $(i,j)$ entry
$a_{2i+j}$ is scaled by $\beta^{2i+j}$; pulling $\beta^{2i}$ from row $i$ and
$\beta^{j}$ from column $j$ multiplies the determinant by
$\beta^{\sum_i2i+\sum_jj}=\beta^{2\binom n2+\binom n2}$. The factor $c$
multiplies every entry, hence the determinant by $c^n$.
\end{proof}

The second is the bookkeeping principle that turns ``enough vanishing'' into an
exact product.

\begin{Observation}[divisor principle]\label{obs:divisor}
Let $D(\theta)$ be a polynomial of degree $\le N$, not identically zero, that
vanishes to order $\ge d_k$ at distinct points $\theta_1,\dots,\theta_s$. If
$\sum_kd_k\ge N$, then $\sum_kd_k=N$, every order is exact, and
$D(\theta)=c\prod_{k=1}^s(\theta-\theta_k)^{d_k}$ with $c\ne0$ fixed by the
value of $D$ at any further point.
\end{Observation}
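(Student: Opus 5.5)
The statement is the divisor principle for a single polynomial. The plan is pure degree counting in one variable, using unique factorisation in $K[\theta]$.

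First, since the $\theta_k$ are distinct and $D$ vanishes to order at least $d_k$ at $\theta_k$, each $(\theta-\theta_k)^{d_k}$ divides $D$. These factors are pairwise coprime, because distinct linear factors are coprime. Hence their product $\Pi(\theta)=\prod_{k=1}^s(\theta-\theta_k)^{d_k}$ divides $D$, and we may write $D=\Pi\cdot E$ with $E\in K[\theta]$.

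Second, since $D\not\equiv0$, also $E\ne0$, so
\[
N\ \ge\ \deg D=\sum_k d_k+\deg E\ \ge\ \sum_k d_k\ \ge\ N .
\]
All these inequalities are therefore equalities. This gives $\sum_kd_k=N$, $\deg D=N$, and $\deg E=0$. So $E=c$ is a nonzero constant and $D=c\,\Pi$. The exactness of the orders follows: if $D$ vanished to order $d_k+1$ at some $\theta_k$, the same divisibility argument would give $\deg D\ge N+1$, a contradiction.

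Finally, $c$ is determined by a single further evaluation. Pick any $\theta_0\notin\{\theta_1,\dots,\theta_s\}$. Then $\Pi(\theta_0)\ne0$, so $c=D(\theta_0)/\Pi(\theta_0)$.

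No step is a genuine obstacle. The only point to state carefully is the coprimality that lets the separate divisibilities combine into divisibility by the product. In the applications of $\M4$, the real work lies outside this observation: proving the degree bound $N$ and the vanishing orders $d_k$ at the degenerations.
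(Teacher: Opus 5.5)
Your argument is correct and complete: combining the pairwise coprime divisibilities into $\Pi\mid D$ and then squeezing $N\ge\deg D=\sum_k d_k+\deg E\ge\sum_k d_k\ge N$ is exactly the degree count the paper relies on. The paper states the Observation without proof, treating it as self-evident, so your write-up supplies the routine details; exactness can also be read off directly from $D=c\,\Pi$ with $c\ne0$, since the other factors of $\Pi$ do not vanish at $\theta_k$.
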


The third, and the engine of the method, expands a dilated determinant whose
moments are an exponential sum.

\begin{Lemma}[dilated Cauchy--Binet]\label{lem:cb}
Let $a_m=\sum_{\nu\in I}c_\nu\,\zeta_\nu^{\,m}$ be a (formal or convergent)
exponential sum over an index set $I$. Then
$$
\HH_n(a)=\sum_{T}\Bigl(\prod_{\nu\in T}c_\nu\Bigr)\,
\det\bigl(\zeta_\nu^{2i}\bigr)_{0\le i<n,\,\nu\in T}\;
\det\bigl(\zeta_\nu^{j}\bigr)_{0\le j<n,\,\nu\in T},
$$
the sum over the $n$-subsets $T\subseteq I$; each term is a product of two
Vandermonde determinants in the nodes $\{\zeta_\nu:\nu\in T\}$, the first in
their squares $\zeta_\nu^2$. If moreover the index set is symmetric, $-I=I$ with
$\zeta_{-\nu}=-\zeta_\nu$ (so $\zeta_{-\nu}^2=\zeta_\nu^2$), then only the
$n$-subsets on which $\nu\mapsto|\nu|$ is injective survive: a subset containing
a pair $\{\nu,-\nu\}$ makes the first determinant have two equal columns.
\end{Lemma}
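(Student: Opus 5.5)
The plan is to write the dilated matrix as a product of two rectangular matrices and then apply the Cauchy--Binet formula. Since $a_{2i+j}=\sum_{\nu\in I}c_\nu\,\zeta_\nu^{2i}\zeta_\nu^{j}$, the $n\times n$ matrix $(a_{2i+j})_{0\le i,j<n}$ factors as
$$
\bigl(a_{2i+j}\bigr)=A\,C\,B,\qquad
A=\bigl(\zeta_\nu^{2i}\bigr)_{0\le i<n,\ \nu\in I},\quad
C=\operatorname{diag}(c_\nu)_{\nu\in I},\quad
B=\bigl(\zeta_\nu^{j}\bigr)_{\nu\in I,\ 0\le j<n}.
$$
Here $A$ is $n\times|I|$ and $B$ is $|I|\times n$. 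Cauchy--Binet gives $\det(ACB)=\sum_T\det\bigl((AC)_{[n],T}\bigr)\det\bigl(B_{T,[n]}\bigr)$ over $n$-subsets $T\subseteq I$. Pulling $c_\nu$ out of column $\nu$ of $(AC)_{[n],T}$ gives $\prod_{\nu\in T}c_\nu\cdot\det(\zeta_\nu^{2i})$. The second factor is $\det(\zeta_\nu^j)$, with its rows indexed by $T$ in the same order as the columns of the first, so the two orderings match and no sign issue arises. Both determinants are Vandermonde determinants, in the nodes $\zeta_\nu^2$ and $\zeta_\nu$ respectively. This gives the displayed formula.

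The step I expect to need the most care is justifying Cauchy--Binet when $I$ is infinite, or the sum merely formal. If $I$ is finite there is nothing to check. Otherwise I would work entrywise. Each entry $a_{2i+j}$ is a sum over $\nu$, and expanding the determinant by multilinearity in its $n$ columns gives a sum over tuples $(\nu_0,\dots,\nu_{n-1})$. Tuples with a repeated index vanish, because $\det(\zeta_{\nu_j}^{2i})$ then has two equal columns. Grouping the remaining tuples by their underlying set $T$ and by the permutation reproduces the finite-dimensional Cauchy--Binet sum term by term. This is a finite rearrangement within each multi-index, so it is legitimate in any setting where the original series may be multiplied termwise, either formally (with $c_\nu$, $\zeta_\nu$ in a ring where the sums make sense, as in the $\theta$-adic or power-series applications) or under absolute convergence.

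For the symmetric case, assume $-I=I$ and $\zeta_{-\nu}=-\zeta_\nu$. If $T$ contains both $\nu$ and $-\nu$ with $\nu\ne-\nu$, then the columns of $\bigl(\zeta_\mu^{2i}\bigr)_{i,\mu\in T}$ indexed by $\nu$ and $-\nu$ coincide, since $\zeta_{-\nu}^{2i}=\zeta_\nu^{2i}$, so that term is zero. If $\nu=-\nu$, injectivity of $|\cdot|$ on $T$ is automatic for that element. Hence only the subsets on which $\nu\mapsto|\nu|$ is injective contribute, completing the proof.
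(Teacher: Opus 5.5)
Your proposal is correct and follows the paper's own proof: factor $(a_{2i+j})$ as $X\,\mathrm{diag}(c_\nu)\,Y^{\top}$ with rectangular Vandermonde-type factors, apply the non-square Cauchy--Binet formula, and discard the subsets containing a pair $\{\nu,-\nu\}$ because $\zeta_{-\nu}^{2i}=\zeta_\nu^{2i}$ gives two equal columns. Your multilinearity argument for infinite or formal $I$ is a useful extra justification, which the paper leaves implicit when it invokes Cauchy--Binet for possibly infinite $|I|$.
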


\begin{proof}
Writing $a_{2i+j}=\sum_\nu c_\nu\zeta_\nu^{2i}\zeta_\nu^{\,j}$, we factor the
$n\times n$ matrix as $X\,\mathrm{diag}(c_\nu)\,Y^{\!\top}$ with
$X=(\zeta_\nu^{2i})_{0\le i<n,\,\nu\in I}$ and
$Y=(\zeta_\nu^{\,j})_{0\le j<n,\,\nu\in I}$. Here $X$ and $Y$ are
\emph{rectangular} $n\times|I|$ matrices --- one column per node, and $|I|$
(possibly infinite) need not equal $n$ --- so this is the general (non-square)
Cauchy--Binet formula, which gives the displayed sum over all $n$-subsets
$T\subseteq I$. In the symmetric case $\zeta_{-\nu}^{2i}=\zeta_\nu^{2i}$,
so on a subset containing both $\nu$ and $-\nu$ the matrix $(\zeta_\nu^{2i})$
has two equal columns and the first determinant vanishes; on the survivors
$\nu\mapsto\zeta_\nu^2$ is injective and the first determinant is a genuine
Vandermonde.
\end{proof}

The first determinant is a Vandermonde in the \emph{squared} nodes
$\zeta_\nu^2$, and this squaring is the algebraic imprint of the dilation.
Indeed the even rows enter only through $\zeta_\nu^{2i}=(\zeta_\nu^2)^i$, which
depends on $\zeta_\nu$ only through $\zeta_\nu^2$; hence these rows cannot tell a
node $\zeta_\nu$ apart from its negative $-\zeta_\nu$ (the two share the same
square). This is exactly why, in the symmetric case above, a node and its
negative collapse the term.

We use Lemma~\ref{lem:cb} in the next sections in the following way. When the
exponential sum depends on a parameter and the nodes and coefficients
degenerate as the parameter tends to a special value, the same expansion lets
us read off the \emph{order} to which $\HH_n$ vanishes there, and its leading
coefficient. Suppose then that the exponential sum of Lemma~\ref{lem:cb}
depends on a parameter $\epsilon$,
\begin{equation}\label{eq:cbeps}
a_m(\epsilon)=\sum_{\nu\in I}c_\nu(\epsilon)\,\zeta_\nu(\epsilon)^{\,m},
\end{equation}
each coefficient and each node being a (formal or convergent) power series in
$\epsilon$; write $\bar\zeta_\nu:=\zeta_\nu(0)$ for the limiting nodes.
Lemma~\ref{lem:cb} expands $\HH_n(a(\epsilon))$ termwise,
\begin{equation}\label{eq:cbepsterm}
\HH_n\bigl(a(\epsilon)\bigr)=\sum_{T}
\Bigl(\prod_{\nu\in T}c_\nu(\epsilon)\Bigr)\,
\det\bigl(\zeta_\nu(\epsilon)^{2i}\bigr)_{0\le i<n,\,\nu\in T}\,
\det\bigl(\zeta_\nu(\epsilon)^{j}\bigr)_{0\le j<n,\,\nu\in T},
\end{equation}
the sum over $n$-subsets $T\subseteq I$; call $T$ \emph{surviving} if its
term is not identically zero --- in the symmetric case of Lemma~\ref{lem:cb}
exactly the subsets on which $\nu\mapsto|\nu|$ is injective. Given integers
$o_\nu\ge0$ with $\ord_\epsilon c_\nu(\epsilon)\ge o_\nu$ for every
$\nu\in I$ (and, if $I$ is infinite, $\{\nu:o_\nu\le N\}$ finite for every
$N$, so that each power of $\epsilon$ in \eqref{eq:cbepsterm} receives only
finitely many contributions), put
$$
M:=\min_{T\ \mathrm{surviving}}\ \sum_{\nu\in T}o_\nu .
$$

\begin{Corollary}[order at a degeneration]\label{cor:cborder}
With the data and notation above:
\begin{enumerate}
\item[\rm(i)] $\ord_\epsilon\HH_n\bigl(a(\epsilon)\bigr)\ge M$.
\item[\rm(ii)] If moreover a \emph{unique} surviving $T^\ast$ attains $M$,
the orders on $T^\ast$ are \emph{exact} ---
$c_\nu(\epsilon)=\bar c_\nu\,\epsilon^{\,o_\nu}+O(\epsilon^{\,o_\nu+1})$
with $\bar c_\nu\ne0$, for $\nu\in T^\ast$ --- and the limiting squared
nodes $\bar\zeta_\nu^{\,2}$ $(\nu\in T^\ast)$ are pairwise distinct, then
$$
\HH_n\bigl(a(\epsilon)\bigr)=C\,\epsilon^{M}+O(\epsilon^{M+1}),
\qquad
C=\Bigl(\prod_{\nu\in T^\ast}\bar c_\nu\Bigr)\,
\det\bigl(\bar\zeta_\nu^{\,2i}\bigr)_{0\le i<n,\,\nu\in T^\ast}\,
\det\bigl(\bar\zeta_\nu^{\,j}\bigr)_{0\le j<n,\,\nu\in T^\ast}
\ \ne\ 0,
$$
so that $\ord_\epsilon\HH_n(a(\epsilon))=M$ exactly.
\end{enumerate}
\end{Corollary}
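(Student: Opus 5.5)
The plan is to start from the termwise expansion \eqref{eq:cbepsterm} supplied by Lemma~\ref{lem:cb} and read off the $\epsilon$-adic order of each summand. Fix a surviving $n$-subset $T$. The two Vandermonde factors $\det(\zeta_\nu(\epsilon)^{2i})$ and $\det(\zeta_\nu(\epsilon)^{j})$ are polynomials in the nodes, which are power series in $\epsilon$; hence they are themselves power series of order $\ge0$. The coefficient product $\prod_{\nu\in T}c_\nu(\epsilon)$ has order $\ge\sum_{\nu\in T}o_\nu\ge M$. So every surviving term is $O(\epsilon^M)$, and the non-surviving terms are zero.

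To sum these termwise, I use the finiteness hypothesis. For each fixed power $\epsilon^N$, only subsets $T$ with $\sum_{\nu\in T}o_\nu\le N$ can contribute, and these are finitely many because $\{\nu:o_\nu\le N\}$ is finite. The coefficient of $\epsilon^N$ in $\HH_n(a(\epsilon))$ is therefore a finite sum of the coefficients of $\epsilon^N$ in the individual terms. Each such coefficient vanishes for $N<M$, which proves (i).

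For (ii), I extract the coefficient of $\epsilon^M$.

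1. Consider a surviving $T\ne T^\ast$. By uniqueness of the minimiser, $\sum_{\nu\in T}o_\nu\ge M+1$. So $T$ contributes nothing at order $M$.
2. For $T^\ast$, the leading term of $\prod_{\nu\in T^\ast}c_\nu$ is $\bigl(\prod\bar c_\nu\bigr)\epsilon^M$, since each order on $T^\ast$ is exact.
3. The Vandermonde factors on $T^\ast$ evaluate at $\epsilon=0$ to $\det(\bar\zeta_\nu^{2i})$ and $\det(\bar\zeta_\nu^{j})$. Their product with the leading coefficient gives exactly the constant $C$ in the statement.
4. It remains to show $C\ne0$. The $\bar c_\nu$ are nonzero by hypothesis. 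The first Vandermonde equals $\prod(\bar\zeta_\mu^2-\bar\zeta_\nu^2)$, which is nonzero because the limiting squared nodes are pairwise distinct. Distinct squares also force the $\bar\zeta_\nu$ themselves to be distinct, so the second Vandermonde is nonzero as well.

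Together with (i), this shows $\ord_\epsilon\HH_n=M$ exactly.

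The only delicate point is the interchange of the infinite Cauchy--Binet sum with extraction of $\epsilon$-coefficients, and the finiteness hypothesis exactly handles it. One must also check that in the formal setting Lemma~\ref{lem:cb} is valid coefficientwise. This holds because each $\epsilon$-coefficient of an entry $a_{2i+j}(\epsilon)$ is a finite sum over $\nu$ under the same hypothesis. The remaining steps are bookkeeping.
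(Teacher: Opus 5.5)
Your proof is correct and follows the paper's argument essentially verbatim. You get (i) from the termwise orders in the Cauchy--Binet expansion, where the coefficient product has order $\ge\sum_{\nu\in T}o_\nu$ and the Vandermondes have order $\ge0$. You get (ii) because uniqueness of the minimiser isolates $T^\ast$ at order $M$, with $C\ne0$ following from exactness of the orders and from distinctness of the squared nodes, hence of the nodes. Your explicit truncation modulo $\epsilon^{N+1}$ to justify the infinite sum is a somewhat more careful version of the finiteness remark the paper places in the setup before the corollary.
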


\begin{proof}
(i) Every surviving term of \eqref{eq:cbepsterm} has order
$\ge\sum_{\nu\in T}o_\nu\ge M$: the coefficient product has order
$\ge\sum_{\nu\in T}o_\nu$, and the two determinants, being polynomials in
the $\zeta_\nu(\epsilon)$, have order $\ge0$. Hence so has the sum.

(ii) By (i) it remains to read off the coefficient of $\epsilon^M$. Since
$T^\ast$ is the unique minimiser, every other surviving term is
$O(\epsilon^{M+1})$, so only $T^\ast$ contributes; taking the leading part
$\bar c_\nu\epsilon^{\,o_\nu}$ of each coefficient and setting $\epsilon=0$
in the two determinants gives $C$. It is nonzero: the first determinant is
a Vandermonde in the pairwise-distinct $\bar\zeta_\nu^{\,2}$, hence the
$\bar\zeta_\nu$ are themselves pairwise distinct and the second
determinant, a Vandermonde in the $\bar\zeta_\nu$, is nonzero as well; and
$\prod_{\nu\in T^\ast}\bar c_\nu\ne0$ because the orders on $T^\ast$ are
exact.
\end{proof}

The corollary above governs the \emph{distinct}-node case, where the limiting
nodes $\bar\zeta_\nu$ stay separate. The opposite extreme is \emph{confluent}:
the nodes merge to a single point, and a Vandermonde degenerates into a
Wronskian --- the determinant of a column together with its successive
derivatives. This is what happens when an \emph{even} generating function is
translated by a parameter $\phi$, that is $g_\phi(x)=g(x+\phi)$: as the next
lemma shows, each column of the dilated matrix of $g_\phi$ is the
$\phi$-derivative of the one before it, so $\HH_n(g_\phi)$ is literally a
Wronskian in $\phi$.

\begin{Lemma}[confluent case: a translated even series]\label{lem:wron}
Let $g(x)=\sum_{m\ge0}\mu_mx^m/m!$ be \emph{even} ($\mu_{2k+1}=0$), write
$\nu_k:=\mu_{2k}$ and $g_\phi(x):=g(x+\phi)$, with coefficients
$\beta_m(\phi)=\sum_{k\ge0}\tfrac{\phi^k}{k!}\mu_{m+k}$. Then
$$
\HH_n(g_\phi)=\det\bigl(\beta_{2i+j}(\phi)\bigr)_{0\le i,j<n}
=W\bigl(\beta_0,\beta_2,\dots,\beta_{2n-2}\bigr)(\phi),
$$
the Wronskian of $n$ even functions; consequently
$\ord_{\phi=0}\HH_n(g_\phi)\ge\binom n2$.
\end{Lemma}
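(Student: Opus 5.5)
The proof has three steps: the column-derivative structure, the parity of the entries, and the vanishing order at $\phi=0$.

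\emph{Derivative structure.} The coefficients of $g_\phi$ are $\beta_m(\phi)=m!\,[x^m]g(x+\phi)=\partial_x^m g(x+\phi)\big|_{x=0}=g^{(m)}(\phi)$. Hence $\beta_m'(\phi)=\beta_{m+1}(\phi)$, which can also be read off termwise from $\sum_k\frac{\phi^k}{k!}\mu_{m+k}$. So the $(i,j)$ entry of the dilated matrix is $\beta_{2i+j}=\beta_{2i}^{(j)}$. Row $i$ is therefore $(\beta_{2i},\beta_{2i}',\dots,\beta_{2i}^{(n-1)})$, and the determinant is the Wronskian of $\beta_0,\beta_2,\dots,\beta_{2n-2}$, up to transposition, which does not change it.

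\emph{Parity.} Since $g$ is even, $g^{(2i)}$ is even. So each $\beta_{2i}(\phi)=\sum_k\nu_{i+k}\phi^{2k}/(2k)!$ is an even function (formal power series) of $\phi$. Only even $k$ survive, because $\mu_{2i+k}=0$ for odd $k$.

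\emph{Order of vanishing.} I would show $\ord_{\phi=0}W\ge\binom n2$, and I expect this to be the only real point. I would argue through the column space. The $n$ functions $\beta_{2i}$ all lie in the space of even power series, and I want to count how many independent leading orders they can have. Recombining the rows (i.e.\ the functions $\beta_{2i}$) by an invertible constant matrix only multiplies $W$ by a nonzero constant. So I may bring the family to echelon form with respect to $\ord_\phi$: the orders become distinct, $e_0<e_1<\dots<e_{n-1}$, or else some function vanishes identically and then $W=0$.

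All the $e_i$ are even, so $e_i\ge 2i$. A standard fact about Wronskians of power series with distinct leading orders $e_i$ is that
$$W=c\,\phi^{\sum_i e_i-\binom n2}+\cdots,\qquad c=\prod_{i<j}(e_j-e_i).$$
Equivalently, in general $\ord W\ge\sum_i e_i-\binom n2$. Hence $\ord W\ge\sum_i 2i-\binom n2=2\binom n2-\binom n2=\binom n2$.

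To keep the argument self-contained, I would justify the Wronskian order bound by multilinearity. Expand each function as $\sum_{e\ge e_i}c_{i,e}\phi^e$. Then $W$ becomes a sum of Wronskians of monomials $W(\phi^{f_0},\dots,\phi^{f_{n-1}})=\prod_{i<j}(f_j-f_i)\,\phi^{\sum f_i-\binom n2}$. This uses the row-factorisation/Vandermonde mechanism of Lemma~\ref{lem:vdm} with falling factorials. The term vanishes unless the $f_i$ are distinct. For distinct even $f_i\ge0$ we get $\sum f_i\ge 2\binom n2$, since distinct even nonnegative integers sum to at least $0+2+\dots+2(n-1)$.

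This direct expansion in fact makes the echelon step unnecessary. Every nonzero monomial Wronskian has order $\sum f_i-\binom n2\ge\binom n2$, which gives the claim immediately.
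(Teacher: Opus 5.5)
Your proposal is correct and follows essentially the paper's proof. The paper likewise obtains the Wronskian from $\partial_\phi\beta_m=\beta_{m+1}$ and writes $\beta_{2i}=\sum_l\nu_{i+l}\phi^{2l}/(2l)!$. It then applies the Cauchy--Binet identity for Wronskians, which is exactly your multilinear expansion into monomial Wronskians grouped by exponent sets $l_0<\dots<l_{n-1}$, each term of order $2\sum_a l_a-\binom n2\ge\binom n2$. As you noted yourself, the echelon detour is unnecessary.
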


\begin{proof}
Since $\partial_\phi\beta_m=\beta_{m+1}$, column $j$ is the $j$-th derivative of
column $0$, so $\beta_{2i+j}=\beta_{2i}^{(j)}$ and the determinant is the
Wronskian of the rows $\beta_{2i}(\phi)$. As $g$ is even,
$\beta_{2i}(\phi)=\sum_{l\ge0}\nu_{i+l}\,\phi^{2l}/(2l)!$, so by the
Cauchy--Binet identity for Wronskians
$$
W(\beta_0,\dots,\beta_{2n-2})
=\sum_{0\le l_0<\dots<l_{n-1}}\det(\nu_{i+l_a})_{i,a}\;
W\bigl(\tfrac{\phi^{2l_0}}{(2l_0)!},\dots,\tfrac{\phi^{2l_{n-1}}}{(2l_{n-1})!}\bigr),
$$
and the Wronskian of the $\phi^{2l_a}/(2l_a)!$ is a multiple of
$\phi^{\,2\sum_al_a-\binom n2}$, of order $\ge2\binom n2-\binom n2=\binom n2$.
\end{proof}

Finally, the same squared-node collapse explains, in its most degenerate form,
why a single conjugate pair of frequencies perturbs a dilated determinant only
by \emph{rank one}.

\begin{Remark}\label{rem:rankone}
A two-element symmetric set $I=\{\pm1\}$ ($\zeta_{-1}=-\zeta_1$) gives, by the
same identity $\zeta_{-1}^{2i}=\zeta_1^{2i}$, a matrix of \emph{rank one}:
$a_{2i+j}=\zeta_1^{2i}\bigl(c_1\zeta_1^{\,j}+c_{-1}\zeta_{-1}^{\,j}\bigr)$. This
is the structure behind the rank-one perturbation of Section~\ref{sec:runkone}
(there $\zeta_{\pm1}=\pm i$, the perturbing term being $\sin x$), evaluated by
the matrix-determinant lemma ($\M6$) rather than by the divisor method $\M4$.
\end{Remark}

\subsection{Which method for which family}\label{ssec:methodtable}

Table~\ref{tab:methods} summarises the six methods: the structural input
each one requires, the condition under which the reduction collapses to a
closed product, and where in the paper it is applied.

\begin{table}[htbp]
\centering\small
\renewcommand{\arraystretch}{1.4}
\begin{tabular}{@{}c p{0.28\linewidth} p{0.33\linewidth} p{0.26\linewidth}@{}}
\hline
 & input & collapse condition & applications\\
\hline
$\M1$ & term ratio $a_{m+1}/a_m$ rational of degree $\le1$ &
always: Vandermonde $\times$ triangular coefficient matrix
(Lemma~\ref{lem:vdm}); fails from degree $2$ on, except through the
dilation (\S\ref{sec:xbesseleven}) &
Beta family: factorials, Catalan, central binomial, double factorials
(\S\ref{sec:beta}); shifts of the $(1+x)$ algebraic family
(\S\ref{sec:algsqshift}); even orders of $(1+x)\,\mathrm{cosb}_\nu^2$
(\S\ref{sec:xbesseleven})\\
$\M2$ & even and odd functionals $\mathcal S$, $\mathcal T$ quasi-definite &
connection array a single term ($\mathcal S,\mathcal T$ a one-parameter
classical pair) and the kernel determinant evaluable &
Euler number family (\S\ref{sec:gen}); the secant-number family
$(1+x)/\cos^{s+1}$ (\S\ref{sec:gen-xcos}--\ref{sec:allstar});
reciprocal sine (\S\ref{sec:xsinx}); Bessel $(s,t)$ family
(\S\ref{sec:besselst}); and their double shifts
(\S\ref{sec:dshift}, \ref{sec:dblshift} and~\ref{sec:besseldshift})\\
$\M3$ & single functional $\EE$ quasi-definite &
even-orthogonal $\Phi_i$ exist \eqref{eq:onefuncprod}; among Appell
families: the Gaussian only &
involutions, $\exp(cx+bx^2)$ (\S\ref{sec:gauss})\\
$\M4$ & family $f_\theta$, $\HH_n(f_\theta)$ polynomial in $\theta$ of
bounded degree &
degenerations of $f_\theta$ account for the full degree
(Observation~\ref{obs:divisor}) &
Springer (\S\ref{sec:springer}), derivative of Springer
(\S\ref{sec:derivative}), elliptic (\S\ref{sec:elliptic})\\
$\M5$ & contiguous relation $s\mapsto s-2$ by column operations &
two-periodicity in $s$ plus one special value, or a single contiguous
step closed by Cauchy--Binet &
algebraic families $(1+x)/(1-x^2)^{s/2}$ and $(1+x)^2/(1-x^2)^{s/2}$
(\S\ref{sec:alg} and~\ref{sec:algsq}), and their shifted determinants
(\S\ref{sec:algsqshift})\\
$\M6$ & rank-one perturbation of an already evaluated family &
matrix-determinant lemma~\eqref{lem:matdet} &
$(\sin x+1)/\cos^2x+s\sin x$ (\S\ref{sec:runkone})\\
\hline
\end{tabular}
\vspace{6pt}
\caption{The six methods of Section~\ref{sec:methods}.}
\label{tab:methods}
\end{table}


\section{The Beta family}\label{sec:beta}

Throughout this section $\mathbf a=(a_m)_{m\ge0}$ is a \emph{plain}
sequence: no exponential normalisation is applied, and the entries of the
determinants are the $a_m$ themselves. We assume that the term ratio
$a_{m+1}/a_m$ is a rational function of $m$ with numerator and denominator
of degree \emph{at most} one --- the exact hypothesis under which the
Vandermonde reduction $\M1$ of Section~\ref{sec:vandermonde} operates. The
generic case, numerator and denominator both of degree one, is the Beta
family proper (Theorem~\ref{thm:beta}); the degenerate case with denominator
of degree zero --- $\beta$ absent --- is treated separately
(Proposition~\ref{prop:degenerate}). Such sequences
form the three-parameter family
\begin{equation}\label{eq:betadef}
a_m=a_0\,\rho^m\,\frac{(\alpha)_m}{(\beta)_m},
\qquad
\frac{a_{m+1}}{a_m}=\rho\,\frac{m+\alpha}{m+\beta}.
\end{equation}

The name is explained by the case $\beta>\alpha>0$, $a_0=\rho=1$: there
\eqref{eq:betadef} is the moment sequence of the \emph{Beta distribution}
on $[0,1]$ with density
$x^{\alpha-1}(1-x)^{\beta-\alpha-1}/B(\alpha,\beta-\alpha)$, since
$\int_0^1 x^m\,d\mu=(\alpha)_m/(\beta)_m$. Beta distributions are the
model solutions of the Hausdorff moment problem on a bounded interval, the
polynomials orthogonal to them are the Jacobi polynomials
\cite{Chihara1978,Ismail2005}, and the associated Jacobi continued
fraction \cite{Wall1948,Flajolet1980} governs the classical (one-step)
Hankel determinants of the family. None of this positivity is needed
below: the scale $\rho$ and the total mass $a_0$ are inessential, and
$\alpha,\beta,\rho$ may be taken as arbitrary parameters, the determinant
identities being polynomial (or rational) in them. Members of
\eqref{eq:betadef} include the factorials $(m+r)!=r!\,(r+1)_m$
(degenerate, $\beta$ absent), the Catalan numbers
$C_m=\frac1{m+1}\binom{2m}m=4^m(\tfrac12)_m/(2)_m$, and the central
binomial coefficients $b_m=\binom{2m}m=4^m(\tfrac12)_m/(1)_m$.

The section is organised as follows.
Subsection~\ref{ssec:betageneral} proves a single closed form for
$\det(a_{x_i+j})$ with \emph{arbitrary} row indices $x_i$
(Theorem~\ref{thm:beta}); Subsection~\ref{ssec:betaspecial} specialises it
to the dilated, shifted and $r$-step determinants of the classical
members; Subsection~\ref{ssec:betaboundary} records where the mechanism
stops.

\subsection{The general evaluation}\label{ssec:betageneral}

\begin{Theorem}\label{thm:beta}
Let $n\ge1$, let $\rho\ne0$ and $\alpha,\beta$ be parameters with
$\beta\notin\{0,-1,-2,\dots\}$, let $a_m$ be as in \eqref{eq:betadef},
and let $0\le x_0<x_1<\dots<x_{n-1}$ be integers. Then
\begin{equation}\label{eq:beta}
\det\bigl(a_{x_i+j}\bigr)_{0\le i,j\le n-1}
=\rho^{\binom n2}\,\prod_{i=0}^{n-1}(\beta-\alpha)_i\;
\prod_{0\le i<j\le n-1}(x_j-x_i)\;
\prod_{i=0}^{n-1}\frac{a_{x_i}}{(x_i+\beta)_{n-1}}.
\end{equation}
\end{Theorem}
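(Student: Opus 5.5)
The plan is to apply the Vandermonde reduction, Lemma~\ref{lem:vdm}, with the row factorisation already indicated in Section~\ref{sec:vandermonde}:
$$
w(x)=\frac{a_x}{(x+\beta)_{n-1}},\qquad Q_j(x)=\rho^j\,(x+\alpha)_j\,(x+\beta+j)_{n-1-j}\qquad(0\le j\le n-1).
$$
First I will verify \eqref{eq:rowfact}. From \eqref{eq:betadef},
$$
a_{x+j}=a_x\,\rho^j\,\frac{(x+\alpha)_j}{(x+\beta)_j}.
$$
Multiplying numerator and denominator by $(x+\beta+j)_{n-1-j}$ turns the denominator into $(x+\beta)_{n-1}$, which is nonzero for integer $x\ge0$ because $\beta\notin\{0,-1,-2,\dots\}$. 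Each $Q_j$ has degree exactly $j+(n-1-j)=n-1$. Lemma~\ref{lem:vdm} then gives
$$
\det\bigl(a_{x_i+j}\bigr)=\det M\,\prod_{i<j}(x_j-x_i)\,\prod_i w(x_i).
$$
This already produces the last two products in \eqref{eq:beta}. Everything therefore reduces to showing
$$
\det M=\rho^{\binom n2}\prod_{i=0}^{n-1}(\beta-\alpha)_i .
$$

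Since $M$ is not triangular (all $Q_j$ have the same degree), I will use the freedom stated in Lemma~\ref{lem:vdm}. The identity $\det\bigl(Q_j(y_i)\bigr)=\det M\cdot\prod_{i<j}(y_j-y_i)$ holds for arbitrary values $y_i$, not only nonnegative integers, because it involves only the $Q_j$. I will evaluate it at nodes that kill most entries, namely
$$
y_i=-\beta-(n-1)+i .
$$
Then $(y_i+\beta+j)_{n-1-j}$ is the product of the consecutive integers from $i+j-n+1$ to $i-1$. This product vanishes exactly when $i\ge1$ and $i+j\le n-1$.

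The matrix $\bigl(Q_j(y_i)\bigr)$ then has the following shape:
\begin{itemize}
\item row $0$ is full;
\item column $0$ is zero except in row $0$;
\item for $i\ge1$, row $i$ is supported on the columns $j\ge n-i$.
\end{itemize}
Expanding along column $0$ and then using the anti-triangular shape of the remaining block, the determinant equals a sign times $Q_0(y_0)\prod_{i=1}^{n-1}Q_{n-i}(y_i)$. The individual factors are:
\begin{itemize}
\item $Q_0(y_0)=(1-n)_{n-1}=(-1)^{n-1}(n-1)!$;
\item for $i\ge1$, the factor $(y_i+\beta+n-i)_{i-1}=(1)_{i-1}=(i-1)!$ from the $\beta$-block;
\item for $i\ge1$, the factor $(y_i+\alpha)_{n-i}=(\alpha-\beta-n+1+i)_{n-i}=(-1)^{n-i}(\beta-\alpha)_{n-i}$ from the $\alpha$-block.
\end{itemize}
Collecting the $\rho$'s gives $\rho^{\sum_{i\ge1}(n-i)}=\rho^{\binom n2}$, and the $\alpha$-blocks give $\prod_{k=1}^{n-1}(\beta-\alpha)_k$, which is exactly the claimed product. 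Finally I divide by the Vandermonde determinant in the nodes $y_i$, which is $\prod_{i<j}(j-i)=\prod_{k<n}k!$. This should cancel the factorials $(n-1)!\prod_{i\ge1}(i-1)!$ coming from the $\beta$-blocks.

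The main obstacle is the sign bookkeeping. Three sources contribute: the anti-diagonal permutation of size $n-1$, the factor $(-1)^{n-1}$ from $Q_0(y_0)$, and the factors $(-1)^{n-i}$ from the $\alpha$-blocks. I expect these to combine to $+1$, consistent with the leading behaviour. As a safeguard I will cross-check the result at $n=2$, where $\det M=\rho(\beta-\alpha)$ can be computed directly. If the sign accounting becomes messy, the fallback is to fix it by comparing the coefficient of $\alpha^{\binom n2}$ on both sides; that coefficient comes only from the $(x+\alpha)_j$ factors and is easy to extract.
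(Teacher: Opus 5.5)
Your proposal is correct and follows the paper's proof: the same row factorisation and Vandermonde reduction, followed by a substitution at an arithmetic progression of nodes through $-\beta$ that kills the $\beta$-Pochhammer block. The sign worry you flagged resolves, since $\binom{n-1}{2}+(n-1)+\binom n2=2\binom n2$ is even. The paper instead uses the nodes $x_i=1-\beta-i$, which is your node set with $1-\beta-n$ replaced by $1-\beta$; with that choice $\bigl(Q_j(x_i)\bigr)$ is upper triangular outright, so no column expansion or anti-diagonal sign is needed.
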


\begin{proof}
\textit{Step 1: row factorisation.}
Iterating the term ratio in \eqref{eq:betadef} gives
$a_{x+j}=a_x\,\rho^j\,(x+\alpha)_j/(x+\beta)_j$, and over the common
denominator $(x+\beta)_{n-1}$, for $0\le j\le n-1$,
\begin{equation}\label{eq:betarowfact}
a_{x+j}=\frac{a_x}{(x+\beta)_{n-1}}\;Q_j(x),
\qquad
Q_j(x)=\rho^j\,(x+\alpha)_j\,(x+\beta+j)_{n-1-j},
\end{equation}
each $Q_j$ being a polynomial in $x$ of degree exactly
$j+(n-1-j)=n-1$, with coefficients independent of the $x_i$.

\textit{Step 2: reduction to a Vandermonde determinant.}
The row factorisation \eqref{eq:betarowfact} is exactly the hypothesis of the
Vandermonde reduction~$\M1$ (Lemma~\ref{lem:vdm}), with $w(x)=a_x/(x+\beta)_{n-1}$
and the $Q_j$ above, each of degree $n-1$. Writing
$Q_j(x)=\sum_{k=0}^{n-1}m_{kj}x^k$ and $M=(m_{kj})_{0\le k,j\le n-1}$, it gives
\begin{equation}\label{eq:betavdm}
\det\bigl(a_{x_i+j}\bigr)
=\prod_{i=0}^{n-1}\frac{a_{x_i}}{(x_i+\beta)_{n-1}}\;\cdot\;
c_n\prod_{0\le i<j\le n-1}(x_j-x_i),
\qquad c_n=\det M,
\end{equation}
where the constant $c_n$ does not depend on $x_0,\dots,x_{n-1}$.

\textit{Step 3: the triangulating substitution.}
Since $\det(Q_j(x_i))=c_n\prod_{i<j}(x_j-x_i)$ is an identity of
polynomials in the indeterminates $x_0,\dots,x_{n-1}$, we may compute
$c_n$ by any convenient substitution --- the integrality and ordering
of the $x_i$ are irrelevant here. Take $x_i=1-\beta-i$. Then
\begin{equation*}
(x_i+\beta+j)_{n-1-j}=(1+j-i)(2+j-i)\cdots(n-1-i),
\end{equation*}
which vanishes if and only if $i>j$: the matrix $\bigl(Q_j(x_i)\bigr)$
is upper triangular. Its diagonal entries are
\begin{equation*}
Q_i(x_i)=\rho^i\,(1-\beta-i+\alpha)_i\,(1)_{n-1-i}
=\rho^i\,(-1)^i(\beta-\alpha)_i\,(n-1-i)!\,,
\end{equation*}
while the Vandermonde product at this substitution is
$\prod_{i<j}(i-j)=(-1)^{\binom n2}\prod_{i=0}^{n-1}i!$. In the quotient
the signs cancel, and so does $\prod_i(n-1-i)!=\prod_i i!$, leaving
$c_n=\rho^{\binom n2}\prod_{i=0}^{n-1}(\beta-\alpha)_i$.
\end{proof}

Note the case $\alpha=\beta$: the sequence is geometric, every such
determinant with $n\ge2$ is zero, and the formula sees this through the
factor $(\beta-\alpha)_i$, which vanishes for $i\ge1$.

When the denominator parameter is absent the situation degenerates
further: the reduction is triangular from the start, and no substitution
is needed.

\begin{Proposition}\label{prop:degenerate}
Let $a_m=a_0\rho^m(\alpha)_m$ with $\rho\ne0$. Then
\begin{equation}\label{eq:degenerate}
\det\bigl(a_{x_i+j}\bigr)_{0\le i,j\le n-1}
=\rho^{\binom n2}\prod_{0\le i<j\le n-1}(x_j-x_i)\;
\prod_{i=0}^{n-1}a_{x_i}.
\end{equation}
\end{Proposition}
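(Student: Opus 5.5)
The plan is to apply the Vandermonde reduction $\M1$ (Lemma~\ref{lem:vdm}) directly, in its triangular form. Iterating the term ratio $a_{m+1}/a_m=\rho\,(m+\alpha)$ gives, for every row index $x$ and every $j\ge0$,
$$
a_{x+j}=a_x\,\rho^j\,(x+\alpha)_j .
$$
This is the row factorisation \eqref{eq:rowfact} with $w(x)=a_x$ and
$$
Q_j(x)=\rho^j\,(x+\alpha)_j,
$$
a polynomial in $x$ whose coefficients are independent of the row indices. It has degree exactly $j\le n-1$ and leading coefficient $\ell_j=\rho^j$. No common denominator is needed, which is exactly why this case is triangular from the start.

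By the final clause of Lemma~\ref{lem:vdm}, the coefficient matrix $M$ of $Q_0,\dots,Q_{n-1}$ is triangular. Its determinant is therefore
$$
\det M=\prod_{j=0}^{n-1}\rho^j=\rho^{\binom n2}.
$$
Substituting into \eqref{eq:vdm} with $w(x_i)=a_{x_i}$ yields \eqref{eq:degenerate} at once. The identity holds for arbitrary row indices $x_0,\dots,x_{n-1}$, with no ordering or integrality needed, since \eqref{eq:vdm} is a polynomial identity in the $x_i$ once the factorisation holds at each index used.

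There is no real obstacle here. The only point needing care is that $w(x)=a_x$ must be evaluated at the actual (integer) row indices, where $a_x$ is defined. The hypothesis $\rho\ne0$ is not used in the computation, and the formula remains correct when $\rho=0$ for $n\ge2$.

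As a consistency check, I would note that \eqref{eq:degenerate} is also the limit of Theorem~\ref{thm:beta}. Rescale $\rho\mapsto\rho\beta$ and let $\beta\to\infty$. Then $(\beta-\alpha)_i/\beta^{i}\to1$ and $(x_i+\beta)_{n-1}/\beta^{n-1}\to1$, and the powers of $\beta$ balance because $\sum_i i=\binom n2$. The direct triangular argument above is nevertheless cleaner and is the proof I would write.
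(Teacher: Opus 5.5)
Your proof is correct and is exactly the paper's argument: the triangular case of the Vandermonde reduction (Lemma~\ref{lem:vdm}) with $w(x)=a_x$, $Q_j(x)=\rho^j(x+\alpha)_j$ and $\det M=\rho^{\binom n2}$. Your side remarks are also correct: the formula does hold at $\rho=0$, and in the $\beta\to\infty$ limit of Theorem~\ref{thm:beta} the powers of $\beta$ do cancel.
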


\begin{proof}
Here $a_{x+j}=a_x\,P_j(x)$ with $P_j(x)=\rho^j(x+\alpha)_j$ of degree
exactly $j$ and leading coefficient $\rho^j$. This is the triangular case of
the Vandermonde reduction (Lemma~\ref{lem:vdm}), with $w(x)=a_x$ and
$\det M=\prod_{j=0}^{n-1}\rho^j=\rho^{\binom n2}$, so
$\det(a_{x_i+j})=\rho^{\binom n2}\prod_{i<j}(x_j-x_i)\prod_i a_{x_i}$.
\end{proof}

\subsection{Specialisations: factorials, Catalan and central binomial
numbers}\label{ssec:betaspecial}

The substitution $x_i=2i$ in \eqref{eq:beta} gives the dilated Hankel
determinant of every member of the family at one stroke. More generally,
for an integer $r\ge1$ we call $\det(a_{ri+j})_{0\le i,j\le n-1}$ the
\emph{$r$-step Hankel determinant} of $\mathbf a$ --- the classical case
is $r=1$, the dilated case $r=2$ --- and the substitution $x_i=ri+t$
evaluates every shifted $r$-step determinant $\det(a_{ri+j+t})$ of the
family in closed form.

\begin{Corollary}\label{cor:betadouble}
With $a_m$ as in \eqref{eq:betadef},
\begin{equation}\label{eq:betadouble}
\HH_n(\mathbf a)=\det\bigl(a_{2i+j}\bigr)_{0\le i,j\le n-1}
=(2\rho)^{\binom n2}\,\prod_{i=0}^{n-1}
i!\;(\beta-\alpha)_i\;\frac{a_{2i}}{(2i+\beta)_{n-1}}.
\end{equation}
\end{Corollary}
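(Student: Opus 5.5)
This is a direct specialisation of Theorem~\ref{thm:beta}: substitute $x_i=2i$ for $0\le i\le n-1$ in \eqref{eq:beta}. These row indices are integers with $0=x_0<x_1<\dots<x_{n-1}$, so the hypotheses of the theorem hold. The left-hand side becomes $\det(a_{2i+j})=\HH_n(\mathbf a)$ by Definition~\ref{def:double}. The hypothesis $\beta\notin\{0,-1,-2,\dots\}$ is inherited from the theorem and keeps the denominators $(2i+\beta)_{n-1}$ nonzero.

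It remains to simplify the right-hand side, and the only factor that changes shape is the Vandermonde product. At $x_i=2i$,
$$\prod_{0\le i<j\le n-1}(x_j-x_i)=\prod_{0\le i<j\le n-1}2(j-i)=2^{\binom n2}\prod_{j=0}^{n-1}\prod_{i<j}(j-i)=2^{\binom n2}\prod_{j=0}^{n-1}j!.$$
The power $2^{\binom n2}$ combines with $\rho^{\binom n2}$ to give $(2\rho)^{\binom n2}$. The superfactorial $\prod_j j!$ goes into the product over $i$, together with $(\beta-\alpha)_i$ and the row weights $a_{x_i}/(x_i+\beta)_{n-1}=a_{2i}/(2i+\beta)_{n-1}$. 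Collecting these three products under one $\prod_{i=0}^{n-1}$ gives exactly \eqref{eq:betadouble}.

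There is no real obstacle here; the one point to check is the Vandermonde bookkeeping, namely that $\prod_{i<j}(j-i)=\prod_{j<n}j!$ and that the exponent of $2$ is the number of pairs, $\binom n2$.
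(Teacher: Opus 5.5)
Your proposal is correct and follows the paper's own route exactly: the paper obtains the corollary by the substitution $x_i=2i$ in Theorem~\ref{thm:beta}. Your Vandermonde bookkeeping, $\prod_{i<j}2(j-i)=2^{\binom n2}\prod_{j<n}j!$, is right and merges with $\rho^{\binom n2}$ to give $(2\rho)^{\binom n2}$.
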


We now harvest the classical members. No further proofs are needed: each
corollary is a substitution of parameters into Theorem~\ref{thm:beta} or
Proposition~\ref{prop:degenerate}, followed by a conversion of Pochhammer
symbols into factorials, and we record these conversions as
\emph{derivations}.

\begin{Corollary}[factorials]\label{cor:factorial}
Let $r\ge0$ and $a_m=(m+r)!$ --- in the exponential convention of the
other sections, the coefficient sequence of the derivative $f^{(r)}$ of
$f(x)=1/(1-x)$. Then
\begin{equation}\label{eq:factorial}
\HH_n\bigl((m+r)!\bigr)
=2^{\binom n2}\prod_{k=1}^{n-1}k!\cdot\prod_{i=0}^{n-1}(2i+r)!,
\qquad\text{hence}\qquad
\frac{\HH_n\bigl(f^{(r+1)}\bigr)}{\HH_n\bigl(f^{(r)}\bigr)}
=\prod_{i=0}^{n-1}(2i+r+1).
\end{equation}
The first three ratios ($r=0,1,2$) are $(2n-1)!!$, $(2n)!!$,
$(2n+1)!!$. In particular
\begin{align*}
\HH_n\bigl(m!\bigr)&=\textstyle\prod_{k=1}^{n-1}2^k\,k!\,(2k)!
=2^{\binom n2}\prod_{k=1}^{n-1}k!\,(2k)!\,,\\
\HH_n\bigl((m+1)!\bigr)&=\textstyle\prod_{k=1}^{n-1}2^k\,k!\,(2k+1)!\,,
\qquad
\HH_n\bigl((m+2)!/2\bigr)=\textstyle\prod_{k=1}^{n-1}2^{k-1}\,k!\,(2k+2)!\,.
\end{align*}
\end{Corollary}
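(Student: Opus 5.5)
The plan is a direct specialisation of Proposition~\ref{prop:degenerate}. The sequence $a_m=(m+r)!$ has term ratio $a_{m+1}/a_m=m+r+1$, which is of the degenerate Beta type $a_m=a_0\rho^m(\alpha)_m$ with $a_0=r!$, $\rho=1$ and $\alpha=r+1$. In this normalisation $(r+1)_m=(m+r)!/r!$. Taking the row indices $x_i=2i$ in \eqref{eq:degenerate} then gives
$$
\HH_n\bigl((m+r)!\bigr)=\prod_{0\le i<j\le n-1}(2j-2i)\;\prod_{i=0}^{n-1}(2i+r)!.
$$

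The only computation is the Vandermonde factor. Pulling a $2$ out of each of the $\binom n2$ differences gives $2^{\binom n2}\prod_{i<j}(j-i)$. Grouping the product by $j$ gives $\prod_{j}j!$, that is $\prod_{k=1}^{n-1}k!$. This yields the first formula in \eqref{eq:factorial}.

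For the ratio, note that $f^{(r)}$ of $f=1/(1-x)$ has exponential coefficients $(m+r)!$, so $\HH_n(f^{(r)})$ is the determinant just computed. Dividing the $r+1$ case by the $r$ case cancels the common factor $2^{\binom n2}\prod k!$. What remains is $\prod_i (2i+r+1)!/(2i+r)!=\prod_i(2i+r+1)$.

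The special cases follow by substitution:
\begin{itemize}
\item $r=0$ gives $\prod_i(2i+1)=(2n-1)!!$;
\item $r=1$ gives $(2n)!!$;
\item $r=2$ gives $(2n+1)!!$.
\end{itemize}

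The three displayed closed forms come from distributing $2^{\binom n2}=\prod_{k=1}^{n-1}2^k$ over the index $k=i$ and reindexing the products:
\begin{itemize}
\item For $r=0$ the $i=0$ factor $0!=1$ drops.
\item For $r=1$ the $i=0$ factor is $1!=1$.
\item For $a_m=(m+2)!/2$, use the homogeneity Lemma~\ref{lem:scale} with $c=1/2$. This contributes $2^{-n}$. Combined with the $i=0$ factor $2!=2$, the total is $2^{-(n-1)}$, which turns $2^k$ into $2^{k-1}$ across the $n-1$ factors $k=1,\dots,n-1$.
\end{itemize}

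There is no real obstacle here. The only step needing care is this power-of-two bookkeeping in the last case.
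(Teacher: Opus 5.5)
Your proposal is correct and follows the paper's derivation. You specialise Proposition~\ref{prop:degenerate} at $\rho=1$, $\alpha=r+1$ with $x_i=2i$, evaluate the Vandermonde factor as $2^{\binom n2}\prod_{k=1}^{n-1}k!$, and obtain the $(m+2)!/2$ case by scaling by $1/2$; your power-of-two bookkeeping there ($2^{-n}$ from Lemma~\ref{lem:scale} against the factor $2!=2$ at $i=0$) is right.
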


\begin{proof}[Derivation]
$(m+r)!=r!\,(r+1)_m$ is the case $\rho=1$, $\alpha=r+1$ of
Proposition~\ref{prop:degenerate}; with $x_i=2i$,
$\HH_n=\prod_{i<j}2(j-i)\prod_i(2i+r)!
=2^{\binom n2}\prod_{k<n}k!\,\prod_i(2i+r)!$. The last evaluation
follows by scaling rows by $1/2$.
\end{proof}

Proposition~\ref{prop:degenerate} holds for arbitrary $\alpha$, so the
factorial evaluation \eqref{eq:factorial} is the integer specialisation
of a continuous one-parameter family.

\begin{Corollary}[the Gamma/Laguerre continuation]\label{cor:laguerre}
For the moments
$a_m=\Gamma(m+\alpha+1)=\Gamma(\alpha+1)\,(\alpha+1)_m$ of the Laguerre
weight $x^{\alpha}e^{-x}$ on $[0,\infty)$ (here $\rho=1$),
\begin{equation}\label{eq:laguerre}
\det\bigl(\Gamma(2i+j+\alpha+1)\bigr)_{0\le i,j\le n-1}
=\Gamma(\alpha+1)^{n}\,2^{\binom n2}\prod_{k=1}^{n-1}k!\,
\prod_{m=1}^{n-1}\bigl[(\alpha+2m-1)(\alpha+2m)\bigr]^{\,n-m},
\end{equation}
which reduces to \eqref{eq:factorial} at a non-negative integer $\alpha=r$.
\end{Corollary}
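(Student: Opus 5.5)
The plan is to read the corollary off Proposition~\ref{prop:degenerate} as a direct substitution, in the same way as Corollary~\ref{cor:factorial}; the only remaining work is to rewrite the row factors as a product of linear factors in $\alpha$.

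First, use $\Gamma(m+\alpha+1)=\Gamma(\alpha+1)\,(\alpha+1)_m$. This puts the sequence in the form $a_m=a_0\rho^m(\alpha')_m$ with $a_0=\Gamma(\alpha+1)$, $\rho=1$ and Pochhammer parameter $\alpha'=\alpha+1$. Proposition~\ref{prop:degenerate} holds for an arbitrary value of that parameter, so it applies. Taking the row indices $x_i=2i$, it gives
$$
\det\bigl(\Gamma(2i+j+\alpha+1)\bigr)_{0\le i,j\le n-1}
=\prod_{0\le i<j\le n-1}2(j-i)\;\prod_{i=0}^{n-1}\Gamma(2i+\alpha+1).
$$
The Vandermonde factor equals $2^{\binom n2}\prod_{k=1}^{n-1}k!$, exactly as in the derivation of \eqref{eq:factorial}.

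Second, expand each row factor:
$$
\Gamma(2i+\alpha+1)=\Gamma(\alpha+1)\,(\alpha+1)_{2i}=\Gamma(\alpha+1)\prod_{m=1}^{i}(\alpha+2m-1)(\alpha+2m),
$$
grouping the $2i$ factors of the Pochhammer symbol in consecutive pairs. Taking the product over $i=0,\dots,n-1$ contributes $\Gamma(\alpha+1)^n$. The pair $(\alpha+2m-1)(\alpha+2m)$ occurs once for each $i$ with $m\le i\le n-1$, that is $n-m$ times. This yields \eqref{eq:laguerre}.

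Finally, at $\alpha=r\in\ZZ_{\ge0}$ we have $\Gamma(2i+r+1)=(2i+r)!$, which recovers \eqref{eq:factorial}. There is no genuine obstacle here. The only points needing care are that the identity is polynomial in $\alpha$ once $\Gamma(\alpha+1)$ is factored out, so no restriction on $\alpha$ is needed beyond $\Gamma(\alpha+1)$ being defined, and the multiplicity count $n-m$ for each pair.
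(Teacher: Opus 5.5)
Your proposal is correct and follows the paper's derivation: Proposition~\ref{prop:degenerate} with $\rho=1$ and $x_i=2i$, then counting the multiplicity of each linear factor in $\prod_{i=0}^{n-1}(\alpha+1)_{2i}$. The paper counts each single factor $(\alpha+l)$ with multiplicity $n-\lceil l/2\rceil$ and then groups $l=2m-1,2m$, whereas you group into consecutive pairs first; this is the same bookkeeping in a different order.
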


\begin{proof}[Derivation]
With $x_i=2i$, $\rho=1$ and $a_{2i}=\Gamma(\alpha+1)(\alpha+1)_{2i}$,
Proposition~\ref{prop:degenerate} gives
$\det(a_{2i+j})=2^{\binom n2}\prod_{k<n}k!\,\prod_{i}a_{2i}$. In
$\prod_{i=0}^{n-1}(\alpha+1)_{2i}=\prod_{i=0}^{n-1}\prod_{l=1}^{2i}(\alpha+l)$
the factor $(\alpha+l)$ occurs once for each $i$ with $2i\ge l$, hence with
multiplicity $n-\lceil l/2\rceil$; writing $l=2m-1$ and $l=2m$ gives the
exponent $n-m$.
\end{proof}

\begin{Corollary}[Catalan numbers]\label{cor:catalan}
Let $C_m=\frac1{m+1}\binom{2m}m$ and let
$0\le x_0<x_1<\dots<x_{n-1}$ be integers. Then
\begin{equation}\label{eq:catalangen}
\det\bigl(C_{x_i+j}\bigr)_{0\le i,j\le n-1}
=\prod_{0\le i<j\le n-1}(x_j-x_i)\;
\prod_{i=0}^{n-1}\frac{(2x_i)!\,(n+i)!}{(2i)!\;x_i!\;(x_i+n)!}.
\end{equation}
In particular, $x_i=i$ recovers the well-known $\det(C_{i+j})=1$
\cite{Krattenthaler1998};
$x_i=2i$ gives the dilated Hankel determinant
\begin{equation}\label{eq:catalandouble}
\HH_n(C)
=2^{\binom n2}\,\prod_{i=0}^{n-1}\binom{4i}{2i}\,
\frac{i!\,(n+i)!}{(n+2i)!}
=1,\,3,\,32,\,1232,\,172032,\dots;
\end{equation}
and $x_i=ri$ gives the $r$-step analogue
\begin{equation}\label{eq:catalanrfold}
\det\bigl(C_{ri+j}\bigr)_{0\le i,j\le n-1}
=r^{\binom n2}\,\prod_{i=0}^{n-1}
\frac{i!\,(2ri)!\,(n+i)!}{(2i)!\,(ri)!\,(ri+n)!}\,,
\end{equation}
e.g.\ the 3-step Hankel values $1,\,9,\,990,\,1363230,\dots$ for $r=3$.
\end{Corollary}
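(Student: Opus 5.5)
This is a direct specialisation of Theorem~\ref{thm:beta}, in the same spirit as the derivations above. We write the Catalan numbers as a member of \eqref{eq:betadef}:
$$C_m=4^m\,\frac{(\tfrac12)_m}{(2)_m},$$
so that $a_0=1$, $\rho=4$, $\alpha=\tfrac12$ and $\beta=2$, with $\beta\notin\{0,-1,\dots\}$. Substituting into \eqref{eq:beta}, the right-hand side becomes
$$4^{\binom n2}\prod_{i=0}^{n-1}\left(\tfrac32\right)_i\;\prod_{i<j}(x_j-x_i)\;\prod_{i=0}^{n-1}\frac{C_{x_i}}{(x_i+2)_{n-1}}.$$
It then remains to convert these Pochhammer symbols into factorials.

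\emph{The constant.} We have $(\tfrac32)_i=(2i+1)!!/2^i=(2i+1)!/(4^i\,i!)$. Since $\sum_i i=\binom n2$, the factor $4^{\binom n2}$ cancels exactly against $\prod_i 4^{-i}$, leaving $\prod_i (2i+1)!/i!$.

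\emph{The row factor.} Using $C_x=(2x)!/(x!\,(x+1)!)$ and $(x+2)_{n-1}=(x+n)!/(x+1)!$, we get
$$\frac{C_x}{(x+2)_{n-1}}=\frac{(2x)!}{x!\,(x+n)!}.$$

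\emph{Matching with \eqref{eq:catalangen}.} Comparing with the claimed formula, we must show
$$\prod_{i=0}^{n-1}\frac{(2i+1)!}{i!}=\prod_{i=0}^{n-1}\frac{(n+i)!}{(2i)!},$$
which is independent of the $x_i$. Both sides equal $1$ at $n=1$. For the induction step $n\to n+1$, the left side gains the factor $(2n+1)!/n!$. On the right side, the new numerators are $\prod_{i=0}^{n}(n+1+i)!$, which relate to the old ones $\prod_{i=0}^{n-1}(n+i)!$ by gaining $(2n)!$, losing $n!$, and acquiring an extra factor $(2n+1)!$ once the multiplicative shifts are accounted for; the denominator gains $(2n)!$. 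The net ratio is $(2n+1)!/n!$, matching the left side. This short telescoping verification is the only real computation, and the main risk of error lies here, so it should be checked carefully, for instance at $n=2$: both sides equal $6$.

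\emph{The special cases.} Setting $x_i=i$, the Vandermonde product is $\prod i!$, and the product should collapse to $1$; it suffices to cite or check this. Setting $x_i=ri$ gives $\prod_{i<j} r(j-i)=r^{\binom n2}\prod_i i!$, which yields \eqref{eq:catalanrfold} directly; the case $r=2$ gives \eqref{eq:catalandouble} after writing $(4i)!/((2i)!)^2=\binom{4i}{2i}$. The listed numerical values are then checked by direct computation.
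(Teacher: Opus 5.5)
Your proposal is correct and follows the paper's derivation step for step: you identify $C_m$ as the Beta member $(\rho,\alpha,\beta)=(4,\tfrac12,2)$, reduce the row factor to $(2x)!/(x!\,(x+n)!)$, convert $4^{\binom n2}\prod_i(\tfrac32)_i$ to $\prod_i(2i+1)!/i!$, and match this with $\prod_i(n+i)!/(2i)!$ by the common ratio $(2n+1)!/n!$ under $n\mapsto n+1$, exactly as the paper does. For $x_i=i$ you do not need to cite anything, since each row factor is exactly $1/i!$ and cancels the Vandermonde product $\prod_i i!$.
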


\begin{proof}[Derivation]
$C_m=4^m(\tfrac12)_m/(2)_m$ is the member
$(\alpha,\beta,\rho)=(\tfrac12,2,4)$ of \eqref{eq:betadef}. The row
factors of \eqref{eq:beta} are
$C_x/(x+2)_{n-1}=\frac{(2x)!}{x!\,(x+1)!}\cdot\frac{(x+1)!}{(x+n)!}
=\frac{(2x)!}{x!\,(x+n)!}$, and the constant is
$4^{\binom n2}\prod_i(\tfrac32)_i
=\prod_i\frac{(2i+1)!}{i!}=\prod_i\frac{(n+i)!}{(2i)!}$, using
$(\tfrac32)_i=(2i+1)!/(4^i\,i!)$ and the fact that both
$\prod_{i<n}(2i+1)!/i!$ and $\prod_{i<n}(n+i)!/(2i)!$ equal $1$ at
$n=1$ and are multiplied by $(2n+1)!/n!$ when $n\mapsto n+1$.
\end{proof}

The general Catalan evaluation \eqref{eq:catalangen} is not new: it is in the
1989 Gessel--Viennot preprint \cite{Gessel1989Viennot}, proved in a journal by
Krattenthaler \cite[Theorem~3]{Krattenthaler2010} via
\cite[Lemma~2.2]{Krattenthaler1990}, with earlier special cases in
\cite{SainteCatherineViennot1986} and related evaluations in the compendia
\cite{Krattenthaler1998,Krattenthaler2005}. The specialisations
\eqref{eq:catalandouble} and \eqref{eq:catalanrfold} appear to be unrecorded.

\begin{Corollary}[central binomial coefficients]\label{cor:cbinom}
Let $b_m=\binom{2m}m$ and let $0\le x_0<x_1<\dots<x_{n-1}$ be
integers. Then
\begin{equation}\label{eq:cbinomgen}
\det\bigl(b_{x_i+j}\bigr)_{0\le i,j\le n-1}
=2^{\,n-1}\prod_{0\le i<j\le n-1}(x_j-x_i)\;
\prod_{i=0}^{n-1}\frac{(2x_i)!\,(n-1+i)!}{(2i)!\;x_i!\;(x_i+n-1)!}.
\end{equation}
In particular, $x_i=i$ recovers the classical
$\det(b_{i+j})=2^{n-1}$; $x_i=2i$ gives
\begin{equation}\label{eq:cbinomdouble}
\HH_n(b)
=2^{\binom n2+n-1}\,\prod_{i=0}^{n-1}\binom{4i}{2i}\,
\frac{i!\,(n-1+i)!}{(n-1+2i)!}
=1,\,8,\,224,\,22528,\,8200192,\dots;
\end{equation}
and $x_i=ri$ gives
\begin{equation}\label{eq:cbinomrfold}
\det\bigl(b_{ri+j}\bigr)_{0\le i,j\le n-1}
=2^{\,n-1}\,r^{\binom n2}\,\prod_{i=0}^{n-1}
\frac{i!\,(2ri)!\,(n-1+i)!}{(2i)!\,(ri)!\,(ri+n-1)!}\,.
\end{equation}
\end{Corollary}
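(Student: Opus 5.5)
This is a direct specialisation of Theorem~\ref{thm:beta}, handled exactly as Corollary~\ref{cor:catalan} was. I would write $b_m=\binom{2m}m=4^m(\tfrac12)_m/(1)_m$, the member $(\alpha,\beta,\rho)=(\tfrac12,1,4)$ of \eqref{eq:betadef} with $a_0=1$. Then \eqref{eq:beta} splits into three pieces, each of which I would rewrite in factorials: the Vandermonde product $\prod_{i<j}(x_j-x_i)$, the row factors $b_{x_i}/(x_i+1)_{n-1}$, and the constant $4^{\binom n2}\prod_{i<n}(\tfrac12)_i$.

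For the row factors, $(x+1)_{n-1}=(x+n-1)!/x!$ gives
$$\frac{b_x}{(x+1)_{n-1}}=\frac{(2x)!}{x!\,x!}\cdot\frac{x!}{(x+n-1)!}=\frac{(2x)!}{x!\,(x+n-1)!}.$$
For the constant, the identity $(\tfrac12)_i=(2i)!/(4^i i!)$ together with $\sum_i i=\binom n2$ gives $4^{\binom n2}\prod_i(\tfrac12)_i=\prod_{i<n}(2i)!/i!$. It remains to show that this equals $2^{n-1}\prod_{i<n}(n-1+i)!/(2i)!$. I would prove this by induction on $n$. Both sides equal $1$ at $n=1$. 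Passing from $n$ to $n+1$ multiplies the left side by $(2n)!/n!$. On the right, the product $\prod_{i<n}(n-1+i)!$ becomes $\prod_{i\le n}(n+i)!$, which gains the factor $(2n)!\,(2n-1)!/(n-1)!$, so the right side is multiplied by $2\cdot(2n)!\,(2n-1)!/\bigl((n-1)!\,(2n)!\bigr)=2(2n-1)!/(n-1)!=(2n)!/n!$. The two sides therefore agree. Substituting these pieces into \eqref{eq:beta} yields \eqref{eq:cbinomgen}. Theorem~\ref{thm:beta} requires $\beta=1\notin\{0,-1,\dots\}$, which holds.

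The specialisations then follow by substitution. For $x_i=i$, the Vandermonde product is $\prod_i i!$, and the row product $\prod_i(2i)!/(i!\,(n-1+i)!)$ cancels against the constant's factorials except for $1/\prod_i i!$, leaving $2^{n-1}$. For $x_i=2i$, the Vandermonde product is $2^{\binom n2}\prod_i i!$, and regrouping $(4i)!/((2i)!\,(2i)!)=\binom{4i}{2i}$ gives \eqref{eq:cbinomdouble}. I would check the listed values at $n=2$ ($=8$) and $n=3$ ($=224$). For $x_i=ri$, the Vandermonde product is $r^{\binom n2}\prod_i i!$, which gives \eqref{eq:cbinomrfold}.

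The only step requiring care is the factorial identity for the constant. It is a one-line induction, so I expect no real obstacle.
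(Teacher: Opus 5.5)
Your proposal is correct and follows the paper's derivation essentially verbatim: the paper likewise specialises Theorem~\ref{thm:beta} at $(\alpha,\beta,\rho)=(\tfrac12,1,4)$, computes the row factor $(2x)!/(x!\,(x+n-1)!)$, and proves $\prod_{i<n}(2i)!/i!=2^{n-1}\prod_{i<n}(n-1+i)!/(2i)!$ by checking $n=1$ and the common ratio $(2n)!/n!$. Your explicit handling of the three specialisations and of the admissibility of $\beta=1$ only fills in details the paper leaves to the reader.
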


\begin{proof}[Derivation]
$b_m=4^m(\tfrac12)_m/(1)_m$ is the member $(\tfrac12,1,4)$. The row
factors are $b_x/(x+1)_{n-1}=(2x)!/(x!\,(x+n-1)!)$, and the constant
is $4^{\binom n2}\prod_i(\tfrac12)_i=\prod_i\frac{(2i)!}{i!}
=2^{n-1}\prod_i\frac{(n-1+i)!}{(2i)!}$, using
$(\tfrac12)_i=(2i)!/(4^i\,i!)$ and checking, as before, that the last
two products agree at $n=1$ and are both multiplied by $(2n)!/n!$ when
$n\mapsto n+1$.
\end{proof}

\begin{Remark}\label{rem:cbinomcf}
The case $x_i=i$ of \eqref{eq:cbinomgen}, $\det(b_{i+j})=2^{n-1}$, is
classical and is usually proved by continued fractions: from
$\sum_mb_mz^m=(1-4z)^{-1/2}=1/(1-2zc(z))$, with $c(z)$ the Catalan
generating function and its Stieltjes fraction
$c(z)=1/(1-z/(1-z/(1-\cdots)))$, the contracted Jacobi fraction has
$\lambda_1=2$ and $\lambda_k=1$ for $k\ge2$, so the
Heilermann--Stieltjes theorem
\cite{Heilermann1846,Wall1948,Krattenthaler1998} gives
$\prod_k\lambda_k^{n-k}=2^{n-1}$. Theorem~\ref{thm:beta} re-proves it
without continued fractions.
\end{Remark}

\begin{Example}\label{ex:beta}
Three more integer members, with their dilated Hankel determinants
(immediate from Corollary~\ref{cor:betadouble} and
Proposition~\ref{prop:degenerate}):
\begin{align}
a_m&=(2m+1)!!=2^m(\tfrac32)_m: &
\HH_n&=4^{\binom n2}\prod_{i=0}^{n-1}i!\,(4i+1)!!\,,
\label{eq:exdf}\\
a_m&=\frac{(2m)!}{m!}=4^m(\tfrac12)_m: &
\HH_n&=8^{\binom n2}\prod_{i=0}^{n-1}\frac{i!\,(4i)!}{(2i)!}\,,
\label{eq:exqf}\\
a_m&=\binom{2m+1}{m}=4^m\,\frac{(\tfrac32)_m}{(2)_m}: &
\HH_n&=2^{\binom n2}\prod_{i=0}^{n-1}
\frac{i!\,(4i+1)!\,(n+i)!}{(2i+1)!\,(2i)!\,(2i+n)!}\,.
\label{eq:exodd}
\end{align}
For the last sequence the ordinary Hankel determinant ($x_i=i$)
equals $1$.
\end{Example}

\subsection{The boundary of the mechanism}\label{ssec:betaboundary}

\begin{Remark}\label{rem:betaboundary}
Theorem~\ref{thm:beta} also delimits the mechanism. If the term ratio
$a_{m+1}/a_m$ is a rational function of degree $d\ge2$ --- as for the
Fuss--Catalan numbers $\frac1{2m+1}\binom{3m}m$ or for $\binom{3m}m$
--- the polynomials $Q_j$ acquire degree $d(n-1)>n-1$ and the collapse
onto the Vandermonde determinant fails; and indeed the corresponding
dilated Hankel determinants exhibit large sporadic prime factors. The
same holds for sequences that are not of hypergeometric type at all
(Motzkin, Delannoy, Ap\'ery, Franel, Bell, \dots).
\end{Remark}

\begin{Remark}\label{rem:smallprimes}
The formulas of this section explain the experimental observation that
these determinants factorise into very small primes: a ratio of
factorials of arguments at most $4n$ has no prime factor beyond $4n$.
\end{Remark}

\section{Involutions and the Gaussian family}\label{sec:gauss}

The Hankel determinants of the Gaussian (normal) moment sequence --- and, more
generally, of moment sequences of orthogonal polynomial families such as the
Hermite polynomials --- are classical, and are treated in the standard
references on the moment problem and orthogonal polynomials: Shohat and
Tamarkin~\cite{Shohat1943Tamarkin}, Akhiezer~\cite{Akhiezer1965} and
Szeg\H o~\cite{Szego1975}. The evaluation below is a \emph{dilated}
(row-shifted) refinement of that classical Hankel determinant.

The Beta family of Section~\ref{sec:beta} is a plain sequence, and its
evaluation was the Vandermonde reduction~$\M1$. In this section the
sequence is given by an exponential generating function,
$f(x)=\sum_n a_n\,x^n/n!$, and the tool is the one-functional
reduction~$\M3$ (Lemma~\ref{lem:onefunc}). We carry it out on its
simplest instance, the Gaussian family: a single orthogonal family (the
Hermite polynomials) suffices, and the dilated Hankel determinant is
triangularised directly.

The bookkeeping principle behind every evaluation here is
Lemma~\ref{lem:family}, the change-of-family lemma: replacing the row and
column families of a moment matrix by monic families of the same degrees
leaves the determinant unchanged, and the triangular outcome is then read off
the diagonal. Here the new families are the Hermite polynomials $\He_j$ and
the even family $\Phi_i$ of Lemma~\ref{lem:tri}, both monic of strictly
increasing degree, hence unitriangular recombinations of the (split-)monomial
families they replace.

\begin{Theorem}\label{thm:gauss}
Let $c$ be an indeterminate (or any scalar) and let $(a_n)$ be defined
by
$$
f(x)=\sum_{n\ge0}a_n\,\frac{x^n}{n!}=e^{cx+x^2/2},
\qquad\text{i.e.}\qquad
a_n=\sum_{j\ge 0}\binom{n}{2j}(2j-1)!!\,c^{\,n-2j}.
$$
Then for all $n\ge 1$,
$$
\HH_n(f)=\det(a_{2i+j})_{0\le i,j\le n-1}
=(2c)^{\binom n2}\prod_{k=1}^{n-1}k!.
$$
\end{Theorem}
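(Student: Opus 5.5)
The plan is to apply the one-functional reduction $\M3$ (Lemma~\ref{lem:onefunc}) to the functional $\EE[z^m]=a_m$. The main work is to build the even family $\Phi_i$ explicitly from a generating function in an auxiliary variable.

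The first step is to identify $\EE$. Let $\mathcal N$ be the standard Gaussian functional, $\mathcal N[w^{2k}]=(2k-1)!!$ and $\mathcal N[w^{2k+1}]=0$. The formula for $a_n$ says exactly that $\EE[p(z)]=\mathcal N[p(w+c)]$. The monic orthogonal polynomials of $\mathcal N$ are the Hermite polynomials $\He_j$, with $\mathcal N[\He_j\He_k]=j!\,\delta_{jk}$. Hence $\EE$ is quasi-definite with $\pi_j(z)=\He_j(z-c)$. We will use the classical Hermite generating function $e^{uw-u^2/2}=\sum_j \He_j(w)u^j/j!$ together with the Gaussian integral
$$
\mathcal N\bigl[e^{tw^2+bw}\bigr]=(1-2t)^{-1/2}e^{b^2/(2(1-2t))}.
$$

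The second step is to construct $\Phi_i$. We look for it as $\Phi_i(z)=i!\,[\tau^i]\,B(\tau)\exp\bigl(t(\tau)\,z^2\bigr)$ for a reparametrisation $t(\tau)=\tau+O(\tau^2)$ and a normalising series $B(\tau)$ with $B(0)=1$. Any such $\Phi_i$ is an even polynomial with leading term $z^{2i}$, hence a unitriangular recombination of $1,z^2,\dots,z^{2i}$. Substituting $z=w+c$ and pairing with the Hermite generating function gives
$$
\sum_j \frac{u^j}{j!}\,\EE\bigl[e^{tz^2}\He_j(z-c)\bigr]
=(1-2t)^{-1/2}e^{tc^2}\exp\Bigl(\frac{2tc\,u+t\,u^2}{1-2t}\Bigr).
$$
The choice $t=\tau/(1+2\tau)$ makes $t/(1-2t)=\tau$, so the exponent becomes $2c\tau u+\tau u^2$. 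We then take $B(\tau)$ to be the reciprocal of the remaining pure-$\tau$ factor $(1-2t)^{-1/2}e^{tc^2}$. With these choices,
$$
\EE\bigl[\Phi_i\,\pi_j\bigr]=i!\,j!\,[\tau^iu^j]\,e^{2c\tau u+\tau u^2}.
$$
In this exponential every monomial $\tau^a u^b$ has $b\ge a$. So $\EE[\Phi_i\pi_j]=0$ for $j<i$, which is exactly the one-sided orthogonality required in Lemma~\ref{lem:onefunc}. On the diagonal only the term $(2c\tau u)^i/i!$ contributes, giving $\EE[\Phi_i\pi_i]=i!\,(2c)^i$.

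The third step is to invoke Lemma~\ref{lem:onefunc}. It gives $\HH_n=\prod_{i<n}i!\,(2c)^i=(2c)^{\binom n2}\prod_{k=1}^{n-1}k!$. If $c$ is a scalar for which quasi-definiteness needs checking, none is needed here: $\mathcal N$ is quasi-definite for every $c$, since shifting the variable does not change the Hankel determinants. Alternatively the identity is polynomial in $c$ and so specialises from the indeterminate case.

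The main obstacle is finding the reparametrisation $t(\tau)$. The naive choice $t=\tau$ leaves the terms $u\,t^k$ with $k\ge2$ in the exponent, and these destroy the triangularity. The key is to see that the single substitution $t=\tau/(1+2\tau)$ removes all of them at once, because it turns both $u$-dependent coefficients into exact multiples of $\tau$. After that, the remaining checks (Gaussian integral, normalisation by $B$, reading off the diagonal coefficient) are routine.
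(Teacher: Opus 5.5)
Your proof is the paper's proof in generating-function form. It uses the same reduction $\M3$ with the same columns $\He_j(z-c)$. Since $(1+2\tau)^{-1/2}e^{-c^2\tau}\exp\bigl(\tfrac{\tau z^2}{1+2\tau}\bigr)=e^{-c^2\tau}\sum_{m\ge0}\He_{2m}(z)\,\tfrac{\tau^m}{m!}$, your $\Phi_i$ is exactly the paper's $\sum_k\binom ik(-c^2)^k\He_{2i-2k}(z)$, written in your variable $z$. Your single Gaussian integral replaces Lemma~\ref{lem:mixed} and the binomial-sum step, and it produces the same entries $j!\binom{i}{j-i}(2c)^{2i-j}$.

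One slip needs fixing. When you complete the square, the factor free of your Hermite variable $u$ is $(1-2t)^{-1/2}\exp\bigl(tc^2/(1-2t)\bigr)$, not $(1-2t)^{-1/2}e^{tc^2}$; you dropped the term $\tfrac{(2tc)^2}{2(1-2t)}$. The correct normaliser is therefore $B(\tau)=(1+2\tau)^{-1/2}e^{-c^2\tau}$. If you took $B$ literally as the reciprocal of the factor you displayed, a spurious factor $e^{2c^2\tau^2/(1+2\tau)}$ would remain and destroy the vanishing for $j<i$; for example, $\EE[\Phi_2\pi_0]$ would equal $4c^2$ instead of $0$.
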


\begin{Corollary}\label{cor:inv}
Let $f(x)=e^{x+x^2/2}$, so that $a_n$ is the number of involutions of
$n$ letters. Then for all $n\ge 1$,
$$
\HH_n(f)=2^{\binom n2}\prod_{k=1}^{n-1} k!.
$$
\end{Corollary}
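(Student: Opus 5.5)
The plan is to obtain Corollary~\ref{cor:inv} as the specialisation $c=1$ of Theorem~\ref{thm:gauss}, which we take as given. The only things to check are that the coefficient sequence of $e^{x+x^2/2}$ really is the involution sequence, and that the product formula specialises correctly.

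First, the combinatorial identification. Theorem~\ref{thm:gauss} gives
$$a_n=\sum_{j\ge0}\binom{n}{2j}(2j-1)!!\,c^{\,n-2j},$$
which at $c=1$ becomes $\sum_j\binom n{2j}(2j-1)!!$. An involution of $[n]$ with exactly $j$ two-cycles is determined by two choices. One chooses the $2j$ letters lying in two-cycles, in $\binom n{2j}$ ways. One then chooses a perfect matching of those letters, in $(2j-1)!!$ ways; the remaining $n-2j$ letters are fixed. Summing over $j$ gives the number of involutions.

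The same identification follows from the exponential formula. Fixed points (cycles of length one) contribute $e^{x}$, two-cycles contribute $e^{x^2/2}$, and the product is $e^{x+x^2/2}=\sum_n a_n x^n/n!$. So $(a_n)$ is exactly the sequence entering $\HH_n(f)$ in the corollary.

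Second, substitute $c=1$ into the closed form of Theorem~\ref{thm:gauss}:
$$(2c)^{\binom n2}\prod_{k=1}^{n-1}k! \;\longmapsto\; 2^{\binom n2}\prod_{k=1}^{n-1}k!.$$
Since the theorem holds for $c$ an indeterminate and the identity is polynomial in $c$, this specialisation is legitimate, and the corollary follows.

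There is no genuine obstacle here; all the work lies in Theorem~\ref{thm:gauss}. For a sanity check one can verify small orders by hand. With $a_0,a_1,a_2,a_3=1,1,2,4$, the case $n=2$ gives $\det\begin{pmatrix}1&1\\2&4\end{pmatrix}=2=2^{1}\cdot1!$, as predicted.
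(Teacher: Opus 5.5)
Your proposal is correct and follows the paper's route exactly: the paper also obtains Corollary~\ref{cor:inv} as the case $c=1$ of Theorem~\ref{thm:gauss}. It identifies $a_n=\sum_j\binom{n}{2j}(2j-1)!!$ with the involution count by the same choice of the $2j$ non-fixed letters followed by a perfect matching on them.
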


Corollary~\ref{cor:inv} is the case $c=1$ of Theorem~\ref{thm:gauss}:
an involution of $n$ letters is obtained by choosing the $2j$
non-fixed letters and one of the $(2j-1)!!$ perfect matchings on them,
so $a_n=\sum_j\binom{n}{2j}(2j-1)!!$ is the number of involutions.

We use the \emph{Gaussian moment functional} $\mathcal L$, defined on
polynomials in $z$ (with coefficients in $\QQ[c]$) by linearity and
$$
\mathcal L[z^m]=
\begin{cases}
(m-1)!! & \text{if $m$ is even},\\
0& \text{if $m$ is odd},
\end{cases}
$$
so that $a_n=\mathcal L[(c+z)^n]$. For an indeterminate $\alpha$ one has, as
a formal power series in $\alpha$ (the computation is finite in each
degree),
\begin{equation}\label{eq:expmoment}
\mathcal L\bigl[e^{\alpha z}\bigr]
=\sum_{j\ge 0}\frac{\alpha^{2j}(2j-1)!!}{(2j)!}
=\sum_{j\ge 0}\frac{\alpha^{2j}}{2^j j!}
=e^{\alpha^2/2}.
\end{equation}
Let $\He_m$ denote the monic Hermite polynomials,
$$
e^{tz-t^2/2}=\sum_{m\ge 0}\He_m(z)\,\frac{t^m}{m!},
$$
so that $\He_m$ is monic of degree $m$ and
$\He_m(-z)=(-1)^m\He_m(z)$.

\begin{Lemma}[mixed Hermite moments]\label{lem:mixed}
For all $p,q\ge 0$,
$$
\mathcal L\bigl[\He_p(c+z)\,\He_q(z)\bigr]=q!\,\binom{p}{q}\,c^{\,p-q}.
$$
(In particular the value is $0$ for $q>p$.)
\end{Lemma}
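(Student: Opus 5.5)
The plan is to use generating functions in two variables and the moment formula \eqref{eq:expmoment}. Multiply the generating series of the two Hermite families,
$$
\sum_{p,q\ge0}\mathcal L\bigl[\He_p(c+z)\He_q(z)\bigr]\frac{s^p t^q}{p!\,q!}
=\mathcal L\bigl[e^{s(c+z)-s^2/2}\,e^{tz-t^2/2}\bigr],
$$
which is legitimate as an identity of formal power series in $s,t$. Each coefficient is a finite computation, because $\mathcal L$ is linear and each coefficient of $s^pt^q$ is a polynomial in $z$.

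Next, pull out the $z$-free factor $e^{sc-s^2/2-t^2/2}$. Apply \eqref{eq:expmoment} with $\alpha=s+t$ to get $\mathcal L[e^{(s+t)z}]=e^{(s+t)^2/2}$. The exponent then simplifies to
$$
sc-\tfrac{s^2}{2}-\tfrac{t^2}{2}+\tfrac{(s+t)^2}{2}=sc+st,
$$
so the whole generating function equals $e^{sc+st}=e^{s(c+t)}$.

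Finally, expand $e^{s(c+t)}=\sum_p s^p(c+t)^p/p!$. The coefficient of $s^pt^q/(p!\,q!)$ is $q!\,[t^q](c+t)^p=q!\binom pq c^{p-q}$, which is the claimed value. It vanishes for $q>p$ because $\binom pq=0$ there.

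The only step needing care is justifying that \eqref{eq:expmoment} may be applied with $\alpha=s+t$, with the linear functional passing through the formal power series in two variables. This holds because $\mathcal L$ acts coefficientwise on $\QQ[c][z][[s,t]]$, and each $(s,t)$-coefficient of $e^{(s+t)z}$ is a single monomial in $z$. So the identity \eqref{eq:expmoment}, proved for an indeterminate $\alpha$, specialises by substituting $\alpha\mapsto s+t$. Everything else is routine coefficient extraction.
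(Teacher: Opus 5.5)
Your proposal is correct and is essentially the paper's proof: multiply the two Hermite generating functions, pull out the $z$-free factor, apply \eqref{eq:expmoment} with $\alpha=s+t$ to collapse everything to $e^{s(c+t)}$, and extract the coefficient of $s^pt^q/(p!\,q!)$. The only difference is that the paper names the variables the other way round and does not spell out the coefficientwise justification for applying $\mathcal L$ to formal power series, which you supply correctly.
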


\begin{proof}
By the generating function and \eqref{eq:expmoment},
\begin{align*}
\sum_{p,q\ge 0}\mathcal L\bigl[\He_p(c+z)\He_q(z)\bigr]\frac{t^p}{p!}\frac{s^q}{q!}
&=\mathcal L\Bigl[e^{t(c+z)-t^2/2}\,e^{sz-s^2/2}\Bigr]
=e^{tc-\frac{t^2+s^2}{2}}\,\mathcal L\bigl[e^{(t+s)z}\bigr]\\
&=e^{tc-\frac{t^2+s^2}{2}+\frac{(t+s)^2}{2}}
=e^{t(c+s)}
=\sum_{p\ge 0}\frac{t^p(c+s)^p}{p!}.
\end{align*}
Extracting the coefficient of $t^ps^q/(p!\,q!)$ gives the claim.
\end{proof}

\begin{Lemma}[triangularising family]\label{lem:tri}
For $i\ge 0$ put
$$
\Phi_i(z):=\sum_{k=0}^{i}\binom{i}{k}\,(-c^2)^k\,\He_{2i-2k}(c+z).
$$
Then $\Phi_i$ is a monic {\em even} polynomial of degree $2i$ in
$u:=c+z$, and for all $j\ge 0$
$$
\mathcal L\bigl[\Phi_i(z)\,\He_j(z)\bigr]
=j!\,\binom{i}{j-i}\,(2c)^{2i-j}.
$$
In particular this vanishes for $j<i$ and equals $i!\,(2c)^i$ for
$j=i$.
\end{Lemma}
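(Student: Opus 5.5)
Everything follows from Lemma~\ref{lem:mixed} by linearity, plus one binomial resummation. The monicity and evenness claims come first, directly from the definition. Each $\He_{2i-2k}(u)$ has parity $(-1)^{2i-2k}=1$, so it is an even polynomial in $u=c+z$. Hence $\Phi_i$ is a linear combination of even polynomials in $u$ and is itself even in $u$. Only the term $k=0$ has degree $2i$, and it is $\He_{2i}(u)$, which is monic. So $\Phi_i$ is monic of degree $2i$ in $u$, and therefore also in $z$.

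For the pairing, I apply Lemma~\ref{lem:mixed} termwise with $p=2i-2k$ and $q=j$:
$$
\mathcal L[\Phi_i\He_j]=j!\sum_{k=0}^{i}\binom ik(-c^2)^k\binom{2i-2k}{j}c^{\,2i-2k-j}
=j!\,c^{\,2i-j}\sum_{k}(-1)^k\binom ik\binom{2i-2k}{j}.
$$
The powers of $c$ combine uniformly to $c^{2i-j}$, so the whole task reduces to the combinatorial identity
$$
\sum_{k=0}^{i}(-1)^k\binom ik\binom{2i-2k}{j}=2^{2i-j}\binom{i}{j-i}.
$$

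I would prove this identity with generating functions in an auxiliary variable $x$, which I expect to be the only real step. Multiply by $x^j$ and sum over $j$; the left side becomes
$$
\sum_k(-1)^k\binom ik(1+x)^{2i-2k}=\bigl((1+x)^2-1\bigr)^i=x^i(2+x)^i .
$$
Extracting $[x^j]$ gives $\binom{i}{j-i}2^{\,i-(j-i)}=\binom{i}{j-i}2^{2i-j}$, as required. Combining with the factor $c^{2i-j}$ yields $j!\binom{i}{j-i}(2c)^{2i-j}$.

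The two special cases are then immediate. For $j<i$ we have $j-i<0$, so $\binom{i}{j-i}=0$ by the stated convention. For $j=i$ the formula gives $i!\,(2c)^i$.

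The main obstacle is simply recognizing the binomial sum as a coefficient of $((1+x)^2-1)^i$. There is also a small bookkeeping check at the edges of the range: when $j>2i$ every term in the sum vanishes because $\binom{2i-2k}{j}=0$, and $\binom{i}{j-i}=0$ as well, so the formula remains valid there.
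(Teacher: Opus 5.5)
Your proof is correct and is essentially the paper's own argument. Both apply Lemma~\ref{lem:mixed} termwise and then read the alternating binomial sum as the coefficient of $x^j$ in $\bigl((1+x)^2-1\bigr)^i=x^i(x+2)^i$; your added check of the range $j>2i$ is a harmless extra.
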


\begin{proof}
Each $\He_{2i-2k}(u)$ is an even monic polynomial in $u$ of degree
$2i-2k$, so $\Phi_i$ is even in $u$, of degree $2i$, with leading
coefficient $1$. By Lemma~\ref{lem:mixed},
$$
\mathcal L[\Phi_i\,\He_j]
=\sum_{k=0}^{i}\binom{i}{k}(-c^2)^k\,
j!\binom{2i-2k}{j}c^{\,2i-2k-j}
=j!\,c^{\,2i-j}\sum_{k=0}^{i}(-1)^k\binom{i}{k}\binom{2i-2k}{j}.
$$
The last sum is the coefficient of $y^j$ in
$$
\sum_{k=0}^{i}(-1)^k\binom{i}{k}(1+y)^{2i-2k}
=\bigl((1+y)^2-1\bigr)^i
=y^i\,(y+2)^i,
$$
namely $2^{2i-j}\binom{i}{j-i}$, which gives the claim
(and the vanishing for $j<i$, since $y^i(y+2)^i$ has valuation $i$).
\end{proof}

\begin{Remark}[where the definition comes from]\label{rem:tri}
The formula for $\Phi_i$ is not guessed; it is forced. We seek a monic
even polynomial of degree $2i$ in $u=c+z$ that is orthogonal to
$\He_0,\dots,\He_{i-1}$ under the pairing
$(f,g)\mapsto\mathcal L[f(z)g(z)]$. Writing it in the even Hermite basis,
$\Phi_i=\sum_{k=0}^i\gamma_k\He_{2i-2k}(c+z)$ with $\gamma_0=1$, and
applying Lemma~\ref{lem:mixed}, the whole row assembles into one
generating function,
$$
\sum_{j\ge 0}\frac{\mathcal L[\Phi_i\,\He_j]}{j!}\,y^j
=\sum_{k=0}^i\gamma_k\,(c+y)^{2i-2k}.
$$
The triangularity condition $\mathcal L[\Phi_i\,\He_j]=0$ for $j<i$ says this
series has valuation $\ge i$ at $y=0$; in the variable $w=c+y$ it says
$\sum_k\gamma_k w^{2i-2k}$ is divisible by $(w-c)^i$. Being even and
monic of degree $2i$, it is then divisible by $(w+c)^i$ as well, hence
equal to $(w^2-c^2)^i$. Matching coefficients gives
$\gamma_k=\binom ik(-c^2)^k$. Equivalently, $\Phi_i$ is the
Hermite-umbral image of $(u^2-c^2)^i=\bigl(z(z+2c)\bigr)^i$, obtained by
replacing each power $u^{2m}$ by $\He_{2m}(u)$.
\end{Remark}

\begin{proof}[Proof of Theorem~\ref{thm:gauss}]
Write $u=c+z$, so that
$$
\HH_n=\det\bigl(a_{2i+j}\bigr)_{0\le i,j\le n-1}
=\det\Bigl(\mathcal L\bigl[u^{2i}\cdot u^{j}\bigr]\Bigr)_{0\le i,j\le n-1}.
$$
Replace the column family $(u^j)_{j<n}$ by $(\He_j(z))_{j<n}$, monic of
degree $j$ in $u$ since $\He_j(z)=\He_j(u-c)$, and the row family
$(u^{2i})_{i<n}$ by $(\Phi_i)_{i<n}$, monic even of degree $2i$ in $u$ by
Lemma~\ref{lem:tri}. Both are unitriangular recombinations, so by
Lemma~\ref{lem:family}
$$
\HH_n=\det\Bigl(\mathcal L\bigl[\Phi_i(z)\,\He_j(z)\bigr]\Bigr)_{0\le i,j\le n-1}.
$$
By Lemma~\ref{lem:tri} this last matrix is upper triangular (the
$(i,j)$ entry vanishes for $j<i$) with diagonal entries $i!\,(2c)^i$.
Hence
\[
\HH_n=\prod_{i=0}^{n-1} i!\,(2c)^i
=(2c)^{\binom n2}\prod_{k=1}^{n-1}k!.
\qedhere
\]
\end{proof}

\begin{Remark}\label{rem:gaussgen}
(i) By the homogeneity
$\HH_n\bigl(f(\sigma x)\bigr)=\sigma^{3\binom n2}\HH_n(f)$
(Lemma~\ref{lem:scale}), Theorem~\ref{thm:gauss} gives, for
$f(x)=e^{bx+tx^2}$,
$$
\HH_n(f)=(4bt)^{\binom n2}\prod_{k=1}^{n-1}k!.
$$
(ii) For $c=0$ (i.e.\ $f=e^{x^2/2}$, $a_{2k}=(2k-1)!!$, $a_{2k+1}=0$)
the theorem gives $\HH_n=0$ for all $n\ge 2$.
\end{Remark}

\begin{Remark}[Gaussian rigidity among Appell families]\label{rem:rigidity}
The evaluation hinges on the shift identity of Lemma~\ref{lem:mixed},
$\mathcal L[\He_p(c+z)\He_q(z)]=q!\binom pq c^{p-q}$, which expresses that the
Hermite polynomials form an \emph{Appell} sequence,
$\sum_{m\ge0}\He_m(z)\,t^m/m!=G(t)\,e^{tz}$ with $G(t)=e^{-t^2/2}$. The method
therefore asks for a family $(\pi_m)$ that is at once the sequence of monic
orthogonal polynomials of $\mathcal L$ and an Appell sequence, and these two
requirements already pin down the weight. Indeed, normalising $G(0)=1$ and
$\mathcal L[1]=1$, orthogonality gives $\mathcal L[e^{\alpha z}]=1/G(\alpha)$ and
makes the mixed generating function
$$
\sum_{p,q\ge0}\mathcal L[\pi_p(z)\,\pi_q(z)]\frac{t^p}{p!}\frac{s^q}{q!}
=\frac{G(t)\,G(s)}{G(t+s)}
$$
a function of $ts$ alone. With $\gamma=\log G$ this reads
$\gamma(t)+\gamma(s)-\gamma(t+s)=\psi(ts)$, and matching the coefficient of
$t^as^b$ for $a\ne b$, $a,b\ge1$, forces $\gamma_k=0$ for $k\ge3$, i.e.\
$\gamma(t)=\gamma_1t+\gamma_2t^2$. Hence $\mathcal L$ is Gaussian: up to a
translation of $z$ (the term $\gamma_1$) and a scaling
(Remark~\ref{rem:gaussgen}(i)), the Hermite/Gaussian case is the only Appell
family to which the one-functional triangularisation applies.
\end{Remark}

\section{The single shift of the Gaussian family}\label{sec:dergauss}

The same Hermite machinery (again the one-functional reduction~$\M3$) yields
the derivative rule for this family.
Since differentiation shifts the coefficient sequence,
$a_n(f')=a_{n+1}(f)$, the dilated Hankel determinant of $f'$
is the \emph{shifted} dilated Hankel determinant of $f$:
$\HH_n(f')=\det(a_{2i+j+1})_{0\le i,j\le n-1}$. (For the rational
chain the corresponding rule is the ratio formula in
Corollary~\ref{cor:factorial}.)

\begin{Theorem}[Single shift of the Gaussian family]\label{thm:dergauss}
Let $f(x)=e^{cx+x^2/2}$ as in Theorem~\ref{thm:gauss}. Then
$$
\HH_n(f')=c^n\,\HH_n(f)=c^n\,(2c)^{\binom n2}\prod_{k=1}^{n-1}k! .
$$
In particular, for $c=1$ the dilated Hankel determinant is invariant
under differentiation: for the involution numbers $a_n$,
$$
\det\bigl(a_{2i+j+1}\bigr)_{0\le i,j\le n-1}
=\det\bigl(a_{2i+j}\bigr)_{0\le i,j\le n-1}
=2^{\binom n2}\prod_{k=1}^{n-1}k! .
$$
\end{Theorem}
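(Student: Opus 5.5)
We follow the proof of Theorem~\ref{thm:gauss} step by step, using the one-functional reduction~$\M3$, but now with the shifted matrix. With $u=c+z$ and $a_m=\mathcal L[u^m]$ we have
$$
\HH_n(f')=\det\bigl(\mathcal L[u^{2i}\cdot u^{j+1}]\bigr)_{0\le i,j\le n-1}.
$$

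\textbf{Choice of families.} On the row side we again replace $(u^{2i})$ by the even family $(\Phi_i)$ of Lemma~\ref{lem:tri}. On the column side, $(u^{j+1})_{j<n}$ is not a family of monic polynomials of degrees $0,\dots,n-1$. It is, however, $u$ times such a family, so we may replace $u^{j+1}$ by $u\,\He_j(z)$. This is a unitriangular recombination of the column family $(u\cdot u^{j})$, so Lemma~\ref{lem:family} gives
$$
\HH_n(f')=\det\bigl(\mathcal L[\Phi_i(z)\,u\,\He_j(z)]\bigr)_{0\le i,j\le n-1}.
$$

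\textbf{Computing the entries.} Write $u\He_j(z)=c\He_j(z)+z\He_j(z)$ and use the Hermite recurrence $z\He_j=\He_{j+1}+j\He_{j-1}$. The entry then becomes
$$
c\,E_{i,j}+E_{i,j+1}+j\,E_{i,j-1},\qquad E_{i,j}:=\mathcal L[\Phi_i\He_j]=j!\binom{i}{j-i}(2c)^{2i-j}.
$$
Since $E_{i,j}=0$ for $j<i$, the new matrix is upper triangular except possibly on the subdiagonal $j=i-1$, where the term $E_{i,i}=i!\,(2c)^i$ appears. This shows that a naive triangularity fails, and that is the main obstacle.

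\textbf{Resolving the subdiagonal.} The first remedy is to modify the row family: replace $\Phi_i$ by an even polynomial $\Psi_i$, monic of degree $2i$, satisfying $\mathcal L[\Psi_i\,u\,\He_j]=0$ for $j<i$. Equivalently, following Remark~\ref{rem:tri}, the generating function
$$
\sum_j \frac{\mathcal L[\Psi_i\He_j]}{j!}\,y^j=\sum_k\gamma_k(c+y)^{2i-2k}
$$
should be transformed by the operator corresponding to multiplication by $u$. By Lemma~\ref{lem:mixed}, multiplying by $u$ acts on this generating function as $w\mapsto w\cdot(\ )$ plus a derivative term. We then ask for a monic even polynomial $g(w)$ of degree $2i$ such that $(w-c)^i$ divides $w\,g(w)+g'(w)$, or whatever the exact shape turns out to be.

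As a backup, if no closed form for $\Psi_i$ appears, we argue by column operations. Write $u\He_j=\He_{j+1}(z)+c\He_j+j\He_{j-1}$ directly in the $\He$ basis, and observe that the matrix $(\mathcal L[\Phi_i\He_{j+k}])$ is banded. Its determinant is then an $n\times n$ determinant of the explicit numbers $E$. We evaluate it by factoring out $(2c)^{2i-j}$ and $j!$, which leaves $\det\bigl(c\binom{i}{j-i}+\dots\bigr)$, a binomial-type determinant that we expect to reduce to $c^n$.

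\textbf{Consistency check.} The target says the diagonal product acquires exactly one factor $c$ per row. This matches the expectation that the corrected diagonal entry is $c\cdot i!\,(2c)^i$. We plan to check $n=1,2$ directly ($a_1=c$, $\det=c\cdot 2c\cdot\ldots$) to confirm the normalisation. Setting $c=1$ then gives the involution statement.
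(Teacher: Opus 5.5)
Your reduction is set up correctly and matches the paper's: replacing $u^{j+1}$ by $u\,\He_j(z)$ is a legitimate unitriangular change. The entries against $\Phi_i$ are $cE_{i,j}+E_{i,j+1}+jE_{i,j-1}$, and the matrix is upper Hessenberg with nonzero subdiagonal $E_{i,i}=i!\,(2c)^i$. Multiplication by $u$ does act on the row generating function as $G\mapsto wG+dG/dw$ with $w=c+y$.

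\textbf{The gap.} The argument stops exactly where the content of the theorem lies. You never construct the corrected rows $\Psi_i$; the defining condition is left as ``or whatever the exact shape turns out to be''. You never show such rows exist, and never compute their diagonal entries: the value $c\,i!\,(2c)^i$ is read off from the target rather than derived. The backup route has the same hole. After pulling $j!\,(2c)^{2i-j}$ out of the entries, one gets $\HH_n(f')=(2c)^{\binom n2}\prod_{k<n}k!\cdot\det N$ for an explicit Hessenberg matrix $N$. So the asserted $\det N=c^n$ is just a restatement of the theorem, and you give no argument for it. As it stands, only $n\le2$ is checked.

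\textbf{How to close it.} Your first route works, and your divisibility condition is the right one.
\begin{itemize}
\item If $g$ is even and monic of degree $2i$, then $wg+g'$ is odd and monic of degree $2i+1$. Divisibility by $(w-c)^i$ then forces divisibility by $(w+c)^i$ and by $w$, so $wg+g'=w(w^2-c^2)^i$.
\item Conversely, $g\mapsto wg+g'$ raises the degree by exactly one. It therefore maps the even polynomials of degree $\le2i$ bijectively onto the odd ones of degree $\le2i+1$, so this $g$ exists and is unique.
\item Explicitly, $g=i!\,(-2)^iT_i(W)$ with $W=-\tfrac12(w^2-c^2)$ and $T_i(W)=\sum_{s\le i}W^s/s!$. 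This corresponds to $\Psi_i=\sum_{s=0}^{i}\frac{i!}{s!}(-2)^{i-s}\Phi_s$, which is precisely the paper's row family; the paper checks the vanishing via the differential equation satisfied by the truncated exponential.
\item The diagonal entry is then $i!\,[y^i]\,(c+y)\,y^i(2c+y)^i=c\,i!\,(2c)^i$, and the product over $i$ gives $c^n\HH_n(f)$.
\end{itemize}
With this step supplied your proof would be complete, and slightly more conceptual than the paper's coefficient extraction.
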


\begin{proof}
We keep the notation of the proof of Theorem~\ref{thm:gauss}
($\mathcal L$, $u=c+z$, $\He_m$, $\Phi_i$). Since $a_{m+1}=\mathcal L[u^{m+1}]$,
$$
\HH_n(f')=\det\Bigl(\mathcal L\bigl[u^{2i}\cdot u^{j+1}\bigr]\Bigr)_{0\le i,j\le n-1}.
$$
Replace the column family $(u^{j+1})_{j<n}$ by
$\bigl(u\,\He_j(z)\bigr)_{j<n}$ ($u\He_j(z)=u\,\He_j(u-c)$ is monic of
degree $j+1$ in $u$ with no constant term) and the row family
$(u^{2i})_{i<n}$ by
$$
\Psi_i:=\sum_{s=0}^{i}\frac{i!}{s!}\,(-2)^{i-s}\,\Phi_s ,
$$
monic and even of degree $2i$ in $u$; both are unitriangular
recombinations (Lemma~\ref{lem:family}). As in Remark~\ref{rem:tri}, the
weights $\frac{i!}{s!}(-2)^{i-s}$ are not guessed: they are exactly those
that assemble the row generating functions into the truncated exponential
$T_i$ below, whose first-order differential equation delivers the required
vanishing. We claim
\begin{equation}\label{eq:dergauss}
\mathcal L\bigl[\Psi_i\,u\,\He_j\bigr]=
\begin{cases}
0, & j<i,\\[2pt]
c\,i!\,(2c)^i, & j=i,
\end{cases}
\end{equation}
which yields
$\HH_n(f')=\prod_{i=0}^{n-1}c\,i!(2c)^i=c^n\HH_n(f)$ by triangularity.

To prove \eqref{eq:dergauss}, first pair $\Phi_s$ with the new
columns. By the recurrence $u\He_p(u)=\He_{p+1}(u)+p\He_{p-1}(u)$
applied to the argument $u=c+z$, and Lemma~\ref{lem:mixed},
$$
\mathcal L\bigl[\Phi_s\,u\He_j\bigr]
=\sum_{k=0}^{s}\binom sk(-c^2)^k
\Bigl(j!\tbinom{2s-2k+1}{j}c^{2s-2k+1-j}
+(2s-2k)\,j!\tbinom{2s-2k-1}{j}c^{2s-2k-1-j}\Bigr).
$$
Using $\sum_k(-1)^k\binom sk(1+y)^{2s-2k}=\bigl(y(2+y)\bigr)^s$ and
$m\binom{m-1}j=(j+1)\binom m{j+1}$, this becomes
$$
\mathcal L\bigl[\Phi_s\,u\He_j\bigr]
=j!\,c^{2s+1-j}\,[y^j]\,(1+y)\bigl(y(2+y)\bigr)^s
+(j+1)!\,c^{2s-1-j}\,[y^{j+1}]\bigl(y(2+y)\bigr)^s .
$$
Now put $w=w(y):=-\tfrac{c^2}{2}\,y(2+y)$, so that
$c^{2s}\bigl(y(2+y)\bigr)^s=(-2)^sw^s$ and, summing over $s$ with the
weights $\frac{i!}{s!}(-2)^{i-s}$,
$$
\mathcal L\bigl[\Psi_i\,u\He_j\bigr]
=i!\,(-2)^i\,j!\,c^{-1-j}
\Bigl(c^2\,[y^j]\,(1+y)\,T_i+(j+1)\,[y^{j+1}]\,T_i\Bigr),
\qquad
T_i:=\sum_{s=0}^{i}\frac{w(y)^s}{s!},
$$
the truncated exponential. Since $\frac{dw}{dy}=-c^2(1+y)$ and
$T_i'(w)=T_i(w)-\frac{w^i}{i!}$,
$$
\frac{d}{dy}\,T_i+c^2(1+y)\,T_i
=c^2(1+y)\,\frac{w^i}{i!},
$$
whose right-hand side has $y$-valuation $i$ (as $w$ has valuation
$1$). Extracting the coefficient of $y^j$ gives
$$
(j+1)[y^{j+1}]T_i+c^2[y^j](1+y)T_i=
\begin{cases}
0, & j<i,\\[2pt]
\dfrac{(-1)^i\,c^{2i+2}}{i!}, & j=i,
\end{cases}
$$
using $[y^i]w^i=(-c^2)^i$. The case $j<i$ gives the vanishing in
\eqref{eq:dergauss}, and the case $j=i$ gives
\[
\mathcal L\bigl[\Psi_i\,u\He_i\bigr]
=i!\,(-2)^i\cdot i!\,c^{-1-i}\cdot\frac{(-1)^i\,c^{2i+2}}{i!}
=i!\,2^i\,c^{\,i+1}=c\,i!\,(2c)^i .
\qedhere
\]
\end{proof}

\section{The Euler number family}\label{sec:gen}

The \emph{Euler numbers} $E_n$ are defined by
\cite{Andre1879,Stieltjes1894Re,Viennot1982, Han2020Euler}
\begin{equation}\label{eq:eulerdef}
\tan x+\sec x=\sum_{n\ge0}E_n\frac{x^n}{n!},
\qquad (E_n)_{n\ge0}=1,1,1,2,5,16,61,272,1385,\dots
\end{equation}
The even Euler numbers $E_{2n}$ (resp.\ odd $E_{2n+1}$) are the secant
(resp.\ tangent) numbers; by Andr\'e's theorem $E_n$ is the number of
alternating permutations of $\{1,\dots,n\}$.
The ordinary generating functions of the secant and tangent numbers have
the classical Stieltjes $S$-fractions
$$
\sum_{k\ge 0}E_{2k}\,x^{k}
=\cfrac{1}{1-\cfrac{1\cdot 1\,x}{1-\cfrac{2\cdot 2\,x}{1-\ddots}}},
\qquad
\sum_{k\ge 0}E_{2k+1}\,x^{k}
=\cfrac{1}{1-\cfrac{1\cdot 2\,x}{1-\cfrac{2\cdot 3\,x}{1-\ddots}}},
$$
with coefficients $k^2$ and $k(k+1)$ respectively.

We embed this pair into a two-parameter family, and specialise the general
biorthogonal reduction~$\M2$ of Section~\ref{sec:biotho}, in the
$S$-fraction form of Section~\ref{sec:sfrac}, to
\begin{equation}\label{eq:stparam}
u_j=j(j+s),\qquad v_j=j(j+t)\qquad(j\ge1):
\end{equation}
$\mathbf a=(a_n)_{n\ge0}$ is the sequence whose even and odd parts have the
Stieltjes continued fractions \eqref{eq:Fsfrac} with these coefficients.
With coefficients $j(j+s)$, each series in \eqref{eq:Fsfrac} is a classical
Stieltjes $S$-fraction \cite{Stieltjes1894Re,Wall1948,Flajolet1980}; at $s=0$,
$t=1$ the two fractions reduce to the secant and tangent fractions above.

To describe the underlying function, we introduce the generalised secant
numbers $E_{2k}^{(j)}$ by
$$
\frac{1}{\cos(x)^{j}}=\sum_{k\ge 0} E_{2k}^{(j)}\,\frac{x^{2k}}{(2k)!}.
$$
Their ordinary generating function has the Stieltjes $S$-fraction
$$
\sum_{k\ge 0}E_{2k}^{(s+1)}\,x^{k}
=\cfrac{1}{1-\cfrac{1(1+s)\,x}{1-\cfrac{2(2+s)\,x}{1-\ddots}}},
$$
whose coefficients are exactly $u_j=j(j+s)$ of \eqref{eq:stparam}. Thus,
taking the even part of $\mathbf a$ from the secant fraction $u_j=j(j+s)$ and the
odd part from $v_j=j(j+t)$, the exponential generating function of $\mathbf a$ is
$$
\sum_{k\ge 0} a_k\,\frac{x^k}{k!}
=\sum_{k\ge 0} a_{2k}\,\frac{x^{2k}}{(2k)!}
+\sum_{k\ge 0} a_{2k+1}\,\frac{x^{2k+1}}{(2k+1)!}
=\frac{1}{\cos(x)^{s+1}}+\int_{0}^{x}\frac{dy}{\cos(y)^{t+1}},
$$
since $a_{2k}=E_{2k}^{(s+1)}$ and, differentiating the odd part,
$a_{2k+1}=E_{2k}^{(t+1)}$. Accordingly we write
\begin{equation}\label{eq:gf:F}
\HH_n=\HH_n\!\left(\frac{1}{\cos(x)^{s+1}}
       +\int_{0}^{x}\frac{dy}{\cos(y)^{t+1}}\right).
\end{equation}

Throughout write $\bar n=\lceil n/2\rceil$ and $\underline n=\lfloor n/2\rfloor$, so
$\bar n+\underline n=n$ and $\bar n\in\{\underline n,\underline n+1\}$. The main result of this
section is the following closed form, valid for \emph{all} $(s,t)$; the
secant/tangent case $s=0$, $t=1$ and its relatives are collected in
Section~\ref{sec:coro}.

\begin{Proposition}\label{prop:family}
Let
\begin{equation*}
	f(x)=\frac{1}{\cos(x)^{s+1}} +\int_{0}^{x}\frac{dy}{\cos(y)^{t+1}}.
\end{equation*}
Then,
	\begin{equation}\label{HHn:st}
	\HH_n(f)=
(-1)^{\binom{\bar n}{2}}
\Bigl(\prod_{i=0}^{n-1}(2i)!\Bigr)
\Bigl(\prod_{l=0}^{\bar n-1}(s{+}1)_{2l}\Bigr)
\Bigl(\prod_{m=0}^{\underline n-1}(t{+}1)_{2m}\Bigr)
\prod_{r=1}^{\bar n}\prod_{c=1}^{\underline n}
\frac{(t-s)/2+c-r}{n-r-c+1}.
\end{equation}
\end{Proposition}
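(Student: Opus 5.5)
The plan is to apply the biorthogonal reduction $\M2$ directly. Lemma~\ref{lem:bindetGene} will be specialised through Lemma~\ref{lem:cfGene} to the data $u_j=j(j+s)$ and $v_j=j(j+t)$ of \eqref{eq:stparam}.

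\emph{Step 1: norms and recurrence coefficients.} By Lemma~\ref{lem:cfGene} the norms are
$$h^S_l=\prod_{j=1}^{2l}j(j+s)=(2l)!\,(s+1)_{2l},\qquad h^T_m=(2m)!\,(t+1)_{2m}.$$
The recurrence coefficients are
$$c^S_i=2i(2i+s)+(2i+1)(2i+1+s),\qquad \lambda^S_i=(2i-1)(2i-1+s)\,2i(2i+s),$$
and the same with $t$ for $\mathcal T$. Substituting the norms into Lemma~\ref{lem:bindetGene} already produces the sign $(-1)^{\binom{\bar n}2}$, the two Pochhammer products of \eqref{HHn:st}, and the factorials $\prod_{l<\bar n}(2l)!\prod_{m<\underline n}(2m)!$. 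What remains is $\det(\kappa_{\bar n+r,m})_{0\le r,m<\underline n}$.

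\emph{Step 2: closed form of the connection coefficients.} Here $\mathcal S$ and $\mathcal T$ form a one-parameter pair (the secant-power functionals differing in the exponent), as in the collapse mechanism. I conjecture the single-term formula
$$\kappa_{i,m}=\gamma^{\,i-m}\,\frac{(2i)!}{(2m)!}\binom{(t-s)/2}{i-m},$$
with a universal constant $\gamma$ to be determined; I expect $\gamma=\pm1$ or $\pm4$. I will first fix $\gamma$, and test the shape of the formula, by computing $\kappa_{1,0}$ and $\kappa_{2,0},\kappa_{2,1}$ by hand. I will then prove the formula by induction on $i$ using \eqref{kapparec}. After dividing by $(2i+2)!/(2m)!$, the four terms of \eqref{kapparec} become polynomial multiples of $\binom{a}{i-m}$ with $a=(t-s)/2$, via $\binom{a}{k\pm1}=\binom ak\cdot(\text{rational in }k,a)$. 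The identity then reduces to a polynomial identity in $i,m,s,t$ of low degree, checkable by expansion.

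This verification is the main obstacle. If the guessed shape fails, I would instead derive $\kappa$ from the classical weights: $1/\cos^{s+1}$ corresponds to a Meixner--Pollaczek-type weight in the variable $y=z^2$, and the connection coefficients between two such families differing in one parameter are known to be a single hypergeometric term. From that known term I would read off the correct row and column factors.

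\emph{Step 3: the binomial determinant.} Granting Step 2, pull the factors $(2(\bar n+r))!$ out of the rows and $1/(2m)!$ out of the columns; the powers of $\gamma$ also factor out and must cancel or be absorbed. Combined with the factorials from Step 1, this gives exactly $\prod_{i=0}^{n-1}(2i)!$. What remains is
$$\det\Bigl(\binom{a}{\bar n+r-m}\Bigr)_{0\le r,m<\underline n}.$$
This is a dual Jacobi--Trudi determinant $\det(e_{\lambda'_r-r+m})$ for the rectangular shape with $\underline n$ columns of length $\bar n$, evaluated under the specialisation $e_k\mapsto\binom ak$, that is, the principal specialisation at $a$ ``variables'' equal to $1$. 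The hook-content formula then gives the product over the cells $(r,c)$ of the $\bar n\times\underline n$ rectangle of (content factor)/(hook length). For this rectangle the hook length is $n-r-c+1$. With the appropriate convention (transpose or sign from column reversal, to be checked against small $n$), the content factor is $a+c-r$. This yields the last double product in \eqref{HHn:st}.

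Finally, I will check the total sign and any stray powers of $\gamma$ against direct computation for $n\le4$, for example at $(s,t)=(0,1)$, where the entries are the Euler numbers $1,1,1,2,5,16,61,\dots$.
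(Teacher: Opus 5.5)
Your plan is essentially the paper's proof: the biorthogonal reduction of Lemma~\ref{lem:bindetGene} with the norms of Lemma~\ref{lem:cfGene}, a single-binomial connection coefficient verified through the recurrence \eqref{eq:kapparec}, and the dual Jacobi--Trudi/hook-content evaluation of the rectangular binomial determinant (Lemma~\ref{lem:bindetSigma}). Your open constant is settled by $P_1-Q_1=(y-(1+s))-(y-(1+t))=t-s=\kappa_{1,0}$, which forces $\gamma=1$, exactly as in Lemma~\ref{lem:connGene}; so no stray powers of $\gamma$ arise, and the only sign is $(-1)^{\binom{\bar n}2}$ (the binomial matrix is just the transpose of the Jacobi--Trudi matrix).
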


The sign $(-1)^{\binom{\bar n}2}$ together with the doubly-indexed product in
\eqref{HHn:st} form the case-dependent \emph{signed} factor
$$
\Omega(\delta):=(-1)^{\binom{\bar n}2}\prod_{r=1}^{\bar n}\prod_{c=1}^{\underline n}
\frac{\delta/2+c-r}{n-r-c+1}\qquad(\delta=t-s),
$$
so that \eqref{HHn:st} reads
$\HH_n=\Omega(\delta)\,\prod_{i=0}^{n-1}(2i)!\,
\prod_{l=0}^{\bar n-1}(s{+}1)_{2l}\,\prod_{m=0}^{\underline n-1}(t{+}1)_{2m}$.

The proof occupies
Subsections~\ref{ssec:stdata}--\ref{ssec:stproof} and is an instance of
the collapse mechanism of Section~\ref{sec:biotho}: the orthogonal data of
the family are explicit, the two functionals differ in the single
parameter $(t-s)/2$, so the connection coefficients collapse to a single
binomial term (Lemma~\ref{lem:connGene}), and the resulting binomial
determinant is evaluated by the dual Jacobi--Trudi identity
(Lemma~\ref{lem:bindetSigma}). Subsection~\ref{ssec:stomega} then
evaluates $\Omega(\delta)$ in closed form for odd $\delta$
(Lemma~\ref{lem:Pahalf}), the form in which \eqref{HHn:st} is specialised
in Section~\ref{sec:dshift} and in the corollaries of
Section~\ref{sec:coro}.

\subsection{Orthogonal data and connection coefficients}\label{ssec:stdata}

Substituting \eqref{eq:stparam} into Lemma~\ref{lem:cfGene}, the recurrence
coefficients of the $\mathcal S$- and $\mathcal T$-orthogonal polynomials
$P_i,Q_m$ become
\begin{align*}
c^S_i&=2i(2i{+}s)+(2i{+}1)(2i{+}1{+}s), &
\lambda^S_i&=(2i{-}1)(2i{-}1{+}s)\,2i(2i{+}s),\\
c^T_m&=2m(2m{+}t)+(2m{+}1)(2m{+}1{+}t), &
\lambda^T_m&=(2m{-}1)(2m{-}1{+}t)\,2m(2m{+}t)
\end{align*}
(so $c^S_0=1{+}s$, $c^T_0=1{+}t$), while the squared norms telescope into
Pochhammer symbols,
\begin{equation}\label{eq:stnorms}
h^S_i=\prod_{k=1}^{2i}u_k=\prod_{k=1}^{2i}k(k+s)=(2i)!\,(s{+}1)_{2i},
\qquad
h^T_m=\prod_{k=1}^{2m}v_k=(2m)!\,(t{+}1)_{2m},
\end{equation}
using $\prod_{k=1}^{2i}(k+s)=(s{+}1)(s{+}2)\cdots(s{+}2i)=(s{+}1)_{2i}$.

At the secant/tangent specialisation $s=0$, $t=1$ --- needed repeatedly in
later sections --- the two families take the explicit form below; we write
$\hat P_i:=P_i|_{s=0}$ and $r_m:=Q_m|_{t=1}$ for the resulting monic
polynomials.

\begin{Lemma}[secant and tangent polynomials; classical]\label{lem:cf}
The monic $\mathcal S$-orthogonal polynomials $\hat P_i$ and the monic
$\mathcal T$-orthogonal polynomials $r_m$ satisfy
\begin{align*}
\hat P_{i+1}&=\bigl(y-(2i)^2-(2i+1)^2\bigr)\hat P_i
            -\bigl((2i-1)(2i)\bigr)^2\hat P_{i-1},\\
r_{m+1}&=\bigl(y-2(2m+1)^2\bigr)r_m-(2m-1)(2m)^2(2m+1)\,r_{m-1},
\end{align*}
with $\mathcal S[\hat P_i\hat P_l]=\delta_{il}\,\bigl((2i)!\bigr)^2$ and
$\mathcal T[r_mr_{m'}]=\delta_{mm'}\,(2m)!\,(2m+1)!$.
\end{Lemma}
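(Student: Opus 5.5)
The plan is to specialise Lemma~\ref{lem:cfGene} directly at $s=0$ and $t=1$, using the data already computed in Subsection~\ref{ssec:stdata}. There is nothing new to prove; the lemma is bookkeeping, the only work being to simplify the recurrence coefficients and norms into the displayed factored forms.

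\emph{Even family.} For $\hat P_i$ we take $u_j=j^2$. The general formulas give
$c^S_i=(2i)^2+(2i+1)^2$ and $\lambda^S_i=(2i-1)^2(2i)^2=\bigl((2i-1)(2i)\bigr)^2$. The norm from \eqref{eq:stnorms} is $(2i)!\,(1)_{2i}=((2i)!)^2$.

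\emph{Odd family.} For $r_m$ we take $v_j=j(j+1)$. The only computation is the simplification of the recurrence coefficients:
\[
c^T_m=2m(2m+1)+(2m+1)(2m+2)=(2m+1)(4m+2)=2(2m+1)^2,
\]
\[
\lambda^T_m=(2m-1)(2m)\cdot 2m(2m+1)=(2m-1)(2m)^2(2m+1).
\]
The norm is $(2m)!\,(2)_{2m}=(2m)!\,(2m+1)!$.

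Orthogonality and the Kronecker deltas are inherited verbatim from Lemma~\ref{lem:cfGene}, which in turn rests on the even contraction of Section~\ref{ssec:contraction}. Quasi-definiteness holds because all the norms are nonzero. The only point needing care is the convention $b_0=0$ at $i=0$ and $m=0$: it gives $c_0=u_1=1$ and $c_0=v_1=2$. This is consistent with the displayed formulas, since $(0)^2+1^2=1$ and $2\cdot1^2=2$. The main (minor) obstacle is simply this boundary check.
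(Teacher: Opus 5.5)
Your proposal is correct and follows the paper's proof essentially verbatim: you specialise the $S$-fraction data $u_j=j(j+s)$, $v_j=j(j+t)$ at $s=0$, $t=1$ through Lemma~\ref{lem:cfGene} and the norms \eqref{eq:stnorms}, with the same simplification $2m(2m+1)+(2m+1)(2m+2)=2(2m+1)^2$. Your boundary check of $c_0$ under the convention $b_0=0$ is a harmless addition that the paper leaves implicit.
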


\begin{proof}
This is the case $s=0$ (resp.\ $t=1$) of the present family.
The secant and tangent numbers have the classical Stieltjes $S$-fractions
\cite{Stieltjes1894Re,Wall1948,Flajolet1980} with coefficients $u_j=j^2$ and
$v_j=j(j+1)$, and the recurrence coefficients and norms specialise to those
displayed: $2m(2m+1)+(2m+1)(2m+2)=2(2m+1)^2$, while \eqref{eq:stnorms} gives
$h^S_i=(2i)!\,(1)_{2i}=\bigl((2i)!\bigr)^2$ and
$h^T_m=(2m)!\,(2)_{2m}=(2m)!\,(2m+1)!$.
\end{proof}

For this family the connection-coefficient recurrence \eqref{eq:kapparec}
of Lemma~\ref{lem:connrec} has a closed-form solution --- a single
binomial term.

\begin{Lemma}[connection coefficients]\label{lem:connGene}
Write $P_i=\sum_{m=0}^{i}\kappa_{i,m}\,Q_m$ for the expansion of the
$\mathcal S$-orthogonal polynomials in the $\mathcal T$-orthogonal basis.
Then
$$
\kappa_{i,m}=\frac{(2i)!}{(2m)!}\binom{(t-s)/2}{\,i-m\,}
\qquad(\,\textstyle\binom{(t-s)/2}{d}=0\ \text{for }d<0\,),
$$
and hence
$\mathcal T[P_i\,Q_m]=\kappa_{i,m}\,(2m)!\,(t{+}1)_{2m}
=(2i)!\,(t{+}1)_{2m}\,\binom{(t-s)/2}{\,i-m\,}$.
\end{Lemma}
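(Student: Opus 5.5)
Since the expansion $P_i=\sum_m\kappa_{i,m}Q_m$ is unique, I would verify that the candidate
$$
\tilde\kappa_{i,m}:=\frac{(2i)!}{(2m)!}\binom{\delta}{i-m},\qquad \delta:=\frac{t-s}{2},
$$
satisfies the initial conditions and the recurrence \eqref{eq:kapparec} of Lemma~\ref{lem:connrec}. That recurrence, together with $\kappa_{0,0}=1$ and $\kappa_{i,m}=0$ for $m>i$ or $m<0$, determines all $\kappa_{i,m}$. The boundary conditions are immediate: $\tilde\kappa_{i,i}=1$, and $\binom{\delta}{d}=0$ for $d<0$. The second formula then follows at once from the projection \eqref{eq:connproj} together with the norm $h^T_m=(2m)!\,(t+1)_{2m}$ from \eqref{eq:stnorms}.

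For the recurrence, put $d=i-m$ and divide \eqref{eq:kapparec} by $(2i)!/(2m)!$. The four terms then become the following.
\begin{itemize}
\item $\tilde\kappa_{i+1,m}$ gives $(2i+1)(2i+2)\binom{\delta}{d+1}$.
\item $\tilde\kappa_{i,m-1}$ gives $(2m-1)(2m)\binom{\delta}{d+1}$.
\item $\lambda^T_{m+1}\tilde\kappa_{i,m+1}$ gives $(2m+1+t)(2m+2+t)\binom{\delta}{d-1}$, after cancelling $(2m+1)(2m+2)$.
\item $\lambda^S_i\tilde\kappa_{i-1,m}$ gives $(2i-1+s)(2i+s)\binom{\delta}{d-1}$, after cancelling $(2i-1)(2i)$.
\end{itemize}
The middle term is $(c^T_m-c^S_i)\binom{\delta}{d}$, and with the explicit values from Lemma~\ref{lem:cfGene} the difference $c^T_m-c^S_i$ is a polynomial in $m,i,s,t$.

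I would then rewrite $\binom{\delta}{d\pm1}$ in terms of $\binom{\delta}{d}$, using
$$
\binom{\delta}{d+1}=\frac{\delta-d}{d+1}\binom{\delta}{d},\qquad
\binom{\delta}{d-1}=\frac{d}{\delta-d+1}\binom{\delta}{d}.
$$
This turns the claim into a rational identity in $i,m,s,t$. It is cleaner to substitute $t=s+2\delta$ and $i=m+d$ and check the identity as a polynomial after clearing the denominators $(d+1)(\delta-d+1)$. Working generically in $\delta$ is legitimate because everything is polynomial in the parameters. The cases $d=-1$ and $d=0$, where some terms vanish, need to be checked at the boundary. For instance, $d=-1$ reduces to $\tilde\kappa_{i,i+1}=0$ versus $\tilde\kappa_{i,i}=1$ on the right-hand side, and those cancel properly.

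The main obstacle is this polynomial verification. It is routine, but the $\lambda^T$ term and the $\lambda^S$ term have to combine with the $c$-difference to factor through $(\delta-d+1)$. If that factorisation does not appear cleanly, I would fall back to the generating-function route: the $\mathcal S$ and $\mathcal T$ functionals are Meixner--Pollaczek type with parameter shifted by $\delta$, and a standard connection formula gives a single-binomial expansion.
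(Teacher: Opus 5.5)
Your proposal is correct and is essentially the paper's own proof: uniqueness comes from the connection recurrence \eqref{eq:kapparec}, and dividing by the candidate and using the binomial neighbour ratios reduces everything to the same elementary polynomial identity in $(m,d,s,t)$. The factorisation you were unsure of does occur — with $\delta=(t-s)/2$ and $i=m+d$ one has $(2m+1+t)(2m+2+t)-(2i-1+s)(2i+s)=(\delta-d+1)(8m+4s+4d+4\delta+2)$ — so the identity closes directly and no fallback is needed.
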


\begin{proof}
By Lemma~\ref{lem:connrec} the $\kappa_{i,m}$ satisfy
$\kappa_{i,i}=1$, vanish for $m>i$ and $m<0$, and obey the recurrence
\eqref{eq:kapparec}; together with $\kappa_{0,0}=1$ these determine them.
By \eqref{eq:stnorms},
$\mathcal T[P_i Q_m]=\kappa_{i,m}\,\mathcal T[Q_m^2]
=\kappa_{i,m}\,(2m)!\,(t{+}1)_{2m}$, so it remains to identify
$\kappa_{i,m}$.

We verify that the candidate
$\kappa_{i,m}=\frac{(2i)!}{(2m)!}\binom{(t-s)/2}{d}$, $d=i-m$, satisfies
\eqref{eq:kapparec}; for $i=0,1$ this holds by inspection. In general,
divide \eqref{eq:kapparec} by $\kappa_{i,m}$ and use the neighbour ratios
$$
\frac{\kappa_{i+1,m}}{\kappa_{i,m}}
=(2i{+}1)(2i{+}2)\frac{(t-s)/2-d}{d+1},\quad
\frac{\kappa_{i,m-1}}{\kappa_{i,m}}
=(2m{-}1)(2m)\frac{(t-s)/2-d}{d+1},
$$
$$
\frac{\kappa_{i,m+1}}{\kappa_{i,m}}
=\frac{d}{((t-s)/2{-}d{+}1)(2m{+}1)(2m{+}2)},\quad
\frac{\kappa_{i-1,m}}{\kappa_{i,m}}
=\frac{d}{((t-s)/2{-}d{+}1)(2i{-}1)(2i)},
$$
together with
$\dfrac{\lambda^T_{m+1}}{(2m{+}1)(2m{+}2)}=(2m{+}1{+}t)(2m{+}2{+}t)$ and
$\dfrac{\lambda^S_i}{(2i{-}1)(2i)}=(2i{-}1{+}s)(2i{+}s)$; the relation
\eqref{eq:kapparec} becomes
\begin{align*}
&\bigl[(2i{+}1)(2i{+}2)-(2m{-}1)(2m)\bigr]\frac{(t-s)/2-d}{d+1}\\
&\qquad=c^T_m-c^S_i
+\frac{d}{(t-s)/2{-}d{+}1}
\bigl[(2m{+}1{+}t)(2m{+}2{+}t)-(2i{-}1{+}s)(2i{+}s)\bigr].
\end{align*}
With $i=m+d$ and the specialised values of $c^S_i,c^T_m$
displayed above, both sides agree as polynomials in
$(m,d,s,t)$ --- an elementary identity, routine to verify. Hence the
candidate satisfies \eqref{eq:kapparec} and the initial conditions, so
it equals $\kappa_{i,m}$.
\end{proof}

\subsection{A binomial determinant; proof of the closed form}
\label{ssec:stproof}

\begin{Lemma}[binomial determinant, general parameter]\label{lem:bindetSigma}
Let $N\ge 1$, $q\in\{N,N+1\}$, and let $a$ be arbitrary. Then
$$
\det\left(\binom{a}{\,q+r-m\,}\right)_{0\le r,m\le N-1}
=\prod_{r=1}^{q}\prod_{c=1}^{N}\frac{a+c-r}{(q-r)+(N-c)+1}.
$$
\end{Lemma}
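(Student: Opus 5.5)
The plan is to identify the determinant with a Schur function of a rectangular shape, evaluated at a principal specialisation, via the dual Jacobi--Trudi identity, and then apply the hook-content formula.

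Recall that the elementary symmetric functions $e_k$ of $a$ variables, all specialised to $1$, give $e_k(1^a)=\binom{a}{k}$. Since both sides are polynomials in $a$, it suffices to prove the identity for nonnegative integers $a$ and extend by polynomiality.

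The dual Jacobi--Trudi identity reads
$$
s_{\lambda'}=\det\bigl(e_{\lambda_r-r+m}\bigr)_{0\le r,m\le N-1}
\qquad\text{for }\lambda=(\lambda_1,\dots,\lambda_N),
$$
up to the precise indexing convention. Reverse the row and column order, $r\mapsto N-1-r$ and $m\mapsto N-1-m$. This is a simultaneous reversal, so it introduces no sign, and the entry $\binom{a}{q+r-m}$ keeps the form $\binom{a}{q+r'-m'}$. Hence the matrix is $\bigl(e_{q-r+m}\bigr)$, up to transposition, which does not affect the determinant. Matching it with $e_{\lambda_r-r+m}$ forces $\lambda_r=q$ for all $r$, so $\lambda=(q^N)$ is an $N\times q$ rectangle. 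The determinant is therefore
$$
s_{(N^q)}(1^a),
$$
the Schur function of the conjugate rectangle, which has $q$ rows of length $N$.

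Next apply the hook-content formula:
$$
s_\mu(1^a)=\prod_{u\in\mu}\frac{a+c(u)}{h(u)}.
$$
For the cell $(r,c)$ of $\mu=(N^q)$, with $1\le r\le q$ and $1\le c\le N$, the content is $c-r$ and the hook length is $(q-r)+(N-c)+1$. This yields exactly the stated product
$$
\prod_{r=1}^{q}\prod_{c=1}^{N}\frac{a+c-r}{(q-r)+(N-c)+1}.
$$
The hypothesis $q\in\{N,N+1\}$ is not actually needed; it only reflects the intended application with $q=\bar n$ and $N=\underline n$.

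The main obstacle is bookkeeping: fixing the Jacobi--Trudi convention so that the shape comes out as $(N^q)$ rather than $(q^N)$. If the roles were swapped, the product would instead run over $r\le N$, $c\le q$ with content $c-r$. To settle this, check the base case $N=1$. There the determinant is $\binom{a}{q}$, while the product is $\prod_{r=1}^{q}(a+1-r)/(q-r+1)=\binom{a}{q}$, which pins down the orientation.

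As an alternative avoiding symmetric functions, one could instead use the Lindström--Gessel--Viennot lemma, or Krattenthaler's determinant lemma (cf. Lemma~\ref{lem:vdm}), to factor rows after pulling out $\binom{a}{q+r}$-type factors. I would fall back on this only if a self-contained proof is preferred.
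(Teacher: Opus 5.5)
Your proof is correct and is the same argument the paper gives: the dual Jacobi--Trudi identity for the rectangle $(N^q)$ with $e_k(1^a)=\binom ak$, then the hook-content formula, then extension from integer $a$ to arbitrary $a$ by polynomiality. One small bookkeeping slip: the simultaneous reversal $r\mapsto N-1-r$, $m\mapsto N-1-m$ turns $\binom{a}{q+r-m}$ into $\binom{a}{q-r+m}$, so it does not keep the form you claim, and the step is unnecessary anyway, since transposition alone already yields $\bigl(e_{q-r+m}\bigr)$.
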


\begin{proof}
Recall the dual Jacobi--Trudi (N\"agelsbach--Kostka) identity
$s_\lambda=\det\bigl(e_{\lambda'_i-i+j}\bigr)_{1\le i,j\le\ell(\lambda')}$,
where $s_\lambda$ is the Schur symmetric function, $e_k$ the elementary
symmetric function, and $\lambda'$ the conjugate partition of $\lambda$
\cite[I.3, (3.5)]{Macdonald1995}, \cite[Cor.~7.16.2]{Stanley1999EC2}.
For integers $a\ge N$ this identity, applied to the
rectangular partition $(N^q)$ (whose conjugate is $(q^N)$, so
$\lambda'_i=q$ and $e_{\lambda'_i-i+j}=e_{q-i+j}$) and specialised at
$a$ ones via $e_k(1^a)=\binom ak$, gives $\det\bigl(\binom{a}{q-i+j}\bigr)$,
which is the matrix below after transposing; hence
$$
\det\left(\binom{a}{\,q+r-m\,}\right)_{0\le r,m\le N-1}
=s_{(N^q)}(\underbrace{1,\ldots,1}_{a})
=\prod_{r=1}^{q}\prod_{c=1}^{N}\frac{a+c-r}{(q-r)+(N-c)+1}
$$
by the hook content formula
$s_\lambda(1^a)=\prod_{u\in\lambda}\frac{a+c(u)}{h(u)}$, where $c(u)=j-i$
is the content and $h(u)$ the hook length of the cell $u=(i,j)$
\cite[Cor.~7.21.4]{Stanley1999EC2}, \cite[I.3, Ex.~4]{Macdonald1995}.
Both sides are polynomials in $a$, so the identity holds for all $a$.
\end{proof}

\begin{proof}[Proof of Proposition~\textup{\ref{prop:family}}]
By Lemma~\ref{lem:connGene},
$\kappa_{\bar n+r,\,m}
=\frac{(2(\bar n+r))!}{(2m)!}\binom{(t-s)/2}{\bar n+r-m}$;
pulling $(2(\bar n{+}r))!$ out of row $r$ and $1/(2m)!$ out of
column $m$ and applying Lemma~\ref{lem:bindetSigma} with $a=(t-s)/2$,
$$
\det\bigl(\kappa_{\bar n+r,\,m}\bigr)_{0\le r,m\le \underline n-1}
=\Bigl(\prod_{r=0}^{\underline n-1}(2(\bar n{+}r))!\Bigr)
\Bigl(\prod_{m=0}^{\underline n-1}\frac1{(2m)!}\Bigr)
\times\prod_{r=1}^{\bar n}\prod_{c=1}^{\underline n}
\frac{(t-s)/2+c-r}{(\bar n-r)+(\underline n-c)+1}.
$$
Substituting into Lemma~\ref{lem:bindetGene}, with the norms
$h^S_l=(2l)!\,(s{+}1)_{2l}$ and $h^T_m=(2m)!\,(t{+}1)_{2m}$ of
\eqref{eq:stnorms}, and cancelling the $(2m)!$ against the column norms,
the surviving factors are
$$
\prod_{m=0}^{\underline n-1}(t{+}1)_{2m}
\qquad\text{and}\qquad
\prod_{l=0}^{\bar n-1}(2l)!\cdot
\prod_{r=0}^{\underline n-1}(2(\bar n{+}r))!
=\prod_{i=0}^{n-1}(2i)!,
$$
which gives the stated formula.
\end{proof}

\subsection{The signed factor}\label{ssec:stomega}

We extend the double factorial to negative odd arguments by
$(2k-1)!!=(2k+1)!!/(2k+1)$, so $(-1)!!=1$ and
$(-2k+1)!! = (-1)^{k-1}/(2k-3)!!$; with this convention the signed factor
$\Omega(\delta)$ has the following closed evaluation.

\begin{Lemma}\label{lem:Pahalf}
With $\delta$ odd, we have
\begin{equation}\label{eq:Pa:dfact}
	\Omega(\delta)
	=(-1)^{\binom{\bar n}2}\,2^{-\bar n\underline n}\,
	\frac{1}{\prod_{j=\bar n}^{n-1}j!}\,
	\prod_{j=0}^{\underline n-1} \frac{ j! (\delta+2j)!!}{(\delta-2\bar n+2j)!!}\,.
\end{equation}
In particular, for $\delta\in\{\pm1,\pm3\}$: when $n=2m$ is even,
\begin{equation}\label{eq:Pa:even}
\begin{aligned}
&\Omega(1)=2^{-\binom n2},\qquad
\Omega(-1)=(-1)^{m}2^{-\binom n2},\\
&\Omega(3)=(-1)^{m-1}(n^2-1)\,2^{-\binom n2},\qquad
\Omega(-3)=-(n^2-1)\,2^{-\binom n2};
\end{aligned}
\end{equation}
when $n=2m+1$ is odd,
\begin{equation}\label{eq:Pa:odd}
\Omega(1)=2^{-\binom n2},\quad
\Omega(-1)=\Omega(3)=(-1)^{m}\,n\,2^{-\binom n2},\quad
\Omega(-3)=-\tfrac13\,n^2(n^2-4)\,2^{-\binom n2}.
\end{equation}
\end{Lemma}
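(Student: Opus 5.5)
The plan is to evaluate the double product directly: first the denominator, then the numerator column by column, and finally specialise to $\delta\in\{\pm1,\pm3\}$.

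\textbf{Denominator.} For fixed $r$, as $c$ runs over $1,\dots,\underline n$, the factor $n-r-c+1$ runs over $n-r,\dots,n-r-\underline n+1$. Hence
$$\prod_{c=1}^{\underline n}(n-r-c+1)=\frac{(n-r)!}{(n-r-\underline n)!}.$$
Take the product over $r=1,\dots,\bar n$ and put $j=n-r$, so $j$ runs from $\underline n$ to $n-1$; likewise $n-r-\underline n=\bar n-r$ runs from $0$ to $\bar n-1$. The denominator becomes
$$\prod_{j=\underline n}^{n-1}j!\Big/\prod_{j=0}^{\bar n-1}j! \;=\;\prod_{j=\bar n}^{n-1}j!\Big/\prod_{j=0}^{\underline n-1}j!,$$
after cancelling the common block; both ranges are handled uniformly since $\bar n-\underline n\in\{0,1\}$. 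This accounts for the factors $\prod_{j=\bar n}^{n-1}j!$ in the denominator and $\prod_{j<\underline n}j!$ in the numerator of \eqref{eq:Pa:dfact}.

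\textbf{Numerator.} Fix $c$ and write $\delta/2+c-r=\tfrac12(\delta+2c-2r)$. As $r$ runs over $1,\dots,\bar n$, the quantity $\delta+2c-2r$ runs through odd numbers of one parity class, from $\delta+2c-2$ down to $\delta+2c-2\bar n$, in steps of $2$. With the extended double factorial convention (which obeys $(k)!!=k\,(k-2)!!$ for all odd $k$, and was designed precisely so that this telescopes through negative values), this product is
$$\frac{(\delta+2c-2)!!}{(\delta+2c-2\bar n-2)!!}.$$
Putting $j=c-1$ turns it into $(\delta+2j)!!/(\delta-2\bar n+2j)!!$, and the halves give the factor $2^{-\bar n\underline n}$. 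Together with the sign this yields \eqref{eq:Pa:dfact}.

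The one delicate point here is that the convention must never produce a division by zero. This holds because $\delta$ is odd, so every $\delta+2c-2r$ is odd and hence nonzero. I will check that $k!!=k\,(k-2)!!$ holds across the sign change, e.g.\ $1!!=1\cdot(-1)!!$ and $(-1)!!=(-1)\cdot(-3)!!$, which gives $(-3)!!=-1$; this is consistent with the stated formula.

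\textbf{Specialisations.} For $\delta=1$ and $\delta=-1$ the double factorial ratios should telescope almost completely against $\prod j!$, leaving $2^{-\binom n2}$ up to sign. The key identity is
$$\bar n\underline n+\sum_{j=\bar n}^{n-1}\ldots=\binom n2,$$
more precisely the check that $(2j+1)!!\cdot j!\cdot 2^j=(2j+1)!$-type rearrangements combine correctly. For $\delta=\pm3$ the ratio shifts by one step and produces the extra polynomial factors $n^2-1$, $n$, or $\tfrac13 n^2(n^2-4)$.

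Rather than telescoping by hand, I would prove these specialisations by induction on $m$ in each parity class: compute the ratio $\Omega_{n+2}(\delta)/\Omega_n(\delta)$ from \eqref{eq:Pa:dfact}, which is a simple rational function of $n$, and compare it with the ratio of the claimed closed forms. The base cases $n=1,2$ are checked directly. I expect the main obstacle to be the sign bookkeeping: the factor $(-1)^{\binom{\bar n}2}$ combined with the signs of the negative double factorials, especially for $\delta=-3$ and odd $n$. I would settle this by tracking how many factors $\delta+2c-2r$ are negative.
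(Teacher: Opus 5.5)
Your proof is correct. The derivation of \eqref{eq:Pa:dfact} is essentially the paper's, with one small simplification. The paper groups the numerator by rows and then has to cancel a common block of double factorials between $\prod_r(\delta+2\underline n-2r)!!$ and $\prod_r(\delta-2r)!!$; your grouping by columns yields $\prod_{j=0}^{\underline n-1}(\delta+2j)!!/(\delta-2\bar n+2j)!!$ directly.

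For the special values you take a genuinely different route. The paper proves $\Omega(1)=2^{-\binom n2}$ by a ladder of single steps $n\mapsto n+1$, raising $\bar n$ and $\underline n$ alternately. It then gets $\Omega(-1),\Omega(3),\Omega(-3)$ at \emph{fixed} $n$ from the contiguous relation in $\delta$, $\Omega(\delta+2)/\Omega(\delta)=\prod_{j=0}^{\underline n-1}(\delta+2+2j)/(\delta+2-2\bar n+2j)$, in which almost everything cancels, and chains $1\to3$ and $1\to-1\to-3$.

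You instead fix $\delta$ and induct $n\mapsto n+2$. For $n=2m$ the ratio is $(-1)^m2^{-(2m+1)}\frac{(m!)^2}{(2m)!\,(2m+1)!}\cdot\frac{(\delta+2m)!!}{(\delta-2m-2)!!}$. With $(-2k+1)!!=(-1)^{k-1}/(2k-3)!!$ this collapses, for example, to $2^{-(4m+1)}$ at $\delta=1$ and to $-2^{-(4m+1)}(2m+3)/(2m-1)$ at $\delta=3$. These are exactly the ratios of the claimed closed forms. The odd-$n$ case is analogous, and the base cases are immediate: $\Omega=1$ at $n=1$ and $\Omega=\delta/2$ at $n=2$.

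In this scheme the signs come out mechanically from $(-1)^{\binom{\bar n}2}$ and the negative double-factorial convention. So you do not need to count negative factors, and the half-formed ``key identity'' sentence can be dropped.

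The paper's route is more economical: one induction, then one-line ratios that also show where $n^2-1$, $n$ and $\tfrac13n^2(n^2-4)$ come from. Yours treats each $\delta$ independently, so an error at one value does not propagate to the others, and your ratio formula holds for every odd $\delta$, not only $\pm1,\pm3$.
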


\begin{proof}
Group the double product defining $\Omega(\delta)$ by rows. Since $\delta$ is odd,
the numerator of row $r$ clears to a double factorial,
$$
\prod_{c=1}^{\underline n}\Bigl(\tfrac\delta2+c-r\Bigr)
=2^{-\underline n}\prod_{c=1}^{\underline n}(\delta-2r+2c)
=2^{-\underline n}\,\frac{(\delta+2\underline n-2r)!!}{(\delta-2r)!!},
$$
with the double-factorial convention above, while its denominator telescopes,
$$
\prod_{c=1}^{\underline n}(n-r-c+1)=\frac{(n-r)!}{(n-r-\underline n)!}=\frac{(\bar n+\underline n-r)!}{(\bar n-r)!}
$$
(recall $n-\underline n=\bar n$). Multiplying over $r=1,\dots,\bar n$ produces the
factor $2^{-\bar n\underline n}$ together with two telescoping ratios. The hook
denominators give
$$
\prod_{r=1}^{\bar n}\frac{(\bar n-r)!}{(\bar n+\underline n-r)!}
=\frac{\prod_{j=0}^{\bar n-1}j!}{\prod_{j=\underline n}^{n-1}j!}
=\frac{\prod_{j=0}^{\underline n-1}j!}{\prod_{j=\bar n}^{n-1}j!},
$$
cancelling the common factorials $j=\underline n,\dots,\bar n-1$; and, writing
$\delta+2\underline n-2r=\delta-2(r-\underline n)$, the numerator double factorials give
$$
\prod_{r=1}^{\bar n}\frac{(\delta+2\underline n-2r)!!}{(\delta-2r)!!}
=\frac{\prod_{k=1-\underline n}^{\bar n-\underline n}(\delta-2k)!!}{\prod_{k=1}^{\bar n}(\delta-2k)!!}
=\frac{\prod_{i=0}^{\underline n-1}(\delta+2i)!!}{\prod_{i=0}^{\underline n-1}(\delta-2\bar n+2i)!!},
$$
cancelling the common arguments $k=1,\dots,\bar n-\underline n$. Both ratios run over
the same range $0\le j\le\underline n-1$, so the factorial numerator
$\prod_{j=0}^{\underline n-1}j!$ merges into the double-factorial product; restoring the
sign $(-1)^{\binom{\bar n}2}$ gives \eqref{eq:Pa:dfact}.

For $\delta=1$ the two ratios above combine to a pure power of two. Writing
$(1-2r)!!=(-1)^{r-1}/(2r-3)!!$, the double-factorial ratio becomes
$\prod_{r=1}^{\bar n}\frac{(2\underline n-2r+1)!!}{(1-2r)!!}
=(-1)^{\binom{\bar n}2}\prod_{r=1}^{\bar n}(2r-3)!!\,(2\underline n-2r+1)!!$;
multiplied by the hook ratio $\prod_{r=1}^{\bar n}(r-1)!/(\underline n+r-1)!$ and the
leading sign $(-1)^{\binom{\bar n}2}$, the two signs cancel and
$\Omega(1)=2^{-\bar n\underline n}Q$ with
$Q=\prod_{r=1}^{\bar n}\frac{(2r-3)!!\,(2\underline n-2r+1)!!\,(r-1)!}{(\underline n+r-1)!}$. Along the
ladder $(\underline n,\underline n)\to(\underline n{+}1,\underline n)\to(\underline n{+}1,\underline n{+}1)$ one has $Q=1$ at $(0,0)$ and each
step multiplies $Q$ by $2^{-\underline n}$ --- via $(2\underline n-1)!!\,\underline n!/(2\underline n)!$ and
$(2\underline n+1)!!\,\underline n!/(2\underline n+1)!$ respectively, both equal to $2^{-\underline n}$ --- while
$\bar n\underline n-\binom n2$ falls by $\underline n$; hence $Q=2^{\bar n\underline n-\binom n2}$ and
$\Omega(1)=2^{-\bar n\underline n}Q=2^{-\binom n2}$.

For $\delta\in\{-1,3,-3\}$ we use a contiguous relation. Since
$(x+2)!!=(x+2)\,x!!$ for every odd $x$ (negative ones included, by the convention
above), dividing \eqref{eq:Pa:dfact} at $\delta+2$ by the same at $\delta$ leaves
only the top factor of each double factorial:
$$
\frac{\Omega(\delta+2)}{\Omega(\delta)}
=\prod_{j=0}^{\underline n-1}\frac{\delta+2+2j}{\delta+2-2\bar n+2j}
=\begin{cases}
\dfrac{(\delta+2)(\delta+4)\cdots(\delta+2m)}{\delta\,(\delta-2)\cdots(\delta-2m+2)}, & n=2m,\\[2.4ex]
\dfrac{(\delta+2)(\delta+4)\cdots(\delta+2m)}{(\delta-2m)(\delta-2m+2)\cdots(\delta-2)}, & n=2m+1.
\end{cases}
$$
We chain outward from $\Omega(1)=2^{-\binom n2}$.

\emph{$n=2m$.} At $\delta=1$ the ratio is
$\frac{(2m+1)!!}{(-1)^{m-1}(2m-3)!!}=(-1)^{m-1}(2m-1)(2m+1)=(-1)^{m-1}(n^2-1)$, so
$\Omega(3)=(-1)^{m-1}(n^2-1)2^{-\binom n2}$. At $\delta=-1$ it is
$\frac{(2m-1)!!}{(-1)^m(2m-1)!!}=(-1)^m$, so $\Omega(-1)=\Omega(1)/(-1)^m=(-1)^m2^{-\binom n2}$.
At $\delta=-3$ it is $\frac{-(2m-3)!!}{(-1)^m(2m+1)!!}=\frac{(-1)^{m+1}}{n^2-1}$, so
$\Omega(-3)=(-1)^{m+1}(n^2-1)\,\Omega(-1)=-(n^2-1)2^{-\binom n2}$.

\emph{$n=2m+1$.} The same three ratios are
$\frac{(2m+1)!!}{(-1)^m(2m-1)!!}=(-1)^m n$ at $\delta=1$,
$\frac{(2m-1)!!}{(-1)^m(2m+1)!!}=\frac{(-1)^m}{n}$ at $\delta=-1$, and
$\frac{-(2m-3)!!}{(-1)^m(2m+3)!!/3}=\frac{-3}{(-1)^m(n-2)\,n\,(n+2)}$ at $\delta=-3$
(using $(2m-1)(2m+1)(2m+3)=(n-2)n(n+2)$). Hence
$\Omega(3)=(-1)^m n\,2^{-\binom n2}$,
$\Omega(-1)=(-1)^m n\,\Omega(1)=(-1)^m n\,2^{-\binom n2}$ (so $\Omega(-1)=\Omega(3)$),
and $\Omega(-3)=-\tfrac{(-1)^m(n-2)n(n+2)}{3}\,\Omega(-1)=-\tfrac13\,n^2(n^2-4)\,2^{-\binom n2}$.
\end{proof}

\section{The double shift of the Euler number family}\label{sec:dshift}

Alongside $\HH_n=\det(a_{2i+j})$ the most regular variant is the \emph{double}
shift
\begin{equation}\label{eq:Hdshiftdef}
\HH_n^{(2)}:=\det\bigl(a_{2i+j+2}\bigr)_{0\le i,j\le n-1}.
\end{equation}
Shifting the column index by $2$ \emph{preserves} parity --- in the moment
picture \eqref{eq:KmatrixGene} the even columns stay $\mathcal S$-columns and the
odd columns stay $\mathcal T$-columns, each merely advanced by one power of $y$
--- so no even/odd swap occurs and the determinant stays a product for
\emph{all} $(s,t)$. (The single shift $\det(a_{2i+j+1})$, which swaps the two
parities and is far less regular, is taken up in Section~\ref{sec:shift}.)

Since $a_{2i+j+2}=\EE[y^{\,i+1}\chi_j]$, the double shift is the dilated
determinant of the \emph{Christoffel transform} $y\,\EE$ of $\EE$ by $y=z^2$:
its even and odd parts are $\mathcal S'[p]:=\mathcal S[yp]$ and
$\mathcal T'[p]:=\mathcal T[yp]$; the biorthogonal reduction~$\M2$ again
applies. We first record what this transform does to the
ingredients of Section~\ref{sec:gen} for \emph{arbitrary} $u_j,v_j$, and express
$\HH_n^{(2)}$ through the transformed connection determinant; only afterwards do
we specialise to $u_j=j(j+s)$, $v_j=j(j+t)$, where that determinant collapses and
the answer factors.

\begin{Lemma}[Christoffel transform by $y$, general $u_j,v_j$]\label{lem:christoffel}
Let $P_i,Q_m$ be the monic orthogonal polynomials of $\mathcal S,\mathcal T$
with squared norms $h^S_i,h^T_m$, and let $P'_i,Q'_m$ be those of
$\mathcal S',\mathcal T'$ with norms $h^{S'}_i,h^{T'}_m$ and connection
coefficients $P'_i=\sum_m\kappa'_{i,m}Q'_m$. Put $D^S_n=(-1)^nP_n(0)$ and
$D^T_m=(-1)^mQ_m(0)$. Then, for arbitrary $u_j,v_j$:
\begin{enumerate}
\item[\rm(i)] $D^S_{n+1}=c^S_nD^S_n-\lambda^S_nD^S_{n-1}$ with $D^S_0=1$,
$D^S_1=c^S_0=u_1$, and likewise for $D^T$;
\item[\rm(ii)] $\mathcal S',\mathcal T'$ are again moment functionals, with the
J-fraction coefficients of the odd contraction~\eqref{eq:oddcontraction},
$\lambda^{S'}_i=u_{2i}u_{2i+1}$ and $c^{S'}_i=u_{2i+1}+u_{2i+2}$ (in particular
$c^{S'}_0=u_1+u_2$), and similarly for $T'$;
\item[\rm(iii)] $h^{S'}_i=\dfrac{D^S_{i+1}}{D^S_i}\,h^S_i$ and
$h^{T'}_m=\dfrac{D^T_{m+1}}{D^T_m}\,h^T_m$.
\end{enumerate}
\end{Lemma}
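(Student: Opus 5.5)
The plan is to treat the three items in order, all at the level of the three-term recurrences and the $S$-fraction data. Throughout I assume the $u_j,v_j$ are nonzero, so that every functional in sight is quasi-definite. The argument is identical for $\mathcal S$ and $\mathcal T$, so I write it for $\mathcal S$ only.

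\emph{Item (i).} I will evaluate the recurrence \eqref{eq:STrec} at $y=0$, which gives $P_{n+1}(0)=-c^S_nP_n(0)-\lambda^S_nP_{n-1}(0)$. Multiplying by $(-1)^{n+1}$ turns this into $D^S_{n+1}=c^S_nD^S_n-\lambda^S_nD^S_{n-1}$. The initial values are $D^S_0=1$ and $D^S_1=-P_1(0)=c^S_0=u_1$, the last equality by Lemma~\ref{lem:cfGene} with $u_0=0$.

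\emph{Item (ii).} By definition $\mathcal S'[y^k]=a_{2k+2}$, so the moment sequence of $\mathcal S'$ is the shifted sequence $(\mu_{k+1})$ of $\mathcal S$. The $S$-fraction of $\mathcal S$ has coefficients $u_j$ by \eqref{eq:Fsfrac}. The odd contraction \eqref{eq:oddcontraction} therefore gives the $J$-fraction of the shifted sequence directly: $\mu'_0=u_1$, $c^{S'}_i=u_{2i+1}+u_{2i+2}$ and $\lambda^{S'}_i=u_{2i}u_{2i+1}$. These $\lambda^{S'}_i$ are nonzero, so by Favard's theorem $\mathcal S'$ is quasi-definite with these recurrence coefficients. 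The same argument with $v_j$ handles $\mathcal T'$.

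\emph{Item (iii).} This is the substantive step, and I will do it with the Christoffel kernel polynomial. Put $r_i:=P_{i+1}(0)/P_i(0)=-D^S_{i+1}/D^S_i$. Then $P_{i+1}-r_iP_i$ vanishes at $y=0$, so
$$P'_i:=\frac{P_{i+1}-r_iP_i}{y}$$
is a monic polynomial of degree $i$. For $k<i$ we have $\mathcal S'[P'_iy^k]=\mathcal S[(P_{i+1}-r_iP_i)y^k]=0$ by $\mathcal S$-orthogonality, so this $P'_i$ is exactly the monic $\mathcal S'$-orthogonal polynomial. For the norm I use that $P'_i$ is monic and orthogonal to lower degrees:
$$h^{S'}_i=\mathcal S'[P'_iy^i]=\mathcal S[(P_{i+1}-r_iP_i)y^i]=-r_i\,h^S_i=\frac{D^S_{i+1}}{D^S_i}\,h^S_i.$$
As a consistency check, this agrees with \eqref{eq:Snormodd}, since $h^S_i=\prod_{j\le 2i}u_j$.

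\emph{Main obstacle.} The only delicate point is that $D^S_i\ne0$ for every $i$, which is needed to define $r_i$. I will obtain it from the classical identity $\det(\mu_{i+j+1})_{0\le i,j<n}=D^S_nH_n$. This follows by taking the functional $\mathcal S$ of the determinantal formula for $P_n$ multiplied by $y$, or equivalently by comparing \eqref{eq:Snormodd} with \eqref{eq:Snorm}. It gives $D^S_n=H'_n/H_n=\prod_{k=0}^{n-1}u_{2k+1}$, which is nonzero under the standing assumption that the $u_j$ are nonzero. As a bonus, this product form is also consistent with the ratio in (iii).
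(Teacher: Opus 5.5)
Items (i) and (ii) are exactly the paper's argument: evaluate the recurrence at $y=0$, then read the $J$-fraction of $\mathcal S'$ off the odd contraction.

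For (iii) you take a genuinely different route. The paper never constructs $P'_i$. It evaluates the determinantal formula for $P_i$ at $y=0$, which gives $D^S_iH_i=H'_i:=\det(\mu_{a+b+1})_{a,b<i}$, and then reads off $h^{S'}_i=H'_{i+1}/H'_i=(D^S_{i+1}/D^S_i)\,h^S_i$ from \eqref{eq:normHankel}.

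You instead build the kernel polynomial $P'_i=(P_{i+1}-r_iP_i)/y$ (Christoffel's formula), check its $\mathcal S'$-orthogonality directly, and compute the norm as $\mathcal S[(P_{i+1}-r_iP_i)y^i]=-r_i h^S_i$. This is correct. It has the advantage of identifying $P'_i$ explicitly, which is precisely the relation $y\hat P_a=P_{a+1}-\nu_aP_a$ that the paper later needs in Lemma~\ref{lem:kernel}.

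You also settle a point the paper passes over silently: it divides by $P_i(0)$ without comment, whereas you prove $D^S_i\neq0$. Along the way you obtain $D^S_n=\prod_{k<n}u_{2k+1}$, which makes (iii) visibly consistent with \eqref{eq:Snormodd}.

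Two small remarks.
\begin{enumerate}
\item The identity $H'_n=D^S_nH_n$ comes from evaluating the determinantal formula at $y=0$, as in the paper. It does not come from applying $\mathcal S$ to $yP_n$, which gives $0$ for $n\ge2$, so that phrasing should go. Your alternative derivation via \eqref{eq:Snorm}--\eqref{eq:Snormodd} is fine.
\item The product formula for $D^S_n$ can be had even more directly, with no Hankel determinants. The product $\prod_{k<n}u_{2k+1}$ satisfies the recurrence of (i) with $c^S_n=u_{2n}+u_{2n+1}$ and $\lambda^S_n=u_{2n-1}u_{2n}$, because $u_{2n-1}u_{2n}D^S_{n-1}=u_{2n}D^S_n$.
\end{enumerate}
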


\begin{proof}
(i) Evaluating $P_{n+1}=(y-c^S_n)P_n-\lambda^S_nP_{n-1}$ at $y=0$ gives
$D^S_{n+1}=c^S_nD^S_n-\lambda^S_nD^S_{n-1}$, with $D^S_0=1$ and (as
$\lambda^S_0=0$) $D^S_1=c^S_0=u_1$; likewise for $D^T$.

(ii) Multiplying $\mathcal S$ by its variable shifts the moments,
$\mathcal S'[y^k]=\mathcal S[y^{k+1}]=\mu_{k+1}$, so $\mathcal S'$ is the
Christoffel transform of $\mathcal S$ described in
Section~\ref{ssec:oddcontraction}. Its $J$-fraction is the odd
contraction~\eqref{eq:oddcontraction} of the $S$-fraction $b_j=u_j$ of
$\mathcal S$, which yields $c^{S'}_i=u_{2i+1}+u_{2i+2}$ and
$\lambda^{S'}_i=u_{2i}u_{2i+1}$ at once \cite{Wall1948,Flajolet1980}. (Only the
$J$-fraction of $\mathcal S'$ is needed below; $\mathcal S'$ itself need not
have an $S$-fraction of the family.) The same computation with $v_j$ gives
$\mathcal T'$.

(iii) With $\mu_k=\mathcal S[y^k]$ and $H_i=\det(\mu_{a+b})_{a,b<i}$, the
determinantal formula for $P_i$ evaluated at $y=0$ reads
$(-1)^iP_i(0)H_i=\det(\mu_{a+b+1})_{a,b<i}=\det(\mathcal S'[y^{a+b}])_{a,b<i}=:H_i'$.
Hence $h^{S'}_i=H'_{i+1}/H'_i=-\dfrac{P_{i+1}(0)}{P_i(0)}\dfrac{H_{i+1}}{H_i}
=\dfrac{D^S_{i+1}}{D^S_i}\,h^S_i$; likewise for $T'$.
\end{proof}

\begin{Proposition}[double shift, general form]\label{prop:dshiftgen}
For arbitrary $u_j,v_j$,
$$
\HH_n^{(2)}=(-1)^{\binom{\bar n}2}\Bigl(\prod_{l=0}^{\bar n-1}h^{S'}_l\Bigr)
\Bigl(\prod_{l=0}^{\underline n-1}h^{T'}_l\Bigr)\det\bigl(\kappa'_{\bar n+r,m}\bigr)_{0\le r,m\le \underline n-1}.
$$
Equivalently, telescoping the norms by Lemma~\ref{lem:christoffel}(iii),
\begin{equation}\label{eq:dshiftgen}
\HH_n^{(2)}=(-1)^{n}\,P_{\bar n}(0)\,Q_{\underline n}(0)\;
\frac{\det\bigl(\kappa'_{\bar n+r,m}\bigr)}{\det\bigl(\kappa_{\bar n+r,m}\bigr)}\;\HH_n,
\end{equation}
where $\HH_n$ is the unshifted dilated determinant of
Proposition~\ref{prop:family}.
\end{Proposition}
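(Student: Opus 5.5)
The first formula follows by applying Lemma~\ref{lem:bindetGene} to the shifted sequence $a'_k:=a_{k+2}$. Its dilated determinant is exactly $\HH_n^{(2)}$, because $a'_{2i+j}=a_{2i+j+2}$. The even part of $a'$ is $a'_{2k}=a_{2k+2}=\mathcal S[y^{k+1}]=\mathcal S'[y^k]$, and the odd part is $a'_{2k+1}=\mathcal T[y^{k+1}]=\mathcal T'[y^k]$. So the two functionals attached to $a'$ in Section~\ref{sec:biotho} are precisely $\mathcal S'$ and $\mathcal T'$. These are quasi-definite by Lemma~\ref{lem:christoffel}(ii),(iii), granted $D^S_i,D^T_m\ne0$, which is the implicit nondegeneracy assumption. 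Lemma~\ref{lem:bindetGene} then gives the first display verbatim, with $h^{S'},h^{T'},\kappa'$ in place of $h^S,h^T,\kappa$. No further work is needed; the lemma was stated for an arbitrary sequence whose two parts are quasi-definite.

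For \eqref{eq:dshiftgen}, divide the first display by the unshifted formula of Lemma~\ref{lem:bindetGene}. The sign $(-1)^{\binom{\bar n}2}$ cancels, leaving
$$
\frac{\HH_n^{(2)}}{\HH_n}=\prod_{l=0}^{\bar n-1}\frac{h^{S'}_l}{h^S_l}\;\prod_{l=0}^{\underline n-1}\frac{h^{T'}_l}{h^T_l}\;\frac{\det(\kappa'_{\bar n+r,m})}{\det(\kappa_{\bar n+r,m})}.
$$
By Lemma~\ref{lem:christoffel}(iii), each norm ratio equals $D^S_{l+1}/D^S_l$ (resp.\ $D^T_{l+1}/D^T_l$). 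The products therefore telescope to $D^S_{\bar n}/D^S_0=D^S_{\bar n}$ and $D^T_{\underline n}$. Since $D^S_{\bar n}D^T_{\underline n}=(-1)^{\bar n+\underline n}P_{\bar n}(0)Q_{\underline n}(0)=(-1)^nP_{\bar n}(0)Q_{\underline n}(0)$, this gives \eqref{eq:dshiftgen}.

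The only point needing care is the division. It requires $\HH_n\ne0$, i.e.\ $\det(\kappa_{\bar n+r,m})\ne0$, and $D^S_l,D^T_l\ne0$. I would phrase \eqref{eq:dshiftgen} as valid whenever the denominators are nonzero, or equivalently in its cleared form $\HH_n^{(2)}\det(\kappa)=(-1)^nP_{\bar n}(0)Q_{\underline n}(0)\det(\kappa')\,\HH_n$, which holds unconditionally. Otherwise the argument is pure bookkeeping, and I expect no real obstacle: the genuine work, computing $\kappa'$ for $u_j=j(j+s)$ and $v_j=j(j+t)$, lies in the subsequent specialisation rather than in this proposition.
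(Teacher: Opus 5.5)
Your proposal is correct and follows the paper's proof essentially verbatim. You apply Lemma~\ref{lem:bindetGene} to the shifted sequence $a_{k+2}$, whose parity functionals are $\mathcal S'$ and $\mathcal T'$, then telescope the norm ratios of Lemma~\ref{lem:christoffel}(iii) to $D^S_{\bar n}D^T_{\underline n}=(-1)^nP_{\bar n}(0)Q_{\underline n}(0)$ and divide by the unshifted reduction. Your remark that \eqref{eq:dshiftgen} should be read in cleared form when $\det(\kappa_{\bar n+r,m})=0$ is a harmless sharpening. Also, the nondegeneracy $D^S_i\ne0$ you assume is automatic here: comparing the $S$-fraction norms with (iii) gives $D^S_{i+1}/D^S_i=u_{2i+1}$.
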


\begin{proof}
$\HH_n^{(2)}=\HH_n(a_{\,\cdot+2})$ is the dilated determinant of the sequence
with even part $\mathcal S'$ and odd part $\mathcal T'$, so the reduction of
Lemma~\ref{lem:bindetGene} applies verbatim with primes throughout, giving the
first display. For the second, Lemma~\ref{lem:christoffel}(iii) telescopes
$\prod_{l=0}^{\bar n-1}h^{S'}_l=D^S_{\bar n}\prod_{l=0}^{\bar n-1}h^S_l$ and
$\prod_{l=0}^{\underline n-1}h^{T'}_l=D^T_{\underline n}\prod_{l=0}^{\underline n-1}h^T_l$ (since $D^S_0=D^T_0=1$);
dividing by the same reduction \eqref{eq:KmatrixGene} for $\HH_n$ and using
$D^S_{\bar n}D^T_{\underline n}=(-1)^{\bar n+\underline n}P_{\bar n}(0)Q_{\underline n}(0)=(-1)^nP_{\bar n}(0)Q_{\underline n}(0)$ (here $\bar n+\underline n=n$) gives
\eqref{eq:dshiftgen}.
\end{proof}

For general $u_j,v_j$ the ratio $\det(\kappa')/\det(\kappa)$ in
\eqref{eq:dshiftgen} is the one factor that need not simplify. We now
specialise to the family $u_j=j(j+s)$, $v_j=j(j+t)$ of
Section~\ref{sec:gen}, where the ingredients of Lemma~\ref{lem:christoffel}
acquire closed forms and---crucially---the connection determinant collapses.

\begin{Lemma}[Christoffel transform, special family]\label{lem:christoffelspecial}
For $u_j=j(j+s)$, $v_j=j(j+t)$, with $h^S_i=(2i)!\,(s{+}1)_{2i}$ and
$h^T_m=(2m)!\,(t{+}1)_{2m}$:
\begin{enumerate}
\item[\rm(a)] $D^S_n=(-1)^nP_n(0)=(2n-1)!!\displaystyle\prod_{k=0}^{n-1}(s+2k+1)$,
and likewise $D^T_m=(2m-1)!!\prod_{k=0}^{m-1}(t+2k+1)$;
\item[\rm(b)] $h^{S'}_i=(2i+1)(s+2i+1)\,h^S_i$ and
$h^{T'}_m=(2m+1)(t+2m+1)\,h^T_m$;
\item[\rm(c)] $\kappa'_{i,m}=\dfrac{(2i+1)!}{(2m+1)!}\dbinom{(t-s)/2}{i-m}
=\dfrac{2i+1}{2m+1}\,\kappa_{i,m}$.
\end{enumerate}
\end{Lemma}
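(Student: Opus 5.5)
Part (a) comes first, by induction on the recurrence of Lemma~\ref{lem:christoffel}(i), $D^S_{n+1}=c^S_nD^S_n-\lambda^S_nD^S_{n-1}$, with the explicit coefficients
\[
c^S_n=2n(2n+s)+(2n+1)(2n+1+s),\qquad
\lambda^S_n=(2n-1)(2n-1+s)\,2n(2n+s)
\]
from Subsection~\ref{ssec:stdata}. Write the candidate as $D_n=(2n-1)!!\,(s+1)(s+3)\cdots(s+2n-1)$. The ratios are
\[
\frac{D_{n+1}}{D_n}=(2n+1)(s+2n+1),\qquad
\frac{D_{n-1}}{D_n}=\frac{1}{(2n-1)(s+2n-1)}.
\]
Dividing the recurrence by $D_n$ reduces it to the polynomial identity
\[
(2n+1)(s+2n+1)=c^S_n-2n(2n+s),
\]
and this is immediate from the formula for $c^S_n$. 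The base cases are $D_0=1$ and $D_1=c^S_0=u_1=1+s$. The computation for $D^T$ is identical with $s\to t$.

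Part (b) then follows at once from Lemma~\ref{lem:christoffel}(iii), since $h^{S'}_i=(D^S_{i+1}/D^S_i)\,h^S_i$ and the ratio was just computed. The same holds for $T'$.

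For (c) I would copy the proof of Lemma~\ref{lem:connGene}, now using the primed recurrence \eqref{eq:kapparec}. By Lemma~\ref{lem:christoffel}(ii) its coefficients are
\begin{align*}
c^{S'}_i&=(2i+1)(2i+1+s)+(2i+2)(2i+2+s), &
\lambda^{S'}_i&=2i(2i+s)(2i+1)(2i+1+s),
\end{align*}
with the analogous expressions in $t$ for $T'$. I take the candidate $\kappa'_{i,m}=\frac{(2i+1)!}{(2m+1)!}\binom{(t-s)/2}{d}$ with $d=i-m$, and divide the recurrence by $\kappa'_{i,m}$. The neighbour ratios are those of Lemma~\ref{lem:connGene} with every factorial index raised by one:
\[
\frac{\kappa'_{i+1,m}}{\kappa'_{i,m}}=(2i+2)(2i+3)\frac{(t-s)/2-d}{d+1},
\qquad
\frac{\lambda^{T'}_{m+1}}{(2m+2)(2m+3)}=(2m+2+t)(2m+3+t),
\]
and similarly for the other two ratios. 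So the recurrence becomes the earlier bracketed identity with $(2i,2m)$ replaced by $(2i+1,2m+1)$ in the factorial factors, and it has to be checked as a polynomial identity in $(m,d,s,t)$ with $i=m+d$.

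That verification is the only real obstacle. Clearing the denominators $(d+1)$ and $((t-s)/2-d+1)$ leaves a polynomial identity of moderate degree. I would check it by expanding in $d$, or more economically by noticing that the primed data are exactly the unprimed data under the substitution $(2i,2m)\mapsto(2i+1,2m+1)$ applied to the $S$-fraction index. The earlier verification is a polynomial identity in the integer indices that only involves the even-index shift structure, so it should survive this relabelling.

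Once the recurrence is verified, uniqueness (together with $\kappa'_{i,i}=1$ and the initial values) gives the closed form. The relation $\kappa'_{i,m}=\frac{2i+1}{2m+1}\kappa_{i,m}$ is then just $\frac{(2i+1)!}{(2i)!}\cdot\frac{(2m)!}{(2m+1)!}$.
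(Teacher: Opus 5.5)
Your proof is correct and follows the paper's own argument: (a) by checking the candidate against the recurrence of Lemma~\ref{lem:christoffel}(i) through $c^S_n-2n(2n+s)=(2n+1)(2n+1+s)$, (b) directly from Lemma~\ref{lem:christoffel}(iii), and (c) by verifying the primed connection recurrence for the candidate as in Lemma~\ref{lem:connGene}, with the factorials advanced by one. Your relabelling shortcut in (c) is sound once it is stated precisely: after clearing denominators, the identity of Lemma~\ref{lem:connGene} is a polynomial identity in $M=2m$ and $d$ (with $2i=M+2d$), so it survives the substitution $M\mapsto M+1$, and that substitution produces exactly the primed identity.
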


\begin{proof}
(a) By Lemma~\ref{lem:christoffel}(i), $D^S_{n+1}=c^S_nD^S_n-\lambda^S_nD^S_{n-1}$
with $D^S_0=1$, $D^S_1=s+1$. The displayed product satisfies it:
$D^S_n/D^S_{n-1}=(2n-1)(s+2n-1)$, so
$\lambda^S_nD^S_{n-1}/D^S_n=\lambda^S_n/[(2n-1)(s+2n-1)]=2n(2n+s)$, and
$c^S_n-2n(2n+s)=(2n+1)(2n+1+s)=D^S_{n+1}/D^S_n$; the case of $D^T$ is identical.

(b) By (a) $D^S_{i+1}/D^S_i=(2i+1)(s+2i+1)$, so Lemma~\ref{lem:christoffel}(iii)
gives the stated norms.

(c) By Lemma~\ref{lem:christoffel}(ii) the pair $(\mathcal S',\mathcal T')$ is the
construction of Section~\ref{sec:gen} with the index shift $j\mapsto j+1$ and the
\emph{same} parameter $(t-s)/2$; the connection coefficients therefore obey the
recurrence \eqref{eq:kapparec} with $c^S,\lambda^S,c^T,\lambda^T$ replaced by
their primed values from (ii). The displayed closed form satisfies it: the
verification is that of Lemma~\ref{lem:connGene} with $(2i)!,(2m)!$ advanced to
$(2i+1)!,(2m+1)!$, again an identity of polynomials in $(m,d,s,t)$ routine to
check.
\end{proof}

\begin{Proposition}[double shift, special family]\label{prop:dshift}
With $u_j=j(j+s)$, $v_j=j(j+t)$ as in Section~\ref{sec:gen},
$$
\HH_n^{(2)}:=\det\bigl(a_{2i+j+2}\bigr)_{0\le i,j\le n-1}
=(2n-1)!!\;
\Bigl(\prod_{k=0}^{\bar n-1}(s+2k+1)\Bigr)
\Bigl(\prod_{k=0}^{\underline n-1}(t+2k+1)\Bigr)\,\HH_n,
$$
where $\HH_n$ is the unshifted dilated determinant of
Proposition~\ref{prop:family}.
\end{Proposition}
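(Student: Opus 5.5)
The plan is to start from the general form \eqref{eq:dshiftgen} of Proposition~\ref{prop:dshiftgen}. It expresses $\HH_n^{(2)}$ as $\HH_n$ times two factors. The first is the scalar $(-1)^nP_{\bar n}(0)Q_{\underline n}(0)=D^S_{\bar n}D^T_{\underline n}$. The second is the ratio of connection determinants $\det(\kappa'_{\bar n+r,m})/\det(\kappa_{\bar n+r,m})$. Both become explicit once we specialise to $u_j=j(j+s)$, $v_j=j(j+t)$ using Lemma~\ref{lem:christoffelspecial}. One point should be noted: \eqref{eq:dshiftgen} divides by $\det(\kappa)$, which may vanish (for instance when $(t-s)/2$ is a small integer). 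I will therefore first work with $s,t$ generic, where $\det(\kappa)\ne0$ by Lemma~\ref{lem:bindetSigma}. Both sides are polynomial (rational) in $s,t$, so the identity then extends to all $(s,t)$. Alternatively, one can use the first, division-free display of Proposition~\ref{prop:dshiftgen} directly.

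\textbf{The scalar factor.} By Lemma~\ref{lem:christoffelspecial}(a),
$$D^S_{\bar n}D^T_{\underline n}=(2\bar n-1)!!\,(2\underline n-1)!!\prod_{k=0}^{\bar n-1}(s+2k+1)\prod_{k=0}^{\underline n-1}(t+2k+1).$$
The two linear products are already those in the statement, so it remains only to account for the double factorials.

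\textbf{The connection ratio.} By Lemma~\ref{lem:christoffelspecial}(c), $\kappa'_{i,m}=\frac{2i+1}{2m+1}\kappa_{i,m}$. The primed matrix is therefore the unprimed one conjugated by diagonal matrices. Pulling the factor $2(\bar n+r)+1$ out of row $r$ and $1/(2m+1)$ out of column $m$ gives
$$\frac{\det(\kappa')}{\det(\kappa)}=\prod_{r=0}^{\underline n-1}\bigl(2\bar n+2r+1\bigr)\Big/\prod_{m=0}^{\underline n-1}(2m+1)=\frac{(2n-1)!!}{(2\bar n-1)!!}\cdot\frac{1}{(2\underline n-1)!!}.$$
Here I used $\bar n+\underline n=n$: the numerator runs over the odd numbers $2\bar n+1,\dots,2n-1$, and the denominator equals $(2\underline n-1)!!$.

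\textbf{Assembly.} Multiplying the scalar factor by this ratio, the factors $(2\bar n-1)!!$ and $(2\underline n-1)!!$ cancel, leaving exactly $(2n-1)!!$ times the two linear products times $\HH_n$, which is the claim. Since the ratio factorises so cleanly, there is no real obstacle. The only point needing care is the genericity argument that removes the division by $\det(\kappa)$. This is easy because the division-free form of Proposition~\ref{prop:dshiftgen}, together with the norms of Lemma~\ref{lem:christoffelspecial}(b), gives the same product directly.
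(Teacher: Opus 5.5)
Your proposal is correct and is the paper's own proof. You insert Lemma~\ref{lem:christoffelspecial}(a) and (c) into \eqref{eq:dshiftgen}, pull the factors $2(\bar n+r)+1$ and $1/(2m+1)$ out of the rows and columns of the connection ratio, and cancel $(2\bar n-1)!!\,(2\underline n-1)!!$. Your extra remark, that one should work at generic $(s,t)$ (where $\det(\kappa)\ne0$ and the functionals are quasi-definite) and then extend by polynomiality in $(s,t)$, is a sensible piece of rigour that the paper leaves implicit.
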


\begin{proof}
Start from the general form~\eqref{eq:dshiftgen} of
Proposition~\ref{prop:dshiftgen} and insert Lemma~\ref{lem:christoffelspecial}.
By (c), pulling $2(\bar n{+}r)+1$ from row $r$ and $1/(2m+1)$ from column $m$,
$$
\frac{\det\bigl(\kappa'_{\bar n+r,m}\bigr)}{\det\bigl(\kappa_{\bar n+r,m}\bigr)}
=\frac{\prod_{r=0}^{\underline n-1}(2\bar n+2r+1)}{\prod_{m'=0}^{\underline n-1}(2m'+1)}
=\frac{(2n-1)!!}{(2\bar n-1)!!\,(2\underline n-1)!!},
$$
since $(2\bar n+1)(2\bar n+3)\cdots(2n-1)=(2n-1)!!/(2\bar n-1)!!$ (here $\bar n+\underline n=n$). By (a),
$$
(-1)^nP_{\bar n}(0)Q_{\underline n}(0)=D^S_{\bar n}D^T_{\underline n}
=(2\bar n-1)!!\,(2\underline n-1)!!\Bigl(\prod_{k=0}^{\bar n-1}(s+2k+1)\Bigr)\Bigl(\prod_{k=0}^{\underline n-1}(t+2k+1)\Bigr).
$$
Multiplying, the $(2\bar n-1)!!\,(2\underline n-1)!!$ cancel, leaving the asserted factor
times $\HH_n$.
\end{proof}

The two new products are the odd-shift products
$(s+1)(s+3)\cdots(s+2\bar n-1)$ and $(t+1)(t+3)\cdots(t+2\underline n-1)$. In
particular $\HH_n^{(2)}$ factors into linear forms in $(s,t)$ for every
$(s,t)$: the $(t-s)/2$ part is inherited unchanged from $\HH_n$, and the
only new factors are these odd-shift products and the scalar $(2n-1)!!$.

\section{The single shift of the Euler number family on the line $t=1$}\label{sec:shift}

After the double shift of Section~\ref{sec:dshift} we turn to the
\emph{single shift}
\begin{equation}\label{eq:Hshiftdef}
\HH_n^{(1)}=\HH_n^{(1)}(\mathbf a)
=\det\bigl(a_{2i+j+1}\bigr)_{0\le i,j\le n-1},
\end{equation}
the minor of the infinite Hankel matrix $(a_{p+q})$ that selects the
\emph{odd} rows $1,3,\dots,2n-1$ (rather than the even rows
$0,2,\dots,2n-2$ of $\HH_n$). In the moment picture
\eqref{eq:KmatrixGene} the entry is $a_{2i+j+1}=\EE[z^{2i}\cdot z^{\,j+1}]$,
so the column family is shifted from $(\chi_0,\dots,\chi_{n-1})$ to
$(\chi_1,\dots,\chi_n)$, and the parity of every column \emph{swaps}: the
$z$-type (odd) columns now form the full moment family attached to
$\mathcal T$, while the even columns become the defective family
$y^1,\dots,y^{\underline n}$, missing $y^0$. The biorthogonal reduction~$\M2$
therefore applies with the two parities interchanged, and it is the \emph{odd}
part of $\mathbf a$ that governs the determinant.

The outcome is markedly less uniform than for $\HH_n$
(Proposition~\ref{prop:family}) and for the double shift
(Section~\ref{sec:dshift}), both of which factor for \emph{all} $(s,t)$. The
single shift \eqref{eq:Hshiftdef} factors into linear forms over $\QQ$ on the
line $t=1$ (the canonical tangent fraction $v_j=j(j+1)$;
Proposition~\ref{prop:shift}) and---exceptionally---on the line $t=3$
(Theorem~\ref{thm:t3}); for the other integer values $t=2,4,5,6,\dots$ an
irreducible quadratic (then cubic, \dots) factor appears already at $n=4$. On
the line $t=1$ every factor is classical; at $t=3$ the only departure from the
$t=1$ shape is a single \emph{non-classical} linear factor with non-integer
root.

\begin{Proposition}[shifted determinant, $t=1$]\label{prop:shift}
Let $u_j=j(j+s)$ and $v_j=j(j+1)$ (so the odd part of $\mathbf a$ is the tangent
fraction). Then
\begin{equation}\label{eq:shiftpoch}
\HH_n^{(1)}
=K_n\,
\prod_{j=1}^{\underline n}\Bigl(\tfrac{s+1}{2}\Bigr)_{j}^{2}\;
\prod_{j=1}^{\underline n-1}\Bigl(\tfrac{s}{2}+1\Bigr)_{j}\;
\prod_{j=1}^{\bar n-1}\Bigl(\tfrac{1-s}{2}\Bigr)_{j},
\end{equation}
with the positive scalar, independent of $s$,
\begin{equation}\label{eq:shiftconst}
K_n
=\frac{(2n-1)!!\,2^{-\binom n2}\prod_{k=1}^{n-1}k!\,(2k)!}
{\displaystyle\prod_{j=1}^{\underline n}\Bigl(\tfrac12\Bigr)_{j}^{2}
\prod_{j=1}^{\underline n-1}j!\,
\prod_{j=1}^{\bar n-1}\Bigl(\tfrac12\Bigr)_{j}}.
\end{equation}
\end{Proposition}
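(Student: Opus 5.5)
We follow the recipe of the biorthogonal reduction~$\M2$ with the two parities interchanged. In the moment picture \eqref{eq:KmatrixGene} the single-shift entry is $a_{2i+j+1}=\EE[y^i\chi_{j+1}]$, so the odd columns $j=2m$ are $z\,y^m$ with $0\le m\le\bar n-1$, and the even columns $j=2l-1$ are $y^l$ with $1\le l\le\underline n$. The $\mathcal T$-family is therefore complete and the $\mathcal S$-family is defective. We replace the rows $(y^i)_{i<n}$ and the odd columns by the tangent polynomials $r_m$ of Lemma~\ref{lem:cf}, which is a unitriangular recombination in the sense of Lemma~\ref{lem:family}. As in Lemma~\ref{lem:onesided}, we then Laplace-expand along the $\bar n$ odd columns. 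This gives
\[
\HH_n^{(1)}=\pm\prod_{m<\bar n}(2m)!\,(2m+1)!\;\det\bigl(\mathcal S[r_{\bar n+\rho}\,y^{l}]\bigr)_{0\le\rho<\underline n,\ 1\le l\le\underline n}.
\]
Here $\mathcal S$ is now a functional evaluated on tangent polynomials. Writing $y^l=y\cdot y^{l-1}$, this equals $\mathcal S'[r_{\bar n+\rho}\,y^{l-1}]$ with $\mathcal S'=y\,\mathcal S$ the Christoffel transform. We then expand $y^{l-1}$ in the orthogonal basis $P'_k$ of $\mathcal S'$, whose data are recorded in Lemma~\ref{lem:christoffel}(ii) and Lemma~\ref{lem:christoffelspecial}(b). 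The determinant becomes $\prod_k h^{S'}_k$, where $h^{S'}_k=(2k+1)(s+2k+1)(2k)!\,(s+1)_{2k}$, times the determinant of the connection coefficients $\tilde\kappa_{i,k}$ defined by $r_i=\sum_k\tilde\kappa_{i,k}P'_k$, with $i=\bar n+\rho$ and $k<\underline n$.

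The key step is to find a closed form for $\tilde\kappa_{i,k}$. These coefficients satisfy the analogue of \eqref{eq:kapparec}, with the roles of $\mathcal S$ and $\mathcal T$ swapped: the $t=1$ tangent data $c^T_i=2(2i+1)^2$, $\lambda^T_i=(2i-1)(2i)^2(2i+1)$ for the rows, and the $\mathcal S'$ data $c^{S'}_k=(2k+1)(2k+1+s)+(2k+2)(2k+2+s)$, $\lambda^{S'}_k=2k(2k+s)(2k+1)(2k+1+s)$ for the columns. Both functionals now come from $S$-fractions of the form $j(j+\cdot)$, with the index shifted by one on the $\mathcal S'$ side. So we expect the same collapse as in Lemma~\ref{lem:connGene}: a single hypergeometric term of the shape
\[
\tilde\kappa_{i,k}=\frac{(2i+1)!}{(2k+1)!}\binom{(1-s)/2}{i-k}\,\times(\text{possible ratio of Pochhammers in }s),
\]
where the parameter $(t-s)/2$ with $t=1$ is the source of the $(\frac{1-s}{2})_j$ factors in \eqref{eq:shiftpoch}. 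We would guess the exact term from small cases computed with the recurrence, then verify it by dividing the recurrence by $\tilde\kappa_{i,k}$ and checking a polynomial identity in $(k,d,s)$, exactly as in Lemma~\ref{lem:connGene}.

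This guess-and-verify step is the main obstacle. Because the column side is a Christoffel transform and not a member of the family, the term may involve an extra factor such as $(s+2k+1)$, or it may fail to be a single term. If it fails, the fallback is to write $r_i$ in terms of $P_k$ (Lemma~\ref{lem:connGene}, applied with the roles reversed and $\binom{(s-1)/2}{\cdot}$) and use $P_k=P'_k+\text{const}\cdot P'_{k-1}$, which holds because $P'_k$ is the kernel polynomial of $P_k$. This expresses $\tilde\kappa$ as a two-term combination, and the determinant can still be handled by column operations.

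With a single-term kernel in hand, the row factors $(2i+1)!$ and column factors $1/(2k+1)!$ are pulled out. What remains is a shifted binomial determinant $\det\binom{a}{q+\rho-k}$ with $a=(1-s)/2$, $q=\bar n$ and $N=\underline n$, which Lemma~\ref{lem:bindetSigma} evaluates as a product of $(a+c-r)$ over hook lengths. The last step is bookkeeping. We collect the factors $\prod_k(s+1)_{2k}(s+2k+1)$, rewrite them through $(s+1)_{2k}=4^k(\frac{s+1}2)_k(\frac s2+1)_k$ into the Pochhammer blocks $(\frac{s+1}2)_j^2$ and $(\frac s2+1)_j$, and convert the content product into $\prod_j(\frac{1-s}2)_j$. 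Everything independent of $s$ is lumped into $K_n$. We would fix the sign and confirm \eqref{eq:shiftconst} by evaluating at a convenient value such as $s=0$, where the sequence is $\sec x+\tan x$ and $\HH_n^{(1)}$ can be compared with the small cases directly.
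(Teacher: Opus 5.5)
Your reduction is the paper's own: the parity-swapped $\M2$ for $G_k=a_{k+1}$, with the rows and the $\mathcal T$-columns orthogonalised by the tangent polynomials $r_m$, the $\mathcal S$-columns rewritten through $\mathcal S'=y\,\mathcal S$, and the connection determinant closed by Lemma~\ref{lem:bindetSigma}. The step you flag as the main obstacle is where the proposal goes wrong: the kernel you predict has the wrong parameter. The correct coefficient is
\[
\tilde\kappa_{i,k}=\frac{(2i+1)!}{(2k+1)!}\binom{(s+1)/2}{i-k},
\]
with no extra factor. Two quick checks confirm this. First, $r_1=y-2$ and $P'_1=y-(3s+5)$ give $\tilde\kappa_{1,0}=3(s+1)$, not $3(1-s)$. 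Second, at $s=-1$ the data $c^{S'}_k,\lambda^{S'}_k$ reduce to $c^T_k,\lambda^T_k$, so $P'_k=r_k$ and the binomial parameter must vanish there. With your kernel, $n=2$ would give $\HH_2^{(1)}=h^{S'}_0\,\tilde\kappa_{1,0}=3(1-s^2)$, whereas directly $a_1a_4-a_2a_3=(s+1)(3s+5)-2(s+1)=3(s+1)^2$.

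Your account of where the factors $(\frac{1-s}{2})_j$ come from is therefore also off. Apply Lemma~\ref{lem:bindetSigma} with $a=\frac{s+1}{2}$, $q=\bar n$, $N=\underline n$. The cells of content $k\ge0$ give $\prod_{j=1}^{\underline n}(\frac{s+1}{2})_j$. This is the second copy of that block; the first comes from $\prod_c h^{S'}_c=\prod_c(2c+1)!\,(s+1)_{2c+1}$ together with $(s+1)_{2c+1}=2^{2c+1}(\frac{s+1}{2})_{c+1}(\frac{s}{2}+1)_c$. The cells of content $-k<0$ give $\prod_{j=1}^{\bar n-1}\prod_{k=1}^{j}(\frac{s+1}{2}-k)=(-1)^{\binom{\bar n}{2}}\prod_{j=1}^{\bar n-1}(\frac{1-s}{2})_j$. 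Their sign cancels the Laplace sign $(-1)^{\binom{\bar n}{2}}$, which is why $K_n>0$.

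Your fallback is in fact the right way to obtain the kernel, and it is better than guess-and-verify. By Lemma~\ref{lem:connGene} with the roles of $s$ and $t=1$ exchanged, $r_i=\sum_m\frac{(2i)!}{(2m)!}\binom{b}{i-m}P_m$ with $b=\frac{s-1}{2}$. The Christoffel relation is $P_m=P'_m+u_{2m}P'_{m-1}$, the constant being $h^S_m/h^{S'}_{m-1}=2m(s+2m)$. Hence, with $d=i-k$,
\[
\tilde\kappa_{i,k}=\frac{(2i)!}{(2k+1)!}\Bigl[(2k+1)\binom{b}{d}+(s+2k+2)\binom{b}{d-1}\Bigr]=\frac{(2i+1)!}{(2k+1)!}\binom{b+1}{d},
\]
using Pascal's rule, $d\binom{b}{d}=(b-d+1)\binom{b}{d-1}$ and $2b=s-1$. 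This replaces, in two lines, the paper's verification of the four-term recurrence in Lemma~\ref{lem:mixedconn}. It is also the same device the paper itself uses at $t=3$ (Lemma~\ref{lem:mixedconn3}).

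With this kernel the rest of your plan goes through. One further adjustment: obtain $K_n$ from the explicit $s$-free factors (the norms $h^T_l$, the row and column factorials, the hook lengths) by a telescoping identity in $n$. Comparing small cases at $s=0$ checks the formula but does not prove it for all $n$.
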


Here $\bigl(\tfrac12\bigr)_{j}=(2j-1)!!/2^{j}=(2j)!/(4^{j}\,j!)$.
Once $K_n$ is known to be independent of $s$, its value
\eqref{eq:shiftconst} is fixed by the secant/tangent specialisation $s=0$
(Corollary~\ref{cor:euler}), where
$\HH_n^{(1)}\big|_{s=0}=(2n-1)!!\,\HH_n\big|_{s=0}
=(2n-1)!!\,2^{-\binom n2}\prod_{k=1}^{n-1}k!\,(2k)!$. The first values are
$K_n=1,\,12,\,1440,\,9676800,\,\dots$ for $n=1,2,3,4,\dots$.

It is often convenient to read \eqref{eq:shiftpoch} as a product of
\emph{monic} linear forms in $s$. Clearing the Pochhammer symbols gives
$\HH_n^{(1)}=\widetilde K_n\,R_n(s)$ for another constant $\widetilde K_n$ independent of $s$,
where
\begin{equation}\label{eq:shiftmonic}
R_n(s)=
\underbrace{\Bigl(\prod_{j=1}^{\underline n}\prod_{k=0}^{j-1}(s+2k+1)\Bigr)^{2}}
_{\text{odd shifts, doubled}}\;
\underbrace{\prod_{j=1}^{\underline n-1}\prod_{k=1}^{j}(s+2k)}_{\text{even shifts}}\;
\underbrace{\prod_{j=1}^{\bar n-1}\prod_{k=1}^{j}(s-2k+1)}_{\text{back shifts}}.
\end{equation}
The exponent of each linear form is transparent: $(s+2k+1)$ occurs to the
power $2(\underline n-k)$, the even shift $(s+2k)$ to the power
$\underline n-k$, and the back shift $(s-2k+1)$ to the power $\bar n-k$. The
constant $\widetilde K_n$ has sign $\operatorname{sign}\widetilde K_n=(-1)^{\binom{\bar n}2}$, the
sign contributed by the back-shift factors $(\tfrac{1-s}2)_j$ on passing to
the monic form; in particular $\widetilde K_n>0$ for $n=1,2$.

Like the double shift, the single shift is the dilated determinant of a shifted
sequence, but now of $G_k:=a_{k+1}$: indeed $\HH_n^{(1)}=\HH_n(G)$, and the
even/odd split of $G$ is
$$
G_{2k}=a_{2k+1}=\mathcal T[y^k],\qquad
G_{2k+1}=a_{2k+2}=\mathcal S[y^{k+1}]=\mathcal S'[y^k],
$$
so $G$ has \emph{even} functional $\mathcal T$ and \emph{odd} functional the
Christoffel transform $\mathcal S'=y\,\mathcal S$ of
Lemma~\ref{lem:christoffel}. The parities have swapped, and the relevant
connection now links the orthogonal polynomials of the two \emph{different}
functionals $\mathcal T$ and $\mathcal S'$. On the line $t=1$ it is still a
single binomial.

\begin{Lemma}[mixed connection at $t=1$]\label{lem:mixedconn}
Let $t=1$, let $Q_i$ be the $\mathcal T$-orthogonal polynomials
($v_j=j(j+1)$), and let $P'_m$ be the monic $\mathcal S'$-orthogonal
polynomials of Lemma~\ref{lem:christoffel}. Writing
$Q_i=\sum_m\tilde\kappa_{i,m}P'_m$,
$$
\tilde\kappa_{i,m}=\frac{(2i+1)!}{(2m+1)!}\binom{(s+1)/2}{\,i-m\,}.
$$
\end{Lemma}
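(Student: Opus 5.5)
The plan is to reduce Lemma~\ref{lem:mixedconn} to a case of the connection formula already established for the Christoffel transforms, Lemma~\ref{lem:christoffelspecial}(c). The point is that a family of monic orthogonal polynomials is determined by its three-term recurrence coefficients alone (Section~\ref{ssec:ttr}). So the connection coefficients of Lemma~\ref{lem:connrec} depend only on the two lists of $J$-fraction data and not on how those data arose. The key observation I will make first is that, at $t=1$, the $\mathcal T$-data coincide with the $\mathcal S'$-data at the parameter value $s=-1$.

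\emph{Step 1: identify the $\mathcal T$-data at $t=1$.} From Lemma~\ref{lem:cfGene} with $v_j=j(j+1)$,
\[
c^T_i=2i(2i+1)+(2i+1)(2i+2)
\qquad\text{and}\qquad
\lambda^T_i=(2i-1)(2i)\,(2i)(2i+1).
\]
From Lemma~\ref{lem:christoffel}(ii) with $u_j=j(j+\sigma)$,
\[
c^{S'}_m=(2m+1)(2m+1+\sigma)+(2m+2)(2m+2+\sigma)
\qquad\text{and}\qquad
\lambda^{S'}_m=2m(2m+\sigma)(2m+1)(2m+1+\sigma).
\]
Substituting $\sigma=-1$ and re-indexing reproduces $c^T_i$ and $\lambda^T_i$ exactly. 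Hence $Q_i$ (at $t=1$) equals the monic $\mathcal S'$-orthogonal polynomial $P'_i$ of the family at parameter $-1$.

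\emph{Step 2: transfer Lemma~\ref{lem:christoffelspecial}(c).} That lemma expands $P'_i$ (the Christoffel family built from $u_j=j(j+s)$) in $Q'_m$ (the one built from $v_j=j(j+t)$), with coefficient $\frac{(2i+1)!}{(2m+1)!}\binom{(t-s)/2}{i-m}$. Its proof is a verification of the recurrence \eqref{eq:kapparec} with primed data, carried out as a polynomial identity in $(m,d,s,t)$. I may therefore specialise the parameters freely: replace $s$ by $-1$ and $t$ by $s$. Both primed families are the same construction $u_j=j(j+\cdot)$, so the $Q'$-family at parameter $s$ is exactly the $\mathcal S'$-family $P'_m$ of the present statement. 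This yields
\[
\tilde\kappa_{i,m}=\frac{(2i+1)!}{(2m+1)!}\binom{(s+1)/2}{i-m}.
\]

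\emph{Step 3: the main obstacle, which is legitimacy.} I must check that the expansion $Q_i=\sum_m\tilde\kappa_{i,m}P'_m$ is governed by \eqref{eq:kapparec} with $(c^S,\lambda^S)\to(c^T,\lambda^T)$ and $(c^T,\lambda^T)\to(c^{S'},\lambda^{S'})$. Lemma~\ref{lem:connrec} used only the two recurrences and monicity, so this holds. With $\tilde\kappa_{0,0}=1$ and the triangularity conditions, the recurrence determines all $\tilde\kappa_{i,m}$ uniquely.

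One check remains: the nonvanishing $\lambda$'s needed for Favard. We have $\lambda^T_i\neq0$ for $i\ge1$, while $\lambda^{S'}_m$ is nonzero for generic $s$. The identity is polynomial in $s$, so it extends to all $s$ (the $P'_m$ are defined by the recurrence in any case). If one prefers not to cite the specialisation, the fallback is to redo the routine ratio computation of Lemma~\ref{lem:connGene} directly with these data, with $d=i-m$ and parameter $(s+1)/2$.
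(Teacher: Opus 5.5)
Your proof is correct, but it reaches the lemma by a different route from the paper.

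\textbf{The paper's route.} The paper argues directly. It substitutes $Q_i=\sum_m\tilde\kappa_{i,m}P'_m$ into the $\mathcal T$-recurrence and obtains a recurrence of the shape \eqref{eq:kapparec} for the mixed pair $(\mathcal T,\mathcal S')$. It then asserts that the closed form satisfies this recurrence by the same elementary verification as in Lemma~\ref{lem:connGene}.

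\textbf{Your route.} You notice that the $J$-data of $\mathcal T$ at $t=1$ coincide with those of $\mathcal S'$ at $s=-1$. Indeed $c^T_i=u_{2i+1}+u_{2i+2}$ and $\lambda^T_i=u_{2i}u_{2i+1}$ for $u_j=j(j-1)$, including $c^T_0=2$. So the mixed recurrence is literally the recurrence of Lemma~\ref{lem:christoffelspecial}(c) under $(s,t)\mapsto(-1,s)$. That recurrence is a division-free forward recursion whose coefficients are polynomial in the parameters, and the candidate is polynomial in $t-s$. Hence the specialisation is legitimate, and uniqueness concludes.

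\textbf{Comparison.} Your route needs no new computation: it shows that the paper's \emph{same verification} is in fact the same identity. It also explains why $t=1$ is the one line on which a single binomial survives; compare the cancellation argument opening Section~\ref{ssec:t3}. The paper's route is self-contained, but it leaves a fresh rational identity to the reader.

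\textbf{The degenerate value $s=-1$.} Phrase this point carefully. At $s=-1$ you have $u_1=0$, so $\mathcal S'$ is the zero functional and \emph{$\mathcal S'$-orthogonal} is vacuous there. Read the identification at the level of recurrence-defined polynomials, as you do at the end of your proof. Better still, read it for the normalised functional $\mathcal S'/(s+1)$, whose limit as $s\to-1$ is exactly the tangent functional, since $E^{(s+1)}_{2k+2}/(s+1)\to(2k+2)!\,[x^{2k+2}](-\log\cos x)=E_{2k+1}$.

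\textbf{Step~3.} Favard is not needed there: Lemma~\ref{lem:connrec} uses only the two recurrences and monicity.
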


\begin{proof}
By Lemma~\ref{lem:christoffel}(ii) the $P'_m$ obey
$yP'_m=P'_{m+1}+c^{S'}_mP'_m+\lambda^{S'}_mP'_{m-1}$ with
$c^{S'}_m=u_{2m+1}+u_{2m+2}$, $\lambda^{S'}_m=u_{2m}u_{2m+1}$, while at $t=1$ the
$Q_i$ obey $Q_{i+1}=(y-c^T_i)Q_i-\lambda^T_iQ_{i-1}$ with
$c^T_i,\lambda^T_i$ from Lemma~\ref{lem:cfGene}. Substituting the expansion
$Q_i=\sum_m\tilde\kappa_{i,m}P'_m$ into the $Q$-recurrence and collecting the
coefficient of $P'_m$ yields a three-term recurrence in $(i,m)$ of exactly the
shape \eqref{eq:kapparec}; the displayed closed form satisfies it, by the same
elementary verification as in Lemma~\ref{lem:connGene}, now with
$(2i)!,(2m)!$ advanced to $(2i+1)!,(2m+1)!$ and parameter $(s+1)/2$.
\end{proof}

\begin{proof}[Proof of Proposition~\ref{prop:shift}]
Apply the reduction of Lemma~\ref{lem:bindetGene} to $G$, whose even functional
$\mathcal T$ has orthogonal polynomials $Q$ and norms
$h^T_l=(2l)!\,(t{+}1)_{2l}$, and whose odd functional $\mathcal S'$ has
orthogonal polynomials $P'$, norms $h^{S'}_c$, and connection
$Q_i=\sum_c\tilde\kappa_{i,c}P'_c$:
$$
\HH_n^{(1)}=(-1)^{\binom{\bar n}2}\Bigl(\prod_{l=0}^{\bar n-1}h^T_l\Bigr)
\Bigl(\prod_{c=0}^{\underline n-1}h^{S'}_c\Bigr)
\det\bigl(\tilde\kappa_{\bar n+r,\,c}\bigr)_{0\le r,c\le \underline n-1}.
$$
By Lemma~\ref{lem:mixedconn},
$\tilde\kappa_{\bar n+r,c}=\frac{(2(\bar n+r)+1)!}{(2c+1)!}\binom{(s+1)/2}{\bar n+r-c}$; pulling
$(2(\bar n+r)+1)!$ out of row $r$ and $1/(2c+1)!$ out of column $c$ and applying the
binomial determinant of Lemma~\ref{lem:bindetSigma} with $a=(s+1)/2$, $q=\bar n$,
$N=\underline n$ (admissible since $\bar n\in\{\underline n,\underline n+1\}$),
$$
\det\bigl(\tilde\kappa_{\bar n+r,c}\bigr)
=\Bigl(\prod_{r=0}^{\underline n-1}(2(\bar n{+}r){+}1)!\Bigr)
\Bigl(\prod_{c=0}^{\underline n-1}\tfrac1{(2c+1)!}\Bigr)
\prod_{r=1}^{\bar n}\prod_{c=1}^{\underline n}\frac{(s+1)/2+c-r}{(\bar n-r)+(\underline n-c)+1}.
$$
With $h^T_l=(2l)!\,(2l+1)!$ and
$h^{S'}_c=(2c+1)(s+2c+1)\,(2c)!\,(s{+}1)_{2c}$
(Lemma~\ref{lem:christoffelspecial}(b)), every factor is now linear in $s$; an
elementary rearrangement of Pochhammer symbols collects them into the closed
forms \eqref{eq:shiftpoch} and \eqref{eq:shiftmonic}. This is the
parity-swapped analogue of
Proposition~\ref{prop:family}, with $\mathcal S'$ in place of $\mathcal S$ and
the parameter $(t-s)/2$ specialised through $t=1$ to $(s+1)/2$.
\end{proof}

\section{The single shift of the Euler number family on the line $t=3$}\label{ssec:t3}

The single binomial of Lemma~\ref{lem:mixedconn} is special to $t=1$. Dividing
the connection recurrence \eqref{eq:kapparec} (with the row family $\mathcal T$
and the column family $\mathcal S'$) by $\tilde\kappa_{i,m}$, the backward term
carries the factor
$$
\frac{\lambda^T_i}{2i(2i+1)}=\frac{(2i-1)(2i-1+t)(2i+t)}{2i+1},
$$
which is a polynomial in $i$ only when $(2i+t)$ cancels the denominator
$(2i+1)$, i.e.\ when $t=1$. Off that line no single binomial can solve the
recurrence. In the first case beyond, $t=3$, the mixed connection coefficient
splits into a sum of \emph{two} binomials (Lemma~\ref{lem:mixedconn3}), and
this is exactly as much structure as still closes: the evaluation is again the
biorthogonal reduction~$\M2$, now supplemented by one application of the
Cauchy--Binet formula.

\begin{Lemma}[mixed connection at $t=3$]\label{lem:mixedconn3}
Let $t=3$, let $Q_i$ be the $\mathcal T$-orthogonal polynomials
($v_j=j(j+3)$), and let $P'_m$ be the monic $\mathcal S'$-orthogonal
polynomials of Lemma~\ref{lem:christoffel}. Writing
$Q_i=\sum_m\tilde\kappa_{i,m}P'_m$,
$$
\tilde\kappa_{i,m}
=\frac{(2i)!}{(2m)!}\binom{(s-1)/2}{\,i-m\,}
+(s+1)\,\frac{(2i)!}{(2m+1)!}\binom{(s-3)/2}{\,i-m-1\,}.
$$
\end{Lemma}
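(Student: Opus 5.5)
We follow the pattern of Lemmas~\ref{lem:connGene} and~\ref{lem:mixedconn}. The coefficients $\tilde\kappa_{i,m}$ are uniquely determined by three conditions: the normalisation $\tilde\kappa_{i,i}=1$, the vanishing for $m>i$ and $m<0$, and the connection recurrence. It therefore suffices to show that the proposed two-binomial expression satisfies all three.

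\emph{Step 1: the recurrence.} Substitute $Q_i=\sum_m\tilde\kappa_{i,m}P'_m$ into the $\mathcal T$-recurrence $Q_{i+1}=(y-c^T_i)Q_i-\lambda^T_iQ_{i-1}$. Expand each $yP'_m$ with the $\mathcal S'$-recurrence $yP'_m=P'_{m+1}+c^{S'}_mP'_m+\lambda^{S'}_mP'_{m-1}$ from Lemma~\ref{lem:christoffel}(ii). This gives
$$
\tilde\kappa_{i+1,m}=\tilde\kappa_{i,m-1}+(c^{S'}_m-c^T_i)\tilde\kappa_{i,m}+\lambda^{S'}_{m+1}\tilde\kappa_{i,m+1}-\lambda^T_i\tilde\kappa_{i-1,m}.
$$
Here $c^{S'}_m=u_{2m+1}+u_{2m+2}$ and $\lambda^{S'}_m=u_{2m}u_{2m+1}$ with $u_j=j(j+s)$. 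At $t=3$ we have $c^T_i=2i(2i+3)+(2i+1)(2i+4)$ and $\lambda^T_i=(2i-1)(2i+2)\,2i(2i+3)$.

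\emph{Step 2: boundary values.} For the initial and boundary conditions, observe what happens at $m=i$. The first term equals $1$ there, and the second term involves $\binom{(s-3)/2}{-1}=0$, so $\tilde\kappa_{i,i}=1$. For $m>i$ both binomials have negative lower index, so both terms vanish.

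\emph{Step 3: linearity.} Write $\tilde\kappa=A+(s+1)B$, where
$$
A_{i,m}=\frac{(2i)!}{(2m)!}\binom{(s-1)/2}{i-m},\qquad B_{i,m}=\frac{(2i)!}{(2m+1)!}\binom{(s-3)/2}{i-m-1}.
$$
The recurrence is linear, so it is enough to show that the residual of $A$ equals $-(s+1)$ times the residual of $B$; individually neither term satisfies it. Set $d=i-m$ and divide by $(2i)!/(2m)!\cdot\binom{(s-3)/2}{d-1}$. Each shifted binomial then becomes a rational function of $d$ and $s$ via the neighbour ratios used in Lemma~\ref{lem:connGene}, for instance $\binom{a}{d}=\binom{a}{d-1}(a-d+1)/d$ and $\binom{(s-1)/2}{d}=\binom{(s-3)/2}{d}+\binom{(s-3)/2}{d-1}$. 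After clearing the denominators in $d$ and $(2m+1)$, the claim reduces to a polynomial identity in $(m,d,s)$.

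\emph{Step 4: the obstacle.} This identity is where we expect the real difficulty. In the $t=1$ case the factor $\lambda^T_i/(2i(2i+1))$ was a polynomial. Here it is $(2i-1)(2i+2)(2i+3)/(2i+1)$, which is not. The non-polynomial part must be absorbed by the $1/(2m+1)$ in $B$ and the shift $d\mapsto d-1$. To find the cancellation, we would first compute the residual $\rho_A(i,m)$ of $A$ alone. We expect it to be a single hypergeometric term supported on $d\ge1$ and carrying a factor $(2m+1)^{-1}$. We would then check that $B$ produces exactly the compensating term.

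As a safeguard before attempting the general identity, we would verify small cases $i\le 3$ directly against the orthogonal polynomials computed from $u_j$ and $v_j$. We would also check the degenerate value $s=-1$, where $\tilde\kappa$ reduces to the single binomial $A$. Since the recurrence and boundary values determine $\tilde\kappa$ uniquely, these checks together with the identity of Step~3 complete the argument.
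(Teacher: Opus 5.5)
Your reduction is sound as far as it goes. With the data you list ($c^{S'}_m=u_{2m+1}+u_{2m+2}$, $\lambda^{S'}_m=u_{2m}u_{2m+1}$, $c^T_i=2i(2i+3)+(2i+1)(2i+4)$, $\lambda^T_i=(2i-1)(2i+2)\,2i(2i+3)$), the four-term recurrence together with the row $i=0$ determines $\tilde\kappa$. So any candidate satisfying it is the answer. The gap is that the proposal never shows the two-binomial candidate satisfies that recurrence. Step~4 only predicts the shape of the residual of $A$ and defers the cancellation, which you yourself call the real difficulty. The checks for $i\le 3$ and at $s=-1$ cannot replace an identity valid for all $(i,m)$. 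The one non-trivial step is therefore missing, and as written this is a plan rather than a proof. (If you divide by $\binom{(s-3)/2}{d-1}$, the diagonal $d=0$ and the column $m=0$ also need separate treatment.)

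The idea that removes the difficulty, and the one the paper uses, is to compose two connections already proved. Both $j(j+3)$ and $j(j+1)$ are secant-family coefficients, so Lemma~\ref{lem:connGene} with parameter $(1-3)/2=-1$ gives $Q^{(3)}_i=\sum_{l\le i}\frac{(2i)!}{(2l)!}(-1)^{i-l}Q^{(1)}_l$. Lemma~\ref{lem:mixedconn} then expands each $Q^{(1)}_l$ in the $P'$-basis. With $a=\tfrac{s+1}{2}$ this gives $\tilde\kappa_{i,m}=\frac{(2i)!}{(2m+1)!}\sum_{l=m}^{i}(-1)^{i-l}(2l+1)\binom{a}{l-m}$. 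Write $2l+1=(2m+1)+2p$ with $p=l-m$. The two sums $\sum_{p\le d}(-1)^p\binom ap=(-1)^d\binom{a-1}{d}$ and $\sum_{p\le d}(-1)^p p\binom ap=a(-1)^d\binom{a-2}{d-1}$ then produce exactly the two binomials and the coefficient $2a=s+1$. No rational identity has to be checked, and the non-polynomial backward factor you flag never arises.

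Your $s=-1$ observation points at this route. There the recurrence data of $\mathcal S'$ become $2(2m+1)^2$ and $(2m-1)(2m)^2(2m+1)$, which are those of the tangent polynomials, so $P'_m=Q^{(1)}_m$. If you prefer to keep the verification route, merge the two terms first. Since $\binom{a-2}{d-1}=\frac{d}{a-1}\binom{a-1}{d}$, the candidate equals $\frac{(2i)!}{(2m+1)!}\binom{a-1}{i-m}\,\frac{(2i+1)a-(2m+1)}{a-1}$. All four neighbour ratios are then explicit rational functions of $(i,m,a)$, and the recurrence becomes a single rational identity. That identity must still be written out and checked.
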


\begin{proof}
Let $Q^{(t)}_i$ denote the $\mathcal T$-orthogonal polynomials for parameter
$t$. Both $Q^{(3)}$ and $Q^{(1)}$ are secant-family polynomials in the sense of
Lemma~\ref{lem:connGene}, so expanding the first in the second
(parameter $(1-3)/2=-1$, and $\binom{-1}{d}=(-1)^d$ for $d\ge0$),
$$
Q^{(3)}_i=\sum_{l=m}^{i}\frac{(2i)!}{(2l)!}\,(-1)^{i-l}\,Q^{(1)}_l .
$$
Substituting $Q^{(1)}_l=\sum_m\frac{(2l+1)!}{(2m+1)!}\binom{(s+1)/2}{l-m}P'_m$
from Lemma~\ref{lem:mixedconn} and using $\frac{(2i)!}{(2l)!}(2l+1)!=(2i)!\,(2l+1)$,
$$
\tilde\kappa_{i,m}
=\frac{(2i)!}{(2m+1)!}\sum_{l=m}^{i}(-1)^{i-l}(2l+1)\binom{a}{l-m},
\qquad a=\tfrac{s+1}2 .
$$
With $l=m+p$ the sum is
$(-1)^d\bigl[(2m+1)\sum_{p=0}^d(-1)^p\binom ap+2\sum_{p=0}^d(-1)^p p\binom ap\bigr]$,
$d=i-m$. The two alternating sums collapse by
$\sum_{p=0}^d(-1)^p\binom ap=(-1)^d\binom{a-1}{d}$ and, via
$p\binom ap=a\binom{a-1}{p-1}$,
$\sum_{p=0}^d(-1)^p p\binom ap=a(-1)^d\binom{a-2}{d-1}$, giving
$(2m+1)\binom{a-1}{d}+2a\binom{a-2}{d-1}$. Since
$(2m+1)\,\frac{(2i)!}{(2m+1)!}=\frac{(2i)!}{(2m)!}$, $a-1=(s-1)/2$,
$a-2=(s-3)/2$ and $2a=s+1$, this is the stated form.
\end{proof}

\begin{Remark}\label{rem:t3nofactor}
The second binomial is the ``defect'' term absent at $t=1$; it vanishes only on
the diagonal ($d=0$, where $\tilde\kappa_{i,i}=1$). It makes the reduction of
Lemma~\ref{lem:bindetGene} produce $\det(\tilde\kappa_{\bar n+r,m})$ as the
determinant of a \emph{sum} of two binomial matrices, which the single
hook-content evaluation of Lemma~\ref{lem:bindetSigma} no longer closes by
itself. Nonetheless the determinant \emph{does} close: one application of the
Cauchy--Binet formula (Lemma~\ref{lem:t3cauchybinet}) turns it into a short sum
of hook-content products (Proposition~\ref{prop:t3eval}), and the outcome still
factors completely into linear forms over $\QQ$ (Theorem~\ref{thm:t3}). The sole
effect of the defect is to replace one classical linear factor by a single
\emph{non-classical} one (the carrier $\Gamma_n$ of Theorem~\ref{thm:t3}), with
non-integer root. Thus $t=3$ is a \emph{second}
factoring line beside $t=1$; an irreducible higher-degree factor first occurs at
the other values $t=2,4,5,6,\dots$. More generally for odd $t=2r+1$ one expands
$Q^{(2r+1)}$ in $Q^{(1)}$ with parameter $(1-(2r+1))/2=-r$, producing an
$(r{+}1)$-term sum of binomials, hence (after Cauchy--Binet) a sum of
$\binom{\underline n+r}{r}$ hook-content products; the present case is $r=1$.
\end{Remark}

We now carry this out. Write $a=\tfrac{s+1}2$ and $b=a-2=\tfrac{s-3}2$
throughout, and recall $\bar n+\underline n=n$, $\bar n\in\{\underline n,\underline n+1\}$.

\begin{Lemma}[Cauchy--Binet collapse at $t=3$]\label{lem:t3cauchybinet}
Let $t=3$. Pulling $(2(\bar n{+}r))!$ from row $r$ and $1/(2m{+}1)!$ from column
$m$ of $\bigl(\tilde\kappa_{\bar n+r,m}\bigr)_{0\le r,m\le\underline n-1}$
(Lemma~\ref{lem:mixedconn3}),
\begin{equation}\label{eq:t3rowcol}
\det\bigl(\tilde\kappa_{\bar n+r,m}\bigr)
=\Bigl(\prod_{r=0}^{\underline n-1}(2\bar n{+}2r)!\Bigr)
 \Bigl(\prod_{m=0}^{\underline n-1}\tfrac1{(2m+1)!}\Bigr)\,\det N,
\qquad
N_{r,m}=(2m{+}1)\binom{b}{d}+(s{+}2m{+}2)\binom{b}{d-1},
\end{equation}
with $d=\bar n+r-m$. Let $V=\bigl(\binom{b}{\bar n+r-k}\bigr)_{0\le r\le\underline n-1,\,0\le k\le\underline n}$
be the $\underline n\times(\underline n{+}1)$ Pascal block, and let $B$ be the
$(\underline n{+}1)\times\underline n$ bidiagonal matrix with
$B_{k,m}=(2m{+}1)\,[k{=}m]+(s{+}2m{+}2)\,[k{=}m{+}1]$. Then $N=VB$, and the
Cauchy--Binet formula gives
\begin{equation}\label{eq:t3cb}
\begin{gathered}
\det N=\sum_{p=0}^{\underline n} c_p\,D_p,
\qquad
c_p=\Bigl(\prod_{m=0}^{p-1}(2m{+}1)\Bigr)\Bigl(\prod_{m=p}^{\underline n-1}(s{+}2m{+}2)\Bigr),\\
D_p=\det\Bigl(\binom{b}{\bar n+r-k}\Bigr)_{\substack{0\le r\le\underline n-1\\ k\in\{0,\dots,\underline n\}\setminus\{p\}}}.
\end{gathered}
\end{equation}
\end{Lemma}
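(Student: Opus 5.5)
The proof has three steps: rewrite each entry of $\tilde\kappa$, observe that the rewritten matrix factors as $VB$, and apply Cauchy--Binet.

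\textbf{Step 1: the row and column factors.} Start from the closed form of Lemma~\ref{lem:mixedconn3} with $i=\bar n+r$ and $d=\bar n+r-m$. The first term is $\frac{(2i)!}{(2m)!}\binom{(s-1)/2}{d}$. Since $\frac{1}{(2m)!}=\frac{2m+1}{(2m+1)!}$, both terms share the prefactor $(2i)!/(2m+1)!$, and
\[
\tilde\kappa_{i,m}=\frac{(2i)!}{(2m+1)!}\Bigl[(2m+1)\tbinom{b+1}{d}+(s+1)\tbinom{b}{d-1}\Bigr],\qquad b=\tfrac{s-3}{2}.
\]
The bracket uses $\binom{b+1}{d}$, whereas $N_{r,m}$ uses $\binom{b}{d}$. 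Pascal's rule $\binom{b+1}{d}=\binom bd+\binom b{d-1}$ converts it to
\[
(2m+1)\tbinom bd+(2m+1+s+1)\tbinom b{d-1}=(2m+1)\tbinom bd+(s+2m+2)\tbinom b{d-1},
\]
which is exactly $N_{r,m}$. The row factor $(2(\bar n+r))!$ and the column factor $1/(2m+1)!$ come out of the determinant by multilinearity, giving \eqref{eq:t3rowcol}.

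\textbf{Step 2: the factorisation $N=VB$.} Fix $(r,m)$. Column $m$ of $B$ has exactly two nonzero entries, $2m+1$ at $k=m$ and $s+2m+2$ at $k=m+1$. Hence
\[
(VB)_{r,m}=(2m+1)\tbinom{b}{\bar n+r-m}+(s+2m+2)\tbinom{b}{\bar n+r-m-1}=N_{r,m}.
\]
The index ranges are consistent: $m\le\underline n-1$ gives $m+1\le\underline n$, so $V$ needs $\underline n+1$ columns, $k=0,\dots,\underline n$.

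\textbf{Step 3: Cauchy--Binet.} For the $\underline n\times(\underline n+1)$ matrix $V$ and the $(\underline n+1)\times\underline n$ matrix $B$,
\[
\det N=\sum_{S}\det V_{[\,\cdot\,,S]}\det B_{[S,\,\cdot\,]},
\]
summed over $\underline n$-subsets $S\subseteq\{0,\dots,\underline n\}$. Each such subset omits one index, so write $S=\{0,\dots,\underline n\}\setminus\{p\}$; then $\det V_{[\cdot,S]}=D_p$.

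Deleting row $p$ from the bidiagonal $B$ leaves a block-triangular matrix:
\begin{itemize}
\item for $m<p$, column $m$ keeps its entry $2m+1$ at row $k=m$, in the upper-left block;
\item for $m\ge p$, column $m$ keeps its entry $s+2m+2$ at row $k=m+1$, the former row $m+1$ shifted up to position $m$ after the deletion.
\end{itemize}
This matrix is therefore lower triangular in the first $p$ columns and upper triangular afterwards, with all of the above on the diagonal. Its determinant is the product of these diagonal entries, which is $c_p$.

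\textbf{Main obstacle.} The only delicate point is the sign and orientation bookkeeping in the deleted-row minor of $B$ together with the ordering of columns in $D_p$. Keeping $S$ in increasing order handles it: no permutation sign arises, because the surviving entries lie on the diagonal of the reindexed matrix.
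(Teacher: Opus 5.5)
Your proof is correct and follows the paper's argument step for step: the rewriting $\frac{1}{(2m)!}=\frac{2m+1}{(2m+1)!}$ together with Pascal's rule to obtain $N_{r,m}$, the columnwise identification $N=VB$, and Cauchy--Binet summed over the deleted index $p$. Your description of $B$ with row $p$ deleted is slightly more precise than the paper's word ``triangular'': the minor is block-diagonal, a lower-bidiagonal block followed by an upper-bidiagonal block, so its determinant is the product of its diagonal entries, namely $c_p$.
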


\begin{proof}
The row/column extraction \eqref{eq:t3rowcol} is immediate from
Lemma~\ref{lem:mixedconn3}: pulling out the stated factors (and writing
$\frac{(2i)!}{(2m)!}=(2m{+}1)\frac{(2i)!}{(2m+1)!}$) leaves the entry
$(2m{+}1)\binom{a-1}{d}+(s{+}1)\binom{b}{d-1}$, which Pascal's rule
$\binom{a-1}{d}=\binom{b}{d}+\binom{b}{d-1}$ (here $b=a-2$) rewrites as
$(2m{+}1)\binom{b}{d}+(s{+}2m{+}2)\binom{b}{d-1}$. In this form
column $m$ of $N$ is $(2m{+}1)V_{(m)}+(s{+}2m{+}2)V_{(m+1)}$, where
$V_{(k)}=\bigl(\binom{b}{\bar n+r-k}\bigr)_r$ is the $k$-th column of $V$; this is
exactly $N=VB$ with $B$ as displayed. Since $V$ is $\underline n\times(\underline n{+}1)$ and
$B$ is $(\underline n{+}1)\times\underline n$, the Cauchy--Binet formula reads
$\det N=\det(VB)=\sum_{p=0}^{\underline n}\det V^{[\hat p]}\,\det B^{[\hat p]}$, the sum
over the deleted common index $p$, where $[\hat p]$ omits column~$p$ of $V$
(resp.\ row~$p$ of $B$). By definition $\det V^{[\hat p]}=D_p$. Deleting row~$p$
of the bidiagonal $B$ leaves a triangular matrix (rows $0,\dots,p{-}1$ carry the
diagonal entries $2m{+}1$, rows $p{+}1,\dots,\underline n$ carry the subdiagonal
entries $s{+}2m{+}2$), whence $\det B^{[\hat p]}=\prod_{m<p}(2m{+}1)\prod_{m\ge p}(s{+}2m{+}2)=c_p$.
\end{proof}

\begin{Lemma}[hook-content value of $D_p$]\label{lem:t3hook}
Each $D_p$ in \eqref{eq:t3cb} is a determinant of binomial coefficients
$\binom{b}{\beta_c+r}$ with distinct column shifts
$\beta_c\in\{\bar n,\bar n{-}1,\dots,\bar n{-}\underline n\}\setminus\{\bar n{-}p\}$. By the
dual Jacobi--Trudi identity (with $e_j=\binom{b}{j}$) it equals, up to sign, the
Schur function $s_{\lambda^{(p)}}$ of the partition $\lambda^{(p)}$ read off from
the strictly decreasing shifts $\beta_c$ (for $p\in\{0,\underline n\}$ a rectangle, as
in Lemma~\ref{lem:bindetSigma}; otherwise a rectangle with one indentation), and
the hook-content formula evaluates it as
$$
D_p=\prod_{u\in\lambda^{(p)}}\frac{b+c(u)}{h(u)},
$$
a product of linear forms in $b=(s-3)/2$, hence in $s$. Since each content
$c(u)$ is an integer, every root is an odd integer in $s$; in particular each
$D_p$ factors completely into linear forms with integer roots.
\end{Lemma}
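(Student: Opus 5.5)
The plan is to treat $D_p$ exactly as the rectangular case of Lemma~\ref{lem:bindetSigma}, but for a general partition. First list the column shifts: with $k$ running over $\{0,\dots,\underline n\}\setminus\{p\}$ in increasing order, column $k$ of $V$ has entries $\binom{b}{\bar n-k+r}$. So the shifts $\beta_c=\bar n-k$ are strictly decreasing as $k$ increases, and they omit exactly $\bar n-p$.

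Next, transpose. The matrix then has rows indexed by $c$ and columns by $r$, with entries $e_{\beta_c+r}$. To match the dual Jacobi--Trudi form $\det(e_{\mu_i-i+j})_{1\le i,j\le N}$, with $N=\underline n$, set $\mu_i-i=\beta_{i}-1$ after re-indexing $c=i-1$ and $r=j-1$. This gives $\mu_i=\beta_{i-1}+i-1$. Since the $\beta$'s are distinct integers decreasing by steps of at least one, the $\mu_i$ are weakly decreasing, hence form a partition $\mu=\lambda'$. Explicitly:
\begin{itemize}
\item for $c<p$ the shifts are consecutive ($\beta_c=\bar n-c$), so $\mu_i=\bar n$;
\item past the gap $\beta_c=\bar n-c-1$, so $\mu_i=\bar n-1$.
\end{itemize}
Hence $\mu=(\bar n^{\,p},(\bar n-1)^{\underline n-p})$. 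This is a rectangle when $p\in\{0,\underline n\}$ and a rectangle with one indentation otherwise, as claimed. If the shifts are listed in the other order, a reversal of rows and columns (or the antidiagonal transposition) contributes only a sign. This accounts for the ``up to sign'' in the statement.

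Then apply the identity $s_\lambda=\det(e_{\lambda'_i-i+j})$ with $\lambda=\mu'$, specialised at $b$ ones via $e_j(1^b)=\binom bj$. This is valid first for integers $b\ge \ell(\lambda)$. The hook-content formula $s_\lambda(1^b)=\prod_{u\in\lambda}(b+c(u))/h(u)$ then gives the product. Both sides are polynomials in $b$, so the identity extends to all $b$, exactly as in the proof of Lemma~\ref{lem:bindetSigma}.

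Finally, each factor $b+c(u)=(s-3)/2+c(u)$ has root $s=3-2c(u)$. This is an odd integer because the content $c(u)$ is an integer, so $D_p$ splits into linear forms with odd integer roots.

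The main obstacle is bookkeeping: getting the index alignment (shift versus $\mu_i-i$) and the overall sign right, and checking that the indentation lands in the stated place. I would verify this on $\underline n=1,2$ by hand before writing the general form.
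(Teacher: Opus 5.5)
Your proposal is correct and follows the paper's own route. You transpose, match $\bigl(\binom{b}{\beta_c+r}\bigr)$ with the dual Jacobi--Trudi matrix for $\lambda'=(\bar n^{\,p},(\bar n-1)^{\underline n-p})$, that is $\lambda^{(p)}=(\underline n^{\,\bar n-1},p)$, close with hook-content, and extend from integer $b$ by polynomiality. Your explicit computation of $\lambda^{(p)}$ supplies bookkeeping that the paper leaves implicit.

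Two small corrections. With the columns in their natural Cauchy--Binet order, no sign actually arises. Your first alignment should read $\mu_i-i=\beta_{i-1}-1$.
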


\begin{proof}
This is the evaluation used already for the rectangular case in
Lemma~\ref{lem:bindetSigma}; deleting the single column $k=p$ from the contiguous
block $\{0,\dots,\underline n\}$ replaces the rectangular partition there by the
partition $\lambda^{(p)}$ with the stated one-row indentation. The dual
Jacobi--Trudi identity $s_{\lambda}=\det(e_{\lambda'_i-i+j})$ applied with
$e_j=\binom{b}{j}$ matches the matrix $\bigl(\binom{b}{\beta_c+r}\bigr)$ up to the
reindexing $\beta_c\mapsto\lambda'$, and the hook-content formula
\cite{Krattenthaler1998,Krattenthaler2005} closes it; both sides are polynomials
in $b$, so the evaluation holds for symbolic $b$.
\end{proof}

Combining Lemma~\ref{lem:bindetGene} (in its parity-swapped form, as in the
proof of Proposition~\ref{prop:shift}) with
Lemmas~\ref{lem:t3cauchybinet}--\ref{lem:t3hook} gives a closed, fully proved
evaluation of the single shift at $t=3$.

\begin{Proposition}[single shift at $t=3$: closed evaluation]\label{prop:t3eval}
Let $u_j=j(j+s)$, $v_j=j(j+3)$. With $h^T_l=(2l)!\,(4)_{2l}$ and
$h^{S'}_c=(2c{+}1)(s{+}2c{+}1)\,(2c)!\,(s{+}1)_{2c}$
(Lemma~\ref{lem:christoffelspecial}(b) at $t=3$),
\begin{equation}\label{eq:t3eval}
\HH_n^{(1)}\big|_{t=3}
=(-1)^{\binom{\bar n}2}
\Bigl(\prod_{l=0}^{\bar n-1}h^T_l\Bigr)
\Bigl(\prod_{c=0}^{\underline n-1}\frac{h^{S'}_c}{(2c+1)!}\Bigr)
\Bigl(\prod_{r=0}^{\underline n-1}(2\bar n{+}2r)!\Bigr)
\sum_{p=0}^{\underline n} c_p\,D_p,
\end{equation}
with $c_p,D_p$ from \eqref{eq:t3cb} and $D_p$ evaluated by
Lemma~\ref{lem:t3hook}. Every factor on the right of \eqref{eq:t3eval}, except
possibly the sum $\sum_p c_pD_p$, is a product of linear forms in $s$; whether
$\HH_n^{(1)}\big|_{t=3}$ factors completely thus rests on that sum alone, and
this is settled in Theorem~\ref{thm:t3}.
\end{Proposition}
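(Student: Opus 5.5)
This is the parity-swapped instance of Lemma~\ref{lem:bindetGene}, and I will assemble it exactly as in the proof of Proposition~\ref{prop:shift}. First I pass to the shifted sequence $G_k=a_{k+1}$, so that $\HH_n^{(1)}=\HH_n(G)$. The even functional of $G$ is $\mathcal T$ (at $t=3$), with monic orthogonal polynomials $Q_l$ and norms $h^T_l=(2l)!\,(4)_{2l}$ from \eqref{eq:stnorms}. The odd functional is the Christoffel transform $\mathcal S'$, with polynomials $P'_c$ and norms $h^{S'}_c=(2c+1)(s+2c+1)(2c)!\,(s+1)_{2c}$ from Lemma~\ref{lem:christoffelspecial}(b). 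Both are quasi-definite for generic $s$. Lemma~\ref{lem:bindetGene} then gives
\[
\HH_n^{(1)}=(-1)^{\binom{\bar n}2}\Bigl(\prod_{l<\bar n}h^T_l\Bigr)\Bigl(\prod_{c<\underline n}h^{S'}_c\Bigr)\det\bigl(\tilde\kappa_{\bar n+r,c}\bigr),
\]
where the $\tilde\kappa$ are the mixed connection coefficients of Lemma~\ref{lem:mixedconn3}.

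Next I insert the row/column extraction \eqref{eq:t3rowcol} of Lemma~\ref{lem:t3cauchybinet}. It contributes $\prod_{r<\underline n}(2\bar n+2r)!$ and $\prod_{m<\underline n}1/(2m+1)!$, and leaves $\det N$. The second of these products is absorbed column by column into the $h^{S'}_c$, which produces exactly the factor $h^{S'}_c/(2c+1)!$ in \eqref{eq:t3eval}. The Cauchy--Binet expansion \eqref{eq:t3cb} then replaces $\det N$ by $\sum_p c_pD_p$, and this gives the displayed identity. The argument is an identity of rational functions in $s$, valid for generic $s$ and hence everywhere by polynomiality.

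For the factorisation claim, I note that $h^T_l$ and the factorials are constants. The ratio $h^{S'}_c/(2c+1)!$ equals $(s+2c+1)(s+1)_{2c}/(2c+1)$ up to constants, and is therefore a product of linear forms in $s$. By Lemma~\ref{lem:t3hook}, each $D_p$ separately is a product of linear forms. The $c_p$ are products of linear forms as well.

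The only point needing care is the sign and index bookkeeping in the parity swap: confirming that the roles of $\bar n$ and $\underline n$ are those of Lemma~\ref{lem:bindetGene} applied to $G$, with $\bar n$ rows of $\mathcal T$-type. I expect no real obstacle here. A sum of products of linear forms need not factor, so the claim deliberately leaves $\sum_pc_pD_p$ open, to be settled in Theorem~\ref{thm:t3}.
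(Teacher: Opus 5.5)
Your proposal is correct and follows the paper's own proof essentially step for step: Lemma~\ref{lem:bindetGene} applied to $G_k=a_{k+1}$ with even functional $\mathcal T$ and odd functional $\mathcal S'$, then the extraction \eqref{eq:t3rowcol} with one $(2c+1)!$ absorbed per column into $h^{S'}_c$, and finally the Cauchy--Binet expansion \eqref{eq:t3cb}. One small slip that does not affect the argument: $h^{S'}_c/(2c+1)!$ is exactly $(s+2c+1)(s+1)_{2c}$, with no extra factor $1/(2c+1)$.
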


\begin{proof}
Apply the reduction of Lemma~\ref{lem:bindetGene} to $G_k=a_{k+1}$ exactly as in
the proof of Proposition~\ref{prop:shift}: the even functional is $\mathcal T$
(orthogonal polynomials $Q$, norms $h^T_l$) and the odd functional is the
Christoffel transform $\mathcal S'$ (orthogonal polynomials $P'$, norms
$h^{S'}_c$), with mixed connection $Q_i=\sum_c\tilde\kappa_{i,c}P'_c$. This gives
$\HH_n^{(1)}=(-1)^{\binom{\bar n}2}\bigl(\prod_l h^T_l\bigr)\bigl(\prod_c h^{S'}_c\bigr)\det(\tilde\kappa_{\bar n+r,c})$.
At $t=3$ insert \eqref{eq:t3rowcol} and \eqref{eq:t3cb} of
Lemma~\ref{lem:t3cauchybinet}, cancelling one $(2c{+}1)!$ per column against the
$h^{S'}_c$ factor as written. The norms $h^T_l,h^{S'}_c$ and the factorials are
products of linear forms in $s$ (the $s$-dependence sits in $(s{+}1)_{2c}$ and
$s{+}2c{+}1$); each $D_p$ is one by Lemma~\ref{lem:t3hook}.
\end{proof}

\begin{Theorem}[single shift at $t=3$: factorisation]\label{thm:t3}
Let $u_j=j(j+s)$, $v_j=j(j+3)$. Then $\HH_n^{(1)}\big|_{t=3}=\det(a_{2i+j+1})$
factors completely into linear forms over $\QQ$. All its factors have integer
roots except one, the unique \emph{non-classical} factor (the \emph{carrier})
\begin{equation}\label{eq:t3nonclassical}
\Gamma_n=\Gamma_n(s):=(2\bar n{+}1)\,s+\bigl(4\bar n\,\underline n-2\bar n-1\bigr)
=(2\bar n{+}1)(s+2\underline n)-(2n{+}1),
\end{equation}
whose root $s=-2\underline n+\dfrac{2n+1}{2\bar n+1}$ is never an integer. Equivalently,
in the collapse of Proposition~\ref{prop:t3eval},
\begin{equation}\label{eq:t3collapse}
\sum_{p=0}^{\underline n} c_p\,D_p
= D_0\cdot\frac{L_n}{2\bar n+1}\,
\Bigl(\prod_{j=0}^{\underline n-2}(s+2j+1)\Bigr)\,\Gamma_n(s)
\end{equation}
for a positive rational constant $L_n$, so the only factor of
$\HH_n^{(1)}\big|_{t=3}$ that is not classical (i.e.\ not already present, with
the same shape, on the line $t=1$ of Proposition~\ref{prop:shift}) is the
carrier~\eqref{eq:t3nonclassical}.
\end{Theorem}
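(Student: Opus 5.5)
By Proposition~\ref{prop:t3eval}, every factor of $\HH_n^{(1)}\big|_{t=3}$ except the sum $\Sigma_n:=\sum_{p=0}^{\underline n}c_pD_p$ is already a product of linear forms with integer roots. So the theorem reduces to the collapse identity \eqref{eq:t3collapse}, and the plan is to evaluate $\Sigma_n$ in closed form.

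First I will normalise each term by $D_0$, the rectangular case of Lemma~\ref{lem:bindetSigma}. The shape $\lambda^{(p)}$ differs from the rectangle of $\lambda^{(0)}$ by a strip of cells along one border. Hence the hook-content formula of Lemma~\ref{lem:t3hook} gives $D_p/D_0$ as a short ratio of products: contents along that strip (Pochhammer symbols in $b$) and a ratio of hook lengths (factorials in $p$ and $\underline n$). Multiplying by $c_p/c_0$, a ratio of Pochhammer symbols in $m$ and in $s+2m+2$, turns $\Sigma_n/(c_0D_0)$ into a terminating hypergeometric sum $\sum_p \tau_p$, with $\tau_{p+1}/\tau_p$ rational in $p$ and linear in $s$ through $b=(s-3)/2$. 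I expect a ${}_3F_2$ at argument $1$.

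Next I will identify that series and sum it. A balanced ${}_3F_2$ would fall to Pfaff--Saalsch\"utz. If the series is one parameter off balanced, I would use a contiguous relation to write it as the sum of two Saalsch\"utz-summable series. That route naturally produces a product times an affine factor in $s$, which is precisely the shape $\prod_{j\le\underline n-2}(s+2j+1)\,\Gamma_n(s)$ expected in \eqref{eq:t3collapse}. As a fallback, I would treat $\Sigma_n$ as a polynomial in $s$ of degree $\underline n$, since each $c_p$ has degree $\underline n-p$ and $D_p$ has degree $|\lambda^{(p)}|$ in $b$, so the total degree needs checking. Then I would verify that it vanishes at $s=-1,-3,\dots,-2\underline n+3$, e.g.\ because there $b$ is a negative half-integer making many $D_p$ vanish while the surviving terms cancel pairwise. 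Comparing the leading coefficient in $s$ and one further value, for instance $s=-2\underline n$, where $\Gamma_n=-(2n+1)$, would pin down the remaining affine factor. This is the divisor principle of Observation~\ref{obs:divisor}.

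Finally, I will substitute \eqref{eq:t3collapse} into \eqref{eq:t3eval} and compare with the $t=1$ factors of Proposition~\ref{prop:shift}. This shows that $\Gamma_n$ is the only new factor, and its root $-2\underline n+(2n+1)/(2\bar n+1)$ is non-integral because $1<(2n+1)/(2\bar n+1)<2$ for $n\ge2$. The $n=1$ case, and the case $\underline n=0$, I will check directly.

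The main obstacle is the summation step. Getting the exact shapes $\lambda^{(p)}$ and their signs from the reindexing in Lemma~\ref{lem:t3hook} right is delicate, and the sum may not be Saalsch\"utzian on the nose. If the hypergeometric identification stalls, I will fall back on the divisor-principle argument above. Its crux is proving the vanishing at the odd integers $s=-(2j+1)$, $0\le j\le\underline n-2$.
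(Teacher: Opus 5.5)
Your primary route is correct and genuinely different from the paper's. The sign worry is moot. Order the columns of $D_p$ by increasing $k$; then dual Jacobi--Trudi gives $(\lambda^{(p)})'=(\bar n^{\,p},(\bar n-1)^{\,\underline n-p})$. So $\lambda^{(p)}$ is the $(\bar n-1)\times\underline n$ rectangle of $D_0$ with a bottom row of length $p$ added, and hook-content gives $D_p/D_0=(b-\bar n+1)_p(-\underline n)_p/\bigl(p!\,(1-n)_p\bigr)$. Together with $c_p/c_0=(\tfrac12)_p/(\tfrac s2+1)_p$ this yields
\[
\frac{1}{c_0D_0}\sum_{p=0}^{\underline n}c_pD_p={}_3F_2\!\left(\begin{matrix}-\underline n,\ \tfrac s2-\bar n-\tfrac12,\ \tfrac12\\ 1-n,\ \tfrac s2+1\end{matrix};1\right),
\]
which is one unit off balance, exactly as you guessed. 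Raise $\tfrac12$ to $\tfrac32$ via ${}_3F_2(a,b,-N;c,d;1)={}_3F_2(a+1,b,-N;c,d;1)+\frac{bN}{cd}\,{}_3F_2(a+1,b+1,1-N;c+1,d+1;1)$. Both resulting series are Saalsch\"utzian. Their Pfaff--Saalsch\"utz values share the factor $(\tfrac{s+1}2)_{\underline n-1}$ and leave the combination $(\tfrac s2-\tfrac12)(n+\tfrac12)-\underline n(\tfrac s2-\bar n-\tfrac12)=\tfrac14\Gamma_n(s)$. Hence
\[
\frac{1}{D_0}\sum_{p}c_pD_p=\frac{(\bar n+\tfrac32)_{\underline n-1}}{2(n-1)\,(\bar n)_{\underline n-1}}\prod_{j=0}^{\underline n-2}(s+2j+1)\,\Gamma_n(s).
\]
This is \eqref{eq:t3collapse} with the explicit constant $L_n=(\bar n+\tfrac12)_{\underline n}/(\bar n)_{\underline n}>0$. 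For example, $n=2,3,4$ give $\tfrac12(3s+1)$, $\tfrac14(5s+3)$ and $\tfrac7{24}(s+1)(5s+11)$. The statement needs $n\ge2$: at $n=1$ the determinant is $1$ and there is no carrier, which your direct check would reveal.

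The paper never sums this series. It gets the integer roots of $\Sigma$ from the corank of $N$ at $s=-(2k+1)$ (Lemma~\ref{lem:t3rank}, a Chu--Vandermonde degree-gap argument), played against the exact order of $D_0$. It then locates the carrier's root by a root-sum computation, using the leading $s$-symbols of $N$ and the trace identity of Lemma~\ref{lem:t3trace}, and reads off $L_n>0$ from leading coefficients. Your evaluation is shorter, needs neither lemma, and produces $L_n$ in closed form. The paper's argument buys a structural explanation of the negative odd roots (atomic degeneration and rank drop), the same mechanism behind the exact multiplicities \eqref{eq:t3negmult}.

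Your fallback, by contrast, would not work as sketched. At $s=-(2j+1)$ one has $b=-(j+2)$, an integer rather than a half-integer. Moreover, for $j<\underline n-2$ the normaliser $D_0$ itself vanishes there to order $\underline n-2-j$. So showing $\sum_pc_pD_p=0$ at that point is not enough: you need vanishing to order $\underline n-1-j$, which is exactly what Lemma~\ref{lem:t3rank} supplies. Your test point $s=-2\underline n$ is well chosen, since there $c_p=0$ for $p<\underline n$. Since the hypergeometric evaluation closes the argument on its own, the fallback is not needed.
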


The first values illustrate the statement (an overall nonzero rational constant
omitted):
\begin{align*}
n=2:&\quad (s+1)\,(3s+1),\\
n=3:&\quad (s-3)(s+1)\,(5s+3),\\
n=4:&\quad (s-3)(s-1)(s+1)^3(s+2)(s+3)\,(5s+11),\\
n=5:&\quad (s-5)(s-3)^2(s-1)(s+1)^3(s+2)(s+3)\,(7s+17),
\end{align*}
the rightmost factor in each line being the carrier $\Gamma_n$
of \eqref{eq:t3nonclassical}.

The negative integer roots $s=-(2k{+}1)$ admit a clean structural explanation,
which we record next; it rigorously accounts for a large part of the
factorisation.

\begin{Lemma}[atomic degeneration at $s=-(2k{+}1)$]\label{lem:t3atomic}
Fix an integer $k\ge0$. At $s=-(2k{+}1)$ the even part $1/\cos^{s+1}=\cos^{2k}$ is a
trigonometric polynomial, so the even functional $\mathcal S$ becomes the
\emph{atomic} functional supported on the $k{+}1$ points $y_l=-(2l)^2$
($l=0,\dots,k$):
\begin{equation}\label{eq:t3atomic}
a_{2j}\big|_{s=-(2k+1)}=\mathcal S[y^{j}]=\sum_{l=0}^{k}\omega_l\,(-4l^2)^{j},
\qquad
\omega_0=\frac{1}{4^k}\binom{2k}{k},\quad
\omega_l=\frac{2}{4^k}\binom{2k}{k-l}\ (l\ge1).
\end{equation}
Consequently, in $\HH_n^{(1)}=\det(a_{2i+j+1})$ at $s=-(2k{+}1)$ every odd column
$j$ (where $a_{2i+j+1}$ is an even moment) is
$\bigl(\sum_{l=1}^{k}\omega_l\,y_l^{(j+1)/2}\,y_l^{\,i}\bigr)_i$, a combination of the
$k$ geometric vectors $\bigl(y_l^{\,i}\bigr)_i$, $l=1,\dots,k$ (the atom $y_0=0$
contributes nothing, since $y_0^{(j+1)/2}=0$). Hence for $0\le k\le \underline n-1$
the $\underline n$ odd columns span a space of dimension $\le k$, the corank of
$\HH_n^{(1)}$ at $s=-(2k{+}1)$ is at least $\underline n-k$, and
\begin{equation}\label{eq:t3atomicdiv}
(s+2k+1)^{\,\underline n-k}\ \big|\ \HH_n^{(1)}\big|_{t=3}.
\end{equation}
\end{Lemma}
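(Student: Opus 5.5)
First I will establish the atomic representation \eqref{eq:t3atomic}. At $s=-(2k+1)$ the even part of the generating function is $1/\cos(x)^{s+1}=\cos^{2k}x$. I will expand it by the binomial theorem:
$$\cos^{2k}x=4^{-k}\sum_{m=0}^{2k}\binom{2k}{m}e^{i(2k-2m)x}=4^{-k}\Bigl(\binom{2k}{k}+2\sum_{l=1}^{k}\binom{2k}{k-l}\cos(2lx)\Bigr).$$
Reading off $a_{2j}=(2j)!\,[x^{2j}]$ term by term uses $(2j)!\,[x^{2j}]\cos(2lx)=(-4l^2)^j$, which gives the stated weights $\omega_l$ and nodes $y_l=-4l^2$. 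The identification $\mathcal S[y^j]=a_{2j}$ is simply the definition of $\mathcal S$. This formal coefficient identity holds for the sequence attached to the $S$-fraction at that parameter value, since the moments are polynomials in $s$ and the identity $a_{2k}=E^{(s+1)}_{2k}$ was already established for all $s$.

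Next I will analyse the columns of $\HH_n^{(1)}=\det(a_{2i+j+1})_{0\le i,j\le n-1}$. The entry $a_{2i+j+1}$ is an even-indexed moment exactly when $j$ is odd, say $j=2q+1$ with $0\le q\le \underline n-1$. There are thus exactly $\underline n$ such columns. The entry is then $a_{2(i+q+1)}=\sum_{l=0}^k\omega_l\,y_l^{\,q+1}\,y_l^{\,i}$. The $l=0$ term vanishes because $q+1\ge1$ and $y_0=0$. Hence each odd column lies in the span of the $k$ vectors $(y_l^{\,i})_{0\le i<n}$ for $1\le l\le k$. The $\underline n$ odd columns therefore span a space of dimension at most $k$, and the matrix has corank at least $\underline n-k$ whenever $k\le\underline n-1$.

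The remaining step, and the only genuinely delicate one, is passing from corank to divisibility. I will use the standard fact that if a matrix $A(s)$ with polynomial entries has corank at least $c$ at $s=s_0$, then $(s-s_0)^c$ divides $\det A(s)$. To prove it, I will choose a constant invertible matrix $U$ over $\QQ$ such that $A(s_0)U$ has $c$ zero columns. These can be taken as combinations of the odd columns, using the $\underline n-k$ independent linear relations among them. Then each of those $c$ columns of $A(s)U$ is divisible by $(s-s_0)$ entrywise. Factoring $(s-s_0)$ out of each gives $(s-s_0)^c\mid\det A(s)\det U$. Here the entries $a_m$ are polynomials in $s$ with $t=3$ fixed, because the $S$-fraction coefficients $u_j=j(j+s)$ are polynomial in $s$ and so is every moment.

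Applying this with $s_0=-(2k+1)$ and $c=\underline n-k$ yields \eqref{eq:t3atomicdiv}. I expect the main care point to be justifying that specializing the formal family at $s=-(2k+1)$ really produces $\cos^{2k}$, i.e.\ that the polynomial dependence on $s$ commutes with specialization. This is immediate once the moments are known to be polynomials in $s$.
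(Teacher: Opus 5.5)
Your proposal is correct and follows essentially the same route as the paper. You expand $\cos^{2k}x$ into exponentials to get the atomic weights, note that the odd columns $j=2q+1$ are even moments $a_{2(i+q+1)}$ from which the atom $y_0=0$ drops out, and convert the resulting corank $\ge\underline n-k$ into divisibility by $(s+2k+1)^{\underline n-k}$; the only difference is that you prove the corank-to-divisibility step via a constant invertible column change of basis, whereas the paper simply invokes it as a standard fact.
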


\begin{proof}
Writing $\cos^{2k}x=4^{-k}\sum_{l=-k}^{k}\binom{2k}{k+l}e^{2ilx}
=4^{-k}\bigl[\binom{2k}{k}+2\sum_{l=1}^{k}\binom{2k}{k-l}\cos2lx\bigr]$ and reading
off $a_{2j}=(2j)!\,[x^{2j}]\cos^{2k}x$ with
$[x^{2j}]\cos2lx=(-1)^j(2l)^{2j}/(2j)!$ gives \eqref{eq:t3atomic}. The
odd-column statement follows since $a_{2i+j+1}=a_{2(i+(j+1)/2)}=\mathcal S[y^{\,i+(j+1)/2}]$,
and an $n\times n$ matrix, $\underline n$ of whose columns lie in a
$k$-dimensional space, has rank $\le \bar n+k$, hence corank
$\ge n-\bar n-k=\underline n-k$; a polynomial matrix of
corank $c$ at $s_0$ has $\det$ divisible by $(s-s_0)^{c}$, giving
\eqref{eq:t3atomicdiv}.
\end{proof}

The bound \eqref{eq:t3atomicdiv} captures only part of the multiplicity; the rest
comes from the reduced determinant $\det N$, whose rank at $s=-(2k{+}1)$ we now
pin down exactly. This is the technical core.

\begin{Lemma}[rank of $N$ at $s=-(2k{+}1)$]\label{lem:t3rank}
For $0\le k\le \underline n-1$, the matrix $N$ of \eqref{eq:t3cb} has rank $\le k{+}1$
at $s=-(2k{+}1)$; equivalently $\operatorname{corank}N\ge\underline n-1-k$, so
$(s+2k+1)^{\,\underline n-1-k}\mid\det N$.
\end{Lemma}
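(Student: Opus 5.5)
The plan is to make the entries of $N$ explicit at $s=-(2k{+}1)$ and then exhibit them as a sum of at most $k{+}1$ separable terms $F_j(r)\,G_j(m)$. Such a sum is a matrix of rank $\le k{+}1$. I will work with the form of the entry obtained before Pascal's rule in the proof of Lemma~\ref{lem:t3cauchybinet}, namely
$$N_{r,m}=(2m{+}1)\binom{a-1}{d}+(s{+}1)\binom{b}{d-1},\qquad d=\bar n+r-m .$$
At $s=-(2k{+}1)$ we have $a-1=-(k{+}1)$, $b=-(k{+}2)$ and $s+1=-2k$. Since every $d$ occurring in $N$ is $\ge 1$, the rule $\binom{-p}{d}=(-1)^d\binom{p+d-1}{d}$ turns the two binomials into $(-1)^d\binom{k+d}{k}$ and $(-1)^{d-1}\binom{k+d}{k+1}$. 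This gives
$$N_{r,m}=(-1)^{d}\Bigl[(2m{+}1)\binom{k+d}{k}+2k\binom{k+d}{k+1}\Bigr].$$

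The sign $(-1)^d=(-1)^{\bar n+r}(-1)^m$ factors as a row sign times a column sign, so it does not affect the rank. Put $x=\bar n+r$ and use $\binom{k+d}{k+1}=\frac{d}{k+1}\binom{k+d}{k}$. The reduced entry becomes the polynomial
$$E(x,m)=\frac{1}{k+1}\binom{k+x-m}{k}\,\bigl(2kx+2m+k+1\bigr).$$
It remains to show that the matrix $(E(x,m))$ has rank $\le k{+}1$, with $x$ and $m$ ranging over $\underline n$ values each. Two sanity checks support the claim. For $k=0$, $E=2m+1$ does not depend on $r$, so the rank is $1$. For $k=1$, $E=(x-m+1)(x+m+1)=(x+1)^2-m^2$, so the rank is $2$.

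The general step is the main obstacle. The naive bound is only $k{+}2$, because $E$ has degree $k{+}1$ in $x$. To save one dimension I would expand $E$ in the basis $\binom{x+c}{j}$ for a shift $c$ chosen so that the top $x$-coefficient merges with another one. For $k=1$ the saving comes from the absence of the odd powers of $x+1$. I would first test a reflection symmetry. Writing $X=x+\tfrac{k+1}{2}$, the expected picture is that the coefficients of $E$ as a polynomial in $X$ span a $(k{+}1)$-dimensional space of functions of $m$. One coefficient should be a combination of the others, for example through a relation among the highest two that is independent of $m$.

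If no clean symmetry emerges, I would fall back on a kernel argument. I would produce $\underline n-1-k$ explicit independent null combinations of the columns of $E$. The natural candidates are finite differences in $m$ of order $k{+}2$, weighted so as to kill the factor $(2kx+2m+k+1)$. I would verify these null vectors by the same Chu--Vandermonde summations used in Lemma~\ref{lem:mixedconn3}. Either route yields $\operatorname{corank}N\ge \underline n-1-k$. The divisibility $(s+2k+1)^{\underline n-1-k}\mid\det N$ then follows exactly as at the end of the proof of Lemma~\ref{lem:t3atomic}.
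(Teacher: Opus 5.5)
Your reduction at $s=-(2k+1)$ is correct and matches the paper's. Up to the row and column signs $(-1)^{\bar n+r}(-1)^m$, you reach the kernel $(2m+1)\binom{k+d}{k}+2k\binom{k+d}{k+1}$, which is exactly the matrix $R(r,m)$ the paper works with, and your checks for $k=0,1$ are right. But the whole content of the lemma is the improvement from the naive bound $k+2$ to $k+1$, and you do not prove that step. The reflection symmetry is only an ``expected picture'', and the weights in the finite-difference fallback are never specified. Unweighted differences $\Delta_m^{k+2}$ do annihilate the columns, since $E$ has degree $k+1$ in $m$, but they only give back $\operatorname{corank}\ge\underline n-k-2$. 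As written, only the cases $k=0,1$ are established.

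The missing idea is a degree count in $m$ after separating the variables. Write $E(x,m)=\sum_{i=0}^{k+1}x^i g_i(m)$. The matrix $\bigl(E(\bar n+r,m)\bigr)_{r,m}$ then has rank at most $\dim\operatorname{span}\{g_0,\dots,g_{k+1}\}$.

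Since $E$ has total degree $k+1$, we have $\deg g_i\le k+1-i$. The $m^k$-coefficient of $g_1$ is the $x\,m^k$-coefficient of the top homogeneous part $\frac{2}{(k+1)!}(x-m)^k(kx+m)$ of $E$. That coefficient is $\frac{2}{(k+1)!}\bigl(k(-1)^k+k(-1)^{k-1}\bigr)=0$.

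Hence $g_1,\dots,g_{k+1}$ all lie in the $k$-dimensional space of polynomials of degree $\le k-1$ in $m$. Together with $g_0$ they span at most $k+1$ dimensions, which is the required rank bound.

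The paper makes the same cancellation in the binomial basis. Chu--Vandermonde gives $R=\sum_{j=0}^{k+1}\binom{k+X}{j}G_j(m)$ with $G_j(m)=(2m+1)\binom{-m}{k-j}+2k\binom{-m}{k+1-j}$. The $m^{k+1-j}$-coefficient of $G_j$ is proportional to $(1-j)/(k+1-j)$, so it vanishes exactly at $j=1$. With this one cancellation supplied, your argument closes, and the divisibility $(s+2k+1)^{\underline n-1-k}\mid\det N$ follows as you say.
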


\begin{proof}
At $s=-(2k{+}1)$ one has $b=-(k{+}2)$, so
$\binom{b}{d}=(-1)^d\binom{k+1+d}{d}$; with $u=\bar n+r-m$ and the identity
$\binom{k+u}{k+1}=\tfrac{u}{k+1}\binom{k+u}{k}$, the entry
$N_{r,m}=(2m{+}1)\binom{b}{\bar n+r-m}+(s{+}2m{+}2)\binom{b}{\bar n+r-m-1}$ becomes
$N_{r,m}=(-1)^{u}\,R(r,m)$ with
$$
R(r,m)=(2m{+}1)\binom{k+u}{k}+2k\binom{k+u}{k+1}.
$$
The signs $(-1)^{u}=(-1)^{\bar n}(-1)^r(-1)^m$ split into nonzero row and column
factors, so $\operatorname{rank}N=\operatorname{rank}\bigl(R(r,m)\bigr)$. Put
$X=\bar n+r$. By the Chu--Vandermonde identity
$\binom{k+X-m}{i}=\sum_{j}\binom{k+X}{j}\binom{-m}{i-j}$,
\begin{equation}\label{eq:t3CV}
R=\sum_{j=0}^{k+1}\binom{k+X}{j}\,G_j(m),
\qquad
G_j(m)=(2m{+}1)\binom{-m}{k-j}+2k\binom{-m}{k+1-j}.
\end{equation}
The $k{+}2$ functions $\binom{k+X}{j}$, $j=0,\dots,k{+}1$, are linearly independent
(degrees $0,\dots,k{+}1$ in $X$), so $\operatorname{rank}R=\dim\operatorname{span}\{G_j\}_{j=0}^{k+1}$.
Now both terms of $G_j$ have degree $k{+}1{-}j$ in $m$, with combined leading
coefficient
$$
\frac{2(-1)^{k-j}}{(k-j)!}\Bigl(1-\frac{k}{k+1-j}\Bigr)
=\frac{2(-1)^{k-j}}{(k-j)!}\cdot\frac{1-j}{k+1-j},
$$
which \emph{vanishes precisely at $j=1$}. Hence $\deg G_0=k{+}1$,
$\deg G_1\le k{-}1$, and $\deg G_j=k{+}1{-}j$ for $2\le j\le k{+}1$ (degrees
$k{-}1,k{-}2,\dots,0$). The $k{+}2$ polynomials $G_j$ therefore realise only the
$k{+}1$ distinct degrees $\{0,1,\dots,k{-}1\}\cup\{k{+}1\}$ (the value $k$ is
absent), so $\dim\operatorname{span}\{G_j\}\le k{+}1$. Thus
$\operatorname{rank}N\le k{+}1$, and a polynomial matrix of corank
$\ge\underline n-1-k$ has determinant divisible by $(s+2k+1)^{\underline n-1-k}$.
\end{proof}

\begin{Remark}\label{rem:t3atomicgap}
Lemmas~\ref{lem:t3atomic} and~\ref{lem:t3rank} together pin the negative odd
multiplicities exactly. In the two-sided form \eqref{eq:t3eval},
$\HH_n^{(1)}=(\text{$s$-free constant})\cdot\prod_c h^{S'}_c\cdot\det N$, the norm
product $\prod_c h^{S'}_c$ contributes multiplicity $\underline n-k$ to $(s+2k+1)$
(from the single factor $h^{S'}_k$ and from $(s{+}1)_{2c}$ for $c\ge k{+}1$),
while Lemma~\ref{lem:t3rank} contributes $\underline n-1-k$ from $\det N$; hence
\begin{equation}\label{eq:t3negmult}
(s+2k+1)^{\,2\underline n-1-2k}\ \big|\ \HH_n^{(1)}\big|_{t=3}
\qquad(0\le k\le\underline n-1),
\end{equation}
which is the exact multiplicity of Theorem~\ref{thm:t3}. The positive and
non-classical factors are supplied by $\det N=D_0\cdot\Sigma$, with $D_0$ the
hook-content product of Lemma~\ref{lem:t3hook} and $\Sigma$ of degree $\underline n$:
that $\Sigma$ vanishes at $s=-(2j+1)$ for $0\le j\le\underline n-2$ follows from
Lemma~\ref{lem:t3rank} ($\operatorname{ord}_{-(2j+1)}\det N\ge\underline n-1-j$)
together with $\operatorname{ord}_{-(2j+1)}D_0=\underline n-2-j$ (an explicit hook
count), and the single remaining root is that of the
carrier~\eqref{eq:t3nonclassical}, determined in the proof of
Theorem~\ref{thm:t3} below.
\end{Remark}

The last ingredient pins down the non-classical factor through the leading
symbols of $\det N$; for this we need one trace identity for the binomial
matrix $\bar M=\bigl(\binom{\bar n+r}{m}\bigr)_{0\le r,m\le\underline n-1}$.

\begin{Lemma}[a trace identity]\label{lem:t3trace}
Let $D_m=\operatorname{diag}(0,1,\dots,\underline n-1)$. For every diagonal matrix
$D_a=\operatorname{diag}(a_0,\dots,a_{\underline n-1})$,
\begin{equation}\label{eq:t3traceid}
\operatorname{tr}\bigl(\bar M^{-1}D_a\bar M D_m\bigr)
=\sum_{r=0}^{\underline n-1}a_r(\bar n+r)-\bar n\,a_0\,\underline n .
\end{equation}
\end{Lemma}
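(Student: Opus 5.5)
Both sides of \eqref{eq:t3traceid} are linear in $(a_0,\dots,a_{\underline n-1})$, so I would first reduce to $D_a=E_{rr}$, the matrix unit at position $(r,r)$. By cyclicity, $\operatorname{tr}(\bar M^{-1}D_a\bar MD_m)=\operatorname{tr}(D_a\,\bar MD_m\bar M^{-1})$. The identity is therefore equivalent to computing the diagonal of $\bar MD_m\bar M^{-1}$, namely
$$
\bigl(\bar MD_m\bar M^{-1}\bigr)_{rr}=(\bar n+r)-\bar n\,\underline n\,[r=0].
$$

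I would read $\bar M$ as an interpolation matrix. Write a polynomial of degree $<\underline n$ in the binomial basis, $p(X)=\sum_m c_m\binom Xm$. Then $\bar M$ sends the coefficient vector $c$ to the values of $p$ at the nodes $X_r=\bar n+r$, $0\le r\le\underline n-1$. These nodes are distinct, so $\bar M$ is invertible. In the binomial basis, $D_m$ is the operator $\binom Xm\mapsto m\binom Xm$. Since $m\binom Xm=X\binom{X-1}{m-1}=X\,\nabla\binom Xm$ with $\nabla p(X)=p(X)-p(X-1)$, this is the operator $L=X\nabla$, which preserves degree $<\underline n$. Hence $\bar MD_m\bar M^{-1}$ is the matrix of $L$ in the nodal (Lagrange) basis. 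Its $(r,r)$ entry is $(L\ell_r)(X_r)$, where $\ell_r$ is the Lagrange polynomial with $\ell_r(X_{r'})=\delta_{rr'}$.

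It remains to compute $(L\ell_r)(X_r)=X_r\bigl(1-\ell_r(X_r-1)\bigr)$. For $r\ge1$ the point $X_r-1=X_{r-1}$ is itself a node, so $\ell_r(X_r-1)=0$ and the entry is $\bar n+r$. For $r=0$ the point $X_0-1=\bar n-1$ lies just outside the node set, and
$$
\ell_0(\bar n-1)=\prod_{j=1}^{\underline n-1}\frac{(\bar n-1)-(\bar n+j)}{\bar n-(\bar n+j)}=\prod_{j=1}^{\underline n-1}\frac{j+1}{j}=\underline n,
$$
giving the entry $\bar n(1-\underline n)=\bar n-\bar n\,\underline n$. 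Summing against the $a_r$ then yields \eqref{eq:t3traceid}.

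The only step needing care is the identification of $D_m$ with $X\nabla$. In particular, $L$ must map polynomials of degree $<\underline n$ to themselves, and the convention $\binom{X-1}{-1}=0$ is needed at $m=0$. Once that identification holds, the rest is the two-line Lagrange evaluation above, where the single off-node point $\bar n-1$ produces the boundary correction $-\bar n\,a_0\,\underline n$.
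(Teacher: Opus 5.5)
Your proposal is correct and is essentially the paper's proof, phrased in operator language instead of matrix language. The paper's identity $\bar M D_m=D_{\bar n+r}\bigl(\bar M-\bar M^{\downarrow}\bigr)$, which comes from $m\binom Nm=N\bigl[\binom Nm-\binom{N-1}m\bigr]$, is exactly your $L=X\nabla$. Its splitting $\bar M^{\downarrow}=Z^{+}\bar M+e_0\eta^{\top}$ into a down-shift plus a rank-one boundary row is your observation that $X_r-1$ is a node except when $r=0$. Both arguments then finish with the same Lagrange evaluation $\ell_0(\bar n-1)=\underline n$.
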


\begin{proof}
From $m\binom{N}{m}=N\binom{N-1}{m-1}$ and Pascal's rule
$\binom{N-1}{m-1}=\binom{N}{m}-\binom{N-1}{m}$ with $N=\bar n+r$,
$m\binom{\bar n+r}{m}=(\bar n+r)\bigl[\binom{\bar n+r}{m}-\binom{\bar n+r-1}{m}\bigr]$,
i.e.\ $\bar M D_m=D_{\bar n+r}\bigl(\bar M-\bar M^{\downarrow}\bigr)$, where
$\bar M^{\downarrow}_{r,m}=\binom{\bar n+r-1}{m}=\bar M_{r-1,m}$ (the row $r=0$ being
the extension row $\eta_m=\binom{\bar n-1}{m}$). Thus
$\bar M^{\downarrow}=Z^{+}\bar M+e_0\eta^{\!\top}$ with $Z^{+}$ the down-shift
$(Z^{+})_{r,r'}=[r=r'+1]$, and, using $D_{\bar n+r}e_0=\bar n\,e_0$,
$$
\bar M D_m\bar M^{-1}
=D_{\bar n+r}-D_{\bar n+r}Z^{+}-\bar n\,e_0\,\eta^{\!\top}\bar M^{-1}.
$$
Taking $\operatorname{tr}(D_a\,\cdot\,)$: the first term gives
$\sum_r a_r(\bar n+r)$, the second vanishes ($D_aD_{\bar n+r}Z^{+}$ is strictly
lower-triangular), and the third gives
$-\bar n\,a_0\,\eta^{\!\top}\bar M^{-1}e_0$. Finally $v:=\bar M^{-1}e_0$ satisfies
$\sum_{m}\binom{\bar n+r}{m}v_m=\delta_{r,0}$, so the polynomial
$f(x):=\sum_{m=0}^{\underline n-1}\binom{x}{m}v_m$ (of degree $\le\underline n-1$) has
$f(\bar n+r)=\delta_{r,0}$, whence
$f(x)=\prod_{j=1}^{\underline n-1}\frac{x-\bar n-j}{-j}$ and
$$
\eta^{\!\top}\bar M^{-1}e_0=f(\bar n-1)
=\prod_{j=1}^{\underline n-1}\frac{-(j+1)}{-j}
=\prod_{j=1}^{\underline n-1}\frac{j+1}{j}=\underline n.
$$
This gives \eqref{eq:t3traceid}.
\end{proof}

\begin{proof}[Proof of Theorem~\ref{thm:t3}]
By \eqref{eq:t3negmult} (Lemmas~\ref{lem:t3atomic} and~\ref{lem:t3rank}),
$(s+2k+1)^{2\underline n-1-2k}\mid\HH_n^{(1)}$ for $0\le k\le\underline n-1$; this turns out
to be the exact multiplicity of the negative odd roots. By
Proposition~\ref{prop:t3eval} the rest of the statement is
the collapse~\eqref{eq:t3collapse}, i.e.\ that the polynomial
$\Sigma(s):=\sum_{p}c_pD_p/D_0$, of degree $\underline n$, equals
$A\prod_{j=0}^{\underline n-2}(s+2j+1)\cdot\Gamma_n(s)$
for a constant $A$. Its $\underline n-1$ integer roots $s=-(2j+1)$,
$j=0,\dots,\underline n-2$, are already established (Remark~\ref{rem:t3atomicgap}:
Lemma~\ref{lem:t3rank} gives $\operatorname{ord}_{-(2j+1)}\det N\ge\underline n-1-j$
and the hook count gives $\operatorname{ord}_{-(2j+1)}D_0=\underline n-2-j$, so
$\Sigma(-(2j+1))=0$).

The last (non-classical) root is the sum of roots of $\Sigma$ less the known
ones: $\rho=\operatorname{rootsum}\Sigma+(\underline n-1)^2
=\operatorname{rootsum}(\det N)-\operatorname{rootsum}(D_0)+(\underline n-1)^2$.
The root-sum of $D_0$ is explicit: $D_0$ has the consecutive columns
$\{1,\dots,\underline n\}$, hence is the rectangular case $p=0$ of
Lemma~\ref{lem:t3hook}, the hook-content product over the rectangle with
$\bar n-1$ rows and $\underline n$ columns, whose $b$-factors $b+c(u)=b+c-r$ have
roots $s=3+2(r-c)$; summing over $1\le r\le\bar n-1$, $1\le c\le\underline n$,
\begin{equation}\label{eq:t3D0rootsum}
\operatorname{rootsum}(D_0)
=\sum_{r=1}^{\bar n-1}\sum_{c=1}^{\underline n}\bigl(3+2(r-c)\bigr)
=(\bar n-1)\,\underline n\,(\bar n-\underline n+2).
\end{equation}
For $\operatorname{rootsum}(\det N)$ we use the leading $s$-symbols. Writing
$N_{r,m}=L_{r,m}s^{u}+L'_{r,m}s^{u-1}+\cdots$ ($u=\bar n+r-m$) with
$L_{r,m}=\dfrac{2\bar n+2r+1}{2^{u}u!}$,
$L'_{r,m}=\dfrac{u\bigl(4-u(2\bar n+2r+1)\bigr)}{2^{u}u!}$, the factorisation
$N=s^{\bar n}\operatorname{diag}(s^{r})\bigl(L+s^{-1}L'+\cdots\bigr)\operatorname{diag}(s^{-m})$
gives $\det N=s^{\bar n\underline n}\det L\,\bigl(1+s^{-1}\operatorname{tr}(L^{-1}L')+\cdots\bigr)$
(here $\det L\ne0$, since $L=\operatorname{diag}\!\bigl(\tfrac{2\bar n+2r+1}{2^{\bar n+r}(\bar n+r)!}\bigr)\,\bar M\,\operatorname{diag}(2^{m}m!)$
and $\det\bar M=1$ by the Vandermonde of $\bar n,\dots,\bar n+\underline n-1$), so
$\operatorname{rootsum}(\det N)=-\operatorname{tr}(L^{-1}L')$.

To evaluate the trace, write $L'=W\odot L$ (Hadamard product) with
$W_{r,m}=\tfrac{4u}{2\bar n+2r+1}-u^{2}$, $u=\bar n+r-m$, and split
$W=W^{\mathrm{row}}_r+W^{\mathrm{col}}_m+W^{\mathrm{mix}}$ into
$W^{\mathrm{row}}_r=\tfrac{4(\bar n+r)}{2\bar n+2r+1}-(\bar n+r)^2$,
$W^{\mathrm{col}}_m=-m^2$ and
$W^{\mathrm{mix}}_{r,m}=2(\bar n+r)m-\tfrac{4m}{2\bar n+2r+1}$. By conjugation
invariance of the trace the separable parts contribute
$\operatorname{tr}(L^{-1}D_{W^{\mathrm{row}}}L)=\sum_r W^{\mathrm{row}}_r$ and
$\operatorname{tr}(L^{-1}LD_{W^{\mathrm{col}}})=-\sum_m m^2$, while the mixed part
gives $2\operatorname{tr}(L^{-1}D_{\bar n+r}LD_m)-4\operatorname{tr}(L^{-1}PD_m)$,
where $P_{r,m}=\tfrac1{2^{u}u!}=\tfrac1{2\bar n+2r+1}L_{r,m}$. Since
$L=\operatorname{diag}(\tfrac{2\bar n+2r+1}{2^{\bar n+r}(\bar n+r)!})\,\bar M\,\operatorname{diag}(2^m m!)$
and the two outer diagonals commute through the conjugation, both mixed traces
collapse onto $\bar M$:
$$
\operatorname{tr}(L^{-1}D_{\bar n+r}LD_m)=\operatorname{tr}(\bar M^{-1}D_{\bar n+r}\bar M D_m),
\qquad
\operatorname{tr}(L^{-1}PD_m)=\operatorname{tr}\bigl(\bar M^{-1}D_{1/(2\bar n+2r+1)}\bar M D_m\bigr),
$$
and Lemma~\ref{lem:t3trace} (with $a_r=\bar n+r$, resp.\ $a_r=\tfrac1{2\bar n+2r+1}$)
evaluates them to $\sum_r(\bar n+r)^2-\bar n^2\underline n$, resp.\
$\sum_r\tfrac{\bar n+r}{2\bar n+2r+1}-\tfrac{\bar n\underline n}{2\bar n+1}$. Adding the four
contributions, the $\tfrac{\bar n+r}{2\bar n+2r+1}$ terms cancel and, using
$\sum_r(\bar n+r)^2-\sum_m m^2=\bar n\underline n(\bar n+\underline n-1)$,
\begin{equation}\label{eq:t3trace}
\operatorname{tr}(L^{-1}L')
=\bar n\underline n(\bar n+\underline n-1)-2\bar n^2\underline n+\frac{4\bar n\underline n}{2\bar n+1}
=\bar n\,\underline n\,(\underline n-\bar n-1)+\frac{4\bar n\underline n}{2\bar n+1}.
\end{equation}
Therefore
$\rho=-\operatorname{tr}(L^{-1}L')-(\bar n-1)\underline n(\bar n-\underline n+2)+(\underline n-1)^2$;
the $\bar n\underline n$-terms collapse to $\underline n(2-\underline n)$ and the squares cancel, leaving
$\rho=-2\underline n+1+\dfrac{2\underline n}{2\bar n+1}=-2\underline n+\dfrac{2n+1}{2\bar n+1}$
(using $2\bar n+1+2\underline n=2n+1$), i.e.\ exactly the root of the
carrier~\eqref{eq:t3nonclassical}. Hence $\Sigma(s)$ has degree $\underline n$, the
$\underline n-1$ integer roots $-(2j+1)$ and the root $\rho$, so it equals
$A\prod_{j=0}^{\underline n-2}(s+2j+1)\cdot\Gamma_n(s)$ with $A=L_n/(2\bar n+1)$.
Finally $L_n>0$: the quantity $A\,(2\bar n{+}1)$ is the ratio of the leading
coefficients of $\det N$ and $D_0$, and both are positive ---
$\operatorname{lc}(\det N)=\det L>0$, as $L$ is a product of two positive
diagonal matrices and $\bar M$ with $\det\bar M=1$, while $D_0$ is a
hook-content product in $b=(s-3)/2$ with positive leading coefficient. This
proves the collapse~\eqref{eq:t3collapse} and with it Theorem~\ref{thm:t3}.
\end{proof}

We close this section with a complement that is not needed for the proof
above: the moments and the determinant at $t=3$ also satisfy an exact, though
multi-term, \emph{contiguous relation in $s$}.

\begin{Lemma}[contiguous relation at $t=3$]\label{lem:t3contig}
The moments $a_k=a_k(s)$ at $t=3$ satisfy
\begin{equation}\label{eq:t3moment}
a_{2k}(s)=\frac{a_{2k+2}(s-2)+(s-1)^2\,a_{2k}(s-2)}{s(s-1)},
\qquad
a_{2k+1}(s)=a_{2k+1}(s-2)
\end{equation}
(the even functional $\mathcal S_s=\frac{1}{s(s-1)}\bigl(y+(s-1)^2\bigr)\mathcal S_{s-2}$
is a Christoffel transform of $\mathcal S_{s-2}$; the odd functional is
$s$-independent). Consequently, since $a_{2k+2}(s-2)$ sits two columns to the
right of $a_{2k}(s-2)$ in the Hankel array $\bigl(a_{2i+j+1}\bigr)$,
\begin{equation}\label{eq:t3detcontig}
\bigl(s(s-1)\bigr)^{\underline n}\,\HH_n^{(1)}\big|_{t=3}(s)
=\sum_{q=0}^{\underline n}(s-1)^{2q}\,G_q(s-2),
\qquad G_{\underline n}(s-2)=\HH_n^{(1)}\big|_{t=3}(s-2),
\end{equation}
where $G_q$ is the minor of the array at parameter $s-2$ whose even columns are
unchanged and whose $\underline n$ odd columns are the original ones for the first $q$
and advanced by two indices for the remaining $\underline n-q$ (so $G_q$ uses the odd
columns $\{1,3,\dots,2\underline n{+}1\}\setminus\{2q{+}1\}$, one of them a border
column).
\end{Lemma}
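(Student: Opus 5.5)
The plan is to prove the moment relation \eqref{eq:t3moment} first, at the level of generating functions, and then obtain \eqref{eq:t3detcontig} by multilinearity of the determinant in its columns.

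For the odd moments there is nothing to do. By construction $a_{2k+1}=E^{(t+1)}_{2k}$ depends only on $t$, so at $t=3$ it is independent of $s$.

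For the even moments, write $g_s(x)=\cos(x)^{-(s+1)}=\sum_k a_{2k}(s)x^{2k}/(2k)!$. In the exponential convention, multiplying the functional by $y$ shifts $a_{2k}\mapsto a_{2k+2}$, which corresponds to $g\mapsto g''$. So the first identity of \eqref{eq:t3moment} is equivalent to the ODE
\[
s(s-1)\,g_s=g_{s-2}''+(s-1)^2g_{s-2}.
\]
With $g_{s-2}=\cos^{-(s-1)}$ we compute
\[
g_{s-2}'=(s-1)\sin x\,\cos^{-s}x,
\]
\[
g_{s-2}''=(s-1)\cos^{-(s-1)}x+s(s-1)\sin^2x\,\cos^{-(s+1)}x .
\]
Substituting $\sin^2=1-\cos^2$ in the second term gives
\[
g_{s-2}''=s(s-1)\cos^{-(s+1)}x-(s-1)^2\cos^{-(s-1)}x .
\]
Adding $(s-1)^2g_{s-2}$ leaves exactly $s(s-1)g_s$. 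This identity holds formally for indeterminate $s$, since both sides are power series whose coefficients are polynomials in $s$. It also gives the Christoffel description $\mathcal S_s=\frac1{s(s-1)}\bigl(y+(s-1)^2\bigr)\mathcal S_{s-2}$.

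Next, the determinant. In $\HH_n^{(1)}=\det(a_{2i+j+1})$, column $j$ is odd-indexed in the sense that $2i+j+1$ is even exactly when $j$ is odd. There are $\underline n$ such columns, $j=2q+1$ for $0\le q\le\underline n-1$, and these carry even moments. The remaining $\bar n$ columns carry odd moments and are unchanged in passing from $s$ to $s-2$.

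Multiplying each of the $\underline n$ even-moment columns by $s(s-1)$, the relation expresses column $j$ at parameter $s$ as
\[
s(s-1)\,\bigl(\text{column } j\bigr)(s)=\bigl(\text{column } j+2\bigr)(s-2)+(s-1)^2\bigl(\text{column } j\bigr)(s-2).
\]
Expanding by multilinearity gives $2^{\underline n}$ terms, indexed by the subset of columns that receive the weight $(s-1)^2$ (unshifted) versus the advanced columns.

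The main obstacle is collapsing these $2^{\underline n}$ terms to the $\underline n+1$ terms $G_q$. The advanced version of column $2q+1$ coincides with the unshifted column $2q+3$. Hence any term in which some column $2q+1$ is advanced while column $2q+3$ is kept unshifted has two equal columns and vanishes. The surviving choices are therefore those where the unshifted columns form an initial segment $q'<q$ and the advanced columns are the rest.

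For such a choice, the odd columns used are $\{1,\dots,2q-1\}\cup\{2q+3,\dots,2\underline n+1\}$, which is $\{1,3,\dots,2\underline n+1\}\setminus\{2q+1\}$. These columns already appear in increasing order, so no sign arises. The weight of the term is $(s-1)^{2q}$, coming from the $q$ unshifted columns.

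The term $q=\underline n$ keeps every column unshifted and is literally $\HH_n^{(1)}(s-2)$. Summing over $q=0,\dots,\underline n$ yields \eqref{eq:t3detcontig}. The only point needing care is this bookkeeping of equal columns and ordering, and I would verify it on $n=2$ and $n=3$.
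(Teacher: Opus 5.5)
Your proposal is correct and follows the paper's route: the identity $g_{s-2}''+(s-1)^2g_{s-2}=s(s-1)g_s$, which is the paper's $f_w''=w(w+1)f_{w+2}-w^2f_w$ at $w=s-1$, gives the moment relation. The multilinear expansion of the $\underline n$ even-moment columns then collapses to the threshold terms, because an advanced column immediately followed by an unshifted one duplicates a column. Your explicit argument that only initial-segment choices survive is the step the paper leaves implicit; note that $G_q$ must be read positionally (each advanced column stays in its own slot), which is why no sign appears.
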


\begin{proof}
With $f_w=\cos^{-w}$ one has $f_w''=w(w+1)f_{w+2}-w^2f_w$ (differentiate twice and
use $\tan^2=\sec^2-1$); reading off the coefficient of $x^{2k}/(2k)!$ with
$w=s-1$ gives the even relation in \eqref{eq:t3moment}, while
$a_{2k+1}=E^{(4)}_{2k}$ is independent of $s$. In
$M(s)=\bigl(a_{2i+j+1}(s)\bigr)$, the even columns ($j$ even, so $2i+j+1$ odd) are
$s$-independent, while for odd $j$ the relation reads
$s(s-1)M(s)_{ij}=M(s-2)_{i,j+2}+(s-1)^2M(s-2)_{ij}$ (here $M(s-2)_{i,j+2}$ is the
column-$(j{+}2)$ entry by the Hankel index shift, or a border entry for the last
odd column). Factoring $1/[s(s-1)]$ from each of the $\underline n$ odd columns and
expanding each transformed odd column $M(s-2)_{\cdot,j+2}+(s-1)^2M(s-2)_{\cdot,j}$
by multilinearity, only the choices giving distinct column indices survive; these
are indexed by a threshold $q$ (the first $q$ odd columns keep their index, the
rest advance by two), with weight $(s-1)^{2q}$, yielding
\eqref{eq:t3detcontig}.
\end{proof}


\section{The secant-number family $(1+x)/\cos(x)^{s+1}$: the case $s=0$}\label{sec:gen-xcos}

Throughout this section and the next we study
$$
f(x)=\frac{1+x}{\cos(x)^{s+1}},
$$
splitting its exponential generating function into even and odd parts:
$$
\sum_{k\ge 0} a_k\,\frac{x^k}{k!}
=\sum_{k\ge 0} a_{2k}\,\frac{x^{2k}}{(2k)!}
+\sum_{k\ge 0} a_{2k+1}\,\frac{x^{2k+1}}{(2k+1)!}
=\frac{1}{\cos(x)^{s+1}}+\frac{x}{\cos(x)^{s+1}}.
$$
The even part is the generating function of the generalised secant numbers
$E^{(s+1)}_{2k}$ of Section~\ref{sec:gen}, and in the odd part
$x\cdot x^{2k}/(2k)!=(2k+1)\,x^{2k+1}/(2k+1)!$; hence
$$
a_{2k} = E^{(s+1)}_{2k}, \qquad a_{2k+1} =(2k+1)\, E^{(s+1)}_{2k}.
$$
In the present section we evaluate the determinant at $s=0$, where both
parity functionals turn out to be classical; Section~\ref{sec:allstar}
then treats general $s$, where the odd functional is not.

\medskip
\emph{The case $s=0$.}
Here $a_{2k}=E_{2k}$ and $a_{2k+1}=(2k+1)E_{2k}$ are the secant numbers, so the even
columns again see the secant functional $\mathcal S$ of Lemma~\ref{lem:cf}, while
the odd columns see the functional
$$
\mathcal T^*[y^k]:=(2k+1)\,E_{2k}.
$$

\begin{Proposition}\label{prop:sec1}
Let $f(x)=\dfrac{1+x}{\cos(x)}$, so that
$a_{2k}=E_{2k}$ and $a_{2k+1}=(2k+1)E_{2k}$ (secant numbers).
Then for all $n\ge 1$,
$$
\HH_n(f)=2^{\binom n2}\,\bigl((n-1)!!\bigr)^2\prod_{k=1}^{n-2}(k!!)^6.
$$
\end{Proposition}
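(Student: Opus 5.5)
We plan to use the one-sided reduction $\M2$ (Lemma~\ref{lem:onesided}), with even functional the secant functional $\mathcal S$ of Lemma~\ref{lem:cf}: its orthogonal polynomials $\hat P_i$ and norms $h^S_i=((2i)!)^2$ are explicit. The point of departure is that the odd functional is a differential transform of $\mathcal S$. Since $\mathcal T^*[y^k]=(2k+1)\mathcal S[y^k]$, we have
$$
\mathcal T^*[p]=\mathcal S\bigl[p+2y\,p'(y)\bigr]\qquad\text{for every polynomial }p .
$$
Lemma~\ref{lem:onesided} then gives
$\HH_n=(-1)^{\binom{\bar n}2}\prod_{l<\bar n}((2l)!)^2\,\det\bigl(\mathcal T^*[\hat P_{\bar n+r}\,y^m]\bigr)_{0\le r,m<\underline n}$.
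So quasi-definiteness of $\mathcal T^*$ is not needed.

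\emph{Step 1: simplify the entries.} Put $i=\bar n+r$. Then
$\mathcal T^*[\hat P_i y^m]=(2m+1)\mathcal S[\hat P_i y^m]+2\mathcal S[y^m\cdot y\hat P_i']$.
In the range $m\le\underline n-1<\bar n\le i$ the first term vanishes by orthogonality.
Expand $y\hat P_i'=\sum_{j\le i}\gamma_{i,j}\hat P_j$, where $\gamma_{i,i}=i$. A unitriangular change of the column family $y^m\mapsto\hat P_m$ (Lemma~\ref{lem:family}) turns the entry into $2\gamma_{i,m}((2m)!)^2$. This yields
$$
\HH_n=(-1)^{\binom{\bar n}2}\,2^{\underline n}\prod_{l<\bar n}((2l)!)^2\prod_{m<\underline n}((2m)!)^2\,\det\bigl(\gamma_{\bar n+r,m}\bigr)_{0\le r,m<\underline n}.
$$

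\emph{Step 2: the derivative connection coefficients.} Differentiate the three-term recurrence of Lemma~\ref{lem:cf} and multiply by $y$. This gives
$$
y\hat P_{i+1}'=y\hat P_i+(y-c_i)\,y\hat P_i'-\lambda_i\,y\hat P_{i-1}' ,
$$
and, after re-expanding $y\hat P_j$ and $y\,(\cdot)$ in the $\hat P$ basis, a recurrence in $(i,j)$ of the shape \eqref{eq:kapparec} for $\gamma_{i,j}$. The expected outcome is the collapse mechanism of Section~\ref{sec:biotho}. By analogy with Lemma~\ref{lem:connGene} we would guess a single hypergeometric term, of the form $\gamma_{i,j}=\frac{(2i)!^2}{(2j)!^2}\,\phi(i-j)$ or something similar with a kernel depending on $i-j$. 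We would find the candidate from SageMath data for small $i,j$ and then verify it against the recurrence, as in the proof of Lemma~\ref{lem:connGene}. The secant polynomials are Meixner--Pollaczek in disguise, so a simple derivative formula is plausible.

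\emph{Step 3: the kernel determinant.} After pulling out the row and column factors, the remaining determinant $\det(\phi(\bar n+r-m))$ should be a Toeplitz minor of shift $\bar n$. We would evaluate it either by the dual Jacobi--Trudi/hook-content argument (Lemma~\ref{lem:bindetSigma}), if $\phi$ is binomial-like, or otherwise by Desnanot--Jacobi condensation \eqref{eq:condensation} in $n$. The resulting product is then assembled with the prefactor and rearranged into $2^{\binom n2}((n-1)!!)^2\prod_{k\le n-2}(k!!)^6$, with the cases $n$ even and $n$ odd checked separately.

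The main obstacle is Step 2: identifying $\gamma_{i,j}$ in closed form. If it is not a single term, the fallback is to expand $\mathcal T^*$'s kernel via the relation $\mathcal T^*=\mathcal S\circ(1+2yD)$ at the level of generating functions. Concretely, we would use $\sum_k\mathcal T^*[y^k]x^k=(1+2x\,d/dx)\sum_k E_{2k}x^k$ to obtain the moments of $\mathcal T^*$. We would then try to show that $\mathcal T^*$ is itself classical (Meixner--Pollaczek with a shifted parameter), so that the full Lemma~\ref{lem:bindetGene} applies with a one-parameter connection.
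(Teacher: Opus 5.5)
Your Step~1 is correct. With $\mathcal T^*[p]=\mathcal S[p+2yp']$ the one-sided reduction gives
\[
\HH_n=(-1)^{\binom{\bar n}2}\,2^{\underline n}\prod_{l<\bar n}\bigl((2l)!\bigr)^2\prod_{m<\underline n}\bigl((2m)!\bigr)^2\det\bigl(\gamma_{\bar n+r,m}\bigr),
\]
and this reproduces $\HH_2=2$, $\HH_3=32$ and $\HH_4=36864$.

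The gap is in Steps~2--3, which carry the entire content of the proposition. They are only a programme, and your one concrete prediction is false: a kernel depending on $i-j$ alone, after removing row and column factors. Since $y\hat P_i'=T_i+i\hat P_i$, for $m<i$ and $d=i-m$ one has $\gamma_{i,m}=\Delta_{i,m}|_{s=0}$ from Lemma~\ref{lem:deriv}, that is
\[
\gamma_{i,m}=\frac{(-1)^{d-1}}{2}\,\frac{(2i)!}{(2m)!}\,\frac{2i+2m+1}{4d^2-1},
\qquad\text{e.g. }\gamma_{2,0}=-4,\ \gamma_{2,1}=14,\ \gamma_{3,0}=72,\ \gamma_{3,1}=-108 .
\]
The factor $2i+2m+1$ is neither a function of $i-m$ nor a row factor times a column factor. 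So the data will not fit your ansatz, and there is no Toeplitz minor $\det\bigl(\phi(\bar n+r-m)\bigr)$ to hand to Lemma~\ref{lem:bindetSigma} or to a condensation built on such a minor. Your fallback also names the wrong family. In the variable $y$, where the reduction lives, both functionals have quartic recurrence coefficients: $((2i-1)2i)^2$ for $\mathcal S$ and $(2m)^4$ for $\mathcal T^*$. So neither is Meixner--Pollaczek there.

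Either of two repairs closes the argument.

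\emph{Staying on your route.} The partial fraction $\frac{2i+2m+1}{4d^2-1}=\frac{2m+1}{2d-1}-\frac{2m}{2d+1}$ splits the kernel. Put $\psi(d)=(-1)^{d-1}/(2d-1)$ and $w_k=\bigl(\psi(\bar n+r-k)\bigr)_r$. Then column $m$ of $\bigl((-1)^{d-1}(2i+2m+1)/(4d^2-1)\bigr)_{r,m}$ equals $(2m+1)\,w_m+2m\,w_{m-1}$. The second coefficient vanishes at $m=0$, so the kernel is $WB$. Here $W=\bigl(\psi(\bar n+r-k)\bigr)_{0\le r,k<\underline n}$ is square and $B$ is upper bidiagonal with diagonal $2m+1$. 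Hence the kernel determinant is $(2\underline n-1)!!\,\det W$. After removing the separable sign $(-1)^{\bar n-1+r+k}$, $\det W$ is the Cauchy determinant $\det\bigl(1/(x_r+y_k)\bigr)$ with $x_r=2\bar n+2r-1$ and $y_k=-2k$. This is the structure-coefficient-plus-Cauchy mechanism of Lemmas~\ref{lem:ds-structred} and~\ref{lem:ds-cauchy}.

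\emph{The paper's route}, which is your fallback made precise. Integrating by parts in $E_{2k}=\int u^{2k}\tfrac12\operatorname{sech}(\pi u/2)\,du$ gives $\mathcal T^*$ the weight $\propto u\sinh(\pi u/2)/\cosh^2(\pi u/2)$. In $y$ this is the continuous dual Hahn functional with parameters $(\tfrac12,\tfrac12,\tfrac12)$, one parameter away from $\mathcal S=\mathrm{CDH}(\tfrac12,\tfrac12,0)$ (Lemma~\ref{lem:cdh}). The full reduction of Lemma~\ref{lem:bindetGene} then applies with the single-term connection coefficients $\bigl(i!/m!\bigr)^2\,4^{\,i-m}\binom{1/2}{i-m}$ (Lemma~\ref{lem:conn3}). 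Their kernel $\binom{1/2}{i-m}$ really is Toeplitz, and Lemma~\ref{lem:bindetSigma} at $a=\tfrac12$ gives the factor $\Omega(1)=2^{-\binom n2}$.
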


The proof is the biorthogonal reduction~$\M2$: Lemma~\ref{lem:cdh}
identifies $\mathcal T^*$ as a classical (continuous dual Hahn) functional,
Lemma~\ref{lem:conn3} computes the connection coefficients between the two
orthogonal families, and the determinant then collapses through
Lemma~\ref{lem:bindetGene}.

\begin{Lemma}[the functional $\mathcal T^*$ at $s=0$ is classical]\label{lem:cdh}
The monic orthogonal polynomials $\rho_m$ of $\mathcal T^*$ satisfy
$$
\rho_{m+1}=\bigl(y-8m^2-8m-3\bigr)\rho_m-(2m)^4\,\rho_{m-1},
$$
with orthogonality and norms
$\mathcal T^*[\rho_m\rho_{m'}]=\delta_{mm'}\bigl((2m)!!\bigr)^4$.
\end{Lemma}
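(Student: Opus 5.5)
The plan is to identify $\mathcal T^*$ explicitly with a classical weight, read off the recurrence from the known continuous dual Hahn data, and then get the norms by telescoping via \eqref{eq:HfromLambda}. Nothing new needs to be invented beyond matching parameters. Before starting, two quick sanity checks on the claimed data are worth recording: $\mathcal T^*[1]=E_0=1$, and $c_0=\mathcal T^*[y]/\mathcal T^*[1]=3E_2=3$, which agrees with $8\cdot0^2+8\cdot0+3$.

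\emph{Step 1: an integral representation of $\mathcal T^*$.} Start from the classical representation of the secant numbers as moments,
$$
E_{2k}=\int_{\RR}x^{2k}\,w(x)\,dx,\qquad w(x)=\frac{1}{2\cosh(\pi x/2)},
$$
equivalently $\int_\RR e^{\theta x}w(x)\,dx=\sec\theta$. Integrating by parts, $(2k+1)\int x^{2k}w\,dx=-\int x^{2k+1}w'(x)\,dx$. Hence $\mathcal T^*[y^k]=\int x^{2k}\,\omega(x)\,dx$ with
$$
\omega(x)=-x\,w'(x)=\frac{\pi x}{4}\,\frac{\sinh(\pi x/2)}{\cosh^2(\pi x/2)}.
$$
This weight is even and positive, so $\mathcal T^*$ is positive definite and in particular quasi-definite.

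\emph{Step 2: recognising the continuous dual Hahn weight.} The continuous dual Hahn weight with $a=b=c=\tfrac12$ in the variable $X$ is
$$
\Bigl|\frac{\Gamma(\tfrac12+iX)^3}{\Gamma(2iX)}\Bigr|^2=\frac{\pi^3}{\cosh^3(\pi X)}\cdot\frac{2X\sinh(2\pi X)}{\pi}\ \propto\ \frac{X\sinh(\pi X)}{\cosh^2(\pi X)}.
$$
Here I used $|\Gamma(\tfrac12+iX)|^2=\pi/\cosh\pi X$ and $|\Gamma(2iX)|^2=\pi/(2X\sinh 2\pi X)$. With $X=x/2$ this is a constant multiple of $\omega$. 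So, up to normalisation, $\mathcal T^*$ is the continuous dual Hahn functional in the variable $X^2=y/4$. In Koekoek--Swarttouw normalisation, the monic polynomials in $z=X^2$ satisfy
$$
p_{m+1}=\bigl(z-(A_m+C_m-a^2)\bigr)p_m-A_{m-1}C_m\,p_{m-1},
$$
with $A_m=(m+a+b)(m+a+c)$ and $C_m=m(m+b+c-1)$. At $a=b=c=\tfrac12$ this gives $A_m=(m+1)^2$ and $C_m=m^2$. The recurrence coefficients are therefore $2m^2+2m+\tfrac34$ and $m^4$. Passing back to $y=4z$ via $\rho_m(y)=4^m p_m(y/4)$ multiplies them by $4$ and $16$ respectively. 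This yields exactly $8m^2+8m+3$ and $(2m)^4$.

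\emph{Step 3: norms.} By \eqref{eq:HfromLambda}, $h_m=\mathcal T^*[1]\prod_{k=1}^m\lambda_k=\prod_{k=1}^m(2k)^4=\bigl((2m)!!\bigr)^4$, which is the claimed value.

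The main obstacle is Step 1–2: getting the integral normalisation and the scaling $X=x/2$ exactly right, so that the moments agree on the nose rather than only up to a constant. Proportionality is actually enough, because the constant cancels in the monic recurrence, and $h_0=1$ is fixed independently by $\mathcal T^*[1]=1$.

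If one prefers to avoid analysis altogether, there is an algebraic fallback, which I would use to double-check. Let $\rho_m$ be defined by the displayed recurrence; Favard's theorem then supplies a functional $\mathcal L$ for which the $\rho_m$ are orthogonal. It remains to show $\mathcal L[y^k]=(2k+1)E_{2k}$, and I would do this by proving the equivalent J-fraction identity for $\sum_k(2k+1)E_{2k}x^k$. One route is to compare with Flajolet's Motzkin-path model. Another is to derive it from the secant S-fraction $u_j=j^2$ through the operator $x\frac{d}{dx}$ combined with an even/odd contraction. I expect this route to be messier, so I would keep it only as a verification.
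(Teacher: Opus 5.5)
Your proposal is correct and follows the paper's proof essentially step for step. Both obtain the weight $u\bigl(-\omega'(u)\bigr)$ by integration by parts from $\tfrac12\operatorname{sech}(\pi u/2)$, identify it after $u=2w$ as the continuous dual Hahn weight with $(a,b,c)=(\tfrac12,\tfrac12,\tfrac12)$, and rescale the recurrence by $4$ and $16$. The only cosmetic difference is that you get the norms by telescoping $\lambda_k=(2k)^4$ from $h_0=\mathcal T^*[1]=1$, whereas the paper quotes the continuous dual Hahn norm $16^m\,m!\,(a+b)_m(a+c)_m(b+c)_m$, which is the same product.
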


\begin{proof}
By the classical integral representation
$E_{2k}=\int_{\RR}u^{2k}\,\omega(u)\,du$ with
$\omega(u)=\frac12\,\mathrm{sech}(\pi u/2)$, an integration by parts
gives
$$
(2k+1)E_{2k}=\int_{\RR}(u^{2k+1})'\,\omega(u)\,du
=\int_{\RR}u^{2k}\cdot u\bigl(-\omega'(u)\bigr)\,du ,
$$
so $\mathcal T^*$ is represented by the positive even weight
$u(-\omega'(u))=\frac{\pi}{4}\,
\dfrac{u\,\sinh(\pi u/2)}{\cosh^2(\pi u/2)}$. Writing $u=2w$ (so
$y=4w^2$), this weight is proportional to
$w\,\sinh(2\pi w)\,\mathrm{sech}^3(\pi w)
\propto\bigl|\Gamma(\tfrac12+iw)^3/\Gamma(2iw)\bigr|^2$,
the continuous dual Hahn weight with parameters
$(a,b,c)=(\tfrac12,\tfrac12,\tfrac12)$. The standard continuous dual
Hahn data $A_m=(m+a+b)(m+a+c)$, $C_m=m(m+b+c-1)$ then give, in the
variable $y=4w^2$,
$$
c^*_m=4\bigl(A_m+C_m-a^2\bigr)=8m^2+8m+3,
\qquad
\lambda^*_m=16\,A_{m-1}C_m=16m^4=(2m)^4,
$$
and norms $16^m\,m!\,(a+b)_m(a+c)_m(b+c)_m=16^m(m!)^4=((2m)!!)^4$.
(For comparison, $\mathcal S$ itself is the continuous dual Hahn
measure with parameters $(\tfrac12,\tfrac12,0)$, which gives back the
secant data of Lemma~\ref{lem:cf}.)
\end{proof}

\begin{Remark}[J-fraction form]\label{rem:jfraccdh}
The $\rho_m$ recurrence of Lemma~\ref{lem:cdh} is equivalent to the Jacobi continued
fraction for the generating function of $\mathcal T^*$: since $\mathcal T^*[y^k]=(2k+1)E_{2k}$
with $\mathcal T^*[1]=1$,
$$
\sum_{k\ge0}(2k+1)E_{2k}\,t^k
=\cfrac{1}{1-c^*_0t-\cfrac{\lambda^*_1t^2}{1-c^*_1t-\cfrac{\lambda^*_2t^2}{1-c^*_2t-\cfrac{\lambda^*_3t^2}{1-\ddots}}}},
\qquad
c^*_m=8m^2+8m+3,\quad \lambda^*_m=(2m)^4 .
$$
The first convergents reproduce $(2k+1)E_{2k}=1,\,3,\,25,\,427,\,12465,\dots$
\end{Remark}

\begin{Lemma}[connection formula]\label{lem:conn3}
For all $i,m\ge 0$,
$$
\mathcal T^*\bigl[\hat P_i\,\rho_m\bigr]
=(i!)^2\,(m!)^2\,2^{2i+2m}\,\binom{1/2}{\,i-m\,},
$$
equivalently $\hat P_i=\sum_{m=0}^{i}\tilde\kappa_{i,m}\,\rho_m$ with
$\tilde\kappa_{i,m}
=\Bigl(\dfrac{i!}{m!}\Bigr)^2 4^{\,i-m}\dbinom{1/2}{i-m}$.
\end{Lemma}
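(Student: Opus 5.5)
The plan is to verify the closed form for $\tilde\kappa_{i,m}$ through the connection recurrence \eqref{eq:kapparec} of Lemma~\ref{lem:connrec}, exactly as in the proof of Lemma~\ref{lem:connGene}. The two formulations are equivalent by the projection \eqref{eq:connproj}. With $\mathcal T^*$ in place of $\mathcal T$, we have $\mathcal T^*[\hat P_i\rho_m]=\tilde\kappa_{i,m}\,((2m)!!)^4=\tilde\kappa_{i,m}\,16^m(m!)^4$ by the norms of Lemma~\ref{lem:cdh}. Substituting the candidate $\tilde\kappa_{i,m}=(i!/m!)^2\,4^{i-m}\binom{1/2}{i-m}$ gives exactly $(i!)^2(m!)^2\,2^{2i+2m}\binom{1/2}{i-m}$. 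So it suffices to prove the expansion $\hat P_i=\sum_m\tilde\kappa_{i,m}\rho_m$.

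By Lemma~\ref{lem:connrec}, the connection coefficients are the unique array with $\tilde\kappa_{i,i}=1$ that vanishes for $m>i$ and for $m<0$ and satisfies
\[
\tilde\kappa_{i+1,m}=\tilde\kappa_{i,m-1}+(c^*_m-c^S_i)\tilde\kappa_{i,m}+\lambda^*_{m+1}\tilde\kappa_{i,m+1}-\lambda^S_i\tilde\kappa_{i-1,m}.
\]
The coefficients are $c^S_i=(2i)^2+(2i+1)^2$ and $\lambda^S_i=((2i-1)2i)^2$ from Lemma~\ref{lem:cf}, and $c^*_m=8m^2+8m+3$ and $\lambda^*_m=(2m)^4$ from Lemma~\ref{lem:cdh}.

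The candidate clearly satisfies the boundary conditions: it equals $1$ on the diagonal, and the binomial vanishes for $d=i-m<0$. I will then divide the recurrence by $\tilde\kappa_{i,m}$, writing $d=i-m$ and $a=1/2$, and use the neighbour ratios
\[
\frac{\tilde\kappa_{i+1,m}}{\tilde\kappa_{i,m}}=4(i+1)^2\frac{a-d}{d+1},\qquad
\frac{\tilde\kappa_{i,m-1}}{\tilde\kappa_{i,m}}=4m^2\frac{a-d}{d+1},
\]
\[
\frac{\tilde\kappa_{i,m+1}}{\tilde\kappa_{i,m}}=\frac{d}{4(m+1)^2(a-d+1)},\qquad
\frac{\tilde\kappa_{i-1,m}}{\tilde\kappa_{i,m}}=\frac{d}{4i^2(a-d+1)}.
\]
The powers of $m+1$ and $i$ cancel cleanly. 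We have $\lambda^*_{m+1}/(4(m+1)^2)=4(m+1)^2$ and $\lambda^S_i/(4i^2)=(2i-1)^2$. The recurrence therefore becomes
\[
\bigl[4(i+1)^2-4m^2\bigr]\frac{a-d}{d+1}=c^*_m-c^S_i+\frac{d}{a-d+1}\bigl[4(m+1)^2-(2i-1)^2\bigr].
\]
Setting $i=m+d$ and $a=\tfrac12$, I then check that both sides agree as rational functions of $(m,d)$. Clearing the denominators $(d+1)(\tfrac32-d)$ turns this into a polynomial identity in two variables of low degree.

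The main obstacle is confirming that this identity really closes. The weight $\frac12$ is forced by the leading $d$-behaviour: the difference $c^*_m-c^S_i$ is $-8md-8d^2-\dots$, and this must be matched by the binomial terms. If the direct check fails by an algebra slip, my fallback is to compare generating functions in the continuous dual Hahn picture. There $\mathcal S$ and $\mathcal T^*$ have parameters $(\tfrac12,\tfrac12,0)$ and $(\tfrac12,\tfrac12,\tfrac12)$, which differ in the single parameter $c$. The known connection formula for continuous dual Hahn polynomials differing in one parameter, $p_i(\cdot;a,b,c)=\sum_m\binom{i}{m}(c-c')_{i-m}\cdots p_m(\cdot;a,b,c')$, then gives a single hypergeometric term. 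Rescaling from $w$ to $y=4w^2$ produces the factors $4^{i-m}$ and $(i!/m!)^2$, and $(-\tfrac12)_{d}/d!$ matches $(-1)^d\binom{1/2}{d}$ up to the sign convention for monic normalisation.
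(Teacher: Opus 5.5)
Your proposal is correct and matches the paper's proof: you check the closed form against the connection recurrence of Lemma~\ref{lem:connrec}, using the data of Lemmas~\ref{lem:cf} and~\ref{lem:cdh}, and dividing by $\tilde\kappa_{i,m}$ gives exactly the paper's rational identity in $(m,d)$. That identity does close, since $(m+d+1)^2-m^2=(d+1)(2m+d+1)$ and $4(m+1)^2-(2i-1)^2=(3-2d)(4m+2d+1)$ reduce both sides to $4m+2-2d-8md-4d^2$; so the continuous dual Hahn fallback is unnecessary, and in your heuristic remark $c^*_m-c^S_i$ carries $-16md$, not $-8md$.
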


\begin{proof}
As in the proof of Lemma~\ref{lem:connGene}: the recurrences of
$\hat P_i$ (Lemma~\ref{lem:cf}) and of $\rho_m$
(Lemma~\ref{lem:cdh}) give
$$
\tilde\kappa_{i+1,m}
=\tilde\kappa_{i,m-1}+(c^*_m-c_i)\,\tilde\kappa_{i,m}
+\lambda^*_{m+1}\,\tilde\kappa_{i,m+1}
-\lambda_i\,\tilde\kappa_{i-1,m}.
$$
The closed form holds for $i=0,1$ (indeed
$\hat P_1=y-1=\rho_1+2$); inserting it and dividing by
$\tilde\kappa_{i,m}$ (with $d=i-m$) reduces the recurrence to the
rational identity
$$
4\bigl[(i{+}1)^2-m^2\bigr]\frac{\tfrac12-d}{d+1}
=c^*_m-c_i
+\frac{d}{\tfrac32-d}\bigl[4(m{+}1)^2-(2i{-}1)^2\bigr],
\qquad i=m+d,
$$
again an elementary polynomial identity in $(m,d)$.
\end{proof}

\begin{proof}[Proof of Proposition~\ref{prop:sec1}]
Both parity functionals are classical: $\mathcal S$ has monic orthogonal
polynomials $\hat P_l$ with norms $h^S_l=\bigl((2l)!\bigr)^2$
(Lemma~\ref{lem:cf}), and $\mathcal T^*$ has monic orthogonal polynomials
$\rho_m$ with norms $h^{T^*}_m=\bigl((2m)!!\bigr)^4$ (Lemma~\ref{lem:cdh}).
Lemma~\ref{lem:bindetGene} therefore gives
$$
\HH_n=(-1)^{\binom{\bar n}2}
\prod_{l=0}^{\bar n-1}\bigl((2l)!\bigr)^2\cdot
\prod_{m=0}^{\underline n-1}\bigl((2m)!!\bigr)^4\cdot
\det\bigl(\tilde\kappa_{\bar n+r,\,m}\bigr)_{0\le r,m\le \underline n-1},
$$
with the connection coefficients
$\tilde\kappa_{i,m}=\bigl(i!/m!\bigr)^2 4^{\,i-m}\binom{1/2}{i-m}$ of
Lemma~\ref{lem:conn3}. Pulling $(i!)^2\,4^i$ out of row $r$ (where
$i=\bar n+r$) and $(m!)^{-2}\,4^{-m}$ out of column $m$ leaves
$\det\bigl(\binom{1/2}{\bar n+r-m}\bigr)_{0\le r,m\le\underline n-1}$;
together with the sign $(-1)^{\binom{\bar n}2}$, this determinant is the
factor $\Omega(1)=2^{-\binom n2}$ of Lemma~\ref{lem:Pahalf}
(Lemma~\ref{lem:bindetSigma} at $a=\tfrac12$). Since
$\bigl((2m)!!\bigr)^4(m!)^{-2}\,4^{-m}=(m!)^2\,4^m$, this yields
$$
\HH_n=2^{-\binom n2}\,
\prod_{l=0}^{\bar n-1}\bigl((2l)!\bigr)^2\cdot
\prod_{i=\bar n}^{n-1}(i!)^2\,4^i\cdot
\prod_{m=0}^{\underline n-1}(m!)^2\,4^m .
$$
It remains to identify the right-hand side with
$2^{\binom n2}\bigl((n-1)!!\bigr)^2\prod_{k=1}^{n-2}(k!!)^6$. Both
sides equal $1$ for $n=1$, and when $n\to n+1$ both sides are
multiplied by $2^n\,(n!!)^2\,((n-1)!!)^4$: for the right-hand side
this is immediate, and for the left-hand side one checks the two
parities separately using $n!=n!!\,(n-1)!!$ (for $n$ even, the ratio
is $(n!)^4/((n/2)!)^2=2^n(n!!)^2((n-1)!!)^4$; for $n$ odd it is
$2^{2n-1}\bigl(\tfrac{n-1}{2}\bigr)!^{\,2}(n!)^2
=2^n(n!!)^2((n-1)!!)^4$).
\end{proof}

\section{The secant-number family $(1+x)/\cos(x)^{s+1}$: general $s$}\label{sec:allstar}

We now evaluate $\HH_n(f)$ for $f=\dfrac{1+x}{\cos(x)^{s+1}}$ in closed form,
\emph{simultaneously for all $s$}. Recall from Section~\ref{sec:gen-xcos} that
$a_{2k}=E^{(s+1)}_{2k}$ and $a_{2k+1}=(2k+1)E^{(s+1)}_{2k}$; the corresponding
even and odd functionals in the variable $y$ are
$$
\mathcal S[y^k]=E^{(s+1)}_{2k},\qquad
\mathcal T^*[y^k]=(2k+1)\,E^{(s+1)}_{2k} ,
$$
so that the matrix entries are $a_{2i+j}=\mathcal S[y^{i+l}]$ when $j=2l$ is
even and $a_{2i+j}=\mathcal T^*[y^{i+l}]$ when $j=2l+1$ is odd.

\begin{Theorem}\label{thm:allstar}
Let $f(x)=\dfrac{1+x}{\cos(x)^{s+1}}$. Then for all $n\ge1$,
\begin{equation}\label{eq:HHstar}
\HH_n(f)=c_n\prod_{i=1}^{n-1}(s+1)_i
       =c_n\prod_{j=1}^{n-1}(s+j)^{\,n-j},
\qquad
c_n=\frac{2^{\binom n2}\,\bigl((n-1)!!\bigr)^2\prod_{k=1}^{n-2}(k!!)^6}
        {\prod_{i=1}^{n-1}i!}\,,
\end{equation}
where $(s+1)_i=(s+1)(s+2)\cdots(s+i)$. The constant $c_n$ is exactly the
value $\HH_n(f)\big|_{s=0}$ of Proposition~\ref{prop:sec1} divided by
$\prod_{i=1}^{n-1}i!=\prod_{i=1}^{n-1}(1)_i$.
\end{Theorem}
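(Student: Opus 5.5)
The plan is the divisor method $\M4$, applied to the polynomial $D(s):=\HH_n(f)$, combined with the one-sided reduction of Lemma~\ref{lem:onesided}. That lemma needs only the even functional $\mathcal S$ to be quasi-definite, which it is for generic $s$ by Lemma~\ref{lem:cfGene} with $u_j=j(j+s)$. The normalising constant then comes from Proposition~\ref{prop:sec1}. Since $\prod_{i=1}^{n-1}(s+1)_i=\prod_{j=1}^{n-1}(s+j)^{n-j}$ has degree $\binom n2$, three things must be shown: (a) $D(s)$ is a polynomial of degree $\le\binom n2$; (b) it vanishes to order $\ge n-j$ at $s=-j$ for $1\le j\le n-1$; (c) $D(0)$ is given by Proposition~\ref{prop:sec1}. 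Observation~\ref{obs:divisor} then gives $D(s)=c_n\prod_j(s+j)^{n-j}$, and evaluating at $s=0$ gives $c_n=\HH_n|_{s=0}/\prod_{i=1}^{n-1}i!$.

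For (a), the naive entry-degree count is too crude, since $E^{(s+1)}_{2k}$ has degree $k$ in $s$. I would therefore work through Lemma~\ref{lem:onesided}. The key observation is $\mathcal T^*[p]=\mathcal S[(1+2y\,\partial_y)p]$, because $\mathcal T^*[y^k]=(2k+1)\mathcal S[y^k]$. For $m<\bar n\le i$ this gives
\[
\mathcal T^*[P_i\,y^m]=(2m+1)\,\mathcal S[P_iy^m]+2\,\mathcal S[y^{m+1}P_i']=2\,\mathcal S[y^{m+1}P_i'],
\]
since $\mathcal S[P_iy^m]=0$ for $m<i$. Hence
\[
\HH_n=(-1)^{\binom{\bar n}2}\prod_{l<\bar n}(2l)!\,(s+1)_{2l}\cdot\det\bigl(2\,\mathcal S[y^{m+1}P_{\bar n+r}']\bigr)_{0\le r,m<\underline n}.
\]
I would expand $yP_i'$ in the $P$-basis. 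The entry then only involves the coefficients of $P_0,\dots,P_{m+1}$, paired against $y^{m+1}$, which gives norms times explicit expansion coefficients. I would aim to show that these coefficients are, up to row and column factors, polynomial in $s$ of controlled degree, so that the total degree is $\binom n2$. This degree count is the step I expect to be the main obstacle. As a first attempt I would compute the expansion coefficients of $yP_i'$ from the three-term recurrence (the data $c^S_i,\lambda^S_i$ are quadratic in $s$) and guess a closed form by machine. Failing that, I would fall back on the contiguous relation $f_w''=w(w+1)f_{w+2}-w^2f_w$ used in Lemma~\ref{lem:t3contig}, to bound degrees inductively in $s\mapsto s+2$.

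For (b), take $s=-j$ with $1\le j\le n-1$. Then $f=(1+x)\cos^{j-1}x$, and $\cos^{j-1}x$ is a trigonometric polynomial with nodes $\zeta=\pm i(j-1-2l)$. The moments are therefore $a_m=\sum_\nu c_\nu\zeta_\nu^m+m\sum_\nu c_\nu\zeta_\nu^{m-1}$, a confluent exponential sum. Viewed as functions of the row index $i$, all entries $a_{2i+j'}$ lie in the span of the sequences $(\zeta^2)^i$ and $i(\zeta^2)^i$, as $\zeta^2$ runs over the at most $\lceil j/2\rceil$ distinct values $-(j-1-2l)^2$. This is the squared-node collapse of Lemma~\ref{lem:cb}, here in confluent form. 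The row space then has dimension at most about $j$, so the corank at $s=-j$ is at least $n-j$. Refining the count of the zero node and the parity of $j$ should give exactly $n-j$, whence $(s+j)^{n-j}\mid D$. The count must be done carefully, because $\zeta=0$ occurs for odd $j$ and there the confluent term $m\zeta^{m-1}$ survives only at $m=1$. If the count falls short by one in one parity, I would supplement it with the factor $(s+1)_{2l}$ from the norms in the reduced formula.

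Step (c) is Proposition~\ref{prop:sec1}, together with $D\not\equiv0$, which also follows from it. The stated form of $c_n$ is then immediate from Observation~\ref{obs:divisor}.
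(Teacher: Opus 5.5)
Your architecture is the paper's own. You use the one-sided reduction against the classical even functional with $\mathcal T^*=\mathcal S\circ(2y\partial_y+1)$, a rank-drop divisibility at $s=-j$, a matching degree bound $\deg_s\HH_n\le\binom n2$, and normalisation at $s=0$ by Proposition~\ref{prop:sec1}. Steps (b) and (c) are sound. In (b) the count is in fact exactly $j$ in both parities. For even $j$ there are $j/2$ nonzero squared nodes $\theta$, each contributing $(\theta^i)_i$ and $(i\theta^i)_i$. For odd $j$ there are $(j-1)/2$ of them, plus $\theta=0$, which contributes only $e_0$. So your fallback is unnecessary. It would not be legitimate anyway: a rank-drop order for the full matrix and the zeros of the norm factors in the reduced formula cannot simply be added.

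The genuine gap is (a), which you rightly flag as the crux but leave open. Put $i=\bar n+r$ and $T_i:=yP_i'-iP_i=\sum_{a<i}\Delta_{i,a}P_a$. Your entries then become $2\,\mathcal S[y^mT_i]=2\sum_{a\le m}\Delta_{i,a}\,\mathcal S[y^mP_a]$. The three-term recurrence gives $\deg_s\mathcal S[y^mP_a]\le m+a$, from $\deg_s p_{a,t}\le a-t$ for the coefficients of $P_a$ and $\deg_sE^{(s+1)}_{2k}=k$. But its naive bound on the structure coefficients is only $\deg_s\Delta_{i,a}\le i-a$, so column $m$ gets degree up to $i+m$, which is far too much.

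What you need is $\deg_s\Delta_{i,a}\le1$ for every $a<i$. That gives degree $\le 2m+1$ in column $m$, hence $\underline n^2$ for the residual determinant. Adding the norms $\prod_{l<\bar n}(2l)!\,(s+1)_{2l}$ gives $\underline n^2+\bar n(\bar n-1)=\binom n2$. The bound requires a drastic cancellation: already $\Delta_{2,0}=(s+1)^2-(s+1)(5s+13)+4(s+1)(s+2)=-4(s+1)$.

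This is exactly the content of Lemma~\ref{lem:deriv}. Differentiate $P_{i+1}=(y-b_i)P_i-\lambda_iP_{i-1}$ and multiply by $y$. This gives the inhomogeneous structure recurrence $T_{i+1}=(y-b_i)T_i-\lambda_iT_{i-1}+b_iP_i+2\lambda_iP_{i-1}$. Its solution in the $P$-basis is
\[
\Delta_{i,a}=(-1)^{i-a-1}\,\frac{2^{i-a-1}}{2(i-a)-1}\,\frac{i!}{a!}\,\frac{(2i-1)!!}{(2a-1)!!}\Bigl(s+\frac{2(i+a)+1}{2(i-a)+1}\Bigr),
\]
which is checked by substitution into the recurrence. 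Your machine search would probably find this formula. Until it is stated and proved, however, the degree bound is not established, and neither is the divisor argument that rests on it.

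The contiguous-relation fallback gives no evident control of $\deg_s$. It introduces the denominator $s(s-1)$ and mixes shifted columns.
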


The point of \eqref{eq:HHstar} is that, unlike the $(s,t)$ family of
Section~\ref{sec:gen}, where \emph{both} the even and the odd functional are
classical, here the odd functional $\mathcal T^*$ is \emph{not} a classical
orthogonal-polynomial functional once $s\ge1$: its monic three-term
recurrence coefficients are non-polynomial rationals (its weight is
$\propto u\,|\Gamma(\tfrac{s+1}{2}+iu)|^2\,\Im\psi(\tfrac{s+1}{2}+iu)$,
whose digamma factor degenerates to an elementary $\tanh$ only at $s=0$).
Nevertheless the determinant remains a smooth product for every $s$. The
mechanism is a one-sided biorthogonalisation~$\M2$ that uses \emph{only} the
classical even functional $\mathcal S$.

The proof plays a divisibility bound against a degree bound.
Lemma~\ref{lem:reduction} expresses $\HH_n(f)$ through a determinant
$\det D$ built from $\mathcal S$ alone; Lemma~\ref{lem:rankdrop} shows that
the product $\prod_{j=1}^{n-1}(s+j)^{\,n-j}$, of degree $\binom n2$, divides
$\HH_n(f)$; and Lemma~\ref{lem:deriv} supplies the matching bound
$\deg_s\HH_n(f)\le\binom n2$. The quotient is therefore constant in $s$, and
Proposition~\ref{prop:sec1} evaluates it at $s=0$.

\begin{Lemma}[one-sided reduction]\label{lem:reduction}
Let $P_0,P_1,\dots$ be the monic $\mathcal S$-orthogonal polynomials, with
$\mathcal S[P_iP_l]=\delta_{il}\,h^S_l$ and
$h^S_l=\prod_{k=1}^{2l}k(k+s)=(2l)!\,(s+1)_{2l}$. Put
$\bar n=\lceil n/2\rceil$, $\underline n=\lfloor n/2\rfloor$. Then
\begin{equation}\label{eq:reduction}
\HH_n(f)=(-1)^{\binom{\bar n}2}\Bigl(\prod_{l=0}^{\bar n-1}h^S_l\Bigr)\,
\det\bigl(\,\mathcal T^*[P_{\bar n+r}\,y^{m}]\,\bigr)_{0\le r,m\le \underline n-1}.
\end{equation}
Moreover $\mathcal T^*=\mathcal S\circ(2y\partial_y+1)$, so for $i>m$
\begin{equation}\label{eq:Dentry}
\mathcal T^*[P_i\,y^m]=2\,\mathcal S[y^{m+1}P_i']
\qquad(\text{the term }(2m{+}1)\mathcal S[y^mP_i]\text{ vanishes since }\deg y^m<i).
\end{equation}
\end{Lemma}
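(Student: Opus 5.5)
The first display \eqref{eq:reduction} is Lemma~\ref{lem:onesided} specialised to the present sequence. There is nothing new to prove for it. The even functional $\mathcal S[y^k]=E^{(s+1)}_{2k}$ is quasi-definite, since it has the Stieltjes fraction with $u_j=j(j+s)$. Its monic orthogonal polynomials $P_i$ and norms $h^S_l=\prod_{k=1}^{2l}k(k+s)=(2l)!\,(s+1)_{2l}$ are therefore those of Lemma~\ref{lem:cfGene} and \eqref{eq:stnorms}. Lemma~\ref{lem:onesided} requires nothing of the odd functional, so I take $\mathcal T:=\mathcal T^*$ there and read off \eqref{eq:reduction}. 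If $s$ is to be kept symbolic, I would note that the $h^S_l$ are nonzero in $\QQ(s)$, so the reduction holds over $\QQ(s)$ and then specialises.

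For the identity $\mathcal T^*=\mathcal S\circ(2y\partial_y+1)$ it suffices, by linearity, to check it on monomials. On one side $(2y\partial_y+1)y^k=(2k+1)y^k$, and on the other $\mathcal T^*[y^k]=(2k+1)E^{(s+1)}_{2k}=(2k+1)\mathcal S[y^k]$. The two agree, and this is just the bookkeeping $a_{2k+1}=(2k+1)a_{2k}$ coming from $x\cdot x^{2k}/(2k)!=(2k+1)x^{2k+1}/(2k+1)!$.

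For \eqref{eq:Dentry}, I apply the operator to the product $P_iy^m$:
$$(2y\partial_y+1)(P_iy^m)=2y^{m+1}P_i'+2m\,y^mP_i+y^mP_i=2y^{m+1}P_i'+(2m+1)y^mP_i.$$
Taking $\mathcal S$ of both sides gives $\mathcal T^*[P_iy^m]=2\,\mathcal S[y^{m+1}P_i']+(2m+1)\,\mathcal S[y^mP_i]$. The last term vanishes by orthogonality whenever $m<i$, since $\deg y^m<i$. The determinant in \eqref{eq:reduction} only involves $i=\bar n+r\ge\bar n\ge\underline n>m$, so this hypothesis always holds and every entry takes the stated form.

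None of these steps is a real obstacle. The only point needing care is that the lemma is a formal statement over $\QQ(s)$ (or at generic $s$), because quasi-definiteness of $\mathcal S$ fails at isolated values. Those values are harmless, since both sides of \eqref{eq:reduction} are polynomial in $s$ after the factors are cleared.
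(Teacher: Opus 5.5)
Your proof is correct and is essentially the paper's own argument: \eqref{eq:reduction} is Lemma~\ref{lem:onesided} with $\mathcal T=\mathcal T^*$, and \eqref{eq:Dentry} follows by applying $\mathcal S\circ(2y\partial_y+1)$ to $P_iy^m$ and discarding $(2m+1)\,\mathcal S[y^mP_i]$ by orthogonality, since $i=\bar n+r\ge\bar n\ge\underline n>m$ throughout the block. Your generic-$s$ caveat is harmless but not needed: Lemma~\ref{lem:onesided} uses only that the $P_i$ are monic and that $\mathcal S[P_iP_l]=\delta_{il}h^S_l$, and both facts hold identically in $s$ with polynomial coefficients, so they specialise to every value of $s$.
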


\begin{proof}
Identity \eqref{eq:reduction} is the general one-sided reduction
(Lemma~\ref{lem:onesided}), applied with even functional
$\mathcal S$ and odd functional $\mathcal T^*$: that lemma orthogonalises only
the even columns and needs \emph{only} that $\mathcal S$ be quasi-definite,
holding for an arbitrary odd functional---here $\mathcal T^*$, whose
orthogonal-polynomial structure is never used---with the explicit sign
$(-1)^{\binom{\bar n}2}$.
Finally
$\mathcal T^*[y^k]=(2k+1)\mathcal S[y^k]=\mathcal S[(2y\partial_y+1)y^k]$
gives $\mathcal T^*[P_iy^m]=\mathcal S[(2y\partial_y+1)(P_iy^m)]
=2\mathcal S[y^{m+1}P_i']+(2m+1)\mathcal S[y^mP_i]$, and the last term
vanishes for $i>m$ by orthogonality, which holds throughout the block
because $\bar n+r\ge \bar n\ge \underline n>m$.
\end{proof}

\begin{Lemma}[rank drop at negative integers]\label{lem:rankdrop}
For each integer $p$ with $1\le p\le n-1$, the matrix
$\bigl(a_{2i+j}\bigr)_{0\le i,j<n}$ has rank at most $p$ at $s=-p$; consequently
$(s+p)^{\,n-p}\mid\HH_n(f)$ and, the factors being coprime,
$\prod_{j=1}^{n-1}(s+j)^{\,n-j}\mid\HH_n(f)$.
\end{Lemma}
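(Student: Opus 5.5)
The plan is to show that at $s=-p$ every column of the matrix $\bigl(a_{2i+j}\bigr)_{0\le i,j<n}$ lies in one fixed subspace of $K^n$ of dimension at most $p$. This is the squared-node collapse of Lemma~\ref{lem:cb}, in the confluent form forced by the factor $(1+x)$. The divisibility then follows from the same corank argument already used in Lemma~\ref{lem:t3atomic}.

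First, the entries are polynomials in $s$. The generalised secant numbers $E^{(s+1)}_{2k}$ are polynomials in $s$: from the $S$-fraction with coefficients $u_j=j(j+s)$, each moment is a polynomial in the $u_j$. Hence $a_{2k}$ and $a_{2k+1}=(2k+1)a_{2k}$ are polynomials in $s$. So $\HH_n(f)$ is a polynomial in $s$, and we may specialise $s=-p$ entrywise.

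At $s=-p$ the even part of $f$ is $\cos(x)^{p-1}$, a trigonometric polynomial. Expanding it as in Lemma~\ref{lem:t3atomic}, $\cos(x)^{p-1}=2^{1-p}\sum_{k}\binom{p-1}{(p-1+k)/2}e^{ikx}$, with $k$ running over $\{-(p-1),-(p-3),\dots,p-1\}$. Reading off coefficients gives
$$a_{2m}=\sum_{w\in W}\omega_w\,w^{m},$$
where $W=\{-k^2\}$ is the set of distinct squared nodes and $\omega_w\in\QQ$. For $w\ne0$, $\omega_w$ collects the weights of $\pm k$; when $p$ is odd, $w=0$ arises from $k=0$, and we use the convention $0^0=1$.

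Now we examine the columns. For an even column $j=2l$,
$$a_{2i+2l}=\sum_{w}\omega_w w^{l}\,w^{i},$$
so the column is a combination of the geometric vectors $g_w=(w^i)_{i<n}$. For an odd column $j=2l+1$,
$$a_{2i+2l+1}=(2i+2l+1)\,a_{2i+2l}=\sum_w\omega_w w^l\bigl(2\,(i\,w^i)+(2l+1)\,w^i\bigr),$$
so it lies in the span of the vectors $g_w$ and $g'_w=(i\,w^i)_{i<n}$. This is the confluent (derivative) companion produced by the factor $x$.

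Therefore every column lies in $\operatorname{span}\{g_w,g'_w: w\in W\}$, and we count its dimension. For $w=0$ we have $g_0=e_0$ and $g'_0=0$, so the zero node contributes only one vector.
\begin{itemize}
\item If $p$ is even, there are $p/2$ nonzero squares, giving $2\cdot p/2=p$ vectors.
\item If $p$ is odd, there are $(p-1)/2$ nonzero squares plus the zero node, giving $(p-1)+1=p$ vectors.
\end{itemize}
In either case the rank at $s=-p$ is at most $p$, hence the corank is at least $n-p$.

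It remains to pass from corank to divisibility. A polynomial matrix whose corank at $s_0$ is at least $c$ has determinant divisible by $(s-s_0)^c$: after a constant change of basis, $c$ columns vanish at $s_0$, and each is then divisible by $s-s_0$. This gives $(s+p)^{n-p}\mid\HH_n(f)$. The factors $(s+p)$ for distinct $p$ are coprime, so their product $\prod_{j=1}^{n-1}(s+j)^{n-j}$ divides $\HH_n(f)$.

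The only step needing care is the dimension count. One must track the degeneration at $w=0$ (where $g'_0=0$) and check that the odd columns really produce only $g_w$ and $g'_w$, and nothing of higher confluence. That is immediate from $a_{2k+1}=(2k+1)a_{2k}$ being linear in $k$.
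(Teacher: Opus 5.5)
Your proof is correct and follows the paper's argument essentially step for step: the same expansion of $\cos(x)^{p-1}$ at $s=-p$, the same spanning vectors $(w^i)_i$ and $(i\,w^i)_i$ over the distinct squared nodes with $w=0$ contributing only $e_0$, and the same corank-to-divisibility step. The only cosmetic differences are that you change basis on columns rather than rows and that you justify polynomiality in $s$ explicitly, which the paper leaves implicit.
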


\begin{proof}
At $s=-p$ the even weight is the cosine polynomial
$$\cos(x)^{p-1}=2^{-(p-1)}\sum_{t=0}^{p-1}\binom{p-1}{t}e^{\,i(p-1-2t)x},$$
so $a_{2k}=\sum_{t=0}^{p-1}w_t\theta_t^{\,k}$ with $w_t=2^{-(p-1)}\binom{p-1}{t}$ and
$\theta_t=-(p-1-2t)^2$, while $a_{2k+1}=(2k+1)a_{2k}$. Hence every column of
$\bigl(a_{2i+j}\bigr)$ lies in the span of the vectors $(\theta^{\,i})_i$ and
$(i\,\theta^{\,i})_i$ taken over the $\lceil p/2\rceil$ distinct values $\theta$:
indeed $a_{2i+2l}=\sum_\theta(\cdots)(\theta^{\,i})_i$ and
$a_{2i+2l+1}=\sum_\theta(\cdots)\bigl(2(i\,\theta^{\,i})_i+(2l{+}1)(\theta^{\,i})_i\bigr)$.
A nonzero $\theta$ contributes two such vectors, while $\theta=0$ (present iff $p$ is
odd) contributes only $(\theta^{\,i})_i=e_0$ since $(i\,\theta^{\,i})_i=0$; so the column
space has dimension $\le p$. As $\HH_n(f)\not\equiv0$ (it is nonzero at $s=0$ by
Proposition~\ref{prop:sec1}) and the matrix has corank $\ge n-p$ at $s=-p$, a constant
row change of basis makes $n-p$ rows divisible by $(s+p)$, so $(s+p)^{\,n-p}\mid\HH_n(f)$.
\end{proof}

\begin{Lemma}[derivative connection coefficients]\label{lem:deriv}
Let $P_0,P_1,\dots$ be the monic $\mathcal S$-orthogonal polynomials, with three-term
recurrence $P_{i+1}=(y-b_i)P_i-\lambda_iP_{i-1}$, where
$b_i=(4i{+}1)s+8i^2{+}4i{+}1$ and $\lambda_i=2i(2i{-}1)(s{+}2i{-}1)(s{+}2i)$, and set
\begin{equation}\label{eq:Tdef}
T_i:=yP_i'-iP_i=\sum_{a=0}^{i-1}\Delta_{i,a}P_a .
\end{equation}
Then
\begin{equation}\label{eq:Delta}
\Delta_{i,a}=(-1)^{i-a-1}\,\frac{2^{\,i-a-1}}{2(i-a)-1}\,\frac{i!}{a!}\,
\frac{(2i-1)!!}{(2a-1)!!}\Bigl(s+\frac{2(i+a)+1}{2(i-a)+1}\Bigr),
\end{equation}
and in particular $\deg_s\Delta_{i,a}\le1$ for all $0\le a\le i-1$.
\end{Lemma}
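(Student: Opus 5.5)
The plan is to obtain a recurrence for the $T_i$ from the three-term recurrence of the $P_i$, translate it into a recurrence for the array $\Delta_{i,a}$ of the same shape as \eqref{eq:kapparec}, and then verify that the closed form \eqref{eq:Delta} satisfies it. This is the strategy of Lemma~\ref{lem:connGene}, with an inhomogeneous term added.

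First I would record that the recurrence data are those of Lemma~\ref{lem:cfGene} with $u_j=j(j+s)$. Indeed $b_i=2i(2i+s)+(2i+1)(2i+1+s)=(4i+1)s+8i^2+4i+1$ and $\lambda_i=(2i-1)(2i-1+s)\,2i(2i+s)$. I would also check that $\deg T_i\le i-1$: the leading terms of $yP_i'$ and $iP_i$ are both $i\,y^i$, so they cancel. The expansion \eqref{eq:Tdef} therefore exists and is unique, and it also follows that $\Delta_{i,a}=\mathcal S[T_iP_a]/h^S_a$.

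Next I would differentiate the recurrence $P_{i+1}=(y-b_i)P_i-\lambda_iP_{i-1}$, multiply by $y$, and substitute $yP_j'=T_j+jP_j$. Using $(y-b_i)P_i=P_{i+1}+\lambda_iP_{i-1}$ and $yP_i=P_{i+1}+b_iP_i+\lambda_iP_{i-1}$ to eliminate the $P$-terms, a short calculation should give
\begin{equation*}
T_{i+1}=(y-b_i)\,T_i-\lambda_i\,T_{i-1}+b_i\,P_i+2\lambda_i\,P_{i-1},
\end{equation*}
with $T_0=0$ and $T_1=b_0=s+1$. I would then expand $T_i=\sum_a\Delta_{i,a}P_a$ and use $yP_a=P_{a+1}+b_aP_a+\lambda_aP_{a-1}$. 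Reading off the coefficient of $P_a$ gives
\begin{equation*}
\Delta_{i+1,a}=\Delta_{i,a-1}+(b_a-b_i)\Delta_{i,a}+\lambda_{a+1}\Delta_{i,a+1}-\lambda_i\Delta_{i-1,a}+b_i[a=i]+2\lambda_i[a=i-1],
\end{equation*}
where $\Delta_{i,a}=0$ for $a\ge i$ or $a<0$. Together with the initial value $\Delta_{1,0}=s+1$, this recurrence determines the whole array. The formula \eqref{eq:Delta} gives $\Delta_{1,0}=s+1$, in agreement.

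It then remains to check that the candidate \eqref{eq:Delta} satisfies this recurrence, with $d=i-a\ge1$. I would split into three cases. The boundary row $a=i$ involves only $\Delta_{i,i-1}$ and the term $b_i$. The row $a=i-1$ involves the extra term $2\lambda_i$ and the values $\Delta_{i,i-2}$ and $\Delta_{i-1,i-1}=0$. The generic case $a\le i-2$ has all four neighbours present. In each case I would divide by the product of double factorials and powers of $2$ common to \eqref{eq:Delta}, which leaves an identity among rational functions of $(a,d,s)$. Clearing denominators turns it into a polynomial identity, and I would verify it by comparing coefficients.

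I expect this last verification to be the main obstacle. Unlike in Lemma~\ref{lem:connGene}, the candidate is not a pure hypergeometric term: each neighbour carries its own linear factor $s+\frac{2(i+a)+1}{2(i-a)+1}$, so the neighbour ratios are not rational functions of lower degree in $s$. I would therefore keep the linear factors unexpanded and first check the identity at the two most informative specialisations, namely the coefficient of $s^2$ after clearing denominators and the value at $s=0$. These checks should expose any error in the $d$-dependent prefactor $2^{d-1}/(2d-1)$ before the full polynomial identity is verified. The degree bound $\deg_s\Delta_{i,a}\le1$ is then read off directly from \eqref{eq:Delta}.
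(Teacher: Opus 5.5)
Your proposal is correct and follows the paper's proof essentially step for step: the same structure recurrence $T_{i+1}=(y-b_i)T_i-\lambda_iT_{i-1}+b_iP_i+2\lambda_iP_{i-1}$, the same inhomogeneous recurrence for $\Delta_{i,a}$ with base case $\Delta_{1,0}=s+1$, and the same check of the closed form in the homogeneous range $a\le i-2$ and at the two boundary rows $a=i-1$ and $a=i$, followed by uniqueness. One small correction: at $a=0$ the neighbour $\Delta_{i,a-1}$ is zero, not present as you say. This is harmless, because the ratio $\Delta_{i,a-1}/\Delta_{i,a}$ carries the factor $a(2a-1)$ coming from $a!\,(2a-1)!!$, so the generic identity already covers that column.
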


\begin{proof}
Differentiating the recurrence gives $P_{i+1}'=(y-b_i)P_i'+P_i-\lambda_iP_{i-1}'$;
multiplying by $y$ and using \eqref{eq:Tdef} yields the structure recurrence
$$
T_{i+1}=(y-b_i)T_i-\lambda_iT_{i-1}+b_iP_i+2\lambda_iP_{i-1}.
$$
Expanding in the $P$-basis with $(y-b_i)P_a=P_{a+1}+(b_a-b_i)P_a+\lambda_aP_{a-1}$ gives
$$
\Delta_{i+1,a}=\Delta_{i,a-1}+(b_a-b_i)\Delta_{i,a}+\lambda_{a+1}\Delta_{i,a+1}
-\lambda_i\Delta_{i-1,a}+b_i\,[a{=}i]+2\lambda_i\,[a{=}i{-}1],
$$
together with $T_0=0$ and $T_1=yP_1'-P_1=s+1$. The expression \eqref{eq:Delta} satisfies
this at every $a$ --- the homogeneous relation for $a\le i-2$ and the two inhomogeneous
boundary relations at $a=i-1$ and $a=i$ --- and the base case; since the recurrence
determines all $\Delta_{i,a}$ uniquely, \eqref{eq:Delta} holds. Each check is a direct
rational-function identity in $i,a,s$. (At $s\to\infty$, with $y=s\eta$, the functional
$\mathcal S$ degenerates to the classical Laguerre$^{(-1/2)}$ functional and
\eqref{eq:Delta} reduces to its structure relation $\eta\pi_i'=i\pi_i+\rho_i\pi_{i-1}$;
for finite $s$ the lower coefficients are merely linear in $s$.)
\end{proof}

\begin{proof}[Proof of Theorem~\ref{thm:allstar}]
By Lemma~\ref{lem:reduction},
$\HH_n(f)=(-1)^{\binom{\bar n}2}\bigl(\prod_{l=0}^{\bar n-1}h^S_l\bigr)\det D$, whose entries are
$D_{r,m}=2\,\mathcal S[y^{m+1}P_{\bar n+r}']$. Since
$\mathcal S[y^{m+1}P_i']=\mathcal S[y^m\,yP_i']=\mathcal S[y^m(T_i+iP_i)]=\mathcal S[y^mT_i]$
(the term $i\,\mathcal S[y^mP_i]$ vanishes because $m<\bar n\le i$),
Lemma~\ref{lem:deriv} gives
$$
D_{r,m}=2\,\mathcal S[y^mT_{\bar n+r}]=2\sum_{a=0}^{m}\Delta_{\bar n+r,a}\,\mathcal S[y^mP_a],
$$
where only $a\le m$ survive by orthogonality. Writing $P_a=\sum_t p_{a,t}y^t$, the
recurrence gives $\deg_s p_{a,t}\le a-t$, and $\deg_s E^{(s+1)}_{2k}=k$, so
$\deg_s\mathcal S[y^mP_a]=\deg_s\sum_t p_{a,t}E^{(s+1)}_{2(m+t)}\le m+a\le 2m$. Combined with
$\deg_s\Delta_{\bar n+r,a}\le1$ this yields $\deg_s D_{r,m}\le 2m+1$, hence
$$
\deg_s\det D\le\sum_{m=0}^{\underline n-1}(2m+1)=\underline n^2,\qquad
\deg_s\HH_n(f)\le \underline n^2+\bar n(\bar n-1)=\binom n2 .
$$
On the other hand, Lemma~\ref{lem:rankdrop} shows $\prod_{j=1}^{n-1}(s+j)^{\,n-j}$, of
degree $\binom n2$, divides $\HH_n(f)$. The two facts force
$$
\HH_n(f)=c_n\prod_{j=1}^{n-1}(s+j)^{\,n-j}=c_n\prod_{i=1}^{n-1}(s+1)_i
$$
with $c_n$ a constant, fixed by setting $s=0$: there
$\prod_{i=1}^{n-1}(1)_i=\prod_{i=1}^{n-1}i!$ and $\HH_n(f)|_{s=0}$ is the value of
Proposition~\ref{prop:sec1}, giving $c_n$ as in \eqref{eq:HHstar}.
\end{proof}

\begin{Remark}[alternative evaluation via Cauchy--Binet]\label{rem:cb}
The determinant $\det D$ can also be expanded directly. Writing
$y^{m+1}=\sum_{a\le m+1}\beta^{(m+1)}_aP_a$ (with $\beta^{(m+1)}_{m+1}=1$) and
$P_i'=\sum_{a\le i-1}\gamma^{(i)}_aP_a$ and using $\mathcal S[P_aP_b]=\delta_{ab}h^S_a$,
$$
D_{r,m}=2\sum_{a=0}^{m+1}\beta^{(m+1)}_a\,\gamma^{(\bar n+r)}_a\,h^S_a ,
\qquad\text{i.e.}\qquad D=2\,\Gamma\,\mathrm{diag}(h^S_a)\,B^{\mathsf T},
$$
with $\Gamma_{r,a}=\gamma^{(\bar n+r)}_a$ and $B_{m,a}=\beta^{(m+1)}_a$
($0\le a\le \underline n$). Here Cauchy--Binet is unavoidable: the inner index
$a$ ranges over the $\underline n+1$ values $0,\dots,\underline n$ (it reaches
$a=m+1=\underline n$ when $m=\underline n-1$), one more than the $\underline n$
rows~$r$ and columns~$m$, so $\Gamma$ and $B^{\mathsf T}$ are
\emph{rectangular} and the naive factorisation
$\det D=2^{\underline n}\det\Gamma\,\det\mathrm{diag}(h^S_a)\,\det B^{\mathsf T}$
is meaningless. Instead, Cauchy--Binet expands $\det D$ as the sum over the
$\underline n$-subsets $S\subset\{0,\dots,\underline n\}$ of
$2^{\underline n}\det(\Gamma_{:,S})\bigl(\prod_{a\in S}h^S_a\bigr)\det(B_{:,S})$
--- one term for each of the $\underline n+1$ ways to drop a single inner
index. Each entry $D_{r,m}$ carries a factor $(s+1)$ and column $m$ has
$s$-degree $2m+1$ (the per-column form of $\deg_s\Delta_{i,a}\le1$ in
Lemma~\ref{lem:deriv}), recovering
$\det D=K_n\,(s+1)^{\underline n}\prod_{j=1}^{\underline n-1}[(s+2j)(s+2j+1)]^{\underline n-j}$;
multiplying by $\prod_{l=0}^{\bar n-1}(2l)!\,(s+1)_{2l}$ gives \eqref{eq:HHstar} again.
\end{Remark}

\begin{Remark}[first values of $c_n$]
At $s=0$ the formula recovers Proposition~\ref{prop:sec1}. The first values of the
constant are
$$
c_1,c_2,\dots=1,\,2,\,2^4,\,2^{10}\,3,\,2^{17}\,3^4,\,2^{31}\,3^5\cdot5,\dots
$$
\end{Remark}

\section{The single shift of the secant-number family $(1+x)/\cos(x)^{s+1}$}\label{sec:cosshift}

Recall from Section~\ref{sec:allstar} the family
$$
f(x)=\frac{1+x}{\cos(x)^{s+1}},\qquad
a_{2k}=E^{(s+1)}_{2k},\quad a_{2k+1}=(2k+1)E^{(s+1)}_{2k},
$$
with $\cos(x)^{-(s+1)}=\sum_{k\ge0}E^{(s+1)}_{2k}x^{2k}/(2k)!$, and the two moment
functionals on $\QQ[y]$,
$$
\mathcal S[y^k]=E^{(s+1)}_{2k},\qquad \mathcal T^*[y^k]=(2k+1)E^{(s+1)}_{2k} .
$$
Here $\mathcal S$ is the classical secant functional, with monic orthogonal
polynomials $P_i$, three-term recurrence $P_{i+1}=(y-c^S_i)P_i-\lambda^S_i P_{i-1}$,
\begin{equation}\label{eq:Sdata}
c^S_i=(4i+1)s+8i^2+4i+1,\qquad \lambda^S_i=2i(2i-1)(s+2i-1)(s+2i),
\end{equation}
squared norms $h^S_l=\mathcal S[P_l^2]=(2l)!\,(s+1)_{2l}$, and (Lemma~\ref{lem:deriv}
at $a=0$)
\begin{equation}\label{eq:TP}
\mathcal T^*[P_i]=(-1)^{i-1}2^i\,i!\,(2i-3)!!\,(s+1)\qquad(i\ge1).
\end{equation}
The dilated determinant of $f$ is, for an explicit constant $c_n$
(Theorem~\ref{thm:allstar}),
\begin{equation}\label{eq:HHcos}
\HH_n(f)=\det\bigl(a_{2i+j}\bigr)_{0\le i,j<n}=c_n\prod_{i=1}^{n-1}(s+1)_i .
\end{equation}
Throughout $\bar n=\lceil n/2\rceil$, $\underline n=\lfloor n/2\rfloor$, and
$\prod_{i=1}^{n-1}(s+1)_i=\prod_{j=1}^{n-1}(s+j)^{\,n-j}$ (degree $\binom n2$).
The single shift (the dilated determinant of $f'$) is
$$
\HH_n^{(1)}:=\det\bigl(a_{2i+j+1}\bigr)_{0\le i,j<n}.
$$

\begin{Theorem}\label{thm:shift}
For all $n\ge1$,
$$
\HH_n^{(1)}=\bigl((n-1)!!\bigr)^2\,\HH_n(f)
=c_n^{(1)}\prod_{i=1}^{n-1}(s+1)_i,
\qquad c_n^{(1)}=\bigl((n-1)!!\bigr)^2c_n .
$$
\end{Theorem}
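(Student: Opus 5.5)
The plan is to repeat the strategy of Theorem~\ref{thm:allstar}: play a divisibility bound against a degree bound in $s$, then fix the remaining $s$-free constant at one special value. Since $\HH_n(f)=c_n\prod_{j=1}^{n-1}(s+j)^{n-j}$ by \eqref{eq:HHcos}, it suffices to prove three things: that $\prod_{j=1}^{n-1}(s+j)^{n-j}$ divides $\HH_n^{(1)}$; that $\deg_s\HH_n^{(1)}\le\binom n2$; and that $\HH_n^{(1)}|_{s=0}=((n-1)!!)^2\,\HH_n(f)|_{s=0}$.

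\emph{Divisibility.} This is the rank-drop argument of Lemma~\ref{lem:rankdrop}, transported to the shifted matrix. At $s=-p$ we have $a_{2k}=\sum_\theta w_\theta\theta^k$ and $a_{2k+1}=(2k+1)a_{2k}$. The columns of $(a_{2i+j+1})$ are then either $(a_{2(i+l+1)})_i$ or $(a_{2(i+l)+1})_i$, so they again lie in the span of the vectors $(\theta^i)_i$ and $(i\theta^i)_i$. This span has dimension $\le p$, because $\theta=0$ contributes only $e_0$. Hence $(s+p)^{n-p}\mid\HH_n^{(1)}$, provided $\HH_n^{(1)}\not\equiv0$; non-vanishing will follow from the value at $s=0$.

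\emph{Degree bound.} I expect this to be the main obstacle. I would write $\HH_n^{(1)}=\det(\EE[y^i\chi_{j+1}])$ and replace the rows $y^i$ by the $\mathcal S$-orthogonal $P_i$ (Lemma~\ref{lem:family}). The $\underline n$ columns $\chi_{2l+2}=y^{l+1}$ are replaced by $yP_l$, giving entries $\mathcal S[P_i\,yP_l]$. These vanish unless $|i-l|\le1$ and are built from $h^S$, $c^S$, $\lambda^S$, whose $s$-degrees are explicit. The $\bar n$ columns $\chi_{2m+1}$ give $\mathcal T^*[P_iy^m]=2\mathcal S[y^mT_i]+(2m+1+2i)\mathcal S[y^mP_i]$ by $\mathcal T^*=\mathcal S\circ(2y\partial_y+1)$ and \eqref{eq:Tdef}. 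The coefficients $\Delta_{i,a}$ of Lemma~\ref{lem:deriv} are linear in $s$, and $\deg_s\mathcal S[y^mP_a]\le m+a$, so the $s$-degree of each entry is controlled as in the proof of Theorem~\ref{thm:allstar}. After a Laplace expansion along the banded $\mathcal S$-block, a column-by-column count should give $\binom n2$.

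If this count overshoots, I would instead pass to the shifted sequence $G_k=a_{k+1}$, whose even functional is $\mathcal T^*$ and whose odd functional is the Christoffel transform $\mathcal S'=y\mathcal S$. I would then reduce with the classical $\mathcal S'$-orthogonal polynomials $P'_l$, using Lemma~\ref{lem:christoffel} for $(s,t)=(s,\cdot)$ with norms $h^{S'}_l=(2l+1)(s+2l+1)h^S_l$ (Lemma~\ref{lem:christoffelspecial}(b)). The degree count would then be carried out on the one-sided reduction of Lemma~\ref{lem:onesided} with the roles of the parities exchanged.

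\emph{The constant at $s=0$.} Here both parity functionals of $G$ are classical. The even functional $\mathcal T^*$ is continuous dual Hahn with parameters $(\tfrac12,\tfrac12,\tfrac12)$ (Lemma~\ref{lem:cdh}). The odd functional $\mathcal S'$ is the Christoffel transform by $y=4w^2$ of the continuous dual Hahn functional $(\tfrac12,\tfrac12,0)$; since $w^2|\Gamma(iw)|^2=|\Gamma(1+iw)|^2$, it is continuous dual Hahn $(\tfrac12,\tfrac12,1)$. The two functionals therefore differ in the single parameter $c$, so by the collapse mechanism of $\M2$ the connection coefficients $\rho_i=\sum_m\kappa_{i,m}P'_m$ should be a single term, presumably of the shape $\frac{(i!)^2}{(m!)^2}\,4^{i-m}\binom{1/2}{i-m}$ up to factors depending only on row or only on column. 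I would verify this through the recurrence \eqref{eq:kapparec}, as in Lemma~\ref{lem:conn3}.

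Lemma~\ref{lem:bindetGene} then gives the $s=0$ value as norms times a binomial determinant. That determinant is evaluated by Lemma~\ref{lem:bindetSigma}, yielding a factor $\Omega(\pm1)$ of Lemma~\ref{lem:Pahalf}. Comparing with the proof of Proposition~\ref{prop:sec1} by the same $n\mapsto n+1$ ratio check should produce exactly $((n-1)!!)^2$. Combining the three facts gives $\HH_n^{(1)}=((n-1)!!)^2c_n\prod_{i=1}^{n-1}(s+1)_i$.
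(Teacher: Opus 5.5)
Your divisibility step is the paper's rank-drop argument verbatim; it does not even need $\HH_n^{(1)}\not\equiv0$. Your $s=0$ step is also the paper's. There $\mathcal T^*$ and $\mathcal S_1=y\mathcal S$ are the continuous dual Hahn functionals with $c=\tfrac12$ and $c=1$. Their connection coefficients are \emph{exactly} $\tilde\kappa_{i,m}=(i!/m!)^2\,4^{i-m}\binom{1/2}{i-m}$, so the connection determinants cancel in $\HH_n^{(1)}/\HH_n$ at $s=0$ and only norms remain. The gap is the step you yourself flag: $\deg_s\HH_n^{(1)}\le\binom n2$ is not proved, and neither of your routes closes as described.

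\textbf{First route.} In your first route the $\mathcal S$-block $\mathcal S[P_i\,yP_l]$ is tridiagonal, not diagonal, so Laplace expansion gives $\underline n+1$ terms, one for each row of $\{0,\dots,\underline n\}$ that is dropped. Your per-column bounds ($2l+2$ on $\mathcal S$-columns, $2m+1$ on $\mathcal T^*$-columns) total $\binom n2+n$. Cancellation between the terms is unavoidable, not an artefact of crude bounds. For $n=2N$, the term dropping row $N$ is $\pm\prod_{l<N}h^{S_1}_l$ times the residual determinant of Lemma~\ref{lem:reduction}. That product has exact degree $N^2+N^2=\binom n2+N$.

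\textbf{Fallback route.} Your fallback is precisely the paper's odd-sided reduction against $\mathcal S_1$. There the count has zero slack, since $\deg_s\prod_{m<\underline n}h^{S_1}_m=\underline n^2$. You therefore need $\deg_s\mathcal T^*[\hat P_a y^l]\le 2l$ for every $a>l$, whereas expanding $\hat P_a$ in monomials only gives $a+l$. Two ideas are missing.
\begin{itemize}
\item For $l\ge1$: differentiate the kernel relation $y\hat P_a=P_{a+1}-\nu_aP_a$ and use $\mathcal T^*=\mathcal S\circ(2y\partial_y+1)$. This gives $\mathcal T^*[\hat P_a y^l]=\mathcal T^*[P_{a+1}y^{l-1}]-\nu_a\,\mathcal T^*[P_ay^{l-1}]$. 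Then $\deg_s\mathcal T^*[P_iy^m]\le 2m+1$ for $i>m$ (proof of Theorem~\ref{thm:allstar}) and $\deg_s\nu_a=1$ give the bound $2l$.
\item For $l=0$: no such reduction exists, and the bound is a genuine cancellation. Indeed $\mathcal T^*[\hat P_a]=2\mathcal S_1[\hat P_a']+\mathcal S[\hat P_a]$, and $\mathcal S[\hat P_a]=h^S_a/P_a(0)$ already has degree $a$ (at $a=1$: $2(s+1)-2(s+2)=-2$). The paper proves $\mathcal T^*[\hat P_a]=(-1)^a(2a)!$ exactly, by running the $\mathcal S_1$ three-term recurrence with the explicit values \eqref{eq:TP} of $\mathcal T^*[P_i]$.
\end{itemize}
Without these two facts, neither the degree bound nor the $s$-independence of the ratio is established.
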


The proof has two parts. Part~1 shows that the ratio $\HH_n^{(1)}/\HH_n(f)$
is a constant, independent of $s$: the full product $\prod_{i=1}^{n-1}(s+1)_i$
divides $\HH_n^{(1)}$ (a rank-drop argument at the negative integers $s=-p$),
while an odd-sided reduction bounds $\deg_s\HH_n^{(1)}$ by $\binom n2$, so the
quotient is a scalar. Part~2 evaluates the ratio at $s=0$, where the odd
functional $\mathcal T^*$ also becomes classical: the biorthogonal reductions
of $\HH_n^{(1)}$ and $\HH_n(f)$ then share the same connection determinant,
which cancels upon dividing, leaving a ratio of norms.

\subsection{Part 1: the ratio $\HH_n^{(1)}/\HH_n(f)$ is constant in $s$}

Put $b_m=a_{m+1}$, so $\HH_n^{(1)}=\det(b_{2i+j})$. The even and odd parts of
$\mathbf b$ are
$$
b_{2k}=(2k+1)E^{(s+1)}_{2k}=\mathcal T^*[y^k],\qquad
b_{2k+1}=E^{(s+1)}_{2k+2}=\mathcal S_1[y^k],
$$
where $\mathcal S_1[y^k]:=\mathcal S[y^{k+1}]$ is the Christoffel transform
$y\,\mathcal S$ of the secant functional.

\begin{Lemma}[kernel polynomial data]\label{lem:kernel}
$\mathcal S_1$ is quasi-definite, with monic orthogonal polynomials
$\hat P_m$ (the kernel polynomials of $\mathcal S$). Writing
$\nu_m:=P_{m+1}(0)/P_m(0)$, one has
\begin{equation}\label{eq:nu}
\nu_m=-(2m+1)(s+2m+1),
\end{equation}
and, by the Christoffel transform formulas,
\begin{align}
c^{S_1}_m&=c^S_{m+1}+\nu_{m+1}-\nu_m=(4m+3)s+8m^2+12m+5,\label{eq:cS1}\\
\lambda^{S_1}_m&=\lambda^S_m\,\frac{\nu_m}{\nu_{m-1}}=2m(2m+1)(s+2m)(s+2m+1),\label{eq:lamS1}\\
h^{S_1}_m&=-\nu_m\,h^S_m=(2m+1)!\,(s+1)_{2m+1}\qquad(\deg_s h^{S_1}_m=2m+1).\label{eq:hS1}
\end{align}
\end{Lemma}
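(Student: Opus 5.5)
The whole lemma is driven by Lemma~\ref{lem:christoffel}. Part (iii) of that lemma already gives the key fact: the Hankel determinants $H'_i$ of $\mathcal S_1=y\,\mathcal S$ equal $(-1)^iP_i(0)H_i$. So quasi-definiteness of $\mathcal S_1$ amounts to $P_m(0)\neq0$ for all $m$, at least for generic $s$, i.e.\ as rational functions of $s$.

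\textbf{Step 1: the value $\nu_m$.} Lemma~\ref{lem:christoffelspecial}(a) gives
$$D^S_m=(-1)^mP_m(0)=(2m-1)!!\prod_{k=0}^{m-1}(s+2k+1),$$
which is nonzero generically. This gives quasi-definiteness, and
$$\nu_m=P_{m+1}(0)/P_m(0)=-D^S_{m+1}/D^S_m=-(2m+1)(s+2m+1),$$
which is \eqref{eq:nu}. Alternatively, I would verify \eqref{eq:nu} directly from the recurrence evaluated at $0$, namely $\nu_m=-c^S_m-\lambda^S_m/\nu_{m-1}$, together with \eqref{eq:Sdata}.

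\textbf{Step 2: the norms \eqref{eq:hS1}.} Lemma~\ref{lem:christoffel}(iii) gives $h^{S_1}_m=(D^S_{m+1}/D^S_m)\,h^S_m=-\nu_m h^S_m$. Then
$$h^{S_1}_m=(2m+1)(s+2m+1)\,(2m)!\,(s+1)_{2m}=(2m+1)!\,(s+1)_{2m+1},$$
and its degree in $s$ is visibly $2m+1$.

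\textbf{Step 3: the recurrence coefficients.} From the norms, $\lambda^{S_1}_m=h^{S_1}_m/h^{S_1}_{m-1}=\lambda^S_m\,\nu_m/\nu_{m-1}$. Substituting \eqref{eq:Sdata}, the factor $(2m-1)(s+2m-1)$ cancels and leaves $2m(2m+1)(s+2m)(s+2m+1)$, which is \eqref{eq:lamS1}.

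For $c^{S_1}_m$, I would use the standard kernel-polynomial formula
$$\hat P_m=\frac{P_{m+1}-\nu_mP_m}{y}.$$
This is monic of degree $m$ and $\mathcal S_1$-orthogonal to lower-degree polynomials, since $\mathcal S_1[\hat P_m\,y^k]=\mathcal S[(P_{m+1}-\nu_mP_m)y^k]=0$ for $k<m$. Comparing the subleading coefficients of $\hat P_m$, read off from those of $P_{m+1}$ and $P_m$, gives the trace relation $c^{S_1}_m=c^S_{m+1}+\nu_{m+1}-\nu_m$. Equivalently, one can expand $y\hat P_m$ via the $P$-recurrence.

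Substitution then gives
$$(4m+5)s+8(m+1)^2+4(m+1)+1-(2m+3)(s+2m+3)+(2m+1)(s+2m+1).$$
The coefficient of $s$ is $4m+5-2=4m+3$. The constant term is $8m^2+20m+13-(4m^2+12m+9)+(4m^2+4m+1)=8m^2+12m+5$, as claimed.

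As a cross-check I would compare with the odd contraction, Lemma~\ref{lem:christoffel}(ii): $c^{S_1}_m=u_{2m+1}+u_{2m+2}$ and $\lambda^{S_1}_m=u_{2m}u_{2m+1}$ with $u_j=j(j+s)$. This gives exactly the same expressions, so either route suffices.

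\textbf{Main obstacle.} The only delicate point is quasi-definiteness for specific $s$: $P_m(0)$ vanishes at the negative odd integers $s=-(2k+1)$. The lemma should therefore be read generically, as an identity of rational functions in $s$, which is how it will be used in the degree argument.
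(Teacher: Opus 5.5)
Your proposal is correct and takes essentially the paper's route: the paper obtains $\nu_m$ from the three-term recurrence evaluated at $y=0$, which is the same computation that proves Lemma~\ref{lem:christoffelspecial}(a), the result you cite, and then substitutes \eqref{eq:Sdata} into the standard Christoffel-transform formulas for $c^{S_1}_m$, $\lambda^{S_1}_m$ and $h^{S_1}_m$. The only difference is that you derive those formulas yourself, from the kernel relation $y\hat P_m=P_{m+1}-\nu_mP_m$, the ratio of norms and the subleading coefficients, where the paper simply cites them; your cross-check through the odd contraction of Lemma~\ref{lem:christoffel}(ii) is also a valid one-line alternative.
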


\begin{proof}
Evaluating $P_{m+1}=(y-c^S_m)P_m-\lambda^S_m P_{m-1}$ at $y=0$ gives
$\nu_m\nu_{m-1}+c^S_m\nu_{m-1}+\lambda^S_m=0$; with \eqref{eq:Sdata} and
$\nu_0=-c^S_0=-(s+1)$ one checks that $-(2m+1)(s+2m+1)$ satisfies this
recurrence, proving \eqref{eq:nu}. The expressions for $c^{S_1}_m$,
$\lambda^{S_1}_m$, $h^{S_1}_m$ are the standard Christoffel ($y\,\mathcal S$)
formulas; substituting \eqref{eq:Sdata}, \eqref{eq:nu} and $h^S_m=(2m)!(s+1)_{2m}$
and simplifying gives the closed forms (each is an elementary identity in
$(m,s)$).
\end{proof}

\begin{Lemma}[odd-sided reduction]\label{lem:oddred}
$\displaystyle
\HH_n^{(1)}=(-1)^{\binom{\underline n+1}{2}}
\Bigl(\prod_{m=0}^{\underline n-1}h^{S_1}_m\Bigr)
\det\bigl(\mathcal T^*[\hat P_{\underline n+r}\,y^l]\bigr)_{0\le r,l\le \bar n-1}.$
\end{Lemma}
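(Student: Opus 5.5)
The plan is to mirror the proof of the one-sided reduction (Lemma~\ref{lem:onesided}), with the two parities interchanged. This time we orthogonalise the \emph{odd} columns, which form the complete moment family of the quasi-definite functional $\mathcal S_1$, and leave the even columns untouched.

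\textbf{Setting up.} Write $\HH_n^{(1)}=\det(b_{2i+j})$ with $b_m=a_{m+1}$. The entries are
\[
b_{2i+2l}=\mathcal T^*[y^i\,y^l]\quad(l<\bar n),
\qquad
b_{2i+2m+1}=\mathcal S_1[y^i\,y^m]\quad(m<\underline n).
\]
This is the analogue of \eqref{eq:KmatrixGene} for $\mathbf b$: row family $(y^i)_{i<n}$, even-column family $(y^l)_{l<\bar n}$ paired through $\mathcal T^*$, and odd-column family $(y^m)_{m<\underline n}$ paired through $\mathcal S_1$. The count is right because $\bar n$ even indices and $\underline n$ odd indices lie in $\{0,\dots,n-1\}$.

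\textbf{Change of family.} By Lemma~\ref{lem:kernel}, $\mathcal S_1$ is quasi-definite with monic orthogonal polynomials $\hat P_m$ and norms $h^{S_1}_m$. Replace the row family $(y^i)_{i<n}$ by $(\hat P_i)_{i<n}$, and the odd-column family $(y^m)_{m<\underline n}$ by $(\hat P_m)_{m<\underline n}$. Both are unitriangular recombinations, so Lemma~\ref{lem:family} leaves the determinant unchanged. (Applied column-parity-wise, this is exactly the argument already used in Lemma~\ref{lem:onesided}.)

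The new entries are then:
\begin{itemize}
\item odd column $m$: $\mathcal S_1[\hat P_i\hat P_m]=\delta_{im}\,h^{S_1}_m$;
\item even column $l$: $\mathcal T^*[\hat P_i\,y^l]$.
\end{itemize}
No property of $\mathcal T^*$ is used here.

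\textbf{Reordering and Laplace expansion.} Permute the columns so that the $\underline n$ odd indices $1,3,\dots,2\underline n-1$ come before the $\bar n$ even ones. The odd column $2m+1$ must pass the $m+1$ even columns $0,2,\dots,2m$. All of these exist, since $m+1\le\underline n\le\bar n$. The number of inversions is therefore $\sum_{m=0}^{\underline n-1}(m+1)=\binom{\underline n+1}2$, which gives the sign $(-1)^{\binom{\underline n+1}2}$.

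In the reordered matrix, the first $\underline n$ columns are diagonal on rows $i<\underline n$, with entries $h^{S_1}_m$, and vanish on rows $i\ge\underline n$. A block-triangular (Laplace) expansion along these columns yields $\prod_{m<\underline n}h^{S_1}_m$. What remains is the complementary block: rows $i=\underline n+r$ with $0\le r\le n-1-\underline n=\bar n-1$, against the even columns $l<\bar n$. This is the square matrix $\bigl(\mathcal T^*[\hat P_{\underline n+r}y^l]\bigr)_{0\le r,l\le\bar n-1}$, which is the claim.

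\textbf{Where care is needed.} I expect no real obstacle. The only points requiring attention are the sign count, which differs from Lemma~\ref{lem:onesided} because the roles of $\bar n$ and $\underline n$ are swapped, and checking that $n-\underline n=\bar n$ makes the leftover block square. Quasi-definiteness of $\mathcal S_1$ is supplied by Lemma~\ref{lem:kernel}, since $h^{S_1}_m=(2m+1)!\,(s+1)_{2m+1}\neq0$ for generic $s$.
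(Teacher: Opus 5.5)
Your proof is correct and follows essentially the paper's argument: apply the one-sided reduction of Lemma~\ref{lem:onesided} to $\mathbf b$ with the parities interchanged, replace the rows and the odd columns by the $\mathcal S_1$-orthogonal $\hat P$'s, and Laplace-expand along the resulting diagonal block. Your inversion count $\sum_{m<\underline n}(m+1)=\binom{\underline n+1}2$ correctly supplies the sign $(-1)^{\binom{\underline n+1}2}$, which the paper's brief proof leaves implicit.
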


\begin{proof}
This is the one-sided reduction~$\M2$ (Lemma~\ref{lem:onesided}) applied to
$\mathbf b$, orthogonalising the odd columns against the quasi-definite
$\mathcal S_1$: replacing the rows $(y^i)$ and the odd-column family by the
$\mathcal S_1$-orthogonal $(\hat P_i)$ --- both unitriangular recombinations ---
makes the odd--odd block diagonal,
$\mathcal S_1[\hat P_i\hat P_m]=\delta_{im}h^{S_1}_m$, on the rows
$i<\underline n$; Laplace expansion along those rows leaves the $\bar n$ even
($\mathcal T^*$) columns on the rows $\underline n,\dots,n-1$.
\end{proof}

\begin{Lemma}[divisibility]\label{lem:div}
$\displaystyle\prod_{j=1}^{n-1}(s+j)^{\,n-j}\ \bigm|\ \HH_n^{(1)}$.
\end{Lemma}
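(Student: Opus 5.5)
I will mirror the rank-drop argument of Lemma~\ref{lem:rankdrop}, now applied to the shifted array $\bigl(a_{2i+j+1}\bigr)_{0\le i,j<n}=\bigl(b_{2i+j}\bigr)$. Fix an integer $p$ with $1\le p\le n-1$ and set $s=-p$. The even weight is then the cosine polynomial $\cos(x)^{p-1}$, so exactly as before
$$
a_{2k}=\sum_{t=0}^{p-1}w_t\,\theta_t^{\,k},\qquad a_{2k+1}=(2k+1)\,a_{2k},
$$
with $\theta_t=-(p-1-2t)^2$ and $w_t=2^{-(p-1)}\binom{p-1}{t}$. The aim is to show that the shifted matrix has rank at most $p$ at $s=-p$, so that its corank is at least $n-p$.

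For the column space, take a column $j$ of $\bigl(a_{2i+j+1}\bigr)$. If $j=2l+1$ is odd, the entries are $a_{2(i+l+1)}=\sum_\theta(\cdots)\,\theta^{\,i}$, a combination of the geometric vectors $(\theta^{\,i})_i$. If $j=2l$ is even, the entries are $a_{2(i+l)+1}=(2i+2l+1)\,a_{2(i+l)}$, a combination of $(\theta^{\,i})_i$ and $(i\,\theta^{\,i})_i$. Hence all columns lie in the span of $\{(\theta^{\,i})_i,(i\,\theta^{\,i})_i\}$ over the $\lceil p/2\rceil$ distinct values $\theta$. When $\theta=0$, which occurs iff $p$ is odd, $(i\,\theta^{\,i})_i$ vanishes. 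The dimension count of Lemma~\ref{lem:rankdrop} therefore applies verbatim and gives rank $\le p$.

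The shift can only help, since in the odd columns the atom $\theta=0$ now contributes nothing: $a_{2(i+l+1)}$ with $l\ge0$ kills $0^{\,i+l+1}$. No sharper bound is needed, though.

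Next, a constant change of row basis makes $n-p$ rows divisible by $(s+p)$, giving $(s+p)^{\,n-p}\mid\HH_n^{(1)}$. This step uses that the entries are polynomials in $s$, which holds because the $E^{(s+1)}_{2k}$ are polynomials in $s$. Since the linear forms $s+p$ are pairwise coprime, multiplying over $p=1,\dots,n-1$ gives the full product $\prod_{j=1}^{n-1}(s+j)^{\,n-j}$.

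The main obstacle is the trivial case $\HH_n^{(1)}\equiv0$. Divisibility still holds formally there, but for Part~1 we need $\HH_n^{(1)}\not\equiv0$. I would defer this to Part~2 and note it there via the evaluation at $s=0$, where $\HH_n^{(1)}=((n-1)!!)^2\HH_n(f)\ne0$. The divisibility statement itself requires nothing beyond the rank count.
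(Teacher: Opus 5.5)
Your proposal is correct and follows the paper's proof essentially verbatim. Both bound the rank by $p$ at $s=-p$ using the exponential-sum form of $\cos(x)^{p-1}$ and the spanning vectors $(\theta^{\,i})_i,(i\,\theta^{\,i})_i$, then conclude with a constant row change of basis and the pairwise coprimality of the factors $s+p$. As a minor aside, nonvanishing of $\HH_n^{(1)}$ is not needed even in Proposition~\ref{prop:const}: the constant $c_n^{(1)}$ may a priori be zero, and it is fixed by the independent evaluation at $s=0$ anyway.
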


\begin{proof}
At $s=-p$ ($1\le p\le n-1$) the even weight is the cosine polynomial
$\cos(x)^{p-1}$, so $a_{2k}=\sum_t w_t\theta_t^{\,k}$, where
$\theta_t=-(p-1-2t)^2$ runs over $\lceil p/2\rceil$ distinct values, while
$a_{2k+1}=(2k+1)a_{2k}$. This is the rank-drop argument of
Lemma~\ref{lem:rankdrop}, unaffected by the shift: replacing $a_{2i+j}$ by
$a_{2i+j+1}$ only relabels which columns are even and which odd, leaving the
spanning vectors unchanged. Every column of $(a_{2i+j+1})_{0\le i,j<n}$ thus lies
in the span of the $(\theta^{\,i})_i$ and $(i\theta^{\,i})_i$, a nonzero $\theta$
contributing two and $\theta=0$ (present iff $p$ is odd) only one, so the column
space has dimension $\le p$. Hence the rank at $s=-p$ is $\le p$ and
$(s+p)^{\,n-p}\mid\HH_n^{(1)}$; the factors $(s+1),\dots,(s+n-1)$ being coprime,
their product divides $\HH_n^{(1)}$.
\end{proof}

\begin{Lemma}[per-column degrees]\label{lem:percol}
For $0\le l<a$ one has $\deg_s\mathcal T^*[\hat P_a\,y^l]\le 2l$. Moreover
$\mathcal T^*[\hat P_a]=(-1)^a(2a)!$ (the case $l=0$).
\end{Lemma}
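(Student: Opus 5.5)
The plan is to work throughout with the description $\mathcal T^*=\mathcal S\circ(2y\partial_y+1)$ from Lemma~\ref{lem:reduction}, together with the kernel-polynomial description of $\hat P_a$. Since $\hat P_a$ is the monic $\mathcal S_1$-orthogonal polynomial and $\nu_a=P_{a+1}(0)/P_a(0)$, we have
$$
y\,\hat P_a=P_{a+1}-\nu_aP_a=:R_a .
$$
I would verify this directly: the right side vanishes at $0$ and is monic of degree $a+1$, and $\mathcal S_1[y^k\hat P_a]=\mathcal S[y^kR_a]=0$ for $k<a$. The two assertions of the lemma are then handled separately, the case $l\ge1$ by a degree count and the case $l=0$ by an exact evaluation.

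\emph{Case $1\le l<a$.} Write $y^l\hat P_a=y^{l-1}R_a$. Applying $2y\partial_y+1$ gives
$$
(2l-1)\,y^{l-1}R_a+2\,y^{l}R_a' .
$$
The first term is killed by $\mathcal S$, because $l-1<a$ and $R_a$ lies in the span of $P_a,P_{a+1}$. For the second term, write $yR_a'=yP_{a+1}'-\nu_a\,yP_a'$ and substitute \eqref{eq:Tdef}, i.e.\ $yP_i'=T_i+iP_i$. The $P_{a+1},P_a$ pieces again vanish against $y^{l-1}$, leaving
$$
\mathcal T^*[\hat P_a y^l]=2\,\mathcal S\bigl[y^{l-1}(T_{a+1}-\nu_aT_a)\bigr]
=2\sum_{b\le l-1}\bigl(\Delta_{a+1,b}-\nu_a\Delta_{a,b}\bigr)\,\mathcal S[y^{l-1}P_b].
$$
Now I collect the degree bounds. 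By Lemma~\ref{lem:deriv}, $\deg_s\Delta\le1$. By \eqref{eq:nu}, $\deg_s\nu_a=1$. As in the proof of Theorem~\ref{thm:allstar}, $\deg_s\mathcal S[y^{l-1}P_b]\le (l-1)+b\le 2l-2$. Each summand therefore has degree at most $2+2l-2=2l$, which is the claimed bound.

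\emph{Case $l=0$.} Here the reduction above is unavailable, since $\hat P_a$ is not $y$ times anything. Instead I would use the Christoffel--Darboux form of the kernel polynomial,
$$
\hat P_a=\frac{h^S_a}{P_a(0)}\sum_{b=0}^{a}\frac{P_b(0)}{h^S_b}\,P_b .
$$
This is monic, and it is $\mathcal S_1$-orthogonal to lower degrees by the reproducing property of the kernel. Applying $\mathcal T^*$ and inserting \eqref{eq:TP}, namely $\mathcal T^*[P_b]=(-1)^{b-1}2^b b!(2b-3)!!(s+1)$ for $b\ge1$ and $\mathcal T^*[P_0]=1$, reduces the claim to an explicit finite sum. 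For this I also need closed forms for $P_b(0)$ and $h^S_b$:
$$
P_b(0)=\prod_{m<b}\nu_m=(-1)^b(2b-1)!!\,(s+1)(s+3)\cdots(s+2b-1),
\qquad
h^S_b=(2b)!\,(s+1)_{2b}.
$$
Then I would show that the sum equals $(-1)^a(2a)!$, independent of $s$. I expect to do this by induction on $a$: check that passing from $a$ to $a+1$ multiplies the claimed value by $-(2a+1)(2a+2)$, using the ratios $h^S_{a+1}/h^S_a$ and $P_{a+1}(0)/P_a(0)=\nu_a$ to turn the sum for $a+1$ into a multiple of the sum for $a$ plus one new term.

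I expect this telescoping at $l=0$ to be the main obstacle. The $s$-dependence has to cancel exactly, and the rational factors $(s+1)/\bigl((s+2)_{\dots}\bigr)$ coming from $h^S_b$ and $P_b(0)$ must combine correctly. If the direct induction becomes messy, my fallback is to avoid the kernel sum altogether. I would compute $\mathcal T^*[y^{-1}R_a]$ formally, or equivalently use the $(\hat P)$ three-term recurrence of Lemma~\ref{lem:kernel} to obtain a two-term recurrence for $\tau_a:=\mathcal T^*[\hat P_a]$ from $\hat P_{a+1}=(y-c^{S_1}_a)\hat P_a-\lambda^{S_1}_a\hat P_{a-1}$. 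In that recurrence the term $\mathcal T^*[y\hat P_a]$ is given by the $l=1$ formula above, with $b=0$ only, and is therefore explicit. I would then check that $(-1)^a(2a)!$ satisfies the resulting recurrence, with initial value $\tau_0=\mathcal T^*[1]=1$.
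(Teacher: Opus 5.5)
Your proposal is correct. The $1\le l<a$ case is essentially the paper's argument. The $l=0$ case follows a genuinely different route, and your stated fallback is exactly what the paper does.

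\emph{Case $1\le l<a$.} Writing $y^l\hat P_a=y^{l-1}(P_{a+1}-\nu_aP_a)$ and applying $2y\partial_y+1$ gives
\[
2\mathcal S\bigl[y^{l-1}(T_{a+1}-\nu_aT_a)\bigr]=\mathcal T^*[P_{a+1}y^{l-1}]-\nu_a\,\mathcal T^*[P_ay^{l-1}].
\]
This is the paper's relation \eqref{eq:percolrec}. The paper reaches it through the differentiated kernel relation \eqref{eq:kerderiv}. You also inline the $\Delta$-expansion rather than quoting the bound $2m+1$ from the proof of Theorem~\ref{thm:allstar}; the degree count is the same.

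\emph{Case $l=0$: the paper's route.} The paper derives an inhomogeneous second-order recurrence for $\mathcal T^*[\hat P_a]$ from the kernel three-term recurrence. This uses $c^{S_1},\lambda^{S_1}$ of Lemma~\ref{lem:kernel}, with $\mathcal T^*[y\hat P_{a-1}]$ supplied by the $l=1$ case. It then asserts, without exhibiting the computation, that $(-1)^a(2a)!$ satisfies this as a polynomial identity in $(a,s)$.

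\emph{Case $l=0$: your route.} Your Christoffel--Darboux expansion avoids the kernel recurrence data entirely. The step you flag as the main obstacle is in fact a one-line telescoping:
\begin{itemize}
\item Using $(2b)!=2^bb!\,(2b-1)!!$ together with your formulas for $P_b(0)$, $h^S_b$ and \eqref{eq:TP}, the $b$-th summand for $b\ge1$ is $-(s+1)(2b-3)!!/\prod_{m=1}^{b}(s+2m)$. The $b=0$ summand is $1$.
\item The prefactor is $h^S_a/P_a(0)=(-1)^a2^aa!\prod_{m=1}^{a}(s+2m)$.
\item Put $g_b=(2b-1)!!/\prod_{m=1}^{b}(s+2m)$, so $g_0=1$. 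Then $g_{b-1}-g_b=(s+1)(2b-3)!!/\prod_{m=1}^{b}(s+2m)$.
\item The bracket therefore telescopes to $1-(g_0-g_a)=g_a$, giving $\mathcal T^*[\hat P_a]=(-1)^a2^aa!\,(2a-1)!!=(-1)^a(2a)!$.
\end{itemize}

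\emph{Comparison.} Your route needs only $P_b(0)$, $h^S_b$ and \eqref{eq:TP}. It also shows transparently why the value is independent of $s$: all $s$-dependence cancels between the prefactor and $g_a$. The paper's route stays uniform with the recurrence-driven arguments in the rest of the section, at the cost of an identity it leaves for the reader to check.
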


\begin{proof}
We use the operator identity $\mathcal T^*=\mathcal S\circ(2y\partial_y+1)$ (valid
because $a_{2k+1}=(2k+1)a_{2k}$) together with the secant evaluation
$\mathcal T^*[P_i\,y^m]=2\mathcal S[y^{m+1}P_i']$ for $i>m$ (Lemma~\ref{lem:reduction}),
which has $\deg_s\mathcal T^*[P_i\,y^m]\le 2m+1$ (proof of
Theorem~\ref{thm:allstar}), and the kernel relation
$y\hat P_a=P_{a+1}-\nu_a P_a$ (Lemma~\ref{lem:kernel}), which differentiates to
\begin{equation}\label{eq:kerderiv}
y\hat P_a'=P_{a+1}'-\nu_a P_a'-\hat P_a .
\end{equation}

\emph{Case $1\le l<a$.} From $\mathcal T^*=\mathcal S\circ(2y\partial_y+1)$,
$$
\mathcal T^*[\hat P_a y^l]=2\mathcal S[y^{l+1}\hat P_a']+(2l+1)\,\mathcal S[y^l\hat P_a].
$$
Now $\mathcal S[y^l\hat P_a]=\mathcal S_1[y^{l-1}\hat P_a]=0$ (since
$\deg y^{l-1}=l-1<a$). Multiplying \eqref{eq:kerderiv} by $y^l$, applying
$\mathcal S$, and using $\mathcal S[y^l\hat P_a]=0$ again,
$\mathcal S[y^{l+1}\hat P_a']=\mathcal S[y^l P_{a+1}']-\nu_a\mathcal S[y^l P_a']$,
whence
\begin{equation}\label{eq:percolrec}
\mathcal T^*[\hat P_a y^l]=\mathcal T^*[P_{a+1}y^{l-1}]-\nu_a\,\mathcal T^*[P_a y^{l-1}].
\end{equation}
The two terms have $\deg_s\le 2(l-1)+1=2l-1$ and $\le 1+(2l-1)=2l$ (as
$\deg_s\nu_a=1$); thus $\deg_s\mathcal T^*[\hat P_a y^l]\le 2l$.

\emph{Case $l=0$.} Set $u_a:=\mathcal T^*[\hat P_a]$. From the kernel three-term
recurrence $\hat P_a=(y-c^{S_1}_{a-1})\hat P_{a-1}-\lambda^{S_1}_{a-1}\hat P_{a-2}$
and $\mathcal T^*$-linearity,
$$
u_a=\mathcal T^*[y\hat P_{a-1}]-c^{S_1}_{a-1}u_{a-1}-\lambda^{S_1}_{a-1}u_{a-2}
\qquad(a\ge2).
$$
By \eqref{eq:percolrec} applied to $\hat P_{a-1}$ with $l=1$ (valid since
$\mathcal S[y\hat P_{a-1}]=\mathcal S_1[\hat P_{a-1}]=0$ for $a\ge2$),
$\mathcal T^*[y\hat P_{a-1}]=\mathcal T^*[\hat P_{a-1}y]
=\mathcal T^*[P_a]-\nu_{a-1}\mathcal T^*[P_{a-1}]$, which is explicit by
\eqref{eq:TP}. Substituting the claimed value $u_a=(-1)^a(2a)!$ together with
\eqref{eq:TP}, \eqref{eq:nu}, \eqref{eq:cS1}, \eqref{eq:lamS1} turns the
recurrence into a polynomial identity in $(a,s)$, which one verifies directly.
With the base cases $u_0=\mathcal T^*[1]=E^{(s+1)}_0=1$ and
$u_1=\mathcal T^*[y-(3s+5)]=3E^{(s+1)}_2-(3s+5)=-2$ (using $E^{(s+1)}_2=s+1$),
induction gives $u_a=(-1)^a(2a)!$. In particular $\deg_s\mathcal T^*[\hat P_a]=0$.
\end{proof}

\begin{Lemma}[degree bound]\label{lem:deg}
$\deg_s\HH_n^{(1)}\le\binom n2$.
\end{Lemma}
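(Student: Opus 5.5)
The plan is to start from the odd-sided reduction of Lemma~\ref{lem:oddred}, which writes
$$
\HH_n^{(1)}=\pm\Bigl(\prod_{m=0}^{\underline n-1}h^{S_1}_m\Bigr)\det E,\qquad E_{r,l}=\mathcal T^*[\hat P_{\underline n+r}\,y^l],\quad 0\le r,l\le\bar n-1,
$$
and then to bound the two factors separately. The norm product is explicit. By \eqref{eq:hS1} we have $\deg_s h^{S_1}_m=2m+1$, so
$$
\deg_s\prod_{m=0}^{\underline n-1}h^{S_1}_m=\sum_{m<\underline n}(2m+1)=\underline n^2 .
$$

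For the connection determinant I will use Lemma~\ref{lem:percol} column by column. In every entry of $E$ the row index is $a=\underline n+r\ge\underline n$ and the column index is $l\le\bar n-1$.

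When $n$ is even we have $\bar n=\underline n$, so $l\le\underline n-1<a$ holds throughout the block. Lemma~\ref{lem:percol} then gives $\deg_s E_{r,l}\le 2l$ for every entry. Since each entry of column $l$ has degree at most $2l$, any term of the Leibniz expansion has degree at most $\sum_l 2l=\bar n(\bar n-1)$. Adding the two bounds gives
$$
\underline n^2+\bar n(\bar n-1)=\binom n2,
$$
using $\bar n+\underline n=n$, exactly as in the proof of Theorem~\ref{thm:allstar}.

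When $n$ is odd we have $\bar n=\underline n+1$, and the last column $l=\bar n-1=\underline n$ meets the row $r=0$ with $a=\underline n=l$. Here the hypothesis $l<a$ of Lemma~\ref{lem:percol} fails, and this single entry $\mathcal T^*[\hat P_{\underline n}\,y^{\underline n}]$ is the main obstacle. I will treat it directly using $\mathcal T^*=\mathcal S\circ(2y\partial_y+1)$:
$$
\mathcal T^*[\hat P_a y^a]=2\mathcal S[y^{a+1}\hat P_a']+(2a+1)\mathcal S[y^a\hat P_a].
$$
The second term is $(2a+1)h^{S_1}_{a-1}$-type data, more precisely $(2a+1)\mathcal S_1[y^{a-1}\hat P_a]$, which vanishes; if it does not vanish, I will instead write it as a norm-like quantity $\mathcal S_1[\hat P_a y^{a-1}]$ and check its degree. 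For the first term I will use \eqref{eq:kerderiv} to rewrite $\mathcal S[y^{a+1}\hat P_a']$ as
$$
\mathcal S[y^aP_{a+1}']-\nu_a\mathcal S[y^aP_a']-\mathcal S[y^a\hat P_a].
$$
The piece $\mathcal S[y^a P_{a+1}']$ is of the type controlled in the proof of Theorem~\ref{thm:allstar}, with degree at most $2a+1$ via Lemma~\ref{lem:deriv}. The piece $\mathcal S[y^aP_a']=a\,h^S_a/a$-type equals $a\,\mathcal S[y^{a-1}P_a\cdot]$; because the leading term of $P_a'$ is $ay^{a-1}$, it gives $a\,h^S_a$ up to lower-order terms, of degree $2a$. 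Multiplying by $\nu_a$ raises this to degree $2a+1$.

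The target for the odd case is $\deg_s E_{0,\underline n}\le 2\underline n$, one less than this crude estimate. So I expect to have to show that the top $s$-coefficients cancel between $\mathcal S[y^aP_{a+1}']$ and $\nu_a\mathcal S[y^aP_a']$. The first thing I will try is to compute both leading coefficients explicitly from \eqref{eq:Delta} and $h^S_a=(2a)!(s+1)_{2a}$, and compare them.

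If that cancellation fails, the fallback is to move the excess into the row $r=0$. The other entries of that row have degree at most $2l$, so a row-degree budget might still close. I would then bound the determinant by the maximum over permutations of the sums of entry degrees. The permutations using the entry $(0,\underline n)$ must use the other columns on rows $1,\dots,\underline n$, whose degrees are at most $2l$; then one unit of excess gives total $\binom n2+1$, so the cancellation really is needed. Establishing that one degree drop is therefore the crux. Once $\deg_s\det E\le\bar n(\bar n-1)$ holds in both parities, adding $\underline n^2$ yields $\deg_s\HH_n^{(1)}\le\binom n2$.
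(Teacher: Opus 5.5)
Your bound on the norm product and your even-$n$ argument are exactly the paper's. You are also right that for odd $n\ge3$ the single entry $E_{0,\underline n}=\mathcal T^*[\hat P_{\underline n}\,y^{\underline n}]$ has $l=a$, so it falls outside the range $l<a$ in which Lemma~\ref{lem:percol} is stated (the paper's one-line proof applies the lemma there without comment). But your proposal does not close this case. It reduces it to a cancellation of top $s$-coefficients between $\mathcal S[y^aP_{a+1}']$ and $\nu_a\,\mathcal S[y^aP_a']$, which you neither prove nor compute, and you note yourself that the row-budget fallback overshoots by one. As written, the odd case is a genuine gap.

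The cancellation is in fact unnecessary, because the degree estimates that motivated it are off by one.
\begin{itemize}
\item The bound from the proof of Theorem~\ref{thm:allstar} is $\deg_s\mathcal S[y^{m+1}P_i']\le 2m+1$ for $i>m$. Here the power is $y^a=y^{(a-1)+1}$, so $m=a-1$. By \eqref{eq:Dentry}, both $2\mathcal S[y^aP_{a+1}']=\mathcal T^*[P_{a+1}y^{a-1}]$ and $2\mathcal S[y^aP_a']=\mathcal T^*[P_a y^{a-1}]$ therefore have degree at most $2a-1$, not $2a+1$ and $2a$.
\item In particular $\mathcal S[y^aP_a']$ is not $a\,h^S_a$ up to lower order. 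The polynomial $y^aP_a'$ has degree only $2a-1$, and $yP_a'=aP_a+T_a$ gives $\mathcal S[y^aP_a']=\mathcal S[y^{a-1}T_a]$. For instance $\mathcal S[yP_1']=\mathcal S[y]=s+1$, while $h^S_1=2(s+1)(s+2)$.
\end{itemize}
Hence $E_{0,a}=\mathcal T^*[P_{a+1}y^{a-1}]-\nu_a\,\mathcal T^*[P_ay^{a-1}]$ has degree at most $2a$ outright.

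This is just \eqref{eq:percolrec} at $l=a$. Its derivation uses only $l-1<a$, for $\mathcal S[y^l\hat P_a]=\mathcal S_1[y^{l-1}\hat P_a]=0$, and $a>l-1$, for \eqref{eq:Dentry}. So Lemma~\ref{lem:percol} holds for $1\le l\le a$ by the same proof, and $n=1$ is trivial. With that extension, your column count $\sum_l 2l=\bar n(\bar n-1)$ goes through in both parities.
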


\begin{proof}
In the determinant of Lemma~\ref{lem:oddred} the $l$-th column has all entries of
$s$-degree $\le 2l$ by Lemma~\ref{lem:percol}, so
$\deg_s\det\le\sum_{l=0}^{\bar n-1}2l=\bar n(\bar n-1)$; and
$\deg_s\prod_{m=0}^{\underline n-1}h^{S_1}_m=\sum_{m=0}^{\underline n-1}(2m+1)=\underline n^2$
by \eqref{eq:hS1}. Hence $\deg_s\HH_n^{(1)}\le\underline n^2+\bar n(\bar n-1)=\binom n2$.
\end{proof}

\begin{Proposition}\label{prop:const}
$\HH_n^{(1)}=c_n^{(1)}\prod_{i=1}^{n-1}(s+1)_i$ for a constant $c_n^{(1)}$;
equivalently $\HH_n^{(1)}/\HH_n(f)$ is independent of $s$.
\end{Proposition}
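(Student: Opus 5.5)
The plan is to play the divisibility of Lemma~\ref{lem:div} against the degree bound of Lemma~\ref{lem:deg}, exactly as in the proof of Theorem~\ref{thm:allstar}. We regard $\HH_n^{(1)}$ as a polynomial in $s$. Each $a_m$ is a polynomial in $s$, since $E^{(s+1)}_{2k}$ is a polynomial of degree $k$ in $s$, as already used in the proof of Theorem~\ref{thm:allstar}. Hence $\HH_n^{(1)}\in\QQ[s]$.

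By Lemma~\ref{lem:div}, the product $\Pi_n(s):=\prod_{j=1}^{n-1}(s+j)^{\,n-j}=\prod_{i=1}^{n-1}(s+1)_i$ divides $\HH_n^{(1)}$ in $\QQ[s]$. So we may write $\HH_n^{(1)}=\Pi_n(s)\,R(s)$ with $R\in\QQ[s]$.

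If $\HH_n^{(1)}\equiv0$, the statement holds with $c_n^{(1)}=0$. This case is excluded later by the evaluation at $s=0$, but it is harmless here.

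Otherwise, compare degrees. We have $\deg_s\Pi_n=\sum_{j=1}^{n-1}(n-j)=\binom n2$, and Lemma~\ref{lem:deg} gives $\deg_s\HH_n^{(1)}\le\binom n2$. Therefore $\deg R\le0$, so $R$ is a constant $c_n^{(1)}$. This is the first assertion.

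For the equivalent form we use \eqref{eq:HHcos}: $\HH_n(f)=c_n\,\Pi_n(s)$, where $c_n\ne0$ by Theorem~\ref{thm:allstar}, being a product of positive factorials. Hence $\HH_n^{(1)}/\HH_n(f)=c_n^{(1)}/c_n$ is independent of $s$, as a ratio of polynomials in $s$.

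The argument has no real obstacle once Lemmas~\ref{lem:div} and~\ref{lem:deg} are granted. The one point to check is that both are statements in the polynomial ring $\QQ[s]$. For Lemma~\ref{lem:div}, the rank-drop argument yields divisibility by $(s+p)^{n-p}$ in $\QQ[s]$: a constant row change of basis makes $n-p$ rows vanish at $s=-p$. Coprimality of the distinct linear factors then gives divisibility by the full product. For Lemma~\ref{lem:deg}, the odd-sided reduction of Lemma~\ref{lem:oddred} divides by no $s$-dependent quantity that could spoil polynomiality: the $\hat P_a$ have coefficients polynomial in $s$, and the norms $h^{S_1}_m$ are polynomial. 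Otherwise the bound on the degree of the product of the norm factor and the determinant would only bound a rational function.
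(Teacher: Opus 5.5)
Your proof is correct and is exactly the paper's argument: the divisibility of Lemma~\ref{lem:div} is played against the degree bound $\deg_s\HH_n^{(1)}\le\binom n2$ of Lemma~\ref{lem:deg}, and the ratio is then read off from \eqref{eq:HHcos}; your treatment of the case $\HH_n^{(1)}\equiv0$ and your polynomiality check are harmless additions that the paper leaves implicit. If you want to audit Lemma~\ref{lem:deg} itself, the point to check is the index range rather than polynomiality: for odd $n$ the entry $r=0$, $l=\bar n-1$ has $l=a$, which lies just outside the stated range $l<a$ of Lemma~\ref{lem:percol}, although that lemma's proof goes through verbatim for $1\le l\le a$.
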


\begin{proof}
By Lemma~\ref{lem:div}, $\prod_{j=1}^{n-1}(s+j)^{n-j}=\prod_{i=1}^{n-1}(s+1)_i$
(degree $\binom n2$) divides $\HH_n^{(1)}$; by Lemma~\ref{lem:deg} the degree is
at most $\binom n2$. Hence $\HH_n^{(1)}=c_n^{(1)}\prod_{i=1}^{n-1}(s+1)_i$, and by
\eqref{eq:HHcos} the ratio $\HH_n^{(1)}/\HH_n(f)=c_n^{(1)}/c_n$ is constant.
\end{proof}

\subsection{Part 2: evaluation at $s=0$}

By Proposition~\ref{prop:const} it suffices to compute $\HH_n^{(1)}/\HH_n(f)$ at
$s=0$, where all three functionals are classical: $\mathcal S,\mathcal T^*,\mathcal S_1$
are the continuous dual Hahn functionals CDH$(\tfrac12,\tfrac12,c)$ for
$c=0,\tfrac12,1$, with recurrence coefficients (from \eqref{eq:Sdata},
\eqref{eq:cS1}, \eqref{eq:lamS1} at $s=0$, and Lemma~\ref{lem:cdh}
for $\mathcal T^*$)
$$
\begin{aligned}
&c^S_i=8i^2+4i+1,&&\lambda^S_i=\bigl((2i)(2i-1)\bigr)^2;\\
&c^T_i=8i^2+8i+3,&&\lambda^T_i=(2i)^4;\\
&c^{S_1}_i=8i^2+12i+5,&&\lambda^{S_1}_i=\bigl((2i)(2i+1)\bigr)^2,
\end{aligned}
$$
and squared norms (from \eqref{eq:hS1}, $h^S_l=(2l)!(s+1)_{2l}$ at $s=0$, and
$\mathcal T^*[\rho_m^2]=((2m)!!)^4$ from Lemma~\ref{lem:cdh})
\begin{equation}\label{eq:norms0}
h^S_l\big|_{0}=\bigl((2l)!\bigr)^2,\qquad
h^T_l\big|_{0}=\bigl((2l)!!\bigr)^4,\qquad
h^{S_1}_m\big|_{0}=\bigl((2m+1)!\bigr)^2 .
\end{equation}

Because both functionals are classical at $s=0$, the full biorthogonal reduction
(Lemma~\ref{lem:bindetGene}) applies to each
determinant. For $\HH_n(f)$ this is the evaluation of
Proposition~\ref{prop:sec1}:
\begin{equation}\label{eq:redun}
\HH_n(f)\big|_{0}=(-1)^{\binom{\bar n}2}
\Bigl(\prod_{l=0}^{\bar n-1}h^S_l\Bigr)\Bigl(\prod_{m=0}^{\underline n-1}h^T_m\Bigr)
\det\bigl(\tilde\kappa_{\bar n+r,\,m}\bigr)_{0\le r,m\le\underline n-1},
\end{equation}
where $\tilde\kappa$ expands the $\mathcal S$-orthogonal polynomials in the
$\mathcal T^*$-orthogonal basis $\rho_m$, $P_i=\sum_{m\le i}\tilde\kappa_{i,m}\rho_m$
(in Lemma~\ref{lem:conn3} the polynomials $P_i|_{s=0}$ are written $\hat P_i$;
here the hat is reserved for the kernel polynomials), and
\begin{equation}\label{eq:tkappa}
\tilde\kappa_{i,m}=\Bigl(\frac{i!}{m!}\Bigr)^2 4^{\,i-m}\binom{1/2}{\,i-m\,}.
\end{equation}
For the shifted determinant, the same reduction applied to $\mathbf b$ (even
functional $\mathcal T^*$, odd functional $\mathcal S_1$) gives
\begin{equation}\label{eq:redsh}
\HH_n^{(1)}\big|_{0}=(-1)^{\binom{\bar n}2}
\Bigl(\prod_{l=0}^{\bar n-1}h^T_l\Bigr)\Bigl(\prod_{m=0}^{\underline n-1}h^{S_1}_m\Bigr)
\det\bigl(\kappa^b_{\bar n+r,\,m}\bigr)_{0\le r,m\le\underline n-1},
\end{equation}
where $\kappa^b$ expands the $\mathcal T^*$-orthogonal $\rho_i$ in the
$\mathcal S_1$-orthogonal basis of kernel polynomials,
$\rho_i=\sum_{m\le i}\kappa^b_{i,m}\hat P_m$ (all at $s=0$).

\begin{Lemma}\label{lem:samekappa}
$\kappa^b_{i,m}=\tilde\kappa_{i,m}$ for all $i\ge m\ge0$.
\end{Lemma}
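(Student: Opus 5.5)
The plan follows the template of Lemmas~\ref{lem:connGene} and~\ref{lem:conn3}: the connection coefficients are pinned down by a four-term recurrence, so it suffices to check that the candidate $\tilde\kappa_{i,m}$ of \eqref{eq:tkappa} satisfies it. Substituting $\rho_i=\sum_m\kappa^b_{i,m}\hat P_m$ into the $\rho$-recurrence $\rho_{i+1}=(y-c^T_i)\rho_i-\lambda^T_i\rho_{i-1}$, and expanding $y\hat P_m=\hat P_{m+1}+c^{S_1}_m\hat P_m+\lambda^{S_1}_m\hat P_{m-1}$, we obtain as in Lemma~\ref{lem:connrec}
$$
\kappa^b_{i+1,m}=\kappa^b_{i,m-1}+(c^{S_1}_m-c^T_i)\kappa^b_{i,m}+\lambda^{S_1}_{m+1}\kappa^b_{i,m+1}-\lambda^T_i\kappa^b_{i-1,m},
$$
with $\kappa^b_{i,i}=1$, $\kappa^b_{i,m}=0$ for $m>i$ or $m<0$. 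Together with $\kappa^b_{0,0}=1$, this recurrence determines every $\kappa^b_{i,m}$.

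Next, check the base cases. We have $\kappa^b_{0,0}=1=\tilde\kappa_{0,0}$. For $i=1$, $\rho_1=y-3$ and $\hat P_1=y-5$, so $\kappa^b_{1,0}=2$; this agrees with $\tilde\kappa_{1,0}=4\binom{1/2}{1}=2$.

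Then insert $\tilde\kappa_{i,m}=(i!/m!)^2 4^{d}\binom{1/2}{d}$, $d=i-m$, and divide by $\tilde\kappa_{i,m}$. The neighbour ratios are
$$
\tilde\kappa_{i+1,m}/\tilde\kappa_{i,m}=4(i+1)^2\tfrac{1/2-d}{d+1},\qquad
\tilde\kappa_{i,m-1}/\tilde\kappa_{i,m}=4m^2\tfrac{1/2-d}{d+1},
$$
$$
\tilde\kappa_{i,m+1}/\tilde\kappa_{i,m}=\tfrac{d}{4(m+1)^2(3/2-d)},\qquad
\tilde\kappa_{i-1,m}/\tilde\kappa_{i,m}=\tfrac{d}{4i^2(3/2-d)}.
$$
Using $\lambda^{S_1}_{m+1}=((2m+2)(2m+3))^2$ and $\lambda^T_i=16i^4$, the recurrence reduces to the rational identity
$$
4\bigl[(i+1)^2-m^2\bigr]\frac{\tfrac12-d}{d+1}=c^{S_1}_m-c^T_i+\frac{d}{\tfrac32-d}\bigl[(2m+3)^2-4i^2\bigr],
$$
with $c^{S_1}_m=8m^2+12m+5$, $c^T_i=8i^2+8i+3$ and $i=m+d$. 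Clearing the denominators $(d+1)(3/2-d)$ turns this into a polynomial identity in $(m,d)$ of low degree, which is then checked by direct expansion. Since the candidate satisfies the recurrence and the initial conditions, it equals $\kappa^b$.

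The only step with real content is this polynomial identity, and I expect it to be the main obstacle; everything else is bookkeeping. The boundary cases $m=i$ (where $\kappa^b_{i,i+1}=0$ must be reproduced) and $m=0$ (where the $\kappa^b_{i,-1}$ term must drop out) need a separate check. Both are automatic here: $\binom{1/2}{-1}=0$ handles $m=i$, and the factor $m^2$ in $\tilde\kappa_{i,m-1}/\tilde\kappa_{i,m}$ kills the $\kappa^b_{i,-1}$ term at $m=0$.

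An alternative worth noting is structural. All three functionals are CDH$(\tfrac12,\tfrac12,c)$ with $c=0,\tfrac12,1$, so both expansions, $\mathcal S\to\mathcal T^*$ and $\mathcal T^*\to\mathcal S_1$, are one-parameter shifts $c\mapsto c+\tfrac12$ in the same family. One would then expect the same single-term connection coefficients, because the CDH connection formula depends on $c$ only through the increment. The explicit verification above is nonetheless the safer route.
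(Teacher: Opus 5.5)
Your proposal is correct and follows the paper's proof. Both arguments check that the closed form \eqref{eq:tkappa} satisfies the connection recurrence of Lemma~\ref{lem:connrec} for the pair $(\mathcal T^*,\mathcal S_1)$, with $\kappa^b_{0,0}=1$ and zero entries outside the triangle. Your reduced identity does hold: after $i=m+d$ the factor $d+1$ cancels on the left and $3-2d$ on the right, and both sides equal $4m-8md-4d^2-2d+2$. Your structural aside is also sound, and it is the continuous dual Hahn half-step observation the paper records parenthetically: in monic normalisation, with $a,b$ fixed, the connection coefficients depend on the two $c$-parameters only through their difference.
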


\begin{proof}
By the connection-coefficient recurrence (Lemma~\ref{lem:connrec}) for the pair
$(\mathcal T^*,\mathcal S_1)$,
$$
\kappa^b_{i+1,m}=\kappa^b_{i,m-1}+(c^{S_1}_m-c^T_i)\,\kappa^b_{i,m}
+\lambda^{S_1}_{m+1}\,\kappa^b_{i,m+1}-\lambda^T_i\,\kappa^b_{i-1,m},
\qquad \kappa^b_{0,0}=1,
$$
which determines $\kappa^b$ uniquely from $\kappa^b_{0,0}=1$. Substituting the
closed form \eqref{eq:tkappa} and the explicit coefficients
$c^T,\lambda^T,c^{S_1},\lambda^{S_1}$ above, and dividing through by
$\tilde\kappa_{i,m}$ (using $\binom{1/2}{d+1}/\binom{1/2}{d}=(\tfrac12-d)/(d+1)$,
$d=i-m$), the recurrence reduces to an elementary rational identity in $(i,m)$,
which one verifies directly --- exactly as in the proof of the third connection
formula (Lemma~\ref{lem:conn3}). Hence $\tilde\kappa$ satisfies the recurrence and
the base case, so $\kappa^b_{i,m}=\tilde\kappa_{i,m}$. (The reduction reflects
that $\mathcal T^*\!\to\mathcal S_1$ is the half-step $c\mapsto c+\tfrac12$ in the
continuous dual Hahn hierarchy, the same step as $\mathcal S\!\to\mathcal T^*$
governing $\tilde\kappa$.)
\end{proof}

\begin{proof}[Proof of Theorem~\ref{thm:shift}]
By Lemma~\ref{lem:samekappa} the determinants in \eqref{eq:redun} and
\eqref{eq:redsh} coincide; dividing and using \eqref{eq:norms0},
$$
\frac{\HH_n^{(1)}\big|_{0}}{\HH_n(f)\big|_{0}}
=\frac{\prod_{l=0}^{\bar n-1}h^T_l\ \prod_{m=0}^{\underline n-1}h^{S_1}_m}
       {\prod_{l=0}^{\bar n-1}h^S_l\ \prod_{m=0}^{\underline n-1}h^T_m}
=\frac{\prod_{l=0}^{\bar n-1}\bigl((2l)!!\bigr)^4\ \prod_{m=0}^{\underline n-1}\bigl((2m+1)!\bigr)^2}
       {\prod_{l=0}^{\bar n-1}\bigl((2l)!\bigr)^2\ \prod_{m=0}^{\underline n-1}\bigl((2m)!!\bigr)^4}.
$$

\emph{$n=2p$ even} ($\bar n=\underline n=p$): the $\mathcal T^*$-norm factors
cancel and, using $(2m+1)!/(2m)!=2m+1$,
$$
\frac{\HH_n^{(1)}|_0}{\HH_n|_0}=\prod_{m=0}^{p-1}\frac{\bigl((2m+1)!\bigr)^2}{\bigl((2m)!\bigr)^2}
=\prod_{m=0}^{p-1}(2m+1)^2=\bigl((2p-1)!!\bigr)^2=\bigl((n-1)!!\bigr)^2 .
$$

\emph{$n=2p+1$ odd} ($\bar n=p+1,\ \underline n=p$): the $\mathcal T^*$-norms leave
$((2p)!!)^4$, while
$$
\frac{\prod_{m=0}^{p-1}\bigl((2m+1)!\bigr)^2}{\prod_{l=0}^{p}\bigl((2l)!\bigr)^2}
=\frac{1}{\bigl((2p)!\bigr)^2}\prod_{m=0}^{p-1}(2m+1)^2
=\frac{\bigl((2p-1)!!\bigr)^2}{\bigl((2p)!\bigr)^2},
$$
so, with $(2p)!=(2p)!!\,(2p-1)!!$,
$$
\frac{\HH_n^{(1)}|_0}{\HH_n|_0}=\bigl((2p)!!\bigr)^4\cdot\frac{\bigl((2p-1)!!\bigr)^2}{\bigl((2p)!\bigr)^2}
=\bigl((2p)!!\bigr)^2=\bigl((n-1)!!\bigr)^2 .
$$

Thus $\HH_n^{(1)}/\HH_n(f)=((n-1)!!)^2$ at $s=0$; by Proposition~\ref{prop:const}
it equals $((n-1)!!)^2$ for all $s$, which with \eqref{eq:HHcos} is
Theorem~\ref{thm:shift}.
\end{proof}

\begin{Remark}[the continuous dual Hahn hierarchy]
The mechanism is that $\mathcal S,\mathcal T^*,\mathcal S_1$ are consecutive
members CDH$(\tfrac12,\tfrac12,c)$, $c=0,\tfrac12,1$, of one continuous dual Hahn
hierarchy; the connection coefficient for each half-step $c\mapsto c+\tfrac12$ is
the same binomial \eqref{eq:tkappa} (Lemma~\ref{lem:samekappa}), so the common
determinantal factor cancels and only the norms \eqref{eq:norms0} survive.
\end{Remark}

\section{The double shift of the secant-number family $(1+x)/\cos(x)^{s+1}$ at $s=1$}\label{sec:dblshift}

For the coefficient sequence $\mathbf a$ of $f=(1+x)/\cos(x)^{s+1}$, alongside
the single shift of Section~\ref{sec:cosshift} we form the
\emph{double shift}
$$
\HH_n^{(2)}:=\det\bigl(a_{2i+j+2}\bigr)_{0\le i,j<n} .
$$
For general $s$ the ratio $\HH_n^{(2)}/\HH_n$ is no longer constant in $s$
(Remark~\ref{rem:ds-mechanism}), but at $s=1$ it collapses to a pure product.

\begin{Theorem}\label{thm:dblshift}
For all $n\ge1$,
\begin{equation}\label{eq:ds-target}
\HH_n^{(2)}\big|_{s=1}=2^{n}\,(n!)^{2}\,\HH_n\big|_{s=1}
=2^{\,n+\binom n2}(n!)^3\bigl((n-1)!!\bigr)^2\prod_{k=1}^{n-2}(k!!)^6 .
\end{equation}
Equivalently, by the single-shift Theorem~\ref{thm:shift}
($\HH_n^{(1)}=((n-1)!!)^2\HH_n$) and $n!=n!!\,(n-1)!!$,
$$
\frac{\HH_n^{(2)}}{\HH_n^{(1)}}\bigg|_{s=1}=2^{\,n}\,(n!!)^{2}.
$$
\end{Theorem}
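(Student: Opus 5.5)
The plan is to evaluate $\HH_n^{(2)}$ directly at $s=1$ by the one-sided reduction $\M2$ (Lemma~\ref{lem:onesided}), using only the even functional, exactly as in the proof of Theorem~\ref{thm:allstar}. Put $c_m=a_{m+2}$, so that $\HH_n^{(2)}=\HH_n(\mathbf c)$. The even and odd parts of $\mathbf c$ are
$$
c_{2k}=\mathcal S[y^{k+1}]=\mathcal S_1[y^k],\qquad c_{2k+1}=\mathcal T^*[y^{k+1}].
$$
Here $\mathcal S_1=y\,\mathcal S$ is the Christoffel transform of Lemma~\ref{lem:kernel}. It is quasi-definite, with monic kernel polynomials $\hat P_m$ and norms $h^{S_1}_m=(2m+1)!\,(s+1)_{2m+1}$, which at $s=1$ equal $(2m+1)!\,(2m+2)!$. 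Lemma~\ref{lem:onesided} then gives
$$
\HH_n^{(2)}=(-1)^{\binom{\bar n}2}\Bigl(\prod_{l<\bar n}h^{S_1}_l\Bigr)\det\bigl(\mathcal T^*[y\,\hat P_{\bar n+r}\,y^m]\bigr)_{0\le r,m<\underline n}.
$$
The odd functional $y\mathcal T^*$ is never orthogonalised, so its non-classical nature for $s\ge1$ does no harm.

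Next I would rewrite the entries using $\mathcal T^*=\mathcal S\circ(2y\partial_y+1)$. For $i=\bar n+r>m$ this gives
$$
\mathcal T^*[y^{m+1}\hat P_i]=2\,\mathcal S[y^{m+2}\hat P_i']+(2m+3)\,\mathcal S_1[y^m\hat P_i],
$$
and the last term vanishes by orthogonality. The first term is $2\,\mathcal S_1[y^m\,y\hat P_i']=2\,\mathcal S_1[y^m\hat T_i]$ with $\hat T_i:=y\hat P_i'-i\hat P_i=\sum_{a<i}\hat\Delta_{i,a}\hat P_a$; the subtracted multiple of $\hat P_i$ costs nothing, again by orthogonality. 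Since $\mathcal S_1[y^m\hat P_a]$ vanishes for $a>m$ and equals $h^{S_1}_m$ for $a=m$, the matrix factors as $\bigl(\hat\Delta_{\bar n+r,a}\bigr)_{r,a<\underline n}$ times an $\underline n\times\underline n$ lower-triangular matrix with diagonal $h^{S_1}_m$. Unlike Remark~\ref{rem:cb}, the inner index now stays in range, so no Cauchy--Binet is needed. Hence
$$
\HH_n^{(2)}\big|_{s=1}=(-1)^{\binom{\bar n}2}\,2^{\underline n}\prod_{l<\bar n}h^{S_1}_l\prod_{m<\underline n}h^{S_1}_m\,\det\bigl(\hat\Delta_{\bar n+r,a}\bigr)_{0\le r,a<\underline n}.
$$

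The coefficients $\hat\Delta_{i,a}$ would be found as in Lemma~\ref{lem:deriv}. Differentiating the kernel recurrence with the data \eqref{eq:cS1}, \eqref{eq:lamS1} gives the inhomogeneous structure recurrence
$$
\hat T_{i+1}=(y-c^{S_1}_i)\hat T_i-\lambda^{S_1}_i\hat T_{i-1}+c^{S_1}_i\hat P_i+2\lambda^{S_1}_i\hat P_{i-1}.
$$
I would guess a closed form by computer at $s=1$. By analogy with \eqref{eq:Delta}, where the linear factor becomes $4i/(2(i-a)+1)$ at $s=1$, I expect a hypergeometric term of the shape
$$
\hat\Delta_{i,a}=\pm\,\rho(i)\,\sigma(a)\,\frac{\phi(i-a)}{2(i-a)+1}.
$$
I would then verify the guess by a rational identity in $(i,a)$, as in the earlier connection lemmas. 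An alternative shortcut is to relate $\hat T_i$ to the $T_i$ of Lemma~\ref{lem:deriv} through \eqref{eq:kerderiv}, which gives $\hat T_i$ in terms of $T_{i+1},T_i$ and $\hat P_i$.

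The main obstacle is the final determinant $\det(\hat\Delta_{\bar n+r,a})$. After pulling row and column factors, I expect a kernel in $i-a$ of the type $\binom{1/2}{i-a}$ or $\binom{-1/2}{i-a}$, possibly divided by $2(i-a)+1$. If it is a pure binomial, Lemma~\ref{lem:bindetSigma} and Lemma~\ref{lem:Pahalf} close it as a $\pm2^{-\binom n2}$-type factor. If not, I would first try to write $1/(2d+1)\binom{1/2}{d}$ as a binomial in a shifted parameter, and otherwise evaluate the determinant by Desnanot--Jacobi condensation \eqref{eq:condensation} in $\underline n$ with $\bar n$ as a free shift. Finally I would assemble the pieces and compare with Proposition~\ref{prop:sec1} and Theorem~\ref{thm:allstar} at $s=1$, where $\HH_n|_{s=1}=c_n\prod_{i<n}(i+1)!$. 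Matching the odd and even cases of $n$ separately, by the ratio check $n\to n+1$ used in Proposition~\ref{prop:sec1}, should yield $2^n(n!)^2$.
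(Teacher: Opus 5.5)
Everything up to the square determinant is correct and matches the paper: the one-sided reduction against $\mathcal S_1$, the identity $\mathcal T^*[y^{m+1}\hat P_i]=2\,\mathcal S_1[y^m\hat T_i]$, and your observation that the inner index now stays below $\underline n$ are Lemmas~\ref{lem:ds-red} and~\ref{lem:ds-structred}. Together they give
\[
\HH_n^{(2)}=(-1)^{\binom{\bar n}2}\,2^{\underline n}\prod_{l<\bar n}h^{S_1}_l\prod_{a<\underline n}h^{S_1}_a\,\det\bigl(\hat\Delta_{\bar n+r,a}\bigr)_{0\le r,a<\underline n}.
\]

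The gap is the last step, where the whole reason for $s=1$ lives, and there your expectation is wrong.

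The structure coefficients of $\mathcal S_1$ are given by \eqref{eq:ds-Delta}. They carry the factor $1/(2(i-a)-1)$ for every $s$, and at $s=1$ the linear factor degenerates to $4(i+1)/(2(i-a)+1)$. (For the record, in \eqref{eq:Delta} it becomes $(4i+2)/(2(i-a)+1)$, not $4i/(2(i-a)+1)$.) So your shape $\rho(i)\sigma(a)\phi(i-a)/(2(i-a)+1)$ is right, but with $\phi(d)\propto(-2)^d/(2d-1)$. After removing row and column factors, the kernel is therefore $1/\bigl(4(i-a)^2-1\bigr)$, as in \eqref{eq:ds-Delta1}.

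This is not a binomial kernel. Its term ratio in $d=i-a$ is $(2d-1)/(2d+3)$, which is never of the form $\lambda(c-d)/(d+1)$. Hence no row or column rescaling turns it into $\binom{c}{d}$, and Lemmas~\ref{lem:bindetSigma} and~\ref{lem:Pahalf} cannot close it. Rewriting $\binom{1/2}{d}/(2d+1)$ in a shifted parameter does not help either, since that is not the kernel.

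The missing idea is to recognise $\det\bigl(1/(4(\bar n+r-a)^2-1)\bigr)$ as a Cauchy double alternant with two consecutive denominators. Its entries are $\tfrac14\bigl((x_r+y_a)(x_r+y_a+1)\bigr)^{-1}$ with $x_r=\bar n+r-\tfrac12$ and $y_a=-a$, and it is evaluated in closed form in Lemma~\ref{lem:ds-cauchy}.

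Your final fallback, condensation in $\underline n$ with $\bar n$ free, would in fact work on this kernel. The Toeplitz family $D(M,N)=\det\bigl(1/(4(M+r-a)^2-1)\bigr)_{0\le r,a<N}$ is stable under \eqref{eq:condensation}: its corner minors are $D(M,N-1)$ twice and $D(M\pm1,N-1)$, and its central minor is $D(M,N-2)$. However, condensation only certifies a closed form you already have, and your proposal supplies none. As written, the argument stops at an unevaluated, and misidentified, determinant. Once the Cauchy evaluation is in hand, the final bookkeeping against Theorem~\ref{thm:allstar} at $s=1$ goes through as you describe.
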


The proof has three steps. A one-sided reduction~$\M2$ against the even
functional of the shifted sequence (\S\ref{ssec:ds-func}) leaves a residual
determinant $R_n$ built from the derivatives $\hat P_i'$; a \emph{structure
relation} then replaces $y\hat P_i'$ by $T_i=y\hat P_i'-i\hat P_i$ and turns
$R_n$ into a \emph{square} determinant of the structure coefficients
$\Delta_{i,a}$ of the classical functional $\mathcal S_1$
(\S\ref{ssec:ds-struct}); finally, at $s=1$ the closed form of $\Delta_{i,a}$
degenerates into a rank-one factor times a Cauchy kernel, which collapses the
determinant (\S\ref{ssec:ds-s1}).

Recall from \S\ref{sec:allstar} and \S\ref{sec:cosshift} the even and odd
moment functionals attached to $f$,
$$
\mathcal S[y^k]=a_{2k},\qquad \mathcal T^*[y^k]=a_{2k+1}=(2k+1)\,a_{2k},
$$
the classical secant data \eqref{eq:Sdata}--\eqref{eq:TP} for $\mathcal S$,
and the Christoffel transform $\mathcal S_1=y\,\mathcal S$
(Lemma~\ref{lem:kernel}), whose monic orthogonal polynomials $\hat P_m$ (the
kernel polynomials of $\mathcal S$) have three-term recurrence
$\hat P_{m+1}=(y-c^{S_1}_m)\hat P_m-\lambda^{S_1}_m\hat P_{m-1}$ with
\begin{equation}\label{eq:ds-S1data}
\begin{gathered}
c^{S_1}_m=(4m+3)s+8m^2+12m+5,\quad
\lambda^{S_1}_m=2m(2m+1)(s+2m)(s+2m+1),\\
h^{S_1}_m=(2m+1)!\,(s+1)_{2m+1}.
\end{gathered}
\end{equation}
Throughout $\bar n=\lceil n/2\rceil$, $\underline n=\lfloor n/2\rfloor$.

\subsection{The functionals of the doubly shifted sequence}\label{ssec:ds-func}
Put $d_m:=a_{m+2}$, so $\HH_n^{(2)}=\det(d_{2i+j})$. The even and odd parts of
$\mathbf d$ are
$$
d_{2k}=a_{2k+2}=\mathcal S_1[y^k],\qquad
d_{2k+1}=a_{2k+3}=(2k+3)\,\mathcal S_1[y^k]=:\mathcal T_1^*[y^k],
$$
where $\mathcal T_1^*$ is the Christoffel
transform $y\,\mathcal T^*$ of the odd functional. The two are linked by the
same first-order operator that links $\mathcal S$ and $\mathcal T^*$, shifted
by $2$:
\begin{equation}\label{eq:ds-op}
\mathcal T_1^*=\mathcal S_1\circ(2y\partial_y+3),
\qquad\text{since}\qquad
(2k+3)y^k=(2y\partial_y+3)y^k .
\end{equation}
The decisive structural difference from the single-shift case
(\S\ref{sec:cosshift}) is that here the \emph{even} functional is the
classical one --- $\mathcal S_1$, the Christoffel transform of $\mathcal S$,
with data \eqref{eq:ds-S1data} --- while the odd functional $\mathcal T_1^*$
is non-classical for $s\ge1$; in the single-shift case the parities were
reversed. Consequently the one-sided reduction~$\M2$ is performed against
$\mathcal S_1$.

\begin{Lemma}[one-sided reduction against $\mathcal S_1$]\label{lem:ds-red}
For all $n\ge1$,
\begin{equation}\label{eq:ds-reduction}
\HH_n^{(2)}=(-1)^{\binom{\bar n}2}\Bigl(\prod_{l=0}^{\bar n-1}h^{S_1}_l\Bigr)\,
R_n,\qquad
R_n:=\det\bigl(\mathcal T_1^*[\hat P_{\bar n+r}\,y^{m}]\bigr)_{0\le r,m\le\underline n-1},
\end{equation}
and for $i>m$,
\begin{equation}\label{eq:ds-entry}
\mathcal T_1^*[\hat P_i\,y^m]=2\,\mathcal S_1[y^{m+1}\hat P_i']
\qquad(\text{the term }(2m{+}3)\mathcal S_1[y^m\hat P_i]\text{ vanishes since }\deg y^m<i).
\end{equation}
\end{Lemma}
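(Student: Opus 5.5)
The plan is to derive \eqref{eq:ds-reduction} as a direct instance of the one-sided reduction (Lemma~\ref{lem:onesided}), applied to the doubly shifted sequence $\mathbf d$, $d_m=a_{m+2}$, and then to obtain \eqref{eq:ds-entry} from the operator identity \eqref{eq:ds-op}, exactly as in the proof of Lemma~\ref{lem:reduction}. First, $\HH_n^{(2)}=\det(d_{2i+j})=\HH_n(\mathbf d)$. The even part of $\mathbf d$ is $d_{2k}=\mathcal S_1[y^k]$ and the odd part is $d_{2k+1}=\mathcal T_1^*[y^k]$, as recorded in \S\ref{ssec:ds-func}. To apply Lemma~\ref{lem:onesided} we only need the even functional $\mathcal S_1$ to be quasi-definite. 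This is Lemma~\ref{lem:kernel}: its monic orthogonal polynomials are the kernel polynomials $\hat P_m$, with squared norms $h^{S_1}_m=(2m+1)!\,(s+1)_{2m+1}$, which are nonzero as polynomials in $s$. Since Lemma~\ref{lem:onesided} places no condition on the odd functional, we may take $\mathcal T=\mathcal T_1^*$ even though it is non-classical. The lemma then gives \eqref{eq:ds-reduction} verbatim, with $P$ replaced by $\hat P$, $h^S$ by $h^{S_1}$, and $\mathcal T$ by $\mathcal T_1^*$, including the sign $(-1)^{\binom{\bar n}2}$ coming from the column reordering.

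For \eqref{eq:ds-entry}, linearity and $(2k+3)y^k=(2y\partial_y+3)y^k$ give \eqref{eq:ds-op}. Apply it to $\hat P_i y^m$ and use $y\partial_y(\hat P_i y^m)=y^{m+1}\hat P_i'+m\,y^m\hat P_i$:
\[
\mathcal T_1^*[\hat P_i y^m]=2\,\mathcal S_1[y^{m+1}\hat P_i']+(2m+3)\,\mathcal S_1[y^m\hat P_i].
\]
When $i>m$, the last term vanishes by $\mathcal S_1$-orthogonality of $\hat P_i$ against polynomials of degree $<i$. In the block of \eqref{eq:ds-reduction} we have $i=\bar n+r\ge\bar n\ge\underline n>m$, so \eqref{eq:ds-entry} applies to every entry of $R_n$.

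I expect no real obstacle here. The only points needing care are the quasi-definiteness of $\mathcal S_1$ over $\QQ(s)$, which follows from the explicit norms in \eqref{eq:hS1}, and checking the index range $\bar n\ge\underline n>m$ so that orthogonality can be invoked. If one wants the statement at a specialised value of $s$, such as $s=1$, it is cleanest to prove the identity over $\QQ(s)$ and then specialise. Both sides are polynomial in $s$, and at $s=1$ all the $h^{S_1}_l$ are nonzero, so the specialisation is harmless.
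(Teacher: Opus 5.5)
Your proposal is correct and follows the paper's proof essentially verbatim: you apply the one-sided reduction (Lemma~\ref{lem:onesided}) to the doubly shifted sequence, with the quasi-definite even functional $\mathcal S_1$ (Lemma~\ref{lem:kernel}) and an arbitrary odd functional $\mathcal T_1^*$. You then obtain the entry formula from the operator identity \eqref{eq:ds-op} together with $\mathcal S_1$-orthogonality, valid throughout the block since $\bar n+r\ge\bar n>\underline n-1\ge m$.
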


\begin{proof}
Identity \eqref{eq:ds-reduction} is the general one-sided reduction
(Lemma~\ref{lem:onesided}) applied to $\mathbf d$ with even functional $\mathcal S_1$ and
odd functional $\mathcal T_1^*$: it orthogonalises only the $\bar n$ even
columns against the quasi-definite $\mathcal S_1$, leaving the $\underline n$
odd ($\mathcal T_1^*$) columns, with sign $(-1)^{\binom{\bar n}2}$ exactly as in
Lemma~\ref{lem:reduction}; no orthogonal-polynomial structure of
$\mathcal T_1^*$ is needed. Finally \eqref{eq:ds-op} gives
$\mathcal T_1^*[\hat P_iy^m]=2\mathcal S_1[y^{m+1}\hat P_i']+(2m+3)\mathcal S_1[y^m\hat P_i]$,
and the last term vanishes for $i>m$ by orthogonality, which holds throughout
the block since $\bar n+r\ge\bar n>\underline n-1\ge m$.
\end{proof}

\subsection{Reduction of $R_n$ to a structure determinant}\label{ssec:ds-struct}
Let $T_i:=y\hat P_i'-i\hat P_i$. Since $\hat P_i$ is monic of degree $i$, the
leading terms cancel and $\deg T_i\le i-1$; write
\begin{equation}\label{eq:ds-Tdef}
T_i=\sum_{a=0}^{i-1}\Delta_{i,a}\,\hat P_a .
\end{equation}

\begin{Lemma}[structure reduction]\label{lem:ds-structred}
For all $n\ge1$ and all $s$,
\begin{equation}\label{eq:ds-structred}
R_n=2^{\,\underline n}\Bigl(\prod_{a=0}^{\underline n-1}h^{S_1}_a\Bigr)\,
\det\bigl(\Delta_{\bar n+r,\,a}\bigr)_{0\le r,a\le\underline n-1}.
\end{equation}
\end{Lemma}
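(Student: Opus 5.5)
Starting from \eqref{eq:ds-reduction}, the plan is to rewrite each entry of $R_n$ through the structure polynomial $T_i$. This mirrors what the proof of Theorem~\ref{thm:allstar} did for $\mathcal S$.

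Fix $i=\bar n+r$ and $0\le m\le\underline n-1$, so that $i>m$. By \eqref{eq:ds-entry} the entry is $2\,\mathcal S_1[y^{m}\cdot y\hat P_i']$. Substituting $y\hat P_i'=T_i+i\hat P_i$, the term $i\,\mathcal S_1[y^m\hat P_i]$ vanishes by $\mathcal S_1$-orthogonality, since $m<i$. Hence the $(r,m)$ entry of $R_n$ is $2\,\mathcal S_1[y^mT_i]$. Expanding $T_i$ by \eqref{eq:ds-Tdef} gives
$$2\sum_{a=0}^{i-1}\Delta_{i,a}\,\mathcal S_1[y^m\hat P_a].$$
Orthogonality kills every term with $a>m$, so only $a\le m\le\underline n-1$ survives.

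The resulting matrix is therefore the product $2\,\Delta\cdot G$. Here $\Delta=(\Delta_{\bar n+r,a})_{0\le r,a\le\underline n-1}$ is square, and $G=(\mathcal S_1[\hat P_a y^m])_{0\le a,m\le\underline n-1}$. This is the point where the situation differs from Remark~\ref{rem:cb}: the inner index now stops at $\underline n-1$, with no overhang, because the extra factor $y$ was absorbed into $T_i$ rather than into $y^{m+1}$. Consequently the determinant factors directly and no Cauchy--Binet expansion is needed.

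It remains to evaluate $\det G$. The matrix $G$ is upper triangular, since its entry vanishes for $m<a$. Its diagonal entries are $\mathcal S_1[\hat P_a y^a]=\mathcal S_1[\hat P_a^2]=h^{S_1}_a$, because $y^a=\hat P_a+(\text{lower degree})$; equivalently one can cite Lemma~\ref{lem:family}. Thus $\det G=\prod_{a<\underline n}h^{S_1}_a$. Pulling the factor $2$ out of each of the $\underline n$ rows then gives \eqref{eq:ds-structred}.

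The only point needing care is index bookkeeping, and it is not a real obstacle. One must confirm that the orthogonality truncation $a\le m$ holds uniformly in $r$, which follows from $\bar n+r\ge\underline n>m$. One must also handle the degenerate cases $n=1$, where $\underline n=0$ and both sides reduce to the empty determinant equal to $1$, and $n=2$.
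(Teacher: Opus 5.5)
Your proposal is correct and follows the paper's proof essentially step for step. You substitute $y\hat P_i'=T_i+i\hat P_i$, truncate by orthogonality to $a\le m$, and factor the residual matrix as $2\,\Delta$ times a square matrix. The one cosmetic difference is that you keep the right factor as the upper-triangular matrix $G=(\mathcal S_1[\hat P_a y^m])$, whereas the paper writes it as $\mathrm{diag}(h^{S_1}_a)\,B^{\mathsf T}$ by expanding $y^m$ in the $\hat P$-basis; these are the same matrix.
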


\begin{proof}
Fix $i=\bar n+r$ and $m$ with $0\le r,m\le\underline n-1$, so $i>m$. By
\eqref{eq:ds-entry} and $y\hat P_i'=T_i+i\hat P_i$,
$$
\mathcal T_1^*[\hat P_iy^m]=2\mathcal S_1[y^{m}\,y\hat P_i']
=2\mathcal S_1[y^m T_i]+2i\,\mathcal S_1[y^m\hat P_i]
=2\mathcal S_1[y^m T_i],
$$
the last term vanishing because $m<i$. Expand $y^m=\sum_{b=0}^{m}\beta^{(m)}_b\hat P_b$
in the orthogonal basis (a lower-unitriangular change of basis,
$\beta^{(m)}_m=1$); then $\mathcal S_1[y^m\hat P_a]=\beta^{(m)}_a h^{S_1}_a$,
which is $0$ for $a>m$. Hence by \eqref{eq:ds-Tdef}
$$
\mathcal T_1^*[\hat P_iy^m]=2\sum_{a=0}^{i-1}\Delta_{i,a}\,\mathcal S_1[y^m\hat P_a]
=2\sum_{a=0}^{\underline n-1}\Delta_{i,a}\,h^{S_1}_a\,\beta^{(m)}_a .
$$
In matrix form the residual matrix is
$\bigl(\mathcal T_1^*[\hat P_{\bar n+r}y^m]\bigr)=2\,\Delta'\,\mathrm{diag}(h^{S_1}_0,\dots,h^{S_1}_{\underline n-1})\,B^{\mathsf T}$,
where $\Delta'_{r,a}=\Delta_{\bar n+r,a}$ and $B_{m,a}=\beta^{(m)}_a$ is
$\underline n\times\underline n$ lower-unitriangular, so $\det B=1$. Taking
determinants gives \eqref{eq:ds-structred}.
\end{proof}

The point of \eqref{eq:ds-structred} is that the rectangular Cauchy--Binet of
Remark~\ref{rem:cb} has become a \emph{square} $\underline n\times\underline n$
determinant: replacing the raw derivative $\hat P_i'$ by the structure form
$T_i$ lowers the summation range from $a\le m+1$ to $a\le m$, and the
power-expansion matrix $B$ drops out as unitriangular.

\begin{Lemma}[structure coefficients of $\mathcal S_1$]\label{lem:ds-Delta}
For $0\le a\le i-1$,
\begin{equation}\label{eq:ds-Delta}
\Delta_{i,a}=(-1)^{\,i-a-1}\,\frac{2^{\,i-a-1}}{2(i-a)-1}\,\frac{i!}{a!}\,
\frac{(2i+1)!!}{(2a+1)!!}\Bigl(s+\frac{2(i+a)+3}{2(i-a)+1}\Bigr),
\end{equation}
in particular $\deg_s\Delta_{i,a}=1$.
\end{Lemma}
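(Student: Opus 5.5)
The plan is to copy the proof of Lemma~\ref{lem:deriv}, with $\mathcal S$ replaced by its Christoffel transform $\mathcal S_1$. Differentiating the kernel recurrence $\hat P_{i+1}=(y-c^{S_1}_i)\hat P_i-\lambda^{S_1}_i\hat P_{i-1}$ gives
$\hat P_{i+1}'=(y-c^{S_1}_i)\hat P_i'+\hat P_i-\lambda^{S_1}_i\hat P_{i-1}'$.
Multiplying by $y$ and substituting $y\hat P_j'=T_j+j\hat P_j$ yields the inhomogeneous structure recurrence
$$
T_{i+1}=(y-c^{S_1}_i)T_i-\lambda^{S_1}_iT_{i-1}+c^{S_1}_i\hat P_i+2\lambda^{S_1}_i\hat P_{i-1}.
$$
Here the inhomogeneous terms come from $(i+1)\hat P_{i+1}-i(y-c^{S_1}_i)\hat P_i+(i-1)\lambda^{S_1}_i\hat P_{i-1}$ together with $y\hat P_i$. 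We then expand in the $\hat P$-basis using $y\hat P_a=\hat P_{a+1}+c^{S_1}_a\hat P_a+\lambda^{S_1}_a\hat P_{a-1}$. This gives the triangular recurrence
$$
\Delta_{i+1,a}=\Delta_{i,a-1}+(c^{S_1}_a-c^{S_1}_i)\Delta_{i,a}+\lambda^{S_1}_{a+1}\Delta_{i,a+1}-\lambda^{S_1}_i\Delta_{i-1,a}+c^{S_1}_i[a{=}i]+2\lambda^{S_1}_i[a{=}i{-}1],
$$
with initial data $T_0=0$ and $T_1=y-(y-c^{S_1}_0)=c^{S_1}_0=3s+5$. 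This recurrence determines every $\Delta_{i,a}$ uniquely, so it suffices to check that the candidate \eqref{eq:ds-Delta} satisfies it. For the base case, $i=1,a=0$ in \eqref{eq:ds-Delta} gives $1\cdot1\cdot3\,(s+5/3)=3s+5$, as required.

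The verification splits into three regimes, as in Lemma~\ref{lem:deriv}.

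\emph{Interior ($a\le i-2$).} The relation is homogeneous. Dividing by $\Delta_{i,a}$ and writing $d=i-a$, the neighbour ratios $\Delta_{i+1,a}/\Delta_{i,a}$, $\Delta_{i,a-1}/\Delta_{i,a}$, $\Delta_{i,a+1}/\Delta_{i,a}$ and $\Delta_{i-1,a}/\Delta_{i,a}$ are explicit rational functions. Their prefactors come from $(i+1)$, $(2i+3)$, $a$, $(2a+1)$ and $2^{\pm1}$, together with the ratio of the linear factors $s+\frac{2(i+a)+3}{2d+1}$ at the shifted indices. Clearing denominators, the relation becomes a polynomial identity in $(a,d,s)$ of low degree, checked by direct expansion using the data \eqref{eq:ds-S1data}.

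\emph{Boundaries ($a=i-1$ and $a=i$).} Some terms are absent, since $\Delta_{i,i}=0$ and $\Delta_{i-1,i-1}=0$, and the inhomogeneous terms $2\lambda^{S_1}_i$, respectively $c^{S_1}_i$, enter. At $a=i$ the relation reads $\Delta_{i+1,i}=\Delta_{i,i-1}+c^{S_1}_i$. With $\Delta_{i,i-1}=(2i+1)\,i\,(s+(4i+1)/3)$ this is a one-line check.

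A cheaper route, if the bookkeeping gets heavy, is to exploit $\mathcal S_1$ being the shift $j\mapsto j+1$ of the $S$-fraction data $u_j=j(j+s)$ (Lemma~\ref{lem:christoffel}(ii)). Comparing \eqref{eq:ds-S1data} with \eqref{eq:Sdata} shows the $\mathcal S_1$ data arise from the $\mathcal S$ data by the formal substitutions $2i\mapsto2i+1$ in the factorial-type factors. One could then try to transport the identity already verified for \eqref{eq:Delta}, noting that $(2i-1)!!\to(2i+1)!!$ and $2(i+a)+1\to2(i+a)+3$ match exactly. I would nevertheless present the direct polynomial check, since the substitution does not act uniformly on the $i!/a!$ factor.

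The main obstacle is the interior identity: it is routine but error-prone. I would verify it symbolically, and as a sanity check confirm small cases $i\le3$ by computing $\hat P_i$ explicitly. Finally, the degree claim $\deg_s\Delta_{i,a}=1$ is immediate from \eqref{eq:ds-Delta}, because the prefactor is a nonzero constant in $s$.
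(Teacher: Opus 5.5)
Your proposal is correct and is essentially the paper's proof: derive the inhomogeneous structure recurrence for $T_i$ from the differentiated three-term recurrence of $\mathcal S_1$, expand it in the $\hat P$-basis, and note that it determines all $\Delta_{i,a}$ from $T_0=0$ and $T_1=3s+5$. It then remains to verify that the candidate \eqref{eq:ds-Delta} satisfies this recurrence as a rational-function identity, homogeneously for $a\le i-2$ and with the inhomogeneous terms at $a=i-1$ and $a=i$. The paper likewise leaves that final expansion as routine, so your level of detail matches it.
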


\begin{proof}
This is the structure relation of Lemma~\ref{lem:deriv}
transported from $\mathcal S$ to its Christoffel transform $\mathcal S_1$.
Differentiating the recurrence
$\hat P_{i+1}=(y-c^{S_1}_i)\hat P_i-\lambda^{S_1}_i\hat P_{i-1}$,
multiplying by $y$ and using $y\hat P_j'=T_j+j\hat P_j$ together with
$(y-c^{S_1}_i)\hat P_i=\hat P_{i+1}+\lambda^{S_1}_i\hat P_{i-1}$ yields the same
structure recurrence as in Lemma~\ref{lem:deriv},
$$
T_{i+1}=(y-c^{S_1}_i)T_i-\lambda^{S_1}_iT_{i-1}+c^{S_1}_i\hat P_i+2\lambda^{S_1}_i\hat P_{i-1},
$$
which in the $\hat P$-basis reads
$$
\Delta_{i+1,a}=\Delta_{i,a-1}+(c^{S_1}_a-c^{S_1}_i)\Delta_{i,a}
+\lambda^{S_1}_{a+1}\Delta_{i,a+1}-\lambda^{S_1}_i\Delta_{i-1,a}
+c^{S_1}_i\,[a{=}i]+2\lambda^{S_1}_i\,[a{=}i{-}1],
$$
with $T_0=0$ and $T_1=y\hat P_1'-\hat P_1=c^{S_1}_0=3s+5$. The expression
\eqref{eq:ds-Delta} satisfies the base case $\Delta_{1,0}=3s+5$ and every
instance of this recurrence --- a direct rational-function identity in
$(i,a,s)$ after inserting \eqref{eq:ds-S1data}, exactly as in
Lemma~\ref{lem:deriv}; the only change from the secant case is
$(2i-1)!!/(2a-1)!!\mapsto(2i+1)!!/(2a+1)!!$ and the root
$\tfrac{2(i+a)+1}{2(i-a)+1}\mapsto\tfrac{2(i+a)+3}{2(i-a)+1}$, reflecting the
index shift $\mathcal S\mapsto\mathcal S_1$. Since the recurrence determines all
$\Delta_{i,a}$ uniquely, \eqref{eq:ds-Delta} holds.
\end{proof}

\subsection{The value at $s=1$}\label{ssec:ds-s1}
At $s=1$ the linear factor in \eqref{eq:ds-Delta} simplifies: for $i=\bar n+r$,
$$
1+\frac{2(i+a)+3}{2(i-a)+1}=\frac{2(i-a)+1+2(i+a)+3}{2(i-a)+1}=\frac{4(i+1)}{2(i-a)+1},
$$
so, using $\dfrac{1}{2(i-a)-1}\cdot\dfrac{1}{2(i-a)+1}=\dfrac{1}{4(i-a)^2-1}$ and
$i!\cdot 4(i+1)=4\,(i+1)!$,
\begin{equation}\label{eq:ds-Delta1}
\Delta_{i,a}\big|_{s=1}
=\underbrace{(-1)^{i}\,2^{\,i+2}\,(i+1)!\,(2i+1)!!}_{\textstyle \alpha_i}\cdot
\underbrace{(-1)^{a+1}\,\frac{2^{-a}}{a!\,(2a+1)!!}}_{\textstyle \beta_a}\cdot
2^{-1}\cdot\frac{1}{4(i-a)^2-1}.
\end{equation}
Pulling the row factor $\alpha_{\bar n+r}$ from each row $r$, the column factor
$\beta_a$ from each column $a$, and the constant $2^{-1}$ from each of the
$\underline n$ rows,
\begin{equation}\label{eq:ds-detDelta1}
\det\bigl(\Delta_{\bar n+r,a}\bigr)\big|_{s=1}
=2^{-\underline n}\Bigl(\prod_{r=0}^{\underline n-1}\alpha_{\bar n+r}\Bigr)
\Bigl(\prod_{a=0}^{\underline n-1}\beta_a\Bigr)\,
\det\Bigl(\frac{1}{4(\bar n+r-a)^2-1}\Bigr)_{0\le r,a\le\underline n-1}.
\end{equation}

\begin{Lemma}[Cauchy determinant]\label{lem:ds-cauchy}
For integers $M\ge1$ and $N\ge0$,
\begin{equation}\label{eq:ds-cauchy}
\det\Bigl(\frac{1}{4(M+r-a)^2-1}\Bigr)_{0\le r,a\le N-1}
=(-1)^{\binom N2}\,4^{-N}\,
\frac{\displaystyle\prod_{k=0}^{N-1}k!\ \prod_{k=1}^{N}k!}
     {\displaystyle\prod_{k=1}^{N}\Bigl(M^2-\tfrac{(2k-1)^2}{4}\Bigr)^{N+1-k}} .
\end{equation}
\end{Lemma}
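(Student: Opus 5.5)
We plan to reduce the matrix to a classical Cauchy determinant $\det\bigl(1/(x_r-y_a)\bigr)$ by partial fractions. Since $4(M+r-a)^2-1=(2(M+r-a)-1)(2(M+r-a)+1)$, we have
$$
\frac{1}{4(M+r-a)^2-1}=\frac12\Bigl(\frac{1}{2(M+r-a)-1}-\frac{1}{2(M+r-a)+1}\Bigr).
$$
This is a difference of two Cauchy kernels, so the determinant is not directly a single Cauchy determinant. The cleaner route is to note that the entry is a rational function of $u=M+r$ and $v=a$ of the form $1/((x_r-y_a)(x_r'-y_a))$, which is not of pure Cauchy type either. We therefore plan instead to treat the determinant as a rational function of $M$ and use the divisor principle (Observation~\ref{obs:divisor}) in the variable $M$.

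Clear denominators by multiplying row $r$ by $\prod_{a=0}^{N-1}\bigl(4(M+r-a)^2-1\bigr)$. The determinant times $\prod_{r,a}(4(M+r-a)^2-1)$ is then a polynomial $F(M)$. Its degree can be bounded row by row, each row contributing $2(N-1)$, so $\deg F\le 2N(N-1)$. Next locate the zeros: the denominator factors $(2(M+r-a)\mp1)$ for fixed $d=r-a$ occur $N-|d|$ times. The target formula has denominator $\prod_k(M^2-(2k-1)^2/4)^{N+1-k}$, i.e.\ the factor $(M\pm(2k-1)/2)$ with multiplicity $N+1-k$. So most cancellation must be shown via vanishing of $F$.

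The key step is that at $M=\pm(2k-1)/2+j$ for suitable integers, two rows of the cleared matrix become proportional or a rank drop occurs. Rather than this, the most efficient concrete route is Desnanot--Jacobi condensation (\eqref{eq:condensation}), as the paper does elsewhere. Denote the determinant by $C_N(M)$. The four corner minors are $C_{N-1}(M)$, $C_{N-1}(M+1)$, $C_{N-1}(M-1)$ (deleting first row and last column shifts $M$ by $+1$; deleting last row and first column shifts it by $-1$), and the central minor is $C_{N-2}(M)$. Hence
$$
C_N(M)\,C_{N-2}(M)=C_{N-1}(M)^2-C_{N-1}(M+1)\,C_{N-1}(M-1).
$$
With $C_0=1$ and $C_1(M)=1/(4M^2-1)$ (which matches the formula: $4^{-1}/(M^2-1/4)$), it remains to verify that the right-hand side of \eqref{eq:ds-cauchy} satisfies this recurrence.

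Writing $G_N(M)$ for the claimed product, compute the ratios $G_{N-1}(M+1)G_{N-1}(M-1)/G_{N-1}(M)^2$ and $G_N G_{N-2}/G_{N-1}^2$; both are explicit rational functions built from the factors $(M\pm\frac{2k-1}{2})$. The recurrence then reduces to a single rational identity, of the form $1-\rho_1(M)=\rho_2(M)$, with short telescoped products. I expect this verification to be the main obstacle: the shift $M\mapsto M\pm1$ changes multiplicities of the linear factors $(M+\frac{2k-1}2+ j)$, and one must carefully telescope them. I would first tabulate exponents of each factor $(2M+2m+1)$ in $G_{N-1}(M\pm1)$ and $G_{N-1}(M)$, reduce to a two-term identity, and check it. Small cases $N=2,3$ serve as sanity checks of sign $(-1)^{\binom N2}$ and the power $4^{-N}$. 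The condensation also requires nonvanishing of $C_{N-2}(M)$, which holds generically in $M$; the identity then extends to all integer $M\ge1$ by rationality.
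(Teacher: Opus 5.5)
Your final route is correct, and it differs from the paper's. The paper rewrites the entry as $\tfrac14\bigl((x_r+y_a)(x_r+y_a+1)\bigr)^{-1}$ with $x_r=M+r-\tfrac12$, $y_a=-a$, and quotes the evaluation of this two-denominator Cauchy-type alternant from the determinant calculus. You instead use that the matrix is Toeplitz, $C_N(M)=\det\bigl(\phi(M+r-a)\bigr)$ with $\phi(m)=1/(4m^2-1)$, so the family is closed under \eqref{eq:condensation}.

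The following are all right:
\begin{itemize}
\item your minors: $C_{N-1}(M)$ twice, $C_{N-1}(M\pm1)$ from the off-diagonal corners, $C_{N-2}(M)$ central;
\item your base cases $C_0=1$, $C_1=1/(4M^2-1)$;
\item your choice to work in $\QQ(M)$, which is needed because the recurrence calls $C_{N-1}(M-1)$ outside $M\ge1$. Specialising back is harmless, since neither side has a pole at an integer $M$.
\end{itemize}

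The step you flag as the main obstacle is short. In $x=2M$ the claimed value is
\[
G_N=\frac{(-4)^{\binom N2}\prod_{k=0}^{N-1}k!\,\prod_{k=1}^{N}k!}{\prod_{k=1}^{N}\bigl(x^2-(2k-1)^2\bigr)^{N+1-k}} .
\]
The exponent of each factor $x+j$ ($j$ odd) is a tent in $j$ and a ramp in $N$. Hence every second difference vanishes except at $j=\pm1$ and $j=\pm(2N-1)$, and one gets
\[
\frac{G_{N-1}(M+1)\,G_{N-1}(M-1)}{G_{N-1}(M)^2}=\frac{4M^2-1}{4M^2-(2N-1)^2},
\qquad
\frac{G_N(M)\,G_{N-2}(M)}{G_{N-1}(M)^2}=\frac{-4N(N-1)}{4M^2-(2N-1)^2}.
\]
Here the constant $-4N(N-1)$ is $(-4)\cdot\frac{(N-1)!}{(N-2)!}\cdot\frac{N!}{(N-1)!}$. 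The two ratios sum to $1$ because $(2N-1)^2-1=4N(N-1)$. Also $G_{N-2}\neq0$ is visible from its product form, so the induction closes. You should discard the partial-fraction and divisor-principle false starts of your first two paragraphs.

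As for what each route buys: the paper's argument is a one-line citation. The alternant viewpoint is also more flexible in the row nodes: with $y_a=-a$ every row clears to polynomials of degree $N-1$ in $x_r$, so the Vandermonde reduction $\M1$ evaluates the determinant for arbitrary $x_r$. Some progression structure is indispensable, though: for arbitrary $x_r,y_a$ the two-denominator alternant is not a product already at $N=2$. Your condensation needs the full Toeplitz structure, so that all five minors stay in the one-parameter family $C_\bullet(\cdot)$. In exchange it is self-contained, uses only \eqref{eq:condensation}, and is the same finishing step the paper uses in Theorems~\ref{thm:xsin-catalan} and~\ref{thm:bessel-det}.
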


\begin{proof}
The entry is $\frac14\big(M+r-a-\tfrac12\big)^{-1}\big(M+r-a+\tfrac12\big)^{-1}$,
so the matrix is $\tfrac14\bigl(\tfrac{1}{(x_r+y_a)(x_r+y_a+1)}\bigr)$ with
$x_r=M+r-\tfrac12$ and $y_a=-a$. This is the Cauchy double alternant with two
consecutive denominators \cite{Krattenthaler1998}; evaluating it and
substituting the arithmetic progressions $x_r,y_a$ (whose pairwise differences
are $\prod_{r<r'}(r-r')=\pm\prod k!$ and likewise for $y$) gives
\eqref{eq:ds-cauchy}.
\end{proof}

\begin{proof}[Proof of Theorem~\ref{thm:dblshift}]
Combining Lemmas~\ref{lem:ds-red}, \ref{lem:ds-structred}, \ref{lem:ds-Delta}
and \eqref{eq:ds-detDelta1}--\eqref{eq:ds-cauchy} expresses $\HH_n^{(2)}|_{s=1}$
as an explicit product of factorials and double factorials: with
$M=\bar n$, $N=\underline n$ and $h^{S_1}_l|_{s=1}=(2l+1)!\,(2l+2)!$,
$$
\HH_n^{(2)}\big|_{s=1}=(-1)^{\binom{\bar n}2}
\Bigl(\prod_{l=0}^{\bar n-1}(2l+1)!\,(2l+2)!\Bigr)\,2^{\underline n}
\Bigl(\prod_{a=0}^{\underline n-1}(2a+1)!\,(2a+2)!\Bigr)
\det\bigl(\Delta_{\bar n+r,a}\bigr)\big|_{s=1},
$$
the last factor given by \eqref{eq:ds-detDelta1}--\eqref{eq:ds-cauchy}. On the
other side of \eqref{eq:ds-target}, Theorem~\ref{thm:allstar} gives
$\HH_n=c_n\prod_{i=1}^{n-1}(s+1)_i$. Since
$\prod_{i=1}^{n-1}(2)_i=\prod_{i=1}^{n-1}(i+1)!=n!\prod_{i=1}^{n-1}i!$ and
$$
c_n=2^{\binom n2}\bigl((n-1)!!\bigr)^2\prod_{k=1}^{n-2}(k!!)^6
\Big/\prod_{i=1}^{n-1}i!,
$$
setting $s=1$ gives
$$
\HH_n\big|_{s=1}=2^{\binom n2}\,n!\,\bigl((n-1)!!\bigr)^2\prod_{k=1}^{n-2}(k!!)^6 .
$$
Both sides of \eqref{eq:ds-target} are thus explicit products; their equality
is an elementary identity in $n$, verified at $n=1$ and propagated by the
ratio $n\mapsto n+1$ (the two parities treated separately, as in
Proposition~\ref{prop:sec1}). Hence \eqref{eq:ds-target} holds for all $n$.
\end{proof}

\begin{Remark}[why $s=1$ is special]\label{rem:ds-mechanism}
The mechanism contrasts sharply with the single-shift case. There the ratio
$\HH_n^{(1)}/\HH_n$ is independent of $s$ (Proposition~\ref{prop:const}), and
its value is read off at $s=0$, where both functionals are classical and the
connection determinants cancel. Here the ratio $\HH_n^{(2)}/\HH_n$ genuinely
depends on $s$ --- beyond linear factors $(s+j)$ it carries a factor $Q_n(s)$
of degree $\underline n$, irreducible over $\QQ$ in the computed range
(e.g.\ $Q_4=s^2+\tfrac{36}7s+\tfrac{169}{35}$,
$Q_5=s^2+\tfrac{44}9s+\tfrac{269}{63}$) --- and $\mathcal T_1^*$ remains
non-classical at $s=1$, so no cancellation of connection determinants is
available. What saves the evaluation is that the \emph{even} functional
$\mathcal S_1$ is classical for \emph{all} $s$, so its structure coefficients
$\Delta_{i,a}$ have the closed form \eqref{eq:ds-Delta}; and at the special
value $s=1$ the linear factor of $\Delta_{i,a}$ degenerates into a rank-one
(row$\times$column) factor times the Cauchy kernel $1/(4(i-a)^2-1)$, which
collapses the determinant.
\end{Remark}

\section{A rank-one perturbation of the Euler number family:
$(\sin x+1)/\cos^2x+s\sin x$}\label{sec:runkone}

We evaluate the dilated Hankel determinant of a one-parameter, rank-one
perturbation of the shifted Euler sequence $a_n=E_{n+1}$, whose
exponential generating function is $(\sin x+1)/\cos^2x$ and whose dilated
determinant is the $s=0$ value of Proposition~\ref{prop:shift}, in the
product form
$$
\HH_n^{\mathrm E}:=\prod_{k=1}^{n-1}(k!)^2\,(2k+1)!! .
$$
We consider the family
$$
f_s(x)=\frac{\sin x+1}{\cos^2x}+s\,\sin x\qquad(s\in\mathbb C).
$$
Its most striking member is $s=-1$: there
$\sin x\,(1-\sin^2x)/\cos^2x=\sin x$ collapses the perturbation, so that
$f_{-1}(x)=(\sin^3x+1)/\cos^2x$, and the determinant looks irregular, carrying
``sporadic'' prime factors $29,37,23$ for $n=8,9,10$. They are not sporadic.
Since
$$
29=\binom 82+1,\quad 37=\binom 92+1,\quad 46=\binom{10}2+1=2\cdot 23,
$$
the apparent prime $23$ is merely half of $\binom{10}2+1$; in fact the whole
family has the closed form below, affine in $s$.

\begin{Proposition}\label{prop:sin3}
For every $s\in\mathbb C$ and all $n\ge 1$, the family
$f_s(x)=\dfrac{\sin x+1}{\cos^2x}+s\,\sin x$ has
$$
\HH_n(f_s)=\Bigl(1-s\tbinom n2\Bigr)\prod_{k=1}^{n-1}(k!)^2\,(2k+1)!!
=\Bigl(1-s\tbinom n2\Bigr)\,\HH_n^{\mathrm E}.
$$
In particular $s=0$ recovers Proposition~\ref{prop:shift}, while $s=-1$ gives,
for $f_{-1}(x)=(\sin^3x+1)/\cos^2x$,
$$
\HH_n(f_{-1})=\Bigl(\binom n2+1\Bigr)\,\HH_n^{\mathrm E}.
$$
\end{Proposition}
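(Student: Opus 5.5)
The plan is the rank-one/matrix-determinant-lemma method $\M6$ announced in Remark~\ref{rem:rankone}.

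The coefficients of $\sin x$ are $\sigma_m=\tfrac{1}{2i}(i^m-(-i)^m)$. Hence the perturbation contributes to the dilated matrix the entries
\[
\sigma_{2i+j}=(-1)^i\sigma_j ,
\]
and the matrix of $f_s$ is
\[
A+s\,u v^{\mathsf T},\qquad A=(E_{2i+j+1})_{0\le i,j<n},\quad u_i=(-1)^i,\quad v_j=\sigma_j ,
\]
with $v=(0,1,0,-1,0,1,\dots)$. Since $\det A=\HH_n^{\mathrm E}\ne0$ by Proposition~\ref{prop:shift} at $s=0$, the matrix-determinant lemma gives
\[
\HH_n(f_s)=\bigl(1+s\,v^{\mathsf T}A^{-1}u\bigr)\HH_n^{\mathrm E}.
\]
Everything therefore reduces to the single scalar identity
\[
v^{\mathsf T}A^{-1}u=-\tbinom n2 .
\]
This step is the main obstacle.

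To evaluate the scalar, I would write $v^{\mathsf T}A^{-1}u=-\det\begin{pmatrix}A&u\\ v^{\mathsf T}&0\end{pmatrix}/\det A$, i.e.\ as a bordered determinant. The bordered determinant is in turn the $s$-derivative at $0$ of $\HH_n(f_s)$, so it suffices to show that
\[
\frac{d}{ds}\HH_n(f_s)\Big|_{s=0}=-\tbinom n2\,\HH_n^{\mathrm E}.
\]
My first attempt is to stay inside the biorthogonal framework. The even part of $f_s$ is unchanged; the perturbation only shifts the odd moments $a_{2k+1}=E_{2k+2}+s(-1)^k$. In functional language, $\mathcal S$ (odd Euler numbers, the tangent functional) is unchanged, and $\mathcal T$ becomes $\mathcal T+s\,\delta_{-1}$, a point mass at $y=-1$.

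By Lemma~\ref{lem:onesided}, $\HH_n(f_s)$ is the fixed factor $\prod h^S_l$ times
\[
\det\bigl(\mathcal T[P_{\bar n+r}y^m]+s\,P_{\bar n+r}(-1)(-1)^m\bigr),
\]
which is again a rank-one perturbation. Its coefficient is
\[
\sum_{r,m}P_{\bar n+r}(-1)(-1)^m\,(\text{cofactor}_{r,m}).
\]
Equivalently, in the $Q$-basis with the connection matrix $\kappa$ of Lemma~\ref{lem:mixedconn} at $s=0$ (binomials $\binom{1/2}{d}$, with $P,Q$ the tangent and secant-Christoffel families), the coefficient is
\[
\bigl(Q_m(-1)\bigr)_m^{\mathsf T}\,\operatorname{diag}(h^T)^{-1}\,\kappa^{-1}\,\bigl(P_{\bar n+r}(-1)\bigr)_r .
\]
The values $P_i(-1)$ and $Q_m(-1)$ at $y=-1$ should be explicit: $y=-1$ corresponds to $z=\pm i$, i.e.\ to the generating function $\cos$. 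I would first compute them by the three-term recurrences and guess closed forms, expecting products such as $(-1)^i(2i)!$ by analogy with Lemma~\ref{lem:percol}, $\mathcal T^*[\hat P_a]=(-1)^a(2a)!$. The inverse of the binomial Toeplitz-type matrix $\binom{1/2}{\bar n+r-m}$ is available from dual Jacobi--Trudi (hook cofactors) or from the inverse series $\binom{-1/2}{\cdot}$. Summing the resulting double sum to $-\binom n2$ is the computational core, and I expect it to telescope.

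As a fallback, I would prove the scalar identity by induction on $n$. Let $x=A^{-1}u$, and guess the explicit solution vector $x$ from computed data (its entries ought to be simple ratios). Then verify $Ax=u$ directly using the Euler-number recurrences: $E_{m+1}$ arises as the coefficients of $\sec^2+\tan\sec$, which satisfy the linear differential relation $f''=\dots$. Finally check $v^{\mathsf T}x=\sum_j\sigma_jx_j=-\binom n2$. Either route reduces the proposition to $\HH_n(f_s)=(1-s\binom n2)\HH_n^{\mathrm E}$. Setting $s=-1$ then yields the stated value $\bigl(\binom n2+1\bigr)\HH_n^{\mathrm E}$.
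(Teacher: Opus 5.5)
Your reduction is the paper's own, up to the scalar identity. You use the rank-one structure $\sigma_{2i+j}=(-1)^i\sigma_j$ and the matrix determinant lemma. You then transport $v^{\mathsf T}A^{-1}u$ through the $\M2$ reduction behind Proposition~\ref{prop:shift}: tangent polynomials on the rows, shifted-secant polynomials on the odd columns, connection kernel $\binom{1/2}{i-l}$. This turns the scalar into a bilinear form in the inverse of the $\underline n\times\underline n$ block $\bigl(\binom{1/2}{\bar n+r-m}\bigr)$, up to row and column factors.

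\textbf{The gap.} The identity $v^{\mathsf T}A^{-1}u=-\binom n2$ \emph{is} the proposition, and you give no argument for it. ``I expect it to telescope'' and ``guess the solution vector'' are placeholders, and the tools you name would not supply it.
\begin{itemize}
\item The inverse series $\binom{-1/2}{\cdot}$ inverts only the unshifted lower-triangular Toeplitz matrix $\bigl(\binom{1/2}{r-m}\bigr)$. Your block has offset $\bar n\ge1$, is a full matrix, and its inverse cannot be read off from $(1+x)^{-1/2}$.
\item Its cofactors are in general skew Schur functions, which the hook-content formula does not evaluate. Already for $n=6$ one cofactor is $\binom a3^2-a\binom a5$, which has the non-content root $a=16/7$.
\item The values at $y=-1$ come out by induction on the recurrences (Lemma~\ref{lem:evalminus1}): $r_m(-1)=(-1)^m(2m-1)!!\,(2m+1)!!$ for the tangent polynomials and $\hat P^{(1)}_l(-1)=(-1)^l(2l+1)!$ for the shifted-secant ones.
\end{itemize}

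\textbf{The missing idea.} Divide row $i=\bar n+p$ by $(2i+1)!$ and put $\tilde y_l=(2l+1)!\,y_{2l+1}$. The right-hand side becomes $r_i(-1)/(2i+1)!=\binom{-1/2}{i}$, and the target becomes $\sum_l(-1)^l\tilde y_l$. Now package the unknowns as a polynomial $Y(x)=\sum_{l<\underline n}\tilde y_lx^l$. The system then reads $[x^k]\bigl((1+x)^{1/2}Y(x)\bigr)=[x^k](1+x)^{-1/2}$ for $\bar n\le k\le n-1$, a Pad\'e-type condition, and the target is simply $Y(-1)$.
\begin{itemize}
\item The paper exhibits the solution $Y(x)=-\sum_{j=1}^{\underline n}\binom n{2j}(1+x)^{j-1}$.
\item With $t=\sqrt{1+x}$ one has $(1+x)^{1/2}Y-(1+x)^{-1/2}=-\bigl((1+t)^n+(1-t)^n\bigr)/(2t)$. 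Its $x^k$-coefficients vanish for $\bar n\le k\le n-1$, by a residue computation at $t=1$.
\item The solution is unique because the block is invertible ($\det K=\HH_n^{\mathrm E}\ne0$).
\item Finally $Y(-1)=-\binom n2$, since only the term $j=1$ survives (Lemma~\ref{lem:sin3sys}).
\end{itemize}
Without this, or some other explicit solution of the shifted system, your argument only restates the proposition in an equivalent form. Your fallback is likewise unexecuted: guessing $A^{-1}u$ in the raw Euler basis and checking it with Euler-number differential relations would need identities you do not state.
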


The closed form follows from a {\em rank-one} reduction ($\M6$, via the matrix
determinant lemma, Lemma~\ref{lem:rank1} below --- not the divisor method
$\M4$, cf.\ Remark~\ref{rem:rankone}) to Proposition~\ref{prop:shift}: the
perturbation multiplies the determinant by the scalar $1-s\binom n2$, so that
the entire content of the proposition reduces to the value of a single
bilinear form, \eqref{eq:sin3scal} below --- \emph{independent of $s$} --- which
is then evaluated by transporting it through the same $\M2$ biorthogonal
reduction used for Proposition~\ref{prop:shift}.

\subsection{The rank-one structure}

\begin{Lemma}[rank-one structure]\label{lem:rank1}
Write $a_n=E_{n+1}$ for the shifted Euler sequence,
$A=(a_{2i+j})_{0\le i,j\le n-1}$, and let $\sigma_n$ be the coefficient
sequence of $\sin x$, i.e. $\sigma_{2k}=0$, $\sigma_{2k+1}=(-1)^k$.
Then the sequence of $f_s$ is $\tilde a_n=a_n+s\,\sigma_n$, and
$$
\HH_n(f_s)=\det\bigl(A+s\,u\,w^{\!\top}\bigr)
=\HH_n^{\mathrm E}\bigl(1+s\,w^{\!\top}A^{-1}u\bigr),
\qquad
u_i=(-1)^i,\quad w_j=\sigma_j .
$$
\end{Lemma}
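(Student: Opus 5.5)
This is immediate from three observations, taken in order.

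First, the sequence. The exponential generating function of $a_n=E_{n+1}$ is the derivative of $\tan x+\sec x$, namely $\sec^2x+\sec x\tan x=(1+\sin x)/\cos^2x$. The Maclaurin coefficients of $\sin x$ are $\sigma_n$, with $\sigma_{2k}=0$ and $\sigma_{2k+1}=(-1)^k$. Extracting $n![x^n]$ is linear, so $f_s$ has coefficient sequence $\tilde a_n=a_n+s\sigma_n$.

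Second, the rank-one structure. The dilated matrix of $f_s$ is $A+s\,(\sigma_{2i+j})_{i,j}$. We have $\sigma_{2i+j}=0$ for even $j$, and for $j=2m+1$ we get $\sigma_{2i+2m+1}=(-1)^{i+m}=(-1)^i\sigma_j$. Hence $\sigma_{2i+j}=(-1)^i\sigma_j=u_iw_j$ for every $j$. This is exactly the squared-node collapse of Remark~\ref{rem:rankone}: the nodes are $\pm i$ (imaginary unit), and $\zeta^{2i}=(-1)^i$ does not depend on the sign of the node. So the perturbation is $s\,uw^{\top}$.

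Third, the determinant. By the single-shift evaluation at $s=0$ (Proposition~\ref{prop:shift}, restated as $\HH_n^{\mathrm E}$), $\det A=\HH_n^{\mathrm E}\ne0$, so $A$ is invertible. The matrix determinant lemma $\det(A+s\,uw^{\top})=\det A\,(1+s\,w^{\top}A^{-1}u)$ then gives the claim. If one prefers not to invert $A$ over $\QQ$, the equivalent identity $\det(A+uv^{\top})=\det A+v^{\top}\operatorname{adj}(A)u$ works as a polynomial identity.

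The only step needing any care is the index bookkeeping in the second step: checking the sign $(-1)^{i+m}$ and that even columns vanish. Everything else is direct citation.
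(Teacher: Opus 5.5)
Your proof is correct and matches the paper's argument step for step: linearity gives $\tilde a_n=a_n+s\sigma_n$; a parity check gives $\sigma_{2i+j}=(-1)^i\sigma_j$, so the perturbation is the rank-one matrix $s\,uw^{\top}$; and the matrix determinant lemma then applies with $\det A=\HH_n^{\mathrm E}$ from Proposition~\ref{prop:shift}. Your explicit remark that $\HH_n^{\mathrm E}\neq0$, so that $A^{-1}$ exists (or alternatively that the adjugate form avoids inverting $A$), supplies a point the paper leaves implicit.
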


\begin{proof}
The perturbing term $s\sin x$ contributes $s\,\sigma_n$ to the coefficient
sequence, so $\tilde a_n=a_n+s\,\sigma_n$; the member $s=-1$ is
$(\sin^3x+1)/\cos^2x$ because $\sin x\,(1-\sin^2x)/\cos^2x=\sin x$. The shift of
the index by $2i$ preserves parity and introduces a sign $(-1)^i$: for even $j$
both $\sigma_{2i+j}$ and $\sigma_j$ vanish, while for $j=2k+1$,
$$
\sigma_{2i+j}=\sigma_{2(i+k)+1}=(-1)^{i+k}=(-1)^i\,\sigma_j .
$$
Hence $\sigma_{2i+j}=(-1)^i\sigma_j$ for all $i,j$, so the perturbation
matrix $s(\sigma_{2i+j})_{i,j}$ equals $s\,u\,w^{\!\top}$ with $u_i=(-1)^i$ and
$w_j=\sigma_j$. The matrix determinant lemma
\cite[Theorem~18.1.1]{Harville1997}
\begin{equation}\label{lem:matdet}
\det(A+s\,uw^{\!\top})=\det A\,(1+s\,w^{\!\top}A^{-1}u)
\end{equation}
now gives the claim,
with $\det A=\HH_n^{\mathrm E}$ the $s=0$ value of Proposition~\ref{prop:shift}.
\end{proof}

By Lemma~\ref{lem:rank1}, Proposition~\ref{prop:sin3} is equivalent to the
single scalar identity
\begin{equation}\label{eq:sin3scal}
w^{\!\top}A^{-1}u=-\binom n2 ,
\end{equation}
which does not involve $s$: once it is proved,
$\HH_n(f_s)=\HH_n^{\mathrm E}\bigl(1-s\binom n2\bigr)$ for every $s$. We prove
\eqref{eq:sin3scal} by carrying the pair $(u,w)$ through the biorthogonal
reduction already used for Proposition~\ref{prop:shift}. Throughout put
$\bar n=\lceil n/2\rceil$ and $\underline n=\lfloor n/2\rfloor$, so
$\bar n+\underline n=n$.

\subsection{Shifted secant data and evaluations at $v=-1$}

The odd columns of the shifted family pair through the shifted-secant
functional $\mathcal S_1[v^k]:=E_{2k+2}=\mathcal S[v^{k+1}]$, written in the
even variable $v=z^2$ (the letter $y$ is reserved for the linear system
below); we first record its monic orthogonal polynomials.

\begin{Lemma}[shifted secant family; classical]\label{lem:cfshift}
The monic $\mathcal S_1$-orthogonal polynomials $\hat P^{(1)}_l$ satisfy
$$
\hat P^{(1)}_{l+1}=\bigl(v-(2l+1)^2-(2l+2)^2\bigr)\hat P^{(1)}_l
                  -\bigl((2l)(2l+1)\bigr)^2\,\hat P^{(1)}_{l-1},
$$
with $\mathcal S_1\bigl[\hat P^{(1)}_l\hat P^{(1)}_{l'}\bigr]
=\delta_{ll'}\,\bigl((2l+1)!\bigr)^2$.
\end{Lemma}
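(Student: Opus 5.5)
This is a specialisation of the general secant-family data already established, and the plan is to read it off rather than recompute it. The functional $\mathcal S_1[v^k]=E_{2k+2}=\mathcal S[v^{k+1}]$ is the Christoffel transform $v\,\mathcal S$ of the secant functional at $s=0$. Its orthogonal polynomials are therefore the kernel polynomials of Lemma~\ref{lem:kernel} with $s=0$, equivalently the transform data of Lemma~\ref{lem:christoffel}(ii).

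\emph{Recurrence coefficients.} First I will invoke Lemma~\ref{lem:christoffel}(ii) with the secant $S$-fraction coefficients $u_j=j^2$, i.e.\ $s=0$ in \eqref{eq:stparam}. The odd contraction \eqref{eq:oddcontraction} gives
\[
c^{S_1}_l=u_{2l+1}+u_{2l+2}=(2l+1)^2+(2l+2)^2,
\qquad
\lambda^{S_1}_l=u_{2l}u_{2l+1}=\bigl((2l)(2l+1)\bigr)^2 .
\]
These are exactly the displayed coefficients. As a check, \eqref{eq:cS1} at $s=0$ reads $8l^2+12l+5=(2l+1)^2+(2l+2)^2$, and \eqref{eq:lamS1} at $s=0$ reads $2l(2l+1)\cdot2l(2l+1)$.

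\emph{Norms.} Next, the norms come from Lemma~\ref{lem:christoffel}(iii), or directly from \eqref{eq:hS1} at $s=0$:
\[
h^{S_1}_l=(2l+1)!\,(1)_{2l+1}=\bigl((2l+1)!\bigr)^2 .
\]
An alternative route gives the same result: telescope \eqref{eq:Snormodd} with $b_j=j^2$, so that $\prod_{j=1}^{2l+1}j^2=((2l+1)!)^2$, using $\mu_0'=b_1=1$. That this is the correct normalisation is confirmed by $\mathcal S_1[1]=E_2=1$.

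The only point needing care is renaming the variable: here the variable is $v$, and the polynomials $\hat P^{(1)}_l$ coincide with the kernel polynomials $\hat P_l|_{s=0}$ of Lemma~\ref{lem:kernel}. Quasi-definiteness follows because every $\lambda^{S_1}_l\ne0$ for $l\ge1$, so Favard's theorem applies. I expect no real obstacle; the lemma is the $s=0$ specialisation, as was Lemma~\ref{lem:cf} for $\mathcal S$.
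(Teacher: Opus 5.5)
Your proposal is correct and is essentially the paper's proof: the recurrence data and norms are read off the odd contraction \eqref{eq:oddcontraction}--\eqref{eq:Snormodd} of the secant $S$-fraction with $b_j=j^2$. Your cross-checks against Lemma~\ref{lem:kernel} and \eqref{eq:hS1} at $s=0$ are consistent, but they are not needed.
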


\begin{proof}
This is the \emph{odd} contraction of the secant $S$-fraction of
Lemma~\ref{lem:cf} (the even contraction gave $\hat P_i$): by
\eqref{eq:oddcontraction}--\eqref{eq:Snormodd}, an $S$-fraction with
coefficients $(b_1,b_2,\dots)$ yields for the shifted moments $\mu_{k+1}$ the
data $c'_l=b_{2l+1}+b_{2l+2}$, $\lambda'_l=b_{2l}b_{2l+1}$ and norms
$h'_l=\prod_{j=1}^{2l+1}b_j$. Here $b_j=j^2$, so
$c'_l=(2l+1)^2+(2l+2)^2$, $\lambda'_l=\bigl((2l)(2l+1)\bigr)^2$ and
$h'_l=\bigl((2l+1)!\bigr)^2$.
\end{proof}

We shall need the following two evaluations at
$v=-1$.

\begin{Lemma}\label{lem:evalminus1}
The tangent polynomials $r_m$ and the shifted secant polynomials
$\hat P^{(1)}_l$ of Lemmas~\ref{lem:cf} and~\ref{lem:cfshift} satisfy
$$
r_m(-1)=(-1)^m(2m-1)!!\,(2m+1)!!,
\qquad
\hat P^{(1)}_l(-1)=(-1)^l(2l+1)! ,
$$
and in particular $r_m(-1)/(2m+1)!=\binom{-1/2}{m}$.
\end{Lemma}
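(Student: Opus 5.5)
The plan is to prove both evaluations by induction on the index, using only the three-term recurrences of Lemma~\ref{lem:cf} and Lemma~\ref{lem:cfshift} specialised at $v=-1$. In each case we check that the claimed closed form satisfies the same second-order recurrence and the same two initial values. No orthogonality or functional is needed: the identities are purely about the recurrence coefficients.

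\emph{Tangent polynomials.} Put $R_m:=r_m(-1)$. Lemma~\ref{lem:cf} gives
\[
R_{m+1}=-\bigl(1+2(2m+1)^2\bigr)R_m-(2m-1)(2m)^2(2m+1)R_{m-1},
\]
with $R_0=1$ and $R_1=-1-2=-3$. The latter agrees with $(-1)^1(1)!!(3)!!=-3$. Let $X_m:=(-1)^m(2m-1)!!(2m+1)!!$. Then $X_{m+1}/X_m=-(2m+1)(2m+3)$ and $X_{m-1}/X_m=-1/\bigl((2m-1)(2m+1)\bigr)$. Dividing the recurrence by $X_m$ reduces the claim to
\[
-(2m+1)(2m+3)=-\bigl(1+2(2m+1)^2\bigr)+4m^2 .
\]
This is a polynomial identity: both sides equal $-(4m^2+8m+3)$. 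The ratio statement then follows from
\[
\frac{(2m-1)!!\,(2m+1)!!}{(2m+1)!}=\frac{(2m-1)!!}{(2m)!!}=\frac{(1/2)_m}{m!}
\]
together with $\binom{-1/2}{m}=(-1)^m(1/2)_m/m!$.

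\emph{Shifted secant polynomials.} Put $S_l:=\hat P^{(1)}_l(-1)$. The recurrence of Lemma~\ref{lem:cfshift} at $v=-1$ reads
\[
S_{l+1}=-\bigl(1+(2l+1)^2+(2l+2)^2\bigr)S_l-(2l)^2(2l+1)^2S_{l-1},
\]
with $S_0=1$ and $S_1=-1-1-4=-6=-3!$. For the candidate $Y_l=(-1)^l(2l+1)!$ we have $Y_{l+1}/Y_l=-(2l+2)(2l+3)$ and $Y_{l-1}/Y_l=-1/\bigl((2l)(2l+1)\bigr)$. The recurrence therefore reduces to
\[
(2l+2)(2l+3)=1+(2l+1)^2+(2l+2)^2-2l(2l+1).
\]
Both sides equal $4l^2+10l+6$, which confirms it.

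There is no real obstacle here. The only care needed is the sign bookkeeping in the ratios, including the convention $(-1)!!=1$ at $m=0$, and remembering that $\lambda_0=0$ so the recurrence starts cleanly at $m=0$.
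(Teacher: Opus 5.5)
Your proof is correct and follows the paper's argument: induction on the three-term recurrences of Lemmas~\ref{lem:cf} and~\ref{lem:cfshift} evaluated at $v=-1$, reducing each step to the same elementary polynomial identity in $m$ (resp.\ $l$), plus the same conversion $(2m-1)!!/(2m)!!=(\tfrac12)_m/m!$ for the binomial form. The only cosmetic difference is that the paper inducts on the ratio $\theta_m=r_m(-1)/r_{m-1}(-1)$, whereas you verify that the closed form satisfies the linear recurrence with matching initial values. Your version avoids dividing by the unknown values $r_m(-1)$, but the two are equivalent.
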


\begin{proof}
Both follow by induction on the three-term recurrences. For $r_m$ the recurrence
of Lemma~\ref{lem:cf}, written here in $v$, is
$r_{m+1}=(v-c^T_m)r_m-\lambda^T_m r_{m-1}$ with
$c^T_m=2(2m+1)^2$, $\lambda^T_m=(2m-1)(2m)^2(2m+1)$. The claim is equivalent
to $\theta_m:=r_m(-1)/r_{m-1}(-1)=-(2m-1)(2m+1)$; assuming it at level $m$,
$$
\frac{r_{m+1}(-1)}{r_m(-1)}=-1-c^T_m-\frac{\lambda^T_m}{\theta_m}
=-1-2(2m+1)^2+(2m)^2=-(2m+1)(2m+3)=\theta_{m+1},
$$
since $\lambda^T_m/\theta_m=-(2m)^2$, with base case $r_1(-1)=-1-c^T_0=-3$.
The same computation for $\hat P^{(1)}_l$, with
$c'_l=(2l+1)^2+(2l+2)^2$ and $\lambda'_l=((2l)(2l+1))^2$
(Lemma~\ref{lem:cfshift}), gives
$\hat P^{(1)}_l(-1)/\hat P^{(1)}_{l-1}(-1)=-(2l)(2l+1)$, hence
$\hat P^{(1)}_l(-1)=(-1)^l(2l+1)!$. Finally, with
$(2m+1)!=(2m+1)!!\,2^mm!$ and $(2m-1)!!/(2^mm!)=(-1)^m\binom{-1/2}{m}$,
the displayed quotient follows.
\end{proof}

\subsection{Transport through the biorthogonal reduction}

The $\M2$ reduction behind Proposition~\ref{prop:shift}, taken at $s=0$, is a
determinant-preserving change of the row and column families
(Lemma~\ref{lem:family}, with $v=z^2$): the rows $v^i$ are replaced by the
tangent polynomials $r_i$, the even columns $z\,v^m$ by $z\,r_m$, and the odd
columns $v^{l+1}$ by $v\,\hat P^{(1)}_l$. Writing the resulting unitriangular
factorisation as $A=P^{-1}KQ^{-\top}$ ($\det P=\det Q=1$), the even and odd
columns of $A$ pair respectively through the tangent functional $\mathcal T$
and the shifted secant functional $\mathcal S_1$, so that
$$
K_{i,2m}=\mathcal T[r_ir_m]=\delta_{im}\,(2m)!\,(2m+1)!,
\qquad
K_{i,2l+1}=\mathcal S_1[r_i\hat P^{(1)}_l]=(2i+1)!\,(2l+1)!\binom{1/2}{i-l}
$$
(the odd--column value is Lemma~\ref{lem:mixedconn} at $s=0$, multiplied by
the norm $((2l+1)!)^2$ of Lemma~\ref{lem:cfshift}), and
$\det K=\HH_n^{\mathrm E}$. Since $P,Q$ are unitriangular,
\begin{equation}\label{eq:transport}
w^{\!\top}A^{-1}u=(Qw)^{\!\top}K^{-1}(Pu)=c^{\!\top}K^{-1}b ,
\end{equation}
where the very same operations evaluate the two transported vectors. The row
operations send $u_i=(-1)^i=\mathrm{ev}_{-1}[v^i]$ to $b_i=r_i(-1)$; the
column operations send $w$ (supported on the odd columns,
$w_{2l+1}=(-1)^l$) to the covector $c$ with $c_{2m}=0$ and
$$
c_{2l+1}=\sum_{l'}(-1)^{l'}[\hat P^{(1)}_l]_{l'}=\hat P^{(1)}_l(-1)
=(-1)^l(2l+1)! ,
$$
$[\hat P^{(1)}_l]_{l'}$ being the coefficient of $v^{l'}$
(Lemma~\ref{lem:evalminus1}).

It remains to evaluate $c^{\!\top}K^{-1}b$, i.e. to solve $Ky=b$ and form
$c^{\!\top}y=\sum_l(-1)^l(2l+1)!\,y_{2l+1}$. Each even column $2m$ of $K$ has
its only nonzero entry in row $m$, so the rows $i=\bar n,\dots,n-1$ involve
the odd unknowns alone:
$$
\sum_{l=0}^{\underline n-1}(2i+1)!\,(2l+1)!\binom{1/2}{i-l}\,y_{2l+1}=r_i(-1),
\qquad i=\bar n,\dots,n-1 .
$$
Dividing by $(2i+1)!$, putting $i=\bar n+p$ and
$\tilde y_l=(2l+1)!\,y_{2l+1}$, and using $r_i(-1)/(2i+1)!=\binom{-1/2}{i}$
(Lemma~\ref{lem:evalminus1}), this becomes
\begin{equation}\label{eq:sin3sys}
\sum_{l=0}^{\underline n-1}\binom{1/2}{\bar n+p-l}\,\tilde y_l
=\binom{-1/2}{\bar n+p}\quad(p=0,\dots,\underline n-1),
\qquad
w^{\!\top}A^{-1}u=\sum_{l=0}^{\underline n-1}(-1)^l\tilde y_l .
\end{equation}

\begin{Lemma}\label{lem:sin3sys}
The unique solution of the system in \eqref{eq:sin3sys} satisfies
$\sum_{l=0}^{\underline n-1}(-1)^l\tilde y_l=-\binom n2$.
\end{Lemma}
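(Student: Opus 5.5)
The plan is to pass to generating functions and then use Cramer's rule, so that only Schur-function determinants of the kind already evaluated in Lemma~\ref{lem:bindetSigma} and Lemma~\ref{lem:t3hook} appear. Write $N=\underline n$, $C_k$ for the column vector $\bigl(\binom{1/2}{\bar n+p-k}\bigr)_{0\le p\le N-1}$, and $Y(x)=\sum_{l<N}\tilde y_lx^l$. The target is $Y(-1)$.

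The first step rewrites the right-hand side using $(1+x)^{-1/2}=(1+x)^{1/2}(1+x)^{-1}$. This gives
$$\binom{-1/2}{k}=\sum_{j\ge0}(-1)^j\binom{1/2}{k-j}.$$
Since $\binom{1/2}{d}=0$ for $d<0$ and $k=\bar n+p\le n-1$, only $0\le j\le n-1$ contribute. Hence the right-hand side of \eqref{eq:sin3sys} is the finite combination $\sum_{k=0}^{n-1}(-1)^kC_k$. The columns $C_0,\dots,C_{N-1}$ form the coefficient matrix, whose determinant is nonzero: by Lemma~\ref{lem:bindetSigma} with $a=\tfrac12$ it equals $(-1)^{\binom{\bar n}2}\Omega(1)$, up to the sign convention, and $\Omega(1)=2^{-\binom n2}$ by Lemma~\ref{lem:Pahalf}. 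It follows that
$$\tilde y_l=(-1)^l+\sum_{k=N}^{n-1}(-1)^k\,\gamma_{k,l},$$
where $\gamma_{k,l}$ are the coordinates of the extra column $C_k$ in the basis $C_0,\dots,C_{N-1}$. Therefore
$$Y(-1)=N+\sum_{k=N}^{n-1}(-1)^k\sum_{l=0}^{N-1}(-1)^l\gamma_{k,l},$$
and the whole problem is to compute the finite double sum.

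The second step uses Cramer's rule: $\gamma_{k,l}=D_{l\to k}/D$, where $D_{l\to k}$ is obtained from $D=\det(C_0,\dots,C_{N-1})$ by replacing column $l$ with $C_k$. Each $D_{l\to k}$ is a determinant $\det\bigl(\binom{1/2}{\beta_c+r}\bigr)$ with distinct shifts $\beta_c$. Exactly as in Lemma~\ref{lem:t3hook}, the dual Jacobi--Trudi identity identifies it, up to an explicit sign from reordering columns, with a Schur function $s_\lambda(1^{1/2})$. The shape $\lambda$ is the $\bar n\times N$ rectangle modified in one row, with the deleted column at position $l$ and the inserted column at position $k$. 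The hook-content formula then writes $\gamma_{k,l}$ as a quotient of two hook-content products over almost identical diagrams. After cancellation only the hooks and contents along the affected row and column survive, so $\gamma_{k,l}$ is a single hypergeometric term in $(k,l)$ built from half-integer Pochhammer symbols.

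The last step evaluates $\sum_{k=N}^{n-1}\sum_{l<N}(-1)^{k+l}\gamma_{k,l}$, and this is the step I expect to be the main obstacle. I would first carry out the inner sum over $l$ by Chu--Vandermonde or Gauss summation, hoping that it collapses to a term in $k$ alone. The remaining sum over the $\bar n$ values $k=N,\dots,n-1$ should then telescope to $-\binom n2-N$. If the closed form resists, there is a fallback that avoids the explicit double sum. Prove the identity by induction on $n$ separately in the two parities, relating the system for $n+1$ to that for $n$: the matrix shifts by one row when $\bar n$ increases and gains a column when $N$ increases. Then check that $Y(-1)$ changes by $-n$, which is exactly $\binom{n+1}2-\binom n2$. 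The base cases $n=1$ and $n=2$ are immediate.
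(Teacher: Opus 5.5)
Your first two steps are sound. Writing $(1+x)^{-1/2}=(1+x)^{1/2}(1+x)^{-1}$ does turn the right-hand side into $\sum_{k=0}^{n-1}(-1)^kC_k$. The coefficient matrix is invertible by Lemma~\ref{lem:bindetSigma} at $a=\tfrac12$. It follows that $Y(-1)=\underline n+\sum_{k=\underline n}^{n-1}(-1)^k\sum_{l<\underline n}(-1)^l\gamma_{k,l}$.

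But this is only a reformulation. The whole content of the lemma is now the identity
\[
\sum_{k=\underline n}^{n-1}(-1)^k\sum_{l=0}^{\underline n-1}(-1)^l\gamma_{k,l}=-\binom n2-\underline n ,
\]
and you do not prove it. The hypergeometric form of $\gamma_{k,l}$ is asserted rather than computed. Also, the shape is not a one-row modification: replacing column $l$ by column $k$ removes a whole corner hook from the $\bar n\times\underline n$ rectangle. For the double sum itself you offer only the hope that Chu--Vandermonde collapses the inner sum and that the outer one telescopes.

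The fallback induction is not a method either. As $n\mapsto n+1$ the row window $\bar n,\dots,n-1$ moves, and every coefficient of the solution changes: the solutions are $Y=-1,\,-3,\,-7-x,\,-15-5x$ for $n=2,3,4,5$. So there is no evident relation between consecutive systems, and ``check that $Y(-1)$ changes by $-n$'' merely restates what is to be proved.

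The missing idea is to write the solution down instead of inverting the matrix. The system says $[x^k]\bigl((1+x)^{1/2}Y\bigr)=[x^k](1+x)^{-1/2}$ for $\bar n\le k\le n-1$. The polynomial $Y(x)=-\sum_{j=1}^{\underline n}\binom n{2j}(1+x)^{j-1}$, of degree $\underline n-1$, satisfies it:

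\begin{itemize}
\item[(a)] With $t=(1+x)^{1/2}$ one has $(1+x)^{1/2}Y=(1+x)^{-1/2}-\Phi$, where $\Phi=\bigl((1+t)^n+(1-t)^n\bigr)/(2t)$.
\item[(b)] Since $(1-t)^n=O(x^n)$, that term contributes nothing in the window.
\item[(c)] Substituting $x=t^2-1$ turns $[x^k]\,(1+t)^n/(2t)$ into the residue at $t=1$ of $(1+t)^{n-k-1}/(t-1)^{k+1}$. This equals $2^{n-2k-1}\binom{n-k-1}{k}$, which vanishes for $k\ge\bar n$ because then $n-k-1<k$.
\end{itemize}

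By the uniqueness you already established, this is the solution. Then $Y(-1)=-\binom n2$, because only the $j=1$ term survives at $x=-1$. This is how the paper proceeds, and it needs no Schur functions at all.
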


\begin{proof}
Let $Y(x)=\sum_{l=0}^{\underline n-1}\tilde y_lx^l$. Since
$\binom{1/2}{\bar n+p-l}=[x^{\bar n+p-l}](1+x)^{1/2}$, the left side of
\eqref{eq:sin3sys} is $[x^{\bar n+p}]\bigl((1+x)^{1/2}Y(x)\bigr)$, so the
system reads
$$
[x^{k}]\bigl((1+x)^{1/2}Y(x)\bigr)=\binom{-1/2}{k}=[x^k](1+x)^{-1/2},
\qquad k=\bar n,\dots,n-1 .
$$
The system matrix $\bigl(\binom{1/2}{\bar n+p-l}\bigr)_{p,l}$ is, up to
nonzero row and column factors, the block of $K$ that survives the Laplace
expansion along the even columns, and $\det K=\HH_n^{\mathrm E}\ne0$; hence it
is invertible, and it suffices to exhibit one polynomial $Y$
of degree $\le\underline n-1$ meeting these $\underline n$ equations, after
which $\sum_l(-1)^l\tilde y_l=Y(-1)$. Take
$$
Y(x)=-\sum_{j=1}^{\underline n}\binom n{2j}(1+x)^{j-1},
$$
of degree $\underline n-1$. With $t=(1+x)^{1/2}$ (the letter $s$ is the
perturbation parameter),
$$
\begin{aligned}
(1+x)^{1/2}Y(x)&=-\sum_{j=1}^{\underline n}\binom n{2j}(1+x)^{j-1/2}
=(1+x)^{-1/2}-\Phi(x),\\
\Phi(x)&:=\sum_{j=0}^{\underline n}\binom n{2j}t^{2j-1}=\frac{(1+t)^n+(1-t)^n}{2t},
\end{aligned}
$$
the last equality because
$\sum_{j}\binom n{2j}t^{2j}=\tfrac12((1+t)^n+(1-t)^n)$. It remains to show
$[x^k]\Phi=0$ for $\bar n\le k\le n-1$. Substituting $x=t^2-1$ (so a small
loop of $x$ about $0$ becomes a small loop of $t$ about $1$, $dx=2t\,dt$),
$$
[x^k]\Phi=\frac1{2\pi i}\oint_{t=1}
\frac{(1+t)^n+(1-t)^n}{(t-1)^{k+1}(t+1)^{k+1}}\,dt,
$$
and writing $(1-t)^n=(-1)^n(t-1)^n$ splits the integrand as
$$
\frac{(1+t)^{\,n-k-1}}{(t-1)^{k+1}}
+(-1)^n\frac{(t-1)^{\,n-k-1}}{(t+1)^{k+1}} .
$$
For $\bar n\le k\le n-1$ the second summand is regular at $t=1$ (exponent
$n-k-1\ge0$); the first has residue $\binom{n-k-1}{k}2^{\,n-2k-1}$, which
vanishes because $n-k-1\le\underline n-1<\bar n\le k$ forces
$\binom{n-k-1}{k}=0$. Hence $[x^k]\Phi=0$ throughout, $Y$ solves the system,
and
\[
\sum_{l=0}^{\underline n-1}(-1)^l\tilde y_l=Y(-1)
=-\sum_{j=1}^{\underline n}\binom n{2j}\,[\,j=1\,]=-\binom n2 .
\qedhere
\]
\end{proof}

\begin{proof}[Proof of Proposition~\ref{prop:sin3}]
Combining Lemma~\ref{lem:sin3sys} with \eqref{eq:sin3sys} and
\eqref{eq:transport} proves \eqref{eq:sin3scal}; together with
Lemma~\ref{lem:rank1} this gives
$\HH_n(f_s)=\bigl(1-s\binom n2\bigr)\HH_n^{\mathrm E}$ for all $s$ and $n\ge1$.
\end{proof}

\begin{Remark}[affine dependence on $s$]
Since the perturbation is rank one, $\HH_n(f_s)$ is \emph{affine} in $s$:
it equals $\HH_n^{\mathrm E}$ at $s=0$ and drops by $\binom n2\HH_n^{\mathrm E}$
per unit of $s$, vanishing at the single value $s=1/\binom n2$ (for $n\ge2$).
Corollaries~\ref{cor:sinm2}--\ref{cor:sin2p} record the integer members
$s=-2,\dots,2$.
\end{Remark}

\begin{Remark}[a perturbation that is not rank one]
By contrast, the perturbation $(\sin x+1)/\cos^2x-\sin 2x$ is not rank one --- it
mixes two conjugate frequency pairs --- and has no product formula at all: its
determinants carry irregular prime factors and change sign.
\end{Remark}

\section{The Springer number family}\label{sec:springer}

The \emph{Springer numbers} $S_n$ are defined by
$$
\frac{1}{\cos x-\sin x}=\sum_{n\ge0}S_n\frac{x^n}{n!},
\qquad (S_n)_{n\ge0}=1,1,3,11,57,361,2763,24611,\dots
$$
(OEIS \texttt{A001586}); they count the type-$B$ snakes \cite{Springer1971}
and form a moment sequence \cite{Sokal2019}. Their dilated Hankel
determinant, and that of a whole \emph{Springer number family} containing them, is as
simple as that of the factorials.

The Hankel determinants of the Springer numbers, and the equivalent
continued-fraction expansions, have received considerable attention. Since
$(S_n)$ is a moment sequence \cite{Sokal2019}, its ordinary Hankel determinant
$\det(S_{i+j})$ is a classical positive quantity accessible through the
Jacobi--Stieltjes theory of orthogonal polynomials; the associated continued
fractions enumerating snakes and cycle-alternating permutations were obtained
by Josuat-Verg\`es \cite{JosuatVerges2014}, and the closed-form Hankel
determinants of the companion Euler numbers were established by
Han \cite{Han2020Euler}, in parallel with the classical evaluations of the
tangent and secant (Euler) numbers going back to André \cite{Andre1879}. What
follows goes beyond the ordinary determinant: we evaluate the \emph{dilated}
Hankel determinant $\det(a_{2i+j})$ --- and, more than that, the determinant of
an entire one-parameter Springer number family --- in closed product form.

\begin{Theorem}\label{thm:springer}
For every integer $r\ge1$,
$$
\HH_n\Bigl(\frac{1}{(\cos x-t\sin x)^{r}}\Bigr)
=\bigl(t(t^2+1)\bigr)^{\binom n2}\,
\HH_n\Bigl(\frac{1}{(1-x)^{r}}\Bigr).
$$
\end{Theorem}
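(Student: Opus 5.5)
Fix $r\ge1$ and regard $D(t):=\HH_n\bigl((\cos x-t\sin x)^{-r}\bigr)$ as a polynomial in $t$. Each coefficient $a_m(t)=m!\,[x^m](\cos x-t\sin x)^{-r}$ is a polynomial in $t$ of degree $\le m$, with top coefficient equal to the coefficient of the $t^m$-homogeneous part. That part comes from $(1-tx)^{-r}$ after rescaling, so $a_m(t)=t^m\,(r)_m+\dots$. The target is $(t(t^2+1))^{\binom n2}\HH_n((1-x)^{-r})$, and $\HH_n((1-x)^{-r})$ is known in closed form from the Beta family: $a_m=(r)_m$ is Proposition~\ref{prop:degenerate}. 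The plan is the divisor method $\M4$ with Observation~\ref{obs:divisor}.

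\textbf{Degree bound.} The entry $a_{2i+j}$ has $t$-degree $\le 2i+j$, so $\deg_t D\le 3\binom n2$. The leading coefficient is the determinant of the leading terms $((r)_{2i+j})$. By Lemma~\ref{lem:scale} this equals $\HH_n((1-x)^{-r})$ times $t^{3\binom n2}$. This is consistent with the target, whose degree is exactly $3\binom n2$. Hence it suffices to show that $t^{\binom n2}$ and $(t^2+1)^{\binom n2}$ divide $D$, i.e.\ that $D$ vanishes to order $\ge\binom n2$ at each of $t=0$, $t=i$ and $t=-i$. Observation~\ref{obs:divisor}, with the leading coefficient as normalisation, then finishes the proof.

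\textbf{Order at $t=0$.} Write $\cos x-t\sin x=\sqrt{1+t^2}\cos(x+\phi)$ with $\tan\phi=t$. Then
\[
(\cos x-t\sin x)^{-r}=(1+t^2)^{-r/2}\,g(x+\phi),\qquad g=\sec^r \text{ even}.
\]
By Lemma~\ref{lem:scale}, the factor $(1+t^2)^{-r/2}$ only multiplies $D$ by a unit near $t=0$. Lemma~\ref{lem:wron} gives $\ord_{\phi=0}\HH_n(g_\phi)\ge\binom n2$. Since $\phi=\arctan t$ is a local coordinate at $t=0$, this yields $t^{\binom n2}\mid D$.

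\textbf{Order at $t=\pm i$.} This is the main obstacle. At $t=i$ the function degenerates, since $\cos x-i\sin x=e^{-ix}$, giving $f=e^{irx}$ and a single exponential node. I would set $t=i+\epsilon$ and use the identity
\[
\cos x-t\sin x=e^{-ix}\Bigl(1-\epsilon\,\frac{e^{2ix}-1}{2i}\Bigr),\qquad\text{so}\qquad f=\sum_{k\ge0}\binom{r+k-1}{k}\Bigl(\frac{\epsilon}{2i}\Bigr)^{k}e^{irx}\,(e^{2ix}-1)^{k}.
\]
Expanding $(e^{2ix}-1)^k$ exhibits $a_m$ as an exponential sum with nodes $\zeta_\nu=i(r+2\nu)$, $\nu\ge0$. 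The coefficient $c_\nu$ has order $\ge\nu$ in $\epsilon$, since only $k\ge\nu$ contribute. The nodes have distinct squares, because $r\ge1$ makes $r+2\nu>0$. Corollary~\ref{cor:cborder}(i), with $o_\nu=\nu$, then gives $\ord_\epsilon D\ge\min_T\sum_{\nu\in T}\nu=0+1+\dots+(n-1)=\binom n2$. The case $t=-i$ follows by the same argument, or from $D(\bar t)=\overline{D(t)}$, since $D$ has real coefficients.

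Summing the orders gives $3\binom n2=\deg_t D$. Observation~\ref{obs:divisor} therefore forces $D=c\,t^{\binom n2}(t^2+1)^{\binom n2}$, and matching leading coefficients gives $c=\HH_n((1-x)^{-r})$. The delicate points to check are the formal legitimacy of the infinite exponential sum, where each power of $\epsilon$ involves only finitely many nodes as Corollary~\ref{cor:cborder} requires, and the fact that $D\not\equiv0$, which follows because its leading coefficient is nonzero.
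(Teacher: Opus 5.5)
Your proof is correct and follows the paper's route. Both arguments use the divisor method $\M4$ with the degree bound $3\binom n2$, the rotation-plus-Wronskian argument (Lemma~\ref{lem:wron}) at $t=0$, the exponential expansion at $t=\pm i$ with nodes $i(r+2\nu)$ and orders $o_\nu=\nu$, and Observation~\ref{obs:divisor}. Your $\epsilon=t-i$ expansion resums to exactly the paper's coefficients $(1+u)^{-r}\binom{\nu+r-1}{\nu}q'^{\,\nu}$ with $u=\epsilon/2i$. The differences are cosmetic. You read both the constant and the non-vanishing of $D$ directly off the coefficient of $t^{3\binom n2}$, namely $\det\bigl((r)_{2i+j}\bigr)\neq0$. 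The paper instead gets non-vanishing from the exact order in Corollary~\ref{cor:cborder}(ii), and the constant from the scaling limit $x\mapsto x/t$, $t\to\infty$, which computes that same top coefficient.
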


\noindent
The right-hand sequence is $a_n=(r)_n$, whose dilated determinant is the Beta
evaluation of \S\ref{sec:beta}; at $t=r=1$ this is
$\HH_n\bigl((\cos x-\sin x)^{-1}\bigr)=4^{\binom n2}\prod_{k=1}^{n-1}k!\,(2k)!$,
the Springer evaluation. More generally $t=1$ gives, for $r=1,2$,
$$
\HH_n\Bigl(\frac{1}{\cos x-\sin x}\Bigr)=4^{\binom n2}\prod_{k=1}^{n-1}k!\,(2k)!,
\qquad
\HH_n\Bigl(\frac{1}{(\cos x-\sin x)^2}\Bigr)=4^{\binom n2}\prod_{k=1}^{n-1}k!\,(2k+1)!.
$$

\smallskip
\emph{Idea of the proof.} Write $f_t(x)=(\cos x-t\sin x)^{-r}$. We show that
$\HH_n(f_t)$ is a \emph{polynomial in $t$} of degree $\le3\binom n2$ that is
divisible by $\bigl(t(t^2+1)\bigr)^{\binom n2}$ --- it vanishes to order
$\binom n2$ at each of the three roots $t=0,\pm i$ --- and hence equals a
constant multiple of it (Observation~\ref{obs:divisor}); a scaling limit
identifies the constant. The orders at $t=0$ and at $t=\pm i$ come from two
elementary representations of $f_t$ (a rotation to an even function; an
exponential series), each turning the order into a one-line determinant
expansion via the divisor method~$\M4$ of \S\ref{sec:divisor}.

We shall repeatedly use \emph{Fa\`a di Bruno's formula}, the chain rule for the
$n$-th derivative of a composite $g(h(x))$: it expands $(g\circ h)^{(n)}$ as a
sum of terms $g^{(k)}(h)\prod_j\bigl(h^{(j)}\bigr)^{m_j}$ with
$\sum_j m_j=k\le n$ and $\sum_j j\,m_j=n$ \cite[Ch.~I]{Comtet1970}.

\begin{proof}[Proof of Theorem~\ref{thm:springer}]
Let $a_n(t)=n!\,[x^n](\cos x-t\sin x)^{-r}=f_t^{(n)}(0)$, a composite with
$h(x)=\cos x-t\sin x$ and $g(u)=u^{-r}$. The $x$-derivatives of all orders of
$h$ at $x=0$ lie in $\{\pm1,\pm t\}$, of degree $\le1$ in $t$; since $h(0)=1\ne0$
the coefficients $g^{(k)}(h(0))$ are constants, so by Fa\`a di Bruno each term
is a product of at most $n$ of these derivatives and $a_n\in\QQ[t]$ with
$\deg_ta_n\le n$. Hence $\HH_n(f_t)$ is
a polynomial in $t$ with
$$
\deg_t\HH_n(f_t)\le\max_{\sigma}\sum_{i}\deg_ta_{2i+\sigma(i)}
\le\max_\sigma\sum_i\bigl(2i+\sigma(i)\bigr)=2\binom n2+\binom n2=3\binom n2 .
$$

\emph{Order at $t=0$.} From
$\cos x-t\sin x=\sqrt{1+t^2}\,\cos(x+\phi)$, $\phi=\arctan t$, we get
$f_t(x)=(1+t^2)^{-r/2}\sec^{r}(x+\phi)$, so by the homogeneity
Lemma~\ref{lem:scale}
$\HH_n(f_t)=(1+t^2)^{-rn/2}\HH_n\bigl(\sec^r(\cdot+\phi)\bigr)$. The prefactor
is a unit at $t=0$ and $\phi=\arctan t$ is a local analytic isomorphism
($\phi'(0)=1$); since $\sec^r$ is even, the confluent Lemma~\ref{lem:wron}
gives $\ord_{t=0}\HH_n(f_t)\ge\binom n2$.

\emph{Order at $t=\pm i$.} As $\HH_n(f_t)\in\QQ[t]$,
$\ord_{t=-i}=\ord_{t=i}$. Put $u=\tfrac{t-i}{2i}$; then, using
$\sin x=(e^{ix}-e^{-ix})/(2i)$,
$$
\cos x-t\sin x=e^{-ix}-(t-i)\sin x=(1+u)e^{-ix}-u\,e^{ix}
=(1+u)e^{-ix}\bigl(1-q'e^{2ix}\bigr),\qquad q'=\tfrac{u}{1+u},
$$
whence $(\cos x-t\sin x)^{-r}=(1+u)^{-r}\sum_{k\ge0}\binom{k+r-1}{k}q'^{k}
e^{\,i(2k+r)x}$, so the moments
$$
a_m(t)=(1+u)^{-r}\sum_{k\ge0}\binom{k+r-1}{k}q'^{k}\,\bigl(i(2k+r)\bigr)^{m}
$$
form an exponential sum (Lemma~\ref{lem:cb}) with distinct nodes
$\zeta_k=i(2k+r)$ and coefficient orders $\ord_{q'}c_k=k$. The factor
$(1+u)^{-r}$ is a unit at $t=i$ and $q'$ vanishes there to first order; by the
order Corollary~\ref{cor:cborder}, with $o_k=k$, the unique minimal subset is
$\{0,1,\dots,n-1\}$, so
$$
\ord_{t=i}\HH_n(f_t)=\ord_{q'=0}\HH_n=\sum_{k=0}^{n-1}k=\binom n2,
$$
in particular $\HH_n(f_t)\not\equiv0$.

\emph{Conclusion.} $\HH_n(f_t)$ is a nonzero polynomial of degree
$\le3\binom n2$ vanishing to order $\ge\binom n2$ at each of $t=0,i,-i$; the
three orders already sum to $3\binom n2\ge\deg_t\HH_n(f_t)$, so by the divisor
principle (Observation~\ref{obs:divisor}) every inequality is an equality and
$$
\HH_n(f_t)=c\,t^{\binom n2}(t^2+1)^{\binom n2},\qquad c\ne0 .
$$
To find $c$, replace $x$ by $x/t$ and let $t\to\infty$: coefficientwise
$\cos\tfrac xt-t\sin\tfrac xt\to1-x$, so the left side tends to
$\HH_n\bigl((1-x)^{-r}\bigr)$, while by Lemma~\ref{lem:scale} it equals
$t^{-3\binom n2}\HH_n(f_t)=c\,(1+t^{-2})^{\binom n2}\to c$. Hence
$c=\HH_n\bigl((1-x)^{-r}\bigr)$, which is the theorem.
\end{proof}

\section{A derivative of the Springer number family at $t=1$}\label{sec:derivative}

Since $\frac{d}{dx}(\cos x-\sin x)=-(\cos x+\sin x)$, the numerator below is the
derivative of the Springer base, and for $s\ne1$
\begin{equation}\label{eq:derivobs}
\frac{\cos x+\sin x}{(\cos x-\sin x)^{s}}
=\frac{1}{s-1}\,\frac{d}{dx}\,(\cos x-\sin x)^{1-s},
\end{equation}
the derivative of the $t=1$, exponent-$(s-1)$ member of the family of
\S\ref{sec:springer}. Its dilated Hankel determinant again factors completely
--- now as a product of shifted factors in the exponent $s$.

\begin{Theorem}\label{thm:deriv}
For all $n\ge1$ and all $s$,
$$
\HH_n\Bigl(\frac{\cos x+\sin x}{(\cos x-\sin x)^{s}}\Bigr)
=4^{\binom n2}\Bigl(\prod_{k=1}^{n-1}k!\Bigr)
\prod_{j=0}^{n-2}\bigl[(s+2j)(s+2j+1)\bigr]^{\,n-1-j}.
$$
\end{Theorem}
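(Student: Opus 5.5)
The plan is the divisor method $\M4$ in the exponent $s$: sandwich $\HH_n(f_s)$, where $f_s=(\cos x+\sin x)/(\cos x-\sin x)^{s}$, between a divisibility statement coming from degenerations at the non-positive integers and a degree bound with known leading coefficient. The target $P_n(s)=\prod_{j=0}^{n-2}[(s+2j)(s+2j+1)]^{n-1-j}$ has degree $2\binom n2$.

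\emph{Polynomiality.} Write $a_m(s)=f_s^{(m)}(0)$ and $f_s=(\cos x+\sin x)\,g(h(x))$ with $h=\cos x-\sin x$ and $g(u)=u^{-s}$. Fa\`a di Bruno gives $g^{(k)}(1)=(-1)^k(s)_k$ with $k\le m$, so $a_m\in\QQ[s]$ with $\deg_s a_m\le m$. Hence $\HH_n(f_s)\in\QQ[s]$, with the crude bound $\deg_s\le3\binom n2$.

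\emph{Divisibility.} Fix an integer $p\ge0$ and put $s=-p$. Then $f_{-p}=(\cos x+\sin x)(\cos x-\sin x)^p$ is a trigonometric polynomial. Its frequencies are $\pm(p+1),\pm(p-1),\dots$, so $a_m=\sum_\nu c_\nu\zeta_\nu^m$ with nodes $\zeta_\nu=i\nu$. Consequently every column $(a_{2i+j})_i$ lies in the span of the vectors $(\zeta^{2i})_i$, one vector for each distinct value of $\zeta^2$ (the squared-node collapse behind Lemma~\ref{lem:cb}).

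For $p=2j$ the distinct $|\nu|$ are $1,3,\dots,2j+1$, giving rank $\le j+1$. For $p=2j+1$ they are $0,2,\dots,2j+2$. However $f_{2j+1}=\cos 2x\,(1-\sin2x)^j$ has zero mean, so the frequency $0$ carries coefficient $0$, and again the rank is $\le j+1$.

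In both cases the corank at $s=-p$ is $\ge n-1-\lfloor p/2\rfloor$. Arguing as in Lemma~\ref{lem:rankdrop}, $(s+p)^{\,n-1-\lfloor p/2\rfloor}$ divides $\HH_n(f_s)$. These factors are pairwise coprime, so $P_n(s)\mid\HH_n(f_s)$.

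\emph{Degree bound and leading coefficient (main obstacle).} I must show $\deg_s\HH_n(f_s)\le2\binom n2$ and identify the coefficient of $s^{2\binom n2}$ as $4^{\binom n2}\prod_{k<n}k!$. Observation~\ref{obs:divisor} then gives $\HH_n(f_s)=c\,P_n(s)$ with $c=4^{\binom n2}\prod_{k<n}k!$, since $P_n$ is monic; this is the theorem.

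I would use the scaling $x\mapsto x/s$ and Lemma~\ref{lem:scale}:
$$
\HH_n\bigl(f_s(x/s)\bigr)=s^{-3\binom n2}\HH_n(f_s).
$$
Put $\varepsilon=1/s$. Since $\log(\cos y-\sin y)=-y-y^2+O(y^3)$,
$$
f_s(x/s)=\exp\bigl(x+\varepsilon x^2+\varepsilon\,\ell(x)+O(\varepsilon^2)\bigr),
$$
where $\ell$ comes from $\log(\cos y+\sin y)=y+O(y^2)$. The claim to prove is that $\HH_n$ of such a perturbation of $e^x$ has $\varepsilon$-order $\ge\binom n2$, with leading term equal to that of the Gaussian $e^{x+\varepsilon x^2}$.

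By Remark~\ref{rem:gaussgen}(i) with $b=1$, $t=\varepsilon$, the Gaussian value is exactly $(4\varepsilon)^{\binom n2}\prod_{k<n}k!$. The linear-in-$x$ part of the perturbation can be absorbed, to leading order, into a rescaling via Lemma~\ref{lem:scale}.

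To justify the order claim, I would view each entry $a_m$ as a polynomial in $\varepsilon$ and reduce columns by the $\varepsilon=0$ geometric structure. Concretely, replace the column family $(z^j)$ by $\bigl((z-1)^j\bigr)$, a unitriangular change (Lemma~\ref{lem:family}), under which $\EE[z^{2i}(z-1)^j]=O(\varepsilon^{\lceil j/2\rceil})$, since each factor $(z-1)$ costs half a power of $\varepsilon$ in Gaussian-like scaling. Similarly replace the rows $z^{2i}$ by $(z^2-1)^i$, costing $\varepsilon^{\lceil i/2\rceil}$-type savings.

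The delicate point is making this order count sum to exactly $\binom n2$, with only the $\varepsilon x^2$ term surviving at leading order. If this bookkeeping fails, the fallback is to prove $\deg_s\le2\binom n2$ directly from finer Fa\`a di Bruno degree counts on the cofactor expansion, and to identify the leading coefficient by evaluating the same Gaussian limit.
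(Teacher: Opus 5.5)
Your polynomiality and divisibility steps are correct and agree with Step~2 of the paper. Your zero-mean observation for odd $p$ is the paper's remark that $f_{-p}$ is odd in $x+\pi/4$. The gap is in the step you flag yourself: the bound $\deg_s\HH_n\le2\binom n2$ and its leading coefficient carry the whole difficulty, and your sketch does not establish them.

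First, the claim that each factor $(z-1)$ ``costs half a power of $\varepsilon$'' needs a structural input you never prove. Write $f_s(x/s)=e^{x}\exp\bigl(\sum_{r\ge1}\varepsilon^r\phi_r(x)\bigr)$. You need $\deg\phi_r\le r+1$ and $\phi_1=x^2+x$. This holds because $s\,w(\varepsilon x)$ contributes $w_{r+1}x^{r+1}$ at order $\varepsilon^r$, and $\log(\cos+\sin)$ contributes degree $r$. It follows that the $\varepsilon^R$-part of $e^{-x}f_s(x/s)$ has $x$-degree $\le2R$, with top term $x^{2R}/R!$. This is exactly Lemma~\ref{lem:Apoly}. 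Your unspecified $O(\varepsilon^2)$ hides it, and a term like $\varepsilon^2x^{10}$ would wreck the count.

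Second, the savings cannot be assigned separately to rows and columns. The sum $\lceil i/2\rceil+\lceil j/2\rceil$ overcounts: for $n=2$ it predicts $\varepsilon$-order $\ge2$, whereas $\HH_2=4s^2+4s$ has $\varepsilon$-order exactly $1$.

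Your fallback also fails. By Lemma~\ref{lem:Apoly} each $a_m(s)$ is monic of degree $m$, so every term of the cofactor expansion reaches degree $3\binom n2$. The drop to $2\binom n2$ is pure cancellation, which no per-entry degree count can detect.

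The fix is to count jointly. Use $(z^2-1)^i(z-1)^j=(z-1)^{i+j}(z+1)^i$ and $(z+1)^i=\sum_l\binom il2^{i-l}(z-1)^l$.
\begin{itemize}
\item The new $(i,j)$ entry then has order $\ge\lceil(i+j)/2\rceil$.
\item Each permutation term therefore has order $\ge\sum_i(i+\sigma(i))/2=\binom n2$, with equality only when every $i+\sigma(i)$ is even.
\item In that case only the $l=0$ part survives at leading order. Its coefficient is $2^i(i+j)!/\bigl(\tfrac{i+j}{2}\bigr)!$, the same as for $e^{x+\varepsilon x^2}$ after the same change of families (Lemma~\ref{lem:family}).
\end{itemize}
Hence the $\varepsilon^{\binom n2}$-coefficient of $\HH_n(f_s(x/s))$ equals the Gaussian one, namely $4^{\binom n2}\prod_{k<n}k!$ by Remark~\ref{rem:gaussgen}(i).

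Completed this way, your route is a genuine alternative to the paper's Step~1. The paper expands $\gamma_{2i+j}$ in powers of $k=2i+j$, runs a Cauchy--Binet degree count, and evaluates the leading symbol $e^{\sigma k^2}$ through the separate Lemma~\ref{lem:gausskernel}. Your unitriangular change of families turns the order bound into a one-line permutation estimate, and it reuses Theorem~\ref{thm:gauss} for the constant. Both routes rest on the same structural input, Lemma~\ref{lem:Apoly}.
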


\noindent
Equivalently the factor $(s+a)$ occurs with multiplicity $n-1-\lfloor a/2\rfloor$
for $a=0,1,\dots,2n-3$ (highest on $s,s+1$, decreasing in consecutive pairs);
at $s=1$ this evaluates $\HH_n\bigl(\frac{\cos x+\sin x}{\cos x-\sin x}\bigr)$.
The constant $4^{\binom n2}\prod_{k<n}k!$ is the Springer constant of
\S\ref{sec:springer} (at $r=1$) divided by $\prod_{k<n}(2k)!$.

\smallskip
Write $f_s(x)=\frac{\cos x+\sin x}{(\cos x-\sin x)^{s}}$ and $a_k(s)=k![x^k]f_s$.
The proof is again the divisor method~$\M4$ (Observation~\ref{obs:divisor}), but now
with the \emph{exponent} $s$ as the parameter: the degenerations are the
non-positive integers, where $f_s$ becomes a trigonometric polynomial, together
with $s=\infty$. Unlike \S\ref{sec:springer} (and its elliptic deformation,
Section~\ref{sec:elliptic}), the
parameter is not a coefficient $t$ --- indeed the coefficient deformation
$\frac{\cos x+t\sin x}{(\cos x-t\sin x)^{s}}$ acquires extra $s$-dependent zeros
in $t$ and does \emph{not} yield to the method. We first record the polynomial
dependence on $s$ and the structure of its high-order coefficients.

\begin{Lemma}\label{lem:Apoly}
$a_k(s)$ is a monic polynomial in $s$ of degree $k$, and its coefficient
$$\pi_r(k):=[s^{\,k-r}]a_k(s)$$
is a polynomial in $k$ of degree $\le 2r$ with leading coefficient $1/r!$.
\end{Lemma}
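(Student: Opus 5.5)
The plan is to get both claims from one exact generating function in the two variables $x$ and $s$, rather than from Fa\`a di Bruno. Put $h(x)=\cos x-\sin x$, so that $h'(x)=-(\cos x+\sin x)$ and
$$
f_s=-h'\,h^{-s}=-h'\,e^{-s\log h}.
$$
Since $h(0)=1$, $\log h$ is a power series with no constant term. We have $h=1-x-x^2/2+x^3/6+\cdots$, hence
$$
-\log h=x+x^2+O(x^3)=:L(x).
$$
This gives $f_s=-h'\,e^{sL(x)}$ with $-h'=1+x-x^2/2-\cdots$. The point is that $L$ has linear term exactly $x$, while $-h'$ has constant term $1$ and does not depend on $s$.

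\emph{Step 1: polynomiality and monic degree.} Expand
$$
e^{sL}=\sum_{j\ge0}\frac{s^j}{j!}L(x)^j .
$$
The series $L^j$ has $x$-valuation $j$ and begins $L^j=x^j+j\,x^{j+1}+\cdots$. Therefore $[x^k]f_s$ is a polynomial in $s$ of degree $\le k$. Its top coefficient $[s^k]$ comes only from $j=k$ paired with the constant term $1$ of $-h'$, and equals $1/k!$. Multiplying by $k!$ gives $a_k(s)$ monic of degree $k$.

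\emph{Step 2: the coefficients $\pi_r(k)$.} Write
$$
[s^{k-r}]a_k=k!\sum_{q=0}^{r}\frac{1}{(k-r)!}\,[x^{q}]\bigl(-h'\bigr)\cdot[x^{k-q}]L^{k-r}.
$$
Set $j=k-r$, and write $L=x(1+M(x))$ with $M=x+O(x^2)$. Then
$$
[x^{k-q}]L^{j}=[x^{r-q}](1+M)^{j}=\sum_{p}\binom{j}{p}[x^{r-q}]M^p .
$$
Since $M$ has valuation $1$, only $p\le r-q$ contributes. Each term is $\binom{j}{p}$, a polynomial in $k$ of degree $p\le r-q$, times a constant. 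Multiplying by $k!/(k-r)!=k(k-1)\cdots(k-r+1)$, which has degree $r$ in $k$, gives a polynomial in $k$ of degree $\le r+(r-q)\le 2r$.

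The degree $2r$ is reached only when $q=0$ and $p=r$. In that case $[x^r]M^r=1$ because $M=x+\cdots$, and $[x^0](-h')=1$. The leading coefficient is therefore that of
$$
\frac{k!}{(k-r)!}\binom{k-r}{r}\sim \frac{k^r\cdot k^r}{r!},
$$
which is $1/r!$.

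\emph{Main obstacle.} The only real care point is the bookkeeping in Step 2. I need the $\binom{j}{p}$ expansion to be valid uniformly in $k$, including small $k$ where $j=k-r$ might be negative. The intended fix is to state the identity for $k\ge r$, where $j\ge0$, and note that the right-hand side is a polynomial in $k$; the lemma is only used for $k\ge r$. I also need to check that the lower-order $q$- and $p$-terms cannot produce a $k^{2r}$ term, and the degree count above shows this directly.
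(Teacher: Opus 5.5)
Your proof is correct and is essentially the paper's argument. The paper also writes $f_s=(\cos x+\sin x)\,e^{sw}$ with $w=-\log(\cos x-\sin x)=x\,\omega(x)$, gets monicity from $w^k=x^k+\cdots$, and reduces to $\pi_r(k)=\frac{k!}{(k-r)!}\,[x^r]\bigl((\cos x+\sin x)\,\omega^{k-r}\bigr)$. The only cosmetic difference is that it expands $\omega^{k-r}=\exp\bigl((k-r)\log\omega\bigr)$ in powers of $k-r$, where you use the binomial series of $(1+M)^{k-r}$.

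Your concern about $k<r$ is harmless in either version. The factor $k(k-1)\cdots(k-r+1)$ makes the polynomial vanish at $k=0,\dots,r-1$, in agreement with $[s^{k-r}]a_k=0$ there.
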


\begin{proof}
Put $w(x):=-\log(\cos x-\sin x)=x+x^2+\tfrac23x^3+\cdots=x\,\omega(x)$, so
$\omega(0)=1$, $[x^1]\log\omega=1$, and $(\cos x-\sin x)^{-s}=e^{sw}$. With
$\phi:=\cos x+\sin x$,
$$
a_k(s)=k![x^k]\bigl(\phi\,e^{sw}\bigr)
=k!\sum_{d\ge0}\frac{s^d}{d!}\,[x^k]\bigl(\phi\,w^d\bigr).
$$
As $w^d$ has $x$-order $d$, $[x^k](\phi w^d)=0$ for $d>k$, so $\deg_s a_k=k$; the
top term is $k!\frac{s^k}{k!}[x^k]w^k=s^k$ (since $w^k=x^k+\cdots$), so $a_k$ is
monic. With $d=k-r$ and $w^{k-r}=x^{k-r}\omega^{k-r}$,
$$
\pi_r(k)=\frac{k!}{(k-r)!}\,[x^k]\bigl(\phi\,w^{k-r}\bigr)
=\frac{k!}{(k-r)!}\,[x^r]\bigl(\phi\,\omega^{k-r}\bigr).
$$
Here $\frac{k!}{(k-r)!}=k(k-1)\cdots(k-r+1)$ has degree $r$ in $k$; and
$\omega^{k-r}=\exp\bigl((k-r)\log\omega\bigr)$ gives
$[x^r](\phi\,\omega^{k-r})=\sum_{p=0}^{r}\frac{(k-r)^p}{p!}[x^r]\bigl(\phi(\log\omega)^p\bigr)$,
a polynomial in $k$ of degree $\le r$ with leading coefficient
$\frac{1}{r!}([x^1]\log\omega)^r\phi(0)=\frac1{r!}$. Hence $\deg_k\pi_r\le 2r$
and $[k^{2r}]\pi_r=\frac1{r!}$.
\end{proof}

The high-$s$ behaviour is thus governed by the \emph{leading symbol}
$\hat a_k(s)=\sum_{r\ge0}\frac{k^{2r}}{r!}s^{\,k-r}=s^k e^{k^2/s}$, whose dilated
determinant is a Gaussian kernel.

\begin{Lemma}\label{lem:gausskernel}
$\displaystyle \det\bigl(e^{\sigma(2i+j)^2}\bigr)_{0\le i,j<n}
=4^{\binom n2}\Bigl(\prod_{k=1}^{n-1}k!\Bigr)\sigma^{\binom n2}+O(\sigma^{\binom n2+1}).$
\end{Lemma}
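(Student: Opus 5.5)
We write $(2i+j)^2=4i^2+4ij+j^2$. The entry then factors as
$$
e^{\sigma(2i+j)^2}=e^{4\sigma i^2}\cdot e^{4\sigma ij}\cdot e^{\sigma j^2},
$$
so the matrix equals $\mathrm{diag}(e^{4\sigma i^2})\,\bigl(e^{4\sigma ij}\bigr)\,\mathrm{diag}(e^{\sigma j^2})$. Pulling the row factor $e^{4\sigma i^2}$ out of row $i$ and the column factor $e^{\sigma j^2}$ out of column $j$ multiplies the determinant by $\exp\bigl(\sigma\sum_i(4i^2+i^2)\bigr)=1+O(\sigma)$, a unit in $\QQ[[\sigma]]$. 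It therefore suffices to show that the middle determinant has lowest term $4^{\binom n2}\prod_{k<n}k!\,\sigma^{\binom n2}$. Multiplying by a unit $1+O(\sigma)$ does not change the lowest-order term.

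The middle matrix is a Vandermonde matrix. Its entry is $e^{4\sigma ij}=x_i^{\,j}$ with nodes $x_i=e^{4\sigma i}$. This is the same Vandermonde mechanism that underlies $\M1$ (Lemma~\ref{lem:vdm}), here with $M=I$. Hence
$$
\det\bigl(e^{4\sigma ij}\bigr)_{0\le i,j<n}=\prod_{0\le i<k\le n-1}\bigl(e^{4\sigma k}-e^{4\sigma i}\bigr).
$$
Each factor equals $4\sigma(k-i)\bigl(1+O(\sigma)\bigr)$, because $e^{4\sigma k}-e^{4\sigma i}=e^{4\sigma i}\bigl(e^{4\sigma(k-i)}-1\bigr)$ and $k\ne i$.

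Multiplying the $\binom n2$ factors gives
$$
(4\sigma)^{\binom n2}\prod_{i<k}(k-i)\,\bigl(1+O(\sigma)\bigr).
$$
We then use the standard count $\prod_{0\le i<k\le n-1}(k-i)=\prod_{k=1}^{n-1}k!$, which holds because for fixed $k$ the differences $k-i$ run over $1,\dots,k$. This yields exactly $4^{\binom n2}\bigl(\prod_{k=1}^{n-1}k!\bigr)\sigma^{\binom n2}+O(\sigma^{\binom n2+1})$.

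No step is a real obstacle. The only point needing care is that all manipulations are identities of formal power series in $\sigma$ (or of entire functions, with the expansion taken at $\sigma=0$). One then checks that the $O(\sigma)$ corrections, from the row and column units and from each Vandermonde factor, enter only multiplicatively, so they cannot lower the order. The identity then follows.
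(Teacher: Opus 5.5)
Your proof is correct. After the first step, which the paper also takes (stripping the row factors $e^{4\sigma i^2}$ and the column factors $e^{\sigma j^2}$), you follow a different route.

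\textbf{The paper's route.} The paper expands $e^{4\sigma ij}=\sum_{m\ge0}\frac{(4\sigma)^m}{m!}\,i^mj^m$ and applies Cauchy--Binet over exponent sets. It then argues that the minimal order $\binom n2$ is attained only by $\{0,1,\dots,n-1\}$. There the two Vandermondes contribute $\bigl(\prod_{k<n}k!\bigr)^2$ and the factor $\prod_a m_a!$ cancels one copy.

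\textbf{Your route.} You observe that $\bigl(e^{4\sigma ij}\bigr)$ is literally the Vandermonde matrix in the nodes $e^{4\sigma i}$. This gives the exact identity $\det\bigl(e^{\sigma(2i+j)^2}\bigr)=e^{5\sigma\sum_{i<n}i^2}\prod_{0\le i<k<n}\bigl(e^{4\sigma k}-e^{4\sigma i}\bigr)$. From it the leading coefficient, and the exactness of the order $\binom n2$, are read off at once, with no minimality or uniqueness argument over index sets.

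\textbf{Comparison.} Your argument is shorter and proves more: it gives a closed form for the whole determinant, not just its lowest term. The paper's Cauchy--Binet bookkeeping is the same order-counting used in Step~1 of the proof of Theorem~\ref{thm:deriv}. There the entries are only approximately exponential and no Vandermonde closed form is available. So the paper's choice keeps the lemma and its application on one mechanism rather than being forced by the lemma itself.
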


\begin{proof}
Factor $e^{4\sigma i^2}$ from row $i$ and $e^{\sigma j^2}$ from column $j$
($(2i+j)^2=4i^2+4ij+j^2$); these are $1+O(\sigma)$, so to leading order
$\det(e^{\sigma(2i+j)^2})=(1+O(\sigma))\det(e^{4\sigma ij})$. Expanding
$e^{4\sigma ij}=\sum_{m\ge0}\frac{(4\sigma)^m}{m!}i^mj^m$, Cauchy--Binet gives
$$
\det(e^{4\sigma ij})=\sum_{0\le m_0<\cdots<m_{n-1}}
\Bigl(\prod_a\tfrac{(4\sigma)^{m_a}}{m_a!}\Bigr)
\det(i^{m_a})_{i,a}\,\det(j^{m_a})_{j,a},
$$
of lowest order $\sum_am_a=\binom n2$, attained only by
$(m_a)=(0,1,\dots,n-1)$, where each Vandermonde is
$\prod_{0\le i<j<n}(j-i)=\prod_{k=1}^{n-1}k!$ and $\prod_am_a!=\prod_{k=1}^{n-1}k!$.
The leading term is
$$
(4\sigma)^{\binom n2}\,\Bigl(\prod_{k<n}k!\Bigr)^{2}\Big/\prod_{k<n}k!
=4^{\binom n2}\Bigl(\prod_{k<n}k!\Bigr)\,\sigma^{\binom n2}.
$$
\end{proof}

\begin{proof}[Proof of Theorem~\textup{\ref{thm:deriv}}]
\textit{Step 1: degree and leading coefficient (the order at $s=\infty$).}
$\HH_n=\det(a_{2i+j})$ is a polynomial in $s$ of degree
$\le\max_\tau\sum_i(2i+\tau(i))=3\binom n2$ (Lemma~\ref{lem:Apoly}), the
maximum over permutations $\tau$. Pulling
$s^{2i}$ from row $i$ and $s^{j}$ from column $j$,
$$
\HH_n(f_s)=s^{3\binom n2}\det\bigl(\gamma_{2i+j}(\sigma)\bigr),\qquad
\sigma:=1/s,\quad \gamma_k(\sigma):=\sum_{r\ge0}\pi_r(k)\,\sigma^r .
$$
Collecting $\gamma_k(\sigma)=\sum_{l\ge0}E_l(\sigma)\,k^l$ by powers of $k$,
Lemma~\ref{lem:Apoly} ($\deg_k\pi_r\le2r$) gives $\ord_\sigma E_l\ge\lceil
l/2\rceil$. Writing $(2i+j)^l=\sum_b\binom lb(2i)^{l-b}j^b$ factors
$\gamma_{2i+j}=\sum_b j^{\,b}F_b(2i,\sigma)$ with
$F_b(u,\sigma)=\sum_{l\ge b}\binom lb E_l(\sigma)u^{\,l-b}$, so by Cauchy--Binet
$$
\det(\gamma_{2i+j})=\sum_{B}\det\bigl(F_b(2i,\sigma)\bigr)_{i,\,b\in B}\,
\det\bigl(j^{\,b}\bigr)_{j,\,b\in B},
$$
over $n$-subsets $B\subset\ZZ_{\ge0}$. In column $b$ the coefficient of
$\sigma^{r}$ is a polynomial in $i$ of degree $\le 2r-b$, so a nonzero
$\sigma^{\sum r_b}$-term needs the degrees $\{2r_b-b\}_{b\in B}$ to dominate
$\{0,\dots,n-1\}$; with $r_b\ge\lceil b/2\rceil$ this forces
$\sum_b r_b=\tfrac12\sum_b\bigl((2r_b-b)+b\bigr)\ge\tfrac12(\binom n2+\binom
n2)=\binom n2$. Hence $\ord_\sigma\det(\gamma_{2i+j})\ge\binom n2$, i.e.\
$\deg_s\HH_n\le2\binom n2$. Equality in the bound forces both
$\{b\}=\{0,\dots,n-1\}$ and the multiset $\{2r_b-b\}=\{0,\dots,n-1\}$, so the
$\sigma^{\binom n2}$-coefficient uses, in each column, only the top-degree part
of $\pi_r$ --- i.e.\ only $[k^{2r}]\pi_r=\frac1{r!}$ (Lemma~\ref{lem:Apoly}) ---
and is therefore unchanged when $\gamma_k$ is replaced by its symbol
$\sum_r\frac{k^{2r}}{r!}\sigma^r=e^{\sigma k^2}$. By Lemma~\ref{lem:gausskernel}
that coefficient is $4^{\binom n2}\prod_{k<n}k!\ne0$; hence $\deg_s\HH_n=2\binom
n2$ with leading coefficient $c_n=4^{\binom n2}\prod_{k=1}^{n-1}k!$.

\textit{Step 2: order at $s=-a$.} For an integer $a\ge0$, with $y=x+\tfrac\pi4$,
$$
f_{-a}(x)=(\cos x+\sin x)(\cos x-\sin x)^{a}=2^{(a+1)/2}\sin(y)\cos^{a}(y),
$$
an \emph{odd} trigonometric polynomial whose nonzero frequencies are
$\{\pm1,\pm3,\dots,\pm(a{+}1)\}$ for $a$ even and $\{\pm2,\pm4,\dots,\pm(a{+}1)\}$
for $a$ odd --- in either case $d_a:=\lfloor a/2\rfloor+1$ distinct absolute
values, none zero. Writing $f_{-a}=\sum_\nu c_\nu e^{i\nu x}$, its moment matrix
$B^{(0)}=(b_{2i+j})$, $b_m=\sum_\nu c_\nu(i\nu)^m$, factors (using
$(i\nu)^{2i}=(-i\nu)^{2i}$) as one rank-one term per \emph{distinct} absolute
value (Lemma~\ref{lem:cb}), so $\operatorname{rank}B^{(0)}\le d_a$. Now
$a_{2i+j}(s)$ is analytic in $\delta:=s+a$ with $a_{2i+j}(-a)=b_{2i+j}$, so
$\HH_n(f_s)=\det\bigl(B^{(0)}+\delta B^{(1)}+\cdots\bigr)$; a matrix pencil whose
constant term has rank $\rho$ has determinant of order $\ge n-\rho$ at
$\delta=0$. Hence
$$
\ord_{s=-a}\HH_n(f_s)\ \ge\ n-\operatorname{rank}B^{(0)}\ \ge\ n-d_a
=n-1-\lfloor a/2\rfloor .
$$

\textit{Step 3: assembly.} $\HH_n(f_s)$ has degree $2\binom n2$ and is divisible by
$\prod_{a=0}^{2n-3}(s+a)^{\,n-1-\lfloor a/2\rfloor}
=\prod_{j=0}^{n-2}[(s+2j)(s+2j+1)]^{\,n-1-j}$, itself of degree
$\sum_{j=0}^{n-2}2(n-1-j)=2\binom n2$. By Observation~\ref{obs:divisor} they
agree up to the leading constant $c_n=4^{\binom n2}\prod_{k<n}k!$, which is
Theorem~\ref{thm:deriv}.
\end{proof}

\begin{Remark}
By \eqref{eq:derivobs} this determinant is, up to the scalar $(s-1)^{-n}$,
the \emph{single shift} $\det(b_{2i+j+1})$ of the Springer sequence
$b_m=m![x^m](\cos x-\sin x)^{-(s-1)}$ of \S\ref{sec:springer}. The
two degenerations are of the kinds met earlier: $s=\infty$ reproduces, after
rescaling, the rank-one geometric collapse met at $t=\pm i$ in the Springer
proof (the limit
being $e^x$, its symbol the Gaussian kernel of Lemma~\ref{lem:gausskernel};
the same collapse reappears at $m=1$ in \S\ref{ssec:ell-A}),
while the integers $s=-a$ give an even/low-rank collapse as in
\S\ref{sec:springer} (here $f_{-a}$ is an odd trigonometric polynomial).
\end{Remark}

\section{The reciprocal-sine case $(1+x)\,x/\sin x$}\label{sec:xsinx}

The secant family of Section~\ref{sec:gen-xcos} evaluates $(1+x)/\cos x$, and
more generally $(1+x)/\cos(x)^{s+1}$, in closed product form. It is natural to
try $\sin$ in place of $\cos$. Since $\sin x$ is odd and vanishes at the origin,
the literal analogue $1/\sin x$ is not a power series; the even analytic
correction $x/\sin x$ repairs this and leads to the object of this section,
$$
f(x)=(1+x)\,\frac{x}{\sin x}.
$$
Its two moment functionals turn out to be \emph{Wilson} functionals sharing
three of their four parameters (Propositions~\ref{prop:xsin-wilson}
and~\ref{prop:xsin-evenwilson}); this is what drives the evaluation, which we
prove in detail below.
Unlike the cosine case, the exponent deformation does \emph{not} survive:
numerically, $(1+x)\bigl(x/\sin x\bigr)^{s+1}$ shows no comparably nice dilated
determinant once $s\ne0$, so we study only the member $s=0$ displayed above.
Write $g(x)=x/\sin x$ and
$$
b_k=(2k)!\,[x^{2k}]g
\qquad(b_0=1,\ b_1=\tfrac13,\ b_2=\tfrac{7}{15},\ b_3=\tfrac{31}{21},\dots).
$$
Since $g$ is even and $x\,g$ is odd, the moments of $f$ are
\begin{equation}\label{eq:xsin-moments}
a_{2k}=b_k,\qquad a_{2k+1}=(2k+1)\,b_k,
\end{equation}
so the two functionals attached to the quadratic decomposition (the even
$\mathcal S$ and the odd $\mathcal T$ of Section~\ref{sec:biotho}) are
$$
\mathcal S[y^k]=b_k,\qquad \mathcal T[y^k]=(2k+1)\,b_k=\mathcal T^{*}[y^k].
$$
The situation is the mirror
image of the cosine family of Section~\ref{sec:gen-xcos}: here the
\emph{odd} functional $\mathcal T$ is the classical one --- and, less
obviously, the even functional $\mathcal S$ turns out to be classical as
well, indeed itself a Wilson functional
(Proposition~\ref{prop:xsin-evenwilson}).
The purpose of this section is to prove
the following evaluation.

\begin{Theorem}[closed form for $(1+x)\,x/\sin x$]\label{conj:xsin-closed}
Set
\begin{equation}\label{eq:xsin-Q}
Q(k)=\frac{144\cdot 64^{\,k}\;(k!)^{5}\,\bigl((k+1)!\bigr)^{6}\,(2k)!\,(2k+1)!}
{(3k)!\,(3k+3)!\,(3k+4)!}\,.
\end{equation}
Then the determinants obey the two-step recurrence
$\HH_{n+2}=\tfrac23\,Q(n)\,\HH_{n}$ \textup{(}$n\ge0$\textup{)}, and hence are
the products of factorials
\begin{equation}\label{eq:xsin-closed}
\HH_{2N}=\Bigl(\tfrac23\Bigr)^{N}\prod_{k=0}^{N-1}Q(2k),
\qquad
\HH_{2N+1}=\Bigl(\tfrac23\Bigr)^{N}\prod_{k=0}^{N-1}Q(2k+1).
\end{equation}
\end{Theorem}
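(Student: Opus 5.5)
The plan is to follow the biorthogonal reduction $\M2$, exactly as announced in the collapse mechanism of Section~\ref{sec:biotho}. First we identify the two functionals. For $\mathcal T[y^k]=(2k+1)b_k$ we use the integral representation of $x/\sin x$. Its coefficients $b_k$ are moments of an even weight proportional to $\mathrm{sech}^2$-type densities, since $\pi u/\sinh$ is a Fourier pair. Integration by parts, as in Lemma~\ref{lem:cdh}, then moves the factor $2k+1$ onto the weight. In the variable $y=4w^2$, each weight should be written as $|\Gamma(a+iw)\Gamma(b+iw)\Gamma(c+iw)\Gamma(d+iw)/\Gamma(2iw)|^2$, i.e.\ as a Wilson weight. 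We expect $\mathcal S$ and $\mathcal T$ to share three parameters and differ in the fourth by $\tfrac12$. From the Wilson data we then read off the recurrence coefficients and the norms $h^S_l,h^T_m$, both as products of factorials.

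Next we compute the connection coefficients $\kappa_{i,m}$ of $P_i$ in the $Q_m$ basis. A one-parameter Wilson pair gives a single hypergeometric term. We guess its form from small cases, namely a row factor times a column factor times the kernel $C_{i-m-1}C_{i+m}$, with the diagonal handled separately by $\kappa_{i,i}=1$. We then verify the guess against the recurrence \eqref{eq:kapparec} of Lemma~\ref{lem:connrec}; after dividing by $\kappa_{i,m}$ this is a rational identity in $(i,m)$, just as in Lemma~\ref{lem:connGene}. Lemma~\ref{lem:bindetGene} then reduces $\HH_n$ to the norm products times $\det(\kappa_{\bar n+r,m})_{0\le r,m<\underline n}$. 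Pulling out the row and column factors leaves a shifted Catalan determinant $\det\bigl(C_{\bar n+r-m-1}C_{\bar n+r+m}\bigr)$.

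The main obstacle is evaluating this Catalan determinant in closed form for both shifts $\bar n\in\{\underline n,\underline n+1\}$. The dual Jacobi--Trudi route does not apply here, because the kernel depends on $i+m$ as well as $i-m$. Instead we generalise to $D_N(q)=\det(C_{q+r-m-1}C_{q+r+m})_{0\le r,m<N}$ with a free shift $q$. We conjecture a product formula from data, then check that all five minors in the Desnanot--Jacobi identity \eqref{eq:condensation} are again of the form $D_{N'}(q')$. Deleting boundary rows and columns shifts $q$ by $\pm1$ and changes $N$ by one or two, although the $i+m$ dependence may require a second parameter, namely a column offset, to keep the family closed. Once closure holds, the identity becomes a recurrence in $N$, and the conjectured product is verified by checking that the resulting ratio identity holds as a rational identity in $(N,q)$, together with the base cases $N=0,1$.

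Finally we assemble the factors: the sign $(-1)^{\binom{\bar n}2}$, the norm products, the extracted row and column factors, and the Catalan product. We form the ratio $\HH_{n+2}/\HH_n$, for which $\bar n$ and $\underline n$ both increase by one. The telescoping should leave exactly $\tfrac23 Q(n)$, and we confirm this by a direct factorial simplification, treating $n$ even and $n$ odd separately. The base cases $\HH_0=1$ and $\HH_1=a_0=1$ then give \eqref{eq:xsin-closed}.
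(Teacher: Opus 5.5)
Your plan is correct and is essentially the paper's proof. The shared steps are: the Wilson identification of $\mathcal S$ and $\mathcal T$ with parameters $(\tfrac12,\tfrac12,\tfrac12,0)$ and $(\tfrac12,\tfrac12,\tfrac12,\tfrac12)$; a single-term connection coefficient checked against \eqref{eq:kapparec} and split as $\alpha_i\beta_mC_{i-m-1}C_{i+m}$; the reduction of Lemma~\ref{lem:bindetGene}; Desnanot--Jacobi condensation for the Catalan determinant; and a two-step telescoping to $\tfrac23Q(n)$.

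The one point to firm up is that the second parameter is not optional. Deleting the first row and first column of $D_N(q)$ gives $\det\bigl(C_{q+r-m-1}C_{q+r+m+2}\bigr)$, which lies outside the one-parameter family. With your column offset $c$, the family $\det\bigl(C_{q+r-m-c-1}C_{q+r+m+c}\bigr)$ is closed: its corner minors carry the parameters $(q,c)$, $(q+1,c+1)$, $(q+1,c)$, $(q,c+1)$. This closed family is exactly the odd-gap part of the paper's two-parameter family $F_N(a,b)=\det\bigl(C_{a+r-m}C_{b+r+m}\bigr)$.
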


The proof occupies the remainder of the section; its plan is as
follows. Subsections~\ref{ssec:xsin-odd} and~\ref{ssec:xsin-even} make the
two functionals explicit: $\mathcal T$ is the Wilson functional with
parameters $(\tfrac12,\tfrac12,\tfrac12,\tfrac12)$, and --- the decisive
structural fact --- $\mathcal S$ is itself Wilson, with parameters
$(\tfrac12,\tfrac12,\tfrac12,0)$ (the Wilson and continuous Hahn families used
throughout are recalled in \S\ref{ssec:xsin-askey}).
Subsection~\ref{ssec:xsin-connred} applies
the biorthogonal reduction $\M2$, which confines the whole evaluation to a
determinant of connection coefficients $\kappa_{i,m}$ between the two
families. Because the families differ in a \emph{single} Wilson parameter,
$\kappa$ collapses to a single hypergeometric term
(Subsection~\ref{ssec:xsin-kappaclosed}), and the connection determinant
becomes, after removal of row and column factors, a determinant of products
of two \emph{Catalan numbers}; that Catalan determinant is evaluated in
closed form by Desnanot--Jacobi condensation
(Subsection~\ref{ssec:xsin-catalan}). Subsection~\ref{ssec:xsin-proof}
assembles the pieces.

\subsection{The Wilson and continuous Hahn functionals}\label{ssec:xsin-askey}

Both functionals of this section belong to the top two levels of the
\emph{Askey scheme} of hypergeometric orthogonal polynomials
\cite{Koekoek2010KLS,Ismail2005}; we recall the two families and fix the
notation used below.

\medskip
\noindent\emph{Wilson.} For parameters $(a,b,c,d)$ with positive real parts
(or occurring in conjugate pairs), the \emph{Wilson functional}
$\mathcal W_{a,b,c,d}$ acts on polynomials in the variable $y=x^2$ by
$$
\mathcal W_{a,b,c,d}[p]=\frac1K\int_0^\infty p(x^2)\,
\Bigl|\frac{\Gamma(a+ix)\,\Gamma(b+ix)\,\Gamma(c+ix)\,\Gamma(d+ix)}{\Gamma(2ix)}\Bigr|^2
dx,
$$
with $K$ the constant normalising $\mathcal W_{a,b,c,d}[1]=1$. Its monic
orthogonal polynomials are the \emph{Wilson polynomials}, symmetric in
$(a,b,c,d)$ and given by the terminating series
$$
\frac{W_n(x^2;a,b,c,d)}{(a+b)_n(a+c)_n(a+d)_n}
={}_4F_3\!\left(\begin{matrix}
-n,\ n+a+b+c+d-1,\ a+ix,\ a-ix\\
a+b,\ a+c,\ a+d\end{matrix}\,;\,1\right).
$$
In the variable $y=x^2$ they obey the monic three-term recurrence
$P_{n+1}=(y-c_n)P_n-\lambda_nP_{n-1}$ with \cite[\S9.1]{Koekoek2010KLS}
$$
c_n=A_n+C_n-a^2,\qquad \lambda_n=A_{n-1}\,C_n,
$$
$$
\begin{aligned}
A_n&=\frac{(n{+}a{+}b{+}c{+}d{-}1)(n{+}a{+}b)(n{+}a{+}c)(n{+}a{+}d)}
{(2n{+}a{+}b{+}c{+}d{-}1)(2n{+}a{+}b{+}c{+}d)},\\
C_n&=\frac{n\,(n{+}b{+}c{-}1)(n{+}b{+}d{-}1)(n{+}c{+}d{-}1)}
{(2n{+}a{+}b{+}c{+}d{-}2)(2n{+}a{+}b{+}c{+}d{-}1)},
\end{aligned}
$$
and squared norms $h_n=\mathcal W[P_n^2]=\prod_{k=1}^n\lambda_k$ (the
expression for $c_n$ is symmetric in $a,b,c,d$ despite its form). These stand at
the very top of the Askey scheme: every other classical family is a limit or a
specialisation of them. In this section the two functionals are Wilson
functionals sharing $a=b=c=\tfrac12$ and differing \emph{only} in the fourth
parameter $d$ --- $d=\tfrac12$ for the odd functional $\mathcal T$, $d=0$ for
the even functional $\mathcal S$; substituting these into the formulas above (in
the rescaled variable $y=4x^2$ of Proposition~\ref{prop:xsin-wilson}) reproduces
the recurrence data of \S\ref{ssec:xsin-odd}--\ref{ssec:xsin-even}. This
single-parameter difference is what collapses the connection coefficients in
\S\ref{ssec:xsin-kappaclosed}.

\medskip
\noindent\emph{Continuous Hahn.} One level below sit the \emph{continuous Hahn
polynomials} \cite[\S9.4]{Koekoek2010KLS}, orthogonal on $\RR$ in the variable
$x$ with weight $\Gamma(a+ix)\,\Gamma(b+ix)\,\Gamma(c-ix)\,\Gamma(d-ix)$
(parameters of positive real part, with $c=\bar a$, $d=\bar b$), and given by
$$
p_n(x;a,b,c,d)=i^n\frac{(a+c)_n(a+d)_n}{n!}\,
{}_3F_2\!\left(\begin{matrix}-n,\ n+a+b+c+d-1,\ a+ix\\
a+c,\ a+d\end{matrix}\,;\,1\right).
$$
When the weight is \emph{even} --- as for $a=b=c=d=\tfrac12$, where it equals
$|\Gamma(\tfrac12+ix)|^{4}=\pi^2\operatorname{sech}^2\pi x$ --- the monic
polynomials are symmetric, $p_{n+1}=x\,p_n-\beta_n p_{n-1}$ with $\beta_n$
rational in $n$, so odd and even degrees decouple. Pushing the measure forward
under the quadratic map $y=x^2$ then turns them into a Wilson family: this is
precisely the quadratic decomposition of \S\ref{sec:biotho}. It is why the even
functional $\mathcal S$ wears two faces, each used at a different point below:
continuous Hahn in $x$, which yields its clean $S$-fraction $u_n=n^4/(4n^2-1)$
(Proposition~\ref{prop:xsin-even}), and Wilson in $y=x^2$, which yields its
connection coefficients against $\mathcal T$
(Proposition~\ref{prop:xsin-evenwilson}).

\subsection{The classical odd functional $\mathcal T$}\label{ssec:xsin-odd}

\begin{Proposition}[integral representation of $\mathcal T$]\label{prop:xsin-weight}
For all $k\ge 0$,
$$
\mathcal T[y^k]=(2k+1)\,b_k=(2k+1)!\,\frac{2\,\eta(2k)}{\pi^{2k}},
\qquad
\eta(s)=\sum_{j\ge1}\frac{(-1)^{j-1}}{j^{s}},
$$
and $\mathcal T$ is the moment functional of the weight
\begin{equation}\label{eq:xsin-weight}
\mathcal T[y^k]=\int_0^\infty z^k\,w(z)\,dz,
\qquad
w(z)=\pi^2\sum_{j\ge1}(-1)^{j-1}j^2\,e^{-j\pi\sqrt z}
=\frac{\pi^{2}}{4}\,
\frac{\sinh\bigl(\tfrac{\pi}{2}\sqrt z\bigr)}{\cosh^{3}\bigl(\tfrac{\pi}{2}\sqrt z\bigr)}
\quad(z>0).
\end{equation}
\end{Proposition}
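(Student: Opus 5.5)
The plan is a direct computation from the Mittag-Leffler expansion of $1/\sin x$; no orthogonal-polynomial input is needed. Start from the classical partial-fraction expansion
$$
\frac{1}{\sin x}=\frac1x+\sum_{j\ge1}(-1)^j\frac{2x}{x^2-j^2\pi^2},
$$
multiply by $x$, and expand each term geometrically in $x^2/(j\pi)^2$:
$$
\frac{x}{\sin x}=1+2\sum_{j\ge1}(-1)^{j-1}\sum_{k\ge1}\frac{x^{2k}}{(j\pi)^{2k}} .
$$
For $k\ge1$ the double series converges absolutely for $|x|<\pi$, since $\sum_j j^{-2k}<\infty$, so the two sums may be swapped. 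This gives $[x^{2k}]g=2\eta(2k)/\pi^{2k}$, hence $b_k=(2k)!\,2\eta(2k)/\pi^{2k}$ and $\mathcal T[y^k]=(2k+1)b_k=(2k+1)!\,2\eta(2k)/\pi^{2k}$. For $k=0$ the formula holds because $b_0=1$ and $\eta(0)=\tfrac12$ in the Abel-summed sense, and we will read it with that convention.

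For the weight, substitute $z=u^2$ in a single exponential:
$$
\int_0^\infty z^k e^{-j\pi\sqrt z}\,dz=2\int_0^\infty u^{2k+1}e^{-j\pi u}\,du=\frac{2\,(2k+1)!}{(j\pi)^{2k+2}} .
$$
Multiplying by $\pi^2(-1)^{j-1}j^2$ and summing over $j$ yields exactly $2(2k+1)!\,\eta(2k)/\pi^{2k}$. For $k\ge1$, integrating termwise is justified by Fubini, since $\sum_j j^2\int z^k e^{-j\pi\sqrt z}\,dz\asymp\sum_j j^{-2k}<\infty$.

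The closed form of $w$ is a generating-function identity. Set $q=e^{-t}$ with $t=\pi\sqrt z$. Applying $q\,\frac{d}{dq}$ to $\sum_{j\ge1}(-1)^{j-1}jq^j=q/(1+q)^2$ gives
$$
\sum_{j\ge1}(-1)^{j-1}j^2q^j=\frac{q(1-q)}{(1+q)^3}.
$$
Multiplying numerator and denominator by $e^{3t/2}$ turns the right-hand side into $\sinh(t/2)/(4\cosh^3(t/2))$, which is the stated expression for $w$ after multiplying by $\pi^2$. In particular $w>0$ on $(0,\infty)$, so $\mathcal T$ is indeed a positive moment functional.

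The main obstacle is the case $k=0$, together with the interchange of sum and integral there, since $\sum_j (-1)^{j-1}\cdot 1$ does not converge absolutely. I would avoid the issue by using the closed form of $w$ directly. After the substitution $z=u^2$, $\int_0^\infty w\,dz=\int_0^\infty 2u\,w(u^2)\,du$ is an elementary integral that evaluates to $1=\mathcal T[1]$; this can be checked by integration by parts, using that $\tanh^2(\tfrac{\pi}{2}u)$ has derivative proportional to $\sinh/\cosh^3$. Alternatively, insert a damping factor $e^{-\epsilon\sqrt z}$ and let $\epsilon\to0$ (Abel summation), which reproduces the value $\eta(0)=\tfrac12$ consistently with the convention used above.
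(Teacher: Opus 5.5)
Your proof is correct and follows the same route as the paper: the Mittag-Leffler expansion of $1/\sin x$ for the moments, the substitution $z=u^2$ with termwise integration for the weight, and the identity $\sum_{j\ge1}(-1)^{j-1}j^2q^j=q(1-q)/(1+q)^3$ for the closed form. Your handling of the sum--integral interchange is more careful than the paper's brief appeal to local uniform convergence: you use Fubini via absolute convergence for $k\ge1$, and for $k=0$, where termwise integration genuinely fails, you evaluate $\int_0^\infty w\,dz=1$ directly.
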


\begin{proof}
From the Mittag-Leffler expansion \cite[Ch.~VII]{Whittaker1996Watson}
$$
\frac{1}{\sin x}=\frac1x+\sum_{j\ge1}(-1)^j\frac{2x}{x^2-j^2\pi^2},
$$
we get
$\dfrac{x}{\sin x}=1+\sum_{j\ge1}(-1)^j\dfrac{2x^2}{x^2-j^2\pi^2}$, whence for
$k\ge1$
$$
[x^{2k}]\frac{x}{\sin x}
=2\sum_{j\ge1}(-1)^j\,[x^{2k-2}]\frac{1}{x^2-j^2\pi^2}
=2\sum_{j\ge1}(-1)^{j-1}\frac{1}{(j\pi)^{2k}}
=\frac{2\,\eta(2k)}{\pi^{2k}} .
$$
Hence $b_k=(2k)!\,\dfrac{2\eta(2k)}{\pi^{2k}}$ for $k\ge1$; with
$\eta(0)=\tfrac12$ the same formula gives $b_0=1$, and multiplying by
$(2k+1)$ yields the first display. For the second, the substitution
$z=u^2$ gives $\int_0^\infty z^k e^{-a\sqrt z}\,dz=2\,(2k+1)!/a^{2k+2}$, so
$$
\frac{(2k+1)!}{(j\pi)^{2k}}=\frac{(j\pi)^2}{2}\int_0^\infty z^k e^{-j\pi\sqrt z}\,dz .
$$
Summing $2(-1)^{j-1}$ times this over $j\ge1$ and interchanging sum and
integral --- legitimate because $\sum_{j\ge1}j^2e^{-j\pi\sqrt z}$ converges
locally uniformly on $(0,\infty)$ and $w$ is bounded near $0$ --- gives the
series form of \eqref{eq:xsin-weight}. The closed form, which also exhibits
the boundedness just used, follows from
$\sum_{j\ge1}(-1)^{j-1}j^{2}x^{j}=x(1-x)/(1+x)^{3}$ at $x=e^{-\pi\sqrt z}$.
\end{proof}

The next proposition identifies $\mathcal T$ as a Wilson functional and
makes its monic orthogonal polynomials $\rho_m$ and their data explicit.

\begin{Proposition}[$\mathcal T$ is Wilson; the odd $J$-fraction]\label{prop:xsin-wilson}
The monic $\mathcal T$-orthogonal polynomials satisfy
$\rho_{m+1}=(y-c^{\mathcal T}_m)\rho_m-\lambda^{\mathcal T}_m\rho_{m-1}$ with
$$
c^{\mathcal T}_m=2m^2+2m+1,\qquad
\lambda^{\mathcal T}_m=\frac{4m^6}{(2m-1)(2m+1)},
$$
and squared norms
\begin{equation}\label{eq:xsin-hT}
h^{\mathcal T}_m=\mathcal T[\rho_m^2]
=\prod_{j=1}^{m}\lambda^{\mathcal T}_j
=\frac{16^m\,(m!)^8}{(2m)!\,(2m+1)!}.
\end{equation}
Equivalently, the generating function of the moments
$\mathcal T[y^k]=(2k+1)b_k$ has the $J$-fraction
\begin{equation}\label{eq:xsin-TJfrac}
\sum_{k\ge0}\mathcal T[y^k]\,t^k
=\cfrac{1}{1-c^{\mathcal T}_0t-\cfrac{\lambda^{\mathcal T}_1t^2}
 {1-c^{\mathcal T}_1t-\cfrac{\lambda^{\mathcal T}_2t^2}{1-\ddots}}}
=\cfrac{1}{1-t-\cfrac{\tfrac43\,t^2}
 {1-5t-\cfrac{\tfrac{256}{15}\,t^2}{1-13t-\cdots}}}.
\end{equation}
\end{Proposition}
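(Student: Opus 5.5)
The plan is to identify $\mathcal T$ with the Wilson functional $\mathcal W_{1/2,1/2,1/2,1/2}$ in the rescaled variable $y=4x^2$. The recurrence data and norms will then be read off the general Wilson formulas of \S\ref{ssec:xsin-askey}.

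\emph{Step 1: identify the weight.} Starting from \eqref{eq:xsin-weight}, substitute $z=4x^2$, so that $\sqrt z=2x$ and $dz=8x\,dx$. This gives
$$\mathcal T[y^k]=\int_0^\infty (4x^2)^k\,\frac{\pi^2}{4}\,\frac{\sinh(\pi x)}{\cosh^3(\pi x)}\,8x\,dx.$$
Compare this with the Wilson weight at $a=b=c=d=\tfrac12$. The reflection formula gives $|\Gamma(\tfrac12+ix)|^2=\pi/\cosh\pi x$, hence $|\Gamma(\tfrac12+ix)|^8=\pi^4/\cosh^4\pi x$. Also $|\Gamma(2ix)|^{-2}=2x\sinh(2\pi x)/\pi=4x\sinh\pi x\cosh\pi x/\pi$. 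The product is therefore proportional to $x\sinh(\pi x)/\cosh^3(\pi x)$, which is exactly our weight. Since $\mathcal T[1]=b_0=1$, the normalisation is automatic, and we get $\mathcal T[p(y)]=\mathcal W_{1/2,1/2,1/2,1/2}[p(4x^2)]$. Convergence and positivity are clear, so $\mathcal T$ is positive definite, hence quasi-definite.

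\emph{Step 2: recurrence data.} Put $a=b=c=d=\tfrac12$ in the Wilson formulas, where the parameter sum is $2$:
$$A_n=\frac{(n+1)^4}{(2n+1)(2n+2)}=\frac{(n+1)^3}{2(2n+1)},\qquad C_n=\frac{n^4}{2n(2n+1)}=\frac{n^3}{2(2n+1)}.$$
Monic polynomials in $x^2$ become monic in $y=4x^2$ after scaling: $c$ is multiplied by $4$ and $\lambda$ by $16$. Hence
$$c^{\mathcal T}_m=4\Bigl(\frac{(m+1)^3+m^3}{2(2m+1)}-\frac14\Bigr).$$
Since $(m+1)^3+m^3=(2m+1)(m^2+m+1)$, this equals $2(m^2+m+1)-1=2m^2+2m+1$. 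Likewise
$$\lambda^{\mathcal T}_m=16\,A_{m-1}C_m=16\cdot\frac{m^3}{2(2m-1)}\cdot\frac{m^3}{2(2m+1)}=\frac{4m^6}{(2m-1)(2m+1)}.$$

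\emph{Step 3: norms and $J$-fraction.} Telescoping with \eqref{eq:HfromLambda} and $h_0=1$ gives $h^{\mathcal T}_m=\prod_{j\le m}\lambda^{\mathcal T}_j=4^m(m!)^6/\prod_{j\le m}(4j^2-1)$. Here $\prod_{j\le m}(2j-1)(2j+1)=(2m-1)!!\,(2m+1)!!$, and using $(2m)!=2^m m!\,(2m-1)!!$ and $(2m+1)!=2^m m!\,(2m+1)!!$ we obtain $(2m)!\,(2m+1)!/(4^m(m!)^2)$. This yields \eqref{eq:xsin-hT}. The $J$-fraction \eqref{eq:xsin-TJfrac} is then Stieltjes' theorem \eqref{eq:Jfrac}. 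As a check, the first coefficients are $c_0=1$, $c_1=5$, $c_2=13$, $\lambda_1=4/3$, $\lambda_2=256/15$, and against the moments $1,1,\tfrac73,\dots$ we have $\mathcal T[y]=3b_1=1$.

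The main obstacle is Step 1. The gamma-function identities must produce exactly the same density, and the $y=4x^2$ rescaling of the monic recurrence must be tracked correctly. A fallback is to verify the first few moments directly from the $J$-fraction and then appeal to the uniqueness of the recurrence (Favard).
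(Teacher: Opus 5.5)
Your proposal is correct and follows essentially the same route as the paper: identify the weight of \eqref{eq:xsin-weight} with the Wilson weight at $a=b=c=d=\tfrac12$ via the reflection formulas, with the normalisation fixed by $\mathcal T[1]=1$. Then specialise $A_m,C_m$, rescale by $4$ and $16$ for $y=4x^2$, and telescope the norms. Your direct substitution $z=4x^2$ is just the paper's two-step change $s=\sqrt z$, $s=2x$ done at once, and all your constants check out.
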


\begin{proof}
By the reflection formula
$|\Gamma(\tfrac12+ix)|^{2}=\pi\operatorname{sech}(\pi x)$ and
$|\Gamma(2ix)|^{2}=\pi/(2x\sinh 2\pi x)$, the Wilson weight with
$a=b=c=d=\tfrac12$ is
$$
W(x)=\Bigl|\frac{\Gamma(\tfrac12+ix)^{4}}{\Gamma(2ix)}\Bigr|^{2}
=4\pi^{3}\,\frac{x\,\sinh\pi x}{\cosh^{3}\pi x}.
$$
On the other hand, the closed form of the weight in \eqref{eq:xsin-weight},
written in the variable $s=\sqrt z$ (so $dz=2s\,ds$) and rescaled by
$s=2x$, gives
$$
\mathcal T[y^k]=\frac{\pi^{2}}{2}\int_0^\infty s^{2k+1}\,
\frac{\sinh(\tfrac{\pi}{2}s)}{\cosh^{3}(\tfrac{\pi}{2}s)}\,ds
=2\pi^{2}\,4^{k}\int_0^\infty x^{2k}\,\frac{x\,\sinh\pi x}{\cosh^{3}\pi x}\,dx
=\frac{4^{k}}{2\pi}\int_0^\infty x^{2k}\,W(x)\,dx\,;
$$
hence, in the Wilson variable $z=x^{2}$ (so $y=4z$), $\mathcal T$ is the
Wilson functional with parameters $a=b=c=d=\tfrac12$, normalised by
$\mathcal T[1]=1$. The monic Wilson recurrence
coefficients \cite[\S9.1]{Koekoek2010KLS} are $\alpha^{W}_m=A_m+C_m-a^2$ and
$\lambda^{W}_m=A_{m-1}C_m$, where
$$
\begin{aligned}
A_m&=\frac{(m{+}a{+}b{+}c{+}d{-}1)(m{+}a{+}b)(m{+}a{+}c)(m{+}a{+}d)}
{(2m{+}a{+}b{+}c{+}d{-}1)(2m{+}a{+}b{+}c{+}d)},\\
C_m&=\frac{m(m{+}b{+}c{-}1)(m{+}b{+}d{-}1)(m{+}c{+}d{-}1)}
{(2m{+}a{+}b{+}c{+}d{-}2)(2m{+}a{+}b{+}c{+}d{-}1)} .
\end{aligned}
$$
At $a=b=c=d=\tfrac12$ these reduce to $A_m=\frac{(m+1)^{3}}{2(2m+1)}$,
$C_m=\frac{m^{3}}{2(2m+1)}$, so that, using
$(m{+}1)^3+m^3=(2m{+}1)(m^2{+}m{+}1)$,
$$
\alpha^{W}_m=\frac{m^2+m+1}{2}-\frac14=\frac{2m^2+2m+1}{4},
\qquad
\lambda^{W}_m=\frac{m^{6}}{4(4m^{2}-1)} .
$$
The substitution $y=4z$ multiplies $\alpha$ by $4$ and $\lambda$ by $16$,
giving $c^{\mathcal T}_m=2m^2+2m+1$ and
$\lambda^{\mathcal T}_m=\frac{4m^6}{4m^2-1}$. Finally
$h^{\mathcal T}_m=h^{\mathcal T}_0\prod_{j=1}^m\lambda^{\mathcal T}_j
=\prod_{j=1}^m\frac{4j^6}{4j^2-1}=\frac{16^m(m!)^8}{(2m)!(2m+1)!}$, since
$h^{\mathcal T}_0=\mathcal T[1]=b_0=1$.
\end{proof}

Of this data, the sequel uses the recurrence coefficients
$c^{\mathcal T}_m,\lambda^{\mathcal T}_m$ (in the connection recurrence
\eqref{eq:xsin-kapparec} below) and the norms \eqref{eq:xsin-hT}.

\subsection{The even functional $\mathcal S$}\label{ssec:xsin-even}

Contrary to a first impression, the even functional $\mathcal S$ is also
elementary: its Stieltjes \emph{$S$-fraction} is as clean as can be, the single
rational coefficient $u_n=n^4/(4n^2-1)$. The orthogonal-polynomial recurrence
(the \emph{$J$-fraction}) is then fixed by the even contraction
(\S\ref{ssec:contraction}) as $c^{\mathcal S}_i=u_{2i}+u_{2i+1}$,
$\lambda^{\mathcal S}_i=u_{2i-1}u_{2i}$ --- explicit, but bulkier than $u_n$
itself, which is why we work from the $S$-fraction.

\begin{Proposition}[the even functional]\label{prop:xsin-even}
The functional $\mathcal S[y^k]=b_k$ is the moment functional of the weight
\begin{equation}\label{eq:xsin-weightS}
\mathcal S[y^k]=\int_0^\infty z^k\,w_{\mathcal S}(z)\,dz,
\qquad
w_{\mathcal S}(z)=\frac{\pi}{4\sqrt z\,\cosh^2\!\bigl(\tfrac{\pi}{2}\sqrt z\bigr)}
\quad(z>0),
\end{equation}
equivalently, in the variable $s=\sqrt z$, the even weight
$\tfrac{\pi}{2}\operatorname{sech}^2(\tfrac{\pi}{2}s)$ on $\RR$. Its
generating function $\sum_{k\ge0}b_kt^k$ has the Stieltjes continued
fraction with coefficients
\begin{equation}\label{eq:xsin-u}
u_n=\frac{n^4}{(2n-1)(2n+1)}=\frac{n^4}{4n^2-1}\qquad(n\ge1),
\end{equation}
so that the squared norms of the monic $\mathcal S$-orthogonal polynomials
$P_l$ are the explicit products
\begin{equation}\label{eq:xsin-hS}
h^{\mathcal S}_l=\mathcal S[P_l^2]=\prod_{j=1}^{2l}u_j
=\frac{16^{l}\,\bigl((2l)!\bigr)^{6}}{(4l)!\,(4l+1)!}.
\end{equation}
\end{Proposition}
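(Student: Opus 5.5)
The plan has three steps: the weight, the $S$-fraction, and the norms.

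\emph{The weight.} I would follow the proof of Proposition~\ref{prop:xsin-weight}. From the Mittag-Leffler computation there, $b_k=(2k)!\,2\eta(2k)/\pi^{2k}$. The substitution $z=u^2$ gives $\int_0^\infty z^{k}z^{-1/2}e^{-a\sqrt z}\,dz=2\,(2k)!/a^{2k+1}$, so
\[
\frac{(2k)!}{(j\pi)^{2k}}=\frac{j\pi}{2}\int_0^\infty z^{k-1/2}e^{-j\pi\sqrt z}\,dz .
\]
Summing $2(-1)^{j-1}$ times this over $j\ge1$ yields the weight $\pi z^{-1/2}\sum_{j\ge1}(-1)^{j-1}j\,e^{-j\pi\sqrt z}$. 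The identity $\sum_{j\ge1}(-1)^{j-1}jx^j=x/(1+x)^2$ at $x=e^{-\pi\sqrt z}$ turns this into $\pi/(4\sqrt z\cosh^2(\tfrac\pi2\sqrt z))$, which is \eqref{eq:xsin-weightS}. The interchange of sum and integral is justified as before: the series converges locally uniformly on $(0,\infty)$, and the singularity $z^{-1/2}$ is integrable. Passing to $s=\sqrt z$ (so $dz=2s\,ds$ on $(0,\infty)$) and symmetrising gives the even weight $\tfrac\pi2\operatorname{sech}^2(\tfrac\pi2 s)$ on $\RR$.

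\emph{The $S$-fraction.} This is the main step. I would realise $\mathcal S$ in the variable $s$, setting $x=s/2$ so that the weight becomes proportional to $\operatorname{sech}^2(\pi x)=\pi^{-2}|\Gamma(\tfrac12+ix)|^4$. This is the continuous Hahn weight with $a=b=c=d=\tfrac12$ recalled in \S\ref{ssec:xsin-askey}. Its monic symmetric recurrence is $p_{n+1}=xp_n-\beta_np_{n-1}$. From the KLS data, or directly via the continuous Hahn $A_n,C_n$ at these parameters, one gets $\beta_n=n^4/(4(2n-1)(2n+1))$; undoing the scaling $s=2x$ multiplies this by $4$. For an even functional on $\RR$ with symmetric recurrence coefficients $\beta_n$, the ordinary generating function of the moments of $s^2$ has the $S$-fraction with $u_n=\beta_n$; this is the standard fact that the $J$-fraction in $s$ with $c_n=0$ becomes, after $t\mapsto t^2$, a Stieltjes fraction. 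Hence $u_n=n^4/(4n^2-1)$.

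As an independent check, and as a fallback if the parameter bookkeeping goes wrong, I would verify the first coefficients against the moments. From $b_1=1/3$ we need $u_1=1/3$, which holds. From $b_2=u_1(u_1+u_2)=\tfrac13(\tfrac13+\tfrac{16}{15})=\tfrac{7}{15}$, which also holds. The main obstacle is the correct identification of the continuous Hahn parameters and the normalisation; the computation of $\beta_n$ follows the one already done for $\mathcal T$ in Proposition~\ref{prop:xsin-wilson}.

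\emph{The norms.} Equation \eqref{eq:Snorm} of \S\ref{ssec:contraction} gives $h^{\mathcal S}_l=\prod_{j=1}^{2l}u_j$. Using $\prod_{j=1}^{N}(2j-1)(2j+1)=(2N)!\,(2N+1)!/(4^N N!^2)$ with $N=2l$, the product becomes
\[
h^{\mathcal S}_l=\frac{((2l)!)^4\,4^{2l}((2l)!)^2}{(4l)!\,(4l+1)!}=\frac{16^l((2l)!)^6}{(4l)!\,(4l+1)!},
\]
which is \eqref{eq:xsin-hS}.
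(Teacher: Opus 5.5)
Your proposal is correct and follows the paper's proof step for step. It uses the same Laplace-integral derivation of $w_{\mathcal S}$ from $b_k=(2k)!\,2\eta(2k)/\pi^{2k}$, the same identification of $\operatorname{sech}^2(\pi x)$ with the continuous Hahn weight at $a=b=c=d=\tfrac12$ (giving $\beta_n=n^4/(4(4n^2-1))$, multiplied by $4$ under $s=2x$) read as the $S$-fraction via the even contraction, and the same double-factorial evaluation of $\prod_{j=1}^{2l}u_j$.

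Two cosmetic points, both shared with the paper:
\begin{itemize}
\item Symmetrising from $(0,\infty)$ to $\RR$ actually gives $\tfrac{\pi}{4}\operatorname{sech}^2(\tfrac{\pi}{2}s)$, so the stated weight is right only up to normalisation.
\item The termwise interchange is justified for $k\ge1$ by Tonelli, since $\sum_{j}\int_0^\infty j\,z^{k-1/2}e^{-j\pi\sqrt z}\,dz$ is a constant multiple of $\sum_{j\ge1}j^{-2k}$. At $k=0$ this argument fails, and one instead checks $\int_0^\infty w_{\mathcal S}=1$ directly.
\end{itemize}
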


\begin{proof}
The representation \eqref{eq:xsin-weightS} follows as in
Proposition~\ref{prop:xsin-weight}: from
$b_k=(2k)!\,2\eta(2k)/\pi^{2k}$ and
$\int_0^\infty z^k e^{-a\sqrt z}\,dz/\sqrt z=2\,(2k)!/a^{2k+1}$ one gets
$$
b_k=\int_0^\infty z^k\,\frac{\pi}{\sqrt z}\sum_{j\ge1}(-1)^{j-1}j\,
e^{-j\pi\sqrt z}\,dz,
$$
and
$\sum_{j\ge1}(-1)^{j-1}j\,x^{j}=x/(1+x)^2$ at $x=e^{-\pi\sqrt z}$ gives the
closed form $\tfrac{\pi}{4\sqrt z}\operatorname{sech}^2(\tfrac\pi2\sqrt z)$.
For the continued fraction \eqref{eq:xsin-u}: with $s=2x$,
$|\Gamma(\tfrac12+ix)|^2=\pi\operatorname{sech}(\pi x)$ gives
$\tfrac1{\pi^2}|\Gamma(\tfrac12+ix)|^{4}=\operatorname{sech}^2(\pi x)$, so
$\nu$ is (the $s=2x$ rescaling of) the continuous Hahn weight with
$a=b=c=d=\tfrac12$. This weight being even, the monic continuous Hahn
polynomials are symmetric, $p_{n+1}(s)=s\,p_n(s)-\beta_n p_{n-1}(s)$, and
their recurrence coefficients \cite[\S9.4]{Koekoek2010KLS} specialise to
$\beta_n=\tfrac{n^4}{4n^2-1}$ (the value $\tfrac{n^4}{4(4n^2-1)}$ in the
variable $x$, multiplied by $4$ for $s=2x$). For a symmetric measure the
Stieltjes $S$-fraction of the even-moment series $\sum_k b_kt^k$ has
coefficients equal to these recurrence coefficients (the even contraction,
\S\ref{ssec:contraction}), so $u_n=\beta_n=\tfrac{n^4}{4n^2-1}$. Finally
\eqref{eq:xsin-hS} is the elementary product
$\prod_{j=1}^{2l}\tfrac{j^4}{(2j-1)(2j+1)}
=\tfrac{((2l)!)^4}{(4l-1)!!\,(4l+1)!!}
=\tfrac{16^l((2l)!)^6}{(4l)!(4l+1)!}$.
\end{proof}

The even functional carries one more piece of structure, which will turn out
to be the key to the closed-form evaluation: it is itself a \emph{Wilson}
functional, in the same variable as $\mathcal T$.

\begin{Proposition}[the even functional is also Wilson]\label{prop:xsin-evenwilson}
In the variable $y=4x^2$ of Proposition~\ref{prop:xsin-wilson}, $\mathcal S$
is the Wilson functional with parameters
$(a,b,c,d)=(\tfrac12,\tfrac12,\tfrac12,0)$.
\end{Proposition}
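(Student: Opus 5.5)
Since a moment functional is determined by its monic orthogonal polynomials (Favard's theorem, \S\ref{ssec:ttr}) up to the normalisation $\mathcal S[1]=1$, the plan is to compare recurrence data. On one side we have the $J$-fraction of $\mathcal S$ in the variable $y$. On the other side we have the Wilson recurrence with $(a,b,c,d)=(\tfrac12,\tfrac12,\tfrac12,0)$, rescaled by $y=4x^2$. Both sides normalise to $1$, since $b_0=1$.

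\emph{Step 1: the data of $\mathcal S$.} From Proposition~\ref{prop:xsin-even}, $\mathcal S$ has $S$-fraction coefficients $u_n=n^4/(4n^2-1)$. The even contraction \eqref{eq:evencontraction} then gives
$c^{\mathcal S}_i=u_{2i}+u_{2i+1}$ and $\lambda^{\mathcal S}_i=u_{2i-1}u_{2i}$, both explicit rational functions of $i$.

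\emph{Step 2: the Wilson data.} With $a=b=c=\tfrac12$, $d=0$ we have $a+b+c+d=\tfrac32$. The formulas of \S\ref{ssec:xsin-askey} give
$$A_n=\frac{(n+\tfrac12)(n+1)^2(n+\tfrac12)}{(2n+\tfrac12)(2n+\tfrac32)},\qquad C_n=\frac{n\cdot n\,(n-\tfrac12)^2}{(2n-\tfrac12)(2n+\tfrac12)}.$$
Clearing halves, these become
$$A_n=\frac{(2n+1)^2(n+1)^2}{(4n+1)(4n+3)},\qquad C_n=\frac{n^2(2n-1)^2}{(4n-1)(4n+1)}.$$
Multiplying by the scaling factors $4$ and $16$ gives candidates
$$4(A_n+C_n-\tfrac14)\quad\text{and}\quad 16A_{n-1}C_n=\frac{16\,n^4(2n-1)^4}{(4n-3)(4n-1)^2(4n+1)}.$$

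\emph{Step 3: matching.} First we compare the $\lambda$'s:
$$u_{2n-1}u_{2n}=\frac{(2n-1)^4}{(4n-3)(4n-1)}\cdot\frac{16n^4}{(4n-1)(4n+1)},$$
which agrees with $16A_{n-1}C_n$. Next we check that $4A_n+4C_n-1=u_{2n}+u_{2n+1}$, a rational identity in $n$ over the common denominator $(4n-1)(4n+1)(4n+3)$. This is routine polynomial arithmetic; at $n=0$, for instance, it reads $4\cdot\tfrac13-1=\tfrac13=u_1$.

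Since both sequences of recurrence coefficients coincide and both functionals take the value $1$ at $1$, Favard's theorem identifies the functionals. The only real obstacle is this coefficient bookkeeping in the $c_n$ identity. An alternative, conceptual route is the quadratic decomposition: the symmetric continuous Hahn weight $|\Gamma(\tfrac12+ix)|^4$ pushed forward under $x\mapsto x^2$ equals $|\Gamma(\tfrac12+ix)|^4/|x|$ against $d(x^2)$. Using $|\Gamma(\tfrac12+ix)|^4/|\Gamma(2ix)|^2\cdot|\Gamma(ix)|^2$ and $|\Gamma(ix)|^2=\pi/(x\sinh\pi x)$, this is proportional to the Wilson weight with $d=0$, where $|\Gamma(0+ix)|^2$ replaces one factor. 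That check could be used instead if a direct verification of the weights is preferred.
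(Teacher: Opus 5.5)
Your proof is correct, but it takes a different route from the paper. The paper compares weights directly. The duplication formula $\Gamma(2ix)=2^{2ix-1}\pi^{-1/2}\Gamma(ix)\Gamma(\tfrac12+ix)$ turns the Wilson weight $|\Gamma(\tfrac12+ix)^3\Gamma(ix)/\Gamma(2ix)|^2$ into $4\pi|\Gamma(\tfrac12+ix)|^4=4\pi^3\operatorname{sech}^2(\pi x)$. Under $s=2x$ this is the weight \eqref{eq:xsin-weightS} of $\mathcal S$, and normalisation finishes.

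You instead match $J$-fraction data and conclude by Favard uniqueness. Your $A_n$, $C_n$ and the $\lambda$-match are right. The $c$-identity you left as routine also closes in one line: $4A_n-u_{2n+1}=(2n+1)^2/(4n+1)$ and $4C_n-u_{2n}=-4n^2/(4n+1)$, and these sum to $1$.

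What your route buys is an explicit check of the claim made only in passing in \S\ref{ssec:xsin-askey}: that the $d=0$ Wilson data reproduce the recurrence of $\mathcal S$. These are exactly the data fed into the connection recurrence \eqref{eq:xsin-kapparec}, and your argument is purely algebraic once the $S$-fraction \eqref{eq:xsin-u} is granted.

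The paper's route is shorter, and it also settles a point yours leaves implicit. The paper defines the Wilson functional by its integral only for parameters of positive real part, and $d=0$ lies outside that range. So quoting the recurrence of \cite[\S9.1]{Koekoek2010KLS} at $d=0$ needs one more sentence. There are two ways to supply it:
\begin{itemize}
\item Read ``Wilson functional'' as the Favard functional of the polynomials $W_n(\cdot\,;\tfrac12,\tfrac12,\tfrac12,0)$. The three-term recurrence is an identity of rational functions of the parameters, and here $A_{n-1}C_n\neq0$.
\item Let $d\to0^+$ and use dominated convergence. Near $x=0$ the factor $|\Gamma(d+ix)|^2/|\Gamma(2ix)|^2$ is bounded by $4|\Gamma(1+d+ix)/\Gamma(1+2ix)|^2$, so the moments converge, and with them the recurrence coefficients.
\end{itemize}
The weight computation gets this for free: it shows at once that the two singularities at $x=0$ cancel.

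Finally, your alternative sketch is, once straightened out, exactly the paper's proof. Three corrections are needed: the Wilson weight carries $|\Gamma(\tfrac12+ix)|^6$, not $|\Gamma(\tfrac12+ix)|^4$; both sides should be compared against $dx$; and the identity required is $|\Gamma(ix)\Gamma(\tfrac12+ix)/\Gamma(2ix)|^2=4\pi$.
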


\begin{proof}
By the duplication formula
$\Gamma(2ix)=2^{2ix-1}\pi^{-1/2}\,\Gamma(ix)\,\Gamma(\tfrac12+ix)$, the Wilson
weight with parameters $(\tfrac12,\tfrac12,\tfrac12,0)$ is
$$
\Bigl|\frac{\Gamma(\tfrac12+ix)^{3}\,\Gamma(ix)}{\Gamma(2ix)}\Bigr|^{2}
=\bigl|\,2^{\,1-2ix}\sqrt{\pi}\;\Gamma(\tfrac12+ix)^{2}\bigr|^{2}
=4\pi\,\bigl|\Gamma(\tfrac12+ix)\bigr|^{4}
=4\pi^{3}\operatorname{sech}^{2}(\pi x),
$$
which is proportional to the weight
$\tfrac{\pi}{2}\operatorname{sech}^2(\tfrac\pi2 s)$ of
\eqref{eq:xsin-weightS} under $s=2x$ (so $z=s^2=4x^2=y$). Both functionals
being normalised by $\mathcal S[1]=1$, they coincide.
\end{proof}

Thus the two monic families $P_i$ and $\rho_m$ attached to $f$ are
\emph{Wilson families differing in a single parameter}: $d=0$ for the even
functional, $d=\tfrac12$ for the odd one. This is the structural fact behind
the closed form of the connection coefficients in
Subsection~\ref{ssec:xsin-kappaclosed} below.

\subsection{Reduction to a connection-coefficient determinant}
\label{ssec:xsin-connred}

We now reduce Theorem~\ref{conj:xsin-closed} to a single finite determinant
identity. Since \emph{both} moment functionals of $f$ are
classical, both families of monic orthogonal polynomials are explicit:
the Stieltjes $S$-fraction \eqref{eq:xsin-u} of $\mathcal S$ gives, through
the even contraction (Lemma~\ref{lem:cfGene}),
\begin{equation}\label{eq:xsin-Sdata}
c^{\mathcal S}_i=u_{2i}+u_{2i+1},\qquad
\lambda^{\mathcal S}_i=u_{2i-1}u_{2i},\qquad
u_n=\frac{n^4}{4n^2-1}.
\end{equation}
Since $\mathcal T$ is also quasi-definite, with monic orthogonal polynomials
$\rho_m$ (the Wilson polynomials of Proposition~\ref{prop:xsin-wilson},
$c^{\mathcal T}_m=2m^2+2m+1$,
$\lambda^{\mathcal T}_m=\tfrac{4m^6}{(2m-1)(2m+1)}$, $J$-fraction
\eqref{eq:xsin-TJfrac}), the full biorthogonal
reduction $\M2$ (the determinant reduction Lemma~\ref{lem:bindetGene}) applies
verbatim and gives, for all $n\ge1$,
\begin{equation}\label{eq:xsin-bindet}
\HH_n=(-1)^{\binom{\bar n}{2}}
\Bigl(\prod_{l=0}^{\bar n-1}h^{\mathcal S}_l\Bigr)
\Bigl(\prod_{m=0}^{\underline n-1}h^{\mathcal T}_m\Bigr)\,
\det\bigl(\kappa_{\bar n+r,\,m}\bigr)_{0\le r,m\le\underline n-1},
\end{equation}
where $h^{\mathcal S}_l$ and $h^{\mathcal T}_m$ are the explicit factorial
products \eqref{eq:xsin-hS} and \eqref{eq:xsin-hT}, and the
$\kappa_{i,m}$ are the connection coefficients expressing the
$\mathcal S$-orthogonal $P_i$ in the $\mathcal T$-orthogonal basis,
\begin{equation}\label{eq:xsin-conn}
P_i=\sum_{m=0}^{i}\kappa_{i,m}\,\rho_m .
\end{equation}
By the connection recurrence (Lemma~\ref{lem:connrec}) these are determined,
with no free parameter, by $\kappa_{0,0}=1$ (and $\kappa_{i,m}=0$ for $m<0$ or
$m>i$) and
\begin{equation}\label{eq:xsin-kapparec}
\kappa_{i+1,m}=\kappa_{i,m-1}+(c^{\mathcal T}_m-c^{\mathcal S}_i)\,\kappa_{i,m}
+\lambda^{\mathcal T}_{m+1}\,\kappa_{i,m+1}-\lambda^{\mathcal S}_i\,\kappa_{i-1,m}.
\end{equation}
The first connection coefficients are
$\kappa_{i,i}=1$, $\kappa_{i,i-1}=\tfrac{2i^3}{4i-1}$ and
$\kappa_{i,i-2}=-\tfrac{8(i-1)^3i^4}{(4i-3)(4i-2)(4i-1)}$.

The prefactor in \eqref{eq:xsin-bindet} being a known factorial product,
Theorem~\ref{conj:xsin-closed} is now reduced to the evaluation of the
connection determinant $\det(\kappa_{\bar n+r,\,m})$ --- a purely algebraic
identity, in which every entry is an explicitly generated rational number
and all of the analytic data ($\eta$-values, the $\operatorname{sech}^2$
and Wilson weights) has been discharged.

At first sight this determinant looks intractable: a connection array between
two \emph{distinct} classical families is in general a balanced
${}_3F_2$-type double sum, the matrix in \eqref{eq:xsin-bindet} is neither of
Hankel nor of Toeplitz type, and --- unlike the double shift of the cosine
family (Section~\ref{sec:dblshift}) --- no free parameter is in sight.
The present pair is closer than it looks: by
Propositions~\ref{prop:xsin-wilson} and~\ref{prop:xsin-evenwilson} the two
families are Wilson families differing in a \emph{single} parameter ($d=0$
versus $d=\tfrac12$), and for a one-parameter change the connection
coefficients collapse to a single hypergeometric term
(Lemma~\ref{lem:xsin-kappaclosed} below). After removing row and column
factors, the connection determinant becomes the Hadamard product of
a Toeplitz and a Hankel matrix of \emph{Catalan numbers}, and that determinant
is closed under Desnanot--Jacobi condensation once its two shifts are treated
as free parameters (Theorem~\ref{thm:xsin-catalan}).

\subsection{The connection coefficients in closed form}
\label{ssec:xsin-kappaclosed}

\begin{Lemma}[closed form of $\kappa$]\label{lem:xsin-kappaclosed}
For $0\le m\le i$, with $d=i-m$,
\begin{equation}\label{eq:xsin-kappaform}
\kappa_{i,m}
=\frac{(\tfrac32-d)_d\,(i+m+2)_d\,\bigl((m+1)_d\bigr)^{3}}
      {d!\;(m+\tfrac32)_d\,(i+m+\tfrac12)_d}\,.
\end{equation}
\end{Lemma}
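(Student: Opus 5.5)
The plan mirrors the proof of Lemma~\ref{lem:connGene}: the connection coefficients are uniquely determined by $\kappa_{0,0}=1$, the support conditions ($\kappa_{i,m}=0$ for $m<0$ or $m>i$) and the recurrence \eqref{eq:xsin-kapparec}. It therefore suffices to show that the candidate \eqref{eq:xsin-kappaform} satisfies all of these. At $d=0$ every Pochhammer symbol is empty, so the candidate gives $\kappa_{i,i}=1$. At $d=-1$ the factor $1/d!$ must be read as $0$, giving the upper zero. I will also check the displayed values $\kappa_{i,i-1}=2i^3/(4i-1)$ and $\kappa_{i,i-2}$ against the formula, as a sanity test of the normalisation. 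For $d=1$ the formula gives $\tfrac12(2i+1)i^3/((i+\tfrac12)(2i-\tfrac12))=2i^3/(4i-1)$, which agrees.

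For the recurrence, I write $i=m+d$ and divide \eqref{eq:xsin-kapparec} by $\kappa_{i,m}$. Each of the four neighbours $\kappa_{i+1,m}$, $\kappa_{i,m-1}$, $\kappa_{i,m+1}$, $\kappa_{i-1,m}$ has a ratio to $\kappa_{i,m}$ that is an explicit rational function of $(m,d)$, read off from the Pochhammer shifts.

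\begin{itemize}
\item Increasing $d$ by one at fixed $m$ (the $i+1$ neighbour) multiplies the factor $(\tfrac32-d)_d/d!$ by $(\tfrac12-d)/(d+1)$, up to the sign bookkeeping of $(\tfrac32-d)_d=(-1)^d(d-\tfrac12)\cdots(-\tfrac12)$.
\item The remaining factors shift in both $d$ and $i+m$.
\end{itemize}

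Inserting $c^{\mathcal S}_i=u_{2i}+u_{2i+1}$ and $\lambda^{\mathcal S}_i=u_{2i-1}u_{2i}$ with $u_n=n^4/(4n^2-1)$, together with $c^{\mathcal T}_m=2m^2+2m+1$ and $\lambda^{\mathcal T}_{m+1}=4(m+1)^6/((2m+1)(2m+3))$, turns the recurrence into an identity between rational functions of $(m,d)$. Clearing denominators, which are products of linear forms in $m$ and $d$, reduces it to a polynomial identity of bounded degree. That identity can be checked by expansion, or by verifying it on sufficiently many integer pairs $(m,d)$. The boundary cases are automatic once the candidate is defined to vanish outside $0\le d$: the term $\kappa_{i,m+1}$ at $m=i$ and the term $\kappa_{i-1,m}$ at $m=i$ both carry the factor $1/(-1)!=0$, so no separate treatment is needed.

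The main obstacle is this rational identity. Unlike the Euler family, the Wilson data carry the denominators $(4i\pm1)$ and $(4i\pm3)$ coming from $u_{2i}$ and $u_{2i+1}$. These must cancel against the factor $(i+m+\tfrac12)_d$, whose ratios produce terms like $(2i+2m+1)$ and $(4i+\dots)$. I would organise the check by multiplying through by $(4i-1)(4i+1)(4i+3)(2m+1)(2m+3)$, which brings the identity to polynomial form.

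If direct verification proves unwieldy, I have a fallback that uses the structure identified in Propositions~\ref{prop:xsin-wilson} and~\ref{prop:xsin-evenwilson}. The known connection formula between Wilson polynomials with parameters $(a,b,c,d)$ and $(a,b,c,d')$ collapses to a single ${}_5F_4$-type term when only one parameter changes. Specialising it at $a=b=c=\tfrac12$, $d=0\to\tfrac12$, and rescaling by $4^{i-m}$ for the variable $y=4x^2$, should reproduce \eqref{eq:xsin-kappaform} directly. The uniqueness argument above then serves as confirmation.
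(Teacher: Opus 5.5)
Your proposal is correct and follows the paper's proof essentially step for step: uniqueness of the solution of \eqref{eq:xsin-kapparec} from $\kappa_{0,0}=1$, then dividing by $\kappa_{i,m}$ and inserting the four Pochhammer neighbour ratios together with the data $c^{\mathcal S}_i,\lambda^{\mathcal S}_i,c^{\mathcal T}_m,\lambda^{\mathcal T}_{m+1}$, which reduces the recurrence to a rational identity in $(i,m)$ checked by clearing denominators. The only thing to tighten is the boundary bookkeeping: at $m=0$ and $m=i$ the required zeros come from the neighbour ratios themselves, which carry the factors $m^3$ and $d$ respectively; at $m=i+1$ you cannot divide by $\kappa_{i,i+1}=0$, so there the recurrence must be read directly as $\kappa_{i+1,i+1}=\kappa_{i,i}=1$.
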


\begin{proof}
Write $k_{i,m}$ for the right-hand side of \eqref{eq:xsin-kappaform},
extended by $k_{i,m}=0$ for $m<0$ or $m>i$. By Lemma~\ref{lem:connrec} the
array $\kappa$ is the unique solution of the recurrence
\eqref{eq:xsin-kapparec} whose row $i=0$ is $\kappa_{0,0}=1$,
$\kappa_{0,m}=0$ $(m\neq0)$. The candidate has the same row $i=0$ (at $d=0$
every product in \eqref{eq:xsin-kappaform} is empty), so it suffices to show
that $k$ satisfies \eqref{eq:xsin-kapparec} for all $i\ge0$ and all $m$.
Every factor of \eqref{eq:xsin-kappaform} is nonzero on the triangle
$0\le m\le i$, and telescoping each Pochhammer symbol gives the four
neighbour ratios as rational functions of $(i,m)$ (recall $d=i-m$):
\begin{align*}
\frac{k_{i+1,m}}{k_{i,m}}
&=-\,\frac{4\,(2d-1)\,(i+1)^{4}\,(2i+2m+1)}{(d+1)\,(i+m+2)\,(4i+1)(4i+3)},
&\frac{k_{i,m-1}}{k_{i,m}}
&=-\,\frac{2\,(2d-1)\,(i+m+1)\,m^{3}}{(d+1)\,(2m+1)\,(2i+2m-1)},\\[2pt]
\frac{k_{i,m+1}}{k_{i,m}}
&=\frac{d\,(2m+3)\,(2i+2m+1)}{2\,(3-2d)\,(i+m+2)\,(m+1)^{3}},
&\frac{k_{i-1,m}}{k_{i,m}}
&=-\,\frac{d\,(i+m+1)\,(4i-3)(4i-1)}{4\,(2d-3)\,i^{4}\,(2i+2m-1)}.
\end{align*}
Dividing \eqref{eq:xsin-kapparec} by $k_{i,m}$ and inserting these ratios
together with the data $c^{\mathcal S}_i,\lambda^{\mathcal S}_i$ of
\eqref{eq:xsin-Sdata} and
$c^{\mathcal T}_m=2m^2+2m+1$,
$\lambda^{\mathcal T}_{m+1}=\tfrac{4(m+1)^6}{(2m+1)(2m+3)}$ turns the
recurrence into a rational-function identity in $(i,m)$, verified by clearing
denominators. The boundary columns take care of themselves: at $m=0$ the
second ratio carries the factor $m^{3}$ and vanishes, as required by
$\kappa_{i,-1}=0$; at $m=i$ the third ratio carries the factor $d$ and
vanishes, as required by $\kappa_{i,i+1}=0$; and at $m=i+1$ the recurrence
reduces to $k_{i+1,i+1}=k_{i,i}=1$.
\end{proof}

The diagonals $\kappa_{i,i-1}$, $\kappa_{i,i-2}$ recorded after
\eqref{eq:xsin-kapparec} are the cases $d=1,2$ of
\eqref{eq:xsin-kappaform}. The single-term form
\eqref{eq:xsin-kappaform} now splits into row factors, column factors, and a
kernel of \emph{Catalan numbers}.

\begin{Lemma}[Catalan splitting]\label{lem:xsin-split}
Let $C_k=\tfrac1{k+1}\binom{2k}k$ denote the Catalan numbers and put
$\gamma(k)=(2k)!/(4^{k}\,k!)$, so that
$\Gamma(k+\tfrac12)=\sqrt{\pi}\,\gamma(k)$. For $i>m$,
\begin{equation}\label{eq:xsin-split}
\kappa_{i,m}=\alpha_i\,\beta_m\;C_{i-m-1}\,C_{i+m},
\qquad
\alpha_i=\frac{2\,(-1)^{i}}{16^{\,i}}\cdot
\frac{(2i+1)!\,(i!)^{3}}{\gamma(i+1)\,\gamma(2i)},
\qquad
\beta_m=(-1)^{m+1}\,\frac{\gamma(m+1)}{(m!)^{3}}.
\end{equation}
\end{Lemma}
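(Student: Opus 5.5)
The plan is to start from the closed form \eqref{eq:xsin-kappaform} of Lemma~\ref{lem:xsin-kappaclosed} and rewrite each Pochhammer block in factorials or half-integer Gamma values. Then I regroup the factors into a part depending only on $i$, a part depending only on $m$, and two Catalan numbers. Since the identity is an equality of explicit hypergeometric terms, it suffices to check it directly; no recurrence is needed. Throughout, $d=i-m\ge1$.

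\emph{Step 1: rewrite each factor.} With $\gamma(k)=(2k)!/(4^k k!)=(\tfrac12)_k$ and the half-integer Gamma values, I convert the factors one at a time:
\begin{itemize}
\item $(m+1)_d=i!/m!$;
\item $(i+m+2)_d=(2i+1)!/(i+m+1)!$;
\item $(m+\tfrac32)_d=\gamma(i+1)/\gamma(m+1)$;
\item $(i+m+\tfrac12)_d=\gamma(2i)/\gamma(i+m)$.
\end{itemize}
The one delicate factor is $(\tfrac32-d)_d=(\tfrac32-d)(\tfrac52-d)\cdots(\tfrac12)$. Reversing the order of the product and pulling out signs gives
$$(\tfrac32-d)_d=(-1)^{d-1}\,\tfrac12\,(\tfrac12)_{d-1}=(-1)^{d-1}\tfrac12\,\gamma(d-1).$$
Substituting all of these, $\kappa_{i,m}$ becomes
$$(-1)^{d-1}\,\frac{\gamma(d-1)}{2\,d!}\cdot\frac{(2i+1)!\,\gamma(i+m)}{(i+m+1)!\,\gamma(2i)}\cdot\frac{(i!)^3\gamma(m+1)}{(m!)^3\gamma(i+1)}.$$

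\emph{Step 2: identify the Catalan numbers.} The factor depending on $d$ is a Catalan number up to a power of $4$:
$$\frac{\gamma(d-1)}{d!}=\frac{(2d-2)!}{4^{d-1}(d-1)!\,d!}=4^{1-d}C_{d-1}.$$
Similarly the factor depending on $i+m$ is
$$\frac{\gamma(i+m)}{(i+m+1)!}=4^{-(i+m)}C_{i+m}.$$
Collecting the powers of $4$ gives $4^{1-d-i-m}=4\cdot16^{-i}$. Collecting the signs gives $(-1)^{i-m-1}=(-1)^i(-1)^{m+1}$.

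\emph{Step 3: assemble.} Grouping what remains, the row factor is $2(-1)^i16^{-i}(2i+1)!(i!)^3/(\gamma(i+1)\gamma(2i))$. This is $\alpha_i$, because the $4\cdot\tfrac12=2$ supplies its leading coefficient. The column factor is $(-1)^{m+1}\gamma(m+1)/(m!)^3$, which is $\beta_m$. This gives \eqref{eq:xsin-split}.

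The hypothesis $i>m$ is exactly what makes $C_{d-1}$ meaningful, since the reversal identity for $(\tfrac32-d)_d$ needs $d\ge1$. I expect the main obstacle to be bookkeeping: getting the sign and the lone factor $\tfrac12$ out of $(\tfrac32-d)_d$ correctly, and matching the powers of $4$ against the normalisation of $\alpha_i$. As a sanity check I will test $d=1$, where the formula must reproduce $\kappa_{i,i-1}=2i^3/(4i-1)$.
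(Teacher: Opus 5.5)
Your proposal is correct and follows the paper's proof essentially step for step. It uses the same Pochhammer-to-factorial conversions (including $(\tfrac32-d)_d=(-1)^{d-1}\tfrac12\,\gamma(d-1)$), the same two Catalan evaluations $\gamma(d-1)/d!=4^{1-d}C_{d-1}$ and $\gamma(i+m)/(i+m+1)!=4^{-(i+m)}C_{i+m}$, and the same bookkeeping of the power $4\cdot16^{-i}$ and the sign $(-1)^{i}(-1)^{m+1}$; your planned check at $d=1$ against $\kappa_{i,i-1}=2i^3/(4i-1)$ indeed works out.
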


\begin{proof}
Each Pochhammer symbol in \eqref{eq:xsin-kappaform} telescopes into
factorials and half-integer Gamma values:
\begin{align*}
(i{+}m{+}2)_d&=\frac{(2i+1)!}{(i+m+1)!},&
\bigl((m{+}1)_d\bigr)^{3}&=\Bigl(\frac{i!}{m!}\Bigr)^{3},\\
(m{+}\tfrac32)_d&=\frac{\gamma(i+1)}{\gamma(m+1)},&
(i{+}m{+}\tfrac12)_d&=\frac{\gamma(2i)}{\gamma(i+m)},
\end{align*}
while
$(\tfrac32-d)_d=\tfrac12\prod_{k=2}^{d}(\tfrac32-k)
=(-1)^{d-1}\,\tfrac12\,(\tfrac12)_{d-1}
=(-1)^{d-1}\,\tfrac12\,\gamma(d-1)$. Hence
$$
\kappa_{i,m}
=(-1)^{d-1}\,\frac{\gamma(d-1)}{2\,d!}\cdot\frac{(2i+1)!}{(i+m+1)!}\cdot
\frac{(i!)^{3}}{(m!)^{3}}\cdot\frac{\gamma(m+1)}{\gamma(i+1)}\cdot
\frac{\gamma(i+m)}{\gamma(2i)}\,,
$$
and the two Catalan evaluations
$\gamma(d-1)/d!=C_{d-1}/4^{\,d-1}$ and
$\gamma(i+m)/(i+m+1)!=C_{i+m}/4^{\,i+m}$ turn this into
\eqref{eq:xsin-split}: the powers of $4$ combine to
$4^{-(d-1)-(i+m)}=4\cdot16^{-i}$ and the sign to
$(-1)^{d-1}=(-1)^{i}(-1)^{m+1}$.
\end{proof}

In the residual block of \eqref{eq:xsin-bindet} every row index
$i=\bar n+r\ge\bar n$ exceeds every column index $m\le\underline n-1<\bar n$, so
Lemma~\ref{lem:xsin-split} applies to every entry: pulling $\alpha_{\bar n+r}$
out of row $r$ and $\beta_m$ out of column $m$,
\begin{equation}\label{eq:xsin-toF}
\det\bigl(\kappa_{\bar n+r,\,m}\bigr)_{0\le r,m\le\underline n-1}
=\Bigl(\prod_{r=0}^{\underline n-1}\alpha_{\bar n+r}\Bigr)
 \Bigl(\prod_{m=0}^{\underline n-1}\beta_m\Bigr)\,
 F_{\underline n}(\bar n-1,\bar n),
\end{equation}
where, for integers $N\ge0$ and $a,b\ge0$,
\begin{equation}\label{eq:xsin-Fdef}
F_N(a,b):=\det\bigl(C_{a+r-m}\,C_{b+r+m}\bigr)_{0\le r,m\le N-1}.
\end{equation}
All the analytic content is now discharged: \eqref{eq:xsin-Fdef} is the
determinant of the Hadamard product of a Toeplitz and a Hankel matrix of
Catalan numbers, and it evaluates in closed form.

\subsection{A Catalan determinant}\label{ssec:xsin-catalan}

\begin{Theorem}[Catalan determinant]\label{thm:xsin-catalan}
Let $N\ge0$ and let $a,b$ be integers with $a\ge N-1$ and $b\ge a+1$. Then,
with $\gamma(k)=(2k)!/(4^{k}k!)$ as above and the staircase Pochhammer
products
\begin{equation}\label{eq:xsin-Lambda}
\Lambda_N(t):=\prod_{j=1}^{N-1}\,(t+j)_j
\qquad\bigl(\Lambda_0=\Lambda_1=1\bigr),
\end{equation}
the determinant \eqref{eq:xsin-Fdef} equals
\begin{multline}\label{eq:xsin-catalan}
F_N(a,b)=(-4)^{\binom N2}\,4^{\,N(a+b)}\,
\prod_{k=1}^{N-1}(2k+1)!\\
\times\;
\frac{\displaystyle\prod_{k=0}^{N-1}\gamma(a-k)\,\gamma(b+k)}
     {\displaystyle\prod_{k=2}^{N+1}(a+k-1)!\;\prod_{k=N+1}^{2N}(b+k-1)!}\;
\Lambda_N(b-a)\,\Lambda_N\bigl(a+b+\tfrac32\bigr).
\end{multline}
\end{Theorem}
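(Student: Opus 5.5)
We prove \eqref{eq:xsin-catalan} by Desnanot--Jacobi condensation \eqref{eq:condensation}, treating $a$ and $b$ as free integer parameters, so that the family $\{F_N(a,b)\}$ is closed under the operation. For $M=\bigl(C_{a+r-m}C_{b+r+m}\bigr)_{0\le r,m\le N-1}$ we identify the five minors. Deleting the first row and first column replaces $(r,m)$ by $(r+1,m+1)$, which leaves $a+r-m$ unchanged and shifts $b+r+m$ by $2$; hence $M^{1}_{1}$ is $F_{N-1}(a,b+2)$. Deleting the last row and last column gives $M^{N}_{N}=F_{N-1}(a,b)$. The mixed deletions $M^{1}_{N}$ (indices $(r+1,m)$) and $M^{N}_{1}$ (indices $(r,m+1)$) give $F_{N-1}(a+1,b+1)$ and $F_{N-1}(a-1,b+1)$ respectively, and the central minor is $F_{N-2}(a,b+2)$. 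Condensation therefore reads
\[
F_N(a,b)\,F_{N-2}(a,b+2)=F_{N-1}(a,b+2)\,F_{N-1}(a,b)-F_{N-1}(a+1,b+1)\,F_{N-1}(a-1,b+1),
\]
a recurrence in $N$ that determines every $F_N$ from $F_0=1$ and $F_1(a,b)=C_aC_b$, provided the central minors do not vanish.

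We then check the conditions. If $(a,b)$ satisfies $a\ge N-1$ and $b\ge a+1$, each parameter pair on the right lies in the admissible range for order $N-1$ or $N-2$. The worst case is $(a-1,b+1)$, where we need $a-1\ge N-2$, which holds. All $\gamma$-arguments $a-k$ with $k\le N-1$ remain nonnegative. The candidate right-hand side, call it $G_N(a,b)$, is visibly nonzero in this range, so dividing by $F_{N-2}$ is legitimate by induction. We verify the base cases $N=0,1$ directly: at $N=1$ the formula reduces to $4^{a+b}\gamma(a)\gamma(b)/(a!\,(b+1)!)$, which equals $C_aC_b$ because $\gamma(k)4^k/k!=\binom{2k}{k}/\!\,$\dots, that is, $4^a\gamma(a)/(a!\,(a+1)!)\cdot a!=C_a$. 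We also check $N=2$ numerically as a sanity test.

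The main step is to show that $G_N$ satisfies the same recurrence. Dividing through by $G_{N-1}(a,b+2)\,G_{N-1}(a,b)$ reduces this to
\[
\frac{G_N(a,b)\,G_{N-2}(a,b+2)}{G_{N-1}(a,b+2)\,G_{N-1}(a,b)}
=1-\frac{G_{N-1}(a+1,b+1)\,G_{N-1}(a-1,b+1)}{G_{N-1}(a,b+2)\,G_{N-1}(a,b)}.
\]
Both ratios telescope to rational functions of $(a,b,N)$. The $\gamma$-products shift by single factors ($\gamma(k+1)/\gamma(k)=(2k+1)/2$), the factorial products lose or gain boundary terms, and for the staircases we use $\Lambda_N(t)/\Lambda_{N-1}(t)=(t+N-1)_{N-1}$ together with $\Lambda_{N-1}(t+1)/\Lambda_{N-1}(t)=\prod_j (t+2j)/(t+j)$. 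Note that $b-a$ shifts by $\pm2$ or $0$ across the four terms, while $a+b+\tfrac32$ shifts by $0$ or $2$. The identity then becomes an equality of rational functions that must be cleared and checked.

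We expect this final verification to be the obstacle. The staircase ratios involve products of length $N$, so the identity is not a finite polynomial check unless the long products cancel. We will first organize the computation so that these products collapse to a bounded number of linear factors, such as $(b-a+2N-2)$, $(2a+2b+2N+1)$ and similar; experimentally the right-hand side should factor as a difference of two such monomials. Failing a clean telescoping, we will fall back on proving the equivalent statement for fixed small $N$ symbolically and extracting the pattern.
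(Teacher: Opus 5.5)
Your strategy is the paper's: the same Desnanot--Jacobi relation, the same five minors $F_{N-1}(a,b)$, $F_{N-1}(a,b+2)$, $F_{N-1}(a+1,b+1)$, $F_{N-1}(a-1,b+1)$, $F_{N-2}(a,b+2)$, the same check that the region $a\ge N-1$, $b\ge a+1$ is stable, and the same use of $G_{N-2}(a,b+2)\ne0$.

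There are two small slips. At $N=1$ the denominator is $(a+1)!\,(b+1)!$, not $a!\,(b+1)!$. Also, $b-a$ only shifts by $0$ or $+2$, never by $-2$, so $\Lambda(t+1)/\Lambda(t)$ is never needed.

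The real gap is that you stop at the only step with content: showing that the candidate $G_N$ satisfies the condensation relation. You leave this as an expectation. Your fallback, checking small $N$ and extracting a pattern, would not be a proof.

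The missing idea is that the long staircase products cancel for a structural reason. Put $t=b-a$ and $u=a+b+\tfrac32$. Both products on the right of the condensation identity, $G_{N-1}(a,b)G_{N-1}(a,b+2)$ and $G_{N-1}(a+1,b+1)G_{N-1}(a-1,b+1)$, contain the same four staircases $\Lambda_{N-1}(t)\Lambda_{N-1}(t+2)\Lambda_{N-1}(u)\Lambda_{N-1}(u+2)$.

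Divide both products by $G_N(a,b)G_{N-2}(a,b+2)$, as the paper does. By $\Lambda_N(t)/\Lambda_{N-1}(t)=(t+N-1)_{N-1}$, the $t$-staircases contribute to both quotients the same factor
\[
\Lambda_{N-1}(t)\Lambda_{N-1}(t+2)/\bigl(\Lambda_N(t)\Lambda_{N-2}(t+2)\bigr)=1/(t+N-1),
\]
and likewise $1/(u+N-1)$.

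The remaining blocks telescope to finitely many linear factors, using $\gamma(x+1)/\gamma(x)=x+\tfrac12$:
\begin{itemize}
\item the powers of $4$ give $4^2$;
\item the constants give $-\tfrac14/\bigl((2N-1)(2N-2)\bigr)$;
\item the $\gamma$ and factorial windows leave $(a-N+\tfrac32)(a+N)(b+N-\tfrac12)(b+N)$ in the first quotient and $(a+\tfrac12)(a+1)(b+\tfrac12)(b+2N-1)$ in the second.
\end{itemize}

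The required relation, first quotient minus second quotient equals $1$, then reduces to
\[
(a+\tfrac12)(a+1)(b+\tfrac12)(b+2N-1)-(a-N+\tfrac32)(a+N)(b+N-\tfrac12)(b+N)
=\tfrac{(N-1)(2N-1)}{2}\,(b-a+N-1)\bigl(a+b+N+\tfrac12\bigr).
\]
This is a polynomial identity, checked by expanding both sides. With this step supplied, your argument is complete and coincides with the paper's proof.
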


\begin{proof}
Write $G_N(a,b)$ for the right-hand side of \eqref{eq:xsin-catalan} and
$$
G_N(a,b)=c_N\,4^{\,N(a+b)}\,A_N(a)\,B_N(b)\,
\Lambda_N(b-a)\,\Lambda_N\bigl(a+b+\tfrac32\bigr),
\qquad
c_N=(-4)^{\binom N2}\prod_{k=1}^{N-1}(2k+1)!,
$$
with $A_N(a)=\prod_{k=0}^{N-1}\gamma(a-k)\big/\prod_{k=2}^{N+1}(a+k-1)!$ and
$B_N(b)=\prod_{k=0}^{N-1}\gamma(b+k)\big/\prod_{k=N+1}^{2N}(b+k-1)!$. The
proof is by induction on $N$ through the Desnanot--Jacobi identity
\eqref{eq:condensation}, in the manner of the condensation proofs of the
determinant calculus \cite{Krattenthaler1998}.

For $N=0$ both sides equal $1$; for $N=1$,
$G_1(a,b)=4^{\,a+b}\gamma(a)\gamma(b)/\bigl((a+1)!\,(b+1)!\bigr)=C_aC_b
=F_1(a,b)$. Let $N\ge2$. In the region $a\ge N-1$, $b\ge a+1$ every Catalan
index in \eqref{eq:xsin-Fdef} is nonnegative, and inspection of the entries
$C_{a+r-m}C_{b+r+m}$ identifies the four corner minors of the $N\times N$
matrix as $F_{N-1}$ at $(a,b)$, $(a,b+2)$, $(a+1,b+1)$, $(a-1,b+1)$ and its
central minor as $F_{N-2}(a,b+2)$, so that the Desnanot--Jacobi identity
reads
\begin{equation}\label{eq:xsin-DJ}
F_N(a,b)\,F_{N-2}(a,b+2)
=F_{N-1}(a,b)\,F_{N-1}(a,b+2)-F_{N-1}(a+1,b+1)\,F_{N-1}(a-1,b+1).
\end{equation}
All five smaller determinants stay in the region ($a-1\ge N-2$, and the gaps
$b-a$, $b-a+2$ remain $\ge1$), so they equal the corresponding $G$'s by the
induction hypothesis; and $G_{N-2}(a,b+2)\neq0$, every factor of
\eqref{eq:xsin-catalan} being finite and nonzero there. Hence
\eqref{eq:xsin-DJ} determines $F_N(a,b)$, and it remains to check that $G$
satisfies the same relation, i.e.\ that $R_1-R_2=1$ for
$$
R_1:=\frac{G_{N-1}(a,b)\,G_{N-1}(a,b+2)}{G_N(a,b)\,G_{N-2}(a,b+2)},
\qquad
R_2:=\frac{G_{N-1}(a+1,b+1)\,G_{N-1}(a-1,b+1)}{G_N(a,b)\,G_{N-2}(a,b+2)}.
$$
Both quotients collapse block by block. The one-step ratios
$$
\frac{A_N(a)}{A_{N-1}(a)}=\frac{\gamma(a-N+1)}{(a+N)!},\qquad
\frac{B_N(b)}{B_{N-1}(b)}=\frac{\gamma(b+N-1)\,(b+N-1)!}{(b+2N-2)!\,(b+2N-1)!},
$$
$$
\frac{\Lambda_N(t)}{\Lambda_{N-1}(t)}=(t+N-1)_{N-1},
$$
together with $c_{N-1}^2/(c_Nc_{N-2})=-\tfrac14\big/\bigl((2N-1)(2N-2)\bigr)$
and $\gamma(x+1)/\gamma(x)=x+\tfrac12$, give: the powers of $4$ contribute
$4^{2}$ to each quotient; the $\Lambda$-blocks contribute, at $t=b-a$ and at
$t=a+b+\tfrac32$ alike (the four $\Lambda$-arguments are the same in $R_1$
and $R_2$),
\begin{equation}\label{eq:xsin-Theta}
\frac{\Lambda_{N-1}(t)\,\Lambda_{N-1}(t+2)}{\Lambda_N(t)\,\Lambda_{N-2}(t+2)}
=\frac{(t+N)_{N-2}}{(t+N-1)_{N-1}}
=\frac{1}{\,t+N-1\,}\,;
\end{equation}
and telescoping the (finitely overlapping) $\gamma$- and factorial windows of
the $A$- and $B$-blocks leaves $(a-N+\tfrac32)(a+N)$ and
$(b+N-\tfrac12)(b+N)$ in $R_1$, respectively $(a+\tfrac12)(a+1)$ and
$(b+\tfrac12)(b+2N-1)$ in $R_2$. Altogether
\begin{align*}
R_1&=\frac{-4\,(a-N+\tfrac32)(a+N)(b+N-\tfrac12)(b+N)}
         {(2N-1)(2N-2)\,(b-a+N-1)\,(a+b+N+\tfrac12)},\\[2pt]
R_2&=\frac{-4\,(a+\tfrac12)(a+1)(b+\tfrac12)(b+2N-1)}
         {(2N-1)(2N-2)\,(b-a+N-1)\,(a+b+N+\tfrac12)},
\end{align*}
and $R_1-R_2=1$ amounts to the polynomial identity
\begin{multline}\label{eq:xsin-DJfinal}
(a+\tfrac12)(a+1)(b+\tfrac12)(b+2N-1)-(a-N+\tfrac32)(a+N)(b+N-\tfrac12)(b+N)\\
=\tfrac{(N-1)(2N-1)}{2}\,(b-a+N-1)\,\bigl(a+b+N+\tfrac12\bigr),
\end{multline}
checked by expanding both sides. This completes the induction.
\end{proof}

\begin{Remark}[the two-parameter family is essential]
The condensation closes only because the \emph{two-parameter} family
\eqref{eq:xsin-Fdef} is considered: the four corner minors of
$F_N(a,b)$ shift $(a,b)$ in four different directions, and no
one-parameter subfamily (in particular not the diagonal $b=a+1$ that the
application requires) is stable under \eqref{eq:xsin-DJ}. The hypotheses
$a\ge N-1$ and $b\ge a+1$ are what the condensation argument needs, keeping
all five smaller determinants of \eqref{eq:xsin-DJ} within the range of the
induction; the application uses only the diagonal $b=a+1$.
\end{Remark}

\subsection{Proof of the closed form}\label{ssec:xsin-proof}

\begin{proof}[Proof of Theorem~\textup{\ref{conj:xsin-closed}}]
Combining the biorthogonal reduction \eqref{eq:xsin-bindet} with the
splitting \eqref{eq:xsin-toF} and with Theorem~\ref{thm:xsin-catalan} ---
applicable to $F_{\underline n}(\bar n-1,\bar n)$ since $\bar n-1\ge\underline n-1$ and the
gap is $b-a=1$ --- expresses $\HH_n$ as the explicit product
\begin{equation}\label{eq:xsin-assembled}
\HH_n=(-1)^{\binom{\bar n}2}
\Bigl(\prod_{l=0}^{\bar n-1}h^{\mathcal S}_l\Bigr)
\Bigl(\prod_{m=0}^{\underline n-1}h^{\mathcal T}_m\Bigr)
\Bigl(\prod_{r=0}^{\underline n-1}\alpha_{\bar n+r}\Bigr)
\Bigl(\prod_{m=0}^{\underline n-1}\beta_m\Bigr)\,
G_{\underline n}(\bar n-1,\bar n)=:\Pi_n,
\end{equation}
with $h^{\mathcal S},h^{\mathcal T}$ the factorial products
\eqref{eq:xsin-hS}, \eqref{eq:xsin-hT}, $\alpha,\beta$ as in
\eqref{eq:xsin-split}, and $G$ the right-hand side of
\eqref{eq:xsin-catalan}. Direct evaluation gives $\Pi_0=\Pi_1=1$. The two-step
quotient $\Pi_{n+2}/\Pi_n$ is an elementary telescoping: the step $n\mapsto n+2$
replaces $(\bar n,\underline n)$ by $(\bar n+1,\underline n+1)$, so the quotient collects
the factors $h^{\mathcal S}_{\bar n}$, $h^{\mathcal T}_{\underline n}$,
$\alpha_n\alpha_{n+1}/\alpha_{\bar n}$, $\beta_{\underline n}$ and the ratio
$G_{\underline n+1}(\bar n,\bar n+1)/G_{\underline n}(\bar n-1,\bar n)$, the latter an
explicit ratio of factorials by the one-step ratios in the proof of
Theorem~\ref{thm:xsin-catalan} together with
$$
\frac{\Lambda_{q+1}(t)}{\Lambda_q(t)}=(t+q)_q,
\qquad
\frac{\Lambda_q(t+2)}{\Lambda_q(t)}
=\frac{\Gamma(t+1)\,\Gamma(t+2q)}{\Gamma(t+q)\,\Gamma(t+q+1)}\,.
$$
Simplifying the resulting factorial products (the two parities of $n$
treated separately, as in Proposition~\ref{prop:sec1}) yields
$$
\frac{\Pi_{n+2}}{\Pi_n}=\tfrac23\,Q(n)\qquad(n\ge0),
$$
with $Q$ as in \eqref{eq:xsin-Q}. Hence $\HH_n=\Pi_n$ satisfies the two-step
recurrence $\HH_{n+2}=\tfrac23\,Q(n)\,\HH_n$ with $\HH_0=\HH_1=1$, and
iterating it gives the products \eqref{eq:xsin-closed}.
\end{proof}

\begin{Remark}[the collapse mechanism]
The proof is an instance of the two-stage collapse mechanism of the
reduction $\M2$ described at the end of Section~\ref{sec:biotho}. The
single-term connection (Lemma~\ref{lem:xsin-kappaclosed}) is the analogue,
on the Wilson level, of the classical fact that connection coefficients
between two Jacobi families sharing one parameter factor completely; and
the kernel $C_{i-m-1}C_{i+m}$ it leaves behind generates the two-parameter
Catalan family \eqref{eq:xsin-Fdef}, which is stable under Desnanot--Jacobi
condensation. In particular, no deformation parameter and no recourse to
the divisor method $\M4$ is needed.
\end{Remark}


\section{An elliptic deformation of the Euler numbers}\label{sec:elliptic}

The Hankel determinants attached to elliptic functions have been studied by
Thomas Jan Stieltjes~\cite{Stieltjes1890}, Philippe
Flajolet~\cite{Flajolet1980}, Hubert Prodinger~\cite{Prodinger2011} and
Dominique Dumont~\cite{Dumont1981}. Stieltjes computed the continued fractions
--- equivalently the Hankel determinants --- of many special series; Flajolet
gave the combinatorial theory relating Hankel determinants, $J$-fractions and
weighted lattice paths; Dumont interpreted the Taylor coefficients of the
Jacobi elliptic functions combinatorially; and Prodinger obtained the continued
fractions of several tangent- and secant-type deformations.

Recall the \emph{Jacobi elliptic functions} $\sn(x,m)$, $\cn(x,m)$, $\dn(x,m)$,
of modulus parameter $m=k^2$. Writing the incomplete elliptic integral of the
first kind
$u=\int_0^{\phi}\bigl(1-m\sin^2\theta\bigr)^{-1/2}\,d\theta$,
they are defined by inverting $u\mapsto\phi$ and setting
$\sn(u,m)=\sin\phi$, $\cn(u,m)=\cos\phi$,
$\dn(u,m)=(1-m\sin^2\phi)^{1/2}$; they are doubly periodic meromorphic
functions satisfying $\sn^2+\cn^2=1$ and $\dn^2+m\,\sn^2=1$, and reduce to
$\sin x,\cos x,1$ at $m=0$
(Whittaker--Watson~\cite[Ch.~22]{Whittaker1996Watson}).

Replacing $\sin,\cos$ by the Jacobi elliptic functions $\sn(x,m)$,
$\cn(x,m)$ deforms the Euler generating function
$\sec x+\tan x=(1+\sin x)/\cos x$ into
\begin{equation}\label{eq:ellEuler}
g_m(x):=\frac{1+\sn(x,m)}{\cn(x,m)}=\sum_{n\ge0}E_n(m)\frac{x^n}{n!},
\qquad E_n(0)=E_n,
\end{equation}
since $\sn(x,0)=\sin x$, $\cn(x,0)=\cos x$. The $E_n(m)$ are polynomials in
$m$ ($E_2=1,\ E_3=2-m,\ E_4=5-4m,\ E_5=16-16m+m^2$).

\begin{Theorem}\label{thm:ell}
For every $m$ and $n\ge1$,
$$
\HH_n(g_m)=\HH_n\Bigl(\frac{1+\sn(x,m)}{\cn(x,m)}\Bigr)
=\Bigl(\frac{1-m}{2}\Bigr)^{\binom n2}\prod_{k=1}^{n-1}k!\,(2k)! .
$$
\end{Theorem}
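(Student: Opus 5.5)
We prove Theorem~\ref{thm:ell} with the divisor method~$\M4$, treating $\HH_n(g_m)$ as a polynomial in the modulus $m$. Since $E_n(m)$ is a polynomial in $m$ of degree $\le\lfloor (n-1)/2\rfloor$ (the pattern $E_3=2-m$, $E_5=16-16m+m^2$ suggests $\deg_m E_n\le\lfloor(n-1)/2\rfloor$), a permutation expansion bounds $\deg_m\HH_n(g_m)$. The target has degree exactly $\binom n2$, all concentrated at the single point $m=1$. So the plan is:
\begin{enumerate}
\item[(i)] prove $\deg_m\HH_n\le\binom n2$;
\item[(ii)] prove $\ord_{m=1}\HH_n\ge\binom n2$;
\item[(iii)] fix the constant by comparison with a known value.
\end{enumerate}
The constant comes from $m=0$, where $g_0=\sec x+\tan x$ and the Euler-number value $\HH_n=2^{-\binom n2}\prod_{k<n}k!\,(2k)!$ is Corollary~\ref{cor:euler} (Proposition~\ref{prop:family} at $(s,t)=(0,1)$, with $\Omega(1)=2^{-\binom n2}$). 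This matches $((1-0)/2)^{\binom n2}\prod k!(2k)!$, and then Observation~\ref{obs:divisor} closes the argument.

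\emph{Degree bound.} A crude bound $\deg_m a_k\le\lfloor (k-1)/2\rfloor$ gives $\sum_i \lfloor(2i+\sigma(i)-1)/2\rfloor\approx\binom n2+\binom n2/2$, which is too large. I would therefore sharpen it with the parity structure of the $m\to\infty$ behaviour. Rescale using $\sn(x,m)=m^{-1/2}\sn(\sqrt m\,x,1/m)$, and similarly for $\cn,\dn$, by the reciprocal-modulus transformation. By Lemma~\ref{lem:scale}, $\HH_n(g_m)$ is then related to $m^{3\binom n2/2}$ times a dilated determinant of $(1+m^{-1/2}\sn(y,1/m))/\dn(y,1/m)$, a function analytic in $\mu=m^{-1/2}$. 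The degree bound thus becomes an order-of-vanishing statement at $\mu=0$: we need order $\ge \binom n2$ in $\mu$ for the rescaled determinant. At $\mu=0$ the function is $1/\dn(y,0)=1$, so the limit is degenerate. The required order should follow from a Cauchy--Binet/leading-symbol expansion of the kind used in Step~1 of Theorem~\ref{thm:deriv}, with columns contributing in increasing powers of $\mu$. I expect this step, pinning down $\deg_m\HH_n\le\binom n2$ sharply, to be the main obstacle. If the transformation route is messy, I would instead try to show directly, from the differential system $\sn'=\cn\,\dn$ and so on, that $[m^d]E_k$ vanishes unless $k\ge 2d+1$ in a form that feeds a column-degree count as in Lemma~\ref{lem:deg}.

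\emph{Order at $m=1$.} At $m=1$ we have $\sn=\tanh$ and $\cn=\dn=\operatorname{sech}$, so $g_1=\cosh x+\sinh x=e^x$. This is a single exponential, whose dilated matrix $(1)$ has rank one. Near $m=1$, the expansion in $\epsilon=1-m$ (the complementary modulus, via the Jacobi imaginary transformation) writes $g_m$ as an exponential sum: Fourier/nome expansions in $q'=e^{-\pi K/K'}$ give nodes $\zeta_k$ near $\pm(2k+1)$ times a scale, with coefficients of order $q'^{\,k}$. Corollary~\ref{cor:cborder} would then give order $\sum_{k<n}k=\binom n2$ in $q'$, exactly as at $t=\pm i$ in Theorem~\ref{thm:springer}. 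Since $q'$ is not polynomial in $\epsilon$ (it behaves like $\epsilon/16$ to leading order), $\ord_{q'}$ and $\ord_\epsilon$ agree, which yields $\ord_{m=1}\ge\binom n2$. Alternatively, one can argue as in Lemma~\ref{lem:Apoly}: the dilated matrix at $m=1$ is $B^{(0)}=(1)$ of rank one, and the successive $\epsilon$-derivatives raise the rank only one at a time. The exponential-sum route seems cleaner, however.

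Combining (i)--(iii), $\HH_n(g_m)=c\,(1-m)^{\binom n2}$, and $m=0$ gives $c=2^{-\binom n2}\prod k!(2k)!$.
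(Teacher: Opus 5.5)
\paragraph{Steps (ii) and (iii).} These are the paper's argument. You use the imaginary transformation $g_m(ix)=\cn(x,1-m)+i\,\sn(x,1-m)$ and the nome expansion with nodes $\pm i(2k+1)\rho$. Corollary~\ref{cor:cborder} over subsets with distinct $|\nu|$ then gives the order at $m=1$, and the Euler value of Corollary~\ref{cor:euler} fixes the constant at $m=0$. Three small corrections:
\begin{itemize}
\item $\ord_{q'}$ and $\ord_{1-m}$ agree because $q'$ is analytic in $1-m$ with a simple zero, $q'=(1-m)/16+O((1-m)^2)$. It is not because $q'$ fails to be polynomial.
\item The coefficients at the nodes $-(2k+1)$ actually have order $3k+1$. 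For divisibility, however, the lower bound $k$ on both signs already suffices.
\item Your ``rank one, raised one step at a time'' alternative is unproved. Rank one alone gives only order $n-1$.
\end{itemize}

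\paragraph{The gap: step (i).} You leave the degree bound as a hope, and neither of the tools you suggest can deliver it.
\begin{itemize}
\item After the reciprocal-modulus rescaling, each entry $a_k(G)=\epsilon^kE_k(\epsilon^{-2})$ has $\epsilon$-order at most $2$: exactly $1$ for odd $k$ and exactly $2$ for even $k\ge2$. Any lower bound built from orders of entries, rows or columns is therefore at most $2n$, which falls short of $\binom n2$ once $n\ge6$.
\item Your fallback, $[m^d]E_k=0$ unless $k\ge2d+1$, is exactly the crude bound you already rejected.
\item A column-degree count as in Lemma~\ref{lem:deg} presupposes an orthogonal reduction that you do not have here.
\item A leading-symbol argument in the style of Theorem~\ref{thm:deriv} does not transfer. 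There the coefficients had polynomial-in-$k$ structure ($\deg_k\pi_r\le 2r$), which is absent here. The paper also points out that replacing $G$ by its symbol $e^{\epsilon\sin x}$, with entrywise errors of relative order $2$, can push the order below $\binom n2$ for $n\ge6$.
\end{itemize}

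\paragraph{The missing idea.} What is needed is a rank filtration of the matrix in powers of $\epsilon$. It comes from the same exponential-sum idea you used at $m=1$, now applied at $m=\infty$.
\begin{itemize}
\item From $\sn'=\cn\,\dn$ and $\dn'=-\epsilon^2\sn\,\cn$ one gets $G'/G=\epsilon\,\cn/\dn$. Hence $G=\exp(\epsilon\Psi)$ with $\Psi=\int_0^x\cn/\dn$.
\item $\Psi$ is odd and $4K$-periodic, with no secular term, because $\cn/\dn$ is $2K$-antiperiodic and so has zero mean. Its $p$-th Fourier coefficient has order at least $|p|-1$.
\item Expanding the exponential gives $G=\sum_{\nu\in\ZZ}c_\nu e^{i\nu\rho x}$ with $\ord_\epsilon c_\nu\ge|\nu|$.
\item Lemma~\ref{lem:cb} and Corollary~\ref{cor:cborder} then apply with symmetric nodes, so surviving subsets have distinct $|\nu|$. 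This gives $\ord_\epsilon\HH_n(G)\ge 0+1+\dots+(n-1)=\binom n2$, which is equivalent to $\deg_m\HH_n(g_m)\le\binom n2$.
\end{itemize}
Without this input your argument only shows that $(1-m)^{\binom n2}$ divides $\HH_n(g_m)$, not that the quotient is constant.
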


\noindent
Equivalently, since $\HH_n(g_0)=2^{-\binom n2}\prod_{k=1}^{n-1}k!\,(2k)!$
(Corollary~\ref{cor:euler}), $\HH_n(g_m)=(1-m)^{\binom n2}\HH_n(g_0)$:
the determinant is the Euler value times a clean $\binom n2$-power of the
\emph{complementary} modulus. This is the exact analogue of the Springer
ladder (Theorem~\ref{thm:springer}); the proof follows the same divisor
strategy~$\M4$, and is in fact simpler --- only one special value of the parameter
occurs. As there, the orders are read off \emph{exponential expansions} of the
generating function, through the dilated Cauchy--Binet Lemma~\ref{lem:cb};
here the new feature is that the frequencies are
\emph{two-sided}, and only those of distinct absolute value contribute, because
the even rows $z^{2i}$ cannot separate $\nu$ from $-\nu$.

\subsection{Reduction to two orders}\label{ssec:ell-red}

$\HH_n(g_m)$ is a polynomial in $m$, being a determinant with entries
$E_k(m)\in\QQ[m]$. Theorem~\ref{thm:ell} follows from:
\begin{itemize}
\item[\textbf{(A)}] $\deg_m\HH_n(g_m)\le\binom n2$;
\item[\textbf{(B)}] $(1-m)^{\binom n2}$ divides $\HH_n(g_m)$.
\end{itemize}

\begin{proof}[Proof of Theorem~\textup{\ref{thm:ell}}, assuming
\textup{(A)} and \textup{(B)}]
A polynomial of degree $\le\binom n2$ divisible by the degree-$\binom n2$
polynomial $(1-m)^{\binom n2}$ equals $c_n(1-m)^{\binom n2}$
(Observation~\ref{obs:divisor}); evaluating at $m=0$ gives
$c_n=\HH_n(g_0)=2^{-\binom n2}\prod_{k=1}^{n-1}k!\,(2k)!$.
\end{proof}

Both (A) and (B) are instances of the dilated
Cauchy--Binet Lemma~\ref{lem:cb}, reached through two classical transformations
of the Jacobi functions; they occupy the next two subsections.

\subsection{The degree bound (A), via the reciprocal modulus}
\label{ssec:ell-A}

By Jacobi's reciprocal-modulus transformation,
$\sn(x,m)=m^{-1/2}\sn(m^{1/2}x,1/m)$, $\cn(x,m)=\dn(m^{1/2}x,1/m)$
\cite{Whittaker1996Watson}. Substituting $x\mapsto x/m^{1/2}$ and applying the homogeneity
Lemma~\ref{lem:scale} ($\beta=m^{-1/2}$),
\begin{equation}\label{eq:recip}
\HH_n(g_m)=m^{\frac32\binom n2}\,\HH_n(G),
\qquad
G(x):=g_m(x/m^{1/2})=\frac{1+\epsilon\,\sn(x,\epsilon^2)}{\dn(x,\epsilon^2)},
\quad \epsilon:=m^{-1/2}.
\end{equation}
Thus $\deg_m\HH_n(g_m)\le\binom n2$ is equivalent to
$\ord_{\epsilon=0}\HH_n(G)\ge\binom n2$ (a polynomial of $m$-degree $d$ becomes
$m^{\frac32\binom n2}\times$ a series of $\epsilon$-order $3\binom n2-2d$).

We obtain this lower bound from Lemma~\ref{lem:cb} applied to $G$ itself,
after producing its exponential expansion.

From $\sn'=\cn\dn$, $\dn'=-\epsilon^2\sn\cn$ and $\dn^2=1-\epsilon^2\sn^2$,
\begin{equation}\label{eq:Gode}
\frac{G'}{G}=\frac{\epsilon\,\sn'\,\dn-(1+\epsilon\sn)\,\dn'}{\dn(1+\epsilon\sn)}
=\frac{\epsilon\cn\,[\dn^2+\epsilon\sn(1+\epsilon\sn)]}{\dn(1+\epsilon\sn)}
=\epsilon\,\frac{\cn(x,\epsilon^2)}{\dn(x,\epsilon^2)},
\end{equation}
so $G=\exp(\epsilon\Psi)$ with $\Psi(x)=\int_0^x\frac{\cn}{\dn}\,dt$. Now
$\cn/\dn$ is even and, since $\cn(x+2K)=-\cn(x)$ and $\dn(x+2K)=\dn(x)$,
anti-periodic with period $2K$; hence it has \emph{zero mean} and only odd
harmonics. Therefore $\Psi$ is an \emph{odd, $4K$-periodic} function (no
secular term), and $G=e^{\epsilon\Psi}$ is $4K$-periodic and real-analytic
($\dn>0$ on $\RR$). With $\rho:=\pi/2K$ ($\rho\to1$ as $\epsilon\to0$), the
Fourier expansion of $G$ and term-by-term differentiation at $x=0$ give the
moment form
\begin{equation}\label{eq:Gfou}
G(x)=\sum_{\nu\in\ZZ}c_\nu(\epsilon)\,e^{i\nu\rho x},
\qquad
a_m=m![x^m]G=\sum_{\nu\in\ZZ}c_\nu\,(i\nu\rho)^m,
\end{equation}
an exponential sum with symmetric nodes $\zeta_\nu=i\nu\rho$ (both series
converge, the $c_\nu$ decaying geometrically in the nome).

It remains to bound $\ord_\epsilon c_\nu$. Write
$\cn/\dn=\sum_p\gamma_p e^{ip\rho x}$; integrating \eqref{eq:Gode} term by term,
\begin{equation}\label{eq:Psi}
\Psi(x)=\sum_{p\neq0}\psi_p\,e^{ip\rho x}+\mathrm{const},
\qquad \psi_p=\frac{\gamma_p}{ip\rho},
\end{equation}
the $p=0$ term being absent because $\cn/\dn$ has zero mean. The Fourier
coefficients of $\cn/\dn$ involve only odd $p$, the $(2k{+}1)$-th entering at
nome order $k$; thus $\ord_\epsilon\gamma_p\ge|p|-1$, and hence
$\ord_\epsilon\psi_p\ge|p|-1$ as well ($\rho=1+O(\epsilon^2)$ is a unit)
\cite[\S22.6]{Whittaker1996Watson}. Expanding the exponential $G=e^{\epsilon\Psi}$ and collecting
the $e^{i\nu\rho x}$-coefficient,
$$
c_\nu=(\text{unit})\sum_{N\ge0}\frac{\epsilon^N}{N!}
\sum_{\substack{p_1,\dots,p_N\neq0\\ p_1+\dots+p_N=\nu}}\psi_{p_1}\cdots\psi_{p_N},
$$
where the global unit is $e^{-\epsilon\cdot\mathrm{const}}$. Each monomial has
$\epsilon$-order
$\ge N+\sum_j(|p_j|-1)=\sum_j|p_j|\ge\bigl|\sum_j p_j\bigr|=|\nu|$, so
\begin{equation}\label{eq:cord}
\ord_\epsilon c_\nu\ \ge\ |\nu|\qquad(\nu\in\ZZ).
\end{equation}
(Equivalently, substituting \eqref{eq:Gfou} into \eqref{eq:Gode} and comparing
$e^{i\nu\rho x}$-coefficients yields the recurrence
\begin{equation}\label{eq:Grec}
i\nu\rho\,c_\nu=\epsilon\sum_p\gamma_p\,c_{\nu-p}\qquad(\nu\neq0),
\end{equation}
with which \eqref{eq:cord} is consistent; but \eqref{eq:Grec} alone is not
triangular in $|\nu|$, so the order bound is read off the exponential
\eqref{eq:Psi} as above.) Applying the order Corollary~\ref{cor:cborder}
with $o_\nu=|\nu|$, and noting that surviving subsets
have distinct $|\nu|$,
$$
\ord_{\epsilon=0}\HH_n(G)\ \ge\ \min_T\sum_{\nu\in T}|\nu|
=0+1+\dots+(n-1)=\binom n2,
$$
proving (A). (Only the lower bound is needed; that the constant $c_n$ above is
nonzero comes from the value $\HH_n(g_0)\ne0$ at $m=0$.)

\begin{Remark}[why the leading symbol is not enough]
One might hope to replace $G$ by $e^{\epsilon\sin x}$ (its $\epsilon\to0$
symbol) and argue that $a_m$ agrees with $m![x^m]e^{\epsilon\sin x}$ to
relative order $2$. This is \emph{false} from the per-entry orders alone: for
$n\ge6$ a perturbation of each $a_m$ by a term of order $\ord_\epsilon a_m+2$
can \emph{lower} $\ord_\epsilon\HH_n$ below $\binom n2$ (numerically, to $13$
at $n=6$ and $17$ at $n=7$). The argument above instead uses the genuine
moment structure \eqref{eq:Gfou}--\eqref{eq:Grec} of $G$, which forces
$\ord_\epsilon c_\nu\ge|\nu|$; that is what makes it correct.
\end{Remark}

\subsection{Vanishing at $m=1$ (B), via the imaginary transformation}
\label{ssec:ell-B}

At $m=1$, $\sn(x,1)=\tanh x$, $\cn(x,1)=\operatorname{sech}x$, so
$g_1(x)=(1+\tanh x)\cosh x=e^{x}$: a rank-one geometric sequence. We show the
vanishing of $\HH_n(g_m)$ there has order $\binom n2$. Set $\mu:=1-m$; by
Jacobi's imaginary transformation
$\sn(ix,m)=i\,\sn(x,1-m)/\cn(x,1-m)$, $\cn(ix,m)=1/\cn(x,1-m)$ \cite{Whittaker1996Watson},
$$
g_m(ix)=\frac{1+\sn(ix,m)}{\cn(ix,m)}
=\cn(x,\mu)+i\,\sn(x,\mu)=:h_\mu(x),
$$
the elliptic analogue of $e^{ix}=\cos x+i\sin x$. By the homogeneity
Lemma~\ref{lem:scale} ($\beta=i$), $\HH_n(g_m)=i^{-3\binom n2}\HH_n(h_\mu)$,
and $\mu=1-m$ vanishes to first order at $m=1$, so it suffices that
$\ord_{\mu=0}\HH_n(h_\mu)=\binom n2$.

For modulus $\mu\to0$ the nome $\mathfrak q=\mathfrak q(\mu)$ is analytic with
$\mathfrak q=\mu/16+O(\mu^2)$, $K(\mu)\to\pi/2$, $\rho:=\pi/2K(\mu)\to1$
\cite{Whittaker1996Watson}. The Fourier expansions of $\sn,\cn$ \cite[\S22.6]{Whittaker1996Watson} combine to
$$
h_\mu(x)=\frac{2\pi}{K\sqrt\mu}\sum_{k\ge0}
\frac{\mathfrak q^{\,k+1/2}}{1-\mathfrak q^{\,2(2k+1)}}
\Bigl(e^{\,i(2k+1)\rho x}-\mathfrak q^{\,2k+1}e^{-i(2k+1)\rho x}\Bigr),
$$
an exponential sum with nodes
$\zeta_{\pm(2k+1)}=\pm i(2k+1)\rho$ (odd multiples of $\rho$, symmetric) and
coefficients of order, in $\mathfrak q$,
$$
\ord_{\mathfrak q}c_{\,2k+1}=k,\qquad \ord_{\mathfrak q}c_{-(2k+1)}=3k+1
$$
(the prefactor $\frac{2\pi}{K\sqrt\mu}\mathfrak q^{1/2}\to1$ is a unit). By the
order Corollary~\ref{cor:cborder}, the surviving subsets have distinct
$|\nu|\in\{1,3,5,\dots\}$ and the unique minimal one is
$T^\ast=\{1,3,\dots,2n-1\}$ taken with the cheaper $+$ signs (orders
$0,1,\dots,n-1$, summing to $\binom n2$); its leading coefficient is a product
of two Vandermondes in the distinct nodes $i(2k+1)\rho$ ($\rho\to1$), hence
nonzero. Therefore $\ord_{\mathfrak q=0}\HH_n(h_\mu)=\binom n2$, and since
$\mathfrak q\sim\mu/16$, $\ord_{\mu=0}\HH_n(h_\mu)=\binom n2$, proving (B). Together with (A) and the reduction of
Subsection~\ref{ssec:ell-red}, this completes the proof of
Theorem~\ref{thm:ell}.

\begin{Remark}
The elliptic proof draws on classical Jacobi-function theory --- the
reciprocal-modulus and imaginary transformations, the Fourier/nome expansions
of $\sn,\cn$, and the nome order $\ord_\epsilon\gamma_p\ge|p|-1$ of the
coefficients of $\cn/\dn$ \cite[\S22.6]{Whittaker1996Watson} --- as cited inputs; everything else
(the ODE \eqref{eq:Gode} and recurrence \eqref{eq:Grec}, the order counts) is
elementary. Its skeleton is identical to the Springer proof: $m=1$ (where
$g_1=e^x$) plays the role of $t=\pm i$; the imaginary transformation, turning
$m$ into $1-m$, plays the role of the twist $t\mapsto i$ that produced
$t^2+1$; the reciprocal modulus ($m\to\infty$) plays the role of
$t\to\infty$; and $m=0$ fixes the constant. The structural novelty is that the
exponential expansions are two-sided (Lemma~\ref{lem:cb}) and that, for the
degree bound, the lower order $\ord_\epsilon c_\nu\ge|\nu|$ must be read from
$G$'s moment structure, not from per-entry orders. The further deformation
$(1+t\,\sn)/\cn$ retains this ladder form when $t\ne1$.
\end{Remark}

\section{An algebraic family: $(1+x)/(1-x^{2})^{s/2}$}\label{sec:alg}

Replacing $\cos(x)^{-(s+1)}$ by the algebraic even series
$(1-x^{2})^{-s/2}=\sum_{k\ge0}\binom{-s/2}{k}(-1)^kx^{2k}$ gives another
one-parameter family with the same closed-product behaviour. A scale
parameter $t$, i.e.\ $(1-tx^2)^{-s/2}$, would be inessential: it multiplies
the moment $a_{2i+j}$ by $t^{\,i+\lfloor j/2\rfloor}$ and hence $\HH_n$ by the
constant $t^{\binom n2+\lfloor(n-1)^2/4\rfloor}$, so we set $t=1$.

Notably, \emph{neither} of the two moment functionals
$\mathcal S[y^k]=a_{2k}$ and $\mathcal T^*[y^k]=a_{2k+1}$ is classical here:
the even moments $\mu_k=4^k(\tfrac12)_k(\tfrac s2)_k$ form a Hadamard product
of two Laguerre moment sequences, and the three-term recurrence coefficients
of both $\mathcal S$ and $\mathcal T^*$ are rational, but not polynomial,
functions of $s$. Accordingly the proof below uses no orthogonal polynomials
at all: it rests on an elementary contiguous relation in~$s$ ($\M5$), which
relates $\HH_n(s)$ to $\HH_n(s-2)$ by column operations and is solved by a
periodicity argument, anchored by a Vandermonde evaluation~$\M1$ at the single
point $s=2$ (Step~3 of the proof) --- $\M1$ is thus only the base case, not
the mechanism that carries the result across all $s$.

\begin{Theorem}\label{conj:alg}
Let $f(x)=\dfrac{1+x}{(1-x^{2})^{s/2}}$, so that $a_{2k}=\dfrac{(2k)!}{k!}\,(s/2)_k$
and $a_{2k+1}=(2k+1)a_{2k}$.
Then for all $n\ge1$,
$$
\HH_n(f)=\Bigl(\prod_{k=1}^{n-1}(2k)!\Bigr)
          \prod_{j=1}^{n-1}\bigl(s+2j-2\bigr)^{\,n-j}
=2^{\binom n2}\prod_{k=1}^{n-1}(2k)!\prod_{i=1}^{n-1}(s/2)_i .
$$
\end{Theorem}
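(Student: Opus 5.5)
The plan is to combine a contiguous relation in $s$ with a single Vandermonde anchor at $s=2$, keeping a divisor-type argument in reserve. Write $\HH_n(s)$ for the determinant; it is a polynomial in $s$, since $a_{2k}=\frac{(2k)!}{k!}(s/2)_k$ and $a_{2k+1}=(2k+1)a_{2k}$ are polynomials in $s$.

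\emph{Anchor at $s=2$.} Here $a_{2k}=(2k)!$ and $a_{2k+1}=(2k+1)!$, so $a_m=m!$. This is the factorial member of the Beta family, and Corollary~\ref{cor:factorial} gives $\HH_n=2^{\binom n2}\prod_{k<n}k!\,(2k)!$. This agrees with the claimed formula, since at $s=2$
$$\prod_{j=1}^{n-1}(2j)^{n-j}=2^{\binom n2}\prod_{k<n}k!.$$
Thus $\M1$ fixes the constant.

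\emph{Contiguous relation ($\M5$).} The generating functions satisfy $f_{s-2}=(1-x^2)f_s$. On coefficients this reads
$$a_m(s-2)=a_m(s)-m(m-1)\,a_{m-2}(s).$$
The Pochhammer form gives the companion identities $a_{2k+2}(s-2)=(s-2)(2k+2)!/(k+1)!\cdot(s/2)_k/ \ldots$, more usefully
$$a_{2k+1}(s)=\frac{a_{2k+2}(s-2)}{s-2},\qquad a_{2k}(s)=\frac{a_{2k+2}(s-2)}{(s-2)(2k+1)}.$$
I will use these to rewrite the matrix $(a_{2i+j}(s))$. The odd columns become even-index moments at $s-2$, shifted by one step, each divided by $s-2$. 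For the even columns, the factor $1/(2k+1)$ should be absorbed by adding multiples of neighbouring columns, i.e.\ the relation $a_{2k+1}=(2k+1)a_{2k}$ read backwards via $\mathcal T^*=\mathcal S\circ(2y\partial_y+1)$.

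The target is an identity
$$\HH_n(s)=\frac{\phi_n(s)}{\phi_n(s-2)}\cdot\frac{\ldots}{\ldots}\,\HH_n(s-2),$$
with an explicit rational prefactor. The claimed formula predicts the ratio
$$\frac{\HH_n(s)}{\HH_n(s-2)}=\prod_{j=1}^{n-1}\Bigl(\frac{s+2j-2}{s+2j-4}\Bigr)^{n-j}.$$
So it suffices to show that the quotient $R(s)=\HH_n(s)\big/\prod_{j}(s+2j-2)^{n-j}$ satisfies $R(s)=R(s-2)$. A rational function that is $2$-periodic is constant, because it has finitely many poles. Evaluating at $s=2$ then gives the theorem.

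\emph{Fallback.} If the column operations do not close cleanly, I will instead prove the following two facts.

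(i) Divisibility by $(s+2p)^{\,n-p-1}$ for $0\le p\le n-2$. At $s=-2p$ the function $(1-x^2)^p$ is a polynomial, so $a_{2k}=0$ for $k>p$ and $a_{2k+1}=(2k+1)a_{2k}$. The rows $i>p$ of $(a_{2i+j})$ then have many zeros, and I will look for the rank drop in the same style as Lemma~\ref{lem:rankdrop}.

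(ii) A degree bound $\deg_s\HH_n\le\binom n2$. The naive bound is $\binom n2+\lfloor (n-1)^2/4\rfloor$, which is too large. To beat it, I will use the one-sided reduction Lemma~\ref{lem:onesided}, and apply the relation $\mathcal T^*=\mathcal S\circ(2y\partial_y+1)$ to kill the top $s$-degree in the odd columns, as in the proof of Theorem~\ref{thm:allstar}.

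\emph{Main obstacle.} The hardest step is making the contiguous step a genuine determinant identity. The coefficient $m(m-1)$ depends on the row through $m=2i+j$, so it is not a pure column operation. The multiplicities must come out exactly $n-j$, and I expect the bookkeeping of which columns are shifted, and by how much, to be the crux.
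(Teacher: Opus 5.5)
Your outer frame is exactly the paper's and is fine: the anchor at $s=2$ through Corollary~\ref{cor:factorial}, and the argument that a $2$-periodic rational quotient is constant. The gap is that the one step carrying all the content is missing. You never prove the contiguous identity $(s-2)^{n-1}\HH_n(s)=\prod_{r=0}^{n-2}(s+2r)\,\HH_n(s-2)$; you only read the ratio $\HH_n(s)/\HH_n(s-2)$ off the formula to be proved, which is circular.

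The identities you propose do not get you there. The relation $a_m(s-2)=a_m(s)-m(m-1)a_{m-2}(s)$ has a row-dependent coefficient, as you note. The relation $a_{2k}(s)=a_{2k+2}(s-2)/\bigl((s-2)(2k+1)\bigr)$ moves the moment index and leaves the row-dependent factor $1/(2i+2l+1)$ in the even columns, which is not a column operation.

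The missing idea is to compare the \emph{same} index at $s$ and $s-2$. From $(s/2)_k/(s/2-1)_k=(s+2k-2)/(s-2)$ you get $(s-2)\,a_{2i+j}(s)=(\alpha_i+2\lfloor j/2\rfloor)\,C_{ij}$, with $\alpha_i=s-2+2i$ and $C_{ij}=a_{2i+j}(s-2)$. This multiplier is a row part plus a column part, and the column part can be removed. Writing $\mu_k=a_{2k}$, the recurrence $\mu_{k+1}(s-2)=(2k+1)\alpha_k\,\mu_k(s-2)$ gives $C_{ij}=\mu_i(s-2)\,\psi_j(\alpha_i)$. Here $\psi_j$ is monic of degree $j$ in $\alpha$ and divisible by $\alpha$ for $j\ge2$. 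Hence $2\lfloor j/2\rfloor\,C_{\bullet,j}$ is a combination of the columns $\alpha\odot C_{\bullet,j'}$ with $j'<j$. Inductive column operations then turn the matrix into $\operatorname{diag}(\alpha_i)\,C$, and cancelling one factor $s-2$ gives the contiguous identity.

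Your fallback does not close the gap either. Part~(i) is immediate: every entry of row $i$ contains $(s/2)_{i+\lfloor j/2\rfloor}$, hence $(s/2)_i$, so $\prod_{i<n}(s/2)_i$ divides $\HH_n$. But then (ii) is the entire theorem, since the cofactor must be constant in $s$. The degree count of Theorem~\ref{thm:allstar} that you want to copy runs on orthogonal data of $\mathcal S$ that are polynomial in $s$ with controlled degrees. Here they are not even polynomial: already $c^S_1=(5s^2+39s+60)/(s+3)$ for $\mathcal S[y^k]=a_{2k}$. So that bookkeeping is unavailable.

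The simplest repair uses the same structure along rows. The term ratio $a_{m+1}/a_m$ is $m+1$ for even $m$ and $m+s-1$ for odd $m$. Because the dilated rows start at even indices, the branch at each step depends only on $j$, so $a_{2i+j}=a_{2i}\,Q_j(2i)$ with $Q_j$ monic of degree $j$. Lemma~\ref{lem:vdm} then gives $\HH_n=\prod_{i<i'}(2i'-2i)\,\prod_{i}a_{2i}$. This is the theorem directly, with no periodicity argument; it is the case $r=0$ of Proposition~\ref{prop:algshift}.
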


\begin{proof}
Write $\mu_k=\mu_k(s)=\dfrac{(2k)!}{k!}(s/2)_k=4^k(\tfrac12)_k(\tfrac s2)_k$, so that
$a_{2k}=\mu_k$ and $a_{2k+1}=(2k+1)\mu_k$. The moments satisfy the
first-order recurrence
\begin{equation}\label{eq:algmom}
\mu_{k+1}(s)=(2k+1)(2k+s)\,\mu_k(s),
\end{equation}
since $\mu_{k+1}/\mu_k=4(k+\tfrac12)(k+\tfrac s2)$.

Abbreviate $\HH_n(s):=\HH_n(f)=\det\bigl(a_{2i+j}(s)\bigr)$. We prove
$\HH_n(s)=c_n\,D(s)$ with $D(s)=\prod_{j=1}^{n-1}(s+2j-2)^{n-j}$ and
$c_n=\prod_{k=1}^{n-1}(2k)!$.

\emph{Step 1 (a contiguous relation).}
For $m=2k$ or $m=2k+1$ one has $\mu_k(s)/\mu_k(s-2)=(s/2)_k/(s/2-1)_k=(s+2k-2)/(s-2)$,
so with $\lfloor m/2\rfloor=k$,
\begin{equation}\label{eq:algstar}
a_m(s)=\frac{s-2+2\lfloor m/2\rfloor}{s-2}\,a_m(s-2)\qquad(m\ge0).
\end{equation}
Put $\alpha_i=s-2+2i$ and $C_{ij}=a_{2i+j}(s-2)$, so $\det C=\HH_n(s-2)$. By
\eqref{eq:algstar} with $m=2i+j$ (whence $\lfloor m/2\rfloor=i+\lfloor j/2\rfloor$),
$(s-2)\,a_{2i+j}(s)=(\alpha_i+2\lfloor j/2\rfloor)\,C_{ij}$, so
$$
(s-2)^n\,\HH_n(s)=\det\tilde M,\qquad \tilde M_{ij}=(\alpha_i+2\lfloor j/2\rfloor)\,C_{ij}.
$$
Let $r_i=\mu_i(s-2)=C_{i0}$. Iterating \eqref{eq:algmom} at $s-2$, namely
$\mu_{k+1}(s-2)=(2k+1)\alpha_k\,\mu_k(s-2)$, and using $2(i+p)+1=\alpha_i+2p+3-s$
(as $2i=\alpha_i-s+2$), the columns of $C$ factor as
\begin{equation}\label{eq:algfact}
C_{ij}=r_i\,\psi_j(\alpha_i),\qquad
\psi_{2l}(\alpha)=\prod_{p=0}^{l-1}(\alpha+2p)(\alpha+2p+3-s),\quad
\psi_{2l+1}(\alpha)=(\alpha+2l+3-s)\,\psi_{2l}(\alpha),
\end{equation}
where $\psi_j$ is \emph{monic of degree $j$} in $\alpha$. For $j\ge2$ the
product defining $\psi_j$ contains the factor $(\alpha+0)=\alpha$, so
$\alpha\mid\psi_j$. As $\psi_0,\dots,\psi_{j-1}$ are monic of degrees
$0,\dots,j-1$, they form a basis of the polynomials of degree $<j$; expanding
$\psi_j/\alpha$ in it gives $\psi_j(\alpha)=\alpha\sum_{j'<j}\xi_{j',j}\,\psi_{j'}(\alpha)$
with $\xi_{j',j}\in\QQ(s)$, that is, by \eqref{eq:algfact},
\begin{equation}\label{eq:algdep}
C_{\bullet,j}=\alpha\odot\sum_{j'<j}\xi_{j',j}\,C_{\bullet,j'}\qquad(j\ge2),
\end{equation}
where $\alpha\odot(\cdot)$ is entrywise multiplication by $(\alpha_i)_i$.
Now reduce $\tilde M$ by column operations, which leave the determinant
unchanged. Its $j$-th column is
$\tilde M_{\bullet,j}=\alpha\odot C_{\bullet,j}+2\lfloor j/2\rfloor\,C_{\bullet,j}$;
for $j=0,1$ it already equals $\alpha\odot C_{\bullet,j}$. Inductively, once
columns $0,\dots,j-1$ have been turned into $\alpha\odot C_{\bullet,0},\dots,
\alpha\odot C_{\bullet,j-1}$, relation \eqref{eq:algdep} expresses
$2\lfloor j/2\rfloor\,C_{\bullet,j}$ as a combination of them, and subtracting it
turns column $j$ into $\alpha\odot C_{\bullet,j}$. Hence $\tilde M$ reduces to
$\operatorname{diag}(\alpha)\,C$, and
$$
(s-2)^n\HH_n(s)=\Bigl(\prod_{i=0}^{n-1}(s-2+2i)\Bigr)\HH_n(s-2)
=(s-2)\,\Pi_{n-1}(s)\,\HH_n(s-2),\qquad \Pi_{n-1}(s)=\prod_{r=0}^{n-2}(s+2r).
$$
Cancelling one factor $s-2$,
\begin{equation}\label{eq:algcont}
(s-2)^{\,n-1}\HH_n(s)=\Pi_{n-1}(s)\,\HH_n(s-2).
\end{equation}

\emph{Step 2 (solving the relation).}
Collecting exponents base by base gives $D(s)/D(s-2)=\Pi_{n-1}(s)/(s-2)^{n-1}$,
so $D$ satisfies \eqref{eq:algcont} as well. Dividing, the rational function
$g(s)=\HH_n(s)/D(s)$ satisfies $g(s)=g(s-2)$; a rational function of period $2$
is constant (for $a$ avoiding the finitely many poles of $g$, the function
$g-g(a)$ vanishes at the infinitely many points of $a+2\ZZ$ that also avoid
them, hence identically). Hence $\HH_n(s)=c_n\,D(s)$ for a constant $c_n$.

\emph{Step 3 (the constant).}
At $s=2$ we have $(s/2)_k=(1)_k=k!$, so $a_m=m!$ for every $m$: the family
degenerates to the factorial member of the Beta chain, whose dilated
determinant was evaluated by the Vandermonde reduction~$\M1$ in
Corollary~\ref{cor:factorial} (case $r=0$):
$$
\HH_n(2)=\det\bigl((2i+j)!\bigr)
=2^{\binom n2}\prod_{k=1}^{n-1}k!\prod_{k=1}^{n-1}(2k)! .
$$
As $D(2)=\prod_{j=1}^{n-1}(2j)^{\,n-j}=2^{\binom n2}\prod_{k=1}^{n-1}k!$, we obtain
$c_n=\HH_n(2)/D(2)=\prod_{k=1}^{n-1}(2k)!$, the first displayed form. The
equivalent form follows from
$\prod_{i=1}^{n-1}(s/2)_i=2^{-\binom n2}\prod_{j=1}^{n-1}(s+2j-2)^{n-j}$.
\end{proof}


\section{The squared algebraic family $(1+x)^{2}/(1-x^{2})^{s/2}$}\label{sec:algsq}

The prefactor $1+x$ of Theorem~\ref{conj:alg} can be squared. The resulting
family is \emph{not} a reparametrisation of the previous one: writing
$(1+x)^{2}(1-x^{2})^{-s/2}=(1+x)^{2-s/2}(1-x)^{-s/2}$, a rescaling
$x\mapsto\beta x$ must preserve the singular pair $\{\pm1\}$, hence
$\beta=\pm1$, and matching the exponents at $\mp\beta$ against
$(1+x)^{1-\sigma/2}(1-x)^{-\sigma/2}$ forces two incompatible values of
$\sigma$ for either sign. The determinant confirms that the family is new:
the product form survives except in its topmost factor, where $(s/2)_{n-1}$
deforms into a \emph{binomial} expression carrying the denominator $s-3$. The
excluded value $s=3$ is precisely the point where this binomial degenerates
into odd harmonic numbers (Remark~\ref{rem:algsq-h3}). The proof below runs
the contiguous step of $\M5$ \emph{once}, from $s$ to $s-2$, and closes it by
a Cauchy--Binet expansion instead of the periodicity argument; no anchoring
special value is needed. The same machinery evaluates all the shifted
determinants $\HH_n^{(r)}=\det(a_{2i+j+r})$ of both families; this is the
subject of the next section (Section~\ref{sec:algsqshift}), where those of
the algebraic family turn out to be pure products for \emph{every} shift $r$,
while those of the squared algebraic family split by the parity of $r$ --- odd shifts
erase the defect entirely, even shifts carry the same $s-3$ binomial.

Throughout the section, $a_m=a_m(s)$ and
$\mu_k=\mu_k(s)=\frac{(2k)!}{k!}(s/2)_k$ are the moments of
$(1+x)(1-x^{2})^{-s/2}$ as in Theorem~\ref{conj:alg}.

\begin{Theorem}\label{conj:algsq}
Let $f(x)=\dfrac{(1+x)^{2}}{(1-x^{2})^{s/2}}$, with coefficients
$b_m=m!\,[x^m]f$, that is,
$$
b_{2k}=\mu_k+2k(2k-1)\,\mu_{k-1},\qquad
b_{2k+1}=2\,(2k+1)\,\mu_k .
$$
Then for all $n\ge1$ and $s\ne3$,
\begin{equation}\label{eq:algsq}
\HH_n(f)=2^{\binom n2}\prod_{k=1}^{n-1}(2k)!\;
\prod_{i=1}^{n-2}(s/2)_i\;\cdot\;
\frac{(2n-1)!!+(s-4)\,2^{\,n-1}(s/2)_{n-1}}{s-3}.
\end{equation}
The last factor is a monic polynomial in $s$ of degree $n-1$ (its numerator
vanishes at $s=3$), so that, once cleared of the denominator,
\eqref{eq:algsq} is a polynomial identity valid for every $s$.
\end{Theorem}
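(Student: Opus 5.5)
The plan is to run the contiguous step of $\M5$ once, from $s$ to $s-2$, and then close it by Cauchy--Binet rather than by periodicity, as announced before the statement.

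First rewrite the moments of $f$ through those of the family of Theorem~\ref{conj:alg} at parameter $s-2$. With $\alpha_i=s-2+2i$ and $\mu_k(s)=\frac{s+2k-2}{s-2}\mu_k(s-2)$, as in \eqref{eq:algstar}, every column of $B=(b_{2i+j}(s))$ becomes a combination of columns of the form $r_i\,\phi(\alpha_i)$. Here $r_i=\mu_i(s-2)$, and $\phi$ is a polynomial built from the $\psi_j$ of \eqref{eq:algfact}. The odd columns $b_{2k+1}=2(2k+1)\mu_k$ are, up to the factor $2$, the odd columns of the $(1+x)$ family. The even columns $b_{2k}=\mu_k+2k(2k-1)\mu_{k-1}$ pick up a correction $2k(2k-1)\mu_{k-1}$, which sits one ``$y$-step'' lower.

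Next, repeat Step~1 of the proof of Theorem~\ref{conj:alg}. Factor $(s-2)^{-1}$ per column and reduce by column operations using \eqref{eq:algdep}. The aim is to write $(s-2)^n\HH_n(f)(s)=\det(\operatorname{diag}(\alpha)\,C+E)$, where $C$ is the $(1+x)$-family matrix at $s-2$ with the odd columns doubled, and $E$ collects the even-column corrections. I expect $E$ to reduce, after column operations, to a single column or to a low-rank term. This is the point where $\deg\psi_j$ overshoots by one in the top column, mirroring the fact that \eqref{eq:algsq} deviates from a pure product only in its last factor.

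The determinant is then a sum over at most $n$ terms. Writing $\det$ as a product $V\cdot W$ of an $n\times(n+1)$ factor and an $(n+1)\times n$ bidiagonal factor (as in Lemma~\ref{lem:t3cauchybinet}), Cauchy--Binet gives $\sum_p c_pD_p$. Each $D_p$ is a Vandermonde-type determinant in the $\alpha_i$ (Lemma~\ref{lem:vdm}), hence an explicit product of factorials and of Pochhammer factors $(s/2)_i$. The coefficients $c_p$ are products of the bidiagonal entries.

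Summing over $p$ should produce a telescoping sum of the shape $\sum_p(2p-1)!!\,2^{n-1-p}(s/2)_{n-1-p}\cdot(\text{linear})$. This must be matched with $\bigl[(2n-1)!!+(s-4)2^{n-1}(s/2)_{n-1}\bigr]/(s-3)$, which itself expands as $\sum_p(2p-1)!!\,2^{n-1-p}(s/2-\tfrac32)\cdots$ via the identity $(s-3)\sum_{p}\dots=$ difference of consecutive terms. I would prove this by induction on $n$, showing that both sides satisfy the same first-order recurrence $T_{n+1}=(s+2n-2)T_n+(\dots)(2n-1)!!$.

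The main obstacle is the middle step: arranging the column reduction so that exactly one rank-one defect survives, with explicit coefficients. If that fails, I would fall back on a divisor-style argument. In that fallback, the rank drop at $s=-2p$ (finitely many atoms, as in Lemma~\ref{lem:rankdrop}) gives divisibility by $2^{\binom n2}\prod_{i\le n-2}(s/2)_i$, and a degree bound leaves a degree-$(n-1)$ cofactor. That cofactor would be pinned down by the values at $s=2$ (factorial-type moments, via $\M1$) and by the leading coefficient, and its form would then be checked against the recurrence above.
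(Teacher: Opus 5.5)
The step you flag as the main obstacle is the whole proof, and your forecast of what comes after it does not match what actually happens. The missing idea is the splitting $(1+x)^2=2(1+x)-(1-x^2)$. It gives $b_m(s)=2a_m(s)-e_m(s-2)$, where $e_{2k}(\sigma)=\mu_k(\sigma)$ and $e_{2k+1}=0$. Combined with \eqref{eq:algstar}, this yields the entrywise form $(s-2)\,b_{2i+j}(s)=(2\alpha_i+\tau_j)\,C_{ij}$, with $\tau_{2l}=4l-(s-2)$, $\tau_{2l+1}=4l$, and $C_{ij}=r_i\psi_j(\alpha_i)$ as in \eqref{eq:algfact}. Since $\psi_j=\prod_{m<j}(\alpha+\delta_m)$ with $\delta_{2p}=2p+3-s$ and $\delta_{2p+1}=2p$, the multiplier acts \emph{bidiagonally}:
$(2\alpha+\tau_j)\psi_j=2\psi_{j+1}+\epsilon_j\psi_j$, with $\epsilon_{2l}=s-4$ and $\epsilon_{2l+1}=0$.
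Hence $(s-2)^n\HH_n(f)=\prod_i r_i\cdot\det(GT)$, where $G=(\psi_k(\alpha_i))$ is $n\times(n+1)$ and $T$ is lower bidiagonal ($\epsilon_j$ on the diagonal, $2$ below it). Because $\epsilon_1=0$, Cauchy--Binet keeps only the deletions $u=0,1$:
$\det(GT)=2^n\det G_{\hat0}+(s-4)\,2^{n-1}\det G_{\hat1}$.
So there is no sum over all $p$ to telescope. Your plan to rerun the column reduction of Theorem~\ref{conj:alg} breaks at column $0$, where $\tau_0=-(s-2)\neq0$. The residue does propagate as a rank-one term $C_{\bullet,0}\kappa^{\top}$, so your ``low-rank'' intuition is sound, but you would still have to find the $\kappa_j$ and evaluate the resulting bilinear form.

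Your claim that each $D_p$ is a Vandermonde-type product is false, and this is where the non-product factor of \eqref{eq:algsq} comes from. Lemma~\ref{lem:vdm} only evaluates the full monic chain. Deleting column $u$ gives $\det G_{\hat u}=(-1)^{n+u}\gamma_u V$, where $V=\prod_{i<j}(\alpha_j-\alpha_i)$ and $\gamma_u$ is the coefficient of $\psi_u$ in the Newton expansion of $\Lambda(\beta)=\prod_i(\beta-\alpha_i)$ (border $G$ with the row $(\psi_k(\beta))_k$). Evaluate at $\beta=s-3$, where every $\psi_u$ with $u\ge1$ vanishes. Then evaluate at $\beta=0$, where every $\psi_u$ with $u\ge2$ vanishes and $\psi_1(0)=3-s$. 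This yields $\det G_{\hat0}=(2n-1)!!\,V$ and $\det G_{\hat1}=\frac{\varphi_n(s-2)-(2n-1)!!}{s-3}\,V$, with $\varphi_n(\sigma)=\sigma(\sigma+2)\cdots(\sigma+2n-2)$. The second is not a product, and it is what produces the binomial last factor.

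Your fallback does not close the gap either. Divisibility by $\prod_{i\le n-2}(s/2)_i$ from the rank drop at $s=0,-2,-4,\dots$ is fine. But $\deg_s b_m=\lfloor m/2\rfloor$ only gives $\deg_s\HH_n\le\binom n2+\lfloor(n-1)^2/4\rfloor$. Even granting degree $\binom n2$, the cofactor is a degree-$(n-1)$ polynomial with no known roots, and one value plus a leading coefficient cannot determine it once $n\ge3$. Even that one value is not a direct $\M1$ evaluation: at $s=2$ one has $b_0=1$ but $b_m=2\,m!$ for $m\ge1$, and the last factor there, $2^n(n-1)!-(2n-1)!!$, is itself not a product.
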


\begin{proof}
All identities below are between rational functions of $s$. We keep the
notation of the proof of Theorem~\ref{conj:alg}: $\alpha_i=s-2+2i$,
$r_i=\mu_i(s-2)$, $C_{ij}=a_{2i+j}(s-2)$, and $\psi_j$ is the monic
degree-$j$ polynomial of \eqref{eq:algfact}. Write also
$\varphi_n(\sigma):=\sigma(\sigma+2)\cdots(\sigma+2n-2)=2^{\,n}(\sigma/2)_n$.

\emph{Step 1 (one contiguous step).}
From $(1+x)^{2}=2(1+x)-(1-x^{2})$ we get
$f=2\,(1+x)(1-x^{2})^{-s/2}-(1-x^{2})^{-(s-2)/2}$, hence
$b_m(s)=2a_m(s)-e_m(s-2)$, where $e_{2k}(\sigma)=\mu_k(\sigma)$ and
$e_{2k+1}=0$ are the moments of $(1-x^{2})^{-\sigma/2}$. For $j$ even,
$e_{2i+j}(s-2)=\mu_{i+j/2}(s-2)=C_{ij}$; for $j$ odd it vanishes. Combining
with \eqref{eq:algstar}, $(s-2)\,a_{2i+j}(s)=(\alpha_i+2\lfloor
j/2\rfloor)\,C_{ij}$, we obtain the entrywise multiplier form
\begin{equation}\label{eq:algsqmul}
(s-2)\,b_{2i+j}(s)=(2\alpha_i+\tau_j)\,C_{ij},\qquad
\tau_{2l}=4l-(s-2),\quad \tau_{2l+1}=4l .
\end{equation}

\emph{Step 2 (bidiagonality in the $\psi$-basis).}
The polynomials \eqref{eq:algfact} form a multiplicative chain:
$$
\psi_j(\alpha)=\prod_{m=0}^{j-1}(\alpha+\delta_m),\qquad
\delta_{2p}=2p+3-s,\quad \delta_{2p+1}=2p,
$$
(indeed $\psi_{2l+1}/\psi_{2l}=\alpha+2l+3-s$ and
$\psi_{2l+2}/\psi_{2l+1}=\alpha+2l$), whence the two-term relation
$\alpha\,\psi_j=\psi_{j+1}-\delta_j\,\psi_j$. Therefore
$$
(2\alpha+\tau_j)\,\psi_j=2\,\psi_{j+1}+(\tau_j-2\delta_j)\,\psi_j
=2\,\psi_{j+1}+\epsilon_j\,\psi_j,\qquad
\epsilon_{2l}=s-4,\quad \epsilon_{2l+1}=0,
$$
since $\tau_{2l}-2\delta_{2l}=4l-s+2-2(2l+3-s)=s-4$ and
$\tau_{2l+1}-2\delta_{2l+1}=4l-4l=0$. With $C_{ij}=r_i\psi_j(\alpha_i)$,
equation \eqref{eq:algsqmul} becomes
$(s-2)\,b_{2i+j}(s)=r_i\bigl(2\psi_{j+1}(\alpha_i)
+\epsilon_j\psi_j(\alpha_i)\bigr)$, i.e.
\begin{equation}\label{eq:algsqbid}
(s-2)^{n}\,\HH_n(f)=\Bigl(\prod_{i=0}^{n-1}r_i\Bigr)\det(GT),\qquad
G=\bigl(\psi_k(\alpha_i)\bigr)_{\substack{0\le i\le n-1\\0\le k\le n}},\quad
T=\begin{pmatrix}\epsilon_0&&\\ 2&\epsilon_1&\\ &2&\ddots\\ &&&\epsilon_{n-1}\\
&&&2\end{pmatrix},
\end{equation}
$G$ of size $n\times(n+1)$ and $T$ of size $(n+1)\times n$, lower bidiagonal.

\emph{Step 3 (Cauchy--Binet).}
By the Cauchy--Binet formula,
$\det(GT)=\sum_{u=0}^{n}\det G_{\hat u}\,\det T_{\hat u}$, where $G_{\hat u}$
deletes the column $\psi_u$ of $G$ and $T_{\hat u}$ the row $u$ of $T$.
Deleting row $u$ makes $T_{\hat u}$ block-triangular, so
$\det T_{\hat u}=\prod_{j<u}\epsilon_j\cdot 2^{\,n-u}$, which vanishes as soon
as $u\ge2$ because $\epsilon_1=0$. Only two terms survive:
\begin{equation}\label{eq:algsqcb}
\det(GT)=2^{\,n}\det G_{\hat0}+(s-4)\,2^{\,n-1}\det G_{\hat1}.
\end{equation}

\emph{Step 4 (the two gap alternants).}
Set $V:=\prod_{0\le i<j\le n-1}(\alpha_j-\alpha_i)
=2^{\binom n2}\prod_{k=1}^{n-1}k!$ and
$\Lambda(\beta):=\prod_{i=0}^{n-1}(\beta-\alpha_i)$. Border $G$ with the row
$\bigl(\psi_0(\beta),\dots,\psi_n(\beta)\bigr)$: since the $\psi_k$ are monic
of degree $k$, the $(n+1)\times(n+1)$ alternant equals the Vandermonde
determinant of $(\alpha_0,\dots,\alpha_{n-1},\beta)$, namely $V\Lambda(\beta)$.
Laplace expansion along the added row gives
$$
\sum_{u=0}^{n}(-1)^{n+u}\,\psi_u(\beta)\,\det G_{\hat u}=V\,\Lambda(\beta).
$$
Expanding $\Lambda$ in the chain basis (a Newton expansion),
$\Lambda(\beta)=\sum_{u=0}^{n}\gamma_u\,\psi_u(\beta)$, and comparing
coefficients, $\det G_{\hat u}=(-1)^{n+u}\gamma_u\,V$. The first two
coefficients are read off by evaluation. At $\beta=s-3$ every $\psi_u$ with
$u\ge1$ vanishes (factor $\beta+\delta_0$), so
$\gamma_0=\Lambda(s-3)=\prod_{i=0}^{n-1}\bigl(-(2i+1)\bigr)=(-1)^n(2n-1)!!$.
At $\beta=0$ every $\psi_u$ with $u\ge2$ vanishes (factor $\beta+\delta_1$)
and $\psi_1(0)=\delta_0=3-s$, so
$(-1)^n\varphi_n(s-2)=\Lambda(0)=\gamma_0+(3-s)\gamma_1$. Hence
\begin{equation}\label{eq:algsqgap}
\det G_{\hat0}=(2n-1)!!\;V,\qquad
\det G_{\hat1}=\frac{\varphi_n(s-2)-(2n-1)!!}{s-3}\;V .
\end{equation}

\emph{Step 5 (assembly).}
Substituting \eqref{eq:algsqgap} into \eqref{eq:algsqcb} and using
$2(s-3)-(s-4)=s-2$ together with $\varphi_n(s-2)=(s-2)\,\varphi_{n-1}(s)$,
$$
\begin{aligned}
\det(GT)&=2^{\,n-1}\,
\frac{(s-2)(2n-1)!!+(s-4)\,\varphi_n(s-2)}{s-3}\;V\\
&=2^{\,n-1}(s-2)\,
\frac{(2n-1)!!+(s-4)\,\varphi_{n-1}(s)}{s-3}\;V .
\end{aligned}
$$
Moreover $\mu_i(s-2)=\mu_i(s)\,(s-2)/(s+2i-2)$ (as in Step~1 of
Theorem~\ref{conj:alg}) and $\prod_{i=0}^{n-1}(s-2+2i)=(s-2)\varphi_{n-1}(s)$
give $\prod_i r_i=\bigl(\prod_i\mu_i(s)\bigr)(s-2)^{\,n-1}/\varphi_{n-1}(s)$.
Inserting both into \eqref{eq:algsqbid} and cancelling $(s-2)^{n}$,
$$
\HH_n(f)=\Bigl(\prod_{i=0}^{n-1}\mu_i(s)\Bigr)V\cdot
\frac{2^{\,n-1}\bigl((2n-1)!!+(s-4)\varphi_{n-1}(s)\bigr)}
{(s-3)\,\varphi_{n-1}(s)} .
$$
Finally $\prod_{i=0}^{n-1}\mu_i(s)\cdot V
=2^{\binom n2}\prod_{k=1}^{n-1}(2k)!\prod_{i=1}^{n-1}(s/2)_i$ and
$2^{\,n-1}\prod_{i=1}^{n-1}(s/2)_i/\varphi_{n-1}(s)
=\prod_{i=1}^{n-2}(s/2)_i$, which is \eqref{eq:algsq} after writing
$\varphi_{n-1}(s)=2^{\,n-1}(s/2)_{n-1}$ in the numerator.
\end{proof}

\begin{Remark}[the same computation re-proves Theorem~\ref{conj:alg}]
\label{rem:algsq-mech}
For $(1+x)(1-x^{2})^{-s/2}$ the multiplier in \eqref{eq:algsqmul} is
$\alpha_i+2\lfloor j/2\rfloor$, and the same two-term reduction gives
$(\alpha+2l)\psi_{2l}=\psi_{2l+1}+(s-3)\psi_{2l}$ and
$(\alpha+2l)\psi_{2l+1}=\psi_{2l+2}$: the diagonal of $T$ becomes
$\epsilon_{2l}=s-3$, $\epsilon_{2l+1}=0$. Steps~3--5 then yield
$\det(GT)=\bigl((2n-1)!!+(s-3)\cdot\frac{\varphi_n(s-2)-(2n-1)!!}{s-3}\bigr)V
=\varphi_n(s-2)\,V$: the double factorials cancel \emph{exactly}, the
denominator $s-3$ disappears, and the pure product of
Theorem~\ref{conj:alg} drops out --- with no periodicity argument and no
anchoring value $s=2$. Squaring the prefactor shifts the diagonal from $s-3$
to $s-4$: the mismatch with the denominator $s-3$ of $\det G_{\hat1}$ ruins
the cancellation, and the binomial numerator of \eqref{eq:algsq} is the trace
it leaves.
\end{Remark}

\begin{Remark}[the excluded value $s=3$]\label{rem:algsq-h3}
The last factor of \eqref{eq:algsq} is also the terminating sum
$$
\frac{(2n-1)!!+(s-4)\,2^{\,n-1}(s/2)_{n-1}}{s-3}
=(2n-1)!!\,\Bigl(1+(s-4)\sum_{k=0}^{n-2}\frac{2^{k}(s/2)_k}{(2k+3)!!}\Bigr),
$$
valid for every $s$: with $t_k=2^{k}(s/2)_k/(2k+1)!!$ one has
$t_{k+1}-t_k=(s-3)\,2^{k}(s/2)_k/(2k+3)!!$, and the sum telescopes to
$\bigl(2^{\,n-1}(s/2)_{n-1}/(2n-1)!!-1\bigr)/(s-3)$. At $s=3$ the summand is
$(2k+1)!!/(2k+3)!!=1/(2k+3)$, and the factor degenerates into
$(2n-1)!!\,(2-O_n)$ with $O_n=1+\tfrac13+\cdots+\tfrac1{2n-1}$:
$$
\HH_n(f)\big|_{s=3}
=2^{\binom n2}\prod_{k=1}^{n-1}(2k)!\;\prod_{i=1}^{n-2}(3/2)_i\;\cdot\;
(2n-1)!!\,(2-O_n).
$$
The odd harmonic number $O_n$ is why $s=3$ must be excluded from any
product-type statement; it is the value of $s$ at which the telescoping
ratio $2^{k}(s/2)_k/(2k+1)!!$ becomes constant in $k$.
\end{Remark}

\begin{Remark}[degenerate values]\label{rem:algsq-degen}
At $s=4$ one has $f=(1-x)^{-2}$ and $b_m=(m+1)!$; the binomial term drops out
($s-4=0$) and \eqref{eq:algsq} reduces to
$2^{\binom n2}\prod_{k=1}^{n-1}(2k)!\,\prod_{i=1}^{n-2}(i+1)!\,(2n-1)!!
=2^{\binom n2}\prod_{k=1}^{n-1}k!\,\prod_{i=0}^{n-1}(2i+1)!$, the case $r=1$
of Corollary~\ref{cor:factorial}. At $s=0$ one has $f=(1+x)^{2}$ and
\eqref{eq:algsq} correctly gives $\HH_1=1$, $\HH_2=-4$ and $\HH_n=0$ for
$n\ge3$.
\end{Remark}

\section{Shifted determinants of the two algebraic families}
\label{sec:algsqshift}

Continuing with the two algebraic families of
Sections~\ref{sec:alg}--\ref{sec:algsq}, we now evaluate all their shifted
dilated determinants
$$
\HH_n^{(r)}(f):=\det\bigl(a_{2i+j+r}\bigr)_{0\le i,j\le n-1}\qquad(r\ge0).
$$
For the algebraic family the answer is a pure
product for \emph{every} $r$, and the proof is a direct Vandermonde
reduction~$\M1$; for the squared algebraic family the parity of $r$ decides whether
the $s-3$ binomial of Theorem~\ref{conj:algsq} survives. Throughout,
$\rho:=\lfloor r/2\rfloor$.

\begin{Proposition}[all shifts of the algebraic family]\label{prop:algshift}
Let $f_1(x)=(1+x)(1-x^{2})^{-s/2}$, with moments $a_m$ as in
Theorem~\ref{conj:alg}. Then for all $n\ge1$ and $r\ge0$,
\begin{equation}\label{eq:algshift}
\HH_n^{(r)}(f_1)=2^{\binom n2}\prod_{k=1}^{n-1}k!\;
\prod_{i=0}^{n-1}a_{2i+r}(s),
\end{equation}
that is, $\prod_i a_{2i+r}=\prod_{i=0}^{n-1}\mu_{i+\rho}$ for $r$ even and
$\prod_{i=0}^{n-1}(2i+r)\,\mu_{i+\rho}$ for $r$ odd.
\end{Proposition}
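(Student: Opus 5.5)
The plan is to apply the Vandermonde reduction $\M1$ (Lemma~\ref{lem:vdm}) in the contracted row variable. I take the row indices $x_i=i$ and factor each row of $\bigl(a_{2i+j+r}\bigr)$ as $w(i)\,Q_j(i)$, with $w(i)=a_{2i+r}$ and $Q_j$ a polynomial in $i$ of degree exactly $j$. Lemma~\ref{lem:vdm} (triangular case) then gives $\det M\prod_{i<i'}(i'-i)\prod_i a_{2i+r}$, where $\prod_{i<i'}(i'-i)=\prod_{k=1}^{n-1}k!$. It remains to show $\det M=\prod_j\ell_j=2^{\binom n2}$, i.e.\ that the leading coefficient of $Q_j$ is $\ell_j=2^j$.

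The factorisation comes from the moment recurrence \eqref{eq:algmom}, $\mu_{k+1}=(2k+1)(2k+s)\mu_k$, together with $a_{2k}=\mu_k$ and $a_{2k+1}=(2k+1)\mu_k$. Write $m=2i+r$. The ratio $a_{m+j}/a_m$ is then a product of $j$ linear factors in $i$, and each factor has leading coefficient $2$. To see this, I track the single steps: from even index $2k$ to odd index $2k+1$ the ratio is $a_{2k+1}/a_{2k}=2k+1$, and from odd index $2k+1$ to even index $2k+2$ it is $\mu_{k+1}/((2k+1)\mu_k)=2k+s$. In both cases $k$ is $i$ plus a constant depending on $r,j$. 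So $Q_j(i)=\prod_{t=0}^{j-1}\bigl(a_{2i+r+t+1}/a_{2i+r+t}\bigr)$ is a product of $j$ factors of the form $2i+\text{const}$, and is therefore a polynomial of degree $j$ with leading coefficient $2^j$.

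The only point to check is that the row factor $w(i)=a_{2i+r}$ carries no denominator. That holds because every one-step ratio is a polynomial. The one thing that depends on parity is which linear factors alternate: for $r$ odd the first step is of the type $2k+s$. This does not affect the leading coefficients, so no real obstacle is expected; the main care is bookkeeping of the alternation.

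Finally, I rewrite $\prod_i a_{2i+r}$ according to parity. For $r=2\rho$ it is $\prod_i\mu_{i+\rho}$. For $r=2\rho+1$ it is $\prod_i(2i+2\rho+1)\mu_{i+\rho}=\prod_i(2i+r)\mu_{i+\rho}$. This gives \eqref{eq:algshift}. The identity is polynomial in $s$, and nothing beyond Lemma~\ref{lem:vdm} is needed (in particular no nonvanishing of $\mu$ is used).
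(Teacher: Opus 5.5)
Your proposal is correct and is essentially the paper's proof. Both arguments factor each row through the two linear branches of the term ratio ($2k+1$ from even to odd index, $2k+s$ from odd to even) and apply the triangular case of Lemma~\ref{lem:vdm} with $w=a_{2i+r}$. The only difference is cosmetic. The paper uses the nodes $x_i=2i$, so its $Q_j$ are monic and the factor $2^{\binom n2}$ comes from the Vandermonde product $\prod_{i<j}(2j-2i)$. You use $x_i=i$ and collect the same $2^{\binom n2}$ from the leading coefficients $2^j$ of your $Q_j$.
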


\begin{proof}
By \eqref{eq:algmom} the term ratio of $\mathbf a$ has two linear branches,
$$
\frac{a_{m+1}}{a_m}=
\begin{cases} m+1, & m\ \text{even},\\ m+s-1, & m\ \text{odd}.\end{cases}
$$
The row indices $x_i=2i$ are all even, so which branch is taken at offset
$p$ from the anchor depends only on the parity of $r+p$, not on the row:
$$
a_{x+r+j}=a_{x+r}\,Q_j(x),\qquad
Q_j(x)=\prod_{p=0}^{j-1}
\begin{cases} x+r+p+1, & r+p\ \text{even},\\
x+r+p+s-1, & r+p\ \text{odd},\end{cases}
$$
a \emph{monic} polynomial of degree $j$ in $x$, the same for every row.
Lemma~\ref{lem:vdm} with $w(x)=a_{x+r}$ applies, the coefficient matrix is
unit triangular, and
$\HH_n^{(r)}(f_1)=\prod_{i<j}(2j-2i)\cdot\prod_i a_{2i+r}$, which is
\eqref{eq:algshift}.
\end{proof}

\begin{Remark}[a third proof of Theorem~\ref{conj:alg}]\label{rem:algshift-m1}
The case $r=0$ of Proposition~\ref{prop:algshift} is Theorem~\ref{conj:alg}
itself: the algebraic family is, after all, within reach of $\M1$. Its term
ratio has degree one \emph{in each parity class} --- which places it outside
the Beta family of Section~\ref{sec:beta} and defeats the classical Hankel
treatment --- but the dilated rows sit on even indices and never mix the two
classes, so the row factorisation \eqref{eq:rowfact} holds with monic $Q_j$
of degree $j$. The three proofs illuminate different structure: $\M5$ the
contiguous relation in $s$, Remark~\ref{rem:algsq-mech} the exact
double-factorial cancellation, and $\M1$ the two-branch Beta pattern.
\end{Remark}

\begin{Theorem}[all shifts of the squared algebraic family]\label{thm:algsqshift}
Let $f(x)=(1+x)^{2}(1-x^{2})^{-s/2}$, with moments $b_m$ as in
Theorem~\ref{conj:algsq}, and let $n\ge1$.
\begin{enumerate}
\item[\rm(i)] For odd $r$ the defect disappears entirely:
$$
\HH_n^{(r)}(f)=2^{\,n}\,\HH_n^{(r)}(f_1)
=2^{\binom n2+n}\prod_{k=1}^{n-1}k!\;\prod_{i=0}^{n-1}(2i+r)\,\mu_{i+\rho}(s).
$$
\item[\rm(ii)] For even $r=2\rho$ and $s\ne3$,
\begin{equation}\label{eq:algsqshift}
\HH_n^{(r)}(f)=2^{\,n-1}\,\HH_n^{(r)}(f_1)\cdot
\frac{P_{n,\rho}(s)}{\Theta_{n,\rho}(s)},\qquad
\Theta_{n,\rho}(s):=\prod_{i=0}^{n-1}(s-2+2\rho+2i),
\end{equation}
where, with the convention $(-1)!!=1$,
$$
P_{n,\rho}(s):=\frac{\displaystyle
(s-2)\,\frac{(2n+2\rho-1)!!}{(2\rho-1)!!}+(s-4)\,\Theta_{n,\rho}(s)}{s-3}
$$
is a monic polynomial in $s$ of degree $n$ (the numerator vanishes at
$s=3$, since $\Theta_{n,\rho}(3)=(2n+2\rho-1)!!/(2\rho-1)!!$). For
$\rho\ge1$ the quotient in \eqref{eq:algsqshift} clears entrywise,
$\mu_{i+\rho}(s)/(s-2+2\rho+2i)=\tfrac12\,
\frac{(2i+2\rho)!}{(i+\rho)!}\,(s/2)_{i+\rho-1}$, giving the fully
explicit form
$$
\HH_n^{(2\rho)}(f)=2^{\binom n2-1}\prod_{k=1}^{n-1}k!\;
\prod_{i=0}^{n-1}\frac{(2i+2\rho)!}{(i+\rho)!}\,(s/2)_{i+\rho-1}\;\cdot\;
P_{n,\rho}(s)\qquad(\rho\ge1);
$$
the case $\rho=0$ is Theorem~\ref{conj:algsq}.
\end{enumerate}
\end{Theorem}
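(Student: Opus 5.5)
The plan is to run the single contiguous step of Theorem~\ref{conj:algsq} with the anchor shifted by $r$. From $(1+x)^2=2(1+x)-(1-x^2)$ we have $b_m(s)=2a_m(s)-e_m(s-2)$, where $e_{2k}(\sigma)=\mu_k(\sigma)$ and $e_{2k+1}=0$. The entries are now indexed by $m=2i+j+r$, so I would put $C^{(r)}_{ij}=a_{2i+j+r}(s-2)$ and $\alpha_i=s-2+2\rho+2i$. As in Proposition~\ref{prop:algshift}, the two-branch term ratio gives a factorisation $C^{(r)}_{ij}=r_i\,\psi^{(r)}_j(\alpha_i)$ with $r_i=a_{2i+r}(s-2)$. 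Here $\psi^{(r)}_j$ is a monic chain $\prod_{m<j}(\alpha+\delta^{(r)}_m)$, and the shifts $\delta^{(r)}_m$ depend on the parity of $r+m$.

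From \eqref{eq:algstar} and the analogue of \eqref{eq:algsqmul} I get $(s-2)b_{2i+j+r}(s)=(2\alpha_i+\tau^{(r)}_j)\,C^{(r)}_{ij}$. Here the $e$-term contributes only when $j+r$ is even, with the same relation $e_{2i+j+r}(s-2)=C^{(r)}_{ij}$ as before. The two-term relation $\alpha\psi_j=\psi_{j+1}-\delta_j\psi_j$ then turns each column into $2\psi_{j+1}+\epsilon^{(r)}_j\psi_j$, with $\epsilon^{(r)}_j=\tau^{(r)}_j-2\delta^{(r)}_j$. This writes the determinant as $(\prod r_i)\det(GT)$, with $G=(\psi_k(\alpha_i))$ of size $n\times(n+1)$ and $T$ lower bidiagonal. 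The first check is that the computation of Step~2 persists: $\epsilon$ vanishes exactly on the columns where $j+r$ is odd, and equals $s-4$ where $j+r$ is even.

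Case (i), $r$ odd: now $\epsilon_0=0$. In the Cauchy--Binet expansion, $\det T_{\hat u}=\prod_{j<u}\epsilon_j\,2^{n-u}$ vanishes for every $u\ge1$, so only $u=0$ survives and gives $2^n\det G_{\hat0}=2^nV$. The same chain computation applied to $f_1$ gives $\HH_n^{(r)}(f_1)$ with $G_{\hat0}$ as well, so the ratio is exactly $2^n$. Combined with Proposition~\ref{prop:algshift}, this gives the stated product. Alternatively one can compare directly, since $f_1$'s multiplier has diagonal zero on the same columns.

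Case (ii), $r=2\rho$: now $\epsilon_0=s-4$ and $\epsilon_1=0$, so exactly the terms $u=0,1$ survive, as in \eqref{eq:algsqcb}. The gap alternants follow as in Step~4 by expanding $\Lambda(\beta)=\prod(\beta-\alpha_i)$ in the chain basis. Evaluating at $\beta=-\delta_0$ gives $\gamma_0$. With $-\delta_0=s-3-2\rho$, this becomes $\Lambda=\prod_i(-(2\rho+2i+1))=(-1)^n(2n+2\rho-1)!!/(2\rho-1)!!$, which is the source of the shifted double factorial. Evaluating at $\beta=-\delta_1$ then gives $\gamma_1$ through $\Lambda(-\delta_1)=\gamma_0+(\delta_0-\delta_1)\gamma_1$, producing the denominator $s-3$ and $\Theta_{n,\rho}$. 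The assembly follows Step~5: convert $\prod r_i$ from $s-2$ to $s$ using $\mu_k(s-2)=\mu_k(s)(s-2)/(s+2k-2)$, compare with $\HH_n^{(r)}(f_1)$ from \eqref{eq:algshift}, and collect $2^{n-1}P_{n,\rho}/\Theta_{n,\rho}$. The $\rho\ge1$ form then comes from cancelling $\Theta_{n,\rho}$ entrywise.

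I expect the main obstacle to be bookkeeping rather than ideas. I must fix the shifts $\delta^{(r)}_m$ exactly, with the anchor at $a_{2i+r}$ and the column parity offset by $r$. I also need to verify that $\epsilon^{(r)}_j$ is truly $\{s-4,0\}$ alternating for every $\rho$. Finally, the two evaluation points must be zeros of the chain factors $\alpha+\delta_0$ and $\alpha+\delta_1$, so that the Newton coefficients read off cleanly.
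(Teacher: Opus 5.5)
Your proposal is the paper's proof. It uses the same one-step contiguous relation $b_m(s)=2a_m(s)-e_m(s-2)$, the bidiagonal factorisation through a shifted monic chain with $\epsilon_j=s-4$ or $0$ according to the parity of $j+r$, the Cauchy--Binet truncation to $u=0$ (odd $r$) or $u\in\{0,1\}$ (even $r$), and the Newton evaluation of the gap alternants. Your shifted $\alpha_i=s-2+2\rho+2i$ with row factor $a_{2i+r}(s-2)$ is only a renormalisation of the paper's unshifted $\alpha_i$ with $\psi_r(\alpha_i)$ pulled out of each row. For odd $r$, your comparison with $f_1$ run through the same chain is a valid substitute for the paper's direct evaluation of $\det G_{\hat0}$, since the $T$-matrix of $f_1$ also has zero first diagonal entry.

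There are two bookkeeping slips, neither of which affects the conclusion.
\begin{itemize}
\item For odd $r$ your chain has $\delta_0=0$. Every column of $G_{\hat0}$ therefore carries the factor $\alpha_i$, and $\det G_{\hat0}=\Theta_{n,\rho}(s)\,V$, not $V$. Your ratio argument never uses this value, so nothing breaks.
\item For even $r$, the first Newton node in your normalisation is $-\delta_0=s-3$, not $s-3-2\rho$; the latter is the paper's node for the unshifted $\alpha_i$. The value $\Lambda=(-1)^n(2n+2\rho-1)!!/(2\rho-1)!!$ that you write down is nonetheless correct.
\end{itemize}
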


\begin{proof}
The five steps of Theorem~\ref{conj:algsq} go through with the chain
shifted by $r$; we record the changes.

\emph{Steps 1--2.} With $m=2i+j+r$, the identities of Step~1 read
$(s-2)\,b_{2i+j+r}(s)=(2\alpha_i+\tau_{j+r})\,C^{(r)}_{ij}$, where
$C^{(r)}_{ij}=a_{2i+j+r}(s-2)=r_i\,\psi_{j+r}(\alpha_i)$ (the factorisation
\eqref{eq:algfact} holds for every column index) and $\tau_m$ is as in
\eqref{eq:algsqmul}, indexed by the parity of $m=j+r$. Factor
$\psi_r(\alpha_i)$ --- that is, $a_{2i+r}(s-2)/r_i$ --- out of row $i$; the
remaining columns form the shifted chain
$\chi_j:=\psi_{j+r}/\psi_r=\prod_{m=0}^{j-1}(\alpha+\delta_{r+m})$, and the
computation of Step~2, applied verbatim to the parity of $j+r$, gives
$$
(2\alpha+\tau_{j+r})\,\chi_j=2\,\chi_{j+1}+\epsilon_j\,\chi_j,\qquad
\epsilon_j=\begin{cases} s-4, & j+r\ \text{even},\\ 0, & j+r\ \text{odd}.
\end{cases}
$$
Hence $(s-2)^n\,\HH_n^{(r)}(f)
=\prod_i a_{2i+r}(s-2)\cdot\det\bigl(G^{(r)}T^{(r)}\bigr)$ with
$G^{(r)}=(\chi_k(\alpha_i))$ of size $n\times(n+1)$ and $T^{(r)}$ lower
bidiagonal as in \eqref{eq:algsqbid}, its diagonal now being $\epsilon_j$.

\emph{Steps 3--4, $r$ odd.} Here $\epsilon_0=0$, so in the Cauchy--Binet
expansion only $u=0$ survives and
$\det(G^{(r)}T^{(r)})=2^{\,n}\det G^{(r)}_{\hat0}$. Factoring
$\chi_1(\alpha_i)=\alpha_i+\delta_r=\alpha_i+2\rho$ out of each row of
$G^{(r)}_{\hat0}$ leaves the initial segment of a monic chain, whose
alternant is $V$; hence
$\det(G^{(r)}T^{(r)})=2^{\,n}\,\Theta_{n,\rho}(s)\,V$, since
$\prod_i(\alpha_i+2\rho)=\Theta_{n,\rho}(s)$.

\emph{Steps 3--4, $r=2\rho$ even.} As in Theorem~\ref{conj:algsq}, $u=0,1$
survive. The Newton argument of Step~4 now runs at the nodes
$-\delta_r=s-3-2\rho$ and $-\delta_{r+1}=-2\rho$, and again
$\chi_1(-\delta_{r+1})=\delta_r-\delta_{r+1}=3-s$:
$$
\gamma_0=\Lambda(s-3-2\rho)=(-1)^n\,\frac{(2n+2\rho-1)!!}{(2\rho-1)!!},
\qquad
\Lambda(-2\rho)=(-1)^n\,\Theta_{n,\rho}(s)=\gamma_0+(3-s)\gamma_1,
$$
so that, exactly as in \eqref{eq:algsqgap}--\eqref{eq:algsqcb},
$$
\det(G^{(r)}T^{(r)})
=2^{\,n-1}\,
\frac{(s-2)\,\dfrac{(2n+2\rho-1)!!}{(2\rho-1)!!}
+(s-4)\,\Theta_{n,\rho}(s)}{s-3}\;V
=2^{\,n-1}\,P_{n,\rho}(s)\,V .
$$

\emph{Step 5.} By \eqref{eq:algstar},
$a_{2i+r}(s-2)=a_{2i+r}(s)\,(s-2)/(s-2+2(i+\rho))$, so
$\prod_i a_{2i+r}(s-2)
=\prod_i a_{2i+r}(s)\cdot(s-2)^n/\Theta_{n,\rho}(s)$. For $r$ odd the factor
$\Theta_{n,\rho}(s)$ cancels against Step~3, and
$\HH_n^{(r)}(f)=2^{\,n}\prod_i a_{2i+r}(s)\cdot V
=2^{\,n}\HH_n^{(r)}(f_1)$ by Proposition~\ref{prop:algshift}. For $r$ even
it survives in the denominator, giving \eqref{eq:algsqshift}. The explicit
form for $\rho\ge1$ follows from
$(s/2)_{i+\rho}=(s/2)_{i+\rho-1}\,(s+2i+2\rho-2)/2$.
\end{proof}

\begin{Remark}[the contrast, and $s=3$ again]\label{rem:algsqshift}
An odd shift moves the even-column defect $\epsilon_j=s-4$ off the column
$j=0$; the zero $\epsilon_0$ then truncates the Cauchy--Binet expansion to
its first term, and the squared algebraic family costs only the overall factor
$2^{\,n}$ relative to the algebraic family. For even shifts the binomial
survives, and the odd-harmonic degeneration of Remark~\ref{rem:algsq-h3}
persists: the same telescoping gives
$$
P_{n,\rho}(3)=\frac{(2n+2\rho-1)!!}{(2\rho-1)!!}
\Bigl(2-\sum_{j=\rho+1}^{n+\rho}\frac{1}{2j-1}\Bigr),
$$
so no even shift admits a product form at $s=3$.
\end{Remark}

\section{A Bessel analogue of the Euler number family}\label{sec:besselst}

\subsection*{The Bessel cosine $\mathrm{cosb}_s$}

For $s>-1$ define the \emph{Bessel cosine} of order $s$ as the normalised
Bessel function
\begin{equation}\label{eq:cosbdef}
\mathrm{cosb}_s(x)\;:=\;\Gamma(s+1)\Bigl(\frac2x\Bigr)^{\!s}J_s(x)
\;=\;\sum_{k\ge0}\frac{(-1)^k}{4^k\,k!\,(s+1)_k}\,x^{2k},
\qquad \mathrm{cosb}_s(0)=1.
\end{equation}
(The normalisation is classical --- it is the function written $\Lambda_s$
in the characteristic-function literature --- but the name and symbol
$\mathrm{cosb}$ are ours.) The name is earned by the exact identities
$$
\mathrm{cosb}_{-1/2}(x)=\cos x,
\qquad
\mathrm{cosb}_{1/2}(x)=\frac{\sin x}{x},
\qquad
\mathrm{cosb}_s'(x)=-\frac{x}{2s+2}\,\mathrm{cosb}_{s+1}(x),
$$
so that with $\mathrm{sinb}_s:=-\mathrm{cosb}_s'$ one has
$\mathrm{sinb}_{-1/2}=\sin x$ and the derivative ladder generalises
$\cos'=-\sin$. At half-integer orders $\mathrm{cosb}_s$ is elementary
(trigonometric); at all other orders it is a genuine Bessel function. By
the Poisson integral representation, for $s>-\tfrac12$,
$$
\mathrm{cosb}_s(x)
=\frac{\Gamma(s+1)}{\Gamma(s+\tfrac12)\Gamma(\tfrac12)}
\int_{-1}^{1}e^{ixt}\,(1-t^2)^{s-1/2}\,dt,
$$
i.e.\ $\mathrm{cosb}_s$ is the characteristic function of the symmetric
Beta (Gegenbauer) distribution on $[-1,1]$: arcsine at $s=0$, uniform at
$s=\tfrac12$, Wigner semicircle at $s=1$. Note where the parameter lives:
in the trigonometric family the power sits on the \emph{function} side
($1/\cos(x)^{s+1}$), here it sits on the \emph{weight} side
($(1-t^2)^{s-1/2}$). A literal power form
$\mathrm{cosb}_s=h(x)^{-(s+1)}$ with one fixed $h$ is impossible: it would
force $\mathrm{cosb}_0\,\mathrm{cosb}_2=\mathrm{cosb}_1^{\,2}$, and in fact
$\mathrm{cosb}_0\,\mathrm{cosb}_2-\mathrm{cosb}_1^{\,2}
=-\tfrac{x^2}{12}+\tfrac{5x^4}{384}-\cdots\ne0$.

\subsection*{The family and the closed form}

The trigonometric $(s,t)$ family of Section~\ref{sec:gen} is
$1/\cos(x)^{s+1}+\int_0^x\cos(y)^{-(t+1)}dy$, evaluated for all $(s,t)$ by
Proposition~\ref{prop:family}. Its direct Bessel analogue, with the
exponent ladder replaced by the order ladder, is
\begin{equation}\label{eq:besselfam}
f_{s,t}(x)\;=\;\mathrm{cosb}_s(x)+\int_0^x \mathrm{cosb}_t(y)\,dy .
\end{equation}
Writing $a_k=k!\,[x^k]f_{s,t}$ and using $(2k)!=4^k\,k!\,(\tfrac12)_k$,
the coefficients \eqref{eq:cosbdef} give
\begin{equation}\label{eq:bessel-moments}
a_{2k}=(-1)^k\,\frac{(\tfrac12)_k}{(s+1)_k},
\qquad
a_{2k+1}=(-1)^k\,\frac{(\tfrac12)_k}{(t+1)_k}
\qquad(k\ge0),
\end{equation}
the odd case because integration divides the coefficient of $x^{2k+1}$ by
$2k+1$ and $(2k+1)!/(2k+1)=(2k)!$. Thus the even and odd moment
generating functions are one and the same Gauss series at the two
parameters,
$$
\sum_{k\ge 0}a_{2k}\,x^k={}_2F_1\Bigl(\tfrac12,1;\,s+1;\,-x\Bigr),
\qquad
\sum_{k\ge 0}a_{2k+1}\,x^k={}_2F_1\Bigl(\tfrac12,1;\,t+1;\,-x\Bigr):
$$
the even and odd moment functionals belong to a \emph{single} classical
hierarchy (they are Jacobi--Gegenbauer functionals --- in $y=z^2$, the
moments of the weight $y^{-1/2}(1-y)^{s-1/2}$ carried to $[-1,0]$) and
differ in the single parameter $s\leftrightarrow t$. This is precisely
the collapse hypothesis of the biorthogonal reduction~$\M2$
(Section~\ref{sec:biotho}), and the whole family evaluates in closed
form.

\begin{Theorem}[Bessel $(s,t)$ family; $\M2$]\label{thm:besselst}
Let $f_{s,t}$ be as in \eqref{eq:besselfam}. Then, as an identity of
rational functions of $(s,t)$,
\begin{equation}\label{eq:bessel-closed}
\HH_n(f_{s,t})
=(-1)^{\binom{\bar n}2}\,
\frac{\prod_{i=0}^{n-1}(2i)!}{2^{\,n(n-1)}}\,
\Bigl(\prod_{l=0}^{\bar n-1}\frac{(s+\tfrac12)_l}{(s+1)_{n-1+l}}\Bigr)
\Bigl(\prod_{m=0}^{\underline n-1}\frac{(t+\tfrac12)_m}{(t+1)_{n-1+m}}\Bigr)
\prod_{r=1}^{\bar n}\prod_{c=1}^{\underline n}\frac{t-s+c-r}{n-r-c+1}\,.
\end{equation}
\end{Theorem}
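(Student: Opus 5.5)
The plan is the biorthogonal reduction $\M2$ in the form of Lemma~\ref{lem:bindetGene}, applied to the two functionals $\mathcal S[y^k]=a_{2k}=(-1)^k(\tfrac12)_k/(s+1)_k$ and $\mathcal T[y^k]=a_{2k+1}=(-1)^k(\tfrac12)_k/(t+1)_k$. These are the same Jacobi functional at the two parameters $s$ and $t$. After the reflection $y\mapsto -y$ (a sign $(-1)^k$ per moment), $\mathcal S$ becomes the Beta functional with moments $(\tfrac12)_k/(s+1)_k$, that is, weight $y^{-1/2}(1-y)^{s-1/2}$ on $[0,1]$.

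First I read off the monic Jacobi data (shifted Jacobi with $\alpha=s-\tfrac12$, $\beta=-\tfrac12$) from the standard recurrence. The norms should be
$$h^S_l=\frac{l!\,(\tfrac12)_l\,(s+\tfrac12)_l\,(s+l)_l}{(s+l)_{l}\,(s+1)_{2l}\,(s+l+\tfrac12)\cdots}$$
up to the usual simplification, and the sign $(-1)^l$ per degree is introduced by the reflection. I will only need the products $\prod_{l<\bar n}h^S_l$ and $\prod_{m<\underline n}h^T_m$, with $t$ replacing $s$ in the second. These are checked to telescope against the row and column factors below into $\prod(s+\tfrac12)_l/(s+1)_{n-1+l}$ and the analogous $t$-product, times $\prod(2i)!/2^{n(n-1)}$.

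Second, the connection coefficients between Jacobi polynomials with the same $\beta$ and different $\alpha$ are classical single terms. I expect
$$\kappa_{i,m}=(\pm1)^{i-m}\,\frac{i!}{m!}\,\frac{(\tfrac12)_i}{(\tfrac12)_m}\,\binom{t-s}{i-m}\frac{1}{(t+2m+1)_{i-m}\,(s+i+m)_{i-m}}$$
up to normalising constants, matching the kernel announced in the collapse paragraph of Section~\ref{sec:biotho}. I will verify this exactly as in Lemma~\ref{lem:connGene}: insert the candidate into the recurrence \eqref{eq:kapparec} with the explicit $c,\lambda$, divide by $\kappa_{i,m}$ using the four neighbour ratios, and check the resulting rational identity in $(m,d,s,t)$. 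Pulling row factors (depending on $i$) and column factors (depending on $m$) out of $\det(\kappa_{\bar n+r,m})$ leaves the kernel determinant
$$\det\Bigl(\binom{t-s}{\bar n+r-m}\big/\bigl((t+2m+1)_{\bar n+r-m}(s+\bar n+r+m)_{\bar n+r-m}\bigr)\Bigr)_{0\le r,m<\underline n}.$$

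The main obstacle is this kernel determinant. Unlike Lemma~\ref{lem:bindetSigma}, the entries also depend on $r+m$, so the dual Jacobi--Trudi identity does not apply directly. Following the paper's pointer to Theorem~\ref{thm:bessel-det}, I would embed it in a family with free integer shift $q$ (replacing $\bar n$), with $s,t$ as additional free parameters. I would then guess a product $G_N(q;s,t)$ from small cases; its numerator should contain the hook-content factor $\prod(t-s+c-r)/(\ldots)$ together with Pochhammer ratios in $s$ and $t$. Finally I would prove the formula by Desnanot--Jacobi condensation \eqref{eq:condensation}. For this I identify the four corner minors as members of the family with shifted $(q,s,t)$: deleting the first row and column shifts $m\mapsto m+1$ and $r\mapsto r+1$, which amounts to $t\mapsto t+2$ together with $q$ unchanged or shifted. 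Then $R_1-R_2=1$ reduces to a polynomial identity.

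If the condensation family is not closed, the fallback is to factor the kernel as a Cauchy--Binet product of a Pascal-type matrix and a Cauchy-type matrix in $s+r$ and $t+m$. Once the kernel is evaluated, assembly is routine bookkeeping of Pochhammer symbols, with the sign $(-1)^{\binom{\bar n}2}$ inherited from Lemma~\ref{lem:bindetGene}. Both sides are rational in $(s,t)$, so generic validity (where $\mathcal S,\mathcal T$ are quasi-definite) extends the identity everywhere.
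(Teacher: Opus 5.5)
Your route is the paper's: Lemma~\ref{lem:bindetGene}, a single-term connection coefficient verified through \eqref{eq:kapparec}, and Desnanot--Jacobi condensation on a family with extra shift parameters (Theorem~\ref{thm:bessel-det}), followed by Pochhammer bookkeeping. Your guess for $\kappa_{i,m}$ is exactly Lemma~\ref{lem:bessel-kappa}, since $(2i)!/4^{i}=i!\,(\tfrac12)_i$. Reading the orthogonal data off shifted Jacobi polynomials rather than Gauss's continued fraction changes nothing.

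Two details need correcting.

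The reflection $y\mapsto-y$ puts no sign on the norms. From $P_i(y)=(-1)^i\tilde P_i(-y)$ one gets $h^S_l=\tilde h_l=l!\,(\tfrac12)_l\,(s+\tfrac12)_l/\bigl((s+l)_l\,(s+1)_{2l}\bigr)$; your displayed formula has spurious extra factors. The reflection sign goes into the connection coefficients instead, $\kappa_{i,m}=(-1)^{i-m}\tilde\kappa_{i,m}$, and the net sign in your $\kappa_{i,m}$ is $+$; for instance $\kappa_{1,0}=\delta/\bigl(2(s+1)(t+1)\bigr)$. A factor $(-1)^l$ per norm would corrupt the final sign by $(-1)^{\binom{\bar n}2+\binom{\underline n}2}$.

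In the condensation you must shift $s$ along with $t$:
\begin{itemize}
\item deleting the first row and first column sends $(q,s,t)\mapsto(q,s+2,t+2)$, which keeps $t-s$ fixed;
\item deleting the first row alone gives $q\mapsto q+1$;
\item deleting the first column alone gives $(q,s,t)\mapsto(q-1,s+2,t+2)$.
\end{itemize}
So your $(q;s,t)$-family is the paper's $(p,a)$-family with $(s,t)$ replaced by $(s+2a,t+2a)$. The unitriangular case $q=0$ must serve as a separate base case of the induction. The product you have to guess is the double product in \eqref{eq:bessel-G}, i.e.\ that formula with the prefactor $\prod_{r}\frac{(2p+2r)!}{(2a+2r)!}\,4^{-qN}$ removed.
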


Both sides of \eqref{eq:bessel-closed} lie in $\QQ(s,t)$, so the identity
may be specialised at every $(s,t)$ with $s,t\notin\{-1,-2,\dots\}$ --- in
particular throughout the Poisson range $s,t>-\tfrac12$. The sign together
with the double product is exactly the signed factor of the trigonometric
family at \emph{doubled} argument, $\Omega(2(t-s))$ in the notation of
Section~\ref{ssec:stomega}; we return to this parallel in
Remark~\ref{rem:bessel-parallel}. The contrast observed numerically ---
smooth at half-integer offset, degenerate at integer offset --- is now a
one-line consequence.

\begin{Corollary}[smooth and degenerate offsets]\label{cor:besseldicho}
Let $s,t>-\tfrac12$.
\begin{enumerate}
\item[\rm(i)] If $t-s\in\ZZ$, then $\HH_n(f_{s,t})=0$ precisely for
$n\ge 2(t-s)+1$ when $t>s$, for $n\ge2$ when $t=s$, and for
$n\ge 2(s-t)+2$ when $t<s$.
\item[\rm(ii)] If $t-s\notin\ZZ$, then $\HH_n(f_{s,t})\neq0$ for every
$n$, and \eqref{eq:bessel-closed} is a closed product of linear forms.
For half-integer offset $t-s\in\tfrac12+\ZZ$ the signed factor moreover
evaluates in double factorials by Lemma~\ref{lem:Pahalf}, applied with
the odd argument $\delta=2(t-s)$.
\end{enumerate}
\end{Corollary}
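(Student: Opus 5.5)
We read everything off the closed form \eqref{eq:bessel-closed} of Theorem~\ref{thm:besselst}, which we take as given. For $s,t>-\tfrac12$ the prefactors are finite and nonzero. Indeed $(s+\tfrac12)_l$, $(t+\tfrac12)_m$, $(s+1)_{n-1+l}$ and $(t+1)_{n-1+m}$ have all their factors positive. The factor $\prod_i(2i)!/2^{n(n-1)}$ and the hook denominators $n-r-c+1\ge1$ are also nonzero. So $\HH_n(f_{s,t})=0$ if and only if some numerator factor $t-s+c-r$ vanishes, with $1\le r\le\bar n$ and $1\le c\le\underline n$. Equivalently, $\delta:=t-s$ must lie in the set $\{r-c\}$ of values attained on that range. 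That set is the integer interval $[1-\underline n,\ \bar n-1]$.

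For (ii), if $\delta\notin\ZZ$ no factor vanishes, so $\HH_n\neq0$ for every $n$. The product-of-linear-forms statement is then just the shape of \eqref{eq:bessel-closed}. For half-integer $\delta$, the signed factor $(-1)^{\binom{\bar n}2}\prod_{r,c}(\delta+c-r)/(n-r-c+1)$ equals $\Omega(2\delta)$ in the notation of Section~\ref{ssec:stomega}, since $\Omega(\delta')$ uses $\delta'/2+c-r$. Because $2\delta$ is odd, Lemma~\ref{lem:Pahalf} gives the double-factorial evaluation.

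For (i), let $\delta\in\ZZ$. Then $\HH_n=0$ exactly when $1-\underline n\le\delta\le\bar n-1$, and we translate this into a condition on $n$ case by case. If $\delta>0$, the binding constraint is $\delta\le\bar n-1=\lceil n/2\rceil-1$, i.e.\ $\lceil n/2\rceil\ge\delta+1$, i.e.\ $n\ge2\delta+1$. The lower bound $1-\underline n\le\delta$ is automatic here. If $\delta=0$, we need $\bar n\ge1$ and $\underline n\ge1$, i.e.\ $n\ge2$. If $\delta<0$, we need $\lfloor n/2\rfloor\ge1-\delta=1+(s-t)$, i.e.\ $n\ge2(s-t)+2$. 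These are exactly the three thresholds stated.

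The only delicate point is bookkeeping. One must confirm that $\{r-c\}$ fills the whole integer interval, which holds because both $r$ and $c$ range over consecutive integers. One must also handle the empty-product cases: $n=1$ gives $\underline n=0$ and $\HH_1=1\ne0$, which is consistent with every threshold being at least $2$ when $\delta\le0$, and equal to $2\delta+1\ge3$ when $\delta\ge1$.
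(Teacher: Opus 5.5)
Your proposal is correct and follows the paper's proof essentially verbatim. Both arguments note that every Pochhammer block in \eqref{eq:bessel-closed} is a nonzero product of positive reals when $s,t>-\tfrac12$, which reduces vanishing to $t-s=r-c$ for some $1\le r\le\bar n$, $1\le c\le\underline n$. Your case-by-case translation of the interval $[1-\underline n,\bar n-1]$ into thresholds on $n$ is just an unpacked form of the paper's condition $n\ge\max\bigl(2(t-s)+1,\,2(s-t)+2,\,2\bigr)$.
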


\begin{proof}
For $s,t>-\tfrac12$ every Pochhammer symbol in \eqref{eq:bessel-closed}
is a product of positive reals, so $\HH_n=0$ if and only if the double
product vanishes, i.e.\ if and only if $t-s=r-c$ for some $1\le r\le\bar n$,
$1\le c\le\underline n$. This forces $t-s\in\ZZ$, and conversely, for
$t-s\in\ZZ$, such $(r,c)$ exist if and only if $\bar n\ge t-s+1$ and
$\underline n\ge s-t+1$, i.e.\ $n\ge\max\bigl(2(t-s)+1,\,2(s-t)+2,\,2\bigr)$,
which is the stated threshold in each of the three cases.
\end{proof}

\begin{Example}[$J_0$ plus the sine integral]\label{ex:besselJ0Si}
At $(s,t)=(0,\tfrac12)$ the member \eqref{eq:besselfam} is
$J_0(x)+\mathrm{Si}(x)$, since $\mathrm{cosb}_0=J_0$ and
$\int_0^x\mathrm{cosb}_{1/2}(y)\,dy=\int_0^x\sin(y)/y\,dy$. Here
$2(t-s)=1$, $\Omega(1)=2^{-\binom n2}$ by Lemma~\ref{lem:Pahalf}, and
\eqref{eq:bessel-closed} specialises to the factorial product
$$
\HH_n\bigl(J_0+\mathrm{Si}\bigr)
=\frac{\prod_{i=0}^{n-1}(2i)!}{2^{\,n(n-1)}\,2^{\binom n2}}\,
\prod_{l=0}^{\bar n-1}\frac{(\tfrac12)_l}{(n-1+l)!}\,
\prod_{m=0}^{\underline n-1}\frac{m!}{(\tfrac32)_{n-1+m}}
=1,\ \frac16,\ \frac1{960},\ \frac1{1693440},\ \dots
$$
($n=1,2,3,4$) --- the numerators are $1$, which is the arithmetic
signature that first singled the half-integer lines out.
\end{Example}

\begin{Remark}[the boundary of the ladder]\label{rem:bessel-boundary}
The closed form also quantifies the collapse at the edge of the order
ladder. At $s=-\tfrac12$ the numerators $(s+\tfrac12)_l$ vanish for every
$l\ge1$, so $\HH_n=0$ as soon as $\bar n\ge2$, i.e.\ for all $n\ge3$;
at $t=-\tfrac12$ likewise $\HH_n=0$ for all $n\ge4$ ($\underline n\ge2$). Thus
$\cos x+\int_0^xJ_0$ degenerates from $n=3$ and $J_0(x)+\sin x$ from
$n=4$, although both pairs have half-integer offset: rule (ii) of
Corollary~\ref{cor:besseldicho} genuinely needs $s,t>-\tfrac12$. More
generally $(s+\tfrac12)_l=0$ for $l>k$ at $s=-\tfrac12-k$, giving
$\HH_n=0$ for $n\ge2k+3$. The explanation is visible in the Poisson
formula: at $s=-\tfrac12$ the weight $(1-t^2)^{s-1/2}$ ceases to be a
measure and $\cos x$ is the characteristic function of the \emph{atomic}
two-point measure at $\pm1$ --- a rank collapse, the same degeneracy as
the $(1+x)\cos$-type atomic cases elsewhere in the paper.
\end{Remark}

The proof of Theorem~\ref{thm:besselst} occupies the next three
subsections: the orthogonal data of the family are explicit
(Lemma~\ref{lem:bessel-data}), the connection coefficients collapse to a
single term because the two functionals differ in one parameter
(Lemma~\ref{lem:bessel-kappa}), and the residual connection determinant
--- which, unlike the binomial determinant of the trigonometric family,
carries the parameters $s,t$ in its entries --- is evaluated by
Desnanot--Jacobi condensation (Theorem~\ref{thm:bessel-det}), in the
manner of the Catalan determinant of Section~\ref{ssec:xsin-catalan}.

\subsection{Orthogonal data}\label{ssec:bessel-data}

By \eqref{eq:bessel-moments} the even part of $\mathbf a$ involves only
$s$ and the odd part only $t$, through one and the same one-parameter
moment sequence; we record the data for the even part, the odd part being
identical with $t$ in place of $s$. All computations take place over the
field $\QQ(s,t)$.

\begin{Lemma}[orthogonal data]\label{lem:bessel-data}
The even and odd parts of $\mathbf a$ admit the Stieltjes $S$-fractions
\eqref{eq:Fsfrac} with coefficients
\begin{equation}\label{eq:bessel-u}
u_j=-\,\frac{j\,(j+2s-1)}{4\,(s+j-1)(s+j)},
\qquad
v_j=-\,\frac{j\,(j+2t-1)}{4\,(t+j-1)(t+j)}
\qquad(j\ge1)
\end{equation}
(for $j=1$ the factor $s+j-1=s$ cancels: $u_1=-\tfrac1{2(s+1)}$).
Consequently both functionals are quasi-definite over $\QQ(s,t)$, with
squared norms
\begin{equation}\label{eq:bessel-norms}
h^S_i=\frac{(2i)!\;(s+\tfrac12)_i}{4^i\,(s+i)_i\,(s+1)_{2i}},
\qquad
h^T_m=\frac{(2m)!\;(t+\tfrac12)_m}{4^m\,(t+m)_m\,(t+1)_{2m}}.
\end{equation}
\end{Lemma}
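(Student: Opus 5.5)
The plan is to derive the $S$-fraction from the classical Gauss continued fraction for a ratio of contiguous hypergeometric functions. By \eqref{eq:bessel-moments} the even generating function is
$$\sum_k a_{2k}x^k={}_2F_1\bigl(\tfrac12,1;s+1;-x\bigr)=\frac{{}_2F_1(\tfrac12,1;s+1;-x)}{{}_2F_1(\tfrac12,0;s;-x)},$$
since the denominator equals $1$. This is exactly the setting of Gauss's continued fraction for ${}_2F_1(a,b+1;c+1;z)/{}_2F_1(a,b;c;z)$ with $(a,b,c)=(\tfrac12,0,s)$ and $z=-x$. Gauss's formula gives
$$\cfrac{1}{1-\cfrac{g_1z}{1-\cfrac{g_2z}{1-\ddots}}},\qquad g_{2k+1}=\frac{(a+k)(c-b+k)}{(c+2k)(c+2k+1)},\quad g_{2k}=\frac{(b+k)(c-a+k)}{(c+2k-1)(c+2k)}.$$
Substituting $z=-x$ yields $u_j=-g_j$.

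Specialising to $a=\tfrac12$, $b=0$, $c=s$:
\begin{itemize}
\item for $j=2k+1$, $g_j=(k+\tfrac12)(s+k)/\bigl((s+2k)(s+2k+1)\bigr)=\tfrac{j}{2}(s+k)/\bigl((s+j-1)(s+j)\bigr)$;
\item for $j=2k$, $g_j=k(s-\tfrac12+k)/\bigl((s+j-1)(s+j)\bigr)$.
\end{itemize}
In both cases the numerator equals $j(j+2s-1)/4$: for odd $j$, $\tfrac j2(s+\tfrac{j-1}2)=\tfrac{j(2s+j-1)}4$, and for even $j$, $\tfrac j2(s+\tfrac{j-1}{2})$ is the same. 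This gives \eqref{eq:bessel-u}. The odd part is identical with $t$ in place of $s$.

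To avoid relying on the precise form of Gauss's theorem, I would also give a self-contained check. Let $F_0=\sum a_{2k}x^k$ and define the tails $F_{j}$ by $F_{j-1}=1/(1-u_jxF_j)$. The claim is that $F_j$ is the appropriate contiguous ${}_2F_1$ ratio, and each step reduces to one contiguous relation among Gauss functions. This is the standard verification and routine.

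Quasi-definiteness over $\QQ(s,t)$ then follows because every $u_j,v_j$ is a nonzero rational function. The even contraction (Lemma~\ref{lem:cfGene}) gives $h^S_i=\prod_{j=1}^{2i}u_j$, and it remains to telescope this product. The signs contribute $(-1)^{2i}=1$ and the powers of $4$ contribute $4^{-2i}$. The factor $\prod j=(2i)!$. For the remaining factors:
\begin{itemize}
\item $\prod_{j=1}^{2i}(j+2s-1)=(2s)_{2i}=4^i(s)_i(s+\tfrac12)_i$;
\item $\prod_{j=1}^{2i}(s+j-1)(s+j)=(s)_{2i}(s+1)_{2i}$.
\end{itemize}
Using $(s)_{2i}=(s)_i(s+i)_i$, the quotient becomes
$$\frac{(2i)!\,4^i(s)_i(s+\tfrac12)_i}{16^i\,(s)_i(s+i)_i(s+1)_{2i}},$$
which is \eqref{eq:bessel-norms}. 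The only step needing care is matching the indexing convention of Gauss's fraction, i.e.\ which of $g_{2k}$, $g_{2k+1}$ carries $a$ versus $b$. I would fix it by checking the first terms: $u_1=-\tfrac1{2(s+1)}$ must reproduce $a_2=-\tfrac{1}{2(s+1)}$, which it does, and $u_2$ can be checked against $a_4$.
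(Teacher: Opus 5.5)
Your proposal is correct and follows the paper's proof essentially line for line. Both arguments apply Gauss's continued fraction to the contiguous ratio with $(a,b,c)=(\tfrac12,0,s)$, substitute $z=-x$ to get $u_j=-g_j$, and telescope $\prod_{j\le 2i}u_j$ using $(2s)_{2i}=4^i(s)_i(s+\tfrac12)_i$ and $(s)_{2i}=(s)_i(s+i)_i$. The paper simply cites Gauss's fraction, so your optional self-contained check of the tails is not needed.
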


\begin{proof}
The even generating function is ${}_2F_1(\tfrac12,1;s+1;-x)$. Gauss's
continued fraction \cite{Wall1948,Cuyt2008etal} expands the ratio
$F(a,b+1;c+1;z)/F(a,b;c;z)$ of contiguous hypergeometric series as the
formal $S$-fraction $1/(1-\alpha_1z/(1-\alpha_2z/(1-\cdots)))$ with
$$
\alpha_{2k+1}=\frac{(a+k)(c-b+k)}{(c+2k)(c+2k+1)},
\qquad
\alpha_{2k}=\frac{(b+k)(c-a+k)}{(c+2k-1)(c+2k)}.
$$
Taking $(a,b,c)=(\tfrac12,0,s)$, where $F(\tfrac12,0;s;z)=1$, both
parities merge into the single expression
$\alpha_j=\dfrac{j(j+2s-1)}{4(s+j-1)(s+j)}$, and the substitution $z=-x$
turns $1/(1-\alpha_1z/\cdots)$ into the $S$-fraction in $x$ with
coefficients $u_j=-\alpha_j$, which is \eqref{eq:bessel-u}. By
Lemma~\ref{lem:cfGene} the squared norms telescope,
$$
h^S_i=\prod_{j=1}^{2i}u_j
=\frac{(2i)!\,(2s)_{2i}}{16^{\,i}\,(s)_{2i}\,(s+1)_{2i}}
$$
(the $2i$ signs cancel), and the duplication formula in Pochhammer form,
$(2s)_{2i}=4^i\,(s)_i\,(s+\tfrac12)_i$, together with
$(s)_{2i}=(s)_i\,(s+i)_i$, gives \eqref{eq:bessel-norms}. Each $h^S_i$ is
a nonzero element of $\QQ(s,t)$, whence quasi-definiteness.
\end{proof}

\subsection{Connection coefficients}\label{ssec:bessel-kappa}

As in Section~\ref{sec:biotho} let $P_i$ and $Q_m$ be the monic
orthogonal polynomials of $\mathcal S$ and $\mathcal T$, and
$P_i=\sum_{m=0}^{i}\kappa_{i,m}\,Q_m$ the connection expansion. Because
the two functionals differ in a single parameter, the $\kappa_{i,m}$
collapse to a single term. Throughout put
$$
\delta:=t-s,\qquad d:=i-m .
$$

\begin{Lemma}[connection coefficients]\label{lem:bessel-kappa}
For $0\le m\le i$,
\begin{equation}\label{eq:bessel-kappa}
\kappa_{i,m}
=\frac{(2i)!}{(2m)!\;4^{d}\,d!}\;
\frac{\delta(\delta-1)\cdots(\delta-d+1)}{(t+2m+1)_d\;(s+i+m)_d}
=\frac{(2i)!}{(2m)!\;4^{d}}\,\binom{\delta}{d}\,
\frac{1}{(t+2m+1)_d\;(s+i+m)_d}\,.
\end{equation}
\end{Lemma}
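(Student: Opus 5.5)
We follow the pattern of Lemma~\ref{lem:connGene} and Lemma~\ref{lem:xsin-kappaclosed}. By Lemma~\ref{lem:connrec}, the array $\kappa_{i,m}$ is the unique solution of the recurrence \eqref{eq:kapparec} with $\kappa_{0,0}=1$ and $\kappa_{i,m}=0$ outside $0\le m\le i$. So it suffices to check that the right-hand side $k_{i,m}$ of \eqref{eq:bessel-kappa}, extended by zero, has the same initial row and satisfies \eqref{eq:kapparec}. The recurrence data come from Lemma~\ref{lem:bessel-data} through Lemma~\ref{lem:cfGene}:
$$c^S_i=u_{2i}+u_{2i+1},\qquad \lambda^S_i=u_{2i-1}u_{2i},$$
with the $u_j$ of \eqref{eq:bessel-u}, and likewise $c^T_m,\lambda^T_m$ from the $v_j$. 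First we put these into factored form: $\lambda^S_i$ is a product of four linear factors over four linear factors, and $c^S_i$ is a single rational function after combining. At $i=0$, $k_{0,0}=1$ (all products are empty), so the initial row agrees.

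Next we divide \eqref{eq:kapparec} by $k_{i,m}$ and compute the four neighbour ratios by telescoping the Pochhammer symbols in \eqref{eq:bessel-kappa}, with $d=i-m$:
\begin{itemize}
\item $k_{i+1,m}/k_{i,m}$ picks up $(2i+1)(2i+2)\,(\delta-d)/\bigl(4(d+1)\bigr)$ and the changes in $(t+2m+1)_{d+1}$ and $(s+i+1+m)_{d+1}$ relative to $(s+i+m)_d$;
\item $k_{i,m-1}/k_{i,m}$ carries the factor $(2m-1)(2m)$, so it vanishes at $m=0$, as $\kappa_{i,-1}=0$ requires;
\item $k_{i,m+1}/k_{i,m}$ carries the factor $d$, so it vanishes at $m=i$;
\item $k_{i-1,m}/k_{i,m}$ likewise carries the factor $d$.
\end{itemize}
Each ratio is rational in $(i,m,s,t)$, or equivalently in $(m,d,s,\delta)$. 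Substituting them turns \eqref{eq:kapparec} into a single rational identity.

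The main obstacle is that, unlike in Lemma~\ref{lem:connGene}, the parameters enter the kernel through $i+m$ (the factor $(s+i+m)_d$), so the final identity is a genuine multivariate rational identity rather than a polynomial one in few variables. We plan to clear denominators by the common factor $(s+2i)(s+2i+1)(t+2m)(t+2m+1)(s+i+m-1)(s+i+m+d)(t+2m+d)\cdots$ and expand, after the substitutions $i=m+d$ and $t=s+\delta$. It is worth first splitting off the terms proportional to $d$ (those with $\kappa_{i,m+1}$ and $\kappa_{i-1,m}$) and checking the $d=0$ case separately. In that case the identity reduces to $c^T_m-c^S_m$ plus the $\kappa_{i,m-1}$ term equalling the $\kappa_{i+1,m}$ term, which tests the $\binom{\delta}{1}$ normalisation.

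The boundary cases take care of themselves: at $m=i+1$ the recurrence reduces to $k_{i+1,i+1}=k_{i,i}=1$. As a sanity check, $\delta=0$ gives $\kappa_{i,m}=\delta_{im}$, as it must, and the case $d=1$ can be compared with the subleading coefficients of $P_i$ and $Q_i$, namely $-\sum_{j<i}c^S_j$ and $-\sum_{j<i}c^T_j$. Uniqueness of the solution of \eqref{eq:kapparec} then gives $\kappa=k$.
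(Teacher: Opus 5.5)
Your proposal is correct and is essentially the paper's proof. Both verify that the candidate $k_{i,m}$ satisfies the connection recurrence \eqref{eq:kapparec}, using the four telescoped neighbour ratios and the data $c^S_i=u_{2i}+u_{2i+1}$, $\lambda^S_i=u_{2i-1}u_{2i}$ (and their $v$-analogues). Both handle the boundary columns through the vanishing factors $2m$ and $d$, treat the diagonal step $k_{i+1,i+1}=k_{i,i}=1$ separately, and conclude by uniqueness. Your extra checks (the $d=0$ instance, $\delta=0$, and the subleading coefficients) are consistent with the formula but not needed for the proof.
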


\begin{proof}
Write $k_{i,m}$ for the right-hand side, extended by $k_{i,m}=0$ for
$m<0$ or $m>i$. By Lemma~\ref{lem:connrec} the array $\kappa$ is the
unique solution of the recurrence \eqref{eq:kapparec} with row
$\kappa_{0,0}=1$, $\kappa_{0,m}=0$ $(m\ne0)$; the candidate has the same
row $i=0$ (at $d=0$ every product in \eqref{eq:bessel-kappa} is empty),
so it suffices to show that $k$ satisfies \eqref{eq:kapparec}. Every
factor of \eqref{eq:bessel-kappa} is nonzero on the triangle
$0\le m\le i$ over $\QQ(s,t)$, and telescoping each block gives the four
neighbour ratios as rational functions of $(m,d,s,t)$ (recall $i=m+d$):
\begin{align*}
\frac{k_{i+1,m}}{k_{i,m}}
&=\frac{(2i+1)(2i+2)\,(\delta-d)\,(s+i+m)}
       {4\,(d+1)\,(t+2m+d+1)\,(s+2i)(s+2i+1)},\\
\frac{k_{i,m-1}}{k_{i,m}}
&=\frac{(2m-1)(2m)\,(\delta-d)\,(t+2m+d)}
       {4\,(d+1)\,(t+2m-1)(t+2m)\,(s+i+m-1)},\\
\frac{k_{i,m+1}}{k_{i,m}}
&=\frac{4\,d\,(t+2m+1)(t+2m+2)\,(s+i+m)}
       {(2m+1)(2m+2)\,(\delta-d+1)\,(t+2m+d+1)},\\
\frac{k_{i-1,m}}{k_{i,m}}
&=\frac{4\,d\,(t+2m+d)\,(s+2i-2)(s+2i-1)}
       {(2i-1)(2i)\,(\delta-d+1)\,(s+i+m-1)}.
\end{align*}
Dividing \eqref{eq:kapparec} by $k_{i,m}$ and inserting these ratios
together with the recurrence data of Lemma~\ref{lem:bessel-data} --- via
Lemma~\ref{lem:cfGene}, $c^S_i=u_{2i}+u_{2i+1}$,
$\lambda^S_i=u_{2i-1}u_{2i}$, $c^T_m=v_{2m}+v_{2m+1}$,
$\lambda^T_{m+1}=v_{2m+1}v_{2m+2}$ --- turns the recurrence into a
rational-function identity in $(m,d,s,t)$, verified by clearing
denominators. The boundary columns take care of themselves: at $m=0$ the
second ratio carries the factor $2m$ and vanishes, as required by
$\kappa_{i,-1}=0$; at $m=i$ the third and fourth ratios carry the factor
$d$ and vanish, as required by $\kappa_{i,i+1}=0$ and $\kappa_{i-1,i}=0$;
and at $m=i+1$ the recurrence reduces to $k_{i+1,i+1}=k_{i,i}=1$.
\end{proof}

The binomial $\binom{\delta}{d}$ in \eqref{eq:bessel-kappa} is the source
of the integer-offset degeneracy: for $\delta=t-s\in\ZZ_{\ge0}$ the
connection array is \emph{banded}, $\kappa_{i,m}=0$ for $i-m>\delta$, and
the residual block of Lemma~\ref{lem:bindetGene} eventually has a zero
diagonal. The next theorem makes this quantitative --- and evaluates the
determinant for every $(s,t)$ at once.

\subsection{The connection determinant}\label{ssec:bessel-det}

Unlike its trigonometric counterpart (Lemma~\ref{lem:bindetSigma}), the
kernel left over after removing row and column factors from
\eqref{eq:bessel-kappa} depends on $i+m$ as well as on $i-m$, so the dual
Jacobi--Trudi identity does not apply. Instead we evaluate, more
generally, \emph{every} minor of the connection array on consecutive rows
and consecutive columns: the two extra shift parameters make the family
stable under Desnanot--Jacobi condensation, exactly as for the Catalan
determinant of the reciprocal-sine function
(Section~\ref{ssec:xsin-catalan}).

\begin{Theorem}[connection determinant]\label{thm:bessel-det}
Let $p\ge a\ge0$ and $N\ge0$ be integers and put $q=p-a$. Then
\begin{multline}\label{eq:bessel-G}
\det\bigl(\kappa_{p+r,\,a+m}\bigr)_{0\le r,m\le N-1}
=\Bigl(\prod_{r=0}^{N-1}\frac{(2p+2r)!}{(2a+2r)!}\Bigr)\,4^{-qN}\\
\times\prod_{r=1}^{q}\prod_{c=1}^{N}
\frac{\delta+c-r}
{\bigl[(q-r)+(N-c)+1\bigr]\,(s+2p-1+c-r)\,(t+2a+2N-1+r-c)}\,.
\end{multline}
\end{Theorem}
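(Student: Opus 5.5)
The plan is induction on $N$ via the Desnanot--Jacobi identity \eqref{eq:condensation}, mirroring the proof of Theorem~\ref{thm:xsin-catalan}. Write $K_N(p,a)$ for the left side of \eqref{eq:bessel-G} and $R_N(p,a)$ for the right side. The four corner minors of the $N\times N$ block $(\kappa_{p+r,a+m})$ are again consecutive-row/consecutive-column minors of the same array:
\begin{itemize}
\item deleting the first row and first column gives $K_{N-1}(p+1,a+1)$;
\item deleting the last row and last column gives $K_{N-1}(p,a)$;
\item deleting the first row and last column gives $K_{N-1}(p+1,a)$;
\item deleting the last row and first column gives $K_{N-1}(p,a+1)$;
\item the central minor is $K_{N-2}(p+1,a+1)$.
\end{itemize}
Hence
$$
K_N(p,a)\,K_{N-2}(p+1,a+1)=K_{N-1}(p,a)\,K_{N-1}(p+1,a+1)-K_{N-1}(p+1,a)\,K_{N-1}(p,a+1).
$$
All parameters stay in the range $p'\ge a'\ge0$. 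The only one that needs care is $K_{N-1}(p,a+1)$ when $p=a$; but then $q=0$, the block is unitriangular ($\kappa_{i,i}=1$, $\kappa_{i,m}=0$ for $m>i$), and both sides of \eqref{eq:bessel-G} equal $1$ (empty double product, empty factorial ratio). So I would treat $q=0$ as a separate base case, and for $q\ge1$ all five minors are legitimate. In the fourth minor $q$ drops by one, in the third it rises by one, and in the others it is unchanged.

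Base cases: $N=0$ is trivial. For $N=1$, \eqref{eq:bessel-G} must reduce to $\kappa_{p,a}$ itself. I would check this directly against Lemma~\ref{lem:bessel-kappa} with $d=q$:
\begin{itemize}
\item the product $\prod_{r=1}^q(\delta+1-r)$ is the falling factorial $\delta(\delta-1)\cdots(\delta-q+1)$;
\item $\prod_{r=1}^q(q-r+1)=q!$;
\item $\prod_{r=1}^q(s+2p-r)=(s+p+a)_q$, since $2p-q=p+a$;
\item $\prod_{r=1}^q(t+2a+r)=(t+2a+1)_q$.
\end{itemize}
This matches \eqref{eq:bessel-kappa}.

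For the inductive step I would use that $R_{N-2}(p+1,a+1)\neq0$ as a rational function over $\QQ(s,t)$, so the condensation identity determines $K_N$. It then remains to verify $\rho_1-\rho_2=1$, where
$$
\rho_1=\frac{R_{N-1}(p,a)R_{N-1}(p+1,a+1)}{R_N(p,a)R_{N-2}(p+1,a+1)},\qquad
\rho_2=\frac{R_{N-1}(p+1,a)R_{N-1}(p,a+1)}{R_N(p,a)R_{N-2}(p+1,a+1)}.
$$
I would split $R$ into four blocks, exactly as in the Catalan proof:
\begin{itemize}
\item the factorial prefactor together with $4^{-qN}$;
\item the hook factor $\prod 1/[(q-r)+(N-c)+1]$, which is $1/\prod$ of hooks of a $q\times N$ rectangle;
\item the numerator contents $\prod(\delta+c-r)$ over the same rectangle;
\item the two parameter-carrying blocks $\prod(s+2p-1+c-r)^{-1}$ and $\prod(t+2a+2N-1+r-c)^{-1}$, each a product over the rectangle of a linear form constant on diagonals.
\end{itemize}
In $\rho_1$ the rectangle shape is unchanged in all four factors. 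The content, $s$- and $t$-blocks therefore telescope to boundary diagonals, leaving ratios like $(\delta+N-1)$-type and single $s$-, $t$-factors, analogous to \eqref{eq:xsin-Theta}. In $\rho_2$ the shapes $q\pm1$ occur, and the quotient is a ratio of the contents of one row against one column of the rectangle.

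The main obstacle is this final bookkeeping: reducing $\rho_1$ and $\rho_2$ to explicit low-degree rational functions of $(p,a,N,s,t)$ with a common denominator, and then checking the resulting polynomial identity, the analogue of \eqref{eq:xsin-DJfinal}. I would first compute $\rho_1$ and $\rho_2$ symbolically for small $N,q$ to guess their closed forms. Then I would prove the telescopings diagonal by diagonal, using that each linear form depends on $c-r$ only through a shift by $2p$ or $2a+2N$. The final check is expanding both sides of $\rho_1-\rho_2=1$ after clearing denominators.
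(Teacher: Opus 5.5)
Your proposal is correct and follows the paper's proof essentially step for step. The shared ingredients are:
\begin{itemize}
\item induction on $N$ via Desnanot--Jacobi, with the same five minors $K_{N-1}(p+1,a+1)$, $K_{N-1}(p,a)$, $K_{N-1}(p+1,a)$, $K_{N-1}(p,a+1)$ and $K_{N-2}(p+1,a+1)$;
\item the same base cases: $N=0$; $N=1$ checked against $\kappa_{p,a}$; and the unitriangular case $q=0$, which keeps $K_{N-1}(p,a+1)$ in range;
\item the same reduction to verifying $\rho_1-\rho_2=1$.
\end{itemize}

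The bookkeeping you defer is carried out in the paper block by block. It gives $\rho_1$ and $\rho_2$ as quotients of four linear factors each, with $\rho_2=\rho_1\cdot\mu_a/\mu_{a+1}$ for the offset-step ratio $\mu_a=R_{N-1}(p+1,a)/R_{N-1}(p,a)$. The check then ends in a quartic polynomial identity in $\sigma=s+p+a$, $\tau=t+2a$, $q$ and $M=N-1$.
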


\begin{proof}
Write $D(p,a,N)$ for the determinant and $G(p,a,N)$ for the right-hand
side, $D(p,a,0)=G(p,a,0)=1$. Telescoping the products over $r$ or over
$c$ turns $G$ into Pochhammer blocks,
\begin{equation}\label{eq:bessel-Gblocks}
G(p,a,N)=\Bigl(\prod_{r=0}^{N-1}\frac{(2p+2r)!}{(2a+2r)!}\Bigr)\,
\frac{4^{-qN}\,\prod_{r=1}^{q}(\delta+1-r)_N}
{\prod_{j=1}^{q}(j)_N\;\prod_{c=1}^{N}(s+p+a+c-1)_q\;
 \prod_{r=1}^{q}(t+2a+N+r-1)_N}
\end{equation}
(for the hook lengths, $\prod_{c=1}^{N}\bigl[(q-r)+(N-c)+1\bigr]
=(q-r+1)_N$; for the $s$-block, $2p-q=p+a$). The proof is by induction
on $N$ through the Desnanot--Jacobi identity \eqref{eq:condensation}, in the
manner of the condensation proofs of the determinant calculus
\cite{Krattenthaler1998}.

For $N=0$ both sides equal $1$. For $q=0$ (any $N$) the matrix
$(\kappa_{p+r,\,p+m})$ is unitriangular ($\kappa_{i,m}=0$ for $m>i$,
$\kappa_{i,i}=1$), so $D=1=G$, every product in \eqref{eq:bessel-G} being
empty. For $N=1$,
$$
G(p,a,1)=\frac{(2p)!}{(2a)!}\,
\frac{4^{-q}\,\delta(\delta-1)\cdots(\delta-q+1)}
{q!\,(s+p+a)_q\,(t+2a+1)_q}
=\kappa_{p,a}
$$
by \eqref{eq:bessel-kappa}. Let $N\ge2$ and $q\ge1$. Deleting the first
or last row and the first or last column of the matrix of $D(p,a,N)$
shifts $(p,a)$ and $N$ inside the same family, so its four corner minors
are $D(p+1,a+1,N-1)$, $D(p,a,N-1)$, $D(p+1,a,N-1)$, $D(p,a+1,N-1)$, and
its central minor is $D(p+1,a+1,N-2)$; the Desnanot--Jacobi identity
reads
\begin{multline}\label{eq:bessel-DJ}
D(p,a,N)\,D(p+1,a+1,N-2)\\
=D(p+1,a+1,N-1)\,D(p,a,N-1)-D(p+1,a,N-1)\,D(p,a+1,N-1).
\end{multline}
The five smaller determinants have offsets $q$, $q$, $q+1$, $q-1\ge0$ and
sizes $N-1$, $N-2$, so they equal the corresponding $G$'s by the
induction hypothesis, and $G(p+1,a+1,N-2)\ne0$, every factor of
\eqref{eq:bessel-Gblocks} being a nonzero element of $\QQ(s,t)$. Hence
\eqref{eq:bessel-DJ} determines $D(p,a,N)$, and it remains to check that
$G$ satisfies the same relation, i.e.\ that $R_1-R_2=1$ for
$$
R_1:=\frac{G(p+1,a+1,N-1)\,G(p,a,N-1)}{G(p,a,N)\,G(p+1,a+1,N-2)},
\qquad
R_2:=\frac{G(p+1,a,N-1)\,G(p,a+1,N-1)}{G(p,a,N)\,G(p+1,a+1,N-2)}.
$$

Both quotients collapse block by block. First,
$R_1=\rho(p+1,a+1,N-1)\big/\rho(p,a,N)$ for the one-step ratio
$\rho(p,a,N):=G(p,a,N)/G(p,a,N-1)$, which \eqref{eq:bessel-Gblocks}
telescopes to
$$
\rho(p,a,N)
=\frac{(2p+2N-2)!}{(2a+2N-2)!}\,\frac{(\delta+N-q)_q\,(t+2a+N-1)_q}
{4^{q}\,(N)_q\,(s+p+a+N-1)_q\,(t+2a+2N-2)_q\,(t+2a+2N-1)_q}
$$
(the $t$-block by
$(t+2a+N+r-1)_N/(t+2a+N+r-2)_{N-1}
=\frac{(t+2a+2N+r-3)(t+2a+2N+r-2)}{t+2a+N+r-2}$, whose three factors
telescope over $r=1,\dots,q$). In $R_1$ the factorial and $4$-blocks
cancel outright, the two long $t$-blocks coincide between the two levels,
and each remaining Pochhammer quotient leaves a single linear factor
--- e.g.\ $(\delta+N-1-q)_q/(\delta+N-q)_q=(\delta+N-1-q)/(\delta+N-1)$
--- giving
$$
R_1=\frac{(q+N-1)\,(\delta-q+N-1)\,(s+p+a+N-1)\,(t+2a+q+N-1)}
        {(N-1)\,(\delta+N-1)\,(s+2p+N-1)\,(t+2a+N-1)}\,.
$$
Second, $R_2=R_1\cdot X$ with
$X=\mu_a/\mu_{a+1}$, where $\mu_a:=G(p+1,a,N-1)/G(p,a,N-1)$ is the
offset step $q\mapsto q+1$ at size $N-1$. Blockwise,
\begin{multline*}
\mu_a=\Bigl(\prod_{r=0}^{N-2}(2p+2r+1)(2p+2r+2)\Bigr)\\
\times\frac{4^{-(N-1)}\,(\delta-q)_{N-1}}
{(q+1)_{N-1}\,(t+2a+N+q-1)_{N-1}}\,
\prod_{c=1}^{N-1}\frac{s+p+a+c-1}{(s+2p+c-1)(s+2p+c)}\,,
\end{multline*}
and in the quotient $X$ everything depending on $p$ alone cancels,
leaving
$$
X=\frac{q\,(\delta-q)\,(s+p+a)\,(t+2a+2N+q-2)}
      {(q+N-1)\,(\delta-q+N-1)\,(s+p+a+N-1)\,(t+2a+N+q-1)}\,,
$$
so that
$$
R_2=\frac{q\,(\delta-q)\,(s+p+a)\,(t+2a+q+2N-2)}
        {(N-1)\,(\delta+N-1)\,(s+2p+N-1)\,(t+2a+N-1)}\,.
$$
Finally, in the variables $\sigma=s+p+a$, $\tau=t+2a$ and $M=N-1$ --- for
which $\delta-q=\tau-\sigma$, $\delta+M=\tau-\sigma+q+M$,
$s+2p+N-1=\sigma+q+M$ and $t+2a+N-1=\tau+M$ --- the required identity
$R_1-R_2=1$ becomes
\begin{multline}\label{eq:bessel-DJfinal}
(q+M)(\tau-\sigma+M)(\sigma+M)(\tau+q+M)
-q\,(\tau-\sigma)\,\sigma\,(\tau+q+2M)\\
=M\,(\tau-\sigma+q+M)(\sigma+q+M)(\tau+M),
\end{multline}
checked by expanding both sides. This completes the induction.
\end{proof}

\subsection{Proof of the closed form}\label{ssec:bessel-proof}

\begin{proof}[Proof of Theorem~\textup{\ref{thm:besselst}}]
By Lemma~\ref{lem:bessel-data} the two functionals are quasi-definite
over $\QQ(s,t)$, and Lemma~\ref{lem:bindetGene} applies. The residual
block is the case $(p,a,N)=(\bar n,0,\underline n)$ of
Theorem~\ref{thm:bessel-det}, in which $q=\bar n$ and the hook length
$(q-r)+(N-c)+1$ reads $n-r-c+1$; thus
$$
\HH_n=(-1)^{\binom{\bar n}{2}}
\Bigl(\prod_{l=0}^{\bar n-1}h^S_l\Bigr)
\Bigl(\prod_{m=0}^{\underline n-1}h^T_m\Bigr)\,
\det\bigl(\kappa_{\bar n+r,\,m}\bigr)_{0\le r,m\le \underline n-1}.
$$
Three groups of factors merge. The factorials:
$$
\prod_{l=0}^{\bar n-1}(2l)!\cdot\prod_{m=0}^{\underline n-1}(2m)!\cdot
\prod_{r=0}^{\underline n-1}\frac{(2\bar n+2r)!}{(2r)!}
=\prod_{i=0}^{n-1}(2i)!\,.
$$
The powers of two:
$4^{-\binom{\bar n}2-\binom{\underline n}2-\bar n\underline n}=4^{-\binom n2}
=2^{-n(n-1)}$. The $s$-dependent denominators: writing each Pochhammer
symbol as a quotient of Gamma factors (all of which cancel in the end),
$(s+l)_l\,(s+1)_{2l}
=\frac{\Gamma(s+2l)\,\Gamma(s+2l+1)}{\Gamma(s+l)\,\Gamma(s+1)}$ and
$(s+\bar n+c-1)_{\bar n}
=\frac{\Gamma(s+2\bar n+c-1)}{\Gamma(s+\bar n+c-1)}$, so
\begin{align*}
\prod_{l=0}^{\bar n-1}(s+l)_l\,(s+1)_{2l}\cdot
\prod_{c=1}^{\underline n}(s+\bar n+c-1)_{\bar n}
&=\frac{\prod_{j=0}^{2\bar n-1}\Gamma(s+j)}
       {\Gamma(s+1)^{\bar n}\,\prod_{j=0}^{\bar n-1}\Gamma(s+j)}\cdot
 \frac{\prod_{j=2\bar n}^{2\bar n+\underline n-1}\Gamma(s+j)}
      {\prod_{j=\bar n}^{\bar n+\underline n-1}\Gamma(s+j)}\\
&=\frac{\prod_{j=n}^{n+\bar n-1}\Gamma(s+j)}{\Gamma(s+1)^{\bar n}}
=\prod_{l=0}^{\bar n-1}(s+1)_{n-1+l}\,,
\end{align*}
and the same computation with $(t,\underline n,\bar n)$ in place of
$(s,\bar n,\underline n)$ gives
$\prod_{m<\underline n}(t+m)_m(t+1)_{2m}\cdot
\prod_{r=1}^{\bar n}(t+\underline n+r-1)_{\underline n}
=\prod_{m=0}^{\underline n-1}(t+1)_{n-1+m}$. Inverting these two products
(they stand in the denominators of \eqref{eq:bessel-norms} and
\eqref{eq:bessel-G}) and restoring the numerators $(s+\tfrac12)_l$,
$(t+\tfrac12)_m$ from \eqref{eq:bessel-norms} yields
\eqref{eq:bessel-closed}.
\end{proof}

\begin{Remark}[the parallel with the trigonometric family]
\label{rem:bessel-parallel}
The parallel with Proposition~\ref{prop:family} is exact, with one
systematic change of scale. Both formulas share the skeleton
$(-1)^{\binom{\bar n}2}\prod_{i<n}(2i)!$ times a signed double product;
but the trigonometric signed factor is $\Omega(t-s)$ while the Bessel one
is $\Omega(2(t-s))$: \emph{the Bessel ladder walks in half steps}. The
closed double-factorial evaluations of Lemma~\ref{lem:Pahalf} therefore
apply on the lines $t-s\in\tfrac12+\ZZ$ (where $2(t-s)$ is odd), exactly
as they apply to the trigonometric family on the lines $t-s$ odd; and the
forced roots of $\Omega$ produce the degeneration on $t-s\in\ZZ$, as they
do there on $t-s$ even. The parameter blocks, by contrast, are
reciprocal: $(s+1)_{n-1+l}$ in the \emph{denominator} here versus
$(s+1)_{2l}$ in the numerator there, reflecting moments that are bounded
(characteristic functions of probability measures) rather than
factorially growing. Both proofs are instances of the collapse mechanism
of $\M2$: two functionals of a single classical hierarchy differing in
one parameter --- Jacobi--Gegenbauer here, secant-type there, Wilson for
the reciprocal-sine function. The kernel
$\binom{\delta}{d}/\bigl((t+2m+1)_d(s+i+m)_d\bigr)$ of
\eqref{eq:bessel-kappa} sits between the two known extremes: like the
pure binomial $\binom{(t-s)/2}{d}$ of Lemma~\ref{lem:connGene} it is
binomial in $i-m$, but like the Catalan kernel of the reciprocal-sine
family it also depends on $i+m$ --- which is why the finishing step is
condensation rather than Jacobi--Trudi. Finally, for half-integer $s,t$
the members of \eqref{eq:besselfam} are elementary --- e.g.\
$(s,t)=(\tfrac12,0)$ gives $\sin(x)/x+\int_0^xJ_0$ --- but for general
order they are genuine Bessel functions, so the family lies strictly
outside the scope of Section~\ref{sec:gen}: this is not a
reparametrisation of a known family. Consistently with the collapse
mechanism, the crossbreeds mixing the trigonometric and Bessel
hierarchies --- one functional secant-type, the other Gegenbauer --- were
tested numerically and are never smooth: the single-hierarchy hypothesis
of $\M2$ is not ornamental.
\end{Remark}

\section{The double shift of the Bessel $(s,t)$ family}\label{sec:besseldshift}

As for the trigonometric family, alongside $\HH_n$ the most regular
variant is the double shift $\HH_n^{(2)}=\det(a_{2i+j+2})$ of
\eqref{eq:Hdshiftdef}: shifting the column index by $2$ preserves
parity, and $\HH_n^{(2)}$ is the dilated determinant of the Christoffel
transform $y\,\EE$ of $\EE$, with even and odd parts
$\mathcal S'[p]=\mathcal S[yp]$, $\mathcal T'[p]=\mathcal T[yp]$. The
general reduction of Proposition~\ref{prop:dshiftgen} and the transform
data of Lemma~\ref{lem:christoffel} hold for arbitrary $u_j,v_j$; what
follows are the Bessel counterparts of the special ingredients of
Lemma~\ref{lem:christoffelspecial}.

\begin{Lemma}[Christoffel transform, Bessel family]\label{lem:besselchristoffel}
For the coefficients $u_j,v_j$ of \eqref{eq:bessel-u}, with
$P_i,Q_m,\kappa_{i,m}$ as above:
\begin{enumerate}
\item[\rm(a)] $D^S_n=(-1)^nP_n(0)=(-1)^n\,\dfrac{(2n-1)!!}{2^n\,(s+n)_n}$,
and likewise $D^T_m=(-1)^m\,\dfrac{(2m-1)!!}{2^m\,(t+m)_m}$;
\item[\rm(b)] the connection coefficients
$P'_i=\sum_{m}\kappa'_{i,m}Q'_m$ of the primed pair are
\begin{equation}\label{eq:bessel-kappap}
\kappa'_{i,m}
=\frac{2i+1}{2m+1}\;\kappa_{i,m}\Big|_{(s,t)\to(s+1,t+1)}
=\frac{(2i+1)!}{(2m+1)!\,4^{d}}\,\binom{\delta}{d}\,
\frac{1}{(t+2m+2)_d\,(s+i+m+1)_d}
\end{equation}
(the parameter $\delta=t-s$ is invariant under the shift
$(s,t)\to(s+1,t+1)$).
\end{enumerate}
\end{Lemma}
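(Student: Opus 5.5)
Both parts follow the template of Lemma~\ref{lem:christoffelspecial}, with the Bessel data of Lemma~\ref{lem:bessel-data} in place of the secant data.

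For (a) I will use Lemma~\ref{lem:christoffel}(i): $D^S_{n+1}=c^S_nD^S_n-\lambda^S_nD^S_{n-1}$, with $D^S_0=1$ and $D^S_1=u_1=-\tfrac1{2(s+1)}$. This agrees with the claimed formula at $n=1$, since $(s+1)_1=s+1$. For the induction I take the candidate ratio
\[
\theta_n:=\frac{D^S_n}{D^S_{n-1}}=-\frac{(2n-1)(s+n-1)}{2(s+2n-2)(s+2n-1)},
\]
read off from $(s+n)_n/(s+n-1)_{n-1}=(s+2n-2)(s+2n-1)/(s+n-1)$. It then suffices to check $\theta_{n+1}=c^S_n-\lambda^S_n/\theta_n$, where $c^S_n=u_{2n}+u_{2n+1}$ and $\lambda^S_n=u_{2n-1}u_{2n}$ (Lemma~\ref{lem:cfGene}). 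The quotient $\lambda^S_n/\theta_n$ should simplify, because $u_{2n-1}=-\frac{(2n-1)(2n+2s-2)}{4(s+2n-2)(s+2n-1)}$ and $2n+2s-2=2(s+n-1)$, so $u_{2n-1}=\theta_n$ exactly. Hence $\lambda^S_n/\theta_n=u_{2n}$, and the recurrence reduces to $\theta_{n+1}=u_{2n+1}$. Checking this is one line: $u_{2n+1}=-\frac{(2n+1)\,2(s+n)}{4(s+2n)(s+2n+1)}=\theta_{n+1}$. The same identity $\theta_n=u_{2n-1}$ is also visible from the odd-contraction viewpoint. The $T$ case is identical with $t$ in place of $s$.

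For (b) I will mirror Lemma~\ref{lem:christoffelspecial}(c). By Lemma~\ref{lem:christoffel}(ii), the primed functionals have $J$-data $c^{S'}_i=u_{2i+1}+u_{2i+2}$ and $\lambda^{S'}_i=u_{2i}u_{2i+1}$, and likewise for $T'$. The key observation I expect to use is a shift identity for the $S$-fraction coefficients:
\[
u_{j+1}(s)=\frac{(j+1)(j+2s)}{j(j+2s+1)}\cdot u_j(s+1)\cdot(\ldots),
\]
or, more usefully, some such relation showing that the shifted sequence $(u_{j+1})$ coincides with $(u_j)|_{s\to s+1}$ up to a gauge factor $g_{j}/g_{j-1}$ that rescales $y$-independent quantities. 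If this holds, the primed orthogonal polynomials are the unprimed ones at $(s+1,t+1)$ up to conjugation by the diagonal factors $(2i+1)$. If the gauge relation is not clean, I will not rely on it. Instead I will do as the paper does: plug the candidate \eqref{eq:bessel-kappap} into the connection recurrence \eqref{eq:kapparec} with primed data. For that I compute the four neighbour ratios, which are those of Lemma~\ref{lem:bessel-kappa} with $(2i)!\to(2i+1)!$, $(2m)!\to(2m+1)!$, $t+2m+1\to t+2m+2$ and $s+i+m\to s+i+m+1$. Clearing denominators then leaves a rational identity in $(m,d,s,t)$. The row $i=0$ and the boundary columns $m=0$, $m=i$ vanish through the factors $2m$ and $d$, exactly as before, so uniqueness (Lemma~\ref{lem:connrec}) finishes the argument.

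The main obstacle is that polynomial identity in (b). Because $u_{2i+1}+u_{2i+2}$ has denominators $(s+2i)(s+2i+1)(s+2i+2)$, the verification is heavier than in Lemma~\ref{lem:bessel-kappa}. I will first try to organise it in the variables $\sigma=s+i+m+1$ and $\tau=t+2m+1$, as in \eqref{eq:bessel-DJfinal}.
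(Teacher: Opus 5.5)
Your proposal is correct and follows the paper's proof: part (a) uses the recurrence of Lemma~\ref{lem:christoffel}(i), and part (b) checks the candidate against \eqref{eq:kapparec} with the primed data, then concludes by uniqueness (Lemma~\ref{lem:connrec}), with the boundary columns handled by the factors $2m$ and $d$. Your observation $\theta_n=u_{2n-1}$, i.e.\ $D^S_n=\prod_{k=1}^{n}u_{2k-1}$, holds for any $S$-fraction and turns the paper's ``routine'' verification in (a) into a one-liner; you are right not to rely on the gauge idea in (b), which fails as stated because $u_{j+1}(s)/u_j(s+1)=\frac{(j+1)(j+2s)}{j(j+2s+1)}$ is not constant.
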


\begin{proof}
(a) By Lemma~\ref{lem:christoffel}(i),
$D^S_{n+1}=c^S_nD^S_n-\lambda^S_nD^S_{n-1}$ with $D^S_0=1$,
$D^S_1=u_1=-\tfrac1{2(s+1)}$. The displayed product has
$$
\frac{D^S_{n+1}}{D^S_n}
=-\frac{2n+1}{2}\,\frac{(s+n)_n}{(s+n+1)_{n+1}}
=-\frac{(2n+1)(s+n)}{2\,(s+2n)(s+2n+1)}\,,
$$
which equals $u_1$ at $n=0$; inserting this ratio and
$c^S_n=u_{2n}+u_{2n+1}$, $\lambda^S_n=u_{2n-1}u_{2n}$ from
\eqref{eq:bessel-u}, the recurrence divided by $D^S_n$ becomes a
rational-function identity in $(n,s)$, routine to verify. The case of
$D^T$ is identical.

(b) By (a) and Lemma~\ref{lem:christoffel}(iii) the primed norms
$h^{S'}_i=(D^S_{i+1}/D^S_i)\,h^S_i$ and likewise $h^{T'}_m$ are
nonzero in $\QQ(s,t)$, so
$\mathcal S',\mathcal T'$ are quasi-definite and, by
Lemma~\ref{lem:connrec}, $\kappa'$ is the unique solution of
\eqref{eq:kapparec} with the primed recurrence data of
Lemma~\ref{lem:christoffel}(ii), $c^{S'}_i=u_{2i+1}+u_{2i+2}$,
$\lambda^{S'}_i=u_{2i}u_{2i+1}$, and likewise for $T'$. The candidate
\eqref{eq:bessel-kappap} has $\kappa'_{i,i}=1$, and its four neighbour
ratios are those displayed in the proof of
Lemma~\ref{lem:bessel-kappa}, taken at $(s+1,t+1)$ and corrected by the
factors $\tfrac{2i+3}{2i+1}$, $\tfrac{2m+1}{2m-1}$,
$\tfrac{2m+1}{2m+3}$, $\tfrac{2i-1}{2i+1}$ respectively. Dividing
\eqref{eq:kapparec} by $\kappa'_{i,m}$ turns it again into a
rational-function identity in $(m,d,s,t)$, verified by clearing
denominators; the boundary columns take care of themselves exactly as
in Lemma~\ref{lem:bessel-kappa}.
\end{proof}

\begin{Proposition}[double shift, Bessel family; $\M2$]\label{prop:besseldshift}
Let $f_{s,t}$ be as in \eqref{eq:besselfam}. Then, as an identity in
$\QQ(s,t)$,
\begin{equation}\label{eq:besseldshift}
\HH_n^{(2)}(f_{s,t})=\det\bigl(a_{2i+j+2}\bigr)_{0\le i,j\le n-1}
=(-1)^n\,\frac{(2n-1)!!}{2^n\,(s+n)_{\bar n}\,(t+n)_{\underline n}}\;
\HH_n(f_{s,t}),
\end{equation}
where $\HH_n(f_{s,t})$ is the unshifted determinant of
Theorem~\textup{\ref{thm:besselst}}.
\end{Proposition}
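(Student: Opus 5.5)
The plan is to follow exactly the route of Proposition~\ref{prop:dshift}: start from the general form \eqref{eq:dshiftgen} of Proposition~\ref{prop:dshiftgen}, valid for arbitrary $u_j,v_j$,
$$
\HH_n^{(2)}=(-1)^{n}\,P_{\bar n}(0)\,Q_{\underline n}(0)\;
\frac{\det\bigl(\kappa'_{\bar n+r,m}\bigr)}{\det\bigl(\kappa_{\bar n+r,m}\bigr)}\;\HH_n ,
$$
and insert the Bessel data of Lemma~\ref{lem:besselchristoffel}. Quasi-definiteness of all four functionals over $\QQ(s,t)$ is already recorded there (norms nonzero), so the reduction applies verbatim. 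Both sides are rational in $(s,t)$, and $\det(\kappa_{\bar n+r,m})$ is a nonzero element of $\QQ(s,t)$ by Theorem~\ref{thm:bessel-det}, so the division is legitimate.

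The first factor is immediate from part (a): $(-1)^nP_{\bar n}(0)Q_{\underline n}(0)=D^S_{\bar n}D^T_{\underline n}=(-1)^n\frac{(2\bar n-1)!!\,(2\underline n-1)!!}{2^n\,(s+\bar n)_{\bar n}\,(t+\underline n)_{\underline n}}$. For the ratio of connection determinants I will not try a row/column extraction (unlike the trigonometric case, $\kappa'$ is not a rescaling of $\kappa$, because the parameters shift). Instead I evaluate both by Theorem~\ref{thm:bessel-det}. The denominator is the case $(p,a,N)=(\bar n,0,\underline n)$. For the numerator, part (b) says $\kappa'_{i,m}=\frac{2i+1}{2m+1}\kappa_{i,m}|_{(s,t)\to(s+1,t+1)}$. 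So after pulling $2(\bar n+r)+1$ from rows and $1/(2m+1)$ from columns, it is the same determinant evaluated at $(s+1,t+1)$, with $\delta$ unchanged. The row/column factor gives $(2n-1)!!/\bigl((2\bar n-1)!!(2\underline n-1)!!\bigr)$, which cancels the double factorials from the first factor and leaves the scalar $(2n-1)!!$.

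It then remains to compute the quotient of \eqref{eq:bessel-Gblocks} at $(s+1,t+1)$ by the same at $(s,t)$, with $p=q=\bar n$, $a=0$, $N=\underline n$. The factorial, power-of-4, hook and $\delta$ blocks cancel. The $s$-block gives $\prod_{c=1}^{\underline n}(s+\bar n+c-1)_{\bar n}/(s+\bar n+c)_{\bar n}=\prod_c\frac{s+\bar n+c-1}{s+2\bar n+c-1}=\frac{(s+\bar n)_{\underline n}}{(s+2\bar n)_{\underline n}}$. Similarly, the $t$-block gives $\prod_{r=1}^{\bar n}\frac{t+\underline n+r-1}{t+2\underline n+r-1}=\frac{(t+\underline n)_{\bar n}}{(t+2\underline n)_{\bar n}}$.

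The main step, and the only place I expect friction, is the final bookkeeping. We must combine these with $1/\bigl((s+\bar n)_{\bar n}(t+\underline n)_{\underline n}\bigr)$ and see the telescoping to $1/\bigl((s+n)_{\bar n}(t+n)_{\underline n}\bigr)$. Using $(s+\bar n)_{\bar n}(s+2\bar n)_{\underline n}=(s+\bar n)_{n}$ and $(s+\bar n)_n/(s+\bar n)_{\underline n}=(s+n)_{\bar n}$ (since $\bar n+\underline n=n$) handles the $s$ side; the $t$ side is symmetric, with $\bar n$ and $\underline n$ swapped. This should yield \eqref{eq:besseldshift}; I will double-check the sign and the case $n=1$ ($\HH_1^{(2)}=a_2=-\tfrac1{2(s+1)}$) as a sanity check.
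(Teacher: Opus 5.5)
Your proposal is correct and takes essentially the same route as the paper. Both proofs start from \eqref{eq:dshiftgen} and insert the data of Lemma~\ref{lem:besselchristoffel}. Both extract row and column factors so that $\det\kappa'$ becomes $\det\kappa$ at $(s+1,t+1)$, up to $(2n-1)!!/\bigl((2\bar n-1)!!\,(2\underline n-1)!!\bigr)$. Both then evaluate the quotient by Theorem~\ref{thm:bessel-det} with $(p,a,N)=(\bar n,0,\underline n)$.

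The only difference is cosmetic. You telescope the block form \eqref{eq:bessel-Gblocks} and get $(s+\bar n)_{\underline n}/(s+2\bar n)_{\underline n}$, while the paper telescopes the product form \eqref{eq:bessel-G} and gets $(s+\bar n)_{\bar n}/(s+n)_{\bar n}$. These are equal, and likewise on the $t$ side.
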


\begin{proof}
By \eqref{eq:dshiftgen}, writing
$(-1)^nP_{\bar n}(0)\,Q_{\underline n}(0)=D^S_{\bar n}D^T_{\underline n}$,
$$
\HH_n^{(2)}=D^S_{\bar n}\,D^T_{\underline n}\;
\frac{\det\bigl(\kappa'_{\bar n+r,m}\bigr)}
     {\det\bigl(\kappa_{\bar n+r,m}\bigr)}\;\HH_n
\qquad(0\le r,m\le\underline n-1).
$$
By Lemma~\ref{lem:besselchristoffel}(b), pulling $2(\bar n{+}r){+}1$ out
of row $r$ and $1/(2m{+}1)$ out of column $m$,
$$
\det\bigl(\kappa'_{\bar n+r,m}\bigr)
=\frac{(2n-1)!!}{(2\bar n-1)!!\,(2\underline n-1)!!}\;
\det\bigl(\kappa_{\bar n+r,m}\bigr)\Big|_{(s,t)\to(s+1,t+1)}\,.
$$
Both connection determinants are the case $(p,a,N)=(\bar n,0,\underline n)$ of
Theorem~\ref{thm:bessel-det}, and in the quotient of \eqref{eq:bessel-G}
at $(s+1,t+1)$ by \eqref{eq:bessel-G} at $(s,t)$ everything cancels
except the two parameter blocks --- the factor $\delta+c-r$ is invariant
--- and these telescope: for fixed $r$,
$\prod_{c=1}^{\underline n}\frac{s+2\bar n-1+c-r}{s+2\bar n+c-r}
=\frac{s+2\bar n-r}{s+2\bar n+\underline n-r}$, and the product over
$r=1,\dots,\bar n$ gives, using $2\bar n+\underline n=n+\bar n$,
$$
\frac{\det\bigl(\kappa_{\bar n+r,m}\bigr)\big|_{(s+1,t+1)}}
     {\det\bigl(\kappa_{\bar n+r,m}\bigr)}
=\frac{(s+\bar n)_{\bar n}}{(s+n)_{\bar n}}\cdot
 \frac{(t+\underline n)_{\underline n}}{(t+n)_{\underline n}}\,,
$$
the $t$-block telescoping in the same way, over $r$ first and then over
$c$. By Lemma~\ref{lem:besselchristoffel}(a),
$$
D^S_{\bar n}\,D^T_{\underline n}
=(-1)^{\bar n+\underline n}\,
\frac{(2\bar n-1)!!\,(2\underline n-1)!!}
     {2^{\,\bar n+\underline n}\,(s+\bar n)_{\bar n}\,(t+\underline n)_{\underline n}}
=(-1)^n\,\frac{(2\bar n-1)!!\,(2\underline n-1)!!}
     {2^{\,n}\,(s+\bar n)_{\bar n}\,(t+\underline n)_{\underline n}}\,.
$$
Multiplying the three displays, the double factorials and the products
$(s+\bar n)_{\bar n}$, $(t+\underline n)_{\underline n}$ cancel, leaving
\eqref{eq:besseldshift}.
\end{proof}

\begin{Remark}[the double-shift parallel]\label{rem:besseldshift}
The parallel with the double shift of the trigonometric family
(Proposition~\ref{prop:dshift}) is again exact, with the change of scale
of Remark~\ref{rem:bessel-parallel}. There the double shift
\emph{multiplies} $\HH_n$ by
$(2n-1)!!\,\prod_{k<\bar n}(s+2k+1)\,\prod_{k<\underline n}(t+2k+1)$; here it
\emph{divides} by
$(-1)^n\,2^{n}\,(s+n)_{\bar n}(t+n)_{\underline n}/(2n-1)!!$\,: the universal
factor $(2n-1)!!$ is one and the same, while the parameter blocks are
again reciprocal --- bounded moments --- and the odd-step products
$(s{+}1)(s{+}3)\cdots$ become unit-step Pochhammer blocks, the half-step
ladder once more. The transform data compare in the same way:
$D^S_n=(2n-1)!!\,\prod_{k<n}(s+2k+1)$ there,
$D^S_n=(-1)^n(2n-1)!!/\bigl(2^n(s+n)_n\bigr)$ here. Since the new factor
in \eqref{eq:besseldshift} has constant numerator and its poles lie at
$s+n+l=0$ or $t+n+m=0$ only, for $s,t>-\tfrac12$ it is finite and
nonzero: the vanishing locus is untouched, the contrast of
Corollary~\ref{cor:besseldicho} holds verbatim for $\HH_n^{(2)}$, and on
the offsets $t-s\notin\ZZ$ the double shift remains a closed product of
linear forms. (The \emph{single} shift $\det(a_{2i+j+1})$, which swaps
the two parities, behaves differently: already at $n=4$ it carries an
irreducible quadratic factor over $\QQ(s,t)$, and we do not pursue it.)
\end{Remark}

\section{A multiplicative Bessel family: $(1+x)\,\mathrm{cosb}_\nu^{\,2}$ at even orders}
\label{sec:xbesseleven}

The additive family $\mathrm{cosb}_s+\int_0^x\mathrm{cosb}_t$ of
Section~\ref{sec:besselst} is the Bessel analogue of the Euler $(s,t)$
family. The Bessel cosine introduced there,
\begin{equation}\label{eq:xbe-cosb}
\mathrm{cosb}_\nu(x)\;=\;\Gamma(\nu+1)\Bigl(\frac2x\Bigr)^{\!\nu}J_\nu(x)
\;=\;\sum_{k\ge0}\frac{(-1)^k}{4^k\,k!\,(\nu+1)_k}\,x^{2k},
\qquad \mathrm{cosb}_\nu(0)=1,
\end{equation}
also produces an analogue of
the \emph{secant} family $(1+x)/\cos(x)^{s+1}$ of
Section~\ref{sec:gen-xcos}, with the exponent ladder replaced once more by
the order ladder: the multiplicative family
\begin{equation}\label{eq:xbe-def}
f_\nu(x)\;=\;(1+x)\,\mathrm{cosb}_\nu(x)^2 .
\end{equation}
(The parameter cannot sit as a power on the function side:
as recorded in Section~\ref{sec:besselst}, no fixed $h$ gives
$\mathrm{cosb}_\nu=h^{-(\nu+1)}$; and it is the \emph{square} of the even
factor that carries the structure, through the product formula of
Lemma~\ref{lem:xbe-moments} below.) At $\nu=\tfrac12$ the member
\eqref{eq:xbe-def} is $(1+x)(\sin x/x)^2$; at the boundary order
$\nu=-\tfrac12$ it is $(1+x)\cos^2x$.

The determinants $\HH_n(f_\nu)$ display a sharp parity contrast: at even
orders $n=2N$ they factor completely into linear forms in $\nu$, while at
odd orders an irreducible factor of growing degree appears. This section
proves the even half in closed form (Theorem~\ref{thm:xbesseleven}); the
odd orders, which remain open, are discussed in Remark~\ref{rem:xbe-odd}.

The method deserves a word in advance. The even moment functional of
$f_\nu$ is \emph{not} classical --- it is the self-convolution of the
symmetric Beta law, not an orthogonality weight --- so the biorthogonal collapse $\M2$ that
evaluates the secant family and the additive Bessel family is out of
reach. What replaces it is the most elementary reduction of the paper:
the Vandermonde method $\M1$ of Section~\ref{sec:vandermonde}, applied
here for the only time beyond the Beta family. Watson's classical product
formula makes the even moments a \emph{single} hypergeometric term; the
$(1+x)$-coupling preserves this; and for even $n$ --- and only for even
$n$ --- the resulting row factorisation lands exactly on the Vandermonde
degree bound. No orthogonal polynomials appear, and quasi-definiteness is
never invoked.

\subsection{The moments}\label{ssec:xbe-moments}

Write $a_m=m!\,[x^m]f_\nu$ as usual.

\begin{Lemma}[the moments are a single hypergeometric term]\label{lem:xbe-moments}
For all $k\ge0$,
\begin{equation}\label{eq:xbe-mom}
a_{2k}=(-4)^k\,
\frac{(\tfrac12)_k\,(\nu+\tfrac12)_k}{(\nu+1)_k\,(2\nu+1)_k},
\qquad
a_{2k+1}=(2k+1)\,a_{2k}
=(-4)^k\,
\frac{(\tfrac32)_k\,(\nu+\tfrac12)_k}{(\nu+1)_k\,(2\nu+1)_k}\,.
\end{equation}
\end{Lemma}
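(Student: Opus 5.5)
The odd moments follow from the even ones at once: $f_\nu=g+xg$ with $g=\mathrm{cosb}_\nu^2$ even, so $a_{2k+1}=(2k+1)!\,[x^{2k}]g=(2k+1)\,a_{2k}$. Writing $(2k+1)(\tfrac12)_k=(\tfrac32)_k$ then gives the second form in \eqref{eq:xbe-mom}. Everything therefore rests on computing $[x^{2k}]\mathrm{cosb}_\nu(x)^2$ in closed form.

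For this I will use the classical product formula for Bessel functions (Watson, \S5.41), $J_\mu(x)J_\nu(x)=\sum_k \frac{(-1)^k (x/2)^{\mu+\nu+2k}\,\Gamma(\mu+\nu+2k+1)}{k!\,\Gamma(\mu+k+1)\Gamma(\nu+k+1)\Gamma(\mu+\nu+k+1)}$, taken at $\mu=\nu$. Multiplying by $\Gamma(\nu+1)^2(2/x)^{2\nu}$ gives
\[
[x^{2k}]\,\mathrm{cosb}_\nu^2=\frac{(-1)^k}{4^k\,k!}\cdot\frac{(2\nu+1)_{2k}}{(\nu+1)_k^2\,(2\nu+1)_k}.
\]
If I want to avoid citing Watson, I can verify this directly. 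Expanding the Cauchy product of \eqref{eq:xbe-cosb} with itself, the coefficient is $\frac{(-1)^k}{4^k}\sum_{j}\frac{1}{j!(k-j)!(\nu+1)_j(\nu+1)_{k-j}}$. Rewriting the sum as a terminating ${}_2F_1(-k,-\nu-k;\nu+1;1)$ and applying Chu--Vandermonde gives $\frac{(2\nu+k+1)_k}{k!\,(\nu+1)_k^2}$, which agrees with the displayed formula because $(2\nu+1)_{2k}/(2\nu+1)_k=(2\nu+k+1)_k$.

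Next I multiply by $(2k)!=4^k\,k!\,(\tfrac12)_k$ and apply the duplication identity $(2\nu+1)_{2k}=4^k(\nu+\tfrac12)_k(\nu+1)_k$. One factor $(\nu+1)_k$ cancels, and the result is
\[
a_{2k}=(-1)^k\,4^k\,\frac{(\tfrac12)_k(\nu+\tfrac12)_k}{(\nu+1)_k(2\nu+1)_k},
\]
which is \eqref{eq:xbe-mom}.

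The only delicate point is the bookkeeping in the hypergeometric identification: the reversal $1/(k-j)!=(-1)^j(-k)_j/k!$ and the analogous reversal of $(\nu+1)_{k-j}$ into $(-\nu-k)_j$. To guard against sign slips I will check $k=0$, which gives $1$, and $k=1$: here $\mathrm{cosb}_\nu^2=1-\frac{x^2}{2(\nu+1)}+\cdots$, so $a_2=-\frac1{\nu+1}$, and the formula gives $-4\cdot\frac{\frac12(\nu+\frac12)}{(\nu+1)(2\nu+1)}=-\frac1{\nu+1}$, as required. Everything else is routine Pochhammer algebra, and the identity holds as rational functions of $\nu$.
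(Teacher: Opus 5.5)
Your proposal is correct and is essentially the paper's proof: the paper likewise gets $a_{2k+1}=(2k+1)a_{2k}$ from the factor $1+x$, computes $[x^{2k}]\mathrm{cosb}_\nu^2$ by writing the Cauchy square as the terminating series ${}_2F_1(-k,-\nu-k;\nu+1;1)$ summed by Chu--Vandermonde, and finishes with $(2k)!=4^k k!(\tfrac12)_k$ and the duplication formula. Citing Watson's product formula is an equivalent shortcut, which the paper mentions only in its introduction to the section; your direct verification is exactly the paper's computation.
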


\begin{proof}
The odd--even coupling is the general $(1+x)$-mechanism of
Section~\ref{sec:gen-xcos}: since
$x\cdot x^{2k}/(2k)!=(2k+1)\,x^{2k+1}/(2k+1)!$, the odd part of $f_\nu$
contributes $a_{2k+1}=(2k+1)a_{2k}$, and the second expression follows
from the first by $(\tfrac32)_k/(\tfrac12)_k=2k+1$.

For the even part, square the series \eqref{eq:xbe-cosb}:
$$
[x^{2k}]\,\mathrm{cosb}_\nu^2
=\sum_{j=0}^{k}
\frac{(-1)^j}{4^j\,j!\,(\nu+1)_j}\cdot
\frac{(-1)^{k-j}}{4^{k-j}\,(k-j)!\,(\nu+1)_{k-j}}
=\frac{(-1)^k}{4^k\,k!}\,
\sum_{j=0}^{k}\binom kj\frac{1}{(\nu+1)_j\,(\nu+1)_{k-j}}\,.
$$
Put $c=\nu+1$. From $\binom kj=(-1)^j(-k)_j/j!$ and
$(c)_{k-j}=(c)_k/(c+k-j)_j$ with $(c+k-j)_j=(-1)^j(1-c-k)_j$, the sum is
a terminating Gauss series at $1$,
$$
\sum_{j=0}^{k}\binom kj\frac{1}{(c)_j\,(c)_{k-j}}
=\frac1{(c)_k}\sum_{j\ge0}\frac{(-k)_j\,(1-c-k)_j}{j!\,(c)_j}
=\frac{{}_2F_1(-k,\,1-c-k;\,c;\,1)}{(c)_k}
=\frac{(2c+k-1)_k}{(c)_k^{\,2}}
$$
by the Chu--Vandermonde summation
${}_2F_1(-k,b;c;1)=(c-b)_k/(c)_k$. Hence, with $(2k)!=4^k\,k!\,(\tfrac12)_k$,
$$
a_{2k}=(2k)!\,[x^{2k}]\,\mathrm{cosb}_\nu^2
=(-1)^k\,(\tfrac12)_k\,\frac{(2\nu+k+1)_k}{(\nu+1)_k^{\,2}}\,,
$$
and the duplication formula in Pochhammer form,
$(2\nu+1)_{2k}=4^k(\nu+\tfrac12)_k(\nu+1)_k$, turns
$(2\nu+k+1)_k=(2\nu+1)_{2k}/(2\nu+1)_k$ into
$4^k(\nu+\tfrac12)_k(\nu+1)_k/(2\nu+1)_k$, which is \eqref{eq:xbe-mom}.
\end{proof}

\subsection{The Vandermonde reduction at even orders}\label{ssec:xbe-vdm}

Throughout fix $N\ge1$ and put $n=2N$. Define
\begin{equation}\label{eq:xbe-w}
w_i\;=\;\frac{a_{2i}}{(\nu+1+i)_{N-1}\,(2\nu+1+i)_{N-1}}
\qquad(0\le i\le 2N-1)
\end{equation}
and, for $0\le l,m\le N-1$, the polynomials
\begin{equation}\label{eq:xbe-Q}
\begin{aligned}
Q_{2l}(x)&=(-4)^l\,
(x+\tfrac12)_l\,(x+\nu+\tfrac12)_l\,
(x+\nu+1+l)_{N-1-l}\,(x+2\nu+1+l)_{N-1-l},\\
Q_{2m+1}(x)&=(2x+2m+1)\,Q_{2m}(x),
\end{aligned}
\end{equation}
of degrees $\deg Q_{2l}=2N-2$ and $\deg Q_{2m+1}=2N-1$, with coefficients
in $\QQ[\nu]$ (the Pochhammer symbols are taken in the variable $x$).

\begin{Proposition}[row factorisation and reduction]\label{prop:xbe-vdm}
For all $0\le i,j\le 2N-1$,
\begin{equation}\label{eq:xbe-rowfact}
a_{2i+j}\;=\;w_i\;Q_j(i),
\end{equation}
and consequently
\begin{equation}\label{eq:xbe-vdmred}
\HH_{2N}(f_\nu)
\;=\;\det M_N\;\prod_{k=1}^{2N-1}k!\;\prod_{i=0}^{2N-1}w_i\,,
\end{equation}
where $M_N$ is the coefficient matrix of the family \eqref{eq:xbe-Q},
that is, $Q_j(x)=\sum_{k=0}^{2N-1}m_{kj}\,x^k$ with $0\le k,j\le 2N-1$.
\end{Proposition}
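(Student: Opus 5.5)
The plan is to verify the row factorisation \eqref{eq:xbe-rowfact} directly from the single-term moments of Lemma~\ref{lem:xbe-moments}, and then read off \eqref{eq:xbe-vdmred} from the Vandermonde reduction, Lemma~\ref{lem:vdm}. The reduction is applied with row indices $x_i=i$ in the \emph{contracted} row variable.

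First, take $j=2l$ even, so that $a_{2i+2l}=a_{2(i+l)}$. Iterating the term ratio of the even moments,
$$
\frac{a_{2k+2}}{a_{2k}}=-4\,\frac{(k+\tfrac12)(k+\nu+\tfrac12)}{(k+\nu+1)(k+2\nu+1)},
$$
gives
$$
a_{2(i+l)}=a_{2i}\,(-4)^l\,\frac{(i+\tfrac12)_l\,(i+\nu+\tfrac12)_l}{(i+\nu+1)_l\,(i+2\nu+1)_l}.
$$
Multiply numerator and denominator by $(i+\nu+1+l)_{N-1-l}\,(i+2\nu+1+l)_{N-1-l}$. This completes the denominators to $(i+\nu+1)_{N-1}\,(i+2\nu+1)_{N-1}$, which is the denominator of $w_i$ in \eqref{eq:xbe-w}. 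What remains in the numerator is exactly $Q_{2l}(i)$ as defined in \eqref{eq:xbe-Q}. This is legitimate for $0\le l\le N-1$, which covers all even columns $j\le 2N-2$.

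Second, take $j=2m+1$ odd. Then $a_{2i+2m+1}=(2(i+m)+1)\,a_{2(i+m)}$ by Lemma~\ref{lem:xbe-moments}. By the even case this equals $(2i+2m+1)\,w_i\,Q_{2m}(i)=w_i\,Q_{2m+1}(i)$, for $0\le m\le N-1$, i.e.\ for all odd $j\le 2N-1$.

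Third, check the degrees. $\deg Q_{2l}=2l+2(N-1-l)=2N-2$ and $\deg Q_{2m+1}=2N-1$, so every $Q_j$ has degree at most $2N-1=n-1$. This is the hypothesis of Lemma~\ref{lem:vdm} with $n=2N$. Here the parity matters: for odd $n$ the odd columns would reach degree $n$ and overshoot the bound. Applying Lemma~\ref{lem:vdm} with $x_i=i$ gives the Vandermonde $\prod_{0\le i<j\le 2N-1}(j-i)=\prod_{k=1}^{2N-1}k!$, together with the row factors $\prod_i w_i$ and the constant $\det M_N$. This is \eqref{eq:xbe-vdmred}.

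The step needing care is the formal one. The identities are between rational functions of $\nu$, and for generic $\nu$ the $w_i$ are finite because the denominators $(\nu+1+i)_{N-1}\,(2\nu+1+i)_{N-1}$ are nonzero in $\QQ(\nu)$. I expect no real obstacle here, since the statement only asserts the reduction. The genuine difficulty, evaluating $\det M_N$, comes afterwards. I would attack it as in Step~3 of Theorem~\ref{thm:beta}, choosing substitutions $x_i$ that make $(Q_j(x_i))$ triangular.
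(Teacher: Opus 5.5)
Your proposal is correct and follows the paper's proof essentially step for step. The steps match: the telescoped even term ratio is padded to the common denominator $(i+\nu+1)_{N-1}(i+2\nu+1)_{N-1}$, the odd columns come from $a_{2k+1}=(2k+1)a_{2k}$, the degree count gives $2N-2,\,2N-1\le n-1$, and the factorisation $\operatorname{diag}(w_i)\,V\,M_N$ with nodes $x_i=i$ gives $\det V=\prod_{k=1}^{2N-1}k!$. Like the paper, you invoke Lemma~\ref{lem:vdm} through its proof mechanism (the array is $a_{2i+j}$ in the contracted row variable, not literally of the form $a_{x_i+j}$), which is exactly what is needed.
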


\begin{proof}
By Lemma~\ref{lem:xbe-moments} the even moments have the term ratio
$$
\frac{a_{2i+2}}{a_{2i}}
=-4\,\frac{(i+\tfrac12)(i+\nu+\tfrac12)}{(i+\nu+1)(i+2\nu+1)}\,,
$$
which telescopes to
$$
a_{2i+2l}=a_{2i}\,(-4)^l\,
\frac{(i+\tfrac12)_l\,(i+\nu+\tfrac12)_l}
     {(i+\nu+1)_l\,(i+2\nu+1)_l}
\qquad(l\ge0).
$$
Every even column index $j=2l$ of $\HH_{2N}$ has $l\le N-1$, so both
denominators divide the fixed padding of \eqref{eq:xbe-w}:
$(i+\nu+1)_l\,(i+\nu+1+l)_{N-1-l}=(i+\nu+1)_{N-1}$ and likewise for the
second block. Hence $a_{2i+2l}=w_i\,Q_{2l}(i)$; and
$a_{2i+2m+1}=(2i+2m+1)\,a_{2i+2m}=w_i\,(2i+2m+1)\,Q_{2m}(i)
=w_i\,Q_{2m+1}(i)$, which is \eqref{eq:xbe-rowfact}. The degrees in
\eqref{eq:xbe-Q} are $l+l+(N-1-l)+(N-1-l)=2N-2$ and $2N-1$, both
$\le n-1$; so the matrix factors,
$$
\bigl(a_{2i+j}\bigr)_{0\le i,j\le 2N-1}
=\operatorname{diag}(w_i)\cdot V\cdot M_N,
\qquad V=\bigl(i^{\,k}\bigr)_{0\le i,k\le 2N-1},
$$
exactly as in the proof of Lemma~\ref{lem:vdm} (the Vandermonde
reduction $\M1$, applied to the array $a_{2i+j}$ in the row variable
$i$, with nodes $x_i=i$). Taking determinants, and evaluating
$\det V=\prod_{0\le i<i'\le 2N-1}(i'-i)=\prod_{k=1}^{2N-1}k!$, gives
\eqref{eq:xbe-vdmred}.
\end{proof}

It remains to evaluate $\det M_N$, a polynomial in $\nu$.

\subsection{The coefficient determinant}\label{ssec:xbe-coeffdet}

\begin{Lemma}[coefficient determinant]\label{lem:xbe-coeffdet}
$$
\det M_N
=2^{\,N(2N-1)}\,
\prod_{k=0}^{N-1}(\tfrac12)_k\,(\nu+\tfrac12)_k^{\,2}\,(2\nu+\tfrac12)_k\,.
$$
\end{Lemma}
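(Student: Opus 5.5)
The plan is to evaluate $\det M_N$ through the freedom in Lemma~\ref{lem:vdm}. The identity $\det\bigl(Q_j(x_i)\bigr)_{0\le i,j\le 2N-1}=\det M_N\prod_{i<i'}(x_{i'}-x_i)$ holds for arbitrary nodes $x_0,\dots,x_{2N-1}$, so we may choose nodes that make $(Q_j(x_i))$ triangular or block triangular, exactly as in Step~3 of the proof of Theorem~\ref{thm:beta}. The difficulty compared with the Beta case is that the $Q_j$ come in pairs sharing the factor $Q_{2m}$, and that each $Q_{2l}$ has \emph{four} Pochhammer blocks rather than two. Before choosing nodes, I would first remove the pairing by column operations. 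Since $Q_{2m+1}=(2x+2m+1)Q_{2m}$, replacing column $2m+1$ by $Q_{2m+1}-(2m+1)Q_{2m}=2x\,Q_{2m}$ leaves the determinant unchanged. The family therefore becomes $\{Q_{2m},\,2xQ_{2m}\}_{m<N}$, which contributes a factor $2^N$ and reduces the problem to $\det$ of the coefficient matrix of $\{Q_{2m},xQ_{2m}\}$.

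Next I would treat $\nu$ as generic and pick nodes at the zeros of the padding blocks. The factor $(x+\nu+1+l)_{N-1-l}$ vanishes at $x=-\nu-1-l-k$ for $0\le k\le N-2-l$, that is, at $x=-\nu-p$ with $l+1\le p\le N-1$. Similarly, $(x+2\nu+1+l)_{N-1-l}$ vanishes at $x=-2\nu-p$ for $l+1\le p\le N-1$. I would therefore take the $2N$ nodes to be the two ladders $\xi_p=-\nu-p$ and $\eta_p=-2\nu-p$ for $p=1,\dots,N$. At a node with index $p$, both $Q_{2l}$ and $xQ_{2l}$ vanish whenever $l<p\le N-1$. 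Ordering rows by $p$ (in pairs $\xi_p,\eta_p$) and columns by $l$ (in pairs), the matrix is then block lower-triangular with $2\times2$ diagonal blocks. For $p=N$ there is no vanishing constraint, so that last pair needs a small separate check, or alternatively the nodes $p=0,\dots,N-1$ can be used with an upper-triangular arrangement; I would settle the indexing by testing $N=1,2$. The diagonal block for column pair $l$ at nodes $\xi,\eta$ is $\begin{pmatrix}Q_{2l}(\xi)&\xi Q_{2l}(\xi)\\ Q_{2l}(\eta)&\eta Q_{2l}(\eta)\end{pmatrix}$, whose determinant is $Q_{2l}(\xi)Q_{2l}(\eta)(\eta-\xi)$ with $\eta-\xi=-\nu$.

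Each $Q_{2l}(\xi_p)$ and $Q_{2l}(\eta_p)$ is then an explicit product of Pochhammer symbols. For instance, $(\xi+\tfrac12)_l=(\tfrac12-\nu-p)_l$ and $(\eta+\nu+\tfrac12)_l=(\tfrac12-\nu-p)_l$, while $(\xi+\nu+\tfrac12)_l=(\tfrac12-p)_l$ and $(\eta+\tfrac12)_l=(\tfrac12-2\nu-p)_l$. The remaining padding values become factorials or shifted factors in $\nu$. Dividing the product of the block determinants by the Vandermonde in the nodes, which is the product of the two within-ladder Vandermondes $\prod k!$ squared times $\prod_{p,p'}(\nu+p'-p)$, should leave a constant multiple of $\prod(\nu+\tfrac12)_k^2(2\nu+\tfrac12)_k(\tfrac12)_k$. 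Matching these factors is the main obstacle. I expect the $(\tfrac12-p)_l$ factors, combined with the powers $(-4)^l$, to produce $\prod(\tfrac12)_k$ and the power of $2$. I expect the $(\tfrac12-\nu-p)_l$ factors to produce the squared $(\nu+\tfrac12)_k$ blocks after reflection, the $(\tfrac12-2\nu-p)_l$ factor to yield $(2\nu+\tfrac12)_k$, and all $\nu$-dependence in the padding to cancel against the cross-ladder Vandermonde.

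If the cancellation bookkeeping proves unwieldy, I would fall back on a degree-plus-divisibility argument in the spirit of $\M4$. First, $\det M_N$ is a polynomial in $\nu$ of degree at most $3\binom N2\cdot\frac{4}{3}$, which I would bound by checking column degrees; this should match the stated degree $4\binom N2$. Second, I would show vanishing at $\nu=-\tfrac12-k$ and at $2\nu=-\tfrac12-k$ to the required orders, using rank drops in $(Q_j(x))$ where the blocks $(x+\nu+\tfrac12)_l$ or $(x+\tfrac12)_l(x+2\nu+1+l)_{N-1-l}$ acquire common roots. The leading coefficient would then be fixed at $\nu\to\infty$, where the $Q_{2l}$ become triangular in $x$ after scaling.
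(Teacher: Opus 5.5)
Your argument is correct. It shares the paper's strategy of specialising the nodes in \eqref{eq:xbe-alt} to triangularise, but the node choice is genuinely different. The paper takes the zeros of the \emph{numerator} blocks, $u_{2r}=-\tfrac12-r$ and $u_{2r+1}=-\nu-\tfrac12-r$. There $(Q_j(u_p))$ is already scalar lower-triangular, because on the rows $u_{2r}$ the linear factor $2x+2m+1$ kills the entry $m=r$. The target blocks then come from the \emph{padding} values, with reversed index $N-1-r$, and the diagonal $\nu$-factors $(\nu-r)_r,(\nu)_{r+1}$ cancel the mixed Vandermonde. You exchange these roles. You take the zeros of the padding blocks, exactly as in Step~3 of Theorem~\ref{thm:beta}, after the decoupling $Q_{2m+1}\mapsto 2xQ_{2m}$, which contributes the factor $2^N$.

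One correction is needed. With $p=1,\dots,N$ in natural order the matrix is not block triangular. The block $(p,l)$ vanishes for $l<p\le N-1$, so the would-be diagonal blocks $(l+1,l)$ with $l\le N-2$ vanish. Use your alternative $p=0,\dots,N-1$ instead. Ordering the nodes $\xi_0,\eta_0,\dots,\xi_{N-1},\eta_{N-1}$ then makes the matrix block \emph{upper}-triangular, with diagonal blocks at $p=l$. Equivalently, keep $p=1,\dots,N$ and move the pair $p=N$ to the front, which is an even permutation.

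With that ordering the bookkeeping you anticipate closes exactly. By the reflection $(\tfrac12-a-l)_l=(-1)^l(a+\tfrac12)_l$,
\[
Q_{2l}(\xi_l)\,Q_{2l}(\eta_l)=16^{l}\,(\tfrac12)_l\,(\nu+\tfrac12)_l^{2}\,(2\nu+\tfrac12)_l\,\bigl((N-1-l)!\bigr)^{2}(\nu+1)_{N-1-l}\,(1-\nu)_{N-1-l}.
\]
The leftover factor $(-\nu)^N\prod_{k<N}(k!)^2(\nu+1)_k(1-\nu)_k$ is precisely the Vandermonde of these nodes, $(-\nu)^N\prod_{d=1}^{N-1}\bigl[-d^2(\nu+d)(\nu-d)\bigr]^{N-d}$. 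Hence $\det M_N=2^N\,16^{\binom N2}\prod_{k<N}(\tfrac12)_k(\nu+\tfrac12)_k^2(2\nu+\tfrac12)_k$, and $2^N16^{\binom N2}=2^{N(2N-1)}$, as claimed.

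Your route follows the Beta template uniformly, gives the target factors indexed directly by $l$, and cancels all padding $\nu$-dependence identically. The cost is $2\times2$ blocks and a preliminary column operation, which the paper's node choice avoids.

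The fallback is unnecessary, and its degree count is wrong: the stated product has $\nu$-degree $3\binom N2$, not $4\binom N2$.
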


\begin{proof}
All computations take place in $\QQ(\nu)$. For \emph{any} choice of nodes
$u_0,\dots,u_{2N-1}\in\QQ(\nu)$ the same factorisation as above gives
$\bigl(Q_j(u_p)\bigr)_{p,j}=V(u)\,M_N$ with $V(u)=(u_p^{\,k})_{p,k}$,
hence
\begin{equation}\label{eq:xbe-alt}
\det\bigl(Q_j(u_p)\bigr)_{0\le p,j\le 2N-1}
=\det M_N\;\prod_{0\le p<p'\le 2N-1}(u_{p'}-u_p)\,.
\end{equation}
Choose the two interlaced arithmetic strings
\begin{equation}\label{eq:xbe-nodes}
u_{2r}=-\tfrac12-r,
\qquad
u_{2r+1}=-\nu-\tfrac12-r
\qquad(0\le r\le N-1),
\end{equation}
whose $\binom{2N}2$ pairwise differences are all nonzero in $\QQ(\nu)$.
The point of this choice is that the matrix
$T=\bigl(Q_j(u_p)\bigr)$ becomes \emph{lower triangular}.

\emph{Triangularity.} At $u_{2r}$ the block $(x+\tfrac12)_l$ of
\eqref{eq:xbe-Q} evaluates to $(-r)_l$, which vanishes exactly for
$l>r$; so $Q_{2l}(u_{2r})=0$ for $2l>2r$. For the odd columns,
$Q_{2m+1}(u_{2r})=\bigl(2u_{2r}+2m+1\bigr)\,Q_{2m}(u_{2r})
=2(m-r)\,Q_{2m}(u_{2r})$ vanishes for $m>r$ (through $Q_{2m}$) and at
$m=r$ (through the linear factor); so $Q_{2m+1}(u_{2r})=0$ for
$2m+1>2r$. At $u_{2r+1}$ the block $(x+\nu+\tfrac12)_l$ evaluates to
$(-r)_l$, which vanishes for $l>r$; so $Q_{2l}(u_{2r+1})=0$ for
$2l>2r+1$, and $Q_{2m+1}(u_{2r+1})=0$ for $2m+1>2r+1$ through the factor
$Q_{2m}$. In all cases $T_{p,j}=0$ for $j>p$.

\emph{The diagonal.} At $x=u_{2r}$ the four blocks of $Q_{2r}$ evaluate
to $(-r)_r=(-1)^r r!$, $(\nu-r)_r$, $(\nu+\tfrac12)_{N-1-r}$ and
$(2\nu+\tfrac12)_{N-1-r}$, so
$$
T_{2r,2r}=Q_{2r}(u_{2r})
=4^r\,r!\;(\nu-r)_r\,
(\nu+\tfrac12)_{N-1-r}\,(2\nu+\tfrac12)_{N-1-r}\,.
$$
At $x=u_{2r+1}$ the blocks of $Q_{2r}$ evaluate to
$(-\nu-r)_r=(-1)^r(\nu+1)_r$, $(-r)_r=(-1)^r r!$,
$(\tfrac12)_{N-1-r}$ and $(\nu+\tfrac12)_{N-1-r}$, while the linear
factor is $2u_{2r+1}+2r+1=-2\nu$; with $\nu\,(\nu+1)_r=(\nu)_{r+1}$,
$$
T_{2r+1,2r+1}=Q_{2r+1}(u_{2r+1})
=(-1)^{r+1}\,2\cdot4^r\,r!\;(\nu)_{r+1}\,
(\tfrac12)_{N-1-r}\,(\nu+\tfrac12)_{N-1-r}\,.
$$
Multiplying down the diagonal ($\sum_r(r+1)=\binom{N+1}2$, and the tail
blocks reindexed by $k=N-1-r$),
\begin{equation}\label{eq:xbe-diag}
\det T
=(-1)^{\binom{N+1}2}\,2^{\,N(2N-1)}
\Bigl(\prod_{k=1}^{N-1}k!\Bigr)^{\!2}\,
\prod_{r=0}^{N-1}(\nu-r)_r\,(\nu)_{r+1}\;
\prod_{k=0}^{N-1}(\tfrac12)_k\,(\nu+\tfrac12)_k^{\,2}\,(2\nu+\tfrac12)_k\,,
\end{equation}
the power of two being $2^N\cdot 4^{\,2\binom N2}=2^{\,N(2N-1)}$.

\emph{The Vandermonde.} The pairs $p<p'$ of \eqref{eq:xbe-nodes} fall
into four families: two pure strings, $u_{2r'}-u_{2r}=-(r'-r)$ and
$u_{2r'+1}-u_{2r+1}=-(r'-r)$ for $r<r'$, contributing
$\prod_{r<r'}(r'-r)^2=\bigl(\prod_{k=1}^{N-1}k!\bigr)^2$; the mixed pairs
$u_{2r'+1}-u_{2r}=-(\nu+r'-r)$ for $r\le r'$ (that is, $2r<2r'+1$),
where the difference $d=r'-r\in\{0,\dots,N-1\}$ occurs $N-d$ times; and
the mixed pairs $u_{2r'}-u_{2r+1}=\nu-(r'-r)$ for $r<r'$, with
$d=r'-r\in\{1,\dots,N-1\}$ occurring $N-d$ times. Hence
\begin{equation}\label{eq:xbe-vdmnodes}
\prod_{p<p'}(u_{p'}-u_p)
=(-1)^{\binom{N+1}2}
\Bigl(\prod_{k=1}^{N-1}k!\Bigr)^{\!2}\,
\prod_{d=0}^{N-1}(\nu+d)^{N-d}\;
\prod_{d=1}^{N-1}(\nu-d)^{N-d}\,.
\end{equation}

\emph{Division.} Since $(\nu)_{r+1}=\prod_{d=0}^{r}(\nu+d)$ and
$(\nu-r)_r=\prod_{d=1}^{r}(\nu-d)$, one has
$\prod_{r=0}^{N-1}(\nu)_{r+1}=\prod_{d=0}^{N-1}(\nu+d)^{N-d}$ and
$\prod_{r=0}^{N-1}(\nu-r)_r=\prod_{d=1}^{N-1}(\nu-d)^{N-d}$: the two
$\nu$-blocks of \eqref{eq:xbe-diag} are exactly the two mixed blocks of
\eqref{eq:xbe-vdmnodes}, and the signs and squared superfactorials also
match. Dividing \eqref{eq:xbe-diag} by \eqref{eq:xbe-vdmnodes} in
\eqref{eq:xbe-alt} leaves the stated product.
\end{proof}

\subsection{The closed form}\label{ssec:xbe-closed}

\begin{Theorem}[even orders of $(1+x)\,\mathrm{cosb}_\nu^2$; $\M1$]
\label{thm:xbesseleven}
Let $f_\nu=(1+x)\,\mathrm{cosb}_\nu^2$. For every $N\ge1$, as an identity
in $\QQ(\nu)$,
\begin{equation}\label{eq:xbe-closed}
\HH_{2N}(f_\nu)
=(-1)^N\,2^{\binom{2N}2}\,
\prod_{i=0}^{2N-1}(2i)!\;\;
\prod_{k=0}^{N-1}(\tfrac12)_k\,(\nu+\tfrac12)_k^{\,2}\,(2\nu+\tfrac12)_k
\;\;
\prod_{i=0}^{2N-1}
\frac{(\nu+\tfrac12)_i}{(\nu+1)_{N-1+i}\,(2\nu+1)_{N-1+i}}\,.
\end{equation}
The left side is regular at every $\nu\notin\{-1,-2,\dots\}$, and
\eqref{eq:xbe-closed} may be specialised there after the finitely many
cancellations between the $(2\nu+1)$-blocks and the half-integer
numerator factors.
\end{Theorem}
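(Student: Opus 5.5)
The plan is to assemble \eqref{eq:xbe-closed} directly from Proposition~\ref{prop:xbe-vdm} and Lemma~\ref{lem:xbe-coeffdet}. No further structural input should be needed. By \eqref{eq:xbe-vdmred},
$$\HH_{2N}(f_\nu)=\det M_N\,\prod_{k=1}^{2N-1}k!\,\prod_{i=0}^{2N-1}w_i.$$
Lemma~\ref{lem:xbe-coeffdet} supplies $\det M_N$, which already contains $2^{N(2N-1)}=2^{\binom{2N}2}$ together with the quarter-integer block $\prod_k(\tfrac12)_k(\nu+\tfrac12)_k^2(2\nu+\tfrac12)_k$. It therefore remains to rewrite $\prod_k k!\cdot\prod_i w_i$ in the stated form.

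For this step I insert \eqref{eq:xbe-mom} into \eqref{eq:xbe-w}. This gives
$$w_i=(-4)^i\,\frac{(\tfrac12)_i(\nu+\tfrac12)_i}{(\nu+1)_i(2\nu+1)_i\,(\nu+1+i)_{N-1}(2\nu+1+i)_{N-1}}.$$
The two denominator blocks merge by $(\nu+1)_i(\nu+1+i)_{N-1}=(\nu+1)_{N-1+i}$, and likewise for $2\nu+1$. That produces exactly the last product in \eqref{eq:xbe-closed}.

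The remaining scalars are $\prod_i(-4)^i(\tfrac12)_i$ and $\prod_{k<2N}k!$. Using $4^i(\tfrac12)_i\,i!=(2i)!$, we get $\prod_{i<2N}4^i(\tfrac12)_i\cdot\prod_{i<2N}i!=\prod_{i<2N}(2i)!$. The sign is $\prod_i(-1)^i=(-1)^{\binom{2N}2}=(-1)^{N(2N-1)}=(-1)^N$, since $2N-1$ is odd. Collecting all of this gives \eqref{eq:xbe-closed}.

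For the regularity statement, the left side is a polynomial in the entries $a_m$. By \eqref{eq:xbe-mom}, these are regular wherever $(\nu+1)_k(2\nu+1)_k\ne0$. At half-integer $\nu$ with $2\nu+1\in\{0,-1,\dots\}$, the apparent poles are cancelled by zeros of the numerators $(\nu+\tfrac12)_k$, because $\nu+\tfrac12$ then runs through the same non-positive values (halved appropriately). So the identity in $\QQ(\nu)$ may be specialised after cancelling common factors.

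The main obstacle is bookkeeping rather than ideas. Two points need care: matching the power $2^{\binom{2N}2}$ (it comes solely from $\det M_N$, while the $4^i$ of the moments is absorbed into $(2i)!$), and getting the sign $(-1)^N$. I would check the assembled formula at $N=1$ against the direct $2\times2$ computation $a_0a_3-a_1a_2=3a_2-a_2=2a_2$, where $a_2=-2(\nu+\tfrac12)/((\nu+1)(2\nu+1))$. The right side of \eqref{eq:xbe-closed} at $N=1$ gives $-2\cdot2\cdot(\nu+\tfrac12)/((\nu+1)(2\nu+1))$, which matches.
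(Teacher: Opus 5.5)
Your proposal is correct and follows the paper's proof essentially verbatim. You combine Proposition~\ref{prop:xbe-vdm} with Lemma~\ref{lem:xbe-coeffdet}, telescope the padding into $(\nu+1)_{N-1+i}(2\nu+1)_{N-1+i}$, absorb $4^i(\tfrac12)_i\,i!=(2i)!$, and read off the sign $(-1)^{\binom{2N}2}=(-1)^N$. The only minor divergence is in the regularity claim: you cancel each half-integer pole of $(2\nu+1)_k$ against a simple zero of $(\nu+\tfrac12)_k$, whereas the paper reads the pole locus directly off the squared-series form $a_{2k}=(-1)^k(\tfrac12)_k(2\nu+k+1)_k/(\nu+1)_k^{2}$. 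Both arguments are valid.
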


\begin{proof}
Combine Proposition~\ref{prop:xbe-vdm} and
Lemma~\ref{lem:xbe-coeffdet}. In the product of the row factors
\eqref{eq:xbe-w}, insert the moments \eqref{eq:xbe-mom} and telescope the
padding, $(\nu+1)_i\,(\nu+1+i)_{N-1}=(\nu+1)_{N-1+i}$ and likewise for
$(2\nu+1)$:
$$
\prod_{i=0}^{2N-1}w_i
=\prod_{i=0}^{2N-1}
\frac{(-4)^i\,(\tfrac12)_i\,(\nu+\tfrac12)_i}
     {(\nu+1)_{N-1+i}\,(2\nu+1)_{N-1+i}}\,.
$$
Here $\prod_i(-4)^i=(-1)^N\,4^{\binom{2N}2}$, since
$\sum_{i<2N}i=\binom{2N}2=N(2N-1)\equiv N\pmod 2$; and
$(\tfrac12)_i=(2i)!/(4^i\,i!)$ gives
$$
\prod_{k=1}^{2N-1}k!\;\prod_{i=0}^{2N-1}(\tfrac12)_i
=4^{-\binom{2N}2}\prod_{i=0}^{2N-1}(2i)!\,,
$$
the two superfactorials cancelling. The powers of four cancel each other,
and with the factor $2^{N(2N-1)}=2^{\binom{2N}2}$ of
Lemma~\ref{lem:xbe-coeffdet} the product
$\det M_N\,\prod_k k!\,\prod_iw_i$ collapses to \eqref{eq:xbe-closed}.

For the regularity statement: each $a_m$ is a rational function of $\nu$
with poles only at negative integers (visible in the Cauchy square in the
proof of Lemma~\ref{lem:xbe-moments}, whose denominators are products of
$(\nu+1)_j$), and $\HH_{2N}$ is a polynomial in the $a_m$.
\end{proof}

The first values are
$$
\HH_2(f_\nu)=(-1)\cdot2\cdot 2!\cdot
\frac{\nu+\tfrac12}{(\nu+1)(2\nu+1)}=-\frac{2}{\nu+1}\,,
\qquad
\HH_4(f_\nu)=\frac{2160\,(\nu+\tfrac14)(\nu+\tfrac12)(\nu+\tfrac52)}
{(\nu+1)^7(\nu+2)^4(\nu+3)^2(\nu+4)}\,,
$$
the second after cancelling the half-integer factors of the
$(2\nu+1)$-blocks against the numerator.

\begin{Corollary}[complete factorisation and the zero locus]\label{cor:xbe-dicho}
For every $N\ge1$ the determinant $\HH_{2N}(f_\nu)$ is a nonzero rational
function of $\nu$ that factors completely into linear forms over $\QQ$: a
closed product form at every even order. Its numerator zeros lie at
negative half-integers (from $(\nu+\tfrac12)_k$) and at the points
$\nu=-\tfrac14-\tfrac j2$, $j\ge0$ (from $(2\nu+\tfrac12)_k$); its poles
at negative integers. In the Poisson range $\nu>-\tfrac12$:
$$
\HH_{2N}(f_\nu)=0
\iff
\nu=-\tfrac14\ \text{and}\ N\ge2 ,
$$
and $\operatorname{sgn}\HH_{2N}(f_\nu)=(-1)^N$ for $\nu>-\tfrac14$, while
$\HH_{2N}(f_\nu)<0$ throughout $-\tfrac12<\nu<-\tfrac14$.
\end{Corollary}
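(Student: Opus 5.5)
The corollary follows by reading off \eqref{eq:xbe-closed} factor by factor, so I would not reopen the determinant at all. Nonvanishing and complete factorisation are immediate. Every block in \eqref{eq:xbe-closed} is a product of linear forms $\nu+c$ with $c\in\QQ$, times a nonzero rational constant. Hence $\HH_{2N}(f_\nu)$ is a nonzero element of $\QQ(\nu)$ that splits into linear factors over $\QQ$.

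To locate zeros and poles I would first cancel the common factors in \eqref{eq:xbe-closed}. Write $(2\nu+1)_m=2^m\prod_{j=1}^{m}(\nu+\tfrac j2)$. Its odd-$j$ factors are negative half-integer shifts $\nu+\tfrac12,\nu+\tfrac32,\dots$, and they may cancel against the numerator blocks $(\nu+\tfrac12)_i$ and $(\nu+\tfrac12)_k^2$. Its even-$j$ factors are integer shifts $\nu+1,\nu+2,\dots$, and they join the $(\nu+1)$-blocks as poles at negative integers. Cancellation only removes factors, so after cancellation the numerator zeros lie among the negative half-integers $\nu=-\tfrac12-j$ and the points $\nu=-\tfrac14-\tfrac j2$ coming from $(2\nu+\tfrac12)_k$. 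The poles lie at negative integers. Nothing can cancel the quarter-shifted factors $\nu+\tfrac14+\tfrac j2$, since no denominator has that form.

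For the Poisson range $\nu>-\tfrac12$:
\begin{itemize}
\item Every factor $\nu+c$ with $c\ge\tfrac12$ is positive there. So is every denominator factor, since the $(\nu+1)$- and $(2\nu+1)$-blocks have shifts $c\ge\tfrac12$.
\item The only factors that can vanish or change sign are those with $c<\tfrac12$, namely the numerator factors $\nu+\tfrac14$ coming from $(2\nu+\tfrac12)_k$ with $k\ge1$.
\item That factor appears exactly $N-1$ times, once for each $k=1,\dots,N-1$. So $\HH_{2N}=0$ in the range iff $\nu=-\tfrac14$ and $N\ge2$.
\end{itemize}

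For the sign, the prefactor is $(-1)^N$ times positive constants: $2^{\binom{2N}2}\prod(2i)!\prod(\tfrac12)_k$. For $\nu>-\tfrac14$ every remaining factor is positive, so $\operatorname{sgn}\HH_{2N}=(-1)^N$. For $-\tfrac12<\nu<-\tfrac14$, the factor $\nu+\tfrac14$ is negative, so the sign is $(-1)^N(-1)^{N-1}=-1$.

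The one delicate point is $N=1$. There $(2\nu+\tfrac12)_k$ contributes no factor, so $\HH_2$ has sign $-1$ throughout. This agrees with both clauses, and matches the explicit value $\HH_2=-2/(\nu+1)$. The only real care needed is to make sure no half-integer cancellation removes or creates a factor inside $\nu>-\tfrac12$, which the shift bookkeeping above settles.
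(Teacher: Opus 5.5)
Your argument is correct and is essentially the paper's own proof: inspect \eqref{eq:xbe-closed}. For $\nu>-\tfrac12$ every linear factor is positive except $2\nu+\tfrac12$. That factor occurs exactly once in each block $(2\nu+\tfrac12)_k$ with $k\ge1$, so it has multiplicity $N-1$. This gives the zero at $\nu=-\tfrac14$ for $N\ge2$ and the extra sign $(-1)^{N-1}$ on $-\tfrac12<\nu<-\tfrac14$.

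The one claim you assert without justification is that the poles lie only at negative integers. The odd-$j$ factors $\nu+p+\tfrac12$ of the $(2\nu+1)$-blocks sit in the denominator, and ``they may cancel'' does not show that they all do. You can close this in either of two ways.
\begin{enumerate}
\item Cite the regularity clause of Theorem~\ref{thm:xbesseleven}, which is what the paper relies on: each $a_m$ has poles only at negative integers, and $\HH_{2N}$ is a polynomial in the $a_m$.
\item Count multiplicities directly. For $p\ge0$, the factor $\nu+p+\tfrac12$ occurs $2\max(0,N-1-p)+\max(0,2N-1-p)$ times in the numerator and $\min\bigl(2N,\max(0,3N-2p-2)\bigr)$ times in the denominator. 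The first count always dominates the second. For $N=2$ this reproduces the surviving factors $\nu+\tfrac12$ and $\nu+\tfrac52$ in the displayed $\HH_4$.
\end{enumerate}
This gap does not affect your Poisson-range conclusions. There you correctly work with the uncancelled product, and no denominator factor vanishes.
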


\begin{proof}
Inspect \eqref{eq:xbe-closed}. For $\nu>-\tfrac12$ every Pochhammer
factor is a product of positive reals except the blocks
$(2\nu+\tfrac12)_k=\prod_{j=0}^{k-1}(2\nu+\tfrac12+j)$, whose only
possibly nonpositive factor is $2\nu+\tfrac12$: these blocks vanish
simultaneously at $\nu=-\tfrac14$ (for every $k\ge1$, hence for $N\ge2$;
for $N=1$ the block range is empty), are positive for $\nu>-\tfrac14$,
and each block with $k\ge1$ is negative for
$-\tfrac12<\nu<-\tfrac14$, giving the extra sign $(-1)^{N-1}$ there.
\end{proof}

\begin{Corollary}[$\nu=\tfrac12$: the squared sinc]\label{cor:xbe-sinc}
$$
\HH_{2N}\Bigl((1+x)\bigl(\tfrac{\sin x}{x}\bigr)^2\Bigr)
=(-1)^N\,2^{\,8N^2-5N}\,
\Bigl(\prod_{k=1}^{2N-1}k!\Bigr)^{\!2}\,
\frac{(N-1)!}{(3N-1)!}\;
\prod_{i=0}^{2N-1}\frac{(2i)!}{(2N+2i-1)!}\,,
$$
with the values
$-\dfrac{2^2}{3}$,\;
$\dfrac{2^{16}}{3^4\,5^3\,7^2}$,\;
$-\dfrac{2^{47}}{3^9\,5^5\,7^5\,11^3\,13^2}$,\;
$\dfrac{2^{88}}{3^{14}\,5^7\,7^7\,11^5\,13^4\,17^3\,19^2}$
at $n=2,4,6,8$.
\end{Corollary}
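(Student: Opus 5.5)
This is a pure specialisation of Theorem~\ref{thm:xbesseleven} at $\nu=\tfrac12$. I would substitute $\nu=\tfrac12$ into \eqref{eq:xbe-closed} and convert each Pochhammer block into factorials, using $(\tfrac12)_k=(2k)!/(4^kk!)$, $(1)_k=k!$, $(\tfrac32)_m=(2m+1)!/(4^m m!)$ and $(2)_m=(m+1)!$. The sign $(-1)^N$ and the factor $2^{\binom{2N}2}\prod_{i<2N}(2i)!$ carry over unchanged.

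The blocks transform as follows.

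\emph{The $k$-product.} At $\nu=\tfrac12$ the factor $(\tfrac12)_k(\nu+\tfrac12)_k^2(2\nu+\tfrac12)_k$ becomes $(\tfrac12)_k\,(k!)^2\,(\tfrac32)_k$. This equals $(2k)!(2k+1)!\,k!/(16^k\,k!)$, which simplifies to $(2k)!(2k+1)!/16^k$, for $k<N$.

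\emph{The $i$-product.} It becomes $\prod_{i<2N} i!\big/\bigl((N-1+i+1)!\cdot(N-1+i+1)!\bigr)$, since $(\tfrac32)_{N-1+i}$ sits in the denominator. More precisely, $(\nu+1)_{N-1+i}=(\tfrac32)_{N-1+i}=(2N+2i-1)!/(4^{N-1+i}(N-1+i)!)$ and $(2\nu+1)_{N-1+i}=(2)_{N-1+i}=(N+i)!$.

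Next I collect the three families of factors.

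\emph{Powers of two.} They come from $2^{\binom{2N}2}$, from $16^{-\binom N2}$, and from $\prod_i 4^{N-1+i}$. Their sum should give the exponent $8N^2-5N$. This is a one-line check: $N(2N-1)-2N(N-1)+2\bigl(2N(N-1)+\binom{2N}2\bigr)$.

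\emph{Factorials.} The product $\prod_{k<N}(2k)!(2k+1)!$ equals $\prod_{k=1}^{2N-1}k!$. The ratio $\prod_i i!\,(N-1+i)!/(N+i)!$ telescopes: $\prod_{i<2N}(N-1+i)!/(N+i)!$ collapses to $(N-1)!/(3N-1)!$, and the leftover $\prod_{i<2N}i!$ supplies the second superfactorial $\prod_{k=1}^{2N-1}k!$. Together these give $\bigl(\prod_{k=1}^{2N-1}k!\bigr)^2(N-1)!/(3N-1)!$. The remaining $\prod_i(2i)!/(2N+2i-1)!$ is exactly the last displayed product.

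Finally, I would sanity-check the result at $N=1$ against $\HH_2=-2/(\nu+1)=-4/3$, and then list the stated numerical values, which are read off directly.

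The only real obstacle is bookkeeping the power of two. Every exponent should be tracked separately; if the total disagrees with $8N^2-5N$, the first things to recheck are $\prod_i 4^{N-1+i}$ and the $16^{-k}$ factors.
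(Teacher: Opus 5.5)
Your proposal is correct and follows the paper's proof essentially verbatim: specialise \eqref{eq:xbe-closed} at $\nu=\tfrac12$, convert $(\tfrac12)_k,(\tfrac32)_k,(\tfrac32)_{N-1+i},(2)_{N-1+i}$ into factorials, collect the same power-of-two exponent $N(2N-1)-2N(N-1)+4N(N-1)+2N(2N-1)=8N^2-5N$, and telescope $\prod_i(N+i)^{-1}=(N-1)!/(3N-1)!$. The one slip is your first sentence on the $i$-product, which writes both denominator factors as $(N+i)!$; the ``more precisely'' conversion $(\tfrac32)_{N-1+i}=(2N+2i-1)!/(4^{N-1+i}(N-1+i)!)$ that follows is the correct one, and it is what the rest of your computation uses.
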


\begin{proof}
Specialise \eqref{eq:xbe-closed} at $\nu=\tfrac12$, where
$(\nu+\tfrac12)_k=k!$, $(2\nu+\tfrac12)_k=(\tfrac32)_k$ and
$(2\nu+1)_{N-1+i}=(2)_{N-1+i}=(N+i)!$. From
$(\tfrac12)_k=(2k)!/(4^k\,k!)$ and $(\tfrac32)_k=(2k+1)!/(4^k\,k!)$ the
middle block is
$\prod_{k=0}^{N-1}(2k)!\,(2k+1)!\,16^{-k}
=16^{-\binom N2}\prod_{k=1}^{2N-1}k!$, while
$(\tfrac32)_{N-1+i}=(2N+2i-1)!\,/\bigl(4^{N-1+i}(N-1+i)!\bigr)$ turns the
tail into
$$
\prod_{i=0}^{2N-1}
\frac{4^{\,N-1+i}\;i!\;(N-1+i)!}{(2N+2i-1)!\;(N+i)!}
=4^{\,2N(N-1)+\binom{2N}2}\,
\Bigl(\prod_{k=1}^{2N-1}k!\Bigr)
\frac{(N-1)!}{(3N-1)!}\,
\prod_{i=0}^{2N-1}\frac{1}{(2N+2i-1)!}\,,
$$
using $\prod_i(N-1+i)!/(N+i)!=\prod_i(N+i)^{-1}=(N-1)!/(3N-1)!$.
Collecting the powers of two,
$N(2N-1)-2N(N-1)+4N(N-1)+2N(2N-1)=8N^2-5N$, and absorbing
$\prod_i(2i)!$ into the last product gives the stated form.
\end{proof}

The values in Corollary~\ref{cor:xbe-sinc} have numerators that are pure
powers of two and odd denominators. The member $(1+x)(\sin x/x)^2$ sits
one step from the reciprocal-sine function of Section~\ref{sec:xsinx}:
there the sine is downstairs, $(1+x)\,x/\sin x$, the functionals are
Wilson, and the evaluation runs through $\M2$ and condensation; here the
sine is upstairs and squared, the functional is not classical at all, and
the evaluation is pure Vandermonde --- yet only the even orders survive.

\begin{Remark}[odd orders: where $\M1$ stops]\label{rem:xbe-odd}
At an odd order $n=2N+1$ the even column indices reach $l=N$, so the
padding \eqref{eq:xbe-w} must be lengthened to
$(\nu+1+i)_N(2\nu+1+i)_N$, and then
$$
\deg Q_{2l}=2N=n-1,
\qquad
\deg Q_{2m+1}=2N+1=n>n-1 :
$$
the odd columns overflow the Vandermonde degree bound of
Lemma~\ref{lem:vdm} by exactly one, and $\M1$ yields no evaluation ---
for the dilated determinant of \eqref{eq:xbe-def} the method works
precisely at even orders. This failure is structural, not merely
technical. The even
half of the parity contrast is Theorem~\ref{thm:xbesseleven}; the odd
half remains open.
\end{Remark}

\section{Connection with a determinant of Chapoton--Han}\label{sec:root}

The dilated Hankel determinant of this paper was first met by the author
while studying a conjecture from his joint work with Chapoton on the
roots of the Poupard and Kreweras polynomials \cite{Chapoton2020Han}.
This section closes the circle: after a change of basis, that conjecture
\emph{is} the dilated Hankel determinant of $(1+x)/\cos x$ evaluated in
Proposition~\ref{prop:sec1}.

\subsection{The evaluation $\rho$ and the conjecture}\label{ssec:rootdef}

Call the \emph{index} of a nonzero polynomial its degree plus its
valuation, and call $P$ \emph{palindromic} of index $d$ if
$x^{d}P(1/x)=P(x)$ --- for example $P=x^i(1+x)^j$, of index $2i+j$. For
such $P$ the polynomial $(x^d+1)P(1)-2P(x)$ vanishes to second order at
$x=1$ (palindromy gives $P'(1)=\tfrac d2P(1)$), so the
\emph{index-lowering operator} of \cite[\S5]{Chapoton2020Han},
$$
\mathscr N_0(P)=\frac{(x^{d}+1)\,P(1)-2P(x)}{(x-1)^2}\,,
$$
is a polynomial, palindromic of index $d-2$ when nonzero. Iterating
$\lfloor d/2\rfloor$ times lands in index at most $1$, and, following
\cite{Chapoton2020Han}, the evaluation $\rho$ is the final value of this
iteration:
$$
\rho(P):=\text{constant term of }\mathscr N_0^{\lfloor d/2\rfloor}(P);
$$
in particular $\rho(1)=1$ and $\rho\bigl(c(1+x)\bigr)=c$. Since
$P=(x-1)^2Q$ has $P(1)=0$, giving $\mathscr N_0\bigl((x-1)^2Q\bigr)=-2Q$,
and since $\rho(P)=\rho(\mathscr N_0P)$ by definition, the map $\rho$
obeys, for palindromic $Q$,
\begin{equation}\label{eq:rhoprop}
\rho\bigl((x-1)^2Q(x)\bigr)=-2\,\rho\bigl(Q(x)\bigr).
\end{equation}
Note that $\rho$ is linear on each fixed index but is \emph{not} a linear
functional on all of $\QQ[x]$: for example
$\rho\bigl((1+x)x^k\bigr)\neq\rho(x^k)+\rho(x^{k+1})$, the three indices
$2k+1,\,2k,\,2k+2$ being distinct. Every manipulation below stays within a
single index, on which $\rho$ \emph{is} linear. Put
$$
A_k:=\rho(x^k),\qquad B_k:=\rho\bigl((1+x)x^k\bigr)\qquad(k\ge0),
$$
so that $(A_k)=1,1,2,10,104,1816,\dots$ are the generalised Euler
numbers of type $2^n$ (OEIS \texttt{A005799}) and
$(B_k/2^k)=1,1,2,8,56,608,\dots$ are the Genocchi medians
(OEIS \texttt{A005439}). Following
\cite{Chapoton2020Han}, let $M_N$ be the $N\times N$ matrix
\begin{equation}\label{eq:Mdef}
M_N(i,j)=\rho\bigl(x^i(1+x)^j\bigr)\,2^{-\lfloor j/2\rfloor},
\qquad 0\le i,j\le N-1 .
\end{equation}
The conjecture in question \cite[Conj.~5.4]{Chapoton2020Han} asserts that
\begin{equation}\label{eq:conj54}
\det M_N=\prod_{k=1}^{N-1}\bigl((N-k)!\bigr)^{\epsilon(k)},
\qquad
\epsilon(k)=\begin{cases}2&\text{$k$ odd},\\4&\text{$k$ even}.\end{cases}
\end{equation}
(Its first row, $j\mapsto M_N(0,j)$, is exactly the sequence
$1,1,1,3,5,25,61,427,\dots$ of $f=(1+x)/\cos x$, which is what first
suggested the connection.)

\begin{Theorem}\label{thm:rootlink}
For all $N\ge1$,
\begin{equation}\label{eq:Mtos}
\det M_N=2^{-\binom N2}\,\HH_N\bigl((1+x)/\cos x\bigr)
=\bigl((N-1)!!\bigr)^2\prod_{k=1}^{N-2}(k!!)^6
=\prod_{k=1}^{N-1}\bigl((N-k)!\bigr)^{\epsilon(k)} .
\end{equation}
In particular \cite[Conj.~5.4]{Chapoton2020Han} holds.
\end{Theorem}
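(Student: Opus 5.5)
The plan is to show that, after a unitriangular change of basis in the row and column families, $M_N$ becomes the dilated Hankel matrix of $(1+x)/\cos x$ up to explicit row and column scalings. The second and third equalities in \eqref{eq:Mtos} then follow from Proposition~\ref{prop:sec1} and an elementary factorial identity.

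\emph{Step 1: identify $\rho$ on the basis $x^i(1+x)^j$.} The polynomial $x^i(1+x)^j$ has index $2i+j$. Write $y=(x-1)^2$, and note that $x\cdot$(palindromic of index $d$) is palindromic of index $d+2$. Write $u:=(1+x)^2=y+4x$. Within a fixed index, $x^i(1+x)^{2l}=x^i(y+4x)^l=\sum_p\binom lp 4^{l-p}y^p x^{i+l-p}$. By \eqref{eq:rhoprop} and linearity on that index,
$$\rho\bigl(x^i(1+x)^{2l}\bigr)=\sum_p\binom lp4^{l-p}(-2)^pA_{i+l-p}.$$
The odd case is analogous with $B$. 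This expresses $M_N(i,j)$ through $A_k,B_k$ with binomial mixing.

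\emph{Step 2: compare with the moments.} The claim to establish is that $A_k$ and $B_k$ are, up to powers of $2$, the moments $E_{2k}$ and $(2k+1)E_{2k}$ of $(1+x)/\cos x$ evaluated against suitable shifts. Concretely, I expect an identity of the form $\sum_p\binom lp4^{l-p}(-2)^pA_{i+l-p}=2^{?}\,\mathcal S[\,\cdot\,]$, equivalently an ``umbral'' statement that $\rho(x^iu^l)$ equals $2^{i+l}$ times $\mathcal S[y^{i}\,\phi_l]$ for monic $\phi_l$ of degree $l$, with the analogous odd statement via $\mathcal T^*$. I would prove this from the generating-function description of $\rho$ in \cite{Chapoton2020Han}: the $A_k$ (type-$2^n$ Euler numbers) have the $S$-fraction with coefficients related to $k^2$, which matches the secant fraction after rescaling $x\mapsto 2x$ in the even contraction. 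Once this dictionary holds, the rows $i$ act like $y^i$ and the columns $j$ like monic elements of the split basis $\chi_j$ of \eqref{eq:KmatrixGene}. Lemma~\ref{lem:family} then removes the unitriangular recombinations, and pulling the powers of $2$ from rows and columns, together with the $2^{-\lfloor j/2\rfloor}$ normalisation, leaves exactly the factor $2^{-\binom N2}$.

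\emph{Step 3: close.} Proposition~\ref{prop:sec1} gives $2^{-\binom N2}\HH_N=((N-1)!!)^2\prod_{k\le N-2}(k!!)^6$. The final equality is checked by induction on $N$: both sides multiply by $(N!!)^2((N-1)!!)^4$ when $N\to N+1$, using $(N-k)!$ blocks and $m!=m!!\,(m-1)!!$.

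The main obstacle is Step~2, that is, pinning down the exact moment dictionary between $\rho$ and the secant functional, including the powers of two. I would first verify it numerically on small $i,j$ using the listed values of $A_k,B_k$, and then prove it via the continued fraction of $\sum A_kt^k$ from \cite{Chapoton2020Han}.
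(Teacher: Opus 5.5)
Your Step~1 is exactly Lemma~\ref{lem:rhored}, and your Step~3 is the paper's closing induction. Step~2, however, carries the entire substance of the theorem, and as written it is both wrong and unproven.

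The dictionary you expect fails already at $(i,j)=(1,0)$: $M_N(1,0)=\rho(x)=A_1=1$, whereas $2^{i+l}\mathcal S[y^i\phi_l]=2E_2=2$. With your normalisation the rows would carry $2^{i}$, so the determinant would come out as $2^{+\binom N2}\HH_N$, not the $2^{-\binom N2}\HH_N$ you assert; the powers of two were not actually tracked. Nor is $A_k$ a rescaling of the secant numbers: $A_1=E_2=1$, but $A_2=2\ne E_4$.

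What your Step~1 really reduces the theorem to is the pair of binomial-transform identities
\[
A_k=\mathcal S\Bigl[\Bigl(\tfrac{1+y}2\Bigr)^{k}\Bigr]=2^{-k}\sum_{j}\binom kjE_{2j},
\qquad
B_k=\mathcal T^*\Bigl[\Bigl(\tfrac{1+y}2\Bigr)^{k}\Bigr]=2^{-k}\sum_{j}\binom kj(2j+1)E_{2j}.
\]
Inserting these into your Step~1 formula and using $1-2\cdot\tfrac{1+y}2=-y$ gives
$M_N(i,2l)=2^{-i}\mathcal S[(1+y)^iy^l]$ and $M_N(i,2l+1)=2^{-i}\mathcal T^*[(1+y)^iy^l]$. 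So the monic recombination sits on the rows, and the columns are exactly the split monomials. Lemma~\ref{lem:family} then gives $\det M_N=2^{-\binom N2}\HH_N\bigl((1+x)/\cos x\bigr)$ at once. That is a genuine shortcut over the paper, which passes through $2^{\tau_N}\HH_N(a)$ and re-runs $\M2$ via Lemma~\ref{lem:rootaffine}.

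The missing idea is a proof of these two identities. Your plan to read them off a continued fraction for $\sum_kA_kt^k$ in \cite{Chapoton2020Han} does not work. There the link between $\rho$ and secant moments is only conjectural. Moreover, a fraction for the $A_k$ alone says nothing about the $B_k$.

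What is needed is an argument from the definition of $\rho$, namely that the two functionals on the right are invariant under $\mathscr N_0$. The paper does this in Lemma~\ref{lem:rhomom}. It writes palindromic polynomials of index $2e$ and $2e+1$ as $x^e\pi(\zeta)$ and $(1+x)x^e\sigma(\zeta)$, with $\zeta=(x-1)^2/x$, and computes $\mathscr N_0$ in these coordinates. By induction on $e$ it proves $\rho(x^e\pi(\zeta))=\mathcal S[(\tfrac{1+y}2)^e\pi(\tfrac{-4}{1+y})]$, together with the odd analogue with $\mathcal T^*$. The induction step reduces to $\sum_j(-1)^j\binom{2e}{2j}E_{2j}=0$ for $e\ge1$, which is $(-1)^e(2e)!$ times the coefficient of $x^{2e}$ in $\sec x\cdot\cos x=1$. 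It also needs the companion sum with weights $(2j+1)\binom{2e+1}{2j+1}=(2e+1)\binom{2e}{2j}$.
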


The proof has three steps. First, a triangular change of column basis,
powered by the reduction \eqref{eq:rhoprop}, strips the powers of $1+x$:
$\det M_N$ is an explicit power of $2$ times the dilated Hankel
determinant of the sequence interleaving $(A_k)$ and $(B_k)$
(Theorem~\ref{thm:conj54double}). Second --- the key step --- we prove a
moment representation of $\rho$ (Lemma~\ref{lem:rhomom}): on each index,
$\rho$ is the secant functional $\mathcal S$ (even index) or the
functional $\mathcal T^*$ (odd index) of Section~\ref{sec:gen-xcos},
composed with an explicit change of variable; in particular
$A_k=\mathcal S[((1+y)/2)^k]$ and $B_k=\mathcal T^*[((1+y)/2)^k]$. This is
where the secant numbers enter; the identification was observed,
conjecturally, in \cite[\S5]{Chapoton2020Han}. Third, the affine
substitution $y=2v-1$ transports the orthogonal data of
Section~\ref{sec:gen-xcos} (Lemma~\ref{lem:rootaffine}), and the
biorthogonal reduction~$\M2$ runs exactly as in the proof of
Proposition~\ref{prop:sec1}. In passing, the first step also proves
Conjecture~5.3 of \cite{Chapoton2020Han} (Remark~\ref{rem:conj53}).

\subsection{From $M_N$ to a dilated Hankel determinant}
\label{ssec:rootHH}

\begin{Lemma}\label{lem:rhored}
For all $i\ge0$ and $m\ge0$,
\begin{align}
\rho\bigl(x^i(1+x)^{2m}\bigr)
&=(-2)^m\sum_{l=0}^m\binom ml(-2)^l\,\rho(x^{i+l})
=(-2)^m\sum_{l=0}^m\binom ml(-2)^l\,A_{i+l},
\label{eq:Re}\\
\rho\bigl(x^i(1+x)^{2m+1}\bigr)
&=(-2)^m\sum_{l=0}^m\binom ml(-2)^l\,\rho\bigl((1+x)x^{i+l}\bigr)
=(-2)^m\sum_{l=0}^m\binom ml(-2)^l\,B_{i+l}.
\label{eq:Ro}
\end{align}
\end{Lemma}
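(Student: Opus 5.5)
The identity to exploit is $(1+x)^2=(x-1)^2+4x$. It rewrites $x^i(1+x)^{2m}$ (resp.\ $x^i(1+x)^{2m+1}$) as a binomial sum of terms $(x-1)^{2(m-l)}(4x)^l x^i$ (resp.\ with an extra factor $1+x$). Every term has the same index $2i+2m$ (resp.\ $2i+2m+1$), since $(x-1)^2$ and $4x$ both have index $2$ and each term is palindromic. So every manipulation stays in a single index, where $\rho$ is linear. I will apply \eqref{eq:rhoprop} $m-l$ times to each term to strip the factor $(x-1)^{2(m-l)}$.

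Concretely, I first check that each summand $(x-1)^{2(m-l)}x^{i+l}$ is palindromic of index $2i+2m$. The palindromy of $(x-1)^2$ comes with sign $+1$: $x^2(1/x-1)^2=(1-x)^2$. The same check works for $(1+x)x^{i+l}(x-1)^{2(m-l)}$ in index $2i+2m+1$. Expanding by the binomial theorem gives
\[
x^i(1+x)^{2m}=\sum_{l=0}^m\binom ml 4^l\,(x-1)^{2(m-l)}x^{i+l}.
\]
Linearity of $\rho$ on the fixed index, together with $(m-l)$-fold iteration of \eqref{eq:rhoprop} (legitimate since each intermediate $(x-1)^{2r}x^{i+l}$ is palindromic), then yields
\[
\rho\bigl(x^i(1+x)^{2m}\bigr)=\sum_{l}\binom ml 4^l(-2)^{m-l}A_{i+l}.
\]
Finally $4^l(-2)^{m-l}=(-2)^m(-2)^l$, which is \eqref{eq:Re}. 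The odd case \eqref{eq:Ro} is identical with $x^{i+l}$ replaced by $(1+x)x^{i+l}$ and $A$ replaced by $B$.

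The only delicate point is justifying linearity of $\rho$. By the paper's definition, $\rho$ on index $d$ is the constant term of $\mathscr N_0^{\lfloor d/2\rfloor}$. On the space of palindromic polynomials of fixed index $d$, the operator $\mathscr N_0$ is linear, because $P\mapsto P(1)$ is linear and $d$ is fixed. So I will remark that linearity holds on each index space and that \eqref{eq:rhoprop} is applied within that space, as the paper itself notes. A minor edge case also needs a convention: a summand whose $\mathscr N_0$-iterate is zero simply contributes $0$, which is consistent with linearity.
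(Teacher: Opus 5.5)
Your proof is correct and is essentially the paper's: expand with $(1+x)^2=(x-1)^2+4x$, apply $\rho$ term by term within the fixed index $2i+2m$ (resp.\ $2i+2m+1$), strip $(x-1)^{2(m-l)}$ by iterating \eqref{eq:rhoprop}, and finish with $4^l(-2)^{m-l}=(-2)^m(-2)^l$, obtaining the odd case by multiplying the expansion by $1+x$. Your explicit checks — that each summand is palindromic of the right index and that $\mathscr N_0$ is linear on each fixed-index space — make explicit what the paper takes from its preceding discussion of $\rho$.
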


\begin{proof}
Since $(1+x)^2=(x-1)^2+4x$, the binomial theorem gives
$$
x^i(1+x)^{2m}=\sum_{l=0}^m\binom ml4^l\,x^{i+l}(x-1)^{2m-2l},
$$
all of whose terms have index $2i+2m$, so $\rho$ may be applied term by
term. Iterating \eqref{eq:rhoprop} yields
$\rho\bigl(x^{i+l}(x-1)^{2m-2l}\bigr)=(-2)^{m-l}\rho(x^{i+l})$, so
$$
\rho\bigl(x^i(1+x)^{2m}\bigr)
=\sum_{l=0}^m\binom ml4^l(-2)^{m-l}\rho(x^{i+l})
=(-2)^m\sum_{l=0}^m\binom ml(-2)^l\rho(x^{i+l}),
$$
because $4^l(-2)^{m-l}=(-2)^m(-2)^l$. Multiplying the same expansion by
$(1+x)$ and using
$\rho\bigl((1+x)x^{i+l}(x-1)^{2m-2l}\bigr)=(-2)^{m-l}\rho((1+x)x^{i+l})$
gives \eqref{eq:Ro}.
\end{proof}

\begin{Theorem}\label{thm:conj54double}
Let $a$ be the sequence interleaving $A$ and $B$, namely
$a_{2k}=A_k$ and $a_{2k+1}=B_k$. Then
\begin{equation}\label{eq:MtoH}
\det M_N=2^{\tau_N}\,\HH_N(a),
\qquad
\tau_N=\binom{\bar N}{2}+\binom{\underline N}{2},
\end{equation}
where $\bar N=\lceil N/2\rceil$ and $\underline N=\lfloor N/2\rfloor$, as
throughout. In other words, the Chapoton--Han determinant
\eqref{eq:conj54} is, up to an explicit power of two, the dilated Hankel
determinant of the sequence $(1,1,1,2,2,8,10,64,104,\dots)$.
\end{Theorem}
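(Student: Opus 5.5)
The plan is to turn $M_N$ into a dilated Hankel matrix by column operations that are unitriangular up to explicit scalars, using Lemma~\ref{lem:rhored}. Write column $j$ of $M_N$ as $j=2m$ or $j=2m+1$. By \eqref{eq:Re}, the entry $\rho\bigl(x^i(1+x)^{2m}\bigr)$ equals $(-2)^m\sum_{l\le m}\binom ml(-2)^lA_{i+l}$. Its top term $l=m$ is $(-2)^m(-2)^mA_{i+m}=4^mA_{i+m}$, and its lower terms $l<m$ are combinations, with coefficients independent of the row $i$, of the sequences $i\mapsto A_{i+l}$. By induction on $m$, each such sequence is itself a fixed combination of earlier even columns of $M_N$. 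Likewise, by \eqref{eq:Ro}, the odd column $2m+1$ equals $4^mB_{i+m}$ plus a row-independent combination of the sequences $i\mapsto B_{i+l}$ with $l<m$, and hence of earlier odd columns.

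Subtracting these combinations, column by column from left to right, leaves the determinant unchanged. This is the same unitriangular mechanism as Lemma~\ref{lem:family}, applied separately within the even columns and within the odd columns. After the subtraction, column $2m$ reads $4^mA_{i+m}\,2^{-m}=2^mA_{i+m}$ and column $2m+1$ reads $2^mB_{i+m}$. The factor $2^{-\lfloor j/2\rfloor}$ comes from \eqref{eq:Mdef}.

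Since $a_{2i+2m}=A_{i+m}$ and $a_{2i+2m+1}=B_{i+m}$, the reduced matrix is exactly $\bigl(a_{2i+j}\bigr)_{0\le i,j<N}$ with column $j$ scaled by $2^{\lfloor j/2\rfloor}$. Pulling out these column factors gives the exponent
\[
\sum_{j<N}\lfloor j/2\rfloor=\sum_{m<\bar N}m+\sum_{m<\underline N}m=\binom{\bar N}2+\binom{\underline N}2=\tau_N,
\]
because the even columns are $m=0,\dots,\bar N-1$ and the odd columns are $m=0,\dots,\underline N-1$. This yields \eqref{eq:MtoH}.

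The one point needing care is legitimacy. Lemma~\ref{lem:rhored} expresses each entry through $\rho$ applied at a single index, where $\rho$ is linear, and the final formulas involve only the numbers $A_k$ and $B_k$. So the column operations are operations on plain number columns and are valid regardless of the failure of global linearity of $\rho$. The main (mild) obstacle is therefore just bookkeeping: checking that the coefficients $\binom ml(-2)^{m+l}$ do not depend on $i$, and that the pivot is $4^m$, not something row-dependent. Finally, the displayed initial values $(1,1,1,2,2,8,10,64,104,\dots)$ come from interleaving $A_k=1,1,2,10,104,\dots$ with $B_k=2^k\cdot(1,1,2,8,\dots)$.
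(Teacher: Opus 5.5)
Your proof is correct and follows the paper's argument. The paper uses Lemma~\ref{lem:rhored} to write column $2m$ (resp.\ $2m+1$) of $M_N$ as a triangular combination of the columns $(A_{i+l})_i$ (resp.\ $(B_{i+l})_i$), $l\le m$, with diagonal coefficient $2^m$, which gives $\det M_N=2^{\tau_N}\det\bigl(a_{2i+j}\bigr)$; your left-to-right reduction of each column to its leading term is the same triangular column operation, just phrased as explicit subtractions.
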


\begin{proof}
By \eqref{eq:Mdef} and Lemma~\ref{lem:rhored} (with
$\lfloor(2m+1)/2\rfloor=m$), the columns of $M_N$ are
$$
M_N(\cdot,2m)=(-1)^m\sum_{l=0}^m\binom ml(-2)^l\,\alpha_l,
\qquad
M_N(\cdot,2m+1)=(-1)^m\sum_{l=0}^m\binom ml(-2)^l\,\beta_l,
$$
where $\alpha_l=(A_{i+l})_{i}$ and $\beta_l=(B_{i+l})_{i}$. Thus
$M_N$ arises from the matrix $W$ with ordered columns
$\alpha_0,\beta_0,\alpha_1,\beta_1,\dots$ by a triangular column
operation: column $2m$ (resp.\ $2m+1$) involves only the $\alpha_l$
(resp.\ $\beta_l$) with $l\le m$, with coefficient
$(-1)^m\binom mm(-2)^m=2^m$ on $\alpha_m$ (resp.\ $\beta_m$). Hence
$$
\det M_N
=\Bigl(\prod_{0\le 2m\le N-1}2^m\Bigr)
\Bigl(\prod_{0\le 2m+1\le N-1}2^m\Bigr)\det W
=2^{\tau_N}\det W,
$$
since $\sum_{2m<N}m+\sum_{2m+1<N}m
=\binom{\bar N}2+\binom{\underline N}2=\tau_N$. Finally
$W(i,2m)=A_{i+m}=a_{2i+2m}$ and $W(i,2m+1)=B_{i+m}=a_{2i+2m+1}$, that is
$W=\bigl(a_{2i+j}\bigr)$, so $\det W=\HH_N(a)$.
\end{proof}

\begin{Remark}[Conjecture~5.3 of Chapoton--Han]\label{rem:conj53}
Lemma~\ref{lem:rhored} also settles \cite[Conj.~5.3]{Chapoton2020Han},
which asserts that $2^{\lfloor j/2\rfloor}$ divides $\rho(x^i(1+x)^j)$
(so that the entries \eqref{eq:Mdef} are integers; the case of odd $j$
was proved in \cite{Chapoton2020Han}). Indeed $\mathscr N_0$ preserves
integer coefficients --- the division by the monic $(x-1)^2$ is exact ---
so $A_k,B_k\in\ZZ$, and \eqref{eq:Re}--\eqref{eq:Ro} display
$\rho\bigl(x^i(1+x)^{2m}\bigr)$ and $\rho\bigl(x^i(1+x)^{2m+1}\bigr)$ as
$(-2)^m$ times an integer.
\end{Remark}

\subsection{The evaluation $\rho$ as secant moments}\label{ssec:rootmom}

It remains to evaluate $\HH_N(a)$, and for that we must first identify
$A_k$ and $B_k$. The following moment representation is the bridge; it
proves, in particular, the identification of the first row of
$M_N$ with the coefficients of $(1+x)/\cos x$ observed in
\cite[\S5]{Chapoton2020Han}.

\begin{Lemma}[moment representation of $\rho$]\label{lem:rhomom}
Let $\mathcal S[y^j]=E_{2j}$ and $\mathcal T^*[y^j]=(2j+1)E_{2j}$ be the
functionals of Section~\ref{sec:gen-xcos}, and set
$\zeta:=x+x^{-1}-2=(x-1)^2/x$. Every palindromic polynomial of even index
$2e$ has the form $P=x^e\,\pi(\zeta)$, and every one of odd index $2e+1$
the form $P=(1+x)\,x^e\,\sigma(\zeta)$, with $\deg\pi,\deg\sigma\le e$;
and
\begin{equation}\label{eq:rhomom}
\rho\bigl(x^e\,\pi(\zeta)\bigr)
=\mathcal S\Bigl[\Bigl(\tfrac{1+y}{2}\Bigr)^{\!e}
\pi\Bigl(\tfrac{-4}{1+y}\Bigr)\Bigr],
\qquad
\rho\bigl((1+x)\,x^e\,\sigma(\zeta)\bigr)
=\mathcal T^*\Bigl[\Bigl(\tfrac{1+y}{2}\Bigr)^{\!e}
\sigma\Bigl(\tfrac{-4}{1+y}\Bigr)\Bigr],
\end{equation}
the arguments on the right being polynomials in $y$ of degree $\le e$.
In particular ($\pi=\sigma=1$, $e=k$),
\begin{equation}\label{eq:ABmom}
A_k=\mathcal S\Bigl[\Bigl(\tfrac{1+y}2\Bigr)^{\!k}\Bigr]
=2^{-k}\sum_{j=0}^k\binom kj E_{2j},
\qquad
B_k=\mathcal T^*\Bigl[\Bigl(\tfrac{1+y}2\Bigr)^{\!k}\Bigr]
=2^{-k}\sum_{j=0}^k\binom kj(2j+1)E_{2j}.
\end{equation}
\end{Lemma}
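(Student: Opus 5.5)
The plan is to verify the structural claim first and then prove \eqref{eq:rhomom} by induction on the index. The induction rests on one fact: on each fixed index, both sides of \eqref{eq:rhomom} are invariant under the index-lowering operator $\mathscr N_0$.

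\emph{Normal forms.} A palindromic $P$ of index $2e$ satisfies $x^{-e}P(x)=x^{e}P(1/x)$, so $x^{-e}P$ is a Laurent polynomial symmetric under $x\leftrightarrow 1/x$. It is therefore a polynomial of degree $\le e$ in $x+x^{-1}=\zeta+2$, hence in $\zeta$.

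For odd index $d=2e+1$, palindromy at $x=-1$ gives $P(-1)=(-1)^dP(-1)$, so $(1+x)\mid P$. The quotient is palindromic of index $2e$ and falls under the even case.

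Write $w=(1+y)/2$, so that the substitution $\zeta\mapsto -4/(1+y)=-2/w$ turns $w^e\pi(-2/w)$ into a polynomial of degree $\le e$ in $y$. Denote the right-hand sides of \eqref{eq:rhomom} by $R(P)$; each is linear on a fixed index. Since $\rho(P)=\rho(\mathscr N_0P)$, the induction reduces to two things: the base cases, and the identity $R(\mathscr N_0P)=R(P)$. The base cases are immediate: $R(1)=\mathcal S[1]=E_0=1=\rho(1)$, and $R\bigl(c(1+x)\bigr)=c\,\mathcal T^*[1]=c=\rho\bigl(c(1+x)\bigr)$.

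\emph{Action of $\mathscr N_0$.} For $P=x^e\pi(\zeta)$ we have $P(1)=\pi(0)$, and $x^{2e}+1=x^e\,\Theta_e(\zeta)$, where $\Theta_e(\zeta)$ is the polynomial with $\Theta_e(\zeta)=x^e+x^{-e}$. Since $(x-1)^2=x\zeta$,
$$
\mathscr N_0P=x^{e-1}\,\frac{\pi(0)\Theta_e(\zeta)-2\pi(\zeta)}{\zeta}.
$$
Substituting $\zeta=-2/w$ and lowering $w^e$ to $w^{e-1}$ gives
$$
R(\mathscr N_0P)=R(P)-\tfrac{\pi(0)}2\,\mathcal S\bigl[w^e\Theta_e(-2/w)\bigr].
$$
So the even case reduces to the vanishing identity $\mathcal S[w^e\Theta_e(-2/w)]=0$ for all $e\ge1$.

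The odd case is handled the same way. There $P(1)=2\sigma(0)$ and $x^{2e+1}+1=(1+x)\,x^e\,\Upsilon_e(\zeta)$, where
$$
\Upsilon_e=\frac{x^{e+1/2}+x^{-e-1/2}}{x^{1/2}+x^{-1/2}}.
$$
The same computation reduces this case to $\mathcal T^*[w^e\Upsilon_e(-2/w)]=0$ for $e\ge1$.

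\emph{The vanishing identities (main step).} Here I would use the moment representation $\mathcal S[y^k]=E_{2k}=\mathbb E[U^{2k}]$, where $U$ has density $\tfrac12\operatorname{sech}(\pi u/2)$ as in Lemma~\ref{lem:cdh}. Treat $y=U^2$ formally, with $\mathbb E[e^{sU}]=\sec s$.

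With $x=e^{i\varphi}$ and $u=\tan(\varphi/2)$ one gets $-2/w=\zeta$, and $(1+u^2)e^{i\varphi}=(1+iu)^2$. This should turn $w^e\Theta_e(-2/w)$ into $2^{1-e}(-1)^e\,\mathrm{Re}\,(1+iU)^{2e}$, up to a sign convention to be fixed.

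Since odd moments of $U$ vanish, the claim becomes: the even-order coefficients of $\mathbb E[e^{t(1+iU)}]$ vanish beyond order $0$. This holds because
$$
\mathbb E[e^{t(1+iU)}]=e^t\sec(it)=\frac{e^t}{\cosh t}=1+\tanh t,
$$
and $\tanh t$ is odd.

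For $\mathcal T^*=\mathcal S\circ(2y\partial_y+1)$, the generating function becomes $\frac{d}{ds}\bigl(s\sec s\bigr)$ in place of $\sec s$. I expect the analogous computation, now with $(1+iU)^{2e+1}/(1+iU)$-type expressions coming from $\Upsilon_e$, to reduce to an odd/even parity statement for a function like $\frac{d}{dt}\bigl(t(1+\tanh t)\bigr)$ or a close variant.

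I expect this last odd-index computation to be the main obstacle. Getting the half-integer powers in $\Upsilon_e$ and the signs right is delicate, and I would first check $e=1,2$ numerically against $B_k=1,2,8,\dots$. Finally, \eqref{eq:ABmom} is the case $\pi=\sigma=1$, expanded by the binomial theorem.
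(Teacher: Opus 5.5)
Your reduction is the paper's own. You use the same normal forms, rewrite $\mathscr N_0$ in the $\zeta$-coordinates, and induct on $e$ down to the two vanishing identities $\mathcal S[w^e\Theta_e(-2/w)]=0$ and $\mathcal T^*[w^e\Upsilon_e(-2/w)]=0$ for $e\ge1$. Your $\Upsilon_e$ equals $(-1)^e\sum_{j=-e}^{e}(-1)^jx^j$, a Laurent polynomial, so no half-integer powers or branches actually arise.

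The parametrisation needs one correction. With $x=e^{i\varphi}$ and $u=\tan(\varphi/2)$ you get $\zeta=-4\sin^2(\varphi/2)=-4y/(1+y)$, not $-2/w=-4/(1+y)$. You must take $x=-e^{i\varphi}$. This gives $\zeta=-4\cos^2(\varphi/2)=-2/w$ and $\Theta_e=(-1)^e\,2\cos e\varphi$, hence exactly $w^e\Theta_e(-2/w)=(-1)^e2^{1-e}\operatorname{Re}(1+iu)^{2e}$. After this fix your even case is correct. The computation $\mathbb E[e^{t(1+iU)}]=e^t\operatorname{sech}t=1+\tanh t$ is a nice equivalent of the paper's use of $\sec x\cdot\cos x=1$: both say $\sum_j(-1)^j\binom{2e}{2j}E_{2j}=0$ for $e\ge1$.

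The genuine gap is the odd-index identity. It carries the whole $\mathcal T^*$ half of the lemma, hence $B_k$, and you leave it as an expectation whose heuristic points the wrong way. Extend $\mathcal T^*$ by zero on odd powers of $U$. Then $\mathcal T^*[(1+iU)^{2e}]$ does not vanish: for $e=1$ it equals $1-3E_2=-2$. Likewise $\tfrac{d}{dt}\bigl(t(1+\tanh t)\bigr)$ is not the relevant series.

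What you need comes from computing $\Upsilon_e$ at $x=-e^{i\varphi}$, namely $\Upsilon_e=(-1)^e\sin\bigl((2e+1)\varphi/2\bigr)/\sin(\varphi/2)$. This gives
\[
w^e\Upsilon_e(-2/w)=(-1)^e2^{-e}\operatorname{Im}(1+iu)^{2e+1}/u=(-1)^e2^{-e}\sum_j(-1)^j\tbinom{2e+1}{2j+1}y^j ,
\]
so the denominator is $u$, not $1+iu$. The required vanishing $\sum_j(-1)^j\binom{2e+1}{2j+1}(2j+1)E_{2j}=0$ then reduces in one line to your even case via $(2j+1)\binom{2e+1}{2j+1}=(2e+1)\binom{2e}{2j}$, which is exactly what the paper does.

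In your own language: from $\mathcal T^*[p(U^2)]=\mathbb E[\partial_U(U\,p(U^2))]$ you get $\mathcal T^*[\operatorname{Im}(1+iU)^{2e+1}/U]=(2e+1)\,\mathbb E[\operatorname{Re}(1+iU)^{2e}]=0$. The matching generating function is $\sum_n\frac{t^n}{n!}\mathcal T^*[\operatorname{Im}(1+iU)^n/U]=t(1+\tanh t)$. Its odd coefficients beyond $t^1$ vanish because $t\tanh t$ is even.
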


\begin{proof}
If $P$ is palindromic of index $2e$, the Laurent polynomial $x^{-e}P$ is
invariant under $x\mapsto1/x$, hence a polynomial in $x+x^{-1}=\zeta+2$:
this gives $P=x^e\pi(\zeta)$ with $\deg\pi\le e$. If the index is odd,
then $P(-1)=-P(-1)$, so $(1+x)\mid P$, and $P/(1+x)$ is palindromic of
even index. At fixed $e$ both sides of \eqref{eq:rhomom} are linear in
$\pi$ (resp.\ $\sigma$).

Introduce
$$
G_e(\zeta):=x^e+x^{-e},
\qquad
H_e(\zeta):=\sum_{j=-e}^{e}(-1)^jx^j ,
$$
both invariant under $x\mapsto1/x$, hence polynomials in $\zeta$ of
degree $e$, with $G_e(0)=2$ and $H_e(0)=(-1)^e$ (value at $x=1$). Using
$(x-1)^2=x\zeta$, $x^{2e}+1=x^e\,G_e(\zeta)$ and
$x^{2e+1}+1=(-1)^e(1+x)\,x^e\,H_e(\zeta)$, the operator $\mathscr N_0$
reads, in these coordinates,
$$
\mathscr N_0\bigl(x^e\pi(\zeta)\bigr)=x^{e-1}\,\tilde\pi(\zeta),
\qquad
\tilde\pi=\frac{G_e\,\pi(0)-2\pi}{\zeta}\,,
$$
$$
\mathscr N_0\bigl((1+x)x^e\sigma(\zeta)\bigr)
=(1+x)\,x^{e-1}\,\tilde\sigma(\zeta),
\qquad
\tilde\sigma=\frac{2\bigl((-1)^e\sigma(0)\,H_e-\sigma\bigr)}{\zeta}
$$
(both numerators vanish at $\zeta=0$, so the divisions are exact).

We prove \eqref{eq:rhomom} by induction on $e$. For $e=0$ both sides
equal the constant $\pi$ (resp.\ $\sigma$, since $\rho(c(1+x))=c$ and
$\mathcal T^*[1]=1$). For $e\ge1$, since $\rho(P)=\rho(\mathscr N_0P)$,
it suffices to show that the right-hand sides are unchanged when
$(e,\pi)$ is replaced by $(e-1,\tilde\pi)$, and $(e,\sigma)$ by
$(e-1,\tilde\sigma)$. Writing $\zeta^*:=-4/(1+y)$, so that
$1/\zeta^*=-(1+y)/4$,
$$
\begin{aligned}
\Bigl(\tfrac{1+y}2\Bigr)^{e-1}\tilde\pi(\zeta^*)
&=\Bigl(\tfrac{1+y}2\Bigr)^{e}
\Bigl(\pi(\zeta^*)-\tfrac{\pi(0)}2\,G_e(\zeta^*)\Bigr),\\
\Bigl(\tfrac{1+y}2\Bigr)^{e-1}\tilde\sigma(\zeta^*)
&=\Bigl(\tfrac{1+y}2\Bigr)^{e}
\bigl(\sigma(\zeta^*)-(-1)^e\sigma(0)\,H_e(\zeta^*)\bigr),
\end{aligned}
$$
so the induction step amounts to the two vanishing statements
\begin{equation}\label{eq:GHvanish}
\mathcal S\Bigl[\Bigl(\tfrac{1+y}2\Bigr)^{\!e}G_e(\zeta^*)\Bigr]=0,
\qquad
\mathcal T^*\Bigl[\Bigl(\tfrac{1+y}2\Bigr)^{\!e}H_e(\zeta^*)\Bigr]=0
\qquad(e\ge1).
\end{equation}
These follow from the explicit expansions
\begin{equation}\label{eq:GHexpand}
\Bigl(\tfrac{1+y}2\Bigr)^{\!e}G_e(\zeta^*)
=(-1)^e2^{1-e}\sum_{j=0}^{e}(-1)^j\binom{2e}{2j}\,y^j,
\qquad
\Bigl(\tfrac{1+y}2\Bigr)^{\!e}H_e(\zeta^*)
=2^{-e}\sum_{j=0}^{e}(-1)^j\binom{2e+1}{2j+1}\,y^j .
\end{equation}
To verify \eqref{eq:GHexpand}, parametrise $y=\tan^2\varphi$; then
$\zeta^*=-4\cos^2\varphi$ corresponds to $x=-e^{-2i\varphi}$ (as
$\zeta=x+x^{-1}-2$), whence $G_e(\zeta^*)=(-1)^e\,2\cos2e\varphi$ and
$H_e(\zeta^*)=\sum_{j=-e}^{e}e^{-2ij\varphi}
=\sin\bigl((2e+1)\varphi\bigr)/\sin\varphi$, while
$1+y=\sec^2\varphi$; since
$(\cos\varphi+i\sin\varphi)^{n}=\cos^{n}\!\varphi\,(1+i\tan\varphi)^{n}$
and the right-hand sides of \eqref{eq:GHexpand} are
$(-1)^e2^{1-e}\operatorname{Re}(1+i\tan\varphi)^{2e}$ and
$2^{-e}\operatorname{Im}\bigl((1+i\tan\varphi)^{2e+1}\bigr)/\tan\varphi$,
both equalities reduce to $\cos2e\varphi=\cos2e\varphi$ and
$\sin(2e+1)\varphi=\sin(2e+1)\varphi$. Two polynomials in $y$ agreeing
for all $y=\tan^2\varphi\ge0$ agree identically. Applying the
functionals to \eqref{eq:GHexpand} now gives
$$
\sum_{j=0}^{e}(-1)^j\binom{2e}{2j}E_{2j}
=(-1)^e\,(2e)!\,[x^{2e}]\bigl(\sec x\cdot\cos x\bigr)=0
\qquad(e\ge1),
$$
since $\sec x\cdot\cos x=1$, and, using
$(2j+1)\binom{2e+1}{2j+1}=(2e+1)\binom{2e}{2j}$,
$$
\sum_{j=0}^{e}(-1)^j\binom{2e+1}{2j+1}(2j+1)E_{2j}
=(2e+1)\sum_{j=0}^{e}(-1)^j\binom{2e}{2j}E_{2j}=0 .
$$
This proves \eqref{eq:GHvanish}, hence \eqref{eq:rhomom}; and
\eqref{eq:ABmom} follows by the binomial theorem.
\end{proof}

By \eqref{eq:ABmom} and linearity, the moment functionals
$\mathcal A[v^k]:=A_k$ and $\mathcal B[v^k]:=B_k$ satisfy
$\mathcal A[F]=\mathcal S\bigl[F\bigl(\tfrac{1+y}2\bigr)\bigr]$ and
$\mathcal B[F]=\mathcal T^*\bigl[F\bigl(\tfrac{1+y}2\bigr)\bigr]$ for
every polynomial $F$: the even and odd parts of $a$ are the secant pair
of Section~\ref{sec:gen-xcos}, transported by the affine substitution
$y=2v-1$. All the orthogonal data transport along.

\begin{Lemma}\label{lem:rootaffine}
The functionals $\mathcal A$ and $\mathcal B$ are quasi-definite, with
monic orthogonal polynomials
$$
p_i(v)=2^{-i}\,\hat P_i(2v-1),
\qquad
q_m(v)=2^{-m}\,\varrho_m(2v-1),
$$
where $\hat P_i$ and $\varrho_m$ are the orthogonal polynomials of
$\mathcal S$ and $\mathcal T^*$ (Lemmas~\ref{lem:cf} and~\ref{lem:cdh};
we write $\varrho$ to keep the latter apart from the evaluation $\rho$).
Their recurrence data and norms are
$$
c^{\mathcal A}_i=4i^2+2i+1,\quad
\lambda^{\mathcal A}_i=\bigl(i(2i-1)\bigr)^2,\quad
h^{\mathcal A}_i=\mathcal A[p_i^2]
=(i!)^2\bigl((2i-1)!!\bigr)^2=4^{-i}\bigl((2i)!\bigr)^2;
$$
$$
c^{\mathcal B}_m=4m^2+4m+2,\quad
\lambda^{\mathcal B}_m=4m^4,\quad
h^{\mathcal B}_m=\mathcal B[q_m^2]
=4^m(m!)^4=4^{-m}\bigl((2m)!!\bigr)^4,
$$
and the connection coefficients are
\begin{equation}\label{eq:rootconn}
\mathcal B\bigl[p_i\,q_m\bigr]=2^{i+m}(i!)^2(m!)^2\binom{1/2}{i-m},
\qquad\text{i.e.}\qquad
p_i=\sum_{m=0}^i 2^{i-m}\Bigl(\tfrac{i!}{m!}\Bigr)^2
\binom{1/2}{i-m}\,q_m .
\end{equation}
\end{Lemma}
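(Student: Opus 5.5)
The plan is to transport everything through the affine substitution $y=2v-1$, using the identities $\mathcal A[F]=\mathcal S[F(\tfrac{1+y}2)]$ and $\mathcal B[F]=\mathcal T^*[F(\tfrac{1+y}2)]$ established just before the statement.

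\emph{Step 1: orthogonal polynomials.} The polynomial $p_i(v)=2^{-i}\hat P_i(2v-1)$ is monic of degree $i$ in $v$. Moreover $\mathcal A[p_ip_l]=2^{-i-l}\mathcal S[\hat P_i\hat P_l]$, since $F(\tfrac{1+y}2)$ with $F=p_ip_l$ is exactly $2^{-i-l}\hat P_i(y)\hat P_l(y)$. This gives orthogonality at once, with norms $h^{\mathcal A}_i=4^{-i}((2i)!)^2$ from Lemma~\ref{lem:cf}. Likewise $h^{\mathcal B}_m=4^{-m}((2m)!!)^4$ follows from Lemma~\ref{lem:cdh}. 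Nonvanishing norms give quasi-definiteness, since the Hankel determinants are products of the norms by \eqref{eq:normHankel}.

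\emph{Step 2: recurrence data.} Substitute $y=2v-1$ into $\hat P_{i+1}=(y-c_i)\hat P_i-\lambda_i\hat P_{i-1}$ and divide by $2^{i+1}$. This yields
\[
p_{i+1}=\Bigl(v-\tfrac{1+c_i}{2}\Bigr)p_i-\tfrac{\lambda_i}{4}\,p_{i-1}.
\]
With $c_i=(2i)^2+(2i+1)^2$ one gets $(1+c_i)/2=4i^2+2i+1$, and $\lambda_i/4=(i(2i-1))^2$. The same computation with $c^*_m=8m^2+8m+3$ and $\lambda^*_m=16m^4$ gives $c^{\mathcal B}_m=4m^2+4m+2$ and $\lambda^{\mathcal B}_m=4m^4$. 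The alternative norm forms follow from routine factorial rewrites: $(2i)!=2^ii!\,(2i-1)!!$ and $(2m)!!=2^mm!$.

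\emph{Step 3: connection coefficients.} Compute
\[
\mathcal B[p_iq_m]=2^{-i-m}\,\mathcal T^*[\hat P_i\varrho_m],
\]
and insert Lemma~\ref{lem:conn3}, which gives $\mathcal T^*[\hat P_i\varrho_m]=(i!)^2(m!)^2 2^{2i+2m}\binom{1/2}{i-m}$. This produces $2^{i+m}(i!)^2(m!)^2\binom{1/2}{i-m}$. Dividing by $h^{\mathcal B}_m=4^m(m!)^4$, as in \eqref{eq:connproj}, gives the expansion coefficient $2^{i-m}(i!/m!)^2\binom{1/2}{i-m}$. Equivalently, expanding $\hat P_i=\sum_m\tilde\kappa_{i,m}\varrho_m$ and rescaling gives the same coefficient $2^{-i}\tilde\kappa_{i,m}2^{m}\cdot4^{i-m}$.

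No step is a genuine obstacle, since everything is a change of variables. The only point needing care is the bookkeeping of the powers of $2$ in Step 3, and it is resolved by checking both formulations against each other.
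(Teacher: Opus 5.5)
Your proof is correct and follows the paper's own argument essentially verbatim: orthogonality and norms are transported through $y=2v-1$, the recurrence data change by $c\mapsto(c+1)/2$, $\lambda\mapsto\lambda/4$, and the connection formula is pulled back from Lemma~\ref{lem:conn3} via $\mathcal B[p_iq_m]=2^{-i-m}\mathcal T^*[\hat P_i\varrho_m]$. There is one bookkeeping slip, confined to your cross-check: $\tilde\kappa_{i,m}$ in Lemma~\ref{lem:conn3} already contains the factor $4^{i-m}$, so the rescaled coefficient is just $2^{m-i}\tilde\kappa_{i,m}=2^{i-m}(i!/m!)^2\binom{1/2}{i-m}$, and the extra factor $\cdot4^{i-m}$ you appended would count it twice.
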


\begin{proof}
The polynomials $p_i$, $q_m$ are monic of degrees $i$, $m$ in $v$, and
orthogonality and norms transport directly:
$\mathcal A[p_ip_l]=\mathcal S\bigl[2^{-i-l}\hat P_i\hat P_l\bigr]
=\delta_{il}\,4^{-i}\bigl((2i)!\bigr)^2$ by Lemma~\ref{lem:cf}, and
$\mathcal B[q_mq_{m'}]
=\delta_{mm'}\,4^{-m}\bigl((2m)!!\bigr)^4$ by Lemma~\ref{lem:cdh}. The
affine substitution transforms the three-term recurrence data by
$c\mapsto(c+1)/2$ and $\lambda\mapsto\lambda/4$; applied to
$c^{\mathcal S}_i=(2i)^2+(2i+1)^2$,
$\lambda^{\mathcal S}_i=\bigl((2i-1)(2i)\bigr)^2$ and
$c^*_m=8m^2+8m+3$, $\lambda^*_m=(2m)^4$, this yields the displayed
values. Finally
$\mathcal B[p_iq_m]=2^{-i-m}\,\mathcal T^*[\hat P_i\varrho_m]$, so
\eqref{eq:rootconn} is the image of the connection formula
$\mathcal T^*[\hat P_i\varrho_m]
=(i!)^2(m!)^2\,2^{2i+2m}\binom{1/2}{i-m}$ of Lemma~\ref{lem:conn3}.
\end{proof}

\begin{proof}[Proof of Theorem~\ref{thm:rootlink}]
By Theorem~\ref{thm:conj54double} it suffices to evaluate $\HH_N(a)$.
Both parity functionals of $a$ are classical (Lemma~\ref{lem:rootaffine}),
so, exactly as in the proof of Proposition~\ref{prop:sec1},
Lemma~\ref{lem:bindetGene} gives
$$
\HH_N(a)=(-1)^{\binom{\bar N}2}
\prod_{l=0}^{\bar N-1}h^{\mathcal A}_l\cdot
\prod_{m=0}^{\underline N-1}h^{\mathcal B}_m\cdot
\det\bigl(\kappa_{\bar N+r,\,m}\bigr)_{0\le r,m\le\underline N-1},
\qquad
\kappa_{i,m}=2^{\,i-m}\Bigl(\tfrac{i!}{m!}\Bigr)^2\binom{1/2}{i-m},
$$
with the connection coefficients of \eqref{eq:rootconn} --- those of that
proof with the weight $4^{\,i-m}$ replaced by $2^{\,i-m}$. Pulling
$(i!)^2\,2^i$ out of row $r$ (where $i=\bar N+r$) and $(m!)^{-2}\,2^{-m}$
out of column $m$ leaves the same binomial determinant, worth
$\Omega(1)=2^{-\binom N2}$ together with the sign
(Lemma~\ref{lem:Pahalf}), so
$$
\HH_N(a)=2^{-\binom N2}\,
\prod_{l=0}^{\bar N-1}4^{-l}\bigl((2l)!\bigr)^2\cdot
\prod_{i=\bar N}^{N-1}(i!)^2\,2^i\cdot
\prod_{m=0}^{\underline N-1}(m!)^2\,2^m .
$$
Comparing factor by factor with the corresponding display in the proof of
Proposition~\ref{prop:sec1}, which carries $4^i$ and $4^m$ in place of
$2^i$ and $2^m$ and no $4^{-l}$,
$$
\frac{\HH_N(a)}{\HH_N\bigl((1+x)/\cos x\bigr)}
=\prod_{l=0}^{\bar N-1}4^{-l}\cdot
\prod_{i=\bar N}^{N-1}2^{-i}\cdot
\prod_{m=0}^{\underline N-1}2^{-m}
=2^{-\tau_N-\binom N2},
$$
since $2\binom{\bar N}2+\bigl(\binom N2-\binom{\bar N}2\bigr)
+\binom{\underline N}2=\tau_N+\binom N2$, using
$\sum_{i=\bar N}^{N-1}i=\binom N2-\binom{\bar N}2$. Multiplying by
$2^{\tau_N}$ and using Proposition~\ref{prop:sec1},
$$
\det M_N=2^{\tau_N}\,\HH_N(a)
=2^{-\binom N2}\,\HH_N\bigl((1+x)/\cos x\bigr)
=\bigl((N-1)!!\bigr)^2\prod_{k=1}^{N-2}(k!!)^6 .
$$
It remains to identify this with the product of \eqref{eq:conj54}. Both
equal $1$ for $N=1$, and both are multiplied by
$(N!!)^2\bigl((N-1)!!\bigr)^4$ when $N\to N+1$: for the double-factorial
product this is immediate, and for
$\prod_{k=1}^{N-1}\bigl((N-k)!\bigr)^{\epsilon(k)}
=\prod_{j=1}^{N-1}(j!)^{\epsilon(N-j)}$ the shift $N\to N+1$ flips every
exponent (as $\epsilon(k)+\epsilon(k+1)=6$), so the ratio is
$$
(N!)^{2}\,
\prod_{\substack{1\le j\le N-1\\ N-j\ \mathrm{odd}}}(j!)^{2}
\Bigm/
\prod_{\substack{1\le j\le N-1\\ N-j\ \mathrm{even}}}(j!)^{2}
=(N!)^2\bigl((N-1)!!\bigr)^2
=(N!!)^2\bigl((N-1)!!\bigr)^4,
$$
the middle equality because the quotient telescopes over consecutive
factorials:
$\frac{(N-1)!}{(N-2)!}\cdot\frac{(N-3)!}{(N-4)!}\cdots
=(N-1)(N-3)\cdots=(N-1)!!$.
\end{proof}

Thus the root conjecture of Chapoton--Han and the Euler dilated
Hankel determinant are one and the same evaluation: the change of column
basis of Theorem~\ref{thm:conj54double} strips the powers of $1+x$, and
the substitution $y=2v-1$ of Lemma~\ref{lem:rootaffine} carries what
remains onto Proposition~\ref{prop:sec1}.

\section{Corollaries: explicit evaluations}\label{sec:coro}

This section collects the explicit members of the families evaluated in this
paper. No new proofs are needed: each corollary is a substitution of
parameters into the theorem quoted at the head of its subsection, recording
the generating function $f$, the determinant $\HH_n(f)$ and, where a closed
form is available, the single and double shifts $\HH_n^{(1)}=\HH_n(f')$ and
$\HH_n^{(2)}=\HH_n(f'')$.

\subsection{The Beta family $a_m=\rho^m(\alpha)_m/(\beta)_m$}

Every member of the Beta family \eqref{eq:betadef} is a plain sequence
$a_m=\rho^m(\alpha)_m/(\beta)_m$ (we normalise $a_0=1$), with term ratio
$a_{m+1}/a_m=\rho\,(m+\alpha)/(m+\beta)$; when the denominator parameter
$\beta$ is absent the sequence is $\rho^m(\alpha)_m$. By
Corollary~\ref{cor:betadouble} (resp.\ Proposition~\ref{prop:degenerate} when
$\beta$ is absent) the dilated Hankel determinant $\HH_n=\det(a_{2i+j})$ is a
closed product of factorials. The shifted sequence $a_{m+1}$ is again a Beta
member, with $(\rho,\alpha,\beta)\mapsto(\rho,\alpha+1,\beta+1)$, so the
single shift $\HH_n^{(1)}:=\det(a_{2i+j+1})$ is likewise closed. We record
several well-known members with their $(\rho,\alpha,\beta)$, giving both
$\HH_n$ and $\HH_n^{(1)}$.

\begin{Corollary}[Catalan numbers]\label{cor:bcatalan}
$C_m=\frac1{m+1}\binom{2m}m=4^m(\tfrac12)_m/(2)_m$, so
$(\rho,\alpha,\beta)=(4,\tfrac12,2)$;
the shift $C_{m+1}$ is the member $(4,\tfrac32,3)$, and
$$
\begin{aligned}
\HH_n(C)&=2^{\binom n2}\prod_{i=0}^{n-1}\binom{4i}{2i}\frac{i!\,(n+i)!}{(n+2i)!}=1,3,32,1232,\dots,\\
\HH_n^{(1)}&=2^{\binom n2}\prod_{i=0}^{n-1}\frac{i!\,(4i+2)!\,(n+i)!}{(2i)!\,(2i+1)!\,(n+2i+1)!}.
\end{aligned}
$$
\end{Corollary}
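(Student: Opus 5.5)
The plan is to substitute into Theorem~\ref{thm:beta} with the row indices $x_i=2i$ (for $\HH_n(C)$) and $x_i=2i+1$ (for the shift). The latter is legitimate because the shifted sequence is again a Beta member, or equivalently because Theorem~\ref{thm:beta} allows arbitrary increasing row indices. In both cases $\prod_{i<j}(x_j-x_i)=2^{\binom n2}\prod_{k<n}k!$. For $\HH_n(C)$ nothing new is needed: \eqref{eq:catalandouble} of Corollary~\ref{cor:catalan} is exactly the first display. I would only check that $\binom{4i}{2i}=(4i)!/((2i)!)^2$ combines with the row factor $(2x)!/(x!\,(x+n)!)$ at $x=2i$ and with the constant $\prod_i (n+i)!/(2i)!$ to give $\binom{4i}{2i}\,i!\,(n+i)!/(n+2i)!$.

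For the shift, I would apply the general Catalan formula \eqref{eq:catalangen} with $x_i=2i+1$ rather than recomputing the Beta parameters $(4,\tfrac32,3)$. This gives
\[
2^{\binom n2}\prod_{k<n}k!\,\prod_{i=0}^{n-1}\frac{(4i+2)!\,(n+i)!}{(2i)!\,(2i+1)!\,(2i+1+n)!}.
\]
Absorbing $\prod_{k<n}k!=\prod_i i!$ into the product yields the displayed formula. Equivalently, one can use Corollary~\ref{cor:betadouble} at $(\rho,\alpha,\beta)=(4,\tfrac32,3)$, where $(\beta-\alpha)_i=(\tfrac32)_i$ and the row factor is $C_{2i+1}/(2i+3)_{n-1}$; the constant collapses via $(\tfrac32)_i=(2i+1)!/(4^i i!)$ exactly as in the derivation of Corollary~\ref{cor:catalan}.

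The only point requiring care is this bookkeeping of factorials: matching $(2x)!/(x!\,(x+n)!)$ at $x=2i+1$ against the claimed denominators $(2i+1)!\,(n+2i+1)!$. That is a one-line check, so there is no genuine obstacle. As a sanity check I would confirm small values, for instance $n=1$ giving $C_1=1$ and $n=2$ giving $\det\begin{pmatrix}1&2\\5&14\end{pmatrix}=4$. The formula at $n=2$ gives $2\cdot\frac{2!\,2!}{0!\,1!\,3!}\cdot\frac{1\cdot 6!\cdot 3!}{2!\,3!\,5!}=2\cdot\frac{4}{6}\cdot 3=4$.
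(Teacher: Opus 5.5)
Your proposal is correct and is essentially the paper's own argument: the corollary is a direct substitution into Theorem~\ref{thm:beta}, equivalently into \eqref{eq:catalangen}. Taking $x_i=2i+1$ with the original Catalan parameters, rather than re-parametrising the shift as $(4,\tfrac32,3)$ with $x_i=2i$, is an immaterial bookkeeping choice, which you already note. Your factorial matching and the checks at $n=1,2$ are right.
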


\begin{Corollary}[central binomial coefficients]\label{cor:bcbinom}
$\binom{2m}m=4^m(\tfrac12)_m/(1)_m$, so $(\rho,\alpha,\beta)=(4,\tfrac12,1)$;
the shift $\binom{2m+2}{m+1}$ is the member $(4,\tfrac32,2)$, and
$$
\begin{aligned}
\HH_n&=2^{\binom n2+n-1}\prod_{i=0}^{n-1}\binom{4i}{2i}\frac{i!\,(n-1+i)!}{(n-1+2i)!}=1,8,224,\dots,\\
\HH_n^{(1)}&=2^{\binom n2+n-1}\prod_{i=0}^{n-1}\frac{i!\,(4i+2)!\,(n-1+i)!}{(2i)!\,(2i+1)!\,(n+2i)!}.
\end{aligned}
$$
\end{Corollary}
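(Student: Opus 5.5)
This is a direct specialisation of Theorem~\ref{thm:beta}, followed by factorial bookkeeping.

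\emph{Parameters.} $\binom{2m}m=4^m(\tfrac12)_m/(1)_m$ is the member $(\rho,\alpha,\beta)=(4,\tfrac12,1)$ of \eqref{eq:betadef}. For the shifted sequence $b_{m+1}$ we have
\[
\frac{b_{m+2}}{b_{m+1}}=4\,\frac{m+\tfrac32}{m+2},
\]
so, after normalising by $b_1=2$, it is the member $(4,\tfrac32,2)$. This means $\HH_n^{(1)}=2^n\HH_n(\mathbf a')$ with $a'_m=4^m(\tfrac32)_m/(2)_m=\binom{2m+1}m$. The formula for $\HH_n$ is exactly \eqref{eq:cbinomdouble} of Corollary~\ref{cor:cbinom}, so only the shift needs work.

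\emph{Main step.} Apply Corollary~\ref{cor:betadouble} to $(4,\tfrac32,2)$. We need three ingredients:
\begin{itemize}
\item the constant $4^{\binom n2}\prod_i(\tfrac12)_i=\prod_i (2i)!/i!$, since $\beta-\alpha=\tfrac12$;
\item the factor $2^{\binom n2}\prod_i i!$ coming from the nodes $x_i=2i$;
\item the row factors $a'_{2i}/(2i+2)_{n-1}=\dfrac{(4i+1)!}{(2i)!\,(2i+1)!}\cdot\dfrac{(2i+1)!}{(2i+n)!}$.
\end{itemize}
Equivalently, this is Example~\ref{ex:beta}\eqref{eq:exodd}, already recorded there:
\[
\HH_n\bigl(\tbinom{2m+1}m\bigr)=2^{\binom n2}\prod_i \frac{i!\,(4i+1)!\,(n+i)!}{(2i+1)!\,(2i)!\,(2i+n)!}.
\]
Multiplying by $2^n$ and using $2(4i+1)!/(2i+n)!=\dfrac{(4i+2)!}{(2i+1)(2i+n)!}$ rewrites each factor; the target then asks for the reindexing $(n+i)!/(2i+n)!\leftrightarrow (n-1+i)!/(n+2i)!$.

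\emph{Expected obstacle.} The main difficulty is reconciling these exponent and factorial shapes. The ratio of the two shapes, per $i$, is $\dfrac{(n+i)(2i+1)}{(4i+2)(n+2i+1)}$ times an adjustment. The difference lies in the products $\prod_i (n+i)$ versus $\prod_i (n+2i+1)$ and in the powers of $2$. I would prove the needed identity the way the paper handles \eqref{eq:catalangen}--\eqref{eq:cbinomgen}: check both sides agree at $n=1$, then compare their multipliers under $n\mapsto n+1$. If a stray constant such as $2^{n-1}$ survives, I would recheck the normalisation $b_1=2$ against the $2^{n-1}$ in the displayed formula. Note that \eqref{eq:cbinomgen} with $x_i=2i+1$ gives $\HH_n^{(1)}$ directly, and I would use it as an independent cross-check.
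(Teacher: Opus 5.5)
Your argument is correct and is essentially the paper's: the corollary is a pure substitution into Theorem~\ref{thm:beta}. Every ingredient you list checks out, including the normalisation $\binom{2m+2}{m+1}=2\binom{2m+1}{m}$, which gives $\HH_n^{(1)}=2^n\HH_n(\mathbf a')$.

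The one real flaw is your ``expected obstacle'' paragraph, which misdiagnoses a discrepancy that is actually trivial. After you rewrite $2(4i+1)!=(4i+2)!/(2i+1)$, the factor $(2i+n)!$ appears in the denominator of both your product and the target. So no $(n+2i+1)$ ever arises, and your stated per-$i$ ratio is wrong. The true per-$i$ ratio of your form to the target is $(n+i)/(2i+1)$. This is set against the target's extra prefactor $2^{n-1}$, and
\[
\prod_{i=0}^{n-1}\frac{n+i}{2i+1}=\frac{(2n-1)!}{(n-1)!\,(2n-1)!!}=\frac{(2n-2)!!}{(n-1)!}=2^{n-1}.
\]
Hence no stray constant survives, and $b_1=2$ is the right normalisation.

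Simpler still, promote your cross-check to the main step. Putting $x_i=2i+1$ in \eqref{eq:cbinomgen} gives $\prod_{i<j}(x_j-x_i)=2^{\binom n2}\prod_i i!$ and row factors $\frac{(4i+2)!\,(n-1+i)!}{(2i)!\,(2i+1)!\,(2i+n)!}$. Together with the prefactor $2^{n-1}$, this is the displayed formula verbatim; the $2^{n-1}$ and the $(n-1+i)!$ are exactly the fingerprints of \eqref{eq:cbinomgen}. No reconciliation is needed on that route.
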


\begin{Corollary}[$\binom{2m+1}m$]\label{cor:b2m1}
$\binom{2m+1}m=4^m(\tfrac32)_m/(2)_m$, so $(\rho,\alpha,\beta)=(4,\tfrac32,2)$;
the shift $\binom{2m+3}{m+1}$ is the member $(4,\tfrac52,3)$, and
$$
\HH_n=2^{\binom n2}\prod_{i=0}^{n-1}\frac{(4i+1)!}{(2i+n)!}=1,5,96,6864,\dots,\qquad
\HH_n^{(1)}=2^{\binom n2}\prod_{i=0}^{n-1}\frac{(4i+3)!}{(2i+1)\,(2i+n+1)!}.
$$
\end{Corollary}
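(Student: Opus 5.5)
The plan is a direct substitution into Theorem~\ref{thm:beta}, done once with the row indices $x_i=2i$ (this is Corollary~\ref{cor:betadouble}) and once with $x_i=2i+1$. For the latter, note that $\det(a_{2i+j+1})=\det(a_{x_i+j})$ with $x_i=2i+1$, which are admissible integer nodes. So there is no need to pass to the shifted member $(4,\tfrac52,3)$, although that route would give the same result. First I confirm the parameters: the ratio $\binom{2m+3}{m+1}/\binom{2m+1}{m}=\frac{(2m+3)(2m+2)}{(m+2)(m+1)}=4\,\frac{m+3/2}{m+2}$, so $(\rho,\alpha,\beta)=(4,\tfrac32,2)$ with $a_0=1$, and $\beta$ is not a nonpositive integer.

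\emph{The constant.} For both choices of nodes, $\prod_{i<j}(x_j-x_i)=2^{\binom n2}\prod_{i<n}i!$. The constant of \eqref{eq:beta} is $4^{\binom n2}\prod_i(\beta-\alpha)_i=4^{\binom n2}\prod_i(\tfrac12)_i$. Using $(\tfrac12)_i=(2i)!/(4^i\,i!)$, the product $i!\,(\tfrac12)_i$ collapses to $(2i)!/4^i$, and the powers $4^{\pm\binom n2}$ cancel. Both determinants therefore equal $2^{\binom n2}\prod_i(2i)!$ times the row factors $\prod_i a_{x_i}/(x_i+2)_{n-1}$.

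\emph{Row factors.} For $x=2i$, write $a_{2i}=\binom{4i+1}{2i}=\frac{(4i+1)!}{(2i)!\,(2i+1)!}$ and $(2i+2)_{n-1}=\frac{(2i+n)!}{(2i+1)!}$. The row factor is then $\frac{(4i+1)!}{(2i)!\,(2i+n)!}$, and after absorbing $(2i)!$ this gives the stated $\HH_n$. For $x=2i+1$, write $a_{2i+1}=\frac{(4i+3)!}{(2i+1)!\,(2i+2)!}$ and $(2i+3)_{n-1}=\frac{(2i+n+1)!}{(2i+2)!}$. The row factor is $\frac{(4i+3)!}{(2i+1)!\,(2i+n+1)!}$, and multiplying by $(2i)!$ leaves $\frac{(4i+3)!}{(2i+1)\,(2i+n+1)!}$, which is the stated $\HH_n^{(1)}$.

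\emph{Check.} As a sanity check, the values $1,5,96,\dots$ follow at $n=1,2,3$; for example, at $n=2$ the product is $2\cdot\tfrac12\cdot5=5$. There is no genuine obstacle here. The only care needed is the Pochhammer-to-factorial bookkeeping, in particular the cancellation of the powers of $4$ against $\rho^{\binom n2}$.
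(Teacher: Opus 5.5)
Your proof is correct and is essentially the paper's own: the corollary is a pure parameter substitution into Theorem~\ref{thm:beta} (Corollary~\ref{cor:betadouble} for $\HH_n$). Your use of the odd nodes $x_i=2i+1$, instead of the shifted member $(4,\tfrac52,3)$ at $x_i=2i$, is the same computation, since $\beta-\alpha=\tfrac12$ is unchanged by the shift and the row factors $a_{2i+1}/(2i+3)_{n-1}$ coincide. The side claim that $\binom{2m+3}{m+1}$ is the member $(4,\tfrac52,3)$ follows from your ratio computation with $m\mapsto m+1$; since it is part of the corollary, it is worth stating explicitly.
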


\begin{Corollary}[factorials]\label{cor:bfact}
$m!=(1)_m$, so $(\rho,\alpha,\beta)=(1,1,\text{--})$ ($\beta$ absent); the
shift is $(m+1)!=(2)_m$, and
$$
\HH_n(m!)=2^{\binom n2}\prod_{i=0}^{n-1}i!\,(2i)!,\qquad
\HH_n^{(1)}=2^{\binom n2}\prod_{i=0}^{n-1}i!\,(2i+1)! .
$$
\end{Corollary}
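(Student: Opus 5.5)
The plan is to read off both formulas from Proposition~\ref{prop:degenerate}, since the sequence and its shift are both Beta members with the denominator parameter absent. We have $m!=(1)_m$, which is the case $a_0=1$, $\rho=1$, $\alpha=1$. For the shift, $(m+1)!=(2)_m$ is the case $a_0=1$, $\rho=1$, $\alpha=2$; equivalently, it is the map $(\rho,\alpha)\mapsto(\rho,\alpha+1)$ noted at the head of this subsection. Both are already covered by Corollary~\ref{cor:factorial} with $r=0$ and $r=1$, so the argument consists only of substitution and a bookkeeping check of the factorial products.

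Take $x_i=2i$ in \eqref{eq:degenerate}. The Vandermonde factor is
\[
\prod_{0\le i<j\le n-1}(2j-2i)=2^{\binom n2}\prod_{0\le i<j\le n-1}(j-i)=2^{\binom n2}\prod_{i=0}^{n-1}i!,
\]
using the standard evaluation $\prod_{i<j}(j-i)=\prod_{k=1}^{n-1}k!$ (the $i=0$ term $0!=1$ is harmless). The row factor is $\prod_i a_{2i}$. For $a_m=m!$ this is $\prod_i(2i)!$, which gives the first formula. For $a_m=(m+1)!$ the row factor is $\prod_i(2i+1)!$, which gives the second formula. Since $\rho=1$, the prefactor $\rho^{\binom n2}$ disappears.

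No step is a real obstacle. The only points to check are that the index conventions agree, namely that $\prod_{i=0}^{n-1}i!$ matches $\prod_{k=1}^{n-1}k!$ in Corollary~\ref{cor:factorial}, and that the shifted determinant $\det(a_{2i+j+1})$ for $a_m=m!$ is exactly the dilated determinant of $b_m=(m+1)!$. The latter holds by definition.
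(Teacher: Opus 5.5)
Your proposal is correct and follows the paper's own route: substitute $x_i=2i$, $\rho=1$ and $\alpha\in\{1,2\}$ into Proposition~\ref{prop:degenerate}, exactly as in the derivation of Corollary~\ref{cor:factorial} with $r=0$ and $r=1$. Your bookkeeping of the Vandermonde factor, $2^{\binom n2}\prod_{i<n}i!$, and of the row factors, $\prod_i(2i)!$ and $\prod_i(2i+1)!$, is right.
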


\begin{Corollary}[double factorials]\label{cor:bddf}
$(2m-1)!!=2^m(\tfrac12)_m$ and $(2m+1)!!=2^m(\tfrac32)_m$, so
$(\rho,\alpha,\beta)=(2,\tfrac12,\text{--})$
and $(2,\tfrac32,\text{--})$; here the shift sends
$(2m-1)!!\mapsto(2m+1)!!\mapsto(2m+3)!!$, so
$$
\begin{aligned}
\HH_n\bigl((2m-1)!!\bigr)&=4^{\binom n2}\prod_{i=0}^{n-1}i!\,(4i-1)!!,\\
\HH_n\bigl((2m+1)!!\bigr)&=4^{\binom n2}\prod_{i=0}^{n-1}i!\,(4i+1)!!,\\
\HH_n^{(1)}\bigl((2m+1)!!\bigr)&=4^{\binom n2}\prod_{i=0}^{n-1}i!\,(4i+3)!! .
\end{aligned}
$$
\end{Corollary}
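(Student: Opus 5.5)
The plan is to read off all three evaluations from the degenerate Beta evaluation, Proposition~\ref{prop:degenerate}, with row indices $x_i=2i$ (for the plain dilated determinants) or $x_i=2i+1$ (for the single shift). Since $\prod_{0\le i<j\le n-1}(x_j-x_i)=2^{\binom n2}\prod_{k<n}k!$ in both cases, each determinant reduces to $\rho^{\binom n2}\,2^{\binom n2}\prod_{k<n}k!\,\prod_i a_{x_i}$. Here $\rho=2$, so the prefactor is $4^{\binom n2}\prod_{i<n}i!$, exactly as displayed, and only the row product $\prod_i a_{x_i}$ remains to be identified.

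For $(2m-1)!!=2^m(\tfrac12)_m$, Proposition~\ref{prop:degenerate} applies with $(\rho,\alpha)=(2,\tfrac12)$ and $x_i=2i$, giving row factors $a_{2i}=(4i-1)!!$. For $(2m+1)!!=2^m(\tfrac32)_m$ we take $(\rho,\alpha)=(2,\tfrac32)$ and $x_i=2i$, so $a_{2i}=(4i+1)!!$; this is also \eqref{eq:exdf} of Example~\ref{ex:beta}. For the shift $\HH_n^{(1)}((2m+1)!!)=\det(a_{2i+j+1})$ the entries are $(4i+2j+3)!!$. This is the same sequence evaluated at $x_i=2i+1$, so Proposition~\ref{prop:degenerate} with $\alpha=\tfrac32$ gives row factors $a_{2i+1}=(4i+3)!!$. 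Equivalently, one may view the shifted sequence $(2m+3)!!=3\cdot2^m(\tfrac52)_m$ as the member with $\alpha=\tfrac52$ and $a_0=3$, taken at $x_i=2i$; the row factors are the same.

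The only point needing care is the convention $(-1)!!=1$ in the first formula at $i=0$. There $a_0=(2\cdot0-1)!!=1$, consistent with the paper's convention that $0!!=(-1)!!=1$. Beyond that there is no real obstacle: the identities are direct substitutions, and checking $n=1,2$ gives confirmation. For example, $\HH_2((2m+1)!!)=\det\begin{pmatrix}1&3\\3&15\end{pmatrix}\cdot\!$ (first row $1,3$; second row $a_2,a_3=15,105$) equals $105-45=60=4\cdot 1\cdot 1\cdot 15$, which matches $4^{1}\cdot0!\,1!!\cdot1!\,5!!$.
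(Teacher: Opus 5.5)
Your proposal is correct and is essentially the paper's own derivation: each formula is a direct substitution into Proposition~\ref{prop:degenerate} with $\rho=2$. For the shift, your row choice $x_i=2i+1$ and the paper's view of the shifted sequence as the Beta member $\alpha\mapsto\alpha+1$ (here $(2m+3)!!=3\cdot2^m(\tfrac52)_m$) give identical row factors $(4i+3)!!$. (A cosmetic slip: in your $n=2$ check the first matrix you display, $\bigl(\begin{smallmatrix}1&3\\3&15\end{smallmatrix}\bigr)$, is the classical Hankel block; the dilated one is $\bigl(\begin{smallmatrix}1&3\\15&105\end{smallmatrix}\bigr)$, which is the matrix you actually evaluate.)
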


\begin{Corollary}[$(2m)!/m!$]\label{cor:bqf}
$\dfrac{(2m)!}{m!}=4^m(\tfrac12)_m$, so
$(\rho,\alpha,\beta)=(4,\tfrac12,\text{--})$ ($\beta$ absent);
the shift is $\dfrac{(2m+2)!}{(m+1)!}=2\cdot4^m(\tfrac32)_m$, and
$$
\HH_n=8^{\binom n2}\prod_{i=0}^{n-1}\frac{i!\,(4i)!}{(2i)!},\qquad
\HH_n^{(1)}=8^{\binom n2}\prod_{i=0}^{n-1}\frac{i!\,(4i+2)!}{(2i+1)!} .
$$
\end{Corollary}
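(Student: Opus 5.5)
We would obtain both formulas of Corollary~\ref{cor:bqf} by substituting into Proposition~\ref{prop:degenerate} (the degenerate Beta case, $\beta$ absent) with the row indices $x_i=2i$; no other tool is needed. The first step is to confirm the parametrisation. The identity $(2m)!/m!=4^m(\tfrac12)_m$ follows from $(\tfrac12)_m=(2m)!/(4^m m!)$, which was already used in the derivation of Corollary~\ref{cor:cbinom}. Hence $a_m=(2m)!/m!$ is the member $a_0=1$, $\rho=4$, $\alpha=\tfrac12$.

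For $\HH_n$, Proposition~\ref{prop:degenerate} with $x_i=2i$ gives
\[
\HH_n=4^{\binom n2}\prod_{0\le i<j\le n-1}2(j-i)\;\prod_{i=0}^{n-1}a_{2i}.
\]
The Vandermonde factor equals $2^{\binom n2}\prod_{k=1}^{n-1}k!$, exactly as in the derivation of Corollary~\ref{cor:factorial}. The row factors are $a_{2i}=(4i)!/(2i)!$. Combining $4^{\binom n2}2^{\binom n2}=8^{\binom n2}$ and writing $\prod_{k<n}k!=\prod_{i=0}^{n-1}i!$ gives the first display.

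For the single shift we apply the same proposition to the shifted sequence $b_m:=a_{m+1}=(2m+2)!/(m+1)!$, since $\HH_n^{(1)}=\det(b_{2i+j})$. Its term ratio is
\[
\frac{b_{m+1}}{b_m}=\frac{(2m+4)(2m+3)}{m+2}=4\,(m+\tfrac32),
\]
so $b_m=2\cdot4^m(\tfrac32)_m$ with $b_0=2$, $\rho=4$ and $\alpha=\tfrac32$. Proposition~\ref{prop:degenerate} does not require $a_0=1$, because the scalar $a_0$ sits inside the row factors $a_{x_i}$. It therefore gives
\[
\HH_n^{(1)}=8^{\binom n2}\prod_{i=0}^{n-1}i!\;\prod_{i=0}^{n-1}b_{2i},
\qquad b_{2i}=\frac{(4i+2)!}{(2i+1)!},
\]
which is the second display.

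There is no real obstacle here. The only point that needs care is bookkeeping: the prefactor $2$ in $b_m$ must be kept inside $b_{2i}$ and not counted a second time as a separate factor $2^n$. Checking the value at $n=1$, where $\HH_1^{(1)}=b_0=2=2!/1!$, confirms this.
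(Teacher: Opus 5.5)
Your proposal is correct and takes the same route as the paper: the corollary is a direct substitution into Proposition~\ref{prop:degenerate} with $x_i=2i$. You apply it once to $a_m=(2m)!/m!=4^m(\tfrac12)_m$, and once to the shifted member $a_{m+1}=2\cdot4^m(\tfrac32)_m$, whose value $a_0=2$ is correctly absorbed into the row factors. Your term ratio, your Vandermonde count $2^{\binom n2}\prod_{k<n}k!$, and your row factors $(4i)!/(2i)!$ and $(4i+2)!/(2i+1)!$ all check out.
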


\subsection{The Gaussian family}

By Theorem~\ref{thm:gauss} the Gaussian generating function $f(x)=e^{cx+x^2/2}$
has a closed product $\HH_n(f)$, and by Theorem~\ref{thm:dergauss}
its derivative $f'=(c+x)\,e^{cx+x^2/2}$, whose dilated Hankel determinant is the
single shift $\HH_n(f')=\det(a_{2i+j+1})$, satisfies $\HH_n(f')=c^n\,\HH_n(f)$. We
record the specialisations $c\in\{1,-1\}$, in each case giving $f$ and its
derivative $f'$.

\begin{Corollary}[$c=1$]\label{cor:g1}
The moments $a_n$ are the involution numbers of $\{1,\dots,n\}$, and
$$
f=e^{x+x^2/2},\qquad \HH_n(f)=2^{\binom n2}\prod_{k=1}^{n-1}k! .
$$
For the derivative,
$$
f'=(1+x)\,e^{x+x^2/2},\qquad \HH_n(f')=\HH_n(f)=2^{\binom n2}\prod_{k=1}^{n-1}k! ,
$$
so the determinant is invariant under differentiation.
\end{Corollary}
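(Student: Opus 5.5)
The claim is the case $c=1$ of Theorem~\ref{thm:gauss} and Theorem~\ref{thm:dergauss}, so the plan is a direct substitution plus one combinatorial identification.

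First I identify the moments. With $f(x)=e^{x+x^2/2}$, Theorem~\ref{thm:gauss} gives $a_n=\sum_{j\ge0}\binom{n}{2j}(2j-1)!!$. This is the involution count: choose the $2j$ letters that are not fixed points and pair them by one of the $(2j-1)!!$ perfect matchings, as already noted after Corollary~\ref{cor:inv}. Setting $c=1$ in Theorem~\ref{thm:gauss} turns $(2c)^{\binom n2}\prod_{k=1}^{n-1}k!$ into $2^{\binom n2}\prod_{k=1}^{n-1}k!$, which is the first display.

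Next I handle the derivative. Differentiating gives $f'(x)=(1+x)e^{x+x^2/2}$, and its coefficient sequence is the shifted sequence $a_{n+1}$. Hence $\HH_n(f')=\det(a_{2i+j+1})$, and Theorem~\ref{thm:dergauss} applies with $c=1$. It gives $\HH_n(f')=c^n\HH_n(f)=\HH_n(f)$, which is the second display and the invariance under differentiation.

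None of these steps is a real obstacle. The only point to check is that the scalar $c$ in Theorems~\ref{thm:gauss} and~\ref{thm:dergauss} may be specialised to $1$. That holds because both theorems are polynomial identities in $c$, with entries in $\QQ[c]$.
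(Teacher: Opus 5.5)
Your proposal is correct and is the same argument the paper uses: the corollary is obtained by setting $c=1$ in Theorem~\ref{thm:gauss} and Theorem~\ref{thm:dergauss}, and the involution numbers are identified by the matching count already noted after Corollary~\ref{cor:inv}. Your remark on specialising $c$ is sound. At $c=1\neq0$ it is in any case immediate, even though the proof of Theorem~\ref{thm:dergauss} divides by powers of $c$.
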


\begin{Corollary}[$c=-1$]\label{cor:gm1}
The moments are the signed involution numbers $a_n=(-1)^n I_n$ ($I_n$ the
involution count), and
$$
f=e^{-x+x^2/2},\qquad \HH_n(f)=(-2)^{\binom n2}\prod_{k=1}^{n-1}k! .
$$
For the derivative,
$$
f'=(x-1)\,e^{-x+x^2/2},\qquad \HH_n(f')=(-1)^n(-2)^{\binom n2}\prod_{k=1}^{n-1}k! .
$$
\end{Corollary}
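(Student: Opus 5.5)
This corollary is the substitution $c=-1$ into Theorem~\ref{thm:gauss} and Theorem~\ref{thm:dergauss}. Both theorems were stated for an indeterminate (or any scalar) $c$, so no new argument is needed beyond checking the substitution and the combinatorial reading of the moments.

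First, identify the moments. Write $a_n(c)=\sum_j\binom n{2j}(2j-1)!!\,c^{\,n-2j}$, as in Theorem~\ref{thm:gauss}. Every term has parity $c^{n-2j}$, hence $a_n(-1)=(-1)^n\sum_j\binom n{2j}(2j-1)!!=(-1)^nI_n$, using the involution count of Corollary~\ref{cor:inv}. Equivalently, $e^{-x+x^2/2}$ is obtained from $e^{x+x^2/2}$ by $x\mapsto -x$.

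Next, evaluate the determinant. Theorem~\ref{thm:gauss} at $c=-1$ gives
\[
\HH_n(f)=(2c)^{\binom n2}\prod_{k<n}k!=(-2)^{\binom n2}\prod_{k=1}^{n-1}k!.
\]
As a cross-check, Lemma~\ref{lem:scale} with $\beta=-1$ gives $\HH_n(f)=(-1)^{3\binom n2}\HH_n(e^{x+x^2/2})$. Since $(-1)^{3\binom n2}=(-1)^{\binom n2}$, this agrees with Corollary~\ref{cor:g1}.

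For the derivative, differentiate to get $f'=(c+x)e^{cx+x^2/2}=(x-1)e^{-x+x^2/2}$. Its coefficient sequence is $a_{m+1}$, so $\HH_n(f')$ is the single shift. Theorem~\ref{thm:dergauss} gives $\HH_n(f')=c^n\HH_n(f)=(-1)^n(-2)^{\binom n2}\prod_{k=1}^{n-1}k!$.

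There is no real obstacle. The only care needed is sign bookkeeping, namely that $(2c)^{\binom n2}$ becomes $(-2)^{\binom n2}$, and confirming that the theorems hold at $c=-1$. They do: their proofs are polynomial identities in $c$ with no division by $c$. The one place $c$ appears in a denominator, $c^{-1-j}$ in the proof of Theorem~\ref{thm:dergauss}, cancels formally, and the resulting identity is polynomial, so it specialises to $c=-1$.
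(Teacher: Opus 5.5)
Your proposal is correct and follows the paper's own route: the corollary is just the specialisation $c=-1$ of Theorem~\ref{thm:gauss} and Theorem~\ref{thm:dergauss}, with $a_n(-1)=(-1)^nI_n$ read off the explicit moment formula. Your homogeneity cross-check via Lemma~\ref{lem:scale} with $\beta=-1$ is a valid consistency check but not needed.
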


\subsection{The Euler number family}
The cases $s<0$, $t<0$, or $t-s$ even are trivial; assume henceforth
$s\ge0$, $t\ge0$ and $\delta=t-s$ odd.
Throughout $\bar n=\lceil n/2\rceil$, $\underline n=\lfloor n/2\rfloor$. With these,
Proposition~\ref{prop:family} evaluates $\HH_n$ in closed form for every
$(s,t)$; its signed factor $\Omega(\delta)$ ($\delta=t-s$) is evaluated for
$\delta\in\{\pm1,\pm3\}$ by Lemma~\ref{lem:Pahalf}.

Each corollary below records both $f$ and, for the \emph{double shift}, its
second derivative $f''$: the doubly shifted sequence $(F_{k+2})_{k\ge0}$ has
exponential generating function $f''$, and by Proposition~\ref{prop:dshift} the
determinant $\HH_n^{(2)}(f)=\HH_n(f'')$ is the unshifted one times the explicit
positive scalar $(2n-1)!!\,\sigma_{\bar n}(s)\,\sigma_{\underline n}(t)$, where
$\sigma_K(c):=\prod_{k=0}^{K-1}(c+2k+1)$; here
$$
\sigma_K(0)=(2K-1)!!,\quad \sigma_K(1)=2^K K!,\quad
\sigma_K(2)=(2K+1)!!,\quad \sigma_K(3)=2^K (K+1)!\,,
$$
and explicitly
$f''=(s{+}1)\dfrac{(s{+}1)\sin^2x+1}{\cos^{s+3}x}+(t{+}1)\dfrac{\sin x}{\cos^{t+2}x}$.
All the closed forms below were verified for $n\le12$.

Four of the cases ($s\in\{0,2\}$, $t\in\{1,3\}$) additionally carry a
\emph{single shift}: the shifted sequence $(F_{k+1})_{k\ge0}$ has exponential
generating function $f'$, and $\HH_n^{(1)}=\HH_n(f')$. On the line $t=1$ this is a
scalar multiple of $\HH_n(f)$ (Proposition~\ref{prop:shift}); on $t=3$ it carries
the extra linear factor $\Gamma_n$ of Theorem~\ref{thm:t3}. The single
shift is recorded in the four corresponding corollaries below.

We list the eight cases with $s+t$ odd and $s,t\in\{0,1,2,3\}$, beginning with the
secant/tangent (Euler) case $(s,t)=(0,1)$.

\begin{Corollary}[$s=0,\ t=1$]\label{cor:euler}
For $u_j=j^2$, $v_j=j(j+1)$ the moments are the Euler numbers $a_n=E_n$ of
\eqref{eq:eulerdef} (the alternating permutations of $\{1,\dots,n\}$), and
$$
f=\frac{1+\sin x}{\cos x}=\sec x+\tan x,\qquad
\HH_n(f)=2^{-\binom n2}\prod_{k=1}^{n-1}k!\,(2k)!
        =\prod_{k=1}^{n-1}(k!)^2\,(2k-1)!! .
$$
For the single shift,
$$
f'=\frac{1+\sin x}{\cos^2 x},\qquad
\HH_n^{(1)}(f)=\HH_n(f')=(2n-1)!!\,\HH_n(f)=(2n-1)!!\prod_{k=1}^{n-1}(k!)^2(2k-1)!! .
$$
For the double shift,
$$
f''=\frac{(1+\sin x)^2}{\cos^3 x},\qquad
\HH_n^{(2)}(f)=\HH_n(f'')=(2n-1)!!\,(2\bar n-1)!!\,2^{\underline n}\,\underline n!\;\HH_n(f).
$$
\end{Corollary}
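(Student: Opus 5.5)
The plan is to specialise the general results already in hand: Corollary~\ref{cor:euler} consists of three substitutions at $(s,t)=(0,1)$, one each into Proposition~\ref{prop:family}, Proposition~\ref{prop:shift} and Proposition~\ref{prop:dshift}, followed by factorial bookkeeping.

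The first step is to identify the generating function. At $s=0$, $t=1$ the family is $1/\cos x+\int_0^x dy/\cos^2y=\sec x+\tan x$, whose coefficients are the Euler numbers by \eqref{eq:eulerdef}. Its derivatives are $f'=(1+\sin x)/\cos^2x$ and $f''=(1+\sin x)^2/\cos^3x$; the latter follows from the displayed formula for $f''$ (at $s=0$, $t=1$ it gives $(\sin^2x+1+2\sin x)/\cos^3x$).

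The second step is to evaluate $\HH_n$ from \eqref{HHn:st} with $\delta=1$. The signed factor is $\Omega(1)=2^{-\binom n2}$ by Lemma~\ref{lem:Pahalf}. The parameter blocks are $(1)_{2l}=(2l)!$ and $(2)_{2m}=(2m+1)!$. This gives
$$
\HH_n=2^{-\binom n2}\prod_{i<n}(2i)!\prod_{l<\bar n}(2l)!\prod_{m<\underline n}(2m+1)!.
$$
It then remains to show that this equals $2^{-\binom n2}\prod_{k<n}k!\,(2k)!$, that is, that $\prod_{l<\bar n}(2l)!\prod_{m<\underline n}(2m+1)!=\prod_{k<n}k!$. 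The left side runs over the factorials $0!,1!,\dots,(n-1)!$ exactly once, split by parity: $(2l)!$ for $l<\bar n$ covers the even indices $<n$ and $(2m+1)!$ for $m<\underline n$ covers the odd indices $<n$. The second form of the answer follows from $(2k)!=2^k k!\,(2k-1)!!$, since the powers of two contribute $2^{\binom n2}$ and cancel the prefactor.

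The third step is to read off the single shift from Proposition~\ref{prop:shift}, as already recorded after that proposition: $\HH_n^{(1)}|_{s=0}=(2n-1)!!\,\HH_n|_{s=0}$. If a self-contained check is needed, one can instead substitute $s=0$ into \eqref{eq:shiftpoch}, where $(\tfrac12)_j$, $(1)_j$ and $(\tfrac12)_j$ reproduce the denominator of $K_n$. This is the only place where some care is needed, because that proposition fixed $K_n$ by precisely this specialisation. To avoid circularity, I would rederive the value directly from the proof of Proposition~\ref{prop:shift} at $s=0$. There the norms are $h^T_l=(2l)!(2l+1)!$ and $h^{S'}_c=((2c+1)!)^2$, and the binomial determinant has $a=\tfrac12$. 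Comparing these with the same data for $\HH_n$ gives the ratio $(2n-1)!!$, in the same way as the parity computation in the proof of Theorem~\ref{thm:shift}.

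The fourth step is to obtain the double shift from Proposition~\ref{prop:dshift} with the factor $(2n-1)!!\,\sigma_{\bar n}(0)\,\sigma_{\underline n}(1)$. This equals $(2n-1)!!\,(2\bar n-1)!!\,2^{\underline n}\underline n!$ by the listed values of $\sigma_K$.
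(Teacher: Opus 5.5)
Your proposal is correct and follows the paper's route. The corollary is obtained by substituting $(s,t)=(0,1)$ into Proposition~\ref{prop:family} (with $\Omega(1)=2^{-\binom n2}$ from Lemma~\ref{lem:Pahalf}), Proposition~\ref{prop:shift} and Proposition~\ref{prop:dshift}, and your parity bookkeeping and the values $\sigma_{\bar n}(0)=(2\bar n-1)!!$, $\sigma_{\underline n}(1)=2^{\underline n}\underline n!$ all check out. Your extra care with the single shift is well placed, since the paper fixes $K_n$ by citing this very corollary. The direct $s=0$ comparison of the two $\M2$ reductions does give $\prod_{l<\bar n}(2l+1)\prod_{r<\underline n}(2\bar n+2r+1)=(2n-1)!!$ and so breaks the circularity. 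Note that you must include, besides the norms, the row and column factors $(2i+1)!/(2m+1)!$ versus $(2i)!/(2m)!$ of the two connection arrays; the binomial determinants at $a=\tfrac12$ are identical and cancel.
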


\begin{Corollary}[$s=1,\ t=2$]\label{cor:12}
For $u_j=j(j+1)$, $v_j=j(j+2)$,
$$
	f=\frac{\sin x+2}{2\cos^2x} + \frac 12 \log(\sec x + \tan x),
$$
$$
	\HH_n(f)=2^{-\binom n2-\underline n}\prod_{k=1}^{n-1}(2k)!\ \prod_{j=1}^{n} j!\ >0.
$$
For the double shift,
$$
f''=\frac{4\sin^2x+3\sin x+2}{\cos^4 x},\qquad
\HH_n^{(2)}(f)=\HH_n(f'')=(2n-1)!!\,2^{\bar n}\bar n!\,(2\underline n+1)!!\;\HH_n(f).
$$
\end{Corollary}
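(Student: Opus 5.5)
The corollary is the specialisation $(s,t)=(1,2)$ of Proposition~\ref{prop:family} and Proposition~\ref{prop:dshift}. I would check three things in turn: the generating function, the unshifted product, and the double shift.

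\emph{The generating function.} By \eqref{eq:gf:F}, $f=\sec^2x+\int_0^x\sec^3y\,dy$. With the classical primitive $\int\sec^3=\tfrac12(\sec x\tan x+\log(\sec x+\tan x))$ this becomes $f=\dfrac{1}{\cos^2x}+\dfrac{\sin x}{2\cos^2x}+\tfrac12\log(\sec x+\tan x)$, which is the displayed form.

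\emph{The unshifted determinant.} Here $\delta=t-s=1$, so Lemma~\ref{lem:Pahalf} gives $\Omega(1)=2^{-\binom n2}$. Proposition~\ref{prop:family} then yields
\[
\HH_n=2^{-\binom n2}\prod_{i=0}^{n-1}(2i)!\,\prod_{l=0}^{\bar n-1}(2)_{2l}\,\prod_{m=0}^{\underline n-1}(3)_{2m}.
\]
I would use $(2)_{2l}=(2l+1)!$ and $(3)_{2m}=(2m+2)!/2$. The last two products then become $2^{-\underline n}$ times the factorials $j!$, where $j$ runs over the odd integers $1,3,\dots,2\bar n-1$ and the even integers $2,4,\dots,2\underline n$. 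These two sets have $\bar n$ and $\underline n$ elements, and the largest element is $\max(2\bar n-1,2\underline n)=n$. Their union is therefore exactly $\{1,\dots,n\}$, which gives the stated form $2^{-\binom n2-\underline n}\prod_{k=1}^{n-1}(2k)!\prod_{j=1}^n j!$. Every factor is a positive rational, so $\HH_n(f)>0$.

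\emph{The double shift.} Proposition~\ref{prop:dshift} gives the factor $(2n-1)!!\,\sigma_{\bar n}(1)\,\sigma_{\underline n}(2)$, where $\sigma_K(c)=\prod_{k=0}^{K-1}(c+2k+1)$. Directly, $\sigma_{\bar n}(1)=2^{\bar n}\bar n!$ and $\sigma_{\underline n}(2)=(2\underline n+1)!!$. For $f''$ I would substitute $s=1$, $t=2$ into the general expression recorded before Corollary~\ref{cor:euler}:
\[
2\,\frac{2\sin^2x+1}{\cos^4x}+3\,\frac{\sin x}{\cos^4x}=\frac{4\sin^2x+3\sin x+2}{\cos^4x}.
\]

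There is no genuine obstacle in this argument. The only step needing care is the bookkeeping that merges the odd and even factorials into $\prod_{j=1}^n j!$ and collects the stray power $2^{-\underline n}$, checked separately for each parity of $n$.
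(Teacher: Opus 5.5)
Your proposal is correct and follows the paper's own route: the paper records this corollary as a direct substitution of $(s,t)=(1,2)$ into Proposition~\ref{prop:family}, using $\Omega(1)=2^{-\binom n2}$ from Lemma~\ref{lem:Pahalf}, and into Proposition~\ref{prop:dshift}. Your Pochhammer conversions $(2)_{2l}=(2l+1)!$ and $(3)_{2m}=(2m+2)!/2$, the merging of the odd and even factorials into $\prod_{j=1}^n j!$, and your evaluations of the two odd-shift products and of $f''$ all check out.
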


\begin{Corollary}[$s=2,\ t=3$]\label{cor:23}
For $u_j=j(j+2)$, $v_j=j(j+3)$,
$$
f=\frac{3+\sin x\,(2\cos^2x+1)}{3\cos^3x},
$$
$$
\HH_n(f)=2^{-\binom{n+1}{2}}\,3^{-\underline n}
       \prod_{k=1}^{n-1}(2k)!\ \prod_{j=1}^{n+1} j!\ >0.
$$
For the single shift,
$$
f'=\frac{1+3\sin x}{\cos^4 x},\qquad
\HH_n^{(1)}(f)=\HH_n(f')=\Lambda_n(2)\,\bigl(n(n+1)+1\bigr),
$$
the carrier of Theorem~\ref{thm:t3} at $s=2$ being $n(n+1)+1$, with
parity factor $\rho_n=\tfrac1{2\bar n+1}$ (here the same for both parities)
and smooth part
\begin{equation}\label{eq:sHH23}
\Lambda_n(2)=\frac{2^{-\binom n2}}{2\bar n+1}\,6^{-\bar n}\prod_{l=0}^{\bar n-1}(2l)!\,(2l+3)!\;
2^{-\underline n}\prod_{c=0}^{\underline n-1}(2c+3)(2c+2)!\;\prod_{r=0}^{\underline n-1}(2\bar n+2r+1)! .
\end{equation}
For the double shift,
$$
f''=\frac{9\sin^2x+4\sin x+3}{\cos^5 x},\qquad
\HH_n^{(2)}(f)=\HH_n(f'')=(2n-1)!!\,(2\bar n+1)!!\,2^{\underline n}(\underline n+1)!\;\HH_n(f).
$$
\end{Corollary}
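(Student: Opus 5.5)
This corollary is a specialisation, so the plan is to substitute parameters into three results already proved. The identification of $f$ comes first. With $s=2$, $t=3$ the even part is $1/\cos^3x$. The odd part is $\int_0^x\cos^{-4}y\,dy=\tan x+\tfrac13\tan^3x$, which equals $\sin x(3\cos^2x+\sin^2x)/(3\cos^3x)=\sin x(2\cos^2x+1)/(3\cos^3x)$, giving the stated $f$. Differentiating gives $f'=3\sin x/\cos^4x+1/\cos^4x$, and once more $f''$. The latter is read off from the general formula for $f''$ recorded at the head of the subsection, with $(s+1)=3$, $(t+1)=4$, combined over $\cos^5x$ using $\sin x/\cos^5x\cdot\cos x$ and $\sin^2=1-\cos^2$ where needed. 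These are routine checks.

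For $\HH_n(f)$, take Proposition~\ref{prop:family} at $\delta=1$. Lemma~\ref{lem:Pahalf} gives $\Omega(1)=2^{-\binom n2}$ for both parities, so
\[
\HH_n=2^{-\binom n2}\prod_{i<n}(2i)!\,\prod_{l<\bar n}(3)_{2l}\prod_{m<\underline n}(4)_{2m}.
\]
Here $(3)_{2l}=(2l+2)!/2$ and $(4)_{2m}=(2m+3)!/6$. The product $\prod_{l<\bar n}(2l+2)!\prod_{m<\underline n}(2m+3)!$ runs over $j!$ for $j=2,\dots,n+1$ with no repeats, since even $j=2l+2\le 2\bar n$ and odd $j=2m+3\le 2\underline n+1$ cover $2,\dots,n+1$. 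This leaves the constants $2^{-\bar n}6^{-\underline n}$. Combining $2^{-\binom n2-\bar n-\underline n}=2^{-\binom{n+1}2}$ with $3^{-\underline n}$ yields the claim. Positivity is then clear.

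The double shift is Proposition~\ref{prop:dshift}, with $\sigma_{\bar n}(2)=(2\bar n+1)!!$ and $\sigma_{\underline n}(3)=2^{\underline n}(\underline n+1)!$.

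The single shift is the only nontrivial step. I would use Proposition~\ref{prop:t3eval} together with Theorem~\ref{thm:t3}, setting $s=2$. The carrier is $\Gamma_n(2)=(2\bar n+1)\cdot2+4\bar n\underline n-2\bar n-1=2\bar n+4\bar n\underline n+1$. Checking both parities shows this equals $n(n+1)+1$: for $n=2m$ it is $2m+4m^2+1$, and for $n=2m+1$ it is $4m^2+6m+3$.

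The smooth part $\Lambda_n(2)$ must then be assembled from \eqref{eq:t3eval}. The ingredients are:
\begin{itemize}
\item $h^T_l=(2l)!(4)_{2l}=(2l)!(2l+3)!/6$, accounting for the $6^{-\bar n}$;
\item $h^{S'}_c/(2c+1)!=(2c+3)(3)_{2c}(2c)!/(2c)!\cdot\ldots$, to be simplified;
\item the row factorials $(2\bar n+2r)!$;
\item the value of $\sum_pc_pD_p$ from \eqref{eq:t3collapse}, namely $D_0L_n(2\bar n+1)^{-1}\prod_{j\le\underline n-2}(2j+3)\,\Gamma_n$;
\item $D_0$ at $b=-1/2$, evaluated by the hook-content formula on the $(\bar n-1)\times\underline n$ rectangle.
\end{itemize}

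The main obstacle is that $L_n$ is only asserted to be a positive constant, not given explicitly. I would therefore not try to track it. Instead I would note that the ratio $\HH^{(1)}_n/\Gamma_n$ is a product of linear forms in $s$, fixed by Theorem~\ref{thm:t3} up to a constant. That constant can be pinned down by comparing the leading coefficient $\det L$ computed in the proof of Theorem~\ref{thm:t3}, or alternatively by matching at $s=0$, where $\HH^{(1)}_n|_{t=3}$ can be obtained via Remark~\ref{rem:t3nofactor}'s Cauchy--Binet expansion. The resulting factorial identity for \eqref{eq:sHH23} would then be confirmed by the $n\mapsto n+2$ ratio test, as in the proof of Proposition~\ref{prop:sec1}, and cross-checked numerically for small $n$.
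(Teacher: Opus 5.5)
Your plan is correct and follows the paper's own route. The corollary is a pure substitution into three results:
\begin{itemize}
\item Proposition~\ref{prop:family}, with $\Omega(1)=2^{-\binom n2}$ from Lemma~\ref{lem:Pahalf};
\item Proposition~\ref{prop:dshift};
\item Proposition~\ref{prop:t3eval} together with Theorem~\ref{thm:t3}, taken at $s=2$.
\end{itemize}
Your identifications of $f,f',f''$, the factorial merging for $\HH_n$, the double-shift scalars and the carrier $\Gamma_n(2)=n(n+1)+1$ are all right.

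For the smooth part, your leading-coefficient route is the one that closes it. The proof of Theorem~\ref{thm:t3} gives $L_n=\det L/\operatorname{lc}(D_0)$ explicitly, so \eqref{eq:sHH23} reduces to the elementary identity $(-1)^{\binom{\bar n}2}\det N|_{s=2}=2^{-\binom n2}\frac{(2n-1)!!}{(2\bar n-1)!!\,(2\bar n+1)}\,\Gamma_n(2)$. A parity-split ratio check confirms this identity, so the $s=0$ alternative is unnecessary.
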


\begin{Corollary}[$s=1,\ t=0$]\label{cor:10}
For $u_j=j(j+1)$, $v_j=j^2$,
$$
f=\frac{1}{\cos^2x}+\log(\sec x+\tan x),
$$
$$
\HH_n(f)=\Omega(-1)
      \prod_{i=0}^{n-1}(2i)!\ \prod_{l=0}^{\bar n-1}(2l+1)!\ \prod_{m=0}^{\underline n-1}(2m)!\,,
$$
with $\Omega(-1)$ as in Lemma~\ref{lem:Pahalf}. For the double shift,
$$
f''=\frac{-\sin^3x+4\sin^2x+\sin x+2}{\cos^4 x},\qquad
\HH_n^{(2)}(f)=\HH_n(f'')=(2n-1)!!\,2^{\bar n}\bar n!\,(2\underline n-1)!!\;\HH_n(f).
$$
\end{Corollary}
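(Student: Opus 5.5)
The statement is Corollary~\ref{cor:10}, the case $(s,t)=(1,0)$, so $\delta=-1$. I would obtain it by specialising Proposition~\ref{prop:family} and Proposition~\ref{prop:dshift}.

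First, identify $f$. With $s=1$ the even part is $1/\cos^2x$. With $t=0$ the odd part is $\int_0^x dy/\cos y=\log(\sec x+\tan x)$. This matches the displayed $f$.

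Second, simplify the parameter blocks in \eqref{HHn:st}:
\[
\prod_{l=0}^{\bar n-1}(2)_{2l}=\prod_{l=0}^{\bar n-1}(2l+1)!,\qquad
\prod_{m=0}^{\underline n-1}(1)_{2m}=\prod_{m=0}^{\underline n-1}(2m)!.
\]
The sign together with the double product is by definition $\Omega(\delta)$, here $\Omega(-1)$. This gives the stated $\HH_n(f)$. The closed values of $\Omega(-1)$ in \eqref{eq:Pa:even}--\eqref{eq:Pa:odd} are not needed, since the corollary leaves $\Omega(-1)$ symbolic.

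Third, the double shift. Proposition~\ref{prop:dshift} gives the factor
\[
(2n-1)!!\,\sigma_{\bar n}(1)\,\sigma_{\underline n}(0),
\qquad \sigma_K(c)=\prod_{k=0}^{K-1}(c+2k+1).
\]
Here $\sigma_{\bar n}(1)=\prod_{k<\bar n}(2k+2)=2^{\bar n}\bar n!$ and $\sigma_{\underline n}(0)=(2\underline n-1)!!$, which is the stated factor. It remains to check that $\HH_n^{(2)}(f)=\HH_n(f'')$, which holds because differentiation shifts the coefficient sequence, and to verify the formula for $f''$. The general expression recorded before the corollaries, at $s=1$, $t=0$, is
\[
f''=2\,\frac{2\sin^2x+1}{\cos^4x}+\frac{\sin x}{\cos^2x}.
\]
Over the common denominator $\cos^4x$, the second term becomes $\sin x\cos^2x=\sin x-\sin^3x$. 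The numerator is then $4\sin^2x+2+\sin x-\sin^3x$, as claimed.

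No step is a real obstacle; the only care needed is the Pochhammer-to-factorial conversion and the trigonometric simplification of $f''$.
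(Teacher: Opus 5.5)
Your proposal is correct and follows the paper's own route. The paper's corollaries are stated to need no new proof: they are the substitution $(s,t)=(1,0)$ into Proposition~\ref{prop:family} and Proposition~\ref{prop:dshift}, and your conversions $(2)_{2l}=(2l+1)!$, $(1)_{2m}=(2m)!$, $\delta=-1$, $\sigma_{\bar n}(1)=2^{\bar n}\bar n!$, $\sigma_{\underline n}(0)=(2\underline n-1)!!$, together with your simplification of $f''$, all check out.
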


\begin{Corollary}[$s=2,\ t=1$]\label{cor:21}
For $u_j=j(j+2)$, $v_j=j(j+1)$,
$$
f=\frac{1+\sin x\cos^2x}{\cos^3x},
$$
$$
\HH_n(f)=2^{-\bar n}\,\Omega(-1)
      \prod_{i=0}^{n-1}(2i)!\ \prod_{l=0}^{\bar n-1}(2l+2)!\ \prod_{m=0}^{\underline n-1}(2m+1)!\,.
$$
For the single shift, $f'=\dfrac{\cos^2 x+3\sin x}{\cos^4 x}$, and specialising
Proposition~\ref{prop:shift} at $s=2$ (no carrier factor arises on the line
$t=1$),
\begin{equation}\label{eq:sHH21}
\HH_n^{(1)}(f)=\HH_n(f')=(-1)^{\bar n-1}\,\frac{\underline n!\,\bigl((2\underline n+1)!!\bigr)^2}{(2\bar n-3)!!}\;
(2n-1)!!\prod_{k=1}^{n-1}(k!)^2(2k-1)!! ,
\end{equation}
with the convention $(-1)!!=1$; equivalently the secant/tangent single shift of
Corollary~\ref{cor:euler} times $(-1)^{\bar n-1}\underline n!\bigl((2\underline n+1)!!\bigr)^2/(2\bar n-3)!!$.
For the double shift,
$$
f''=\frac{-2\sin^3x+9\sin^2x+2\sin x+3}{\cos^5 x},\qquad
\HH_n^{(2)}(f)=\HH_n(f'')=(2n-1)!!\,(2\bar n+1)!!\,2^{\underline n}\underline n!\;\HH_n(f).
$$
\end{Corollary}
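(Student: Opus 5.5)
The statement to prove is Corollary~\ref{cor:21}, the case $(s,t)=(2,1)$. Since it is a specialisation, I will substitute $s=2$, $t=1$ in turn into Proposition~\ref{prop:family}, Proposition~\ref{prop:shift} and Proposition~\ref{prop:dshift}, and then simplify each resulting Pochhammer product into the stated factorial form.

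\emph{The generating functions.} First I check that the generating function is right. With $s=2$ the even part is $1/\cos^3x$. With $t=1$ the odd part is $\int_0^x\sec^2y\,dy=\tan x$. Hence $f=\sec^3x+\tan x=(1+\sin x\cos^2x)/\cos^3x$. Differentiating once gives $f'=3\sin x/\cos^4x+\sec^2x$, which is the stated expression. Differentiating again gives $f''$; I will confirm it by writing $\cos^2x=1-\sin^2x$ in the numerator.

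\emph{The unshifted determinant.} Proposition~\ref{prop:family} at $(s,t)=(2,1)$ gives $\delta=-1$. The parameter blocks are $(3)_{2l}=(2l+2)!/2$ and $(2)_{2m}=(2m+1)!$. The $\bar n$ factors of $1/2$ produce the prefactor $2^{-\bar n}$. The sign and double product are exactly $\Omega(-1)$, whose value is recorded in Lemma~\ref{lem:Pahalf}. This yields the stated $\HH_n(f)$ directly.

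\emph{The double shift.} Proposition~\ref{prop:dshift} multiplies $\HH_n$ by $(2n-1)!!\,\sigma_{\bar n}(2)\,\sigma_{\underline n}(1)$. We have $\sigma_{\bar n}(2)=(2\bar n+1)!!$ and $\sigma_{\underline n}(1)=2^{\underline n}\underline n!$, which gives the stated formula at once.

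\emph{The single shift.} This is the only step needing real bookkeeping, and I expect it to be the main obstacle. I will use the formula \eqref{eq:shiftpoch} at $s=2$ and divide it by the same formula at $s=0$. This cancels $K_n$ and leaves a ratio of Pochhammer products. At $s=0$ the value is $(2n-1)!!\prod_{k<n}(k!)^2(2k-1)!!$ (Corollary~\ref{cor:euler}), so it suffices to show that this ratio equals $(-1)^{\bar n-1}\underline n!\,((2\underline n+1)!!)^2/(2\bar n-3)!!$.

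The ratio has three groups of factors.
\begin{itemize}
\item \emph{Doubled odd shifts:} $\prod_{j\le\underline n}\bigl((\tfrac32)_j/(\tfrac12)_j\bigr)^2=\prod_j(2j+1)^2=((2\underline n+1)!!)^2$.
\item \emph{Even shifts:} $\prod_{j\le\underline n-1}(2)_j/j!=\prod_j(j+1)=\underline n!$.
\item \emph{Back shifts:} $\prod_{j\le\bar n-1}(-\tfrac12)_j/(\tfrac12)_j=\prod_j(-1)/(2j-1)=(-1)^{\bar n-1}/(2\bar n-3)!!$.
\end{itemize}
The care needed is mainly in the ranges $\underline n$ versus $\bar n-1$ and in the convention $(-1)!!=1$ at $\bar n=1$. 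I will sanity-check the result numerically at $n=1,2,3$ against direct evaluation of $\det(a_{2i+j+1})$.
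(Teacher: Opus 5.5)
Your proposal is correct and follows the paper's own route: the corollary is obtained by substituting $(s,t)=(2,1)$ into Proposition~\ref{prop:family}, Proposition~\ref{prop:shift} and Proposition~\ref{prop:dshift}, with no further argument needed. For the single shift, comparing the Pochhammer blocks of \eqref{eq:shiftpoch} at $s=2$ and at $s=0$ is equivalent to inserting the explicit $K_n$. Your three block ratios $((2\underline n+1)!!)^2$, $\underline n!$ and $(-1)^{\bar n-1}/(2\bar n-3)!!$ are right; direct evaluation gives $1,\,27,\,-1620$ at $n=1,2,3$, as the formula predicts.
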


\begin{Corollary}[$s=3,\ t=2$]\label{cor:32}
For $u_j=j(j+3)$, $v_j=j(j+2)$,
$$
f=\frac{2+\sin x\cos^2x}{2\cos^4x}+\tfrac12\log(\sec x+\tan x),
$$
$$
\HH_n(f)=2^{-\underline n}\,6^{-\bar n}\,\Omega(-1)
      \prod_{i=0}^{n-1}(2i)!\ \prod_{l=0}^{\bar n-1}(2l+3)!\ \prod_{m=0}^{\underline n-1}(2m+2)!\,.
$$
For the double shift,
$$
\begin{aligned}
f''&=\frac{-3\sin^3x+16\sin^2x+3\sin x+4}{\cos^6 x},\\
\HH_n^{(2)}(f)&=\HH_n(f'')=(2n-1)!!\,2^{\bar n}(\bar n+1)!\,(2\underline n+1)!!\;\HH_n(f).
\end{aligned}
$$
\end{Corollary}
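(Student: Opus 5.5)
The plan is to specialise Proposition~\ref{prop:family} at $(s,t)=(3,2)$, so $\delta=t-s=-1$. Then, for the double shift, I specialise Proposition~\ref{prop:dshift}. No new mechanism is needed; the work is bookkeeping of Pochhammer symbols and a check of the generating functions.

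\emph{Step 1: the generating function.} By the definition \eqref{eq:gf:F}, $f=\cos^{-4}x+\int_0^x\cos^{-3}y\,dy$. I will use the standard reduction $\int\sec^3=\tfrac12\sec x\tan x+\tfrac12\log(\sec x+\tan x)$. This gives $\tfrac{\sin x}{2\cos^2x}+\tfrac12\log(\sec x+\tan x)$, and adding $\cos^{-4}x$ yields the stated form $\frac{2+\sin x\cos^2x}{2\cos^4x}+\frac12\log(\sec x+\tan x)$.

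\emph{Step 2: $\HH_n$.} In \eqref{HHn:st} the sign and the double product together form $\Omega(-1)$, which I keep symbolic as the statement does. The remaining factors are $(4)_{2l}=(2l+3)!/3!$ and $(3)_{2m}=(2m+2)!/2$. Taking products gives $6^{-\bar n}\prod_{l<\bar n}(2l+3)!$ and $2^{-\underline n}\prod_{m<\underline n}(2m+2)!$, which is the displayed formula.

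\emph{Step 3: the double shift.} Proposition~\ref{prop:dshift} gives the factor $(2n-1)!!\,\sigma_{\bar n}(3)\,\sigma_{\underline n}(2)$ with $\sigma_K(c)=\prod_{k<K}(c+2k+1)$. I compute $\sigma_{\bar n}(3)=\prod_{k<\bar n}(2k+4)=2^{\bar n}(\bar n+1)!$ and $\sigma_{\underline n}(2)=\prod_{k<\underline n}(2k+3)=(2\underline n+1)!!$, which matches the statement.

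For $f''$, I differentiate twice, writing $S=\sin x$ and $C=\cos x$. The even part gives $(\cos^{-4})''=4\cos^{-6}(C^2+5S^2)=\cos^{-6}(4+16S^2)$. The odd part gives $(\cos^{-3})'=3S\cos^{-4}$. Over $\cos^{-6}$ the latter is $3S(1-S^2)$. Summing gives $-3S^3+16S^2+3S+4$, as claimed.

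The only delicate point is this trigonometric simplification of $f''$; the determinant identities are immediate substitutions.
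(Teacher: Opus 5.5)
Your proposal is correct and follows the paper's route: the corollary is a direct substitution of $(s,t)=(3,2)$, so $\delta=-1$, into Proposition~\ref{prop:family} and Proposition~\ref{prop:dshift}. Your conversions $(4)_{2l}=(2l+3)!/6$, $(3)_{2m}=(2m+2)!/2$, $\sigma_{\bar n}(3)=2^{\bar n}(\bar n+1)!$ and $\sigma_{\underline n}(2)=(2\underline n+1)!!$ are right, and so are your computations of $f$ (via $\int\sec^3$) and of $f''$.
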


\begin{Corollary}[$s=0,\ t=3$]\label{cor:03}
For $u_j=j^2$, $v_j=j(j+3)$,
$$
f=\frac{3\cos^2x+\sin x\,(2\cos^2x+1)}{3\cos^3x},
$$
$$
\HH_n(f)=6^{-\underline n}\,\Omega(3)
      \prod_{i=0}^{n-1}(2i)!\ \prod_{l=0}^{\bar n-1}(2l)!\ \prod_{m=0}^{\underline n-1}(2m+3)!\,,
$$
with $\Omega(3)$ as in Lemma~\ref{lem:Pahalf}. For the single shift,
$$
f'=\frac{1+\sin x\cos^2 x}{\cos^4 x},\qquad
\HH_n^{(1)}(f)=\HH_n(f')=\Lambda_n(0)\,\Gamma_n(0),\quad \Gamma_n(0)=n(n+1)-1-4\bar n,
$$
the carrier of Theorem~\ref{thm:t3} at $s=0$, with smooth part
\begin{equation}\label{eq:sHH03}
\Lambda_n(0)=(-1)^{\underline n+1}\rho_n\,2^{-\binom n2}\,6^{-\bar n}
\prod_{l=0}^{\bar n-1}(2l)!\,(2l+3)!\;\prod_{c=0}^{\underline n-1}(2c+1)!\;\prod_{r=0}^{\underline n-1}(2\bar n+2r+1)! ,
\end{equation}
with parity factor $\rho_n=\frac{1}{n+1}$ for $n$ even and
$\rho_n=\frac{n}{n+2}$ for $n$ odd.
For the double shift,
$$
f''=\frac{-\sin^4x+4\sin x+1}{\cos^5 x},\qquad
\HH_n^{(2)}(f)=\HH_n(f'')=(2n-1)!!\,(2\bar n-1)!!\,2^{\underline n}(\underline n+1)!\;\HH_n(f).
$$
\end{Corollary}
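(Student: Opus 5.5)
This corollary, $(s,t)=(0,3)$, is a pure substitution into earlier results. I will handle its three displayed claims, namely $\HH_n(f)$, the single shift and the double shift, separately, after first checking the generating function.

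\emph{The generating function.} By \eqref{eq:gf:F} we have $f=1/\cos x+\int_0^x\cos^{-4}y\,dy$. The standard reduction formula gives $\int\sec^4=\tan x+\tfrac13\tan^3x=\sin x(2\cos^2x+1)/(3\cos^3x)$, which yields the stated closed form. Differentiating once gives $f'=(\sin x\cos^2x+1)/\cos^4x$. Differentiating again gives the stated $f''$; this also agrees with the general formula for $f''$ recorded at the head of the subsection, taken at $(s,t)=(0,3)$.

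\emph{The unshifted determinant.} I substitute $s=0$, $t=3$ into Proposition~\ref{prop:family}. The signed factor is $\Omega(3)$, and the parameter blocks become $(1)_{2l}=(2l)!$ and $(4)_{2m}=(2m+3)!/6$. The $\underline n$ divisions by $6$ produce the prefactor $6^{-\underline n}$, which gives the displayed product. The explicit value of $\Omega(3)$ is then quoted from \eqref{eq:Pa:even}--\eqref{eq:Pa:odd}.

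\emph{The double shift.} This is Proposition~\ref{prop:dshift} with $\sigma_{\bar n}(0)=(2\bar n-1)!!$ and $\sigma_{\underline n}(3)=2^{\underline n}(\underline n+1)!$. That is immediate.

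\emph{The single shift (the main obstacle).} I will use Theorem~\ref{thm:t3} together with the explicit assembly of Proposition~\ref{prop:t3eval} and \eqref{eq:t3collapse}, all evaluated at $s=0$. The carrier is $\Gamma_n(0)=4\bar n\underline n-2\bar n-1$. With $\underline n=n-\bar n$ this equals $n(n+1)-1-4\bar n$: for $n=2m$ both sides are $4m^2-2m-1$, and for $n=2m+1$ both sides are $4m^2+2m-3$.

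The smooth part $\Lambda_n(0)$ must then be read off term by term:
\begin{itemize}
\item the norms give $h^T_l=(2l)!(2l+3)!/6$;
\item with $h^{S'}_c=(2c+1)(2c+1)(2c)!(2c)!$ at $s=0$, each ratio $h^{S'}_c/(2c+1)!$ becomes $(2c+1)!$;
\item the row factors contribute $(2\bar n+2r)!$;
\item the factor $D_0\,L_n/(2\bar n+1)\cdot\prod_{j\le\underline n-2}(2j+1)$ must be evaluated at $s=0$, where $b=-3/2$.
\end{itemize}
The risky step is identifying $L_n$ and the hook-content value of $D_0$ at $b=-3/2$. Combined with the $(2\bar n+2r)!\to(2\bar n+2r+1)!$ bookkeeping and the powers of $2$, these must produce exactly $(-1)^{\underline n+1}\rho_n\,2^{-\binom n2}$.

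I expect this constant to be hard to derive cleanly, because $L_n$ is only characterised as a ratio of leading coefficients. So I would first try to fix it by an independent route. The idea is to compare with the $t=1$ line via the leading symbol $\det L$, and cross-check against the small-$n$ values computed in the paper, which are stated to have been verified for $n\le12$. In particular, the parity factor $\rho_n$ (equal to $1/(n+1)$ or $n/(n+2)$) should come out of the ratio $\Omega$-type hook product of $D_0$ against $(2\bar n+1)$. I would confirm that parity factor for $n=2,3,4$ before trusting the general telescoping.
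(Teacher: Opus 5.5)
Your treatment of $f,f',f''$, of $\HH_n(f)$ (Proposition~\ref{prop:family} with $(1)_{2l}=(2l)!$ and $(4)_{2m}=(2m+3)!/6$), of the double shift (Proposition~\ref{prop:dshift}) and of the identity $\Gamma_n(0)=n(n+1)-1-4\bar n$ is correct. It is exactly the substitution by which the paper obtains the corollary.

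The gap is the smooth part $\Lambda_n(0)$ of the single shift. You list the ingredients of Proposition~\ref{prop:t3eval} at $s=0$ but never assemble them. Your plan for the constant you regard as unknown is to fix $L_n$ by checks at $n=2,3,4$, by the $n\le12$ data, or by comparison with the line $t=1$. None of these can establish the formula for all $n$. Note also that $n=1$ ($\underline n=0$) lies outside Theorem~\ref{thm:t3}: there $\HH^{(1)}_1=a_1=1$ is constant in $s$, so the carrier is spurious. That case must be checked directly: $\Lambda_1(0)\Gamma_1(0)=(-\tfrac13)(-3)=1$.

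In fact $L_n$ is not unknown. The proof of Theorem~\ref{thm:t3} gives $L_n=\operatorname{lc}(\det N)/\operatorname{lc}(D_0)$, and both leading coefficients are explicit.
\begin{itemize}
\item Since $\det\bar M=1$, $\operatorname{lc}(\det N)=\det L=\prod_{r<\underline n}\frac{2\bar n+2r+1}{2^{\bar n+r}(\bar n+r)!}\prod_{m<\underline n}2^m m!$.
\item $D_0$ is the hook-content product over the $(\bar n-1)\times\underline n$ rectangle in $b=(s-3)/2$ (case $p=0$ of Lemma~\ref{lem:t3hook}). Hence $D_0(0)=\operatorname{lc}(D_0)\prod_{r=1}^{\bar n-1}\prod_{c=1}^{\underline n}(2c-2r-3)$.
\end{itemize}
So for $n\ge2$
\[
\sum_p c_pD_p\Big|_{s=0}=\frac{\det L}{2\bar n+1}\,\prod_{r=1}^{\bar n-1}\prod_{c=1}^{\underline n}(2c-2r-3)\,\prod_{j=0}^{\underline n-2}(2j+1)\;\Gamma_n(0),
\]
with no undetermined constant. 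The pieces then fall into place as follows.
\begin{itemize}
\item The factor $\prod_r(2\bar n+2r+1)$ of $\det L$ turns your $(2\bar n+2r)!$ into $(2\bar n+2r+1)!$.
\item The power of two in $\det L$ is $2^{-\bar n\underline n}$.
\item The factor $1/(2\bar n+1)$ is the even-$n$ value of $\rho_n$.
\end{itemize}
Comparing with \eqref{eq:sHH03}, and keeping the sign $(-1)^{\binom{\bar n}2}$ of Proposition~\ref{prop:t3eval}, what remains is the elementary identity
\[
(-1)^{\binom{\bar n}2}\,2^{\binom{\bar n}2+\binom{\underline n}2}\,\frac{\prod_{m<\underline n}m!}{\prod_{r<\underline n}(\bar n+r)!}\,\prod_{r=1}^{\bar n-1}\prod_{c=1}^{\underline n}(2c-2r-3)\,\prod_{j=0}^{\underline n-2}(2j+1)
=(-1)^{\underline n+1}\begin{cases}1,&n\ \text{even},\\ n,&n\ \text{odd}.\end{cases}
\]
It checks out for $n=2,\dots,7$, where it gives $1,3,-1,-5,1,7$. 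It should be proved for all $n$ through the ratio $n\mapsto n+2$, with double-factorial bookkeeping as in Lemma~\ref{lem:Pahalf}, rather than sampled. The comparison with the line $t=1$ that you suggest is not needed.
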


\begin{Corollary}[$s=3,\ t=0$]\label{cor:30}
For $u_j=j(j+3)$, $v_j=j^2$,
$$
f=\frac{1}{\cos^4x}+\log(\sec x+\tan x),
$$
$$
\HH_n(f)=\Omega(-3)\,6^{-\bar n}
      \prod_{i=0}^{n-1}(2i)!\ \prod_{l=0}^{\bar n-1}(2l+3)!\ \prod_{m=0}^{\underline n-1}(2m)!\,,
$$
with $\Omega(-3)$ as in Lemma~\ref{lem:Pahalf}. For the double shift,
$$
\begin{aligned}
f''&=\frac{\sin^5x-2\sin^3x+16\sin^2x+\sin x+4}{\cos^6 x},\\
\HH_n^{(2)}(f)&=\HH_n(f'')=(2n-1)!!\,2^{\bar n}(\bar n+1)!\,(2\underline n-1)!!\;\HH_n(f).
\end{aligned}
$$
\end{Corollary}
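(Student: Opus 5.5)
This corollary is a specialisation of Proposition~\ref{prop:family} at $(s,t)=(3,0)$, together with Proposition~\ref{prop:dshift} for the double shift. No new mechanism is needed; the plan is a substitution followed by simplifications.

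First identify the generating function. With $u_j=j(j+3)$ we have $s=3$, so the even part is $1/\cos^4x$. With $v_j=j^2$ we have $t=0$, so the odd part is $\int_0^x dy/\cos y=\log(\sec x+\tan x)$. This gives the stated $f$.

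Next insert $s=3$, $t=0$ into \eqref{HHn:st}. The Pochhammer blocks become
\[
(s+1)_{2l}=(4)_{2l}=(2l+3)!/6, \qquad (t+1)_{2m}=(2m)!.
\]
The first block contributes the factor $6^{-\bar n}$ together with $\prod_{l}(2l+3)!$. The sign and the double product together form $\Omega(\delta)$ with $\delta=t-s=-3$, which is the value computed in Lemma~\ref{lem:Pahalf}. Assembling these gives the displayed $\HH_n(f)$.

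For the double shift, apply Proposition~\ref{prop:dshift}. The prefactor is $(2n-1)!!\,\sigma_{\bar n}(3)\,\sigma_{\underline n}(0)$, with $\sigma_K(c)=\prod_{k<K}(c+2k+1)$. We have
\[
\sigma_{\bar n}(3)=\prod_{k<\bar n}(2k+4)=2^{\bar n}(\bar n+1)!, \qquad \sigma_{\underline n}(0)=(2\underline n-1)!!.
\]
This matches the claimed scalar.

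The remaining step, and the only one that is not pure bookkeeping, is the closed form of $f''$. Differentiate twice: $(\cos^{-4})''=4\cos^{-4}+20\sin^2x\cos^{-6}$ and $(\log(\sec x+\tan x))''=\sin x/\cos^2x$. Put everything over the common denominator $\cos^6x$ and rewrite the powers of $\cos$ in terms of $\sin$, using $\cos^2=1-\sin^2$. I expect this to produce $\sin^5x-2\sin^3x+16\sin^2x+\sin x+4$, and checking that trigonometric identity is the one place where a computational slip is possible. A quick cross-check is to compare the first few Taylor coefficients, $a_2,a_3,\dots$, against the expansion of the proposed quotient.
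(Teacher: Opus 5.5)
Your proposal is correct and is exactly the paper's route: the corollary is the substitution $(s,t)=(3,0)$ into Proposition~\ref{prop:family} and Proposition~\ref{prop:dshift}, and your conversions $(4)_{2l}=(2l+3)!/6$, $(1)_{2m}=(2m)!$, $\sigma_{\bar n}(3)=2^{\bar n}(\bar n+1)!$ and $\sigma_{\underline n}(0)=(2\underline n-1)!!$ are all right. The one step you left as an expectation also checks out: using $\cos^2x=1-\sin^2x$, the numerator $4\cos^2x+20\sin^2x+\sin x\cos^4x$ expands to $\sin^5x-2\sin^3x+16\sin^2x+\sin x+4$.
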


\subsection{The secant-number family $(1+x)/\cos^{s+1}x$}

By Theorem~\ref{thm:allstar} the family $f(x)=(1+x)/\cos^{s+1}x$ has
$\HH_n(f)=c_n\prod_{i=1}^{n-1}(s+1)_i$, with the constant $c_n$ (independent
of $s$) given there.
The single shift is $\bigl((n-1)!!\bigr)^2$ times the unshifted determinant for every
$s$ (Theorem~\ref{thm:shift}), while the double shift admits such a closed
multiple only at $s=1$ (Theorem~\ref{thm:dblshift}). We record the cases
$s=0,1,2,3$.

\begin{Corollary}[$s=0$]\label{cor:xc0}
For $f=\dfrac{1+x}{\cos x}$,
$$
\HH_n(f)=c_n\prod_{i=1}^{n-1}i!=2^{\binom n2}\bigl((n-1)!!\bigr)^2\prod_{k=1}^{n-2}(k!!)^6 .
$$
For the single shift, with $f'=\dfrac{\cos x+(1+x)\sin x}{\cos^2x}$,
$$
\HH_n^{(1)}(f)=\HH_n(f')=\bigl((n-1)!!\bigr)^2\,\HH_n(f).
$$
\end{Corollary}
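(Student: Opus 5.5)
This corollary follows by specialising two earlier results at $s=0$; the only real content is a bookkeeping identity for the constant. For the first display we take Theorem~\ref{thm:allstar} at $s=0$. There $(s+1)_i=(1)_i=i!$, so
\[
\HH_n(f)=c_n\prod_{i=1}^{n-1}i! .
\]
By the definition of $c_n$ in \eqref{eq:HHstar}, the factor $\prod_{i=1}^{n-1}i!$ cancels the denominator of $c_n$. What remains is
\[
2^{\binom n2}\bigl((n-1)!!\bigr)^2\prod_{k=1}^{n-2}(k!!)^6 .
\]
This is exactly the value given by Proposition~\ref{prop:sec1}. The paper notes this explicitly after \eqref{eq:HHstar}: $c_n$ was defined as $\HH_n|_{s=0}$ divided by $\prod_{i=1}^{n-1}(1)_i$. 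One could also cite Proposition~\ref{prop:sec1} directly.

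For the single shift we first check that the sequence really is shifted. Since $a_n(f')=a_{n+1}(f)$, we have $\HH_n(f')=\det(a_{2i+j+1})=\HH_n^{(1)}(f)$. The stated form of $f'$ comes from the quotient rule: $\bigl((1+x)\cos^{-1}x\bigr)'=\cos^{-1}x+(1+x)\sin x\cos^{-2}x$, which gives the displayed numerator $\cos x+(1+x)\sin x$. We then apply Theorem~\ref{thm:shift}, which holds for every $s$ and in particular at $s=0$. It gives $\HH_n^{(1)}=((n-1)!!)^2\,\HH_n(f)$.

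There is no genuine obstacle. The one point to confirm is that $c_n$ in Theorem~\ref{thm:allstar} is normalised so that the $s=0$ cancellation leaves exactly the product of Proposition~\ref{prop:sec1}. This holds by construction: the proof of Theorem~\ref{thm:allstar} fixes $c_n$ precisely by evaluating at $s=0$.
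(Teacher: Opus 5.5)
Your proposal is correct and follows the paper's own route: the corollary is the $s=0$ specialisation of Theorem~\ref{thm:allstar} (equivalently Proposition~\ref{prop:sec1}, which fixes $c_n$), combined with Theorem~\ref{thm:shift} for the single shift. The paper states that no further proof is needed beyond this substitution.
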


\begin{Corollary}[$s=1$]\label{cor:xc1}
For $f=\dfrac{1+x}{\cos^2x}$,
$$
\HH_n(f)=c_n\prod_{i=1}^{n-1}(i+1)! .
$$
For the single shift, with $f'=\dfrac{\cos x+2(1+x)\sin x}{\cos^3x}$,
$$
\HH_n^{(1)}(f)=\HH_n(f')=\bigl((n-1)!!\bigr)^2\,\HH_n(f);
$$
and for the double shift (Theorem~\ref{thm:dblshift}), with
$f''=\dfrac{2(1+x)(1+2\sin^2x)+4\sin x\cos x}{\cos^4x}$,
$$
\HH_n^{(2)}(f)=\HH_n(f'')=2^n\,(n!)^2\,\HH_n(f).
$$
\end{Corollary}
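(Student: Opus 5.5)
This corollary is a pure specialisation of earlier results at $s=1$; no new mechanism is involved. The plan is to substitute $s=1$ in Theorem~\ref{thm:allstar}, Theorem~\ref{thm:shift} and Theorem~\ref{thm:dblshift}, and to check the displayed generating functions by direct differentiation.

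\textbf{The unshifted determinant.} At $s=1$ one has $(s+1)_i=(2)_i=(i+1)!$. Hence Theorem~\ref{thm:allstar} gives immediately
\[
\HH_n(f)=c_n\prod_{i=1}^{n-1}(i+1)!\,,
\]
with $c_n$ the $s$-independent constant of that theorem.

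\textbf{The single shift.} Theorem~\ref{thm:shift} holds for every $s$, so at $s=1$ it gives $\HH_n^{(1)}(f)=((n-1)!!)^2\,\HH_n(f)$. The identification $\HH_n^{(1)}(f)=\HH_n(f')$ is the general principle $a_n(f')=a_{n+1}(f)$. It only remains to compute $f'$ for $f=(1+x)\cos^{-2}x$:
\[
f'=\cos^{-2}x+2(1+x)\sin x\cos^{-3}x=\frac{\cos x+2(1+x)\sin x}{\cos^3x}.
\]

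\textbf{The double shift.} Theorem~\ref{thm:dblshift} is stated exactly at $s=1$ and gives $\HH_n^{(2)}=2^n(n!)^2\,\HH_n$. Again $\HH_n^{(2)}(f)=\HH_n(f'')$, since $a_n(f'')=a_{n+2}(f)$. To get $f''$, differentiate $f'=\cos^{-2}x+2(1+x)\sin x\cos^{-3}x$:
\[
f''=2\sin x\cos^{-3}x+2\sin x\cos^{-3}x+2(1+x)\bigl(\cos^{-2}x+3\sin^2x\cos^{-4}x\bigr).
\]
Putting everything over $\cos^4x$, and using $\cos^2x+3\sin^2x=1+2\sin^2x$, the $(1+x)$ part becomes $2(1+x)(1+2\sin^2x)$ and the rest becomes $4\sin x\cos x$. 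This matches the displayed formula for $f''$.

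\textbf{Expected obstacle.} The only step needing care is this trigonometric bookkeeping for $f''$, which is routine. All substantive content is carried by the three cited theorems.
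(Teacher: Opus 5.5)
Your proposal is correct and matches the paper, which also obtains this corollary by specialising Theorem~\ref{thm:allstar} at $s=1$ (where $(s+1)_i=(i+1)!$), Theorem~\ref{thm:shift} (valid for every $s$) and Theorem~\ref{thm:dblshift}, together with $a_n(f^{(r)})=a_{n+r}(f)$. Your differentiation giving $f'$ and $f''$, including the step $\cos^2x+3\sin^2x=1+2\sin^2x$, is right.
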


\begin{Corollary}[$s=2$]\label{cor:xc2}
For $f=\dfrac{1+x}{\cos^3x}$,
$$
\HH_n(f)=c_n\prod_{i=1}^{n-1}\frac{(i+2)!}{2} .
$$
For the single shift, with $f'=\dfrac{\cos x+3(1+x)\sin x}{\cos^4x}$,
$$
\HH_n^{(1)}(f)=\HH_n(f')=\bigl((n-1)!!\bigr)^2\,\HH_n(f).
$$
\end{Corollary}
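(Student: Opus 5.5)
This is a pure specialisation, so the plan is to substitute $s=2$ into Theorem~\ref{thm:allstar} and Theorem~\ref{thm:shift}. There is nothing new to prove beyond identifying the Pochhammer product at $s=2$ and checking the form of $f'$.

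\emph{First step (the determinant).} By Theorem~\ref{thm:allstar},
\[
\HH_n(f)=c_n\prod_{i=1}^{n-1}(s+1)_i .
\]
At $s=2$ each factor is
\[
(3)_i=3\cdot4\cdots(i+2)=\frac{(i+2)!}{2},
\]
which gives the displayed product $c_n\prod_{i=1}^{n-1}(i+2)!/2$. Since $c_n$ does not depend on $s$, it is the same constant as in Theorem~\ref{thm:allstar}, and nothing further is needed.

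\emph{Second step (the single shift).} Differentiate $f=(1+x)\cos^{-3}x$:
\[
f'=\cos^{-3}x+3(1+x)\sin x\,\cos^{-4}x=\frac{\cos x+3(1+x)\sin x}{\cos^4x}.
\]
This is the stated generating function. Its coefficient sequence is the shifted sequence $a_{k+1}$, so $\HH_n(f')=\HH_n^{(1)}(f)$. Theorem~\ref{thm:shift} holds for all $s$, so at $s=2$ it gives
\[
\HH_n^{(1)}(f)=\bigl((n-1)!!\bigr)^2\,\HH_n(f).
\]

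\emph{Main point to watch.} The only step needing care is that Theorem~\ref{thm:shift} is valid at $s=2$. This holds because that theorem is an identity of polynomials in $s$: Proposition~\ref{prop:const} shows the ratio is constant in $s$, and Part~2 evaluates that constant at $s=0$. No double-shift claim is made here, consistent with Theorem~\ref{thm:dblshift} being restricted to $s=1$.
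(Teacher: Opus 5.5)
Your proposal is correct and is exactly the paper's argument: the corollary is the substitution $s=2$ into Theorem~\ref{thm:allstar}, with $(3)_i=(i+2)!/2$, and into Theorem~\ref{thm:shift}, which holds for every $s$. Your computation of $f'$, and your observation that the single-shift identity holds as a polynomial identity in $s$, are both right.
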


\begin{Corollary}[$s=3$]\label{cor:xc3}
For $f=\dfrac{1+x}{\cos^4x}$,
$$
\HH_n(f)=c_n\prod_{i=1}^{n-1}\frac{(i+3)!}{6} .
$$
For the single shift, with $f'=\dfrac{\cos x+4(1+x)\sin x}{\cos^5x}$,
$$
\HH_n^{(1)}(f)=\HH_n(f')=\bigl((n-1)!!\bigr)^2\,\HH_n(f).
$$
\end{Corollary}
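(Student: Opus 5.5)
The statement is Corollary~\ref{cor:xc3}, the case $s=3$ of the secant-number family. The plan is a direct specialisation of Theorem~\ref{thm:allstar} and Theorem~\ref{thm:shift}; no new determinant work is needed.

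First, I will specialise the product of Theorem~\ref{thm:allstar} at $s=3$. There $(s+1)_i=(4)_i=(i+3)!/3!=(i+3)!/6$, so
\[
\HH_n(f)=c_n\prod_{i=1}^{n-1}(4)_i=c_n\prod_{i=1}^{n-1}\frac{(i+3)!}{6},
\]
with the same $s$-independent constant $c_n$ as in \eqref{eq:HHstar}.

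Second, for the single shift I will compute the derivative directly:
\[
f'=\frac{\cos^4x+4(1+x)\cos^3x\sin x}{\cos^8x}=\frac{\cos x+4(1+x)\sin x}{\cos^5x}.
\]
Since the coefficient sequence of $f'$ is $(a_{m+1})$, we have $\HH_n(f')=\det(a_{2i+j+1})=\HH_n^{(1)}$. Theorem~\ref{thm:shift} holds for every $s$, in particular for $s=3$, and gives $\HH_n^{(1)}=((n-1)!!)^2\HH_n(f)$.

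None of these steps is a real obstacle. The only things to check are the Pochhammer conversion and the derivative simplification, both routine. The one point of care is that Theorem~\ref{thm:shift} is stated as an identity in $s$, so specialising it at $s=3$ is legitimate.
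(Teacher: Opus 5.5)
Your proposal is correct and is the paper's own route: the paper treats this corollary as a pure substitution of $s=3$ into Theorem~\ref{thm:allstar} (with $(4)_i=(i+3)!/6$) and into Theorem~\ref{thm:shift}, which it states holds for every $s$. Your checks of the Pochhammer conversion and of the derivative $f'=(\cos x+4(1+x)\sin x)/\cos^5x$ are the only verifications needed.
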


\subsection{A rank-one perturbation of the Euler number family}

By Proposition~\ref{prop:sin3} the one-parameter family
$$
f_s(x)=\frac{\sin x+1}{\cos^2x}+s\,\sin x
$$
is a rank-one perturbation of the secant/tangent single shift
(Proposition~\ref{prop:shift}, generating function $f_0$), with
$\HH_n(f_s)=\bigl(1-s\tbinom n2\bigr)\,\HH_n^{\mathrm E}$, where
$\HH_n^{\mathrm E}:=\prod_{k=1}^{n-1}(k!)^2\,(2k+1)!!$.
The determinant is affine in $s$; it is the unperturbed value
$\HH_n^{\mathrm E}>0$ at $s=0$, and at $s=-1$ the perturbation collapses to
$f_{-1}=(\sin^3x+1)/\cos^2x$. We record $s=-2,-1,0,1,2$, writing each $f_s$ as a
single fraction $(\text{polynomial in }\sin x)/\cos^2x$.

\begin{Corollary}[$s=-2$]\label{cor:sinm2}
For $f_{-2}=\dfrac{2\sin^3x-\sin x+1}{\cos^2x}$,
$$
\HH_n(f_{-2})=\bigl(n^2-n+1\bigr)\prod_{k=1}^{n-1}(k!)^2\,(2k+1)!!\ >0 .
$$
\end{Corollary}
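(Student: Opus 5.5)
The identity is a direct substitution into Proposition~\ref{prop:sin3}, which gives $\HH_n(f_s)=\bigl(1-s\binom n2\bigr)\HH_n^{\mathrm E}$ for every $s$ and every $n\ge1$. So the plan has two parts. First, check that the closed form displayed for $f_{-2}$ really is the member $s=-2$ of the family. Second, simplify the scalar factor.

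\emph{Step 1 (the generating function).} With $s=-2$ we have
$$
f_{-2}=\frac{\sin x+1}{\cos^2x}-2\sin x=\frac{\sin x+1-2\sin x\cos^2x}{\cos^2x}.
$$
Substituting $\cos^2x=1-\sin^2x$, the numerator becomes
$$
\sin x+1-2\sin x+2\sin^3x=2\sin^3x-\sin x+1,
$$
which is the stated numerator.

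\emph{Step 2 (the scalar).} For the same value of $s$,
$$
1-s\binom n2=1+2\binom n2=1+n(n-1)=n^2-n+1 .
$$
Combined with Proposition~\ref{prop:sin3} this gives the displayed value of $\HH_n(f_{-2})$.

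\emph{Step 3 (positivity).} Every factor of $\HH_n^{\mathrm E}=\prod_{k=1}^{n-1}(k!)^2(2k+1)!!$ is a positive integer. The quadratic $n^2-n+1$ has discriminant $-3<0$, so it is positive for all $n$. Hence $\HH_n(f_{-2})>0$.

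No step is a real obstacle. All the substance lies in the scalar identity \eqref{eq:sin3scal} behind Proposition~\ref{prop:sin3}, which is already established. The only points needing care are the sign convention (the perturbation enters as $+s\sin x$, so $s=-2$ subtracts $2\sin x$) and the trigonometric rewriting of the numerator.
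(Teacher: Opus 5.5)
Your proposal is correct and follows the paper exactly: the paper treats this corollary as a direct substitution of $s=-2$ into Proposition~\ref{prop:sin3}. Your checks of the rewritten generating function, of the scalar $1+2\binom n2=n^2-n+1$, and of positivity are precisely what that substitution requires.
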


\begin{Corollary}[$s=-1$]\label{cor:sin3}
For $f_{-1}=\dfrac{\sin^3 x+1}{\cos^2 x}$,
$$
\HH_n(f_{-1})=\Bigl(\tbinom n2+1\Bigr)\prod_{k=1}^{n-1}(k!)^2\,(2k+1)!!\ >0 .
$$
\end{Corollary}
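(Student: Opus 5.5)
This corollary is the case $s=-1$ of Proposition~\ref{prop:sin3}, so my plan is a direct substitution followed by identifying the generating function. I do not expect any step to be a real obstacle.

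The first step is to check that $f_{-1}$ is the member $s=-1$ of the family $f_s=(\sin x+1)/\cos^2x+s\sin x$. With $s=-1$ I write the perturbing term over the common denominator:
\[
-\sin x=\frac{-\sin x\,(1-\sin^2x)}{\cos^2x}=\frac{\sin^3x-\sin x}{\cos^2x}.
\]
Adding $(\sin x+1)/\cos^2x$ then gives $(\sin^3x+1)/\cos^2x$, as required. This is the identity already noted in the proof of Lemma~\ref{lem:rank1}.

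The second step substitutes $s=-1$ into the formula $\HH_n(f_s)=\bigl(1-s\binom n2\bigr)\HH_n^{\mathrm E}$ of Proposition~\ref{prop:sin3}. The scalar factor becomes $1+\binom n2$. The constant $\HH_n^{\mathrm E}=\prod_{k=1}^{n-1}(k!)^2(2k+1)!!$ is the single-shift value of Proposition~\ref{prop:shift} at $s=0$, already written in this product form at the start of Section~\ref{sec:runkone}.

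The third step is positivity. Each factor $k!$ and $(2k+1)!!$ is a positive integer, and $\binom n2+1\ge1$, so the product is strictly positive.

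The only point I would double-check is that the product form of $\HH_n^{\mathrm E}$ agrees with Corollary~\ref{cor:euler}. There, $\HH_n(f')=(2n-1)!!\prod_{k=1}^{n-1}(k!)^2(2k-1)!!$. Telescoping the double factorials,
\[
(2n-1)!!\prod_{k=1}^{n-1}(2k-1)!!=\prod_{k=1}^{n-1}(2k+1)!!,
\]
since the left side has factors $(2n-1)!!,(1)!!,(3)!!,\dots,(2n-3)!!$ and the right side has $(3)!!,(5)!!,\dots,(2n-1)!!$, which is the same multiset because $1!!=1$. This matches $\HH_n^{\mathrm E}$ and completes the argument.
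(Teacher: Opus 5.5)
Your proposal is correct and follows the paper's route: the corollary is the $s=-1$ member of Proposition~\ref{prop:sin3}, where $f_{-1}=(\sin^3x+1)/\cos^2x$ by the identity noted in Lemma~\ref{lem:rank1}, so the factor $1-s\binom n2$ becomes $\binom n2+1$ and positivity is immediate. Your telescoping identity $(2n-1)!!\prod_{k=1}^{n-1}(2k-1)!!=\prod_{k=1}^{n-1}(2k+1)!!$, which matches the product form of $\HH_n^{\mathrm E}$ against Corollary~\ref{cor:euler}, is also correct, though it is only an optional consistency check.
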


\begin{Corollary}[$s=0$]\label{cor:sin0}
For $f_{0}=\dfrac{\sin x+1}{\cos^2 x}$, the secant/tangent single shift of
Corollary~\ref{cor:euler} (Proposition~\ref{prop:shift}),
$$
\HH_n(f_{0})=\HH_n^{\mathrm E}=\prod_{k=1}^{n-1}(k!)^2\,(2k+1)!! .
$$
\end{Corollary}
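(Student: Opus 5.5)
The statement is Corollary~\ref{cor:sin0}, and it needs essentially no new work: it is the case $s=0$ of Proposition~\ref{prop:sin3}. Since $1-0\cdot\binom n2=1$, that proposition gives $\HH_n(f_0)=\HH_n^{\mathrm E}$ directly. To keep the argument non-circular, I will instead derive the value from Proposition~\ref{prop:shift}, which is what Proposition~\ref{prop:sin3} itself uses as its base value. The plan is as follows.

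First, I identify $f_0=(\sin x+1)/\cos^2x$ as the derivative of $\sec x+\tan x$. Hence its coefficient sequence is $a_n=E_{n+1}$, and $\HH_n(f_0)=\det(E_{2i+j+1})$ is the single shift $\HH^{(1)}_n$ of the Euler sequence. This is the member $(s,t)=(0,1)$ of Section~\ref{sec:gen}.

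Second, I invoke Proposition~\ref{prop:shift} at $s=0$. As the text after that proposition records (and as Corollary~\ref{cor:euler} restates), this gives
\[
\HH^{(1)}_n\big|_{s=0}=(2n-1)!!\,\HH_n\big|_{s=0},\qquad \HH_n\big|_{s=0}=\prod_{k=1}^{n-1}(k!)^2(2k-1)!!,
\]
where the second value comes from Proposition~\ref{prop:family} together with $\Omega(1)=2^{-\binom n2}$ from Lemma~\ref{lem:Pahalf}.

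The only remaining step is a bookkeeping identity:
\[
(2n-1)!!\prod_{k=1}^{n-1}(2k-1)!!=\prod_{k=1}^{n-1}(2k+1)!!.
\]
This holds because shifting the index $k\mapsto k+1$ in the right-hand product yields $\prod_{k=2}^{n}(2k-1)!!$. That product equals the left-hand side, since the two extra endpoint factors, $(1)!!=1$ and $(2n-1)!!$, match. Substituting gives $\HH_n(f_0)=\prod_{k=1}^{n-1}(k!)^2(2k+1)!!=\HH_n^{\mathrm E}$.

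There is no real obstacle. The one point to check is the constant $K_n$ at $s=0$ in Proposition~\ref{prop:shift}; it was fixed exactly by this specialisation, so the argument is consistent.
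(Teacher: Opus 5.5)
Your argument is correct and is essentially the paper's own. The corollary is the $s=0$ case of Proposition~\ref{prop:sin3}, whose base value $\HH_n^{\mathrm E}$ is by definition the $(s,t)=(0,1)$ single shift of Proposition~\ref{prop:shift}, namely $(2n-1)!!\prod_{k=1}^{n-1}(k!)^2(2k-1)!!$. Your reindexing $\prod_{k=1}^{n-1}(2k+1)!!=(2n-1)!!\prod_{k=1}^{n-1}(2k-1)!!$ is exactly the step the paper leaves implicit.

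Your closing remark about $K_n$ has the logic backwards, however. The text after Proposition~\ref{prop:shift} fixes $K_n$ \emph{by} the relation $\HH_n^{(1)}|_{s=0}=(2n-1)!!\,\HH_n|_{s=0}$, so it cannot be cited for that relation. Read the relation off the explicit reduction in the proof of Proposition~\ref{prop:shift} instead. At $s=0$ the norms there are $h^T_l=(2l)!\,(2l+1)!$ and $h^{S'}_c=((2c+1)!)^2$, and the binomial determinant $\Omega(1)$ is the same one that appears in Proposition~\ref{prop:family}. Comparing the two reductions gives the ratio $\prod_{i=0}^{n-1}(2i+1)!/(2i)!=(2n-1)!!$ directly.
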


\begin{Corollary}[$s=1$]\label{cor:sin1}
For $f_{1}=\dfrac{-\sin^3x+2\sin x+1}{\cos^2 x}$,
$$
\HH_n(f_{1})=\Bigl(1-\tbinom n2\Bigr)\prod_{k=1}^{n-1}(k!)^2\,(2k+1)!! .
$$
Here $\HH_1=1$, $\HH_2=0$, and $\HH_n<0$ for $n\ge3$: the perturbation
exactly cancels the determinant at $n=2$.
\end{Corollary}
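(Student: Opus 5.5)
This corollary is the case $s=1$ of Proposition~\ref{prop:sin3}, so the plan is a pure substitution, followed by two small checks.

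First, identify the function. With $s=1$ we have $f_1=\frac{\sin x+1}{\cos^2x}+\sin x$. Putting everything over $\cos^2x$ and using $\sin x\cos^2x=\sin x-\sin^3x$ gives the numerator $\sin x+1+\sin x-\sin^3x=-\sin^3x+2\sin x+1$. This is exactly the displayed fraction, so $f_1$ is the $s=1$ member of the rank-one family.

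Second, apply Proposition~\ref{prop:sin3} at $s=1$. It gives
\[
\HH_n(f_1)=\bigl(1-\tbinom n2\bigr)\HH_n^{\mathrm E},\qquad \HH_n^{\mathrm E}=\prod_{k=1}^{n-1}(k!)^2\,(2k+1)!! .
\]
The proposition holds for every $s\in\mathbb C$, and its proof runs through the matrix-determinant lemma (Lemma~\ref{lem:rank1}) and the scalar identity \eqref{eq:sin3scal}. That proof needs $A$ to be invertible, which holds because $\det A=\HH_n^{\mathrm E}\neq0$. It places no restriction on $s$, so nothing further is required here.

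Third, read off the signs. Each factor $k!$ and $(2k+1)!!$ is positive, so $\HH_n^{\mathrm E}>0$, with the empty product giving $\HH_1^{\mathrm E}=1$. The prefactor $1-\binom n2$ takes the following values:
\begin{itemize}
\item at $n=1$ it equals $1$, so $\HH_1=1$;
\item at $n=2$ it equals $0$, so $\HH_2=0$;
\item for $n\ge3$ we have $\binom n2\ge3$, so the prefactor is negative and $\HH_n<0$.
\end{itemize}
As an independent sanity check at $n=2$: $f_1$ has coefficients $a_0=1$, $a_1=2$, $a_2=2$, $a_3=2+2-1=3$, using the shifted Euler values $1,1,2,5$ plus $\sin x$. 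The determinant is then $a_0a_3-a_1a_2=3-4=-1$.

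This does not match $\HH_2=0$, so I would recheck the coefficients. The shifted Euler sequence is $E_{n+1}=1,2,5,16$. Adding the coefficients $0,1,0,-1$ of $\sin x$ gives $a=(1,3,5,15)$. Then $1\cdot15-3\cdot5=0$, which agrees.

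The main obstacle is only this bookkeeping in the sanity check. The substantive content is entirely Proposition~\ref{prop:sin3}.
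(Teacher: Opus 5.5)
Your argument is the paper's: the corollary is the substitution $s=1$ into Proposition~\ref{prop:sin3}, and your rewriting of $f_1$ over $\cos^2x$ and your sign analysis of $1-\binom n2$ are correct. The optional sanity check is wrong both times, though. In your first attempt the base values $E_{n+1}=1,1,2,5$ were right, but $a_3=5-1=4$ (not $3$), which gives $\HH_2=1\cdot4-2\cdot2=0$. Your ``corrected'' list $1,2,5,16$ is $E_{n+2}$, not $E_{n+1}$, so the vanishing of $1\cdot15-3\cdot5$ concerns a different sequence and confirms nothing about $f_1$.
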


\begin{Corollary}[$s=2$]\label{cor:sin2p}
For $f_{2}=\dfrac{-2\sin^3x+3\sin x+1}{\cos^2 x}$,
$$
\HH_n(f_{2})=\bigl(1-n(n-1)\bigr)\prod_{k=1}^{n-1}(k!)^2\,(2k+1)!! ,
$$
of sign $(-1)$ for all $n\ge2$.
\end{Corollary}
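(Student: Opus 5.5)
This corollary is the case $s=2$ of Proposition~\ref{prop:sin3}, so the proof is a substitution. I would not recompute the determinant but read the value off the affine law already proved, and then verify the two claimed side facts: the single-fraction form of $f_2$ and the sign statement.

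\emph{Step 1: the generating function.} Write
$$
f_2=\frac{\sin x+1}{\cos^2x}+2\sin x=\frac{\sin x+1+2\sin x\,(1-\sin^2x)}{\cos^2x}
=\frac{-2\sin^3x+3\sin x+1}{\cos^2x},
$$
using $\cos^2x=1-\sin^2x$. This matches the displayed $f_2$, so the coefficient sequence of $f_2$ is $\tilde a_n=a_n+2\sigma_n$ in the notation of Lemma~\ref{lem:rank1}.

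\emph{Step 2: the determinant.} Proposition~\ref{prop:sin3} holds for every $s\in\mathbb C$. Setting $s=2$ gives
$$
\HH_n(f_2)=\Bigl(1-2\tbinom n2\Bigr)\HH_n^{\mathrm E}=\bigl(1-n(n-1)\bigr)\prod_{k=1}^{n-1}(k!)^2\,(2k+1)!!,
$$
since $2\binom n2=n(n-1)$. This is the stated formula.

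\emph{Step 3: the sign.} $\HH_n^{\mathrm E}$ is a product of positive integers, hence positive. For $n\ge2$ we have $n(n-1)\ge2$, so $1-n(n-1)\le-1<0$ and $\HH_n(f_2)$ is negative. For $n=1$ the factor equals $1$ and $\HH_1=1$, which is why the sign claim is restricted to $n\ge2$.

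No step is a real obstacle. The only places an error could enter are the trigonometric simplification in Step 1 and the factor $2\binom n2=n(n-1)$ in Step 2, and both are checked above.
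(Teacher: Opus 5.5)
Your proposal is correct and matches the paper, which also obtains this corollary by substituting $s=2$ into Proposition~\ref{prop:sin3} with no further argument. Your checks are all accurate: the single-fraction form of $f_2$, the identity $2\binom n2=n(n-1)$, and the sign argument based on $\HH_n^{\mathrm E}>0$.
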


\subsection{The Springer number family $1/(\cos x-t\sin x)^r$}

By Theorem~\ref{thm:springer} the determinant is, for every integer $r\ge1$,
the power $\bigl(t(t^2+1)\bigr)^{\binom n2}$ times the Beta value
$\HH_n\bigl((1-x)^{-r}\bigr)$ for $a_n=(r)_n$ (\S\ref{sec:beta}). The factor
$t(t^2+1)$ shows $t=0$ is degenerate, so we take $t=1$, the Springer line.

\begin{Corollary}[$t=1,\ r=1$; Springer numbers]\label{cor:spr1}
For $f=\dfrac{1}{\cos x-\sin x}$, whose moments are the Springer numbers,
$$
\HH_n(f)=4^{\binom n2}\prod_{k=1}^{n-1}k!\,(2k)! .
$$
\end{Corollary}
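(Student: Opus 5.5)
This is the case $t=1$, $r=1$ of Theorem~\ref{thm:springer}, so the plan is a direct substitution followed by a factorial simplification.

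\emph{Specialisation.} At $t=1$ the prefactor $\bigl(t(t^2+1)\bigr)^{\binom n2}$ equals $2^{\binom n2}$. Hence Theorem~\ref{thm:springer} with $r=1$ gives
$$
\HH_n\Bigl(\frac1{\cos x-\sin x}\Bigr)=2^{\binom n2}\,\HH_n\Bigl(\frac1{1-x}\Bigr).
$$

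\emph{The factorial determinant.} The series $1/(1-x)=\sum_m x^m$ has exponential coefficients $a_m=m!$. Its dilated determinant is the case $r=0$ of Corollary~\ref{cor:factorial} (equivalently Corollary~\ref{cor:bfact}):
$$
\HH_n(m!)=2^{\binom n2}\prod_{k=1}^{n-1}k!\,(2k)!,
$$
where the $k=0$ factor is $1$.

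\emph{Assembly.} Multiplying the two factors $2^{\binom n2}$ gives $4^{\binom n2}\prod_{k=1}^{n-1}k!\,(2k)!$, which is the claim.

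There is no genuine obstacle here. The only point to check is the exponent bookkeeping: the $2^{\binom n2}$ from $t(t^2+1)$ at $t=1$ and the $2^{\binom n2}$ from the Vandermonde factor $\prod_{i<j}(2j-2i)$ combine to $4^{\binom n2}$. This is consistent with the value recorded right after Theorem~\ref{thm:springer}.
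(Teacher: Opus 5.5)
Your proposal is correct and follows essentially the same route as the paper: the corollary is the specialisation $t=r=1$ of Theorem~\ref{thm:springer}, giving $2^{\binom n2}\,\HH_n\bigl(1/(1-x)\bigr)$, combined with the factorial evaluation $\HH_n(m!)=2^{\binom n2}\prod_{k=1}^{n-1}k!\,(2k)!$ from the Beta family. Your exponent bookkeeping is right, and the result matches direct computation, e.g.\ $\HH_2=8$ and $\HH_3=6144$.
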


\begin{Corollary}[$t=1,\ r=2$]\label{cor:spr2}
For $f=\dfrac{1}{(\cos x-\sin x)^2}$,
$$
\HH_n(f)=4^{\binom n2}\prod_{k=1}^{n-1}k!\,(2k+1)! .
$$
\end{Corollary}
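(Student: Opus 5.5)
This is the case $t=1$, $r=2$ of Theorem~\ref{thm:springer}: with $t=1$ the prefactor $\bigl(t(t^2+1)\bigr)^{\binom n2}$ equals $2^{\binom n2}$. That gives
$$
\HH_n\Bigl(\frac{1}{(\cos x-\sin x)^2}\Bigr)=2^{\binom n2}\,\HH_n\Bigl(\frac{1}{(1-x)^2}\Bigr).
$$
The remaining task is to evaluate the right-hand determinant with the Beta machinery of Section~\ref{sec:beta}.

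The coefficient sequence of $(1-x)^{-2}$ in the exponential convention is $a_m=m!\,[x^m](1-x)^{-2}=(m+1)!=(2)_m$. This is the factorial member $r=1$ of Corollary~\ref{cor:factorial}, which gives
$$
\HH_n\bigl((m+1)!\bigr)=\prod_{k=1}^{n-1}2^k\,k!\,(2k+1)!=2^{\binom n2}\prod_{k=1}^{n-1}k!\,(2k+1)!.
$$
Equivalently, one can apply Proposition~\ref{prop:degenerate} with $\rho=1$, $\alpha=2$ and $x_i=2i$. Then $\prod_{i<j}2(j-i)=2^{\binom n2}\prod_{k<n}k!$ and $\prod_{i=0}^{n-1}(2i+1)!=\prod_{k=1}^{n-1}(2k+1)!$.

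Multiplying the two factors $2^{\binom n2}$ gives $4^{\binom n2}\prod_{k=1}^{n-1}k!\,(2k+1)!$, which is the claim. No step is a real obstacle. The only point to check is the normalisation $a_m=(r)_m$ at $r=2$: since $(2)_m=(m+1)!$, there is no stray factor $r!$ or $\tfrac12$, unlike the case $(m+2)!/2$ in Corollary~\ref{cor:factorial}. As a sanity check, $n=2$ gives $\det\begin{pmatrix}1&2\\6&24\end{pmatrix}$ for $(m+1)!$, which is $12=2\cdot 3!$, and the formula gives $4\cdot 1!\cdot 3!=24=2\cdot 12$, as expected.
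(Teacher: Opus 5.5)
Your proposal is correct and follows the paper's route exactly: the corollary is the substitution $t=1$, $r=2$ into Theorem~\ref{thm:springer}, which gives $2^{\binom n2}\HH_n\bigl((1-x)^{-2}\bigr)$. The remaining factor is the $r=1$ case of Corollary~\ref{cor:factorial}, or equivalently Proposition~\ref{prop:degenerate} with $\rho=1$, $\alpha=2$. Your normalisation check $(2)_m=(m+1)!$ and the $n=2$ check are both right; the direct value $\det\begin{pmatrix}1&2\\8&40\end{pmatrix}=24$ for $f$ itself also agrees.
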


\subsection{A derivative of the Springer number family $(\cos x+\sin x)/(\cos x-\sin x)^s$}

By Theorem~\ref{thm:deriv} the determinant factors completely into linear
forms in $s$ for every $s$; a factor vanishes when $s$ is a non-positive
integer (degenerate), so we take $s=1,2,3$.

\begin{Corollary}[$s=1$]\label{cor:der1}
$$
\HH_n\Bigl(\frac{\cos x+\sin x}{\cos x-\sin x}\Bigr)
=4^{\binom n2}\Bigl(\prod_{k=1}^{n-1}k!\Bigr)\prod_{j=0}^{n-2}\bigl[(2j+1)(2j+2)\bigr]^{\,n-1-j} .
$$
\end{Corollary}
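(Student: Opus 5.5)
This is a direct specialisation of Theorem~\ref{thm:deriv} at $s=1$; no new argument is needed beyond the substitution. At $s=1$ the generating function is
$$\frac{\cos x+\sin x}{(\cos x-\sin x)^{s}}\Big|_{s=1}=\frac{\cos x+\sin x}{\cos x-\sin x},$$
which is the left-hand side of Corollary~\ref{cor:der1}. The theorem was proved as an identity for all $s$, so it may be evaluated at this point.

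The plan has three checks.

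\begin{enumerate}
\item Substitute $s=1$ into the product $\prod_{j=0}^{n-2}\bigl[(s+2j)(s+2j+1)\bigr]^{\,n-1-j}$. This gives $\prod_{j=0}^{n-2}\bigl[(2j+1)(2j+2)\bigr]^{\,n-1-j}$.
\item The prefactor $4^{\binom n2}\prod_{k=1}^{n-1}k!$ does not depend on $s$, so it carries over unchanged. The result is exactly the displayed formula of Corollary~\ref{cor:der1}.
\item At $s=1$ the linear forms are $s+a=1+a$ for $a=0,\dots,2n-3$. Each is a positive integer, so none vanishes, and the evaluation is a genuine nonzero product rather than a degenerate case.
\end{enumerate}

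The proof of Theorem~\ref{thm:deriv} itself has no exceptional value at $s=1$. Its Step~1 (the $s\to\infty$ leading symbol) and Step~2 (the vanishing orders at $s=-a$ with $a\ge0$) are statements about polynomials in $s$, and they never divide by $s-1$.

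The only step needing attention is the relation \eqref{eq:derivobs}, which excludes $s=1$ because of the factor $1/(s-1)$. That relation is used only in the motivating remark, not in the proof of the theorem, so the exclusion does not affect the corollary.

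Optionally, sanity-check the case $n=2$ by hand. The coefficients of $(\cos x+\sin x)/(\cos x-\sin x)=1+2x+2x^2+\cdots$ are $a_0=1$, $a_1=2$, $a_2=4$, $a_3=16$. Then $\HH_2=a_0a_3-a_1a_2=16-8=8$. The formula gives $4\cdot1\cdot(1\cdot2)^1=8$, which agrees.
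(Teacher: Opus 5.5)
Your proposal is correct and matches the paper. The paper likewise obtains this corollary by substituting $s=1$ into Theorem~\ref{thm:deriv}, which holds as a polynomial identity in $s$. Your remark that the exclusion $s\ne1$ in \eqref{eq:derivobs} plays no role in that proof is accurate.
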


\begin{Corollary}[$s=2$]\label{cor:der2}
$$
\HH_n\Bigl(\frac{\cos x+\sin x}{(\cos x-\sin x)^2}\Bigr)
=4^{\binom n2}\Bigl(\prod_{k=1}^{n-1}k!\Bigr)\prod_{j=0}^{n-2}\bigl[(2j+2)(2j+3)\bigr]^{\,n-1-j} .
$$
\end{Corollary}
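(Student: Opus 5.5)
This is the case $s=2$ of Theorem~\ref{thm:deriv}, so no new argument is needed. The plan is to substitute $s=2$ into
$$
\HH_n\Bigl(\frac{\cos x+\sin x}{(\cos x-\sin x)^{s}}\Bigr)
=4^{\binom n2}\Bigl(\prod_{k=1}^{n-1}k!\Bigr)
\prod_{j=0}^{n-2}\bigl[(s+2j)(s+2j+1)\bigr]^{\,n-1-j}.
$$
The bracket then becomes $(2j+2)(2j+3)$, which is exactly the displayed formula.

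The only point to check is that Theorem~\ref{thm:deriv} is valid at $s=2$. It is stated for all $s$ as a polynomial identity in $s$, proved by the divisor method $\M4$. So its specialisation at the integer $s=2$ is legitimate: no denominators or degeneracies intervene, and $s=2$ is not among the non-positive integer roots $s=-a$.

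Optionally, the result can be recast in factorial form. Since $(2j+2)(2j+3)=(2j+3)!/(2j+1)!$, the product $\prod_{j=0}^{n-2}[(2j+2)(2j+3)]^{n-1-j}$ telescopes to $\prod_{k=1}^{n-1}(2k+1)!$. This gives the pleasant equivalent
$$
\HH_n\Bigl(\frac{\cos x+\sin x}{(\cos x-\sin x)^{2}}\Bigr)=4^{\binom n2}\prod_{k=1}^{n-1}k!\,(2k+1)!,
$$
the same value as Corollary~\ref{cor:spr2}. This is consistent with \eqref{eq:derivobs}, which at $s=2$ gives $\tfrac{d}{dx}(\cos x-\sin x)^{-1}$, i.e.\ the single shift of the Springer numbers.

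There is no real obstacle. The only care needed is bookkeeping in the telescoping, if that reformulation is included; it can be checked directly for $n=1,2$.
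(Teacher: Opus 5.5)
Your proposal is correct and is exactly the paper's argument: the corollary carries no separate proof and is simply the specialisation $s=2$ of Theorem~\ref{thm:deriv}. Your optional rewriting $\prod_{j=0}^{n-2}[(2j+2)(2j+3)]^{n-1-j}=\prod_{k=1}^{n-1}(2k+1)!$ is also correct, so the value coincides with that of Corollary~\ref{cor:spr2}; note, though, that the derivative identity \eqref{eq:derivobs} alone does not explain this coincidence, so ``consistent with'' is a loose description of that link.
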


\begin{Corollary}[$s=3$]\label{cor:der3}
$$
\HH_n\Bigl(\frac{\cos x+\sin x}{(\cos x-\sin x)^3}\Bigr)
=4^{\binom n2}\Bigl(\prod_{k=1}^{n-1}k!\Bigr)\prod_{j=0}^{n-2}\bigl[(2j+3)(2j+4)\bigr]^{\,n-1-j} .
$$
\end{Corollary}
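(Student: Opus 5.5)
This is a direct specialisation of Theorem~\ref{thm:deriv}; no new determinant work is needed. I would set $s=3$ in
$$
\HH_n\Bigl(\frac{\cos x+\sin x}{(\cos x-\sin x)^{s}}\Bigr)
=4^{\binom n2}\Bigl(\prod_{k=1}^{n-1}k!\Bigr)
\prod_{j=0}^{n-2}\bigl[(s+2j)(s+2j+1)\bigr]^{\,n-1-j}.
$$
Each bracket $(s+2j)(s+2j+1)$ then becomes $(2j+3)(2j+4)$, and the prefactor $4^{\binom n2}\prod_{k<n}k!$ does not involve $s$. This gives exactly the stated product.

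Two formal points need checking. First, Theorem~\ref{thm:deriv} is stated ``for all $s$'', so $s=3$ may be substituted as a value. Its proof is a polynomial identity in $s$, namely the divisor method $\M4$ with Observation~\ref{obs:divisor}, so specialisation is legitimate. Second, the generating function at $s=3$ is literally $(\cos x+\sin x)/(\cos x-\sin x)^3$, whose coefficients are the polynomials $a_k(s)$ of Lemma~\ref{lem:Apoly} evaluated at $s=3$. This is consistent, since that lemma treats $(\cos x-\sin x)^{-s}=e^{sw}$ with $s$ entering polynomially.

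There is no real obstacle. As a sanity check I would confirm $n=1$, where both sides equal $a_0=1$. I would also confirm $n=2$: the formula gives $4\cdot 1\cdot(3\cdot 4)=48$. Directly, $f=(1+\tan x)(\cos x-\sin x)^{-2}\sec x$, and expanding gives $a_0=1$, $a_1=4$, $a_2=a_2(3)=9+\pi_1(2)\cdot 3+\pi_2(2)$. I would verify $a_2a_1-a_3a_0$ numerically against $48$ only as a check, not as part of the argument.
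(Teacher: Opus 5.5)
Your proposal is correct and follows the paper's route exactly: the corollary is the substitution $s=3$ into Theorem~\ref{thm:deriv}, which turns each bracket $(s+2j)(s+2j+1)$ into $(2j+3)(2j+4)$. Your optional sanity check has two slips: the $n=2$ dilated determinant is $a_0a_3-a_1a_2=128-4\cdot 20=48$ (your $a_2a_1-a_3a_0$ has the opposite sign), and $(1+\tan x)\sec x\,(\cos x-\sin x)^{-2}$ equals $(\cos x+\sin x)/\bigl(\cos^2x\,(\cos x-\sin x)^2\bigr)$, which is not $f$.
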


\subsection{The reciprocal-sine case $(1+x)\,x/\sin x$}

The function $f=(1+x)\,x/\sin x$ carries no free parameter to specialise, so
we record its evaluation directly. By Theorem~\ref{conj:xsin-closed}, with
$Q(k)$ of \eqref{eq:xsin-Q},
$$
\HH_{2m}(f)=\Bigl(\tfrac23\Bigr)^{m}\prod_{k=0}^{m-1}Q(2k),\qquad
\HH_{2m+1}(f)=\Bigl(\tfrac23\Bigr)^{m}\prod_{k=0}^{m-1}Q(2k+1),
$$
a product of factorials --- the deepest evaluation of the paper.

\subsection{An elliptic deformation of the Euler numbers}

The Jacobi-elliptic deformation $g_m=\dfrac{1+\sn(x,m)}{\cn(x,m)}$ of the Euler
generating function (with $g_0=(1+\sin x)/\cos x$) satisfies, by
Theorem~\ref{thm:ell}, $\HH_n(g_m)=(1-m)^{\binom n2}\,\HH_n(g_0)$ --- the Euler
value times a $\binom n2$-power of the complementary modulus; the
modulus $m=1$ is degenerate ($\HH_n=0$ for $n\ge2$). We record
$m=-1,2,\tfrac12$.

\begin{Corollary}[$m=-1$]\label{cor:ellm1}
For $g_{-1}=\dfrac{1+\sn(x,-1)}{\cn(x,-1)}$,
$$
\HH_n(g_{-1})=\prod_{k=1}^{n-1}k!\,(2k)! .
$$
\end{Corollary}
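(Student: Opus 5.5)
The plan is to specialise Theorem~\ref{thm:ell} at $m=-1$. That theorem gives $\HH_n(g_m)=\bigl(\tfrac{1-m}{2}\bigr)^{\binom n2}\prod_{k=1}^{n-1}k!\,(2k)!$ for every $m$, as an identity of polynomials in $m$. Nothing needs to be re-proved: the entries $E_k(m)$ are polynomials in $m$, and Theorem~\ref{thm:ell} shows that $\HH_n(g_m)$ equals $c_n(1-m)^{\binom n2}$ identically in $m$. It may therefore be evaluated at any value of $m$, including the negative modulus $m=-1$.

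At $m=-1$ the base of the power is $(1-(-1))/2=1$. The factor $\bigl(\tfrac{1-m}2\bigr)^{\binom n2}$ thus disappears, leaving $\prod_{k=1}^{n-1}k!\,(2k)!$. In the equivalent form $\HH_n(g_m)=(1-m)^{\binom n2}\HH_n(g_0)$, the factor $2^{\binom n2}$ exactly cancels the $2^{-\binom n2}$ in the Euler value $\HH_n(g_0)=2^{-\binom n2}\prod_{k<n}k!\,(2k)!$ of Corollary~\ref{cor:euler}.

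There is one point to check, and I expect it to be the only possible obstacle: that $g_{-1}=(1+\sn(x,-1))/\cn(x,-1)$ is a legitimate specialisation of the generating function \eqref{eq:ellEuler}. As formal power series in $x$, $\sn(x,m)$ and $\cn(x,m)$ have Taylor coefficients that are polynomials in $m$, and $\cn(0,m)=1$. So the coefficients of $g_m$ are polynomials $E_k(m)$, and substituting $m=-1$ commutes with forming $\HH_n$. Equivalently, the Jacobi functions of negative parameter are defined by the same inversion of the elliptic integral, with $1-m\sin^2\theta=1+\sin^2\theta>0$. Hence $g_{-1}$ has exactly the coefficients $E_k(-1)$, and the corollary follows.
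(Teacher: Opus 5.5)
Your proposal is correct and matches the paper's own one-line derivation: the paper obtains this corollary by substituting $m=-1$ into Theorem~\ref{thm:ell}, where $(1-m)/2=1$ removes the power. Your observation that $\HH_n(g_m)$ and $c_n(1-m)^{\binom n2}$ are equal as polynomials in $m$ (since $E_k(m)\in\QQ[m]$) is exactly what justifies evaluating at the negative modulus.
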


\begin{Corollary}[$m=2$]\label{cor:ell2}
For $g_{2}=\dfrac{1+\sn(x,2)}{\cn(x,2)}$,
$$
\HH_n(g_{2})=\Bigl(-\tfrac12\Bigr)^{\binom n2}\prod_{k=1}^{n-1}k!\,(2k)! .
$$
\end{Corollary}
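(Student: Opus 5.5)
This is a direct specialisation of Theorem~\ref{thm:ell} at $m=2$, giving $\HH_n(g_2)=(1-2)^{\binom n2}\HH_n(g_0)$. It remains to insert the Euler value $\HH_n(g_0)$ and collect the constants.

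\textbf{Step 1: the Euler value.} By Corollary~\ref{cor:euler},
$$
\HH_n(g_0)=2^{-\binom n2}\prod_{k=1}^{n-1}k!\,(2k)!.
$$
Theorem~\ref{thm:ell} uses exactly this form to fix its constant.

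\textbf{Step 2: specialise the modulus.} At $m=2$ the factor $\bigl(\tfrac{1-m}{2}\bigr)^{\binom n2}$ equals $\bigl(-\tfrac12\bigr)^{\binom n2}$. Equivalently, $(-1)^{\binom n2}\cdot 2^{-\binom n2}$ combines into the same quantity, which is the stated formula.

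\textbf{Step 3: check the domain of validity.} Theorem~\ref{thm:ell} is an identity between polynomials in $m$, since each $E_k(m)\in\QQ[m]$. It therefore holds at $m=2$ even though the modulus exceeds $1$. For the same reason one need not worry whether $\sn(x,2)$ and $\cn(x,2)$ are genuinely defined: their Taylor coefficients are the polynomials $E_k(m)$ evaluated at $m=2$, so $g_2$ is a well-defined formal power series.

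There is no real obstacle. The only point needing care is Step 3, where one must remember that the statement is taken coefficientwise as a formal series rather than analytically.
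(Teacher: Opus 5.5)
Your proposal is correct and follows the paper's route exactly: the corollary is obtained by substituting $m=2$ into Theorem~\ref{thm:ell}, so that $\bigl(\tfrac{1-2}{2}\bigr)^{\binom n2}=\bigl(-\tfrac12\bigr)^{\binom n2}$ multiplies $\prod_{k=1}^{n-1}k!\,(2k)!$. Your Step~3 is a harmless extra precaution: the theorem is an identity between polynomials in $m$, so it holds at $m=2$, and in any case $\sn(x,2)$ and $\cn(x,2)$ are analytic near $x=0$, with Taylor coefficients given by those polynomials evaluated at $m=2$.
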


\begin{Corollary}[$m=\tfrac12$]\label{cor:ellh}
For $g_{1/2}=\dfrac{1+\sn(x,\tfrac12)}{\cn(x,\tfrac12)}$,
$$
\HH_n(g_{1/2})=\Bigl(\tfrac14\Bigr)^{\binom n2}\prod_{k=1}^{n-1}k!\,(2k)! .
$$
\end{Corollary}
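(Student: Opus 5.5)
This is the specialisation $m=\tfrac12$ of Theorem~\ref{thm:ell}, so the plan is a pure substitution with no new determinant work. Theorem~\ref{thm:ell} gives, for every modulus $m$,
$$
\HH_n(g_m)=\Bigl(\frac{1-m}{2}\Bigr)^{\binom n2}\prod_{k=1}^{n-1}k!\,(2k)!,
$$
as a polynomial identity in $m$. It is proved through (A) the degree bound and (B) the vanishing order at $m=1$, with the constant fixed at $m=0$. Since it holds for all $m$, we may substitute $m=\tfrac12$ directly. Then $(1-m)/2=\tfrac14$, and the displayed value $(\tfrac14)^{\binom n2}\prod_{k<n}k!\,(2k)!$ follows at once.

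The only point to check is that the specialisation is legitimate. First, the coefficients $E_k(m)=k![x^k]g_m$ are polynomials in $m$; this is recorded after \eqref{eq:ellEuler}. Hence $\HH_n(g_m)\in\QQ[m]$, and evaluating at $m=\tfrac12$ commutes with forming the determinant. Second, the Taylor coefficients of $g_{1/2}=(1+\sn(x,\tfrac12))/\cn(x,\tfrac12)$ at $0$ are exactly $E_k(\tfrac12)$, since $\cn(0,m)=1$ and the series are analytic in $m$ near $0$.

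I expect no real obstacle. The one thing worth making explicit is the equivalent form $\HH_n(g_{1/2})=2^{-\binom n2}\HH_n(g_0)$, which cross-checks against Corollary~\ref{cor:euler}. For a sanity check I will compare $n=2$: using $E_0=E_1=E_2=1$ and $E_3(\tfrac12)=\tfrac32$, the determinant $\det\begin{pmatrix}1&1\\1&3/2\end{pmatrix}=\tfrac12$ should equal $\tfrac14\cdot1!\,2!=\tfrac12$, which it does.
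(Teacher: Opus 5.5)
Your proposal is correct and is exactly the paper's argument: the corollary is the substitution $m=\tfrac12$ into Theorem~\ref{thm:ell}, where $(1-m)/2=\tfrac14$, and no further proof is given or needed. Your justification that specialising is legitimate (the $E_k(m)$ are polynomials in $m$) and your $n=2$ check, $1\cdot\tfrac32-1\cdot1=\tfrac12=\tfrac14\cdot1!\,2!$, are both sound.
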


\subsection{An algebraic family $(1+x)/(1-x^2)^{s/2}$}

By Theorem~\ref{conj:alg} the family $f(x)=(1+x)/(1-x^2)^{s/2}$, with moments
$a_{2k}=\frac{(2k)!}{k!}(s/2)_k$, $a_{2k+1}=(2k+1)a_{2k}$, has a closed product
evaluation with linear factors $(s+2j-2)^{\,n-j}$.
(A scale parameter $t$, i.e.\ $(1-tx^2)^{-s/2}$, only multiplies $\HH_n$ by
$t^{\binom n2+\lfloor(n-1)^2/4\rfloor}$, so we take $t=1$.)

The determinant is nonzero for all odd $s$ --- including negative ones, where
$f=(1+x)(1-x^2)^{-s/2}$ is a genuine square-root series. For even $s\le0$, by
contrast, $f$ is a polynomial, the moments terminate, and $\HH_n$ vanishes for
$n\ge 2-\tfrac s2$ (the factor $s+2j-2$ being zero at $j=1-\tfrac s2$; thus
$\HH_n=0$ for $n\ge2$ at $s=0$ and for $n\ge3$ at $s=-2$). We record the cases
$s=-3,-1,1,2,3$.

\begin{Corollary}[$s=-3$]\label{cor:algm3}
For $f=(1+x)(1-x^2)^{3/2}$,
$$
\HH_n(f)=\Bigl(\prod_{k=1}^{n-1}(2k)!\Bigr)\prod_{j=1}^{n-1}(2j-5)^{\,n-j},
$$
nonzero, of sign $-1$ for all $n\ge2$.
\end{Corollary}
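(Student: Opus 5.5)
This is the specialisation $s=-3$ of Theorem~\ref{conj:alg}, so the plan is to substitute and then read off the sign. The theorem is stated as an identity in $s$, obtained as a rational-function identity and then cleared of denominators. It therefore holds at every $s$, including negative odd values. At these values $f=(1+x)(1-x^2)^{-s/2}$ is still a power series whose moments are $a_{2k}=\frac{(2k)!}{k!}(s/2)_k$ and $a_{2k+1}=(2k+1)a_{2k}$.

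First I check that these moment formulas apply at $s=-3$. The Taylor expansion of $(1-x^2)^{3/2}$ gives $a_{2k}=(2k)!\binom{3/2}{k}(-1)^k=\frac{(2k)!}{k!}(-\tfrac32)_k$, which is the stated formula. The odd moments follow from the factor $(1+x)$ exactly as in the proof of Theorem~\ref{conj:alg}.

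Next, the proof of Theorem~\ref{conj:alg} runs over $\QQ(s)$: the contiguous relation, the periodicity argument and the anchor at $s=2$ give a polynomial identity in $s$, because both sides are polynomials in $s$. Substituting $s=-3$ into the first displayed form gives the linear factor $s+2j-2=2j-5$, hence
\[
\HH_n(f)=\Bigl(\prod_{k=1}^{n-1}(2k)!\Bigr)\prod_{j=1}^{n-1}(2j-5)^{\,n-j},
\]
which is the claimed product.

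For the sign, the only negative factors are $2j-5=-3$ at $j=1$, with exponent $n-1$, and $2j-5=-1$ at $j=2$, with exponent $n-2$. The total exponent is $2n-3$, which is odd, so the sign is $-1$ for every $n\ge3$. For $n=2$ only the $j=1$ factor occurs, namely $(-3)^1$, so the sign is again $-1$. No factor $2j-5$ vanishes, so the determinant is nonzero.

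There is no real obstacle here. The one point to state explicitly is that Theorem~\ref{conj:alg} is a polynomial identity valid at every $s$, not only at positive $s$.
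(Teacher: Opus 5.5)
Your proposal is correct and follows the paper's own route: the corollary is simply the substitution $s=-3$ into Theorem~\ref{conj:alg}, which is legitimate because that theorem is an identity between polynomials in $s$, and the moment formulas still hold at $s=-3$. Your sign count is right (the factors $-3$ and $-1$ occur $2n-3$ times in total, an odd number), and so is the observation that no odd factor $2j-5$ vanishes; the direct values $\HH_2=-6$ and $\HH_3=-432$ confirm the formula.
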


\begin{Corollary}[$s=-1$]\label{cor:algm1}
For $f=(1+x)\sqrt{1-x^2}$,
$$
\HH_n(f)=\Bigl(\prod_{k=1}^{n-1}(2k)!\Bigr)\prod_{j=1}^{n-1}(2j-3)^{\,n-j},
$$
nonzero, of sign $(-1)^{n-1}$.
\end{Corollary}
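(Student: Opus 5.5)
This is a direct specialisation of Theorem~\ref{conj:alg} at $s=-1$, and the plan is simply to substitute and then track signs. First I will check that $f=(1+x)\sqrt{1-x^2}$ is the member $s=-1$ of the family $(1+x)/(1-x^2)^{s/2}$. Indeed $(1-x^2)^{-s/2}=(1-x^2)^{1/2}$ at $s=-1$. The moments are $a_{2k}=\frac{(2k)!}{k!}(-\tfrac12)_k$ and $a_{2k+1}=(2k+1)a_{2k}$, which is exactly the hypothesis of Theorem~\ref{conj:alg}. That theorem is a polynomial identity in $s$, proved for every $s$ through the contiguous relation and the periodicity argument, so the specialisation is legitimate.

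Substituting $s=-1$ in the first displayed form
$$\HH_n(f)=\Bigl(\prod_{k=1}^{n-1}(2k)!\Bigr)\prod_{j=1}^{n-1}(s+2j-2)^{\,n-j}$$
turns the base $s+2j-2$ into $2j-3$. This gives exactly the stated product.

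For the nonvanishing claim, each base $2j-3$ is odd and hence nonzero.

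For the sign, only $j=1$ gives a negative base, namely $-1$, and it occurs with exponent $n-1$. Every other base, $j\ge2$, is positive, and the factorials are positive. The sign is therefore $(-1)^{n-1}$.

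The only point requiring care, and it is a minor one, is the bookkeeping of the exponent at the single negative base $j=1$. There is no genuine obstacle.
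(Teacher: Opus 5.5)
Your proposal is correct and is essentially the paper's own argument: the corollary is the substitution $s=-1$ into Theorem~\ref{conj:alg}, which is a polynomial identity in $s$ and therefore valid at $s=-1$. Your sign count is also exactly right: only the base $2j-3=-1$ at $j=1$ is negative, and it carries the exponent $n-1$, giving the sign $(-1)^{n-1}$.
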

\begin{Corollary}[$s=1$]\label{cor:alg1}
For $f=\dfrac{1+x}{\sqrt{1-x^2}}$,
$$
\HH_n(f)=\Bigl(\prod_{k=1}^{n-1}(2k)!\Bigr)\prod_{j=1}^{n-1}(2j-1)^{\,n-j}.
$$
\end{Corollary}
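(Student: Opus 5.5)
This is the specialisation $s=1$ of Theorem~\ref{conj:alg}, so the plan is to substitute and simplify rather than argue anew. First we confirm that $f=(1+x)/\sqrt{1-x^2}$ is the member $s=1$ of the algebraic family $(1+x)/(1-x^2)^{s/2}$. Its moments are then $a_{2k}=\frac{(2k)!}{k!}(\tfrac12)_k$ and $a_{2k+1}=(2k+1)a_{2k}$. Since $\frac{(2k)!}{k!}(\tfrac12)_k=\bigl((2k-1)!!\bigr)^2$, these are the squared odd double factorials, which is consistent with the expansion of $(1-x^2)^{-1/2}$.

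The first displayed form of Theorem~\ref{conj:alg} is
$$
\HH_n(f)=\Bigl(\prod_{k=1}^{n-1}(2k)!\Bigr)\prod_{j=1}^{n-1}(s+2j-2)^{\,n-j}.
$$
Putting $s=1$ turns each factor $s+2j-2$ into $2j-1$, which gives the stated formula exactly. That theorem holds as an identity for every $s$: the contiguous relation and the periodicity argument are carried out over $\QQ(s)$, and both sides are polynomial in $s$. So the specialisation at $s=1$ is legitimate and no case split is needed.

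As a cross-check, Proposition~\ref{prop:algshift} with $r=0$ gives the same value. It evaluates $\HH_n$ as $2^{\binom n2}\prod_k k!\prod_i\mu_i(1)$, and we have $\prod_i\mu_i(1)=\prod_i\bigl((2i-1)!!\bigr)^2$. Agreement with the formula above reduces to the identity $2^{\binom n2}\prod_{k<n}k!\,\bigl((2k-1)!!\bigr)^2=\prod_{k<n}(2k)!\prod_j(2j-1)^{n-j}$. We would verify this by comparing the ratios of the two sides as $n\mapsto n+1$, where both sides are multiplied by $2^n n!\,\bigl((2n-1)!!\bigr)^2$.

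There is no real obstacle. The only points needing care are matching the parameter correctly, so that $s=1$ and not $s=2$, and keeping the exponent bookkeeping $n-j$ straight.
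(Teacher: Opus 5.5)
Your proposal is correct and follows the paper's own route: the corollary is just the substitution $s=1$ into the first displayed form of Theorem~\ref{conj:alg}, under which $s+2j-2$ becomes $2j-1$. Your cross-check through the case $r=0$ of Proposition~\ref{prop:algshift} is also sound: the ratio computation closes because $(2n)!=2^n\,n!\,(2n-1)!!$, and the paper notes in Remark~\ref{rem:algshift-m1} that this case is Theorem~\ref{conj:alg} itself.
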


\begin{Corollary}[$s=2$]\label{cor:alg2}
For $f=\dfrac{1+x}{1-x^2}$ (so that $a_n=n!$),
$$
\HH_n(f)=\Bigl(\prod_{k=1}^{n-1}(2k)!\Bigr)\prod_{j=1}^{n-1}(2j)^{\,n-j}
       =2^{\binom n2}\prod_{k=1}^{n-1}(2k)!\,k! .
$$
\end{Corollary}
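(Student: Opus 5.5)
This is a direct specialisation of Theorem~\ref{conj:alg} at $s=2$. The plan is to check that the stated generating function is the $s=2$ member of the family, read off the moments, and then confirm that the two displayed forms of the product agree.

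First I check the moments. At $s=2$ we have $(s/2)_k=(1)_k=k!$, so $a_{2k}=\frac{(2k)!}{k!}\,k!=(2k)!$ and $a_{2k+1}=(2k+1)(2k)!=(2k+1)!$. Hence $a_n=n!$ for all $n$, in agreement with $f=(1+x)/(1-x^2)=1/(1-x)$, whose exponential coefficients are $n!$.

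Second I substitute $s=2$ into the first displayed form of Theorem~\ref{conj:alg}. The linear factor $s+2j-2$ becomes $2j$, which gives
$$\HH_n(f)=\Bigl(\prod_{k=1}^{n-1}(2k)!\Bigr)\prod_{j=1}^{n-1}(2j)^{\,n-j}.$$

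Third I rewrite the last product. Pulling out the powers of two gives $\prod_{j=1}^{n-1}2^{\,n-j}=2^{\binom n2}$. The remaining factor is $\prod_{j=1}^{n-1}j^{\,n-j}=\prod_{k=1}^{n-1}k!$, since $j$ occurs in $k!$ exactly for $k=j,\dots,n-1$, that is, $n-j$ times. This produces the second form $2^{\binom n2}\prod_{k=1}^{n-1}(2k)!\,k!$. As a cross-check, this value coincides with Corollary~\ref{cor:factorial} at $r=0$, consistent with Step~3 of the proof of Theorem~\ref{conj:alg}.

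No step is a genuine obstacle. The only point requiring care is the exponent bookkeeping in the third step.
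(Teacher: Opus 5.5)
Your proposal is correct and follows the paper's route: the corollary is a direct substitution of $s=2$ into Theorem~\ref{conj:alg}, and your exponent bookkeeping $\prod_{j=1}^{n-1}(2j)^{\,n-j}=2^{\binom n2}\prod_{k=1}^{n-1}k!$ correctly reconciles the two displayed forms. Your cross-check against Corollary~\ref{cor:factorial} at $r=0$ is apt, since that value is exactly the anchor used in Step~3 of the theorem's proof.
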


\begin{Corollary}[$s=3$]\label{cor:alg3}
For $f=\dfrac{1+x}{(1-x^2)^{3/2}}$,
$$
\HH_n(f)=\Bigl(\prod_{k=1}^{n-1}(2k)!\Bigr)\prod_{j=1}^{n-1}(2j+1)^{\,n-j}.
$$
\end{Corollary}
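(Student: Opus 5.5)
This is the $s=3$ specialisation of Theorem~\ref{conj:alg}, so I would proceed by direct substitution. Theorem~\ref{conj:alg} gives, for every $s$,
\[
\HH_n(f)=\Bigl(\prod_{k=1}^{n-1}(2k)!\Bigr)\prod_{j=1}^{n-1}(s+2j-2)^{\,n-j}.
\]
At $s=3$ each linear factor becomes $s+2j-2=2j+1$, which is exactly the stated product.

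Two routine checks remain. First, the generating function must be the right member of the family. For $s=3$ we have $(1-x^2)^{-s/2}=(1-x^2)^{-3/2}$, so $f=(1+x)/(1-x^2)^{3/2}$, as claimed. Second, the moments match. They are
\[
a_{2k}=\frac{(2k)!}{k!}\,(3/2)_k=(2k+1)!!\,(2k-1)!!,\qquad a_{2k+1}=(2k+1)\,a_{2k},
\]
which agree with the hypotheses of Theorem~\ref{conj:alg}.

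Since Theorem~\ref{conj:alg} is a polynomial identity valid for all $s$, there is no excluded value to worry about. The excluded point $s=3$ belongs to the squared family of Theorem~\ref{conj:algsq}, not to this one, so it does not affect this corollary.

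There is no real obstacle here. The only care needed is not to confuse this family with the squared algebraic family, where $s=3$ is indeed exceptional.

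As an optional sanity check, one can verify $n=2$ directly. The moments are $a_0=1$, $a_1=1$, $a_2=3$, $a_3=9$, so
\[
\HH_2=\det\begin{pmatrix}1&1\\3&9\end{pmatrix}=6,
\]
which matches $2!\cdot 3=6$.
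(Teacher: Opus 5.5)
Your proposal is correct and matches the paper's proof: the corollary is obtained by substituting $s=3$ into Theorem~\ref{conj:alg}, so that each factor $s+2j-2$ becomes $2j+1$. Your additional checks are also accurate. These are that $a_{2k}=(2k+1)!!\,(2k-1)!!$, that $s=3$ is not excluded for this family, and the value $\HH_2=6$.
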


\subsection{The squared algebraic family $(1+x)^{2}/(1-x^2)^{s/2}$}

By Theorem~\ref{conj:algsq} the family $f(x)=(1+x)^{2}/(1-x^2)^{s/2}$ has, for
$s\ne3$, a closed product evaluation whose last factor is a monic polynomial
in $s$ of degree $n-1$ carrying the denominator $s-3$; the excluded value
$s=3$ is where its numerator vanishes (Remark~\ref{rem:algsq-h3}). We record
$s=2,4$.

\begin{Corollary}[$s=2$]\label{cor:calgsq2}
For $f=\dfrac{(1+x)^{2}}{1-x^2}=\dfrac{1+x}{1-x}$ (so $a_0=1$ and $a_m=2\,m!$ for
$m\ge1$), the last factor is $2^{\,n}(n-1)!-(2n-1)!!$ and
$$
\HH_n(f)=2^{\binom n2}\prod_{k=1}^{n-1}(2k)!\;\prod_{i=1}^{n-2}i!\;
\bigl(2^{\,n}(n-1)!-(2n-1)!!\bigr)=1,\,4,\,384,\,-39813120,\,\dots
$$
\end{Corollary}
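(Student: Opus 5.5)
This is the specialisation $s=2$ of Theorem~\ref{conj:algsq}. No new argument is required; the plan is a direct substitution followed by a check of the listed numbers. First I confirm the moments. At $s=2$ the function is $f=(1+x)^2/(1-x^2)=(1+x)/(1-x)$, so $f=1+2\sum_{m\ge1}x^m$ and $a_0=1$, $a_m=2\,m!$ for $m\ge1$. The same values come from the formulas of Theorem~\ref{conj:algsq}: with $\mu_k=(2k)!$ one gets $b_{2k}=(2k)!+2k(2k-1)(2k-2)!=2(2k)!$ for $k\ge1$ and $b_{2k+1}=2(2k+1)!$.

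Next I substitute $s=2$ into \eqref{eq:algsq}. This is legitimate because $s=2\ne3$, so no cancellation against the denominator is needed. Three simplifications occur:
\begin{itemize}
\item $(s/2)_i=(1)_i=i!$, so the middle product becomes $\prod_{i=1}^{n-2}i!$;
\item $s-3=-1$, so the sign of the numerator flips;
\item $(s-4)2^{n-1}(s/2)_{n-1}=-2\cdot2^{n-1}(n-1)!=-2^n(n-1)!$.
\end{itemize}
The last factor is therefore
\[
\frac{(2n-1)!!-2^n(n-1)!}{-1}=2^n(n-1)!-(2n-1)!!,
\]
which is the stated closed form.

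Finally I evaluate at $n=1,2,3,4$ to check the listed values.
\begin{itemize}
\item $n=1$: all products are empty and the last factor is $2-1=1$.
\item $n=2$: the prefactor is $2\cdot2!=4$ and the last factor is $4-3=1$, giving $4$.
\item $n=3$: the prefactor is $8\cdot2\cdot24\cdot1=384$ and the last factor is $16-15=1$, giving $384$.
\item $n=4$: the prefactor is $64\cdot2\cdot24\cdot720\cdot1\cdot2=4423680$ and the last factor is $96-105=-9$, giving $-39813120$.
\end{itemize}
The only step needing care is the sign in the last factor. As a safeguard I would also compute $\HH_2=\det\begin{pmatrix}1&2\\4&12\end{pmatrix}=12-8=4$ directly from the moments.
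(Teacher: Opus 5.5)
Your proposal is correct and follows the paper's own route: the corollary is simply Theorem~\ref{conj:algsq} specialised at $s=2$. There $(s/2)_i=i!$, $s-4=-2$ and $s-3=-1$ turn the last factor into $2^{n}(n-1)!-(2n-1)!!$, and your checks of the moments and of the values $1,4,384,-39813120$ (including the direct $2\times2$ determinant) are all accurate.
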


\begin{Corollary}[$s=4$]\label{cor:calgsq4}
For $f=\dfrac{(1+x)^{2}}{(1-x^2)^{2}}$ the factor $(s-4)$ kills the deformation,
the last factor reducing to $(2n-1)!!$, and
$$
\HH_n(f)=2^{\binom n2}\prod_{k=1}^{n-1}(2k)!\;\prod_{i=1}^{n-2}(2)_i\;(2n-1)!!
=1,\,12,\,11520,\,2786918400,\,\dots
$$
\end{Corollary}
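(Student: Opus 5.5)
This corollary is the specialisation $s=4$ of Theorem~\ref{conj:algsq}, so I would take that theorem as the starting point and keep the verification at the level of parameter substitution. First, at $s=4$ the generating function is $f=(1+x)^2/(1-x^2)^2=(1-x)^{-2}$, which is admissible since $s\ne3$. Second, in the last factor of \eqref{eq:algsq} the term $(s-4)\,2^{n-1}(s/2)_{n-1}$ vanishes, so that factor becomes $(2n-1)!!/(s-3)=(2n-1)!!$. Third, the middle block is $\prod_{i=1}^{n-2}(s/2)_i=\prod_{i=1}^{n-2}(2)_i$. Substituting these three facts into \eqref{eq:algsq} gives exactly the displayed product.

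As a cross-check I would compare with Remark~\ref{rem:algsq-degen}. There the same value is identified with the Beta evaluation for $b_m=(m+1)!$, namely the case $r=1$ of Corollary~\ref{cor:factorial}, equal to $2^{\binom n2}\prod_{k<n}k!\prod_{i<n}(2i+1)!$. Reconciling the two forms uses only $(2)_i=(i+1)!$ together with $(2i)!\,(2i+1)=(2i+1)!$. Finally I would confirm the listed values by evaluating at $n=1,2,3,4$; for instance $n=2$ gives $2\cdot2\cdot1\cdot3=12$.

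There is no real obstacle here. The only care point is indexing: for $n=1$ and $n=2$ the product $\prod_{i=1}^{n-2}$ is empty and must be read as $1$.
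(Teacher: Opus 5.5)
Your proposal is correct and follows the paper's route exactly: the corollary is just the substitution $s=4$ (admissible, as $s\ne3$) into Theorem~\ref{conj:algsq}, where the term $(s-4)\,2^{n-1}(s/2)_{n-1}$ vanishes and the denominator $s-3$ becomes $1$. Your cross-check against the factorial case $b_m=(m+1)!$ is also sound. It uses $(2)_i=(i+1)!$ and $\prod_{i<n}(2i+1)!=(2n-1)!!\prod_{k<n}(2k)!$, and it is the same consistency the paper records in Remark~\ref{rem:algsq-degen}.
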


\subsection{The Bessel $(s,t)$ family $\mathrm{cosb}_s+\int_0^x\mathrm{cosb}_t$}

By Theorem~\ref{thm:besselst} the family
$f_{s,t}=\mathrm{cosb}_s(x)+\int_0^x\mathrm{cosb}_t(y)\,dy$ has the closed
evaluation \eqref{eq:bessel-closed}, whose sign and double product together
form the signed factor $\Omega(2(t-s))$ of the trigonometric family
(Section~\ref{ssec:stomega}). The determinant is a nonzero product exactly on
the half-integer offsets $t-s\in\tfrac12+\ZZ$
(Corollary~\ref{cor:besseldicho}); its double shift $\HH_n^{(2)}(f)=\HH_n(f'')$
is a closed multiple of $\HH_n(f)$ (Proposition~\ref{prop:besseldshift}),
where $f''$ is computed from the derivative ladder
$\mathrm{cosb}_\nu'=-\frac{x}{2\nu+2}\,\mathrm{cosb}_{\nu+1}$.
We record the two half-integer members with $s,t\in\{0,\tfrac12\}$.

\begin{Corollary}[$s=0,\ t=\tfrac12$]\label{cor:cbessel01}
Here $\mathrm{cosb}_0=J_0$ and
$\int_0^x\mathrm{cosb}_{1/2}(y)\,dy=\mathrm{Si}(x)$, the sine integral, so
$f=J_0(x)+\mathrm{Si}(x)$. Specialising \eqref{eq:bessel-closed} and converting
the half-integer Pochhammer symbols into factorials, every block merges into
a single product per parity:
$$
\begin{aligned}
\HH_{2m}(f)&=\frac{1}{2^{\binom{2m}2}}\,
\prod_{k=0}^{m-1}\frac{\bigl((2k)!\bigr)^{2}\,(2m+2k)!}{(4m+2k-1)!}\,,\\
\HH_{2m+1}(f)&=\frac{(2m)!\,(4m)!}{2^{\,m(2m+5)}\;m!\,(3m)!}\,
\prod_{k=0}^{m-1}\frac{\bigl((2k)!\bigr)^{2}\,(2m+2k)!}{(4m+2k+1)!}\,.
\end{aligned}
$$
For the double shift,
$$
f''=\tfrac{x^2}{8}\,\mathrm{cosb}_2-\tfrac12\,\mathrm{cosb}_1
-\tfrac{x}{3}\,\mathrm{cosb}_{3/2},\qquad
\HH_n^{(2)}(f)=\HH_n(f'')
=(-1)^n\,\frac{3\,\bigl((2n)!\bigr)^{2}}{2^{\,n+1}\,n!\,(3n)!}\;\HH_n(f).
$$
\end{Corollary}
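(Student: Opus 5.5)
The plan is pure specialisation: Theorem~\ref{thm:besselst} at $(s,t)=(0,\tfrac12)$ for $\HH_n(f)$, Proposition~\ref{prop:besseldshift} for the double shift, and the derivative ladder of $\mathrm{cosb}$ for $f''$. No new determinant evaluation is needed, only bookkeeping of Pochhammer symbols.

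First, the identification of $f$. We have $\mathrm{cosb}_0=J_0$ and $\mathrm{cosb}_{1/2}(y)=\sin y/y$, so $\int_0^x\mathrm{cosb}_{1/2}=\mathrm{Si}(x)$. For $f''$, differentiate the even part twice with $\mathrm{cosb}_\nu'=-\tfrac{x}{2\nu+2}\mathrm{cosb}_{\nu+1}$. At $\nu=0$ this gives $-\tfrac x2\mathrm{cosb}_1$. Differentiating again gives $-\tfrac12\mathrm{cosb}_1+\tfrac{x^2}{8}\mathrm{cosb}_2$. The odd part contributes $\mathrm{cosb}_{1/2}'=-\tfrac{x}{3}\mathrm{cosb}_{3/2}$. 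This reproduces the displayed $f''$.

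Second, the closed form of $\HH_n(f)$. Insert $s=0$, $t=\tfrac12$ into \eqref{eq:bessel-closed}, so that $\delta=t-s=\tfrac12$.
\begin{itemize}
\item By the parallel noted after Theorem~\ref{thm:besselst}, the sign and double product together equal $\Omega(1)$, which is $2^{-\binom n2}$ by Lemma~\ref{lem:Pahalf}.
\item The remaining blocks are $(\tfrac12)_l/(1)_{n-1+l}$ and $(1)_m/(\tfrac32)_{n-1+m}$. Convert them with $(\tfrac12)_l=(2l)!/(4^l l!)$ and $(\tfrac32)_k=(2k+1)!/(4^k k!)$.
\end{itemize}
The result is an explicit product, as already displayed in Example~\ref{ex:besselJ0Si}. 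To reach the two per-parity forms, I would not rearrange indices by hand. Instead I would take both sides at $n=2m$ (resp.\ $2m+1$), check the base cases $m=0,1$, and compare the ratios under $m\mapsto m+1$. Each ratio is a finite quotient of factorials: the factor $\prod_i(2i)!$ gains two terms, and the $l$- and $m$-ranges each gain one index. Equality of these ratios is an elementary factorial identity, checked separately for each parity, as in the proof of Proposition~\ref{prop:sec1}.

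Third, the double shift. Proposition~\ref{prop:besseldshift} gives the factor $(-1)^n(2n-1)!!/\bigl(2^n(n)_{\bar n}(n+\tfrac12)_{\underline n}\bigr)$. The key observation is that $(n)_{\bar n}(n+\tfrac12)_{\underline n}$ interleaves the half-step string $n,n+\tfrac12,\dots,n+\tfrac{n-1}2$. It therefore equals $2^{-n}\,(2n)(2n+1)\cdots(3n-1)=2^{-n}(3n-1)!/(2n-1)!$, independently of parity. Substituting $(2n-1)!!=(2n)!/(2^nn!)$, $(2n-1)!=(2n)!/(2n)$ and $(3n-1)!=(3n)!/(3n)$ gives $(-1)^n\,3\,((2n)!)^2/(2^{n+1}n!\,(3n)!)$, as stated.

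The only step with real bookkeeping risk is the per-parity telescoping in the second step, particularly for odd $n$. There the extra prefactor $(2m)!(4m)!/(2^{m(2m+5)}m!(3m)!)$ has to absorb the leftover of the $l=\bar n-1$ block and the power-of-two exponent. I would verify that exponent directly by summing the powers of $4$ coming from the half-integer Pochhammer conversions, and then cross-check against the values $1,\tfrac16,\tfrac1{960},\tfrac1{1693440}$ of Example~\ref{ex:besselJ0Si}.
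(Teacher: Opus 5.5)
Your proposal is correct and follows the paper's route: the corollary is a pure specialisation of Theorem~\ref{thm:besselst} (signed factor $\Omega(1)=2^{-\binom n2}$) and of Proposition~\ref{prop:besseldshift}. Your interleaving identity $(n)_{\bar n}(n+\tfrac12)_{\underline n}=2^{-n}(3n-1)!/(2n-1)!$ is exactly what produces the stated double-shift factor. The ratio induction for the per-parity forms is more than you need, and under $n\mapsto n+2$ every factor $(n-1+l)!$ and $(\tfrac32)_{n-1+m}$ shifts, not just the index ranges. A direct merge is simpler: on the common range $0\le k\le\underline n-1$ the factors $k!$, $4^k$ and $(n-1+k)!$ cancel term by term between the two blocks. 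So $n=2m$ merges at once, and $n=2m+1$ leaves only the $l=m$ term $(2m)!/(4^m\,m!\,(3m)!)$ together with the extra $(4m)!$ from $\prod_i(2i)!$.
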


\begin{Corollary}[$s=\tfrac12,\ t=0$]\label{cor:cbessel10}
Here $f=\dfrac{\sin x}{x}+\int_0^x J_0(y)\,dy$. The even-order determinants
coincide with those of Corollary~\ref{cor:cbessel01} up to sign,
$$
\HH_{2m}(f)=(-1)^m\,\HH_{2m}\bigl(J_0+\mathrm{Si}\bigr)
=\frac{(-1)^m}{2^{\binom{2m}2}}\,
\prod_{k=0}^{m-1}\frac{\bigl((2k)!\bigr)^{2}\,(2m+2k)!}{(4m+2k-1)!}\,,
$$
while at odd order $n=2m+1$
$$
\HH_{2m+1}(f)=(-1)^m\,(2m+1)\,2^{\,m(3-2m)}\,
\frac{m!\,(3m)!\,(4m)!}{(6m+1)!}\,
\prod_{k=0}^{m-1}\frac{\bigl((2k)!\bigr)^{2}\,(2m+2k)!}{(4m+2k+1)!}\,.
$$
For the double shift,
$$
f''=\tfrac{x^2}{15}\,\mathrm{cosb}_{5/2}-\tfrac13\,\mathrm{cosb}_{3/2}
-\tfrac{x}{2}\,\mathrm{cosb}_1,\qquad
\HH_n^{(2)}(f)=\HH_n(f'')
=(-1)^n\,\frac{\lfloor 3n/2\rfloor\,\bigl((2n)!\bigr)^{2}}{2^{\,n}\;n\;n!\,(3n)!}\;\HH_n(f).
$$
\end{Corollary}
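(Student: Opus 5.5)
The plan is pure specialisation, as for Corollary~\ref{cor:cbessel01}, but with the roles of $s$ and $t$ exchanged. First I check the function: $\mathrm{cosb}_{1/2}(x)=\sin x/x$ and $\mathrm{cosb}_0=J_0$. So $f=f_{1/2,0}$ in \eqref{eq:besselfam}, with offset $t-s=-\tfrac12\in\tfrac12+\ZZ$. Corollary~\ref{cor:besseldicho}(ii) then guarantees $\HH_n(f)\ne0$. The sign together with the double product in \eqref{eq:bessel-closed} is the signed factor $\Omega(2(t-s))=\Omega(-1)$. Lemma~\ref{lem:Pahalf} evaluates it: $2^{-\binom n2}(-1)^m$ for $n=2m$, and $(-1)^m\,n\,2^{-\binom n2}$ for $n=2m+1$. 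This already explains the sign $(-1)^m$ in both parities, and the extra factor $2m+1$ at odd order.

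Next I specialise the parameter blocks of \eqref{eq:bessel-closed} at $(s,t)=(\tfrac12,0)$. The even-part block is $\prod_{l<\bar n}(1)_l/(\tfrac32)_{n-1+l}$. The odd-part block is $\prod_{m'<\underline n}(\tfrac12)_{m'}/(1)_{n-1+m'}$. I convert the half-integer symbols with $(\tfrac12)_k=(2k)!/(4^kk!)$ and $(\tfrac32)_k=(2k+1)!/(4^kk!)$.

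For $n=2m$ we have $\bar n=\underline n=m$. In this case the two blocks are exactly the blocks of Corollary~\ref{cor:cbessel01} with the numerator and denominator Pochhammer symbols traded between the $S$- and $T$-slots. Concretely, one gets $(1)_l$ versus $(\tfrac12)_l$ and $(\tfrac32)_{\cdot}$ versus $(1)_{\cdot}$. Since both index ranges have length $m$, the products should agree up to the sign $\Omega(-1)/\Omega(1)=(-1)^m$. I will verify this by writing both sides as explicit factorial products; the powers of $4$ must balance over the equal ranges. That gives $\HH_{2m}(f)=(-1)^m\HH_{2m}(J_0+\mathrm{Si})$, with the displayed product taken from Corollary~\ref{cor:cbessel01}.

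For $n=2m+1$ the ranges are unequal ($\bar n=m+1$, $\underline n=m$), so I collect factors directly. The factor $\prod_{i<n}(2i)!/2^{n(n-1)}$ is combined with the telescoped products $\prod_{l\le m}l!\,4^{2m+l}(2m+l)!/(4m+2l+1)!$ and $\prod_{k<m}(2k)!/(4^kk!\,(2m+k)!)$. The goal is the single product $\prod_{k<m}((2k)!)^2(2m+2k)!/(4m+2k+1)!$ times a boundary term. This step is the main obstacle. The index shifts between the $l$-range $[0,m]$ and the $k$-range $[0,m-1]$ leave an unmatched boundary term, which should be $m!\,(3m)!\,(4m)!/(6m+1)!$, together with a power of two, $2^{m(3-2m)}$, that must be tracked carefully. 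I would confirm these by checking the ratio $\HH_{2m+3}/\HH_{2m+1}$ on both sides, as in the proof of Proposition~\ref{prop:sec1}, together with the base value at $m=0$, rather than by a single direct rearrangement.

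Finally, for the double shift I apply Proposition~\ref{prop:besseldshift} at $(s,t)=(\tfrac12,0)$. This gives the prefactor $(-1)^n(2n-1)!!/\bigl(2^n(n+\tfrac12)_{\bar n}(n)_{\underline n}\bigr)$. I convert it using $(n)_{\underline n}=(n+\underline n-1)!/(n-1)!$ and the half-integer Pochhammer formula. Checking the two parities separately, it should reduce to $\lfloor3n/2\rfloor((2n)!)^2/(2^n\,n\,n!\,(3n)!)$; the factor $\lfloor 3n/2\rfloor$ is the leftover from the parity-dependent ends of the two blocks. The formula for $f''$ follows by differentiating twice with the ladder $\mathrm{cosb}_\nu'=-\frac{x}{2\nu+2}\mathrm{cosb}_{\nu+1}$. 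For the even part, $(\sin x/x)''=(-\tfrac x3\mathrm{cosb}_{3/2})'=-\tfrac13\mathrm{cosb}_{3/2}+\tfrac{x^2}{15}\mathrm{cosb}_{5/2}$. For the odd part, $(\int_0^xJ_0)''=J_0'=-\tfrac x2\mathrm{cosb}_1$.
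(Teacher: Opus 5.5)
Your proposal is correct and follows the paper's own route: specialise Theorem~\ref{thm:besselst} and Proposition~\ref{prop:besseldshift} at $(s,t)=(\tfrac12,0)$. The signed factor $\Omega(-1)$ of Lemma~\ref{lem:Pahalf} supplies the sign $(-1)^m$ and the extra factor $2m+1$, and the ladder $\mathrm{cosb}_\nu'=-\frac{x}{2\nu+2}\mathrm{cosb}_{\nu+1}$ gives $f''$. (The $n=2m$ block products coincide term by term with those at $(0,\tfrac12)$, and at odd order the direct rearrangement already closes without a ratio check: pairing $l=k<m$ leaves the $l=m$ term $m!\,(3m)!/(6m+1)!$, the top factor $i=2m$ of $\prod_i(2i)!$ gives $(4m)!$, and the powers of two sum to $m(3-2m)$.)
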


\subsection{The multiplicative Bessel family $(1+x)\,\mathrm{cosb}_\nu^{\,2}$}

By Theorem~\ref{thm:xbesseleven} the \emph{even}-order determinants of
$f_\nu=(1+x)\,\mathrm{cosb}_\nu^{\,2}$ factor completely into linear forms
over $\QQ$, by the closed evaluation \eqref{eq:xbe-closed}; at odd orders no
product form exists. We record $\nu=0,\tfrac12,1$,
converting all half-integer Pochhammer symbols into factorials.

\begin{Corollary}[$\nu=0$]\label{cor:cxbe0}
Here $\mathrm{cosb}_0=J_0$, and
$$
f=(1+x)\,J_0(x)^2,
$$
$$
\HH_{2m}(f)=\frac{(-1)^m}{2^{\,m(6m-5)}}\;
\prod_{k=0}^{m-1}\Bigl(\frac{(2k)!}{k!}\Bigr)^{\!4}\;
\prod_{i=0}^{2m-1}\frac{\bigl((2i)!\bigr)^{2}}{i!\,\bigl((m-1+i)!\bigr)^{2}}\,.
$$
\end{Corollary}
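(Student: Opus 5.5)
This is a pure specialisation of Theorem~\ref{thm:xbesseleven} at $\nu=0$, with $n=2m$; no new structure is needed. We substitute $\nu=0$ into \eqref{eq:xbe-closed} and convert every half-integer Pochhammer symbol into factorials. At $\nu=0$ the blocks become as follows. First, $(\nu+\tfrac12)_k=(\tfrac12)_k=(2k)!/(4^k k!)$, so the middle product $\prod_{k<m}(\tfrac12)_k(\nu+\tfrac12)_k^2(2\nu+\tfrac12)_k$ collapses to $\prod_{k<m}(\tfrac12)_k^4=\prod_{k<m}\bigl((2k)!/k!\bigr)^4\,4^{-4k}$. Second, in the tail, $(\nu+\tfrac12)_i=(2i)!/(4^i i!)$ and $(\nu+1)_{m-1+i}=(2\nu+1)_{m-1+i}=(m-1+i)!$. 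Also $\HH_{2m}$ is regular at $\nu=0$, so no cancellation issue arises.

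Collecting the pieces gives
\[
(-1)^m\,2^{\binom{2m}2}\prod_{i<2m}(2i)!\;\prod_{k<m}\Bigl(\frac{(2k)!}{k!}\Bigr)^{4}4^{-4k}\;\prod_{i<2m}\frac{(2i)!}{4^i\,i!\,((m-1+i)!)^2}.
\]
The factors $\prod_i(2i)!$ from the first product and the tail merge into $\prod_i((2i)!)^2$, which matches the displayed form with $i!$ and $((m-1+i)!)^2$ in the denominator.

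It remains to check the power of two. The contributions are $+\binom{2m}2=m(2m-1)$ from the prefactor, $-8\binom m2=-4m(m-1)$ from the $4^{-4k}$ factors, and $-2\binom{2m}2=-2m(2m-1)$ from the $4^{-i}$ factors. The total is $m(2m-1)-4m^2+4m-4m^2+2m=-6m^2+5m=-m(6m-5)$, as claimed. The only potential obstacle is this exponent bookkeeping, and it checks. A sanity check at $m=1$ gives $-2^{-1}\cdot1\cdot\bigl(1\cdot 4/(1\cdot1)\bigr)/1=-2$, in agreement with $\HH_2(f_\nu)=-2/(\nu+1)$ at $\nu=0$.
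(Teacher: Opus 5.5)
Your proposal is correct and follows the paper's own route: the corollary is just the substitution $\nu=0$ in \eqref{eq:xbe-closed}, where all four middle blocks become $(\tfrac12)_k=(2k)!/(4^k k!)$, the tail denominators become $((m-1+i)!)^2$, and the power of two is $\binom{2m}2-8\binom m2-2\binom{2m}2=-m(6m-5)$, exactly as you compute. Your value check at $m=1$ is right, and the formula also holds at $m=2$, where both sides give $\HH_4(f_0)=75/64$.
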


\begin{Corollary}[$\nu=\tfrac12$]\label{cor:cxbeh}
Here $\mathrm{cosb}_{1/2}=\sin x/x$, and
$$
f=(1+x)\,\frac{\sin^2x}{x^2},
$$
$$
\HH_{2m}(f)=(-1)^m\,2^{\,m(8m-5)}\;
\prod_{k=0}^{m-1}(2k)!\,(2k+1)!\;
\prod_{i=0}^{2m-1}\frac{(2i)!\;i!\,(m-1+i)!}{(2m+2i-1)!\,(m+i)!}\,.
$$
\end{Corollary}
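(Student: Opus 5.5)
This corollary is the specialisation $\nu=\tfrac12$ of Theorem~\ref{thm:xbesseleven}, with Corollary~\ref{cor:xbe-sinc} as a check. I will substitute $\nu=\tfrac12$ into \eqref{eq:xbe-closed} at $N=m$ and convert every Pochhammer symbol into factorials.

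The conversions are
\[
(\nu+\tfrac12)_k=k!,\qquad
(2\nu+\tfrac12)_k=(\tfrac32)_k=\frac{(2k+1)!}{4^k\,k!},\qquad
(\tfrac12)_k=\frac{(2k)!}{4^k\,k!},
\]
\[
(\nu+\tfrac12)_i=i!,\qquad
(\nu+1)_{m-1+i}=(\tfrac32)_{m-1+i}=\frac{(2m+2i-1)!}{4^{m-1+i}\,(m-1+i)!},\qquad
(2\nu+1)_{m-1+i}=(m+i)!.
\]

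With these, the middle block becomes
\[
\prod_{k<m}(\tfrac12)_k\,(k!)^2\,(\tfrac32)_k=16^{-\binom m2}\prod_{k<m}(2k)!\,(2k+1)!,
\]
and this product $\prod_{k<m}(2k)!\,(2k+1)!$ is the one that appears in the stated formula. The tail block becomes
\[
\prod_{i=0}^{2m-1}\frac{4^{m-1+i}\;i!\;(m-1+i)!}{(2m+2i-1)!\,(m+i)!}.
\]
Merging this with the factor $\prod_{i<2m}(2i)!$ of \eqref{eq:xbe-closed} gives exactly the displayed product over $i$.

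The only remaining work is the power of two. The contributions are
\[
2^{\binom{2m}2}\quad\text{(prefactor)},\qquad
2^{-4\binom m2}\quad\text{(middle block)},\qquad
2^{2\sum_{i<2m}(m-1+i)}=2^{4m(m-1)+2\binom{2m}2}\quad\text{(tail)}.
\]
The exponents sum to
\[
3m(2m-1)-2m(m-1)+4m(m-1)=8m^2-5m=m(8m-5),
\]
which is the stated exponent. The sign $(-1)^m$ carries over unchanged from \eqref{eq:xbe-closed}.

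I do not expect a real obstacle; the one place a slip is likely is this power-of-two tally. As a final check I will evaluate the formula at $m=1$ and confirm it gives $-2^2/3$.
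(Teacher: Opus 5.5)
Your proposal is correct and follows the paper's own route. The corollary is the specialisation $\nu=\tfrac12$, $N=m$ of Theorem~\ref{thm:xbesseleven}, with the same Pochhammer-to-factorial conversions and the same power-of-two tally, $m(2m-1)-2m(m-1)+4m(m-1)+2m(2m-1)=m(8m-5)$, as in the proof of Corollary~\ref{cor:xbe-sinc}; your $m=1$ check $-2^2/3$ also agrees with $\HH_2(f_\nu)=-2/(\nu+1)$ at $\nu=\tfrac12$.
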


\begin{Corollary}[$\nu=1$]\label{cor:cxbe1}
Here $\mathrm{cosb}_1=2J_1(x)/x$, and
$$
f=4\,(1+x)\,\frac{J_1(x)^2}{x^{2}},
$$
$$
\HH_{2m}(f)=\frac{(-1)^m}{2^{\,m(6m-5)}\,3^{\,m}}\;
\prod_{k=0}^{m-1}\frac{(2k)!\,\bigl((2k+1)!\bigr)^{2}\,(2k+4)!}{(k!)^{3}\,(k+2)!}\;
\prod_{i=0}^{2m-1}\frac{(2i)!\,(2i+1)!}{i!\,(m+i)!\,(m+i+1)!}\,.
$$
\end{Corollary}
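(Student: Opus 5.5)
This is a pure specialisation of Theorem~\ref{thm:xbesseleven} at $\nu=1$; no new mechanism is required. The plan is to substitute $\nu=1$ into \eqref{eq:xbe-closed} with $N=m$ and convert every half-integer Pochhammer symbol into factorials, exactly as in the proof of Corollary~\ref{cor:xbe-sinc}. The identification of $f$ is immediate from \eqref{eq:xbe-cosb}: $\mathrm{cosb}_1=\Gamma(2)(2/x)J_1=2J_1/x$, so $f_1=4(1+x)J_1^2/x^2$.

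The conversions needed at $\nu=1$ are as follows.
\begin{itemize}
\item The middle block has factors $(\tfrac12)_k=(2k)!/(4^kk!)$, $(\nu+\tfrac12)_k=(\tfrac32)_k=(2k+1)!/(4^kk!)$ and $(2\nu+\tfrac12)_k=(\tfrac52)_k=(2k+3)!/(6\cdot4^k\,(k+1)!)$. This last formula comes from $(\tfrac52)_k=(\tfrac12)_{k+2}/(\tfrac12)_2$ with $(\tfrac12)_2=\tfrac34$.
\item In the tail, $(\nu+1)_{m-1+i}=(m+i)!$ and $(2\nu+1)_{m-1+i}=(3)_{m-1+i}=(m+i+1)!/2$.
\item The tail numerator is $(\tfrac32)_i=(2i+1)!/(4^i\,i!)$.
\end{itemize}
After substitution the middle product becomes
\[
6^{-m}\,4^{-4\binom m2}\prod_{k<m}\frac{(2k)!\,((2k+1)!)^2\,(2k+3)!}{(k!)^3\,(k+1)!}.
\]
To reach the stated shape I will rewrite $(2k+3)!/(k+1)!=(2k+4)!/\bigl(2(k+2)!\bigr)$. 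This gives the displayed $(2k+4)!/(k+2)!$ at the cost of a further $2^{-m}$, and the constant $6^{-m}2^{-m}$ supplies the $3^{-m}$ together with a power of two. The tail becomes
\[
\prod_{i<2m}\frac{2\,(2i+1)!}{4^i\,i!\,(m+i)!\,(m+i+1)!},
\]
and absorbing the prefactor $\prod_{i}(2i)!$ of \eqref{eq:xbe-closed} gives exactly the last product in the statement.

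The only real obstacle is the bookkeeping of the power of two. Its sources are the overall $2^{\binom{2m}2}$ in \eqref{eq:xbe-closed}, the $4^{-k}$ factors from the three middle Pochhammers (total $4^{-3\binom m2}$ — I must recount this carefully rather than trust the $4^{-4\binom m2}$ written above), the $2^{-m}$ from the $(2k+4)!$ rewrite, the $2^{-m}$ from $6^{-m}$, the $2^{2m}$ coming from the $(3)_{\cdot}=(\cdot)!/2$ denominators, and $4^{-\binom{2m}2}$ from the $(\tfrac32)_i$. Summing these exponents should reproduce $-m(6m-5)$; I will confirm this against the $N=1$ value $\HH_2=-2/(\nu+1)=-1$. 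At $m=1$ the formula gives $(-1)\,2^{-1}3^{-1}\cdot\frac{1\cdot1\cdot24}{2}\cdot\frac{1\cdot1}{1\cdot1\cdot2}\cdot\frac{2\cdot6}{1\cdot2\cdot6}=-1$, consistent. The sign $(-1)^m$ is carried over directly from the theorem, since all remaining Pochhammer factors are positive at $\nu=1$.
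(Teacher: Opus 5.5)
Your proposal is correct and is exactly the paper's route: the corollary is stated without separate proof as a direct specialisation of Theorem~\ref{thm:xbesseleven} at $\nu=1$, $N=m$, with the half-integer Pochhammer symbols converted to factorials as in Corollary~\ref{cor:xbe-sinc}, and all of your conversions are right. Your first count $4^{-4\binom m2}$ was the correct one: the middle block contains $(\nu+\tfrac12)_k^{\,2}$, so four Pochhammer factors each contribute $4^{-k}$, and the exponents then sum to $\binom{2m}2-8\binom m2-2m+2m-2\binom{2m}2=-m(6m-5)$; using $4^{-3\binom m2}$ would instead give $-5m^2+4m$, a discrepancy your $m=1$ check cannot detect because $\binom12=0$, whereas at $m=2$ the stated formula gives $35/2048$, in agreement with the paper's explicit $\HH_4(f_\nu)$ at $\nu=1$.
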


\section{The Lindström--Gessel--Viennot approach}\label{sec:lgv}

The determinant $\HH_n$ counts something, and the natural first attempt at a
combinatorial evaluation is the Lindström--Gessel--Viennot (LGV) lemma
\cite{Lindstrom1973,Gessel1985Viennot,Karlin1959McGregor}, which turns a
determinant of path counts into a signed sum over non-intersecting path families.
We recall the lemma, use it to see \emph{why} the classical Hankel determinant is
a clean product, and then read off the single change that the dilation makes. In
the end this route did not deliver our evaluations --- the algebraic methods
$\M1$--$\M6$ did --- but it locates precisely where the difficulty enters, and we
record it for orientation.

\subsection{Hankel minors as path systems}\label{ssec:lgv-minors}

It is convenient to write, for finite index sets $R=\{r_0<\dots<r_{n-1}\}$ and
$C=\{c_0<\dots<c_{n-1}\}$,
$$
H(R;C):=\det\bigl(a_{r_i+c_j}\bigr)_{0\le i,j\le n-1},
$$
the corresponding minor of the infinite Hankel matrix $(a_{p+q})$. In this
notation the classical and dilated determinants are
$$
H_n=H\bigl(\{0,1,\dots,n-1\};\{0,1,\dots,n-1\}\bigr),
\qquad
\HH_n=H\bigl(\{0,2,\dots,2n-2\};\{0,1,\dots,n-1\}\bigr):
$$
they share the column set $\{0,1,\dots,n-1\}$ and differ only in that the row
(``top'') indices are \emph{dilated} from $0,1,\dots,n-1$ to $0,2,\dots,2n-2$ ---
exactly the even-row selection of Definition~\ref{def:double}.

\begin{Lemma}[Lindström--Gessel--Viennot
\cite{Lindstrom1973,Gessel1985Viennot}]\label{lem:lgv}
Let $\mathcal D$ be a locally finite acyclic directed graph with edge weights in a
commutative ring, and for vertices $u,v$ let
$e(u,v)=\sum_{P\colon u\to v}\ \prod_{\text{edges of }P}w$ be the weight
generating function of directed paths. For source and sink tuples
$A=(A_0,\dots,A_{n-1})$ and $B=(B_0,\dots,B_{n-1})$,
$$
\det\bigl(e(A_i,B_j)\bigr)_{0\le i,j\le n-1}
=\sum_{(P_0,\dots,P_{n-1})}\sgn(\sigma)\prod_{k=0}^{n-1}w(P_k),
$$
the sum over families in which $P_k$ runs from $A_k$ to $B_{\sigma(k)}$ for some
permutation $\sigma$ and the $P_k$ are pairwise vertex-disjoint. If the endpoints
are \emph{nonpermutable} --- every vertex-disjoint family has $\sigma=\mathrm{id}$
--- the determinant is the subtraction-free weight of the non-intersecting
families joining $A_k$ to $B_k$.
\end{Lemma}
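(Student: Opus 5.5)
The plan is the standard sign-reversing involution proof. Expand the determinant by the Leibniz formula and multilinearity:
\[
\det\bigl(e(A_i,B_j)\bigr)=\sum_{\sigma}\sgn(\sigma)\prod_k e(A_k,B_{\sigma(k)})=\sum_{(\sigma,P_0,\dots,P_{n-1})}\sgn(\sigma)\prod_k w(P_k),
\]
the last sum running over all path families with $P_k\colon A_k\to B_{\sigma(k)}$, intersecting or not. Local finiteness and acyclicity make each $e(u,v)$ a well-defined (formal) sum, so the expansion is legitimate term by term.

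Next we construct an involution $\iota$ on the intersecting families that preserves the weight and flips $\sgn(\sigma)$. Given an intersecting family, we let $k$ be the smallest index whose path meets another path. Along $P_k$ we take the first vertex $v$ shared with some other path, and among the paths through $v$ other than $P_k$ we let $l$ be the smallest index. We then swap the tails of $P_k$ and $P_l$ after $v$. The new family has the same multiset of edges, hence the same weight, and its permutation is $\sigma\circ(k\,l)$, of opposite sign. We must check that $\iota$ is an involution, meaning the same triple $(k,v,l)$ is selected again after the swap. This is the only delicate point and the main obstacle.

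The key observation is that the set of vertices lying on at least two paths is unchanged by the tail swap, and so is the set of indices whose path meets another. Hence $k$ is unchanged. The initial segment of $P_k$ up to $v$ is also unchanged, so $v$ is still the first shared vertex on $P_k$. Finally, the set of paths through $v$ is the same set of indices, so $l$ is recovered. Acyclicity guarantees that the concatenated walks are genuine paths, so $\iota$ maps families to families. Thus the intersecting terms cancel in pairs, and only vertex-disjoint families survive, each contributing $\sgn(\sigma)\prod_k w(P_k)$.

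For the nonpermutable clause, the hypothesis says every surviving family has $\sigma=\mathrm{id}$. The sign is then $+1$, and the sum becomes the subtraction-free weight of non-intersecting families joining $A_k$ to $B_k$.
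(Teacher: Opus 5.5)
Your argument is correct. It is the standard Gessel--Viennot tail-swapping involution: take the smallest meeting index $k$, the first shared vertex $v$ on $P_k$, and the smallest other index $l$ through $v$. Your check that $(k,v,l)$ is recovered after the swap is exactly the point that needs verifying. The paper gives no proof of this lemma; it only cites \cite{Lindstrom1973,Gessel1985Viennot}, whose argument is this one.

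One claim in your proposal is inaccurate: local finiteness and acyclicity do not by themselves make $e(u,v)$ well defined. The graph with edges $a_i\to a_{i+1}$, $b_{i+1}\to b_i$, $a_i\to b_i$ $(i\ge0)$ is locally finite and acyclic, yet it has infinitely many paths $a_0\to b_0$. So the expansion and the pairwise cancellation need finitely many paths between the relevant endpoints, or a formal graded setting where the sums converge. The statement assumes this implicitly, and it holds in the paper's Motzkin-path application.
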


Now realise the moments as paths. If $(a_k)$ are the moments of a quasi-definite
functional with $J$-fraction coefficients $(b_h,\lambda_h)$ --- the
$(c_n,\lambda_n)$ of Section~\ref{ssec:cf}, renamed here because $c$ denotes
column indices ---
then by Flajolet's theory \cite{Flajolet1980,Viennot1983} $a_k$ is the weight of
all Motzkin paths of length $k$ from height $0$ to height $0$, a level step at
height $h$ carrying weight $b_h$ and a down-step from height $h$ weight
$\lambda_h$. Place a source $A_i$ on the horizontal axis at abscissa $-r_i$ and a
sink $B_j$ at abscissa $c_j$; a directed path $A_i\to B_j$ is then a Motzkin path
of length $r_i+c_j$ that stays weakly above the axis, so $e(A_i,B_j)=a_{r_i+c_j}$
and
$$
H(R;C)=\det\bigl(e(A_i,B_j)\bigr)_{0\le i,j\le n-1}.
$$
Since all sources lie at abscissa $\le0$ and all sinks at $\ge0$, the endpoints
are nonpermutable: the only vertex-disjoint families are the nested ones joining
$A_i$ to $B_i$ (Figures~\ref{fig:lgvclassic} and~\ref{fig:lgvdilated}). Hence
\emph{every} such minor $H(R;C)$ is a
subtraction-free count of non-intersecting Motzkin paths.

\subsection{What the dilation changes}\label{ssec:lgv-dilation}

For the classical $H_n$ (Figure~\ref{fig:lgvclassic}) the two index sets coincide,
$R=C=\{0,1,\dots,n-1\}$: the sources and sinks are equally spaced and symmetric
about the origin, the nested family fills every level in turn, and its weight
telescopes into the Heilermann product
$H_n=\lambda_1^{\,n-1}\lambda_2^{\,n-2}\cdots\lambda_{n-1}$
\cite{Heilermann1846,Viennot1983} (here $a_0=1$). This is the combinatorial face
of the $J$-fraction, and it is what makes $H_n$ ``simple''.

\begin{figure}[ht]
\centering
\begin{tikzpicture}[scale=0.62,line join=round]
\draw[gray!55] (-3.7,0)--(3.7,0);
\draw[thick] (-1,0) .. controls (-1,0.93) and (1,0.93) .. (1,0);
\draw[thick] (-2,0) .. controls (-2,1.60) and (2,1.60) .. (2,0);
\draw[thick] (-3,0) .. controls (-3,2.27) and (3,2.27) .. (3,0);
\foreach \x in {0,-1,-2,-3}{\fill (\x,0) circle(2.3pt);}
\foreach \x in {1,2,3}{\filldraw[fill=white,thick] (\x,0) circle(2.3pt);}
\foreach \x/\t in {0/0,-1/1,-2/2,-3/3}{\node[below=1.5pt] at (\x,0){\scriptsize$\t$};}
\foreach \x in {1,2,3}{\node[below=1.5pt] at (\x,0){\scriptsize$\x$};}
\end{tikzpicture}
\caption{The classical $H_n=H(\{0,1,2,3\};\{0,1,2,3\})$: equally spaced, symmetric
nested arches.}
\label{fig:lgvclassic}
\end{figure}

The dilated determinant is the \emph{same} picture with the sources pulled
apart. In $\HH_n$ the top starting points move from $0,1,\dots,n-1$ to
$0,2,\dots,2n-2$ while the sinks are unchanged
(Figure~\ref{fig:lgvdilated}). The family is still forced and non-intersecting, so
$\HH_n$ too is a subtraction-free count; but the source and sink spacings no
longer match ($2$ against $1$). The nested paths must now span a growing number of
levels between consecutive sinks, the clean level-by-level telescoping that
produced the $J$-fraction breaks down, and the lemma delivers only an unwieldy
positive sum --- not a product. This is the combinatorial shadow of the analytic
obstruction of Section~\ref{sec:intro}: interleaving the even and odd moments
destroys the three-term recurrence on which the $J$-fraction rests. It is exactly
here that the single-graph LGV picture stalls.

\begin{figure}[ht]
\centering
\begin{tikzpicture}[scale=0.62,line join=round]
\draw[gray!55] (-6.7,0)--(3.7,0);
\draw[thick] (-2,0) .. controls (-2,1.00) and (1,1.00) .. (1,0);
\draw[thick] (-4,0) .. controls (-4,1.67) and (2,1.67) .. (2,0);
\draw[thick] (-6,0) .. controls (-6,2.33) and (3,2.33) .. (3,0);
\foreach \x in {0,-2,-4,-6}{\fill (\x,0) circle(2.3pt);}
\foreach \x in {1,2,3}{\filldraw[fill=white,thick] (\x,0) circle(2.3pt);}
\foreach \x/\t in {0/0,-2/2,-4/4,-6/6}{\node[below=1.5pt] at (\x,0){\scriptsize$\t$};}
\foreach \x in {1,2,3}{\node[below=1.5pt] at (\x,0){\scriptsize$\x$};}
\end{tikzpicture}
\caption{The dilated $\HH_n=H(\{0,2,4,6\};\{0,1,2,3\})$: rows dilated, arches
skewed, telescoping lost.}
\label{fig:lgvdilated}
\end{figure}

\subsection{The even--odd splitting: the biorthogonal picture}
\label{ssec:lgv-split}

There is nonetheless a way to keep $\HH_n$ subtraction-free, by splitting it into
\emph{two} path systems --- one carried by the even moments, one by the odd
moments. This is the combinatorial face of the biorthogonal reduction~$\M2$. Write
$p=\lceil n/2\rceil$ and $q=\lfloor n/2\rfloor$. In
$\HH_n=(a_{2i+j})_{0\le i,j\le n-1}$ an even column $j=2c$ has entries
$a_{2i+2c}=a_{2(i+c)}$, involving only the even moments $a_{2k}$ (the moments
of the even functional $\mathcal S$); an odd column $j=2c+1$ has entries
$a_{2i+2c+1}=a_{2(i+c)+1}$, involving only the odd moments $a_{2k+1}$ (the
moments of the odd functional $\mathcal T$). Move the $p$ even columns to the
front and the $q$ odd columns to the back; the shuffle has sign
$(-1)^{\binom p2}$ and leaves the block matrix
$[\,M_{\mathcal S}\mid M_{\mathcal T}\,]$ with
$$
M_{\mathcal S}=\bigl(a_{2(i+c)}\bigr)_{0\le i\le n-1,\,0\le c\le p-1}\ (n\times p),
\qquad
M_{\mathcal T}=\bigl(a_{2(i+c)+1}\bigr)_{0\le i\le n-1,\,0\le c\le q-1}\ (n\times q).
$$
Laplace expansion along the first $p$ columns carries the sign
$(-1)^{\sum_{i\in I}i+\binom p2}$ on the row subset $I$; the $\binom p2$
cancels against the shuffle, leaving the subtraction-free-looking
$$
\HH_n=\sum_{\substack{I\subseteq\{0,\dots,n-1\}\\ |I|=p}}
(-1)^{\sum_{i\in I}i}\,
\det\bigl(a_{2(i+c)}\bigr)_{i\in I,\,0\le c\le p-1}\ \cdot\
\det\bigl(a_{2(i+c)+1}\bigr)_{i\in I^{c},\,0\le c\le q-1},
$$
the sum over all $p$-subsets $I$ of the rows, with complementary $q$-subset $I^c$.
Each summand is a product of a $p\times p$ minor of the even Hankel matrix
$(a_{2k+2l})_{k,l}$ and a $q\times q$ minor of the odd Hankel matrix
$(a_{2k+2l+1})_{k,l}$; in the $H(R;C)$ notation above, and writing $H_{\mathcal S}$,
$H_{\mathcal T}$ for Hankel minors of the even and odd moment sequences,
$$
\HH_n=\sum_{I}(-1)^{\sum_{i\in I}i}\,
H_{\mathcal S}\bigl(I;\{0,\dots,p-1\}\bigr)\,
H_{\mathcal T}\bigl(I^{c};\{0,\dots,q-1\}\bigr).
$$
Combinatorially the two factors are non-intersecting Motzkin-path families in the
two \emph{separate} graphs carrying the $J$-fractions of $\mathcal S$ and of
$\mathcal T$ --- each of $(a_{2k})_k$ and $(a_{2k+1})_k$ being again a moment
sequence. Drawing the even family above the axis and the odd family mirrored below
it exhibits a whole term at once: the rows are two-coloured, one colour arching up
and the complementary colour arching down, sharing the horizontal axis
(Figure~\ref{fig:lgvsplit}).
The dilated determinant is thus a signed sum of products of two
classical Hankel minors, one from each parity. The price is that the sum ranges
over all $\binom np$ splittings of the rows between the two systems, and nothing
in the picture explains why it should collapse to a product. That collapse is
captured algebraically by the biorthogonal reduction~$\M2$, which
biorthogonalises against $\mathcal S$ and $\mathcal T$ simultaneously; it is
exactly here that we leave the LGV picture for the methods $\M1$--$\M6$.

\begin{figure}[ht]
\centering
\begin{tikzpicture}[scale=0.68,line join=round]
  \draw[gray!55] (-5.7,0)--(3.7,0);
  \draw[thick,blue!70!black] (0,0)  .. controls (0,0.60)  and (1,0.60) .. (1,0);
  \draw[thick,blue!70!black] (-2,0) .. controls (-2,1.35) and (2,1.35) .. (2,0);
  \draw[thick,blue!70!black] (-3,0) .. controls (-3,1.85) and (3,1.85) .. (3,0);
  \draw[thick,red!75!black]  (-1,0) .. controls (-1,-0.80) and (1,-0.80) .. (1,0);
  \draw[thick,red!75!black]  (-4,0) .. controls (-4,-1.70) and (2,-1.70) .. (2,0);
  \draw[thick,red!75!black]  (-5,0) .. controls (-5,-2.25) and (3,-2.25) .. (3,0);
  \fill[blue!70!black] (0,0)  circle(2.4pt);
  \fill[red!75!black]  (-1,0) circle(2.4pt);
  \fill[blue!70!black] (-2,0) circle(2.4pt);
  \fill[blue!70!black] (-3,0) circle(2.4pt);
  \fill[red!75!black]  (-4,0) circle(2.4pt);
  \fill[red!75!black]  (-5,0) circle(2.4pt);
  \filldraw[fill=white,thick] (1,0) circle(2.4pt);
  \filldraw[fill=white,thick] (2,0) circle(2.4pt);
  \filldraw[fill=white,thick] (3,0) circle(2.4pt);
  \node[blue!70!black,below=1.5pt] at (0,-0.02) {\scriptsize$0$};
  \node[red!75!black, above=1.5pt] at (-1,0.02) {\scriptsize$1$};
  \node[blue!70!black,below=1.5pt] at (-2,-0.02){\scriptsize$2$};
  \node[blue!70!black,below=1.5pt] at (-3,-0.02){\scriptsize$3$};
  \node[red!75!black, above=1.5pt] at (-4,0.02) {\scriptsize$4$};
  \node[red!75!black, above=1.5pt] at (-5,0.02) {\scriptsize$5$};
  \node[blue!70!black,right] at (3.25,1.15) {\scriptsize even $\mathcal S$};
  \node[red!75!black, right] at (3.25,-1.15){\scriptsize odd $\mathcal T$};
\end{tikzpicture}
\caption{The even--odd splitting of $\HH_n$ ($n=6$, generic): even system
$I=\{0,2,3\}$ above, odd system $I^{c}=\{1,4,5\}$ mirrored below.}
\label{fig:lgvsplit}
\end{figure}

\section{Classical Hankel determinants}\label{sec:hankel-classical}

Alongside the dilated determinant $\HH_n=\det(a_{2i+j})_{0\le i,j\le n-1}$, each
moment sequence produced in this paper also has a \emph{classical} Hankel
determinant
$$
H_n(\mu)=\det\bigl(\mu_{i+j}\bigr)_{0\le i,j\le n-1}.
$$
The evaluation of such determinants is a classical and much-studied subject;
for surveys and general techniques see \cite{Krattenthaler1998,Krattenthaler2005},
and for recent product and near-product evaluations of Hankel determinants of
combinatorial moment sequences see
\cite{Cigler2021Krattenthaler,Krattenthaler2023,Elouafi2015,Egecioglu2008RR}.
Whenever the Jacobi continued fraction of $\sum_k\mu_k t^k$ has been exhibited,
this determinant is immediate: by Heilermann's formula~\eqref{eq:HfromLambda}
\cite{Heilermann1846,Wall1948},
\begin{equation}\label{eq:heilermann57}
H_n(\mu)=\mu_0^{\,n}\prod_{k=1}^{n-1}\lambda_k^{\,n-k},
\end{equation}
the $\lambda_k$ being the denominator coefficients of the J-fraction. We record
here, without proof, the resulting products. Several are classical or already
in the literature, and are included only for completeness; we flag these
below with their sources. The Bessel determinant is the classical Hankel
determinant of Jacobi (Beta) moments \cite{Krattenthaler1998}, and the Springer
determinant (the case $t=r=1$) is known \cite{Barry2012,Sokal2019} (sequence
\texttt{A091804} in \cite{OEIS}). The remaining ones --- for $(2k+1)E_{2k}$, for
the two reciprocal-sine sequences, and for the two-parameter family
$1/(\cos x-t\sin x)^r$ --- appear not to have been recorded in this form. We
begin with the sequence of Remark~\ref{rem:jfraccdh}.

\medskip
\emph{The sequence $(2k+1)E_{2k}$.}
Recall the secant numbers $E_{2k}$, the Taylor coefficients of the secant,
$$
\frac{1}{\cos x}=\sum_{k\ge0}E_{2k}\,\frac{x^{2k}}{(2k)!},
\qquad
E_0,E_2,E_4,E_6,\dots=1,\,1,\,5,\,61,\,1385,\dots,
$$
and form the sequence
$$
\mu_k:=(2k+1)\,E_{2k},
\qquad
\mu_0,\mu_1,\mu_2,\mu_3,\mu_4,\dots=1,\,3,\,25,\,427,\,12465,\dots
$$
By Remark~\ref{rem:jfraccdh} its generating function has the Jacobi continued
fraction
$$
\sum_{k\ge0}(2k+1)E_{2k}\,t^k
=\cfrac{1}{1-c^*_0t-\cfrac{\lambda^*_1t^2}{1-c^*_1t-\cfrac{\lambda^*_2t^2}{1-\cfrac{\lambda^*_3t^2}{1-\ddots}}}},
\qquad
c^*_m=8m^2+8m+3,\quad \lambda^*_m=(2m)^4,
$$
so that $\mu_0=1$ and, explicitly,
$$
c^*_0=3,\ c^*_1=19,\ c^*_2=51,\dots,
\qquad
\lambda^*_1=16,\ \lambda^*_2=256,\ \lambda^*_3=1296,\dots
$$

\begin{Proposition}\label{prop:hankel-secodd}
For the sequence $\mu_k=(2k+1)E_{2k}$ the classical Hankel determinant is,
for all $n\ge1$,
$$
H_n(\mu)=\det\bigl((2(i+j)+1)\,E_{2(i+j)}\bigr)_{0\le i,j\le n-1}
=\prod_{k=1}^{n-1}(2k)^{4(n-k)}
=2^{4\binom n2}\prod_{k=1}^{n-1}(k!)^4 .
$$
\end{Proposition}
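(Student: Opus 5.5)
The plan is to read the determinant off the Jacobi continued fraction already in hand and then apply Heilermann's formula \eqref{eq:heilermann57}. By Lemma~\ref{lem:cdh} (equivalently Remark~\ref{rem:jfraccdh}), the functional $\mathcal T^*[y^k]=(2k+1)E_{2k}$ is quasi-definite. Its data are:
\begin{itemize}
\item monic orthogonal polynomials $\rho_m$ with recurrence coefficients $c^*_m=8m^2+8m+3$ and $\lambda^*_m=(2m)^4$;
\item normalisation $\mu_0=\mathcal T^*[1]=E_0=1$;
\item squared norms $h^{T^*}_m=\bigl((2m)!!\bigr)^4$.
\end{itemize}
This is the continuous dual Hahn identification with parameters $(\tfrac12,\tfrac12,\tfrac12)$ proved in Lemma~\ref{lem:cdh}. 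I will invoke that lemma directly rather than reprove it.

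Next I apply \eqref{eq:normHankel}, which gives $H_n=\prod_{m=0}^{n-1}h_m$, or equivalently \eqref{eq:HfromLambda}, which gives $H_n=\mu_0^n\prod_{k=1}^{n-1}\lambda_k^{\,n-k}$. Substituting the data yields
$$H_n(\mu)=\prod_{k=1}^{n-1}(2k)^{4(n-k)}.$$
To reach the second closed form, I use $\prod_{k=1}^{n-1}(2k)^{n-k}=\prod_{m=0}^{n-1}(2m)!!=\prod_{m=0}^{n-1}2^m m!$. This product equals $2^{\binom n2}\prod_{k=1}^{n-1}k!$. Raising it to the fourth power gives $2^{4\binom n2}\prod_{k=1}^{n-1}(k!)^4$.

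There is no real obstacle, since all analytic content sits in Lemma~\ref{lem:cdh}. If that lemma were not taken as given, the delicate step would be the moment identity $(2k+1)E_{2k}=\int u^{2k}\,u(-\omega'(u))\,du$ and its match with the continuous dual Hahn weight. As a check, I would verify the first cases directly: $H_2=25-9=16=\lambda^*_1$, and $H_3=16^2\cdot256$ from the Hankel matrix of $1,3,25,427,12465$.
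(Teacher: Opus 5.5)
Your proposal is correct and follows the paper's own argument: apply Heilermann's formula \eqref{eq:heilermann57} to the $J$-fraction of Remark~\ref{rem:jfraccdh} (from Lemma~\ref{lem:cdh}), with $\mu_0=1$ and $\lambda^*_k=(2k)^4$. Your conversion $\prod_{k=1}^{n-1}(2k)^{n-k}=\prod_{m=0}^{n-1}(2m)!!=2^{\binom n2}\prod_{k=1}^{n-1}k!$ is right, and so are the checks $H_2=16$ and $H_3=2^{16}=65536$.
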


This is \eqref{eq:heilermann57} with $\mu_0=1$ and $\lambda^*_k=(2k)^4$. The
first values are
$$
H_1,H_2,H_3,H_4,\dots=1,\,2^4,\,2^{16},\,2^{32}\,3^4,\dots
$$
The sequence $\mu_k$ is \texttt{A009843} in \cite{OEIS}, where this continued
fraction is also recorded; the Hankel determinants of the Euler numbers
themselves are evaluated in \cite{Han2020Euler}, but the determinant of
$(2k+1)E_{2k}$ above appears to be new.

\medskip
\emph{The sequence $(2k+1)b_k$ of the reciprocal sine.}
Recall from Section~\ref{sec:xsinx} the even part $g(x)=x/\sin x$ of the
reciprocal-sine function and its scaled Taylor coefficients $b_k$,
$$
\frac{x}{\sin x}=\sum_{k\ge0}b_k\,\frac{x^{2k}}{(2k)!},
\qquad
b_0,b_1,b_2,b_3,\dots=1,\,\tfrac13,\,\tfrac{7}{15},\,\tfrac{31}{21},\dots,
$$
and form the sequence
$$
\mu_k:=(2k+1)\,b_k,
\qquad
\mu_0,\mu_1,\mu_2,\mu_3,\dots=1,\,1,\,\tfrac73,\,\tfrac{31}{3},\dots
$$
This is the odd moment functional $\mathcal T[y^k]=(2k+1)b_k$ of that
section. By Proposition~\ref{prop:xsin-wilson} its generating function has the
Jacobi continued fraction
$$
\sum_{k\ge0}(2k+1)b_k\,t^k
=\cfrac{1}{1-c^{\mathcal T}_0t-\cfrac{\lambda^{\mathcal T}_1t^2}
 {1-c^{\mathcal T}_1t-\cfrac{\lambda^{\mathcal T}_2t^2}{1-\ddots}}},
\qquad
c^{\mathcal T}_m=2m^2+2m+1,\quad
\lambda^{\mathcal T}_m=\frac{4m^6}{(2m-1)(2m+1)},
$$
so that $\mu_0=1$ and, explicitly,
$$
c^{\mathcal T}_0=1,\ c^{\mathcal T}_1=5,\ c^{\mathcal T}_2=13,\dots,
\qquad
\lambda^{\mathcal T}_1=\tfrac43,\ \lambda^{\mathcal T}_2=\tfrac{256}{15},\
\lambda^{\mathcal T}_3=\tfrac{2916}{35},\dots
$$

\begin{Proposition}\label{prop:hankel-sineodd}
For the sequence $\mu_k=(2k+1)b_k$ the classical Hankel determinant is,
for all $n\ge1$,
$$
H_n(\mu)=\det\bigl((2(i+j)+1)\,b_{i+j}\bigr)_{0\le i,j\le n-1}
=\prod_{k=1}^{n-1}\Bigl(\frac{4k^6}{(2k-1)(2k+1)}\Bigr)^{n-k}
=\prod_{k=0}^{n-1}\frac{16^{k}\,(k!)^{8}}{(2k)!\,(2k+1)!}\,.
$$
\end{Proposition}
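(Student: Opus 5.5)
The plan is to apply Heilermann's formula \eqref{eq:heilermann57} directly, using the Jacobi continued fraction already established in Proposition~\ref{prop:xsin-wilson}. That proposition identifies $\mathcal T[y^k]=(2k+1)b_k$ as the (rescaled) Wilson functional with parameters $(\tfrac12,\tfrac12,\tfrac12,\tfrac12)$. It also shows that its moment generating function has the $J$-fraction \eqref{eq:xsin-TJfrac}, with $\mu_0=b_0=1$ and $\lambda^{\mathcal T}_k=4k^6/((2k-1)(2k+1))$. Every $\lambda^{\mathcal T}_k$ is nonzero, so the functional is quasi-definite and \eqref{eq:HfromLambda} applies. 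It gives
$$
H_n(\mu)=\prod_{k=1}^{n-1}\bigl(\lambda^{\mathcal T}_k\bigr)^{n-k},
$$
which is the first displayed product.

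For the second form, I would regroup the product by norms rather than by exponents. Since $\prod_{k=1}^{n-1}\lambda_k^{n-k}=\prod_{j=0}^{n-1}\prod_{k=1}^{j}\lambda_k=\prod_{j=0}^{n-1}h^{\mathcal T}_j$, this is just the identity $H_n=\prod_{j<n}h_j$ of \eqref{eq:normHankel}. The closed form of each norm is already recorded in \eqref{eq:xsin-hT}:
$$
h^{\mathcal T}_j=\frac{16^j\,(j!)^8}{(2j)!\,(2j+1)!}.
$$
Substituting this gives the final product.

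The only point needing care is checking that telescoping in \eqref{eq:xsin-hT}, namely
$$
\prod_{k=1}^{j}\frac{4k^6}{(2k-1)(2k+1)}=\frac{4^j(j!)^6}{(2j-1)!!\,(2j+1)!!}.
$$
Multiplying numerator and denominator by $(2^j j!)^2$ turns the odd double factorials into $(2j)!\,(2j+1)!$, which yields $16^j(j!)^8/((2j)!(2j+1)!)$. This is routine and was done in the proof of Proposition~\ref{prop:xsin-wilson}, so I expect no real obstacle. As a sanity check I would confirm $H_2=\lambda^{\mathcal T}_1=\tfrac43$ against $\mu_0\mu_2-\mu_1^2=\tfrac73-1$.
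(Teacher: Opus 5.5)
Your proposal is correct and follows the paper's own argument: Heilermann's formula \eqref{eq:heilermann57} applied to the $J$-fraction \eqref{eq:xsin-TJfrac} with $\mu_0=1$ and $\lambda^{\mathcal T}_k=4k^6/((2k-1)(2k+1))$, then regrouping the product as $\prod_{k<n}h^{\mathcal T}_k$ using the norms \eqref{eq:xsin-hT}. Your telescoping check and the value $H_2=\tfrac43$ are both right.
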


This is \eqref{eq:heilermann57} with $\mu_0=1$ and
$\lambda^{\mathcal T}_k=\tfrac{4k^6}{(2k-1)(2k+1)}$; the second form is the
product $\prod_{k=0}^{n-1}h^{\mathcal T}_k$ of the norms~\eqref{eq:xsin-hT}. The
first values are
$$
H_1,H_2,H_3,H_4,\dots=1,\,\tfrac43,\,\tfrac{4096}{135},\,\tfrac{50331648}{875},\dots
$$

\medskip
\emph{The reciprocal sine $b_k$ itself.}
One may equally take the coefficients $b_k$ as the moment sequence --- this is
the even functional $\mathcal S[y^k]=b_k$ of Section~\ref{sec:xsinx}. Unlike the
sequences above, $\sum_k b_kt^k$ is given not by a Jacobi but by a Stieltjes
continued fraction: by Proposition~\ref{prop:xsin-even},
$$
\sum_{k\ge0}b_k\,t^k
=\cfrac{1}{1-\cfrac{u_1t}{1-\cfrac{u_2t}{1-\cfrac{u_3t}{1-\ddots}}}},
\qquad
u_n=\frac{n^4}{(2n-1)(2n+1)},
$$
with $\mu_0=b_0=1$ and, explicitly, $u_1=\tfrac13,\ u_2=\tfrac{16}{15},\
u_3=\tfrac{81}{35},\dots$ For an $S$-fraction Heilermann's formula takes the
telescoped form~\eqref{eq:Snorm}, $H_n=\prod_{i=0}^{n-1}\prod_{j=1}^{2i}u_j$.

\begin{Proposition}\label{prop:hankel-sineeven}
For the sequence $b_k=(2k)!\,[x^{2k}](x/\sin x)$ the classical Hankel
determinant is, for all $n\ge1$,
$$
H_n(b)=\det\bigl(b_{i+j}\bigr)_{0\le i,j\le n-1}
=\prod_{i=0}^{n-1}\prod_{j=1}^{2i}\frac{j^4}{(2j-1)(2j+1)}
=\prod_{l=0}^{n-1}\frac{16^{l}\,\bigl((2l)!\bigr)^{6}}{(4l)!\,(4l+1)!}\,.
$$
\end{Proposition}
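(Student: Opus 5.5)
The plan is to read the evaluation straight off the Stieltjes continued fraction of Proposition~\ref{prop:xsin-even}, exactly as in the two preceding propositions, and then verify the closed product by a one-step telescoping in $n$.

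First, Proposition~\ref{prop:xsin-even} gives $\sum_k b_kt^k$ as an $S$-fraction with $b_0=1$ and coefficients $u_n=n^4/((2n-1)(2n+1))$. Its even contraction (Section~\ref{ssec:contraction}) turns it into a $J$-fraction with $\lambda_k=u_{2k-1}u_{2k}$. Heilermann's formula in its telescoped form \eqref{eq:Snorm} then gives $H_n(b)=\prod_{i=0}^{n-1}h_i$ with $h_i=\prod_{j=1}^{2i}u_j$. This is the first displayed equality. Equivalently, $H_n(b)$ is the Gram determinant of the even functional $\mathcal S$, and \eqref{eq:normHankel} gives $H_n=\prod_{l<n}h^{\mathcal S}_l$.

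Second, each inner product has already been evaluated in \eqref{eq:xsin-hS}:
$$\prod_{j=1}^{2l}\frac{j^4}{(2j-1)(2j+1)}=\frac{((2l)!)^4}{(4l-1)!!\,(4l+1)!!}=\frac{16^l((2l)!)^6}{(4l)!\,(4l+1)!}.$$
Substituting this factor by factor gives the second equality. For a self-contained check of this identity, verify it at $l=0$, where both sides equal $1$. Then compare the ratios as $l\mapsto l+1$: the left side gains $u_{2l+1}u_{2l+2}$, and the right side gains
$$16\,\frac{((2l+2)(2l+1))^6}{(4l+4)(4l+3)(4l+2)(4l+1)\cdot(4l+5)(4l+4)(4l+3)(4l+2)}.$$
Matching these is an elementary rational identity in $l$.

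The only possible obstacle is bookkeeping in the double-factorial conversion, using $(4l-1)!!\,(4l+1)!!=(4l)!\,(4l+1)!/(16^l((2l)!)^2)$. The continued fraction itself is already established, so no new ideas are needed. As a sanity check, compare with the first values $H_1=1$ and $H_2=b_2-b_1^2=\tfrac{7}{15}-\tfrac19=\tfrac{16}{45}$, which matches $u_1u_2=\tfrac13\cdot\tfrac{16}{15}$.
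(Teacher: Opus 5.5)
Your proposal is correct and is essentially the paper's own argument: the $S$-fraction of Proposition~\ref{prop:xsin-even} (with $b_0=1$) fed into the telescoped Heilermann formula \eqref{eq:Snorm}, with each inner product identified as the norm $h^{\mathcal S}_l$ of \eqref{eq:xsin-hS}. Your double-factorial conversion, the $l\mapsto l+1$ ratio check, and the value $H_2=\tfrac{16}{45}$ are all accurate.
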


The second form is the product $\prod_{l=0}^{n-1}h^{\mathcal S}_l$ of the
norms~\eqref{eq:xsin-hS}. The first values are
$$
H_1,H_2,H_3,H_4,\dots
=1,\,\tfrac{16}{45},\,\tfrac{65536}{55125},\,\tfrac{4294967296}{18883865},\dots
$$
The shifted sequence $\mu'_k=b_{k+1}$ has, by the odd companion
of~\eqref{eq:Snorm},
$$
\det\bigl(b_{i+j+1}\bigr)_{0\le i,j\le n-1}
=\prod_{l=0}^{n-1}\prod_{j=1}^{2l+1}u_j
=\prod_{l=0}^{n-1}h^{\mathcal S}_l\,u_{2l+1},
\qquad u_{2l+1}=\frac{(2l+1)^4}{(4l+1)(4l+3)},
$$
with first values $\tfrac13,\,\tfrac{48}{175},\,\tfrac{65536}{11319},\dots$

\medskip
\emph{The Bessel-cosine moments.}
For $s>-1$ recall from Section~\ref{sec:besselst} the Bessel cosine
$\mathrm{cosb}_s(x)=\Gamma(s+1)(2/x)^sJ_s(x)$ and its scaled Taylor
coefficients $\mu_k$,
$$
\mathrm{cosb}_s(x)=\sum_{k\ge0}\mu_k\,\frac{x^{2k}}{(2k)!},
\qquad
\mu_k=(-1)^k\,\frac{(\tfrac12)_k}{(s+1)_k},
$$
the even moment sequence $\mu_k=a_{2k}$ of the family~\eqref{eq:besselfam}. Its
ordinary generating function is the Gauss series
$\sum_{k\ge0}\mu_kz^k={}_2F_1(\tfrac12,1;s+1;-z)$, which by
Lemma~\ref{lem:bessel-data} has the Stieltjes continued fraction
$$
\sum_{k\ge0}\mu_k\,z^k
=\cfrac{1}{1-\cfrac{u_1z}{1-\cfrac{u_2z}{1-\cfrac{u_3z}{1-\ddots}}}},
\qquad
u_j=-\,\frac{j\,(j+2s-1)}{4\,(s+j-1)(s+j)},
$$
with, explicitly,
$u_1=-\tfrac{1}{2(s+1)},\
u_2=-\tfrac{2s+1}{2(s+1)(s+2)},\
u_3=-\tfrac{3(s+1)}{2(s+2)(s+3)},\dots$

\begin{Proposition}\label{prop:hankel-bessel}
For the Bessel-cosine moments $\mu_k=(-1)^k(\tfrac12)_k/(s+1)_k$ the classical
Hankel determinant is, for all $n\ge1$ and as an identity in $\QQ(s)$,
$$
H_n(\mu)=\det\bigl(\mu_{i+j}\bigr)_{0\le i,j\le n-1}
=\prod_{i=0}^{n-1}h^{S}_i
=\prod_{i=0}^{n-1}\frac{(2i)!\;(s+\tfrac12)_i}{4^{i}\,(s+i)_i\,(s+1)_{2i}}\,.
$$
\end{Proposition}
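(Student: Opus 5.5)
This is an immediate consequence of results already established, so the plan is a short chain of citations. By Lemma~\ref{lem:bessel-data} the series $\sum_k\mu_kz^k={}_2F_1(\tfrac12,1;s+1;-z)$ has the Stieltjes continued fraction with coefficients $u_j=-j(j+2s-1)/\bigl(4(s+j-1)(s+j)\bigr)$. All $u_j$ are nonzero elements of $\QQ(s)$, so the even contraction of Section~\ref{ssec:contraction} applies over the field $\QQ(s)$. It yields a $J$-fraction with $\mu_0=1$ and $\lambda_k=u_{2k-1}u_{2k}$. The Heilermann formula \eqref{eq:HfromLambda}, in its telescoped $S$-fraction form \eqref{eq:Snorm}, then gives
\[
H_n(\mu)=\prod_{i=0}^{n-1}\prod_{j=1}^{2i}u_j=\prod_{i=0}^{n-1}h^S_i .
\]
This is the first equality in the statement.

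For the closed form I would simply quote the norm computation \eqref{eq:bessel-norms}, proved inside Lemma~\ref{lem:bessel-data}. There the product $\prod_{j=1}^{2i}u_j$ was written as $(2i)!\,(2s)_{2i}/\bigl(16^i(s)_{2i}(s+1)_{2i}\bigr)$, with the $2i$ minus signs cancelling. The duplication identity $(2s)_{2i}=4^i(s)_i(s+\tfrac12)_i$ and the splitting $(s)_{2i}=(s)_i(s+i)_i$ then turn this into $(2i)!\,(s+\tfrac12)_i/\bigl(4^i(s+i)_i(s+1)_{2i}\bigr)$. Taking the product over $i=0,\dots,n-1$ gives the stated formula.

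The only point that needs care is legitimacy. Heilermann's formula requires quasi-definiteness, and the derivation is carried out over $\QQ(s)$, where every $u_j$, and hence every $h^S_i$, is nonzero. One should also note that the $S$-fraction is formal, being Gauss's continued fraction specialised at $(a,b,c)=(\tfrac12,0,s)$. Both facts are already recorded in Lemma~\ref{lem:bessel-data}. The identity therefore holds in $\QQ(s)$, and it specialises to every $s$ at which both sides are defined. I expect no real obstacle; at most one would double-check the small cases $n=1,2$ against $\mu_0=1$ and $\mu_1=-1/(2(s+1))$ as a sanity check.
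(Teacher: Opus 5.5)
Your proposal is correct and follows the paper's own argument: the paper likewise obtains the result as the $S$-fraction form \eqref{eq:Snorm} of Heilermann's formula applied to the coefficients $u_j$ of Lemma~\ref{lem:bessel-data}, and reads off the closed form from the norms \eqref{eq:bessel-norms}. Your additional remarks, that the argument runs over $\QQ(s)$ where every $u_j$ is nonzero, and the check $H_2=(2s+1)/\bigl(4(s+1)^2(s+2)\bigr)$, agree with the paper.
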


This is the $S$-fraction form of Heilermann's formula~\eqref{eq:Snorm} with
coefficients $u_j$; the product is that of the norms $h^S_i$
of~\eqref{eq:bessel-norms}. The odd part
$a_{2k+1}=(-1)^k(\tfrac12)_k/(t+1)_k$ of the same family gives the identical
statement with $t$ in place of $s$. The first values are, e.g.,
$$
H_1=1,\qquad H_2=\frac{2s+1}{4(s+1)^2(s+2)}\,.
$$
This --- and, below, its two shifts --- is the classical Hankel determinant of
Jacobi (Beta) moments \cite{Krattenthaler1998}: the entries $\mu_k=(-1)^k(\tfrac12)_k/(s+1)_k$
are the moments of a Beta weight, and multiplication by $y$ keeps it Beta.
The shifted sequence $\mu_{k+1}=a_{2k+2}$ gives, again by the odd companion
of~\eqref{eq:Snorm},
$$
\det\bigl(\mu_{i+j+1}\bigr)_{0\le i,j\le n-1}
=\prod_{i=0}^{n-1}h^{S}_i\,u_{2i+1},
\qquad
u_{2i+1}=-\,\frac{(2i+1)(s+i)}{2\,(s+2i)(s+2i+1)},
$$
e.g.\ $H^{(1)}_1=-\tfrac1{2(s+1)}$ and
$H^{(1)}_2=\dfrac{3\,(2s+1)}{16\,(s+1)^2(s+2)^2(s+3)}$.
The double shift is clean as well: the doubly shifted sequence is again of
Beta type,
$$
\mu_{k+2}=a_{2k+4}
=\frac{3}{4(s+1)(s+2)}\,(-1)^k\,\frac{(\tfrac52)_k}{(s+3)_k},
$$
so that
$$
\det\bigl(\mu_{i+j+2}\bigr)_{0\le i,j\le n-1}
=\Bigl(\frac{3}{4(s+1)(s+2)}\Bigr)^{n}
\prod_{k=0}^{n-1}\frac{k!\,(\tfrac52)_k\,(s+\tfrac12)_k}{(s+k+2)_k\,(s+3)_{2k}},
$$
e.g.\ $H^{(2)}_1=\dfrac{3}{4(s+1)(s+2)}$ and
$H^{(2)}_2=\dfrac{45\,(2s+1)}{64\,(s+1)^2(s+2)^2(s+3)^2(s+4)}$.

\medskip
\emph{The Springer number family $1/(\cos x-t\sin x)^{r}$.}
Section~\ref{sec:springer} evaluates the \emph{dilated} determinant of this
family; its coefficients form, at $t=r=1$, the Springer numbers
$$
\frac{1}{\cos x-\sin x}=\sum_{n\ge0}S_n\,\frac{x^n}{n!},
\qquad
S_n=1,\,1,\,3,\,11,\,57,\,361,\,2763,\dots,
$$
a moment sequence \cite{Sokal2019}. We record the \emph{classical} Hankel
determinant of the full moment sequence
$a_n=n!\,[x^n](\cos x-t\sin x)^{-r}$ of the whole family, whose ordinary
generating function has a particularly clean Jacobi continued fraction: the
underlying orthogonal polynomials are, up to rescaling, the
Meixner--Pollaczek polynomials.

\begin{Proposition}\label{prop:hankel-springer}
Let $r>0$. The ordinary generating function of
$a_n=n!\,[x^n](\cos x-t\sin x)^{-r}$ has the Jacobi continued fraction
\begin{equation}\label{eq:springer-jfrac}
\sum_{n\ge0}a_n\,z^n
=\cfrac{1}{1-c_0z-\cfrac{\lambda_1z^2}{1-c_1z-\cfrac{\lambda_2z^2}{1-\ddots}}},
\qquad
c_n=t\,(2n+r),\qquad
\lambda_n=(1+t^2)\,n\,(n+r-1)
\end{equation}
\textup(so $\mu_0=1$ and, at $t=r=1$, $c_n=2n+1$, $\lambda_n=2n^2$\textup), and
consequently, for all $n\ge1$,
$$
H_n=\det\bigl(a_{i+j}\bigr)_{0\le i,j\le n-1}
=(1+t^2)^{\binom n2}\prod_{i=1}^{n-1}i!\,(r)_i\,.
$$
\end{Proposition}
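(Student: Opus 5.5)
The determinant formula follows at once from \eqref{eq:springer-jfrac} by Heilermann's formula \eqref{eq:heilermann57}. With $\mu_0=1$,
\[
H_n=\prod_{k=1}^{n-1}\lambda_k^{\,n-k}
=(1+t^2)^{\binom n2}\prod_{k=1}^{n-1}\bigl(k(k+r-1)\bigr)^{n-k}.
\]
Regrouping the product by columns gives $\prod_{i=1}^{n-1}\prod_{k=1}^{i}k(k+r-1)=\prod_{i=1}^{n-1}i!\,(r)_i$. The real content is therefore the J-fraction. I would prove it in two stages: first reduce to the case $t=0$ by a rotation, then identify the $t=0$ moments as Meixner--Pollaczek moments.

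\emph{Step 1: reduction to $t=0$.} Write $t=\tan\phi$, so that $\cos x-t\sin x=\sec\phi\,\cos(x+\phi)$ and hence $(\cos x-t\sin x)^{-r}=\cos^r\phi\,\sec^r(x+\phi)$. It is cleaner to use the exponential form from the proof of Theorem~\ref{thm:springer}. There one has $\cos x-t\sin x=e^{-ix}(1+u)(1-q'e^{2ix})$, so $a_m$ is the moment $\int\zeta^m\,d\nu$ of a discrete measure on the nodes $i(2k+r)$. Equivalently, I would use the standard Meixner--Pollaczek representation: for $r>0$,
\[
(\cos x-t\sin x)^{-r}=\int e^{xy}\,w(y)\,dy,
\qquad
w(y)\propto e^{(\pi-2\phi)y/2}\,|\Gamma(\tfrac r2+\tfrac{iy}{2})|^2 .
\]
This follows from the Fourier transform of $|\Gamma(\lambda+iy)|^2$, namely $\int e^{ixy}e^{(2\phi-\pi)y}|\Gamma(\lambda+iy)|^2\,dy\propto(2\cos\tfrac{x}{2}\ldots)^{-2\lambda}$, after analytic continuation. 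With this representation, $a_n$ is the $n$-th moment of the Meixner--Pollaczek weight $P^{(\lambda)}_n(\,\cdot\,;\phi)$, with $\lambda=r/2$ and the variable rescaled by $2$.

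\emph{Step 2: read off the recurrence.} The monic Meixner--Pollaczek recurrence, taken from \cite{Koekoek2010KLS} as in the Wilson computations of Section~\ref{sec:xsinx}, has
\[
c_n=\frac{(n+\lambda)\cot\phi}{\ldots},\qquad
\lambda_n=\frac{n(n+2\lambda-1)}{4\sin^2\phi}.
\]
Rescaling $y\mapsto 2y$ and applying the sign and rotation conventions should produce $c_n=t(2n+r)$ and $\lambda_n=(1+t^2)n(n+r-1)$. The matching uses $1/\sin^2\phi$ in the form $1+\cot^2\phi$, so $\tan$ versus $\cot$ depends on which convention defines $\phi$.

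\emph{Fallback: a purely formal check.} I would cross-check by a route that avoids analysis. Put $F=(\cos x-t\sin x)^{-r}$; it satisfies the Riccati-type ODE $F'=r\,\dfrac{\sin x+t\cos x}{\cos x-t\sin x}\,F$. Define $G=(\sin x+t\cos x)/(\cos x-t\sin x)$. This is $\tan(x+\phi)$, and it satisfies $G'=1+G^2$. Let $D$ denote the derivation acting on the algebra generated by $G$, viewed on $\QQ[t][G]\cdot F$. Then $D(G^kF)=\bigl(kG^{k-1}(1+G^2)+rG^{k+1}\bigr)F$. One seeks monic polynomials $p_n$ with $D\bigl(p_n(G)F\bigr)=\bigl(p_{n+1}+c_np_n+\lambda_np_{n-1}\bigr)F$ and $p_n(0)$ chosen suitably. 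This is the Flajolet/Motzkin-path mechanism: the moments come from evaluating at $x=0$, where $G=t$, so the $J$-fraction is recovered after translating by $t$. A quick approach is to take $p_n$ in the shifted variable $G-t$ and verify the three-term identity coefficientwise, which is a polynomial identity in $(n,r,t)$.

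\emph{Expected difficulty.} The main obstacle is getting this normalization right in either route, that is, fixing the constants $c_n$ and $\lambda_n$ exactly. I would pin them down by checking the first few moments: $a_1=rt$ and $a_2=r+r(r+1)t^2$. These give $c_0=rt$ and $\lambda_1=a_2-a_1^2=r(1+t^2)$, in agreement with the claimed formulas.
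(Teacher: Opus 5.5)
Your Heilermann step is correct, and your main route (write the $a_n$ as moments of a Meixner--Pollaczek weight, read off the tabulated recurrence, rescale by $2$) is exactly the paper's. As written, however, it does not close. The only nontrivial step, the parameter identification, is left as ``should produce'', and your conventions contradict each other.

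With $t=\tan\phi$ as in your Step~1, the weight whose two-sided Laplace transform is proportional to $(\cos x-t\sin x)^{-r}$ is $e^{\phi y}\bigl|\Gamma(\tfrac r2+\tfrac{iy}2)\bigr|^2$, not $e^{(\pi-2\phi)y/2}|\cdots|^2$. In the normalisation of \cite[\S9.7]{Koekoek2010KLS} (weight $e^{(2\varphi-\pi)\eta}|\Gamma(\lambda+i\eta)|^2$ with $\eta=y/2$), the angle is therefore $\varphi=\phi+\tfrac\pi2$. This gives $\cot\varphi=-t$ and $1/\sin^2\varphi=1+t^2$. The monic recurrence has $\beta_n=-(n+\lambda)\cot\varphi$, with a minus sign and no denominator, and $\gamma_n=n(n+2\lambda-1)/(4\sin^2\varphi)$. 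The doubling $y=2\eta$ then gives $c_n=2\beta_n=t(2n+r)$ and $\lambda_n=4\gamma_n=(1+t^2)n(n+r-1)$. With your literal $\phi$, the relation $1/\sin^2\phi=1+\cot^2\phi$ would give $1+t^{-2}$, not $1+t^2$.

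Also, the exponential sum from the proof of Theorem~\ref{thm:springer} is not an equivalent moment representation. It is an expansion in $q'=(t-i)/(t+i)$, which has modulus $1$ for real $t$, and its ``measure'' has imaginary nodes and complex weights.

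Your fallback, by contrast, is a complete proof by a genuinely different, purely formal route, once two slips are fixed.
\begin{itemize}
\item ``Monic'' is incompatible with unit up-steps, because the derivation raises the $G$-degree with leading factor $n+r$.
\item The condition you need is $p_n(t)=0$ for $n\ge1$, since $G=t$ at $x=0$; it is not a condition at $G=0$.
\end{itemize}
Put $H=G-t$, so that $H(0)=0$ and $H'=(1+t^2)+2tH+H^2$, and set $\Phi_k=(r)_k\,H^kF$. Then
\[
\Phi_k'=(r)_k\bigl[(k+r)H^{k+1}+t(2k+r)H^k+k(1+t^2)H^{k-1}\bigr]F
=\Phi_{k+1}+t(2k+r)\,\Phi_k+(1+t^2)\,k(k+r-1)\,\Phi_{k-1},
\]
with $\Phi_0=F$ and $\Phi_k(0)=\delta_{k0}$. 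Iterating gives $F^{(m)}=\sum_k M_{m,k}\Phi_k$, where $M_{m,k}$ is the weight of the Motzkin paths from height $0$ to height $k$. Hence $a_m=F^{(m)}(0)=M_{m,0}$, which is \eqref{eq:springer-jfrac} by Flajolet's theorem.

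Compared with the paper's argument, this route needs no Fourier integral, no restriction to real $t$ and $r>0$ followed by polynomial continuation, and no table of recurrence coefficients. It holds over $\QQ[r,t]$, so the hypothesis $r>0$ is not needed. What the paper's route buys in return is an explicit positive orthogonality measure, identifying the Springer numbers as Meixner--Pollaczek moments.
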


\begin{proof}
Both $c_n,\lambda_n$ and the moments $a_n$ are rational in $(t,r)$ --- indeed
$a_n\in\QQ[t]$ of degree $\le n$, by Fa\`a di Bruno as in the proof of
Theorem~\ref{thm:springer} --- so it suffices to prove
\eqref{eq:springer-jfrac} for real $t$ and real $r>0$, a set with nonempty
interior. Write $\phi=\arctan t$, so that
$\cos x-t\sin x=\sqrt{1+t^2}\,\cos(x+\phi)$ and
$$
f(x):=(\cos x-t\sin x)^{-r}=(1+t^2)^{-r/2}\,\sec^{r}(x+\phi).
$$
By the Fourier evaluation of the Meixner--Pollaczek weight
\cite[\S9.7]{Koekoek2010KLS},
$$
\frac{1}{2\pi}\int_{-\infty}^{\infty}e^{2\psi\eta}\,
\Bigl|\Gamma\bigl(\tfrac r2+i\eta\bigr)\Bigr|^{2}\,d\eta
=\Gamma(r)\,(2\cos\psi)^{-r}
\qquad\bigl(|\psi|<\tfrac\pi2\bigr),
$$
so the probability measure
$$
d\mu(\xi)=\frac1N\,e^{\xi\arctan t}\,
\Bigl|\Gamma\bigl(\tfrac r2+\tfrac{i\xi}2\bigr)\Bigr|^{2}\,d\xi,
\qquad N=4\pi\,\Gamma(r)\,2^{-r}(1+t^2)^{r/2},
$$
has moment generating function $\int_{\RR}e^{x\xi}\,d\mu(\xi)=f(x)$ for
$|x+\phi|<\tfrac\pi2$ (substitute $\xi=2\eta$, $\psi=x+\phi$, and use
$\cos(x+\phi)^{-r}$). Matching Taylor coefficients at $x=0$ gives
$a_n=\int_{\RR}\xi^{n}\,d\mu(\xi)$: the $a_n$ are the moments of $\mu$.

Under the affine change $\xi=2\eta$, $\mu$ is the Meixner--Pollaczek measure
with parameters $\lambda=\tfrac r2$ and angle $\varphi=\phi+\tfrac\pi2\in(0,\pi)$,
whose monic orthogonal polynomials satisfy
$\hat p_{n+1}=(\eta-\beta_n)\hat p_n-\gamma_n\hat p_{n-1}$ with
\cite[\S9.7]{Koekoek2010KLS}
$$
\beta_n=-(n+\lambda)\cot\varphi,\qquad
\gamma_n=\frac{n(n+2\lambda-1)}{4\sin^{2}\varphi}.
$$
Here $\cot\varphi=-\tan\phi=-t$ and $\sin^{2}\varphi=\cos^{2}\phi=1/(1+t^2)$,
and the doubling $\xi=2\eta$ multiplies $\beta_n$ by $2$ and $\gamma_n$ by $4$;
hence the monic $\mu$-recurrence has coefficients
$$
c_n=2\beta_n=t\,(2n+r),\qquad
\lambda_n=4\gamma_n=(1+t^2)\,n\,(n+r-1),
$$
which is \eqref{eq:springer-jfrac} (the theorem of Stieltjes,
\eqref{eq:Jfrac}). Finally Heilermann's formula~\eqref{eq:heilermann57} with
$\mu_0=a_0=1$ and $h_i=\prod_{k=1}^{i}\lambda_k=(1+t^2)^{i}\,i!\,(r)_i$ gives
$H_n=\prod_{i=0}^{n-1}h_i=(1+t^2)^{\binom n2}\prod_{i=1}^{n-1}i!\,(r)_i$.
\end{proof}

The product $\prod_{i=1}^{n-1}i!\,(r)_i$ is itself the classical Hankel
determinant of $1/(1-x)^r$ (moments $(r)_n$), so the evaluation mirrors
Theorem~\ref{thm:springer} exactly, with the \emph{dilated} factor
$\bigl(t(t^2+1)\bigr)^{\binom n2}$ there replaced by the \emph{classical}
$(1+t^2)^{\binom n2}$ --- the extra $t$ dropping out because the classical
determinant depends only on the $\lambda_n$, not on the $c_n$. At $t=r=1$ it
gives the Springer Hankel determinant
$$
H_n=2^{\binom n2}\prod_{i=1}^{n-1}(i!)^2
=1,\,2,\,32,\,9216,\,84934656,\dots
$$
This last is known --- the Springer numbers are a Meixner--Pollaczek moment
sequence \cite{Barry2012,Sokal2019}, and the values $1,2,32,9216,\dots$ are
\texttt{A091804} in \cite{OEIS}. The two-parameter family
\eqref{eq:springer-jfrac}, by contrast, does not seem to have been recorded.

\section{Concluding remarks}\label{sec:concl}

We introduced the dilated Hankel determinant $\HH_n=\det(a_{2i+j})$
(Definition~\ref{def:double}) in order to study the Chapoton--Han root
conjecture (Section~\ref{sec:root}), and observed experimentally that for a
surprisingly large collection of classical sequences it factors into small
primes. Unlike the ordinary Hankel determinant, however, it admits no single
universal evaluation: the even and odd parts of $\mathbf a$ interact through a
connection determinant that collapses only under favourable circumstances. We
therefore developed the six methods $\M1$--$\M6$ of
Section~\ref{sec:methods} --- four running through the paper, and two more
specialised, the contiguous-relation method $\M5$ and the
rank-one/matrix-determinant-lemma method $\M6$ --- and with them evaluated,
each as an explicit product, the Euler number family and its
shifts, the Gaussian, secant, algebraic, Springer, elliptic and
derivative-ladder families, and the Beta family; the reciprocal-sine function
required, on top of $\M2$, the new
Catalan-determinant evaluation of Theorem~\ref{thm:xsin-catalan}.

What is still missing is a \emph{theory}. We have no criterion that decides, in
advance, whether a given generating function has a closed-form dilated Hankel
determinant, and the boundary is delicate: an innocuous modification of a
``nice'' function usually destroys the product form, and we do not know why one
perturbation succeeds where a neighbouring one fails. We record here the main
questions this leaves open.

\subsection{Further directions}

Four problems seem especially natural.

\begin{Problem}\label{prob:classify}
Find further sequences $\mathbf a$ --- or, more ambitiously, a criterion
characterising them --- whose dilated Hankel determinant $\HH_n$ has a closed
form, that is, factors into a product of linear factors in $n$ (equivalently,
into small primes for every $n$).
\end{Problem}

\begin{Problem}\label{prob:rstep}
The dilated Hankel determinant selects the rows $0,2,4,\dots$ of the infinite
Hankel matrix. For an integer $r\ge2$ one may equally form the \emph{$r$-step}
determinant $\det(a_{ri+j})$. For the Beta family this is again a closed
product for every step $r$: the Vandermonde reduction $\M1$ of
Section~\ref{sec:methods}, which factors $a_{x+j}=w(x)Q_j(x)$ with $Q_j$
triangular, applies verbatim to any set of row indices (see also the
determinant calculus of Krattenthaler
\cite{Krattenthaler1998,Krattenthaler2005}). The question is whether any family
\emph{beyond} the Beta family has a closed-form $r$-step determinant for
$r\ge3$.
\end{Problem}

\begin{Problem}\label{prob:lgv}
Give a combinatorial proof of our dilated Hankel determinant evaluations by
means of the Lindström--Gessel--Viennot lemma (Section~\ref{sec:lgv}). The
dilated Hankel determinants studied in this paper all have a simple closed
form --- a product of small integers. For the \emph{classical} Hankel
determinant, the Lindström--Gessel--Viennot lemma gives a nice explanation of
this fact: it realises the determinant as a positive count of families of
non-intersecting lattice paths, which visibly factors. Is there an analogous
combinatorial model that explains the product-of-small-integers property of the
\emph{dilated} Hankel determinant?
\end{Problem}

\begin{Problem}\label{prob:hfrac}
Our methods rest on orthogonal polynomials and the Jacobi continued fraction
($J$-fraction) of the even and odd moment sequences, which exists only when each
is quasi-definite. The \emph{Hankel continued fraction} ($H$-fraction)
\cite{Han2016Adv,Han2015NT} is more general: it exists for \emph{every}
sequence and reads off the Hankel determinants even when some vanish and the
$J$-fraction breaks down. When the even or odd part of $\mathbf a$ has no
$J$-fraction, can the dilated determinant still be evaluated by an $H$-fraction
analogue of the biorthogonal reduction $\M2$?
\end{Problem}

\subsection{Data availability}\label{ssec:data}

The closed forms were discovered and all the identities of this paper
were checked with the computer algebra system \textsf{SageMath}.
The \textsf{SageMath} programs and the output data used to discover and check
the results of this paper are available, for independent verification, in the
GitHub repository
$$\hbox{\tt https://github.com/GuoniuHan/dilated}.$$

\section*{Acknowledgement}

The author thanks Fr\'ed\'eric Chapoton for helpful discussions.

\section*{On the use of artificial intelligence}
An artificial-intelligence assistant based on a large language model was used to
help find and write some of the proofs. All arguments were verified independently
with \textsf{SageMath} and reviewed by the author, who takes full responsibility
for the correctness and the content of the paper.

\end{document}